\documentclass[11pt,a4paper,oneside]{amsart}

\usepackage[utf8]{inputenc}
\usepackage[T1]{fontenc}
\usepackage{lmodern}
\usepackage[protrusion=false]{microtype}

\usepackage{amsmath,amssymb,amsxtra,mathtools} 
\usepackage[lite]{amsrefs}
\usepackage{graphicx}
\usepackage{mathabx}
\usepackage{tensor}
\usepackage{a4wide}

\usepackage{booktabs} 
\usepackage{multirow} 

\usepackage{tikz,tikz-cd}

\usepackage{bbm}
\usepackage[all]{xy}

\renewcommand{\PrintDOI}[1]{\href{http://dx.doi.org/\detokenize{#1}}{doi: \detokenize{#1}}}



\BibSpec{book}{%
	+{}  {\PrintPrimary}                {transition}
	+{,} { \textit}                     {title}
	+{.} { }                            {part}
	+{:} { \textit}                     {subtitle}
	+{,} { \PrintEdition}               {edition}
	+{}  { \PrintEditorsB}              {editor}
	+{,} { \PrintTranslatorsC}          {translator}
	+{,} { \PrintContributions}         {contribution}
	+{,} { }                            {series}
	+{,} { \voltext}                    {volume}
	+{,} { }                            {publisher}
	+{,} { }                            {organization}
	+{,} { }                            {address}
	+{,} { \PrintDateB}                 {date}
	+{,} { }                            {status}
	+{}  { \parenthesize}               {language}
	+{}  { \PrintTranslation}           {translation}
	+{;} { \PrintReprint}               {reprint}
	+{.} { }                            {note}
	+{.} {}                             {transition}
	+{} { \PrintDOI}                   {doi}
	+{} { available at \url}            {eprint}
	+{}  {\SentenceSpace \PrintReviews} {review}
}

\usepackage{enumitem} 
\setlist[enumerate,1]{label=\textup{(\arabic*)}}

\numberwithin{equation}{section}
\theoremstyle{plain}
\newtheorem{theorem}[equation]{Theorem}
\newtheorem{lemma}[equation]{Lemma}
\newtheorem{proposition}[equation]{Proposition}
\newtheorem{corollary}[equation]{Corollary}

\newtheorem{thmx}{Theorem}

\newtheorem{corx}[thmx]{Corollary}

\theoremstyle{definition}
\newtheorem{definition}[equation]{Definition}
\theoremstyle{remark}
\newtheorem{remark}[equation]{Remark}
\newtheorem{convention}[equation]{Convention}
\newtheorem{example}[equation]{Example}
\newtheorem{notation}[equation]{Notation}
\newtheorem{question}[equation]{Problem}

\DeclareMathOperator{\Aut}{Aut}
\DeclareMathOperator{\Bis}{Bis}
\DeclareMathOperator{\Iso}{Iso}

\DeclareMathOperator{\supp}{supp}%
\DeclareMathOperator{\clsp}{\overline{span}}
\DeclareMathOperator{\linspan}{span} 
\DeclareMathOperator{\sing}{sing} 
\DeclareMathOperator{\reg}{reg} 
\DeclareMathOperator{\src}{src} 
\DeclareMathOperator{\Hau}{H}
\DeclareMathOperator{\PHomeo}{PHomeo}
\DeclareMathOperator{\MPIso}{MPIso}

\DeclareMathOperator{\PIso}{PIso}

\DeclareMathOperator{\PAut}{PAut}
\DeclareMathOperator{\tight}{tight}

\newcommand{\OO}{\mathcal O}
\newcommand{\TT}{\mathcal T}
\newcommand{\LL}{\mathcal L}
\newcommand{\EE}{\mathcal{E}}
\newcommand{\RR}{\mathcal{R}}


\newcommand{\Gg}{\mathcal G}

\newcommand{\tv}{\widetilde{V}}
\newcommand{\vv}{V}
\newcommand{\tw}{\widetilde{W}}
\newcommand{\ww}{W}
\newcommand{\ttt}{\widetilde{T}}
\renewcommand{\tt}{T}

\newcommand{\rg}{\textbf{\textup{r}}}
\newcommand{\sr}{\textbf{\textup{s}}}

\newcommand{\tc}{\mathrm{t}}



\newcommand{\CC}{\mathcal C}
\newcommand{\DD}{\mathcal D}

\newcommand{\bone}{\mathbbm{1}}

\newcommand*{\Z}{\mathbb Z}
\newcommand*{\R}{\mathbb R}
\newcommand*{\N}{\mathbb N}
\newcommand*{\NN}{\mathcal N}

\newcommand*{\T}{\mathbb T}


\newcommand*{\Gr}{G}

\newcommand{\la}{\rhd}
\newcommand{\ra}{\lhd}

\newcommand*{\Bound}{\mathbb B}
\newcommand*{\Comp}{\mathbb K}
\newcommand*{\red}{\mathrm{red}}
\newcommand*{\ess}{\mathrm{ess}}
\newcommand*{\rd}{\mathrm{r}}
\newcommand*{\es}{\mathrm{e}}
\newcommand*{\M}{\mathfrak M}



\newcommand*{\Cst}{\textup C^*}

\newcommand*{\Cont}{\textup C}
\newcommand*{\Contb}{\textup C_\textup b}
\newcommand*{\Contu}{\textup C_\textup u}
\newcommand*{\Contc}{\Cont_\textup c} 

\renewcommand*{\tight}{\mathrm{tight}}




\newcommand*{\B}{\mathcal B}
\newcommand*{\defeq}{ \coloneqq }

\renewcommand*{\emptyset}{\varnothing}
\newcommand*{\ox}{\otimes}
\newcommand*{\fibre}[2]{\tensor*[^{}_{#1}]{\times}{^{}_{#2}}}
\newcommand*{\ol}[1]{\overline{#1}}

\DeclarePairedDelimiterX{\braket}[2]{\langle}{\rangle}{#1\,\delimsize\vert\,\mathopen{}#2}
\DeclarePairedDelimiterX{\BRAKET}[2]{\langle}{\rangle}{\!\delimsize\langle#1\,\delimsize\vert\,\mathopen{}#2\delimsize\rangle\!}
\DeclarePairedDelimiterX{\setgiven}[2]{\{}{\}}{#1\,{:}\,\mathopen{}#2}




\newcommand*{\into}{\rightarrowtail}
\newcommand*{\onto}{\twoheadrightarrow}
\newcommand*{\Min}{\textup{(Min)}}
\newcommand*{\Fin}{\textup{(Fin)}}
\newcommand*{\Evr}{\textup{(Evr)}}
\newcommand*{\Rec}{\textup{(Rec)}}

\newcommand*{\Cyc}{\textup{(Cyc)}}
\newcommand*{\Con}{\textup{(Con)}}
\newcommand*{\Sla}{\textup{(Slack)}}
\newcommand*{\Hum}{\textup{(Hum)}}

\newcommand\Vspace[1][.4em]{\textup{\mbox{\vrule height.4ex}%
		\vbox{\hrule width#1}%
		\hbox{\vrule height.4ex}}}
\newcommand{\Space}{\Vspace[0.5em]}

\usepackage[unicode=true,
colorlinks=true,
linkcolor=blue!75!black,
citecolor=blue!75!black,
urlcolor=purple,
breaklinks=true]{hyperref}

\begin{document}
	\thispagestyle{empty}
	\title[Twisted self-similar groupoid actions]{
		Twisted operator algebras of 	self-similar
		\\
	 groupoid actions on arbitrary graphs}

\author[Kwa\'sniewski]{Bartosz K. Kwa\'sniewski}  
\email{bartoszk@math.uwb.edu.pl}

\author[Mundey]{Alexander Mundey} 
\email{alex.mundey@adelaide.edu.au}



\keywords{self-similar action, twisted inverse semigroup, directed graph, groupoid,  non-Hausdorff, twisted, $L^p$-operator algebra, $C^*$-algebra, simplicity, pure infiniteness, Cartan subalgebra, effective, topologically free, finitely non-Hausdorff, singular ideal}
\subjclass[2020]{20L05, 22A22   (Primary) 	47L10, 05E18 (Secondary)}

\thanks{We thank Benjamin Steinberg for telling us the trick that produces closed but non-Hausdorff inverse semigroups, and to Josiah Aakre for drawing our attention to \cite{Exel-Pardo:Self-similar}*{Theorem 18.8(2)(b)} where a special case of condition $\Evr$ appears. Bartosz Kwa\'sniewski was supported by the
National Science Centre, Poland, through the WEAVE-UNISONO grant 
no.~2023/05/Y/ST1/00046. 
Alexander Mundey was supported by University of Wollongong AEGiS CONNECT grant 141765.
} 

\begin{abstract} 
We study self-similar groupoid actions on arbitrary directed graphs together with $\T$-valued twists that exhaust the second cohomology group of the associated Zappa-Sz\'ep product category. We define and analyse the associated universal, reduced, and essential $C^*$-algebras, along with their Toeplitz versions and core subalgebras. In fact, we develop our theory in the more general setting of $L^P$-operator algebras, where $P\subseteq [1,\infty]$ is any non-empty set of parameters. This includes $C^*$-algebras, $L^p$-operator algebras  and symmetrised $L^{p,*}$-operator algebras for  $p\in [1,\infty]$, as special cases. 

We use three complementary approaches: twisted inverse semigroups, twisted ample groupoids, and $C^*$-correspondences.
We provide, in terms of the self-similar action, general characterisations of topological freeness, minimality, Hausdorffness, finite non-Hausdorffness, effectiveness, and local contractiveness for the associated ample groupoids. We generalise the classical Cuntz--Krieger Uniqueness and Coburn--Toeplitz Uniqueness Theorems for graph $C^*$-algebras to twisted $L^P$-operator algebras of self-similar groupoid actions.
We characterise when the natural inclusions are Cartan, 
give checkable criteria for simplicity and pure infiniteness of the essential algebras, and discuss when the universal and reduced algebras coincide. 

We also provide conditions that ensure the singular ideals vanish. Using the groupoid model we show that for any $P\subseteq [1,\infty]$, the  $L^P$-operator algebra of a  contracting self-similar action is simple if and only if the corresponding  Steinberg algebra is simple. Using the Toeplitz-Pimsner model, we prove that for the universal groupoid of any self-similar groupoid action on a row-finite graph, the singular $C^*$-algebraic ideal always vanishes.
\end{abstract}

\maketitle

\section*{Introduction}

Groups acting self-similarly on sets, also known as self-similar groups, emerged in the late 1970s and early 1980s as discrete groups generated by automata.
This approach proved to be an effective method for constructing finitely generated groups with specific properties,
leading to the resolution of several open problems in geometric group theory that were not approachable by other methods,
see for instance \cite{Nekrashevych:Self-similar}. In \cite{Nekrashevych:Cuntz--Pimsner} and \cite{Nekrashevych:Cstar_selfsimilar},  Nekrashyevich developed a theory  of $C^*$-algebras that serve as ``noncommutative spaces'' that underlie the given self-similar action $(\Gamma, X)$.
In this approach, the set $X$ is represented by a Cuntz algebra, 
the group $\Gamma$  by a unitary representation, and the  whole system is represented by a Pimsner algebra \cite{Pimsner:Generalizing_Cuntz--Krieger}, which also has an ample groupoid model.
Exel and Pardo \cite{Exel-Pardo:Self-similar} generalised this theory to $C^*$-algebras of self-similar actions of groups on finite directed graphs, where the directed graph is represented by the graph $C^*$-algebra.
Since then, this field of research has taken off, with numerous related papers appearing regularly, see \cite{Enrique's_notes} for a recent summary.

For self-similar actions on graphs, it is more natural to consider actions of groupoids rather than groups.
A self-similar action of a groupoid $\Gr$ on a graph $E$ can be defined in several, essentially equivalent, ways.
We distinguish the following four different approaches that we discuss in detail in Section~\ref{Sec:Self-similar groupoid actions}: 
\begin{enumerate}
\item\label{enu:viewpoint1} an action of $G$ on $E$ equipped with a $1$-cocycle, cf. \cites{Exel-Pardo:Self-similar, Miller_Steinberg};
\item\label{enu:viewpoint2} a  groupoid homomorphism from $G$ to the groupoid of isomorphisms between trees in the forest $T_E$ generated by $E$
, cf. \cites{Laca-Raeburn-Ramagge-Whittaker:Self-similar-groupoids, Deaconu, BBGHSW24, Aakre};
\item\label{enu:viewpoint3} a matched pair of categories $G$ and $E^*$, 
cf. \cites{Yusnitha:Self-similar_actions, Mundey_Sims}; and
\item\label{enu:viewpoint4} a discrete groupoid correspondence, \cites{Anutnes_Ko_Meyer, Miller_Steinberg}.
\end{enumerate}

Each of these points of view has its advantages. Viewpoint \ref{enu:viewpoint2} is closest to the original
combinatorial description of self-similar groups. It provides a very efficient method for constructing groups (or groupoids) with various properties using automata. 

Viewpoint \ref{enu:viewpoint3} 
provides a perspective that puts both the groupoid and path space actions on equal footing by treating them as actions of categories.
This allows one to study self-similar actions from the point of view of their Zappa--Sz\'ep product categories, see \cites{Ortega_Pardo:tight_groupoid_of_inverse_semigroup, Ortega_Pardo:Zappa--Szep_products}, and thus immerse them in the realm explored by Spielberg \cites{Spielberg1, Spielberg2}. Moreover, as shown by the first named author and Sims \cite{Mundey_Sims}, general matched pairs of categories form
the right framework to construct and analyse the relevant (co)homology theory. The twist that we consider comes from \cite{Mundey_Sims}.

There are also many reasons to adopt viewpoint \ref{enu:viewpoint4}. For one, it explains the asymmetry built into self-similar actions, which arises from the difference between groupoids and graphs. More importantly, as exploited by Miller and Steinberg in \cite{Miller_Steinberg}, the associated constructions depend on
the groupoid correspondence rather than the groupoid action itself. Moreover, the dependence on the groupoid correspondence is functorial, which is 
an inherent feature of such objects, see \cite{Anutnes_Ko_Meyer}.

In this paper, however, our starting point will always be viewpoint \ref{enu:viewpoint1}. It has the advantage of defining objects using the smallest number of generators and relations, in the spirit of the good old theory of graph $C^*$-algebras \cites{Cuntz--Krieger, Bates_Pask_Raeburn_Szymanski, FLR, Drinen_Tomforde, Raeburn:Graph_algebras}.

This paper makes the following novel contributions to the field:

\begin{enumerate}[label={(\alph*)}]
\item\label{enu:newpoint1} We consider \emph{arbitrary self-similar groupoid actions} $(\Gr,E)$ \emph{on arbitrary graphs}. 
We do not assume row-finiteness  of the graph, or any condition on the action that implies that the associated groupoids are Hausdorff.
\item\label{enu:newpoint2} We
study \emph{twisted algebras of self-similar actions}. In particular, all the related objects such as inverse semigroups and transformation groupoids need to be twisted.  

\item\label{enu:newpoint3} We introduce and analyse \emph{a number of different algebras} associated to $(\Gr,E)$. This includes reduced and essential algebras, their Toeplitz counterparts, and also the associated core subalgebras. 
\item\label{enu:newpoint4} We associate \emph{$L^P$-operator algebras} to self-similar actions. These include not only C*-algebras but also   $L^p$-operator algebras, and their symmetrised Banacha $*$-versions.

\end{enumerate}
Ad \ref{enu:newpoint1}. All previous articles examining the structure of $C^*$-algebras related to self-similar actions
assume that the underlying graph is row-finite, see \cites{Exel-Pardo:Self-similar, Deaconu, Li_Yang1, Li_Yang2, Larki21, Yusnitha:Self-similar_actions, Larki25}. The preprint \cite{Exel-Pardo-Starling:Self-similar} claims to consider arbitrary graphs (and omits a number of proofs) but adds several assumptions in their final results, and row-finiteness is one of them, see also \cite{Enrique's_notes}. Moreover, in the structural results of these articles, it is usually assumed
that the underlying tight groupoid is Hausdorff. In fact, a number of sources assume a much stronger condition called pseudo-freeness, see 
\cites{Deaconu, Li_Yang1, Li_Yang2, Larki21}, that from our point of view ``trivialises'' the associated structures.  
We believe that the non-Hausdorff case is the most interesting, and just as self-similar groups were invented to produce
revealing examples of groups, we view self-similar actions as an indispensable tool to produce examples that help to understand the structure of non-Hausdorff groupoids and their algebras, a topic that has recently attracted considerable attention, cf. examples in \cites{Clark-Exel-Pardo-Sims-Starling:Simplicity_non-Hausdorff, Steinberg_Szakacs, Brix-Gonzales-Hume-Li:Hausdorff_covers, Martinez_Szakacs}.

Ad \ref{enu:newpoint2}. Twisted $C^*$-algebras of self-similar actions on (row-finite higher rank) graphs were defined in \cite{Mundey_Sims}. 
So far they have only been studied from the point of view of their cohomology theory \cite{Mundey_Sims} or $K$-theory  \cite{Mundey_Sims2}.
To study other properties we need to develop the theory of twisted inverse semigroups and their associated twisted ample groupoids.
It also seems that the theory of twisted groupoid correspondences has not yet been developed. 
The purely algebraic twists considered in \cite{Cortinas} are related but not exactly compatible with the ones we consider 
 (see Remark~\ref{rem:trivial_bowtie} below). Considering  twisted algebras is important for several reasons, one of them is the theory of Cartan subalgebras
\cite{Renault:Cartan}.

Ad \ref{enu:newpoint3}.
So far, the main focus has been on the universal $C^*$-algebra $\OO(\Gr,E)$ of  $(\Gr,E)$ or its purely algebraic counterpart. 
The Toeplitz $C^*$-algebra $\TT(\Gr,E)$ has been occasionally used, often as an auxiliary object, in the analysis of  $K$-theory or KMS-states, cf. \cites{Laca-Raeburn-Ramagge-Whittaker:Self-similar-groupoids, Mundey_Sims, Miller_Steinberg}. However, as indicated in \cite{Nekrashevych:Cstar_selfsimilar}, the study of the fixed-point core subalgebra $\OO(\Gr,E)_{0}$ of $\OO(\Gr,E)$ has some merit and geometric motivation. The nuclearity characterisation of  $\OO(\Gr,E)$ in \cite{Miller_Steinberg} shows that even a much smaller subalgebra, that we denote by $\OO(\Gr,E)_{00}$,
carries important information about the system. We take a closer look at these subalgebras and their Toeplitz algebra counterparts.
Moreover,  using concrete representations of the system, we define the reduced and essential analogues of the aforementioned algebras.
In addition, we do this in the twisted setting and consider representations on $L^p$-spaces, for $p\in[1,\infty]$, rather than just Hilbert spaces.

Ad \ref{enu:newpoint4}. Phillips' program of generalising $C^*$-algebraic constructions to the $L^p$-operator algebra context, which was initiated in 
\cites{Phi12, Phillips}, see \cite{Gardella},   has sparked in recent years. 
In particular, the graph $L^p$-operator-algebras, for $p\in [1,\infty)$, are quite well studied now \cites{Phi12, Phillips, Gardella, Gardella_Lupini17, cortinas_rodrogiez, cgt, cortinas_Montero_rodrogiez, BKM2}. 
We generalise this is to twisted algebras of self-similar actions of graphs.  Moreover, we phrase our results in terms   $L^P$-operator algebras, where $P\subseteq [1,\infty]$ is a set of parameters. These  were introduced in the context of twisted crossed products in \cite{BK},
and then used in the study of twisted groupoid algebras in \cite{BKM2}. They are a useful framework that unifies both the $L^p$-operator algebras of and
their symmetrised $*$-Banach algebra versions \cites{Austad_Ortega, Elkiaer}.  A general theory of Banach groupoid algebras developed in \cite{BKM} and \cite{BKM2} shows that a reasonable theory of $L^P$-operator algebras defined in terms of 
generators and relations is possible. 
When leaving the comfort zone of $C^*$-algebras, one sometimes needs to look for non-standard arguments and more  concrete and spatial constructions.  The reward is a broader perspective and deeper insight, but this is not all.
A concrete advantage is, for instance, that the groupoid $L^p$-operator algebras for $p\neq 2$ are much more rigid and carry the full information about the underlying groupoid, see \cites{Gardella_Palmstrom_Thiel, Hetland_Ortega, cgt}.

We now pass to a more detailed description of the main objects and results of the paper.

\subsection*{An inverse semigroup and groupoid toolkit}
Let $\Gr$ be a discrete groupoid whose unit space $\Gr^0$ is also the set of vertices $E^0$ in a directed graph $E=(E^{0},E^{1},\rg,\sr)$.
We call the pair $(\Gr,E)$ a \emph{self-similar action} if there is a left action of $\Gr$ on  $E^1$ and a $\Gr$-valued $1$-cocycle \(\Gr*E^{1}\ni(g,e)\mapsto g|_e\in \Gr\) for this action (see Definition~\ref{defn:self_similar_action}). We assume that both $\Gr$ and $E$ are countable, though this is only used when we talk about essential algebras or amenability of non-Hausdorff groupoids.

We start by discussing our findings on groupoids that come from inverse semigroups associated to  $(\Gr,E)$.
Besides providing powerful tools for studying associated algebras,  they are interesting in their own right.
The inverse semigroup $S(\Gr,E)$ of a self-similar action $(\Gr,E)$ is the (unique up to isomorphism)
universal inverse semigroup with zero that is generated by the groupoid $\Gr$ and the set of edges $E^1$ modulo the relations
$$
g \cdot h=[\sr(g)=\rg(h)] gh,\qquad e\cdot f=0 \text{ if }\sr(e)\neq \rg(f), \qquad  g \cdot e=[\sr(g)=\rg(e)] (ge)\cdot g|_{e}
$$
for $g,h\in \Gr$, $e,f \in E^1$, where \([sentence]\) is zero if
the \(sentence\) is false and 1 otherwise. See \cite{Miller_Steinberg}*{Subsection 2.2} for the groupoid correspondence construction and \cite{Deaconu}*{Section 5}
 (or Definition~\ref{def:inverse_semigroup_ssa} below) for the explicit description of $S(\Gr,E)$.
We denote by  $S_{0}(\Gr,E)$  the kernel of a natural $\Z$-valued $1$-cocyle on  $S(\Gr,E)$, and 
we let $S_{00}(\Gr,E)$ be the smallest wide inverse subsemigroup of $S(\Gr,E)$ containing $\Gr$.
This yields a sequence of wide inverse subsemigroups  
\[
S_{00}(\Gr,E)\subseteq S_{0}(\Gr,E)\subseteq S(\Gr,E).
\]

We denote by  $\Gg(\Gr,E)$  the tight  groupoid of $S(\Gr,E)$. Its unit space is the boundary path space $\partial E$ of the graph.
The  tight groupoids of  $S_{00}(\Gr,E)$ and $S_{0}(\Gr,E)$ can be naturally identified  with open subgroupoids of $\Gg(\Gr,E)$, and so 
we have a corresponding sequence of wide open subgroupoids 
\[
\Gg_{00}(\Gr,E)\subseteq \Gg_{0}(\Gr,E)\subseteq \Gg(\Gr,E).
\]
In fact, $\Gg_{00}(\Gr,E)$ coincides with the groupoid denoted by $\mathcal{H}_0$ in \cite{Miller_Steinberg}*{Subsection 2.4}.
The universal groupoids associated to the inverse semigroups $S_{00}(\Gr,E)\subseteq S_{0}(\Gr,E)\subseteq S(\Gr,E)$ also form a sequence of open wide subgroupoids 
\[
\widetilde{\Gg}_{00}(\Gr,E)\subseteq \widetilde{\Gg}_0(\Gr,E)\subseteq  \widetilde{\Gg}(\Gr,E).
\]
Their common unit space is the path space $E^{\leq \infty}$ of the graph.

The analysis of tight groupoids is facilitated by the results of \cite{Exel-Pardo:Tight}, in which the key properties of the groupoids are explained in terms of the inverse semigroups.
Following \cite{BKM2}, we give names to the corresponding conditions for inverse semigroups, and we extend the list with \emph{topological freeness} (see Definition~\ref{def:InverseSemigroupSimplicityProperties}).
Topological freeness for   \'etale groupoids was introduced in \cite{Kwasniewski-Meyer:Essential}, and for non-Hausdorff  groupoids  it is a weaker and more suitable condition than effectiveness (cf. Remark \ref{rem:effectiveness_no_good}). The inverse semigroup characterisation of Hausdorffness of the universal groupoid was given in  \cite{Steinberg0} and the corresponding inverse semigroup was called \emph{Hausdorff}  in \cite{Steinberg_Szakacs}. We were not able to find a reasonable condition on the inverse semigroup characterising topological freeness of the universal groupoid.

Combining both inverse semigroup analysis and direct study of groupoids (Sections~\ref{sect:inverse_semigroup} and \ref{sec:self-similar_groupoid}, respectively)  
we establish conditions on  $(\Gr,E)$ that characterise a number of   fundamental properties of the associated groupoids.
They generalise and improve upon a number of results in this direction in various papers. 
The following conditions are phrased in standard terminology established in \cites{Exel-Pardo:Self-similar, Laca-Raeburn-Ramagge-Whittaker:Self-similar-groupoids} and in the theory of graph $C^*$-algebras:
\begin{enumerate}\label{enumerate:conditions}
\item[$\Fin$] Every $g \in \Gr$ admits at most  finitely many minimal strongly $g$-fixed paths.
\item[$\Evr$] If $g\in \Gr$ fixes every  path in $\sr(g)E^*$, then $g$ strongly fixes some path in $\sr(g)E^*$.
\item[$\Cyc$] For all $g\in \Gr$, every $g$-cycle  has an entrance.
\item[$\Rec$] If $g\in \Gr$ fixes a path $\mu$ whose source is an finite receiver, then $\mu$ is strongly $g$-fixed.
\item[$\Min$] There are no nontrivial hereditary, saturated, $\Gr$-invariant subsets of $E^0$.
\item[$\Con$] For every $v\in E^0$ there exists $\mu\in vE^*$ such that $\sr(\mu)$ lies on a $g$-cycle with an entrance.
\end{enumerate}

Their relationship with properties of the associated groupoids is summarised  in  Table \ref{table:tight groupoid}, where  $*=\Space,0,00$ and $\Space$ means \emph{no symbol} (an empty space).

\begin{table*}[hbtp]
\centering

\begin{tabular}{lll}
\toprule
\multicolumn{1}{c}{\textbf{Self-similar action}} & \multicolumn{1}{c}{\textbf{Inverse semigroup}} & \multicolumn{1}{c}{\textbf{Groupoid}} \\
\midrule
$\Fin$&  \begin{tabular}{l} $S_{*}(\Gr,E)$ is closed 
\\
\midrule[0.1pt]
$S_{*}(\Gr,E)$ is Hausdorff
\end{tabular}
&  \begin{tabular}{l} $\Gg_*(\Gr,E)$ is Hausdorff \qquad
\\
\midrule[0.1pt]
$\widetilde{\Gg}_*(\Gr,E)$ is Hausdorff
\end{tabular}  \\ 
\midrule 
$\Evr$ + $\Cyc$ & \, $S(\Gr,E)$ is topologically free &\, $\Gg(\Gr,E)$ is topologically free \\
\midrule 
$\Evr$ + $\Rec$ & \qquad \qquad --------- & \, $\widetilde{\Gg}_*(\Gr,E)$ is topologically free \\
\midrule 
$\Evr$& \begin{tabular}{l}
 $S(\Gr,E)$ is quasi-fundamental
\\
\midrule[0.1pt]
 $S_{0}(\Gr,E)$ is topologically free
\\
\midrule[0.1pt]
$S_{00}(\Gr,E)$ is topologically free
\end{tabular}  & 
\begin{tabular}{l} $\Gg_0(\Gr,E)$ is topologically free
\\
\midrule[0.1pt]
$\Gg_{00}(\Gr,E)$ is topologically free
\end{tabular}
 \\
\midrule 
$\Min$ & \, $S(\Gr,E)$ is minimal & \, $\Gg(\Gr,E)$ is minimal \\
\midrule 
\multirow{2}{*}{$\Con$} & \, $S(\Gr,E)$ is (strongly) & \, $\Gg(\Gr,E)$ is locally contracting \\
 & \, locally contracting & \, with respect to $S$ \\
\bottomrule
\end{tabular}
\vspace{5pt}
\caption{Properties of self-similar actions, their inverse semigroups and  groupoids.}
\label{table:tight groupoid}
\end{table*}

Table \ref{table:tight groupoid} shows that
Hausdorffness of any of the considered groupoids  is equivalent to Hausdorffness of all of them (see Proposition \ref{prop:Hausdorff_extended_groupoid} and Corollary \ref{cor:Hausdorff_equiv_closed}) and holds if and only if $(\Gr,E)$ satisfies $\Fin$. This means that closedness of any inverse semigroup $S_*(\Gr,E)$ implies Hausdorfness of all of them, which is a strong property in that all their  actions with clopen domains yield Hausdorff transformation groupoids  (see Remark \ref{rem:Hausdorff_vs_closed}).
As far as we are aware conditions $\Evr$ and $\Rec$ have not appeared in previous papers; in fact, condition  $\Evr$ appears in disguise in \cite{Exel-Pardo:Self-similar}*{Theorem 18.8(2)(b)} and this is the only instance we know of. 
There might be several reasons for that. Firstly, for faithful self-similar actions, condition $\Evr$ is always fulfilled. Secondly,  most authors have only studied the tight groupoid $\Gg(\Gr,E)$. Thirdly, they tried to characterise its effectiveness rather than topological freeness.
We  characterise effectiveness of $\Gg(\Gr,E)$ in full generality (see Theorem~\ref{thm:effective_tight_groupoid}) and show  that, in general, it is strictly stronger than the related inverse semigroup condition introduced in \cite{Exel-Pardo:Tight} (see Example~\ref{ex:infinitely_many_edges} and Corollary~\ref{cor:effectiveness_fails}).  
Characterisation of topological freeness is easier and more natural. We view it as   manifestation of the fact that, for non-Hausdorff groupoids, topological freeness is the relevant condition, not effectiveness.
Topological freeness for the universal groupoids is equivalent to topological freeness of any of its core subalgebras. For tight groupoids, topological freeness of core subgroupoids is much weaker and is equivalent to that $S(\Gr,E)$ is quasi-fundamental in the sense of \cite{Steinberg_Szakacs}  (see Theorem \ref{thm:topological_freeness_self_similar_transformation_groupoids}). Condition $\Min$, equivalent to minimality of $\Gg(\Gr,E)$, can be also phrased as cofinality of $(\Gr,E)$ (see Definition \ref{def:Zappa--Szep_preorder} and Proposition \ref{prop:groupoid_minimality}).

By \cite{Miller_Steinberg}*{Theorem 2.18}, if any of the groupoids $\Gg_{00}(\Gr,E)\subseteq \Gg_{0}(\Gr,E)\subseteq \Gg(\Gr,E)$ is amenable,
then all of them are. This happens, for instance, when $\Gr$ is amenable or, more generally, when the canonical action $\Gr \curvearrowright \partial E$ is amenable.
It also happens when $(\Gr,E)$ is a contracting self-similar action. We add to this by showing that 
$$
\Gr \text{ is amenable  }\,\, \Longleftrightarrow\,\,  \widetilde{\Gg}(\Gr,E) \text{ is amenable}.
$$
We deduce it from  our $C^*$-correspondence analysis and  results of \cites{Brix-Gonzales-Hume-Li:Hausdorff_covers, Buss_Martinez} (see Theorem~\ref{thm:Toeplitz_nuclearity}), though it probably could be proved directly as in \cite{Miller_Steinberg}.

We show that all non-Hausdorff points of the universal groupoid $\widetilde{\Gg}(\Gr,E)$ belong, in fact,  to the tight groupoid $\Gg(\Gr,E)$, 
and we describe all elements of $\Gg(\Gr,E)$ that cannot be separated from a given point in the unit space 
(see Proposition~\ref{prop:non-Hausdorff_points_description}). This, in particular, characterises when the considered groupoids are \emph{finitely non-Hausdorff}: each point cannot be separated from at most finitely many other points. For an \'etale  groupoid $\Gg$ this is equivalent to saying that the source map restricted to the closure of $\Gg^0$ is finite-to-one. 
As a consequence we show  that
if $(\Gr,E)$ is contracting, as defined in \cite{BBGHSW24}, then $\widetilde{\Gg}(\Gr,E)$ and $\Gg(\Gr,E)$ are finitely non-Hausdorff (see Corollary~\ref{cor:contracting_non_Hausdorff}). 

These results are important since \cite{Brix-Gonzales-Hume-Li:Hausdorff_covers}*{Theorem C} says that for every finitely non-Hausdorff 
\'etale groupoid $\Gg$, the $C^*$-algebraic singular ideal vanishes if and only if the algebraic singular ideal vanishes.
 Efficient characterisations of the latter are now known, see \cite{Hume}. 
We  note that the result of \cite{Brix-Gonzales-Hume-Li:Hausdorff_covers}
 holds in a much more general setting that can be applied to the reduced and essential groupoid Banach algebras introduced in \cite{BKM}. 
The corresponding result can be formulated as follows  
 (see Theorem~\ref{thm:vanishing_of_the_singular_ideal} and Corollary~\ref{cor:vanishing_of_singulars}):
\begin{thmx}[Vanishing of singular ideals]\label{thmx:singular_ideal}
Let $\Gg$ be finitely non-Hausdorff \'etale groupoid with locally compact Hausdorff unit space.  
Let $\mathfrak{C}_c(\Gg)$ be the set of quasi-continuous functions on $\Gg$ and let $\mathfrak{M}_0(\Gg )$ consist
of functions with meager strict support that are in the uniform closure of  $\mathfrak{C}_c(\Gg)$.
The following conditions are equivalent:
\begin{enumerate}
\item $\mathfrak{M}_0(\Gg )=\{0\}$;
\item\label{enux:singular_ideal2} $\mathfrak{C}_c(\Gg)\cap \mathfrak{M}_0(\Gg )=\{0\}$;
\item every reduced groupoid Banach algebra of $\Gg$ is essential; 
\item some reduced groupoid Banach algebra of $\Gg$ is essential.
\end{enumerate}
Condition \ref{enux:singular_ideal2}  is characterised in terms of bisections in \cite{Brix-Gonzales-Hume-Li:Hausdorff_covers} and 
in terms of isotropy groups of $\Gg$ in \cite{Hume}, see condition $\Hum$ on page~\pageref{Condition:Hume} below.
\end{thmx}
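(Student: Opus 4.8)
The plan is to prove the cycle of implications $(1)\Rightarrow(3)\Rightarrow(4)\Rightarrow(2)\Rightarrow(1)$. The three implications $(1)\Rightarrow(3)$, $(3)\Rightarrow(4)$ and $(4)\Rightarrow(2)$ are soft and hold for an arbitrary non-Hausdorff \'etale groupoid; only the final implication $(2)\Rightarrow(1)$ uses the finite non-Hausdorffness hypothesis, and that is where the real work lies.

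For the operator-algebraic implications I would invoke the framework of reduced and essential groupoid Banach algebras from \cite{BKM}. The key facts are: (i) every reduced groupoid Banach algebra $A$ carries an injective \emph{restriction} map $j\colon A\to\ell^\infty(\Gg)$ that is contractive for the uniform norm, restricts to the identity on $\mathfrak{C}_c(\Gg)$, and has image inside the uniform closure of $\mathfrak{C}_c(\Gg)$ (since the reduced norm dominates the uniform norm); and (ii) by construction the singular ideal $J(A)$, the kernel of the quotient $A\onto A_{\ess}$, consists precisely of those $a\in A$ for which $j(a)$ has meager strict support. Granting this, $(1)\Rightarrow(3)$ is immediate: if $\mathfrak{M}_0(\Gg)=\{0\}$ then $j\bigl(J(A)\bigr)\subseteq\mathfrak{M}_0(\Gg)=\{0\}$ for every $A$, and injectivity of $j$ gives $J(A)=\{0\}$, i.e. $A=A_{\ess}$. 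The implication $(3)\Rightarrow(4)$ is trivial, as there is always at least one reduced groupoid Banach algebra (for instance the reduced groupoid $\Cst$-algebra). For $(4)\Rightarrow(2)$, if some reduced $A$ satisfies $J(A)=\{0\}$, then any $f\in\mathfrak{C}_c(\Gg)\cap\mathfrak{M}_0(\Gg)$ defines $a_f\in A$ with $j(a_f)=f$ of meager strict support, so $a_f\in J(A)=\{0\}$, whence $f=j(a_f)=0$.

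The heart of the matter is $(2)\Rightarrow(1)$, which I would prove by contraposition following the topological strategy of \cite{Brix-Gonzales-Hume-Li:Hausdorff_covers}*{Theorem C}. Suppose $f\in\mathfrak{M}_0(\Gg)$ is nonzero and pick $\gamma_0\in\Gg$ with $f(\gamma_0)\neq0$. Since $f$ lies in the uniform closure of $\mathfrak{C}_c(\Gg)$ and has meager strict support, $\gamma_0$ must be a non-Hausdorff point: at a Hausdorff point the approximants would converge to a genuinely continuous limit, which, having meager support, would vanish at $\gamma_0$. By finite non-Hausdorffness and the explicit description in Proposition~\ref{prop:non-Hausdorff_points_description}, the set $\{\gamma_1,\dots,\gamma_k\}$ of points that cannot be separated from $\gamma_0$ is finite. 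Fixing an approximant $f_n\in\mathfrak{C}_c(\Gg)$ with $\lVert f_n-f\rVert_\infty$ small and a Hausdorff open bisection $U\ni\gamma_0$, the limit $f$ vanishes on $U$ off the finite non-separable set, so the entire discontinuity of $f$ is concentrated at $\gamma_0,\dots,\gamma_k$. Localising $f_n$ by a bump function supported near $\gamma_0$ and subtracting the continuous contributions attached to each $\gamma_i$ (a finite correction) produces $g\in\mathfrak{C}_c(\Gg)$ with nowhere-dense strict support and $g(\gamma_0)\neq0$; thus $g\in\bigl(\mathfrak{C}_c(\Gg)\cap\mathfrak{M}_0(\Gg)\bigr)\setminus\{0\}$, contradicting $(2)$.

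The main obstacle is exactly this last construction: manufacturing an \emph{exact} singular function in $\mathfrak{C}_c(\Gg)$ out of an \emph{approximate} one. Finite non-Hausdorffness is indispensable here, as it reduces the Hausdorffness defect of the limit to a finite sum of pointwise defects that can be cancelled inside the convolution algebra. Two points demand care: the approximation is only uniform, not in the stronger reduced norm, so every manipulation must be performed at the level of functions and kept within $\mathfrak{C}_c(\Gg)$; and one must check that the resulting $g$ is genuinely singular and nonzero rather than accidentally continuous. The payoff is that the argument is purely topological, using no $\Cst$-specific structure, which is precisely why it transfers from the setting of \cite{Brix-Gonzales-Hume-Li:Hausdorff_covers} to the uniform closure governing all the $L^P$-operator algebras considered here.
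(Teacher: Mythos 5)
Your three soft implications $(1)\Rightarrow(3)\Rightarrow(4)\Rightarrow(2)$ are correct and essentially coincide with the paper's argument: both rest on the sandwich $\mathfrak{C}_c(\Gg)\cap\mathfrak{M}_0(\Gg)\subseteq J^{B}_{\sing}=j^{-1}(\mathfrak{M}_0(\Gg))\subseteq\mathfrak{M}_0(\Gg)$ valid for any reduced groupoid Banach algebra $B$ with its injective contractive map $j\colon B\to\mathfrak{C}_0(\Gg)$.

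The implication $(2)\Rightarrow(1)$, which you rightly identify as the heart of the matter, has a genuine gap, and in fact two problems. First, your intermediate claim that ``the limit $f$ vanishes on $U$ off the finite non-separable set'' is false: what is true (when $\Gg_{\Hau}$ is comeager) is that $f\in\mathfrak{M}_0(\Gg)$ vanishes on $\Gg_{\Hau}$, but the set of non-Hausdorff points inside a single bisection $U$ can be infinite even for a finitely non-Hausdorff groupoid --- finite non-Hausdorffness bounds the cardinality of each set $\{\gamma\}_{\Gg}$ (equivalently, the fibres of $\sr$ restricted to $\overline{X}$), not the number of non-Hausdorff points. So the discontinuity of $f$ need not be concentrated at the finitely many points non-separated from $\gamma_0$. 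Second, and more seriously, the step ``subtracting the continuous contributions attached to each $\gamma_i$'' is not a construction: $f$ is only a \emph{uniform limit} of elements of $\mathfrak{C}_c(\Gg)$ and carries no canonical decomposition into continuous pieces on bisections. Producing a nonzero element of $\mathfrak{C}_c(\Gg)\cap\mathfrak{M}_0(\Gg)$ --- that is, a finite sum of functions in $\Cont_c(U_i)$, $U_i\in\Bis(\Gg)$, whose strict support has empty interior --- is precisely the negation of condition $\mathcal{S}_0$ and is the entire content of \cite{Brix-Gonzales-Hume-Li:Hausdorff_covers}*{Theorem 4.7}, whose proof passes through the Hausdorff cover and the analysis of limits of primitive nets in the isotropy groups. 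The paper does not redo this construction; it observes that the proof of \cite{Brix-Gonzales-Hume-Li:Hausdorff_covers}*{Theorem 4.7} applies verbatim with $J^{2}_{\sing}$ replaced by $\mathfrak{M}_0(\Gg)$, because that proof only uses that $\mathfrak{M}_0(\Gg)$ is stable under convolution with elements of $\Cont_c(U)$ for $U\in\Bis(\Gg)$, together with a mild generalisation of \cite{Brix-Gonzales-Hume-Li:Hausdorff_covers}*{Lemma 3.12}. To close your gap you would either need to carry out that construction in full or, as the paper does, reduce to the cited result.
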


\subsection*{Twisted \texorpdfstring{$C^*$}{C*}-algebras of self-similar actions} 
Let $(\Gr,E)$ be a self-similar action.
We define a \emph{twist} of  $(\Gr,E)$ to be a pair $\sigma = (\sigma_{\Gr},\sigma_{\bowtie})$, where $\sigma_{\Gr}\colon \Gr^2 \to \T$
 is a standard groupoid $2$-cocycle on $\Gr$ in the usual sense, 
and $\sigma_{\bowtie} \colon \Gr * E^{1}\to \T$ is a map that relates the twist $\sigma_{\Gr}$ with the action  (see Definition~\ref{dfn:ss_cocycle}). 
We call the triple  $(\Gr,E,\sigma)$ a \emph{twisted self-similar action}.
As we explain, such twists exhaust the corresponding cohomology classes of the associated Zappa--Sz\'ep product category.

A \emph{representation} of  $(\Gr,E,\sigma)$ in a $C^*$-algebra $B$ is a pair $(W,T)$ where
$W \colon \Gr \to B$ is a $\sigma_{\Gr}$-twisted unitary representation of $\Gr$, and $T\colon E^1\to B$ is such that 
$T_e^*T_e=W_{\sr(e)}$ and $\sum_{f\in F}T_f T_f^*\leq W_{\rg(e)}$ for all $e\in E^{1}$ and finite $F\subseteq E^1$, and 
$$
W_g T_e = \sigma_{\bowtie}(g,e)T_{g e} W_{g|_e}, \qquad  \text{ for all } (g,e) \in \Gr * E^1. 
$$
We say that a representation is   \emph{(Cuntz--Krieger) covariant} if 
\begin{equation*}\label{eq:CK}
W_v=\sum_{e \in \rg^{-1}(v)} T_e T_e^*,\qquad  \text{ for all }v\in E^{0}_{\reg}, 
\end{equation*}
where $E^{0}_{\reg}$ is the set of regular vertices.
We propose to call a representation $(W,T)$ \emph{Coburn--Toeplitz covariant} if it satisfies
the extreme opposite of the above condition:
$$
W_v\neq\sum_{e \in \rg^{-1}(v)} T_e T_e^*,\qquad  \text{ for all }v\in E^{0}_{\reg}.
$$
We say that $(W,T)$ is \emph{nonzero} if all projections $W_{v}$, $v\in V$, are nonzero.

There is a universal representation  of $(\Gr,E,\sigma)$ and it is Coburn--Toeplitz covariant. 
We also concretely define the reduced and essential Coburn--Toeplitz covariant representations of $(\Gr,E,\sigma)$.
We denote the $C^*$-algebras generated by them by $\TT(\Gr,E,\sigma)$, $\TT_{\red}(\Gr,E,\sigma)$ and $\TT_{\ess}(\Gr,E,\sigma)$, respectively.
Similarly, we define universal, reduced, and essential Cuntz--Krieger covariant representations of $(\Gr,E,\sigma)$, and we denote the associated $C^*$-algebras by $\OO(\Gr,E,\sigma)$, $\OO_{\red}(\Gr,E,\sigma)$ and $\OO_{\ess}(\Gr,E,\sigma)$.
Thus, we have a commuting diagram of surjective $*$-homomorphisms
$$
\begin{tikzcd}  [row sep=small]
\TT(\Gr,E,\sigma)\arrow[r, ->>]\arrow[d, ->>] & \TT_{\red}(\Gr,E,\sigma)\arrow[r, ->>]\arrow[d, ->>] & \TT_{\ess}(\Gr,E,\sigma)
\arrow[d, ->>] 
\\
\OO(\Gr,E,\sigma)\arrow[r, ->>] & \OO_{\red}(\Gr,E,\sigma)\arrow[r, ->>]& \OO_{\ess}(\Gr,E,\sigma) 
\end{tikzcd}
$$
Let $*=\Space,\red,\ess$ where $\Space$ means \emph{no symbol} (an empty space). 
Each algebra $\TT_{*}(\Gr,E,\sigma)$ naturally contains  a copy of the Toeplitz graph $C^*$-algebra $\TT(E)$ of the graph $E$ and so it also contains the  diagonal subalgebra  $\Cont_0(E^{\leq \infty})$.
Similarly, each $\OO_{*}(\Gr,E,\sigma)$ contains a copy of the graph $C^*$-algebra $\OO(E)$ and so also it contains the diagonal subalgebra 
$\Cont_0(\partial E)$. 

The algebras $\TT_{*}(\Gr,E,\sigma)$ and $\OO_{*}(\Gr,E,\sigma)$ come equipped with natural gauge $\T$-actions,
and we denote their fixed-point subalgebras by   $\TT_{*}(\Gr,E,\sigma)_0$ and $\OO_{*}(\Gr,E,\sigma)_0$. 
The fixed-point subalgebras contain important smaller algebras $\TT_{*}(\Gr,E,\sigma)_{00}$ and $\OO_{*}(\Gr,E,\sigma)_{00}$ that are generated by  representations of $(\Gr,\sigma_{G})$ and the diagonal $C^*$-algebras. The introduced subalgebras form the commuting diagram
$$
\begin{tikzcd}  [row sep=14pt, column sep = small]
\Cont_0(E^{\leq \infty})\arrow[r, phantom, sloped, "\subseteq"] \arrow[d, ->>] & \TT_{*}(\Gr,E,\sigma)_{00}\arrow[r, phantom, sloped, "\subseteq"]\arrow[d, ->>] 
& \TT_{*}(\Gr,E,\sigma)_{0}
\arrow[r, phantom, sloped, "\subseteq"]\arrow[d, ->>] & \TT_{*}(\Gr,E,\sigma)\arrow[d, ->>] 
\\
\Cont_0(\partial E)\arrow[r, phantom, sloped, "\subseteq"]  & \OO_{*}(\Gr,E,\sigma)_{00}\arrow[r, phantom, sloped, "\subseteq"]
& \OO_{*}(\Gr,E,\sigma)_{0}
\arrow[r, phantom, sloped, "\subseteq"] & \OO_{*}(\Gr,E,\sigma)
\end{tikzcd},
$$
where the vertical homomorphisms are induced by the Cuntz--Krieger relations.

It is known that $C^*$-algebras associated to groupoid correspondences are modelled by relative Cuntz--Pimsner algebras, see \cite{Meyer}. For
discrete groupoid correspondences this is described in detail \cite{Miller_Steinberg}*{Subsection 2.2} where in fact 
the (untwisted) algebras $\TT(\Gr,E)$ and $\OO(\Gr,E)$ are defined as the Toeplitz and a relative Cuntz--Pimsner algebra of 
a $C^*$-correspondence $X(\Gr,E)$. We generalise this to the twisted case, and analyse Cuntz--Pimsner models for the reduced and essential algebras (see Corollary~\ref{cor:Cuntz--Pimsner_picture}).
The most interesting consequence of this analysis is the  following result (see Theorem~\ref{thm:Toeplitz_nuclearity}).
\begin{thmx}[Nuclearity I]\label{thmx:nuclearity}
Let $(\Gr,E)$ be a  self-similar groupoid action with a twist $\sigma = (\sigma_{\Gr},\sigma_{\bowtie})$. 
The following are equivalent:
\begin{enumerate}
	\item the discrete groupoid $\Gr$ is amenable;
	\item $\TT(\Gr,E,\sigma)$ is nuclear;
	\item $\TT_{\red}(\Gr,E,\sigma)$ is nuclear;
	\item $C^*(\Gr,\sigma_{\Gr})$ is nuclear;
	\item $C^*_{\red}(\Gr,\sigma_{\Gr})$ is nuclear.
\end{enumerate}
Assume that these equivalent conditions hold. Then  $\TT(\Gr,E,\sigma)=\TT_{\red}(\Gr,E,\sigma)$ and $C^*(\Gr,\sigma_{\Gr})=C^*_{\red}(\Gr,\sigma_{\Gr})$, 
and these algebras $KK$-equivalent. For all $*=\Space,\, \red,\,\ess$ and $**=\Space,\,0$ the 
algebras $\TT_{*}(\Gr,E,\sigma)_{**}$ and $\OO_{*}(\Gr,E,\sigma)_{**}$  
are nuclear. If, in addition, $E$ is row-finite, then 
$$
\TT(\Gr,E,\sigma)=\TT_{\red}(\Gr,E,\sigma)=\TT_{\ess}(\Gr,E,\sigma).
$$
\end{thmx}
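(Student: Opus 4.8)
The plan is to run everything through the $C^*$-correspondence picture of Corollary~\ref{cor:Cuntz--Pimsner_picture}, under which $\TT(\Gr,E,\sigma)$ and $\TT_{\red}(\Gr,E,\sigma)$ are the Toeplitz algebras of a single $C^*$-correspondence $X(\Gr,E,\sigma)$ over the coefficient algebras $C^*(\Gr,\sigma_{\Gr})$ and $C^*_{\red}(\Gr,\sigma_{\Gr})$, respectively. The backbone is the general principle that the Toeplitz algebra of a $C^*$-correspondence is nuclear exactly when its coefficient algebra is. The easy direction is that the coefficient algebra $A$ sits inside $\TT_X$ as the corner $P_0\TT_X P_0$ cut out by the Fock projection onto the degree-zero summand, so nuclearity of $\TT_X$ descends to the hereditary subalgebra $A$. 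For the converse, nuclearity of $A$ propagates to $\TT_X$ by the standard structure theory of Toeplitz--Pimsner algebras. Applying this to both the full and the reduced pictures yields the equivalences of (2) with (4) and of (3) with (5).

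To connect these with amenability I would invoke the theory of twisted discrete groupoid $C^*$-algebras: for a discrete groupoid $\Gr$, amenability is equivalent to nuclearity of $C^*_{\red}(\Gr,\sigma_{\Gr})$ and forces $C^*(\Gr,\sigma_{\Gr})=C^*_{\red}(\Gr,\sigma_{\Gr})$. Since the twisting group $\T$ is amenable, the associated twisted Fell bundle over $\Gr$ is amenable precisely when $\Gr$ is, so the twist does not disturb these equivalences; the twisted analogue is supplied by \cite{Buss_Martinez}. This closes the ring (1)$\Leftrightarrow$(4)$\Leftrightarrow$(5), and combined with the previous paragraph gives the full cycle of equivalences together with the coincidence $C^*(\Gr,\sigma_{\Gr})=C^*_{\red}(\Gr,\sigma_{\Gr})$. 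Functoriality of the Toeplitz construction upgrades the latter to $\TT(\Gr,E,\sigma)=\TT_{\red}(\Gr,E,\sigma)$, and the $KK$-equivalence of $\TT(\Gr,E,\sigma)$ with $C^*(\Gr,\sigma_{\Gr})$ is Pimsner's theorem that a Toeplitz algebra is $KK$-equivalent to its coefficient algebra.

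For the nuclearity of the core subalgebras I would argue groupoid-theoretically. Under amenability of $\Gr$, \cite{Miller_Steinberg}*{Theorem 2.18} propagates amenability to all of the groupoids $\widetilde{\Gg}_{*}(\Gr,E)$ and $\Gg_{*}(\Gr,E)$, and each of $\TT_{*}(\Gr,E,\sigma)_{**}$ and $\OO_{*}(\Gr,E,\sigma)_{**}$ is realised as a twisted groupoid algebra, or a Toeplitz analogue thereof, of one of these amenable groupoids, hence is nuclear. Finally, the row-finite conclusion $\TT_{\red}(\Gr,E,\sigma)=\TT_{\ess}(\Gr,E,\sigma)$ amounts to the vanishing of the singular ideal of the universal groupoid $\widetilde{\Gg}(\Gr,E)$, which for row-finite $E$ is precisely the separate Toeplitz--Pimsner result announced in the abstract; combined with $\TT=\TT_{\red}$ this gives the triple equality.

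The step I expect to be the main obstacle is the reduced half of the correspondence nuclearity criterion in the twisted, genuinely non-Hausdorff regime: one must verify that $\TT_{\red}(\Gr,E,\sigma)$ really is the Toeplitz algebra of $X(\Gr,E,\sigma)$ over the \emph{reduced} coefficient algebra, and that the nuclearity transfer survives passage through the reduced Fell-bundle and crossed-product constructions, where the classical Hausdorff-groupoid arguments must be replaced by the tools of \cite{Brix-Gonzales-Hume-Li:Hausdorff_covers} and \cite{Buss_Martinez}.
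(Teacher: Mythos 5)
Your overall strategy coincides with the paper's: the Cuntz--Pimsner model of Corollary~\ref{cor:Cuntz--Pimsner_picture}, Katsura's criterion (\cite{Katsura}*{Theorem 7.2}) that a Toeplitz algebra is nuclear exactly when its coefficient algebra is, the equivalence of amenability of $\Gr$ with nuclearity of $C^*_{\red}(\Gr,\sigma_{\Gr})$ (the paper quotes \cite{Takeishi} here), and the Pimsner/Katsura $KK$-equivalence of a Toeplitz algebra with its coefficients. The equivalences (1)$\Leftrightarrow$(2)$\Leftrightarrow$(4) and the coincidence $C^*(\Gr,\sigma_{\Gr})=C^*_{\red}(\Gr,\sigma_{\Gr})$ go through as you describe. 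But the difficulty you flag at the end is a genuine gap, not a verification: by Corollary~\ref{cor:Cuntz--Pimsner_picture}\ref{thm:Cuntz--Pimsner_picture1}, $\TT_{\red}(\Gr,E,\sigma)$ is \emph{not} the Toeplitz algebra of $X(\Gr,E,\sigma)$ over $C^*_{\red}(\Gr,\sigma_{\Gr})$; it is a \emph{relative} Cuntz--Pimsner algebra over an exotic completion $C^*_{\tc,\red}(\Gr,\sigma_{\Gr})$ sitting between the full and reduced algebras, with a possibly nonzero covariance ideal. So (3)$\Leftrightarrow$(5) cannot be read off from a Toeplitz-over-reduced-coefficients picture. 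The paper circumvents this by observing that conditions (1), (2), (4) are independent of the twist (because (1) is), reducing to the untwisted case where $\TT(\Gr,E)\cong C^*(\widetilde{\Gg}(\Gr,E))$ and the non-Hausdorff results of \cite{Buss_Martinez} and \cite{Brix-Gonzales-Hume-Li:Hausdorff_covers} apply to the reduced groupoid algebra; and in the amenable case the exotic completion collapses, so that $\TT(\Gr,E,\sigma)$ and $\TT_{\red}(\Gr,E,\sigma)$ are literally the same Toeplitz algebra $\TT(X(\Gr,E,\sigma))$, which gives (3).

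Two further steps need repair. For the core and Cuntz--Krieger algebras you propose to quote nuclearity of twisted groupoid algebras of amenable groupoids, but for non-Hausdorff twisted groupoids that implication is exactly what the paper records as open (Problem~\ref{problem:nuclearity}); the paper instead obtains every $\TT_{*}(\Gr,E,\sigma)_{**}$ and $\OO_{*}(\Gr,E,\sigma)_{**}$ from the single nuclear algebra $\TT(X(\Gr,E,\sigma))$, using Katsura's Theorem 7.2 for the gauge fixed-point algebra and the fact that nuclearity passes to quotients for everything else. Finally, your derivation of $\TT_{\red}(\Gr,E,\sigma)=\TT_{\ess}(\Gr,E,\sigma)$ for row-finite $E$ from ``the result announced in the abstract'' is circular, since that result (Corollary~\ref{cor:row_finiteness_and_toeplitz_groupoids}) is deduced \emph{from} this theorem; the paper proves the equality directly from Corollary~\ref{cor:Cuntz--Pimsner_picture}\ref{thm:Cuntz--Pimsner_picture2}, which for row-finite $E$ exhibits $\TT_{\ess}(\Gr,E,\sigma)$ as a relative Cuntz--Pimsner algebra over another exotic completion that likewise collapses when $\Gr$ is amenable.
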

The automatic equality $\TT(\Gr,E,\sigma)=\TT_{\ess}(\Gr,E,\sigma)$   for $\Gr$ amenable and row-finite $E$  in Theorem~\ref{thmx:nuclearity} is quite unexpected: we  show by example that it may fail  when  $E$ is not row-finite.   It can be interpreted as automatic vanishing of the $C^*$-algebraic singular ideal in the universal groupoid associated to $(\Gr,E)$, and could possibly be explained using the machinery developed recently in \cite{Hume}.

  It seems difficult to  obtain a version of the above theorem for $\OO(\Gr,E,\sigma)$ using relative Cuntz--Pimsner picture,
as we have little knowledge about the corresponding covariance ideal (cf. Problem~\ref{problem:embed_into_OO}). The existing groupoid tools allow us  to cover 
only the untwisted case or Hausdorff case (cf. Problem \ref{problem:nuclearity}). Combining recent results from  \cites{Miller_Steinberg, Brix-Gonzales-Hume-Li:Hausdorff_covers, Buss_Martinez}  and our groupoid models we get the following result (see Theorem~\ref{thm:Cuntz--Pimsner_nuclearity} and Remark~\ref{rem:Cuntz--Pimsner_nuclearity}).
\begin{thmx}[Nuclearity II]\label{thmx:nuclearity2}
Let $(\Gr,E, \sigma)$ be a twisted self-similar groupoid action. Assume that either $\Fin$ holds or the twist is trivial. 
The following are equivalent:
\begin{enumerate}
	\item\label{itm:nuclearity2_first} the ample groupoid $\Gg_{00}(\Gr,E)$ is amenable; 
	\item $\OO(\Gr,E,\sigma)$ is nuclear;
	\item $\OO_{\red}(\Gr,E,\sigma)$ is nuclear;
	\item $\OO(\Gr,E,\sigma)_{00}$ is nuclear;
	\item\label{itm:nuclearity2_last}  $\OO_{\red}(\Gr,E,\sigma)_{00}$ is nuclear.
\end{enumerate}
If any of these conditions hold, then for all $*=\Space,\,0,\,00$  we have  $\OO(\Gr,E,\sigma)_{*}=\OO_{\red}(\Gr,E,\sigma)_{*}$, and these algebras,
as well as  $\OO_{\ess}(\Gr,E,\sigma)_{*}$, are nuclear. 
If the self-similar action $(\Gr,E)$ is contracting,
then \ref{itm:nuclearity2_first}--\ref{itm:nuclearity2_last} hold. 
\end{thmx}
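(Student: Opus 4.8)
The plan is to realise all the algebras in the statement as twisted groupoid \(\Cst\)-algebras of the three nested ample groupoids \(\Gg_{00}(\Gr,E)\subseteq \Gg_0(\Gr,E)\subseteq \Gg(\Gr,E)\) and to funnel every condition through amenability of this tower. Using the groupoid models built earlier, I would identify, for \(**=\Space,0,00\), the algebra \(\OO(\Gr,E,\sigma)_{**}\) (resp. \(\OO_{\red}(\Gr,E,\sigma)_{**}\), \(\OO_{\ess}(\Gr,E,\sigma)_{**}\)) with the full (resp. reduced, essential) twisted \(\Cst\)-algebra of \(\Gg(\Gr,E)\), \(\Gg_0(\Gr,E)\), \(\Gg_{00}(\Gr,E)\), where the groupoid twist is induced by \(\sigma\). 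The crucial structural input is \cite{Miller_Steinberg}*{Theorem 2.18}: amenability of any one of the three groupoids is equivalent to amenability of all three. Hence condition \ref{itm:nuclearity2_first} is really a statement about the whole tower.

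The hypothesis that either \(\Fin\) holds or the twist is trivial is exactly what makes the requisite groupoid \(\Cst\)-theory available in both directions. If \(\Fin\) holds, then by Table~\ref{table:tight groupoid} all three groupoids are Hausdorff, and I invoke the twisted form of the Anantharaman-Delaroche--Renault theory: for a twisted Hausdorff \'etale groupoid, amenability is equivalent to nuclearity of the reduced twisted \(\Cst\)-algebra and forces the full, reduced and essential algebras to coincide. If instead the twist is trivial, the algebras are untwisted and I would use the non-Hausdorff results of \cite{Miller_Steinberg} together with \cite{Buss_Martinez}, which supply the same two-way package: amenability \(\Rightarrow\) full \(=\) reduced \(=\) essential and nuclear, and reduced-nuclearity \(\Rightarrow\) amenability.

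With these tools the equivalences close up cleanly. For \ref{itm:nuclearity2_first} \(\Rightarrow\) everything: amenability of the whole tower gives full \(=\) reduced \(=\) essential and nuclearity for each of \(\Gg(\Gr,E)\), \(\Gg_0(\Gr,E)\), \(\Gg_{00}(\Gr,E)\), which yields all the stated equalities \(\OO(\Gr,E,\sigma)_* = \OO_{\red}(\Gr,E,\sigma)_*\) and the nuclearity of every listed algebra, with \(\OO_{\ess}(\Gr,E,\sigma)_*\) covered as a quotient of a nuclear algebra. For the converse, the full algebras surject onto the reduced ones, so (2) \(\Rightarrow\) (3) and (4) \(\Rightarrow\) (5) because quotients of nuclear \(\Cst\)-algebras are nuclear; then nuclearity of \(\OO_{\red}(\Gr,E,\sigma)=C^*_{\red}(\Gg(\Gr,E))\) forces \(\Gg(\Gr,E)\) to be amenable, while nuclearity of \(\OO_{\red}(\Gr,E,\sigma)_{00}=C^*_{\red}(\Gg_{00}(\Gr,E))\) forces \(\Gg_{00}(\Gr,E)\) to be amenable; by \cite{Miller_Steinberg}*{Theorem 2.18} either conclusion gives \ref{itm:nuclearity2_first}. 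Finally, if \((\Gr,E)\) is contracting then \(\Gg_{00}(\Gr,E)\) is amenable, as already recorded in the discussion following \cite{Miller_Steinberg}*{Theorem 2.18}, so \ref{itm:nuclearity2_first} holds automatically and every conclusion follows.

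The step I expect to be the main obstacle is the converse implication, reduced-nuclearity \(\Rightarrow\) amenability, outside the Hausdorff world: the comfortable Hausdorff dichotomy is unavailable, so one is forced onto the trivial-twist branch and must carefully combine \cite{Miller_Steinberg} and \cite{Buss_Martinez}. One must also verify that the identification of \(\OO_*(\Gr,E,\sigma)_{**}\) with the twisted algebras of the respective groupoids is compatible with the open inclusions \(\Gg_{00}(\Gr,E)\subseteq \Gg_0(\Gr,E)\subseteq \Gg(\Gr,E)\), and that the essential algebra --- where a nontrivial singular ideal can appear in the non-Hausdorff case --- is correctly handled via \cite{Brix-Gonzales-Hume-Li:Hausdorff_covers}.
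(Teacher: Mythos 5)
Your proposal is correct and follows essentially the same route as the paper: the paper proves the untwisted case (Theorem~\ref{thm:Cuntz--Pimsner_nuclearity}) by identifying the core subalgebras as reduced groupoid algebras of the tower $\Gg_{00}\subseteq\Gg_0\subseteq\Gg$, invoking \cite{Miller_Steinberg}*{Theorem 2.18} for the equivalence of amenability along the tower and \cite{Buss_Martinez}*{Theorem A}/\cite{Brix-Gonzales-Hume-Li:Hausdorff_covers}*{Theorem F} for the amenability--nuclearity dichotomy, with \cite{Miller_Steinberg}*{Corollary 2.19} for the contracting case; the $\Fin$/Hausdorff twisted branch is handled exactly as you describe in Remark~\ref{rem:Cuntz--Pimsner_nuclearity} via \cite{Takeishi}. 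No substantive differences.
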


We now explain the role of the conditions listed on page~\pageref{enumerate:conditions}. Conditions $\Evr$ and $\Cyc$ are the right generalisation of condition (L) for graphs, which guarantees 
the celebrated Cuntz--Krieger Uniqueness Theorem, see \cites{Cuntz--Krieger,Bates_Pask_Raeburn_Szymanski, FLR, Drinen_Tomforde}.
We obtain the following generalisation of this result to the twisted self-similar setting, 
which gives  true uniqueness  when $\OO_{}(\Gr,E,\sigma)=\OO_{\ess}(\Gr,E,\sigma)$ (see Theorem~\ref{thm:uniqueness_results}\ref{enu:Cuntz--Krieger uniqueness1'}).
\begin{thmx}[Cuntz--Krieger Uniqueness]\label{thmx:Cuntz--Krieger_Uniqueness}
Let $(\Gr,E,\sigma)$ be a twisted self-similar groupoid action. 
If $\Evr$ and $\Cyc$ hold, then for  every $C^*$-algebra $C^*(T,W)$ generated by a nonzero Cuntz--Krieger representation  $(W,T)$ of $(\Gr,E,\sigma)$
we have canonical epimorphisms 
$$
\OO_{}(\Gr,E,\sigma)\onto C^*(T,W)\onto \OO_{\ess}(\Gr,E,\sigma).
$$
Moreover, if either
\begin{enumerate}
\item the action of $\Gr$ on $\partial E$ is amenable and $(\Gr,E)$ satisfies $\Fin$; or
\item $(\Gr,E)$ is contracting, the twist is trivial, and $\Gg(\Gr,E)$ satisfies $\Hum$;
\end{enumerate}
then $\OO_{}(\Gr,E,\sigma)=\OO_{\ess}(\Gr,E,\sigma)$.
\end{thmx}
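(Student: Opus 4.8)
The plan is to realise everything inside the full twisted groupoid $\Cst$\nobreakdash-algebra of the tight groupoid and then run the essential groupoid uniqueness machinery. Recall that $\OO(\Gr,E,\sigma)$ is the full twisted groupoid $\Cst$\nobreakdash-algebra $\Cst(\Gg(\Gr,E),\sigma)$, that $\OO_{\ess}(\Gr,E,\sigma)$ is its essential quotient, and that under this identification the diagonal $\Cont_0(\partial E)$ is $\Cont_0$ of the unit space $\Gg(\Gr,E)^{0}=\partial E$; the canonical composite $\OO(\Gr,E,\sigma)\onto\OO_{\red}(\Gr,E,\sigma)\onto\OO_{\ess}(\Gr,E,\sigma)$ in the big diagram is the quotient map $q$ with singular kernel $J_{\ess}\defeq\ker q$. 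By Table~\ref{table:tight groupoid}, conditions $\Evr$ and $\Cyc$ together are equivalent to topological freeness of $\Gg(\Gr,E)$, so from the outset I may assume $\Gg(\Gr,E)$ is topologically free.

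First I would produce the two epimorphisms. Since $(W,T)$ is Cuntz--Krieger covariant, the universal property of $\OO(\Gr,E,\sigma)$ yields a canonical epimorphism $\phi\colon\OO(\Gr,E,\sigma)\onto\Cst(T,W)$. The crucial point is that $\phi$ is injective on $\Cont_0(\partial E)$: as $(W,T)$ is nonzero, every $W_v\neq 0$, and the relations $T_e^*T_e=W_{\sr(e)}$ together with the covariance relation force every cylinder projection $T_\mu T_\mu^*$, and every difference $T_\mu T_\mu^*-\sum_{e\in F}T_{\mu e}T_{\mu e}^*$ attached to a nonempty compact-open subset of $\partial E$, to be nonzero; since $\Cont_0(\partial E)$ is commutative and these projections detect its whole spectrum $\partial E$, this gives faithfulness on the diagonal. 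I would isolate this as a short lemma (it is the standard graph-algebra fact, valid at infinite emitters, and uses only the nonzero hypothesis, not $\Cyc$). Now invoke the essential uniqueness principle for topologically free \'etale groupoids from \cite{Kwasniewski-Meyer:Essential} (whose twisted $L^P$ form is Theorem~\ref{thm:uniqueness_results}): for topologically free $\Gg(\Gr,E)$ the singular ideal $J_{\ess}$ is the largest ideal of $\OO(\Gr,E,\sigma)$ intersecting $\Cont_0(\partial E)$ trivially. As $\ker\phi\cap\Cont_0(\partial E)=0$, maximality gives $\ker\phi\subseteq J_{\ess}$, so $q$ factors as $q=\psi\circ\phi$ with $\psi\colon\Cst(T,W)=\OO(\Gr,E,\sigma)/\ker\phi\onto\OO(\Gr,E,\sigma)/J_{\ess}=\OO_{\ess}(\Gr,E,\sigma)$. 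This produces the asserted chain $\OO(\Gr,E,\sigma)\onto\Cst(T,W)\onto\OO_{\ess}(\Gr,E,\sigma)$.

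For the moreover clause it remains to give conditions forcing $J_{\ess}=0$, i.e. $\OO(\Gr,E,\sigma)=\OO_{\ess}(\Gr,E,\sigma)$, and in each case I reduce to the chain $\OO=\OO_{\red}=\OO_{\ess}$. In case (1), condition $\Fin$ gives via Table~\ref{table:tight groupoid} that $\Gg(\Gr,E)$ is Hausdorff, and for Hausdorff \'etale groupoids the reduced and essential algebras coincide, so $\OO_{\red}(\Gr,E,\sigma)=\OO_{\ess}(\Gr,E,\sigma)$; amenability of $\Gr\curvearrowright\partial E$ makes $\Gg(\Gr,E)$ amenable, whence $\OO(\Gr,E,\sigma)=\OO_{\red}(\Gr,E,\sigma)$, and the two equalities combine. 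In case (2), contractibility gives via Corollary~\ref{cor:contracting_non_Hausdorff} that $\Gg(\Gr,E)$ is finitely non-Hausdorff, and, since the twist is trivial, Theorem~\ref{thmx:nuclearity2} gives $\OO(\Gr,E,\sigma)=\OO_{\red}(\Gr,E,\sigma)$; then $\Hum$ together with finite non-Hausdorffness lets me apply Theorem~\ref{thmx:singular_ideal} (with $\Hum$ playing the role of its condition~\ref{enux:singular_ideal2}) to conclude $\OO_{\red}(\Gr,E,\sigma)=\OO_{\ess}(\Gr,E,\sigma)$, so again $\OO(\Gr,E,\sigma)=\OO_{\ess}(\Gr,E,\sigma)$. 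The triviality of the twist in case (2) is exactly what lets $\Hum$ and Theorem~\ref{thmx:singular_ideal} apply verbatim.

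The main obstacle I expect is the essential-uniqueness step: one needs the twisted version of the detecting/maximal-ideal property for topologically free \emph{non-Hausdorff} groupoids, guaranteeing that $J_{\ess}$ really is the largest ideal trivially intersecting $\Cont_0(\partial E)$, and that topological freeness --- rather than the stronger effectiveness, which by Example~\ref{ex:infinitely_many_edges} can genuinely fail here --- is enough to drive it. Verifying that the twisted essential algebra $\OO_{\ess}(\Gr,E,\sigma)$ fits the hypotheses of \cite{Kwasniewski-Meyer:Essential}, and that faithfulness on $\Cont_0(\partial E)$ is truly forced by the nonzero condition at infinite emitters, are the two technical points that will need the most care.
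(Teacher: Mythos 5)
Your proposal is correct and follows essentially the same route as the paper: the paper also identifies $\Evr$ and $\Cyc$ with topological freeness of $\Gg(\Gr,E)$ (Theorem~\ref{thm:topological_freeness_self_similar_transformation_groupoids}), upgrades the nonzero hypothesis to faithfulness on $\Cont_0(\partial E)$ via the graph-algebra argument (Lemma~\ref{lem:isometric_on_boundary_path} together with Lemma~\ref{lem:embeddings_of_graph_algebras}), and then applies the fact that for topologically free groupoids every ideal meeting the diagonal trivially lies in the singular kernel (Theorem~\ref{thm:groupoid_simplicity_pure_infiniteness}\ref{enu:groupoid_simplicity_pure_infiniteness0}, via Theorem~\ref{thm:uniqueness_results}\ref{enu:Cuntz--Krieger uniqueness1'}). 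Your treatment of the two ``moreover'' cases (Hausdorff plus amenable in case (1); finitely non-Hausdorff plus $\Hum$ plus amenability from contractibility in case (2)) likewise matches the paper's argument in Corollaries~\ref{cor:detection_of_ideals_amenable} and~\ref{cor:contracting_results}.
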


Fowler and Raeburn \cite{Fowler_Raeburn:Toeplitz}*{Theorem 4.1} proved a version of the uniqueness theorem for the Toeplitz graph $C^*$-algebra $\TT(E)$, which has since been generalised in different directions (see \cites{Raeburn_Sims:Product_systems, Carlsen_Kwasniewski_Ortega, Paper_with_Nadia,Underrated_paper_with_Nadia}).
We generalise this uniqueness theorem to our setting as follows (see Theorem~\ref{thm:uniqueness_results}\ref{enu:Cuntz--Krieger uniqueness2'}).
\begin{thmx}[Coburn--Toeplitz Uniqueness]\label{thmx:Coburn_Uniqueness}
Let $(\Gr,E,\sigma)$ be a  twisted self-similar groupoid action. 
If $\Evr$ and $\Rec$ hold, then for every $C^*$-algebra $C^*(T,W)$ generated by a nonzero Coburn--Toeplitz covariant  representation  $(W,T)$ of $(\Gr,E,\sigma)$
we have  canonical epimorphisms 
$$
\TT(\Gr,E,\sigma)\onto C^*(T,W)\onto \TT_{\ess}(\Gr,E,\sigma).
$$
If the twist is trivial,  the converse implication holds. Moreover, if $\Gr$ is amenable and $E$ is row-finite, or  $(\Gr,E)$ satisfies $\Fin$, then
$\TT_{}(\Gr,E,\sigma)=\TT_{\ess}(\Gr,E,\sigma).
$ 
\end{thmx}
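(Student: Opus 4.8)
The plan is to prove the theorem through the groupoid model, so the first move is to identify $\TT_*(\Gr,E,\sigma)$ (for $*$ empty, $\red$, or $\ess$) with the full, reduced and essential twisted groupoid $C^*$-algebras of the universal groupoid $\widetilde{\Gg}(\Gr,E)$, whose unit space is the path space $E^{\leq\infty}$ and whose diagonal is $\Cont_0(E^{\leq\infty})$. This is exactly the Toeplitz analogue of the Cuntz--Krieger set-up, with $\partial E$ replaced by $E^{\leq\infty}$ and the tight groupoid $\Gg(\Gr,E)$ replaced by $\widetilde{\Gg}(\Gr,E)$. The first epimorphism is then immediate from universality: the universal representation of $(\Gr,E,\sigma)$ is Coburn--Toeplitz covariant, so every representation $(W,T)$ factors through $\TT(\Gr,E,\sigma)$, and this gives the canonical surjection $\TT(\Gr,E,\sigma)\onto C^*(T,W)$ sending generators to generators. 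The real content is the second epimorphism $C^*(T,W)\onto\TT_{\ess}(\Gr,E,\sigma)$, namely that $\TT_{\ess}$ is the smallest quotient of $\TT$ arising from a nonzero Coburn--Toeplitz covariant representation.

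The crucial preliminary step is a diagonal-faithfulness lemma: a nonzero Coburn--Toeplitz covariant $(W,T)$ is faithful on $\Cont_0(E^{\leq\infty})$. Using $T_\mu^*T_\mu=W_{\sr(\mu)}$ and the mutual orthogonality of the range projections $T_fT_f^*$, one checks that for $\mu\in E^*$ and finite $F\subseteq\rg^{-1}(\sr(\mu))$ the punctured-cylinder projection $T_\mu(W_{\sr(\mu)}-\sum_{f\in F}T_fT_f^*)T_\mu^*$ vanishes if and only if $W_{\sr(\mu)}-\sum_{f\in F}T_fT_f^*=0$, since conjugating back by $T_\mu$ recovers the latter difference. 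A short case analysis over $v=\sr(\mu)$ then shows this difference is nonzero exactly when the corresponding compact open subset of $E^{\leq\infty}$ is nonempty: the case $v$ regular with $F=\rg^{-1}(v)$ is precisely the Coburn--Toeplitz hypothesis $W_v\neq\sum_f T_fT_f^*$; if $v$ is singular or $F$ is a proper subset the difference dominates a nonzero $T_{f'}T_{f'}^*$; and if $v$ receives no edges it equals the nonzero projection $W_v$. As the characteristic functions of such punctured cylinders span a dense subset of $\Cont_0(E^{\leq\infty})$, faithfulness on the diagonal follows.

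With faithfulness in hand, the second epimorphism comes from the essential uniqueness theorem for (possibly non-Hausdorff) étale groupoids of \cite{Kwasniewski-Meyer:Essential}. By Table~\ref{table:tight groupoid}, $\Evr$ and $\Rec$ are precisely the conditions making $\widetilde{\Gg}(\Gr,E)$ topologically free, and for a topologically free groupoid the kernel $\mathcal J_{\ess}$ of the canonical map $\TT(\Gr,E,\sigma)\onto\TT_{\ess}(\Gr,E,\sigma)$ is the largest ideal meeting $\Cont_0(E^{\leq\infty})$ trivially. Writing $\pi$ for the representation induced by $(W,T)$, diagonal-faithfulness gives $\ker\pi\cap\Cont_0(E^{\leq\infty})=\{0\}$, so $\ker\pi\subseteq\mathcal J_{\ess}$, which is exactly the factorization $C^*(T,W)\onto\TT_{\ess}(\Gr,E,\sigma)$; that the composite is the canonical quotient is checked on generators. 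The main obstacle is precisely this step: importing the \emph{essential} (rather than reduced) uniqueness theorem in the twisted, non-Hausdorff regime, and verifying that the twist on $\widetilde{\Gg}(\Gr,E)$ induced by $\sigma=(\sigma_{\Gr},\sigma_{\bowtie})$ is the one compatible with the inverse-semigroup construction, so that topological freeness — not the stronger and here-unavailable effectiveness — is sufficient.

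For the converse, I would assume the twist is trivial and argue contrapositively: if $\Evr$ or $\Rec$ fails, then $\widetilde{\Gg}(\Gr,E)$ is not topologically free (Table~\ref{table:tight groupoid}), so some $x\in E^{\leq\infty}$ has isotropy containing an element acting trivially on a neighbourhood of $x$. Inducing from a nontrivial character of this isotropy group — available precisely because $\sigma$ is trivial — produces a nonzero Coburn--Toeplitz covariant representation that is faithful on $\Cont_0(E^{\leq\infty})$ yet has $\ker\pi\not\subseteq\mathcal J_{\ess}$, so $C^*(T,W)$ fails to surject onto $\TT_{\ess}(\Gr,E,\sigma)$; this is the standard converse half of the essential uniqueness theorem, and it is exactly where triviality of the twist is used. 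Finally, for the ``moreover'' clause, amenability of $\Gr$ forces $\TT(\Gr,E,\sigma)=\TT_{\red}(\Gr,E,\sigma)$ by Theorem~\ref{thmx:nuclearity}, while $\TT_{\red}(\Gr,E,\sigma)=\TT_{\ess}(\Gr,E,\sigma)$ follows either from $\Fin$, which makes $\widetilde{\Gg}(\Gr,E)$ Hausdorff so that the singular ideal vanishes, or from row-finiteness of $E$ via Theorem~\ref{thmx:nuclearity}; combining the two equalities yields $\TT(\Gr,E,\sigma)=\TT_{\ess}(\Gr,E,\sigma)$.
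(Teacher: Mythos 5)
Your proposal follows essentially the same route as the paper: identify $\TT_{*}(\Gr,E,\sigma)$ with the twisted groupoid algebras of $\widetilde{\Gg}(\Gr,E)$, translate nonzero Coburn--Toeplitz covariance into faithfulness on the diagonal $\Cont_0(E^{\leq\infty})$, invoke the equivalence of $\Evr+\Rec$ with topological freeness of $\widetilde{\Gg}(\Gr,E)$, and apply the general ideal-intersection theorem for topologically free twisted groupoids (with its converse for trivial twists); the ``moreover'' clause is handled exactly as in the paper via nuclearity/row-finiteness and Hausdorffness under $\Fin$. The one step you under-justify is the diagonal-faithfulness lemma: the basic compact open sets of $E^{\leq\infty}$ are $Z(\mu)\setminus\bigcup_{\beta\in F}Z(\mu\beta)$ for \emph{arbitrary} finite $F\subseteq\sr(\mu)E^{*}$, not just finite sets of edges, so checking only the edge-level projections $W_{\sr(\mu)}-\sum_{f\in F}T_fT_f^*$ with $F\subseteq\rg^{-1}(\sr(\mu))$ does not immediately give injectivity on all of $\Cont_0(E^{\leq\infty})$. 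The paper's Proposition~\ref{prop:Coburn-Toeplitz_for_graphs} supplies the missing reduction (for regular $\sr(\mu)$ by conjugating the strict inequality $W_v>\sum_e T_eT_e^*$ with $T_\mu$, for singular $\sr(\mu)$ by choosing an edge not prefixing any path in $F$); your argument is easily completed along those lines, so this is an omission rather than a wrong turn.
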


Let  $*=\Space,\,\red,\,\ess$ and $**=\Space,\,0,\,00$. 
Condition $\Fin$ is equivalent to the existence of a canonical conditional expectation from $\TT_{*}(\Gr,E,\sigma)_{**}$ onto $\Cont_0(E^{\leq \infty})$ and from $\OO_{*}(\Gr,E,\sigma)_{**}$ onto  $\Cont_0(\partial E)$. This leads to the following characterisations of when the corresponding $C^*$-inclusion is 
Cartan \cites{Renault:Cartan, Kwasniewski-Meyer:Cartan} (see Theorem~\ref{thm:Cartan_algebras}  and Corollary~\ref{cor:Theorem C}).

\begin{thmx}[Cartan subalgebras]\label{thmx:Cartan}
Let $(\Gr,E,\sigma)$ be a  twisted self-similar groupoid action. 
If any of the inclusions 
$
C_0(\partial E)\subseteq \OO_{\red}(\Gr,E,\sigma)_{*}$   or $C_0( E^{\leq \infty})\subseteq \TT_{\red}(\Gr,E,\sigma)_{*}$, 
for some $*=\Space,\,0,\,00$, 
is Cartan, then $\Fin$ and $\Evr$ hold. Conversely, assume that $\Fin$  holds. Then 
$$
\OO_{\red}(\Gr,E,\sigma)_{*}=\OO_{\ess}(\Gr,E,\sigma)_{*}\quad \text{ and } \quad  \TT_{\red}(\Gr,E,\sigma)_{*}=\TT_{\ess}(\Gr,E,\sigma)_{*},
$$
for every $*=\Space,\,0,\,00$. Moreover,
\begin{enumerate}
	
	\item $C_0(\partial E)\subseteq \OO_{\red}(\Gr,E,\sigma)$ is Cartan  if and only if $\Cyc$ and $\Evr$ hold;
	\item  any of the inclusions $C_0( E^{\le \infty})\subseteq  \TT_{\red}(\Gr,E,\sigma), \TT_{\red}(\Gr,E,\sigma)_{0},  \TT_{\red}(\Gr,E,\sigma)_{00}$  is Cartan if and only if  $\Rec$ and $\Evr$ hold; and
	\item   any of the inclusions $C_0(\partial E)\subseteq \OO_{\red}(\Gr,E,\sigma)_{0}, \OO_{\red}(\Gr,E,\sigma)_{00}$ is Cartan  if and only if  $\Evr$ holds.
\end{enumerate}
\end{thmx}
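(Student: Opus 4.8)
The plan is to move the entire statement into the language of reduced twisted groupoid $\Cst$\nobreakdash-algebras and then apply the Renault--Kwa\'sniewski--Meyer theory of Cartan subalgebras through the self-similar dictionary of Table~\ref{table:tight groupoid}. By the groupoid models obtained earlier (Corollary~\ref{cor:Cuntz--Pimsner_picture}), for each $*=\Space,0,00$ there are canonical identifications
$$
\OO_{\red}(\Gr,E,\sigma)_{*}\cong \Cst_{\red}(\Gg_{*}(\Gr,E),\sigma),\qquad \TT_{\red}(\Gr,E,\sigma)_{*}\cong \Cst_{\red}(\widetilde{\Gg}_{*}(\Gr,E),\sigma),
$$
under which the diagonals $C_0(\partial E)$ and $C_0(E^{\leq\infty})$ become the unit-space algebras $C_0(\Gg^0)$. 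Thus each inclusion in the statement has the form $C_0(\Gg^0)\subseteq \Cst_{\red}(\Gg,\sigma)$ for one of the six \'etale groupoids appearing in the table, and the question becomes precisely for which of them this is a Cartan pair.

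For the necessity of $\Fin$ I would use that a Cartan inclusion carries, by definition, a faithful conditional expectation onto the diagonal. By the equivalence recorded immediately before the theorem, the existence of a conditional expectation from $\OO_{\red}(\Gr,E,\sigma)_{*}$ or $\TT_{\red}(\Gr,E,\sigma)_{*}$ onto its diagonal already forces $\Fin$; equivalently, by Corollary~\ref{cor:Hausdorff_equiv_closed} and Proposition~\ref{prop:Hausdorff_extended_groupoid}, $\Fin$ is equivalent to Hausdorffness of all six groupoids, and a genuinely non-Hausdorff \'etale groupoid admits no faithful conditional expectation from its reduced algebra onto $C_0(\Gg^0)$ (the restriction map then takes values in a strictly larger algebra of essentially continuous functions). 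Hence Cartan implies $\Fin$. For the necessity of $\Evr$, once $\Fin$ holds the groupoids are Hausdorff, so topological freeness and effectiveness coincide; a Cartan diagonal is in particular maximal abelian, which by \cite{Renault:Cartan} forces effectiveness, and by Table~\ref{table:tight groupoid} (Theorem~\ref{thm:topological_freeness_self_similar_transformation_groupoids}) topological freeness of any of these groupoids entails $\Evr$. This settles the first assertion.

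Assume now that $\Fin$ holds. Then all the groupoids $\Gg_{*}(\Gr,E)$ and $\widetilde{\Gg}_{*}(\Gr,E)$ are Hausdorff, so their singular ideals vanish and the reduced algebras coincide with the essential ones, giving $\OO_{\red}(\Gr,E,\sigma)_{*}=\OO_{\ess}(\Gr,E,\sigma)_{*}$ and $\TT_{\red}(\Gr,E,\sigma)_{*}=\TT_{\ess}(\Gr,E,\sigma)_{*}$. For the three equivalences I would apply the Cartan criterion for Hausdorff \'etale twists \cites{Renault:Cartan, Kwasniewski-Meyer:Cartan}: $C_0(\Gg^0)\subseteq \Cst_{\red}(\Gg,\sigma)$ is Cartan if and only if $\Gg$ is effective, which under Hausdorffness is the same as topological freeness. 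It then suffices to read off the corresponding rows of Table~\ref{table:tight groupoid}: topological freeness of $\Gg(\Gr,E)$ is $\Evr+\Cyc$, giving~(1); topological freeness of each universal groupoid $\widetilde{\Gg}_{*}(\Gr,E)$ is $\Evr+\Rec$, giving~(2); and topological freeness of $\Gg_{0}(\Gr,E)$ and $\Gg_{00}(\Gr,E)$ is exactly $\Evr$, giving~(3).

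The main obstacle is the necessity of $\Fin$: one must guarantee that a Cartan pair cannot live over a genuinely non-Hausdorff groupoid, that is, that no faithful conditional expectation onto $C_0(\Gg^0)$ survives once $\Fin$ fails. This is exactly the content of the equivalence ``$\Fin\Leftrightarrow$ existence of the canonical conditional expectation'' quoted before the theorem, and the gauge grading is what pins the expectation down as the canonical restriction. A secondary technical point to verify is that Renault's characterisation, and the identification of effectiveness with topological freeness, apply verbatim in the twisted, second-countable setting afforded by our standing countability assumption on $\Gr$ and $E$.
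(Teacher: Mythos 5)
Your overall architecture matches the paper's: translate everything into reduced twisted groupoid algebras, get the three equivalences from "maximal abelian $\Leftrightarrow$ topologically free" for Hausdorff groupoids together with Table~\ref{table:tight groupoid}, and derive the equality of reduced and essential algebras from Hausdorffness under $\Fin$. That part is fine and is essentially the paper's Theorem~\ref{thm:Cartan_algebras}.

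The genuine gap is in the necessity of $\Fin$, which you correctly identify as the main obstacle but do not actually close. Your argument shows only that the \emph{canonical} restriction map $\OO_{\red}(\Gr,E,\sigma)_{*}\to \mathcal{B}(\partial E)$ fails to land in $C_0(\partial E)$ when the groupoid is non-Hausdorff. The Cartan axioms, however, merely postulate the existence of \emph{some} faithful conditional expectation onto the diagonal; a priori it could be an exotic one unrelated to restriction to the unit space. Your fallback — "the gauge grading is what pins the expectation down as the canonical restriction" — does not work: the gauge expectation lands in the fixed-point subalgebra $\OO_{\red}(\Gr,E,\sigma)_{0}$, not in $C_0(\partial E)$, so it cannot identify an expectation onto the diagonal. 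The paper closes this gap differently: the composition of the canonical generalised expectation with the quotient onto the Dixmier algebra gives a \emph{pseudo-expectation} $\mathbb{E}\colon \OO_{\red}(\Gr,E,\sigma)_{*}\to \mathcal{D}(\partial E)$, and by \cite{Kwasniewski-Meyer:Aperiodicity}*{Theorem 3.6} together with \cite{Kwasniewski-Meyer:Cartan}*{Corollary 7.6} a Cartan inclusion admits a \emph{unique} pseudo-expectation, which must therefore coincide with the postulated genuine expectation and hence take values in $C_0(\partial E)$. That forces the restriction map to be $C_0(\partial E)$-valued, i.e.\ the groupoid to be Hausdorff, i.e.\ $\Fin$. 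Without the uniqueness-of-pseudo-expectations input (or an equivalent substitute), your necessity argument for $\Fin$ does not go through.

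A minor point: the groupoid models you need are those of Theorem~\ref{thm:presentations_of_twisted_self_similar_algebras} and Definition~\ref{def:reduced_and_essential_self-similar-algebras} (with the Fell-bundle twist $\LL_{\sigma}$), not Corollary~\ref{cor:Cuntz--Pimsner_picture}, which is the Cuntz--Pimsner picture.
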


We phrase the remaining results in the more general setting of $L^P$-operator algebras.
\subsection*{\texorpdfstring{$L^P$}{LP}-operator algebras and twisted inverse semigroups}
It seems to us that the $C^*$-algebras of twisted inverse semigroups have not previously been thoroughly studied in the literature. 
Since we needed to develop parts of this theory  from scratch, we decided to work in the broader framework of $L^P$-operator algebras. This may  be useful in  future applications, and may attract the attention of the ever-increasing number of researchers interested in $L^p$-operator algebras. Taking $P=\{2\}$ one recovers the $C^*$-algebras we have already discussed.

Firstly, we show that any twisted inverse semigroup $(S,\omega)$  induces a twist $\LL_{\omega}$ on any transformation groupoid for an action of $S$, and in particular on the universal groupoid $\widetilde{\Gg}(S)$ and the tight groupoid $\Gg(S)$ of $S$.
Secondly, for any $\emptyset \neq P\subseteq [1,\infty]$, we define the algebras $\OO^P(S,\omega)$  and  $\TT^P(S,\omega)$  as Banach algebras that are universal for naturally defined representations and covariant representations, respectively, on  $L^p$-spaces for all $p\in P$. 
We represent inverse semigroups  by Moore-Penrose partial isometries \cite{Mbekhta}, which for $C^*$-algebras are the usual partial isometries.
We show  that we have natural isometric isomorphisms  (see Corollary~\ref{cor:groupoid_presentation_inverse_semigroup_algebras})
$$
\TT^P(S,\omega)\cong F^P(\widetilde{\Gg}(S),\LL_{\omega}) \quad \text{and} \quad \OO^P(S,\omega)\cong F^P( \Gg(S),\LL_{\omega}),
$$
where $F^P(\Gg,\LL)$ denotes the universal $L^P$-operator algebra of a twisted groupoid $(\Gg,\LL)$ defined in \cite{BKM2}.
This allows us to also define reduced and essential versions of the above algebras that still have a corresponding groupoid model.\

En passant, we prove a general result that any  continuous groupoid homomorphism $c:\Gg\to \Gamma$ into a discrete abelian group $\Gamma$ induces gauge actions on
the universal $F^P(\Gg,\LL)$, the reduced $F^P_{\red}(\Gg,\LL)$, and the essential algebra $F^P_{\ess}(\Gg,\LL)$. However, we only apply this result to twisted inverse semigroup algebras, so we included the more general result as an appendix  (see Theorem~\ref{thm:gauge_actions_on_groupoids}).
For full algebras and essential algebras, this result is new even for $C^*$-algebras, cf. \cite{Brown- Fuller-Pitts-Reznikof:Graded}.

We show that a twist $\sigma$ of a self-similar action $(\Gr, E)$ defines a twist $\omega_\sigma$ of the inverse semigroup $S(\Gr,E)$ 
and therefore induces a twist $\LL_{\sigma}$ on the groupoids $\widetilde{\Gg}(\Gr,E)$ and $\Gg(\Gr,E)$ (see Proposition~\ref{prop:twist_self_similar_inverse_semigroup}).
It is important to note that even though these are ample groupoids, the twist   $\LL_{\sigma}$ is usually topologically nontrivial (see Example \ref{ex:topologically_nontrivial_twist_from_self-similar}). We define the algebras $\TT^P(\Gr,E,\sigma)$ and $\OO^P(\Gr,E,\sigma)$, which are
universal for representations and Cuntz--Krieger covariant representations, respectively, of $(\Gr,E,\sigma)$ on $L^p$-spaces for $p\in P$. 
We prove the isometric isomorphisms (see Theorem~\ref{thm:presentations_of_twisted_self_similar_algebras}):
\begin{align*}
\TT^P(\Gr,E,\sigma)&\cong \TT^P(S(\Gr,E),\omega_{\sigma})\cong F^P(\widetilde{\Gg}(\Gr,E),\LL_{\sigma}),\\
\OO^P(\Gr,E,\sigma)&\cong \OO^P(S(\Gr,E),\omega_{\sigma})\cong F^P(\Gg(\Gr,E),\LL_{\sigma}).
\end{align*}
These isomorphisms allow us to define reduced and essential versions of these algebras as well as their  core subalgebras.
By analysing extended representations using results of \cite{BKM} on Banach algebras generated by inverse semigroups,
 and combining this with structural results on groupoid Banach algebras from  \cite{BKM2}, we prove $L^P$-analogues of Theorems \ref{thmx:Cuntz--Krieger_Uniqueness}--\ref{thmx:Cartan}
(see Theorems \ref{thm:uniqueness_results}, 
\ref{thm:Cartan_algebras}), where uniqueness is expressed in terms of the ideal intersection property. 
Namely, we say that a Banach subalgebra $A$ \emph{detects ideals} in an ambient Banach algebra $B$ if 
every nonzero (closed two-sided) ideal $I$ in $B$ has a nonzero intersection with $A$.

Using the machinery we have developed, we may produce a myriad of results.  We  present a sample. We start by considering the Hausdorff case  (combine Theorems~\ref{thm:uniqueness_results} and \ref{thm:Cartan_algebras}, 
and Remark \ref{rem:amenablity_of_1_and_infty}):

\begin{thmx}[Detection of ideals in the Hausdorff case]\label{thmx:Hausdorff_detection} 
Let $(\Gr,E)$ be a self-similar groupoid action satisfying $\Fin$. The following are equivalent:
\begin{enumerate}[label={\textup{(o\arabic*)}}]
\item $(\Gr,E)$  satisfies $\Evr$ and $\Cyc$;
\item  $C_{0}(\partial E)$ is maximal abelian in   $\OO^P_{\red}(\Gr,E,\sigma)$ for every $\sigma$ and every $\emptyset \neq P\subseteq [1,\infty]$;
\item  $C_{0}(\partial E)$ is maximal abelian in   $\OO^P_{\red}(\Gr,E,\sigma)$ for some $\sigma$ and  some $\emptyset \neq P\subseteq [1,\infty]$;
\item   $C_{0}(E^0)$ detects ideals in  $\OO^P_{\red}(\Gr,E,\sigma)$ for every $\sigma$ and every $\emptyset \neq P\subseteq [1,\infty]$;
\item   $C_{0}(E^0)$ detects ideals in  $\OO^P(\Gr,E)$ for some $\emptyset \neq P\subseteq \{1,\infty\}$.
\end{enumerate}
 For any   $*=0,\,00$  the following are equivalent:
\begin{enumerate}[label={\textup{(c\arabic*)}}]
\item$(\Gr,E)$  satisfies $\Evr$; 
\item  $C_{0}(\partial E)$ is maximal abelian in   $\OO^P_{\red}(\Gr,E,\sigma)_{*}$ for every $\sigma$ and every $\emptyset \neq P\subseteq [1,\infty]$;
\item  $C_{0}(\partial E)$ is maximal abelian in   $\OO^P_{\red}(\Gr,E,\sigma)_{*}$ for some $\sigma$ and  some  $\emptyset \neq P\subseteq [1,\infty]$;
\item   $C_{0}(\partial E)$ detects ideals in  $\OO^P_{\red}(\Gr,E,\sigma)_{*}$ every $\sigma$ and every $\emptyset \neq P\subseteq [1,\infty]$;
\item   $C_{0}(\partial E)$ detects ideals in  $\OO^P(\Gr,E)_{*}$ for some $\emptyset \neq P\subseteq \{1,\infty\}$.
\end{enumerate}
Finally for every $*=\Space,\,0,\,00$,  the following are equivalent:
\begin{enumerate}[label={\textup{(t\arabic*)}}]
\item$(\Gr,E)$  satisfies $\Evr$ and $\Rec$; 
\item  $C_{0}(E^{\leq\infty})$ is maximal abelian in   $\TT^P_{\red}(\Gr,E,\sigma)_{*}$ for every $\sigma$ and every $\emptyset \neq P\subseteq [1,\infty]$;
\item  $C_{0}(E^{\leq\infty})$ is maximal abelian in   $\TT^P_{\red}(\Gr,E,\sigma)_{*}$ for some $\sigma$ and  $\emptyset \neq P\subseteq [1,\infty]$;
\item  $C_{0}(E^{\leq\infty})$ detects ideals in  $\TT^P_{\red}(\Gr,E,\sigma)_{*}$ for every $\sigma$ and every $\emptyset \neq P\subseteq [1,\infty]$;
\item  $C_{0}(E^{\leq\infty})$ detects ideals in  $\TT^P(\Gr,E)_{*}$ for   some  $\emptyset \neq P\subseteq \{1,\infty\}$;
\item   $C_{0}(E^{\leq 1})$ detects ideals in  $\TT^P_{\red}(\Gr,E,\sigma)$ for every $\sigma$ and every $\emptyset \neq P\subseteq [1,\infty]$;
\item   $C_{0}(E^{\leq 1})$ detects ideals in  $\TT^P(\Gr,E)$ for some $\emptyset \neq P\subseteq \{1,\infty\}$.
\end{enumerate}
\end{thmx}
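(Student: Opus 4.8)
The plan is to treat this statement as a structured combination of the two $L^P$\nobreakdash-theorems \ref{thm:uniqueness_results} and \ref{thm:Cartan_algebras}, organised around three Hausdorff groupoids. Since $(\Gr,E)$ satisfies $\Fin$, the groupoids $\Gg_{00}(\Gr,E)\subseteq\Gg_0(\Gr,E)\subseteq\Gg(\Gr,E)$ and their universal counterpart $\widetilde{\Gg}(\Gr,E)$ are all Hausdorff (Table~\ref{table:tight groupoid} and Corollary~\ref{cor:Hausdorff_equiv_closed}), so by Theorem~\ref{thm:Cartan_algebras} reduced and essential algebras coincide throughout and there is no singular ideal to account for. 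Using the isometric identifications of Theorem~\ref{thm:presentations_of_twisted_self_similar_algebras}, namely $\OO^P_{\red}(\Gr,E,\sigma)\cong F^P_{\red}(\Gg(\Gr,E),\LL_\sigma)$, $\OO^P_{\red}(\Gr,E,\sigma)_*\cong F^P_{\red}(\Gg_*(\Gr,E),\LL_\sigma)$ and $\TT^P_{\red}(\Gr,E,\sigma)_*\cong F^P_{\red}(\widetilde{\Gg}_*(\Gr,E),\LL_\sigma)$, each of the three blocks becomes a statement about a single Hausdorff twisted groupoid, and the relevant dynamical hypothesis is topological freeness of that groupoid. By Table~\ref{table:tight groupoid} this is $\Evr+\Cyc$ for $\Gg(\Gr,E)$, it is $\Evr$ for $\Gg_0$ and $\Gg_{00}$, and it is $\Evr+\Rec$ for $\widetilde{\Gg}(\Gr,E)$, which is precisely the three dynamical conditions in items (o1), (c1) and (t1). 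I would prove all three blocks in parallel, treating the maximal-abelian items and the detection-of-ideals items separately.

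For the maximal-abelian conditions (the ``2'' and ``3'' items) I would argue as follows. Because $\Fin$ holds, each diagonal carries the canonical faithful conditional expectation, so being maximal abelian is equivalent to being a Cartan subalgebra. The forward implication (the dynamical condition implies the MASA property for every $\sigma$ and every $\emptyset\neq P\subseteq[1,\infty]$) is exactly the Cartan half of Theorem~\ref{thm:Cartan_algebras}, and the passage ``for every'' $\Rightarrow$ ``for some'' is trivial. For the converse, the MASA property for a single $\sigma$ and $P$ forces topological freeness of the underlying groupoid by the other half of Theorem~\ref{thm:Cartan_algebras}, hence the corresponding dynamical condition; this direction is valid for an arbitrary twist, so no triviality of $\sigma$ is needed here. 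This closes (o1)$\Leftrightarrow$(o2)$\Leftrightarrow$(o3), and likewise (c1)$\Leftrightarrow$(c2)$\Leftrightarrow$(c3) and (t1)$\Leftrightarrow$(t2)$\Leftrightarrow$(t3).

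For the detection-of-ideals conditions I would use Theorem~\ref{thm:uniqueness_results}, keeping in mind two sizes of detecting subalgebra. Detection by the \emph{full} diagonal ($C_0(\partial E)$ in the core $\OO$\nobreakdash-items (c4)/(c5), and $C_0(E^{\leq\infty})$ in the Toeplitz items (t4)/(t5)) is equivalent to topological freeness of the relevant reduced groupoid algebra by the general groupoid uniqueness theorem. Detection by the \emph{small} diagonal ($C_0(E^0)$ in (o4)/(o5), and $C_0(E^{\leq 1})$ in (t6)/(t7)) is the Cuntz--Krieger, respectively Coburn--Toeplitz, statement, which under the dynamical hypothesis gives this formally stronger detection via the forward half of Theorem~\ref{thm:uniqueness_results}, for every $\sigma$ and every $P$; this yields (o1)$\Rightarrow$(o4), (c1)$\Rightarrow$(c4), (t1)$\Rightarrow$(t4) and (t1)$\Rightarrow$(t6). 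To reach the full-algebra items (o5), (c5), (t5), (t7) I would specialise to the trivial twist and to $P\subseteq\{1,\infty\}$ and invoke Remark~\ref{rem:amenablity_of_1_and_infty}, by which $\OO^P(\Gr,E)=\OO^P_{\red}(\Gr,E)$ and $\TT^P(\Gr,E)=\TT^P_{\red}(\Gr,E)$ for such $P$; detection in the reduced algebra then transfers to the full algebra. Conversely, starting from detection in the full algebra for some $P\subseteq\{1,\infty\}$, the same equality returns us to the reduced algebra; since the named subalgebra is contained in the full diagonal, detection by it implies detection by the full diagonal, hence topological freeness, and the converse half of Theorem~\ref{thm:uniqueness_results} (available because the twist is trivial) recovers the dynamical condition. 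This gives (o5)$\Rightarrow$(o1), (c5)$\Rightarrow$(c1), (t5)$\Rightarrow$(t1) and (t7)$\Rightarrow$(t1), closing the cycles.

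The difficulties here are organisational rather than conceptual. One must deploy the converse directions of Theorems~\ref{thm:uniqueness_results} and~\ref{thm:Cartan_algebras} only where they apply, use the trivial twist precisely for the full-algebra items where Remark~\ref{rem:amenablity_of_1_and_infty} identifies full and reduced algebras, and keep track in the Toeplitz block of the two distinct detecting subalgebras $C_0(E^{\leq\infty})$ and $C_0(E^{\leq 1})$ (the former governed by plain groupoid uniqueness and by the Cartan picture, the latter by Coburn--Toeplitz uniqueness), together with the fact that the dynamical conditions do not depend on $*$. The only genuinely load-bearing input beyond bookkeeping is the identification of full and reduced algebras at the $L^p$ extremes $p\in\{1,\infty\}$: this is what makes the checkable conditions (o5), (c5), (t5), (t7) on the \emph{universal} algebras equivalent to the ``for every $\sigma$ and every $P$'' statements on the reduced ones, and I expect verifying that all implications flow in the correct direction across this interface to be the main point requiring care.
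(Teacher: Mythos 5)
Your proposal is correct and is essentially the paper's own argument: Theorem~\ref{thmx:Hausdorff_detection} is obtained precisely by combining Theorems~\ref{thm:uniqueness_results} and~\ref{thm:Cartan_algebras} with Remark~\ref{rem:amenablity_of_1_and_infty}, using $\Fin$ to identify reduced and essential algebras and Table~\ref{table:tight groupoid} to translate topological freeness of $\Gg(\Gr,E)$, $\Gg_*(\Gr,E)$ and $\widetilde{\Gg}_*(\Gr,E)$ into the three dynamical conditions. One phrasing deserves care: ``detection by the full diagonal, hence topological freeness'' is \emph{not} valid for a general reduced groupoid algebra (a group with simple reduced algebra is a counterexample) --- the converse lives in the \emph{universal} algebra, which is why the ``for some $P$'' items are restricted to the untwisted case with $P\subseteq\{1,\infty\}$, where universal, reduced and core algebras of the relevant subgroupoid all coincide; since you work in exactly that regime, your argument goes through.
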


In the amenable, untwisted, and locally Hausdorff case we obtain the following characterisations of detection of ideals (see Corollary \ref{cor:detection_of_ideals_amenable}).
\begin{thmx}[Detection of ideals in the amenable case]\label{thmx:Detection of ideals II} 
Let $(\Gr,E)$ be a self-similar groupoid action such that each path $\mu\in \partial E$ admits at most finitely many inequivalent singular decompositions. If $\Gg_{00}(\Gr,E)$ is amenable, then the following conditions are equivalent: 
\begin{enumerate}[label={\textup{(o\arabic*)}}]
\item\label{enux:Cuntz--Krieger uniqueness0} $(\Gr,E)$  satisfies $\Evr$ and $\Cyc$, and $\Gg(\Gr,E)$ satisfies $\Hum$;

\item\label{enux:Cuntz--Krieger uniqueness1} $C_{0}(E^0)$ detects ideals in  $\OO^P(\Gr,E)$ for every $\emptyset \neq P\subseteq [1,\infty]$;
\item\label{enux:Cuntz--Krieger uniqueness2}   $C_{0}(E^0)$ detects ideals in  $\OO^P(\Gr,E)$ for some $\emptyset \neq P\subseteq [1,\infty]$;
\end{enumerate}
If $\Gr$ is amenable, then the following conditions are equivalent: 
\begin{enumerate}[label={\textup{(t\arabic*)}}]
\item$(\Gr,E)$  satisfies $\Evr$ and $\Rec$, and $\widetilde{\Gg}(\Gr,E)$ satisfies $\Hum$; 
\item  $C_{0}(E^{\leq\infty})$ detects ideals in  $\TT^P(\Gr,E,\sigma)$ for every $\emptyset \neq P\subseteq [1,\infty]$;
\item  $C_{0}(E^{\leq\infty})$ detects ideals in  $\TT^P(\Gr,E,\sigma)$ for some $\emptyset \neq P\subseteq [1,\infty]$;
\end{enumerate}
\end{thmx}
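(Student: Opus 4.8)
The plan is to transport everything to the groupoid $L^P$-operator algebras via the presentations of Theorem~\ref{thm:presentations_of_twisted_self_similar_algebras}, namely $\OO^P(\Gr,E)\cong F^P(\Gg(\Gr,E))$ for the $(\mathrm{o})$-block and $\TT^P(\Gr,E,\sigma)\cong F^P(\widetilde{\Gg}(\Gr,E),\LL_\sigma)$ for the $(\mathrm{t})$-block, and then to play three facts against one another: topological freeness controls detection of ideals in the \emph{essential} algebra; vanishing of the singular ideal identifies the reduced and essential algebras; and amenability identifies the full and reduced algebras. The hypothesis that each $\mu\in\partial E$ has at most finitely many inequivalent singular decompositions is precisely the assertion that $\Gg(\Gr,E)$ and $\widetilde{\Gg}(\Gr,E)$ are finitely non-Hausdorff (Proposition~\ref{prop:non-Hausdorff_points_description}), which is exactly what is needed to invoke Theorem~\ref{thmx:singular_ideal}.

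For $(\mathrm{o}1)\Rightarrow(\mathrm{o}2)$ I would argue as follows. By Table~\ref{table:tight groupoid}, condition $\Evr+\Cyc$ is equivalent to topological freeness of $\Gg(\Gr,E)$, and the essential half of Theorem~\ref{thm:uniqueness_results} then yields that $C_0(E^0)$ detects ideals in $\OO^P_{\ess}(\Gr,E)$ for every $\emptyset\neq P\subseteq[1,\infty]$; here $\Cyc$ is exactly what allows the vertex projections, rather than the whole diagonal $C_0(\partial E)$, to suffice. Since $\Gg_{00}(\Gr,E)$ is amenable, so are $\Gg_0(\Gr,E)$ and $\Gg(\Gr,E)$ by \cite{Miller_Steinberg}*{Theorem 2.18}, and because the twist is trivial in this block we may pass (without assuming $\Fin$, cf.\ Theorem~\ref{thmx:nuclearity2}) from amenability to the identification $\OO^P(\Gr,E)=\OO^P_{\red}(\Gr,E)$ for every $P$. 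Finally, $\Hum$ for $\Gg(\Gr,E)$ together with finite non-Hausdorffness forces the singular ideal to vanish by Theorem~\ref{thmx:singular_ideal}, so $\OO^P_{\red}(\Gr,E)=\OO^P_{\ess}(\Gr,E)$. Chaining the three identifications gives $\OO^P(\Gr,E)=\OO^P_{\ess}(\Gr,E)$, whence detection in the essential algebra transports to $(\mathrm{o}2)$. The step $(\mathrm{o}2)\Rightarrow(\mathrm{o}3)$ is immediate.

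For $(\mathrm{o}3)\Rightarrow(\mathrm{o}1)$, assume $C_0(E^0)$ detects ideals in $\OO^P(\Gr,E)$ for some $P$. Amenability first gives $\OO^P(\Gr,E)=\OO^P_{\red}(\Gr,E)$, inside which the singular ideal is annihilated by the canonical restriction onto $C_0(\partial E)$ and hence meets $C_0(\partial E)$, and a fortiori $C_0(E^0)$, trivially; detection therefore forces it to vanish, so Theorem~\ref{thmx:singular_ideal} gives $\Hum$ and $\OO^P_{\red}=\OO^P_{\ess}$. Detection by the smaller $C_0(E^0)$ implies detection by $C_0(\partial E)$ (any ideal meeting $C_0(E^0)$ meets $C_0(\partial E)$), and in the essential algebra the converse half of Theorem~\ref{thm:uniqueness_results} turns this into topological freeness of $\Gg(\Gr,E)$, i.e.\ $\Evr+\Cyc$. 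The $(\mathrm{t})$-block is proved by the identical three-step argument applied to $\widetilde{\Gg}(\Gr,E)$ and $F^P(\widetilde{\Gg}(\Gr,E),\LL_\sigma)$: here $\Evr+\Rec$ replaces $\Evr+\Cyc$ (Table~\ref{table:tight groupoid}), the Coburn--Toeplitz uniqueness theorem replaces Cuntz--Krieger uniqueness, detection is by the full diagonal $C_0(E^{\leq\infty})=C_0(\widetilde{\Gg}^0)$ so no vertex-versus-diagonal subtlety arises, $\Hum$ is imposed on $\widetilde{\Gg}(\Gr,E)$, and amenability of $\Gr$ --- which yields amenability of $\widetilde{\Gg}(\Gr,E)$ by Theorem~\ref{thm:Toeplitz_nuclearity} and hence $\TT^P=\TT^P_{\red}$ even for nontrivial $\sigma$ --- plays the role of the amenability hypothesis.

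The main obstacle is coordinating the full, reduced, and essential algebras uniformly over all $p\in[1,\infty]$, including the endpoints $p=1,\infty$: the clean detection-of-ideals statements naturally live in the essential algebra, so the real content is showing that under the stated amenability and $\Hum$ hypotheses these three algebras genuinely coincide, and that this coincidence is compatible with the isometric presentations $\OO^P\cong F^P(\Gg(\Gr,E))$ and $\TT^P\cong F^P(\widetilde{\Gg}(\Gr,E),\LL_\sigma)$. A secondary but essential technical point, underpinning both converse directions, is the verification that the singular ideal meets $C_0(E^0)$ (respectively $C_0(E^{\leq\infty})$) trivially, which reduces to the fact that its elements vanish on the Hausdorff unit space.
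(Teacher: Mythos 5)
Your proposal is correct and follows essentially the same route as the paper's proof of Corollary~\ref{cor:detection_of_ideals_amenable}: the same three ingredients are combined in the same way, namely Theorem~\ref{thm:uniqueness_results}\ref{enu:Cuntz--Krieger uniqueness1'}/\ref{enu:Cuntz--Krieger uniqueness2'} (detection of ideals $\Leftrightarrow$ topological freeness plus injectivity of the map onto the essential algebra), finite non-Hausdorffness via Corollary~\ref{cor:finite_Hausdorffness} together with Theorem~\ref{thmx:singular_ideal} to convert that injectivity into $\Hum$, and amenability to identify the full and reduced algebras uniformly in $P$. The one point to flag is your claim in the $(\mathrm{t})$-block that amenability of $\widetilde{\Gg}(\Gr,E)$ gives $\TT^P=\TT^P_{\red}$ ``even for nontrivial $\sigma$'': for general $P$ and a nontrivial twist this is precisely the open Problem~\ref{problem:amenability_implies_twisted_coincide}, and the paper's own proof quietly works with the untwisted $\TT^P(\Gr,E)$ there (consistent with the fact that the converse direction of Theorem~\ref{thm:uniqueness_results}\ref{enu:Cuntz--Krieger uniqueness2'} is only asserted for trivial twist), so you should either restrict to the untwisted case or to $P$ with $\{1,\infty\}\subseteq P$ or $P\subseteq\{1,\infty\}$ where Remark~\ref{rem:amenablity_of_1_and_infty} applies.
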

The above results can be readily used to obtain simplicity criteria, as for any  $*=\Space,\,\red,\,\ess$ the algebra $\OO^P_{*}(\Gr,E,\sigma)$ is simple if and only if 
$C_0(E^0)$ detects ideals in $\OO^P_{*}(\Gr,E,\sigma)$ and $\Min$ holds. 
Moreover, using  condition $\Con$ we get the following simplicity and  pure infiniteness criteria that generalise similar results in 
\cites{FLR, Drinen_Tomforde, Exel-Pardo:Self-similar, Exel-Pardo-Starling:Self-similar, Larki21, cortinas_Montero_rodrogiez, BKM2}  (see Theorem~\ref{thm:simplicity}):

\begin{thmx}[Simplicity and pure infiniteness]\label{thmx:simplicity} 
Let $(\Gr,E)$ be a   self-similar groupoid action with a twist $\sigma$ and  let $\emptyset\neq  P\subseteq [1,\infty]$. 
If $\OO^P(\Gr,E)$ is simple, then   $\Evr$, $\Cyc$, and $\Min$ hold. Conversely, if $(\Gr,E)$ satisfies  $\Evr$, $\Cyc$, and $\Min$, then  $\OO_{\ess}^P(\Gr,E,\sigma)$ is simple, and in this case $\Con$ implies that $\OO_{\ess}^P(\Gr,E,\sigma)$ is also purely infinite in the sense of \cite{BKM2}.
\end{thmx}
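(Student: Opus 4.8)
The plan is to reduce the whole statement to the simplicity criterion recorded immediately before it---that for each of the full, reduced and essential algebras, $\OO^P_{*}(\Gr,E,\sigma)$ is simple if and only if $C_0(E^0)$ detects ideals in it and $\Min$ holds---and then to translate \emph{detection of ideals} into a dynamical condition via the essential uniqueness theorem together with Table~\ref{table:tight groupoid}. The only input lying outside this dictionary is the pure infiniteness mechanism of \cite{BKM2}. Throughout, I use that $\Evr$, $\Cyc$, $\Min$ and $\Con$ are conditions on $(\Gr,E)$ alone, and that topological freeness, minimality and local contractiveness are properties of the groupoid $\Gg(\Gr,E)$; hence the twist $\sigma$ (equivalently $\LL_\sigma$) is irrelevant to these dynamical considerations and may simply be carried along.

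\emph{Forward implication.} Suppose $\OO^P(\Gr,E)$ is simple. The canonical surjections $\OO^P(\Gr,E)\onto\OO^P_{\red}(\Gr,E)\onto\OO^P_{\ess}(\Gr,E)$ have proper kernels, since each target is nonzero (it contains the nonzero diagonal subalgebra); by simplicity these kernels vanish, so all three algebras coincide and, in particular, $\OO^P_{\ess}(\Gr,E)$ is simple. The simplicity criterion (for the essential algebra) then yields $\Min$ together with the fact that $C_0(E^0)$ detects ideals in $\OO^P_{\ess}(\Gr,E)$. By the essential uniqueness result of Theorem~\ref{thm:uniqueness_results}, detection of ideals in the essential algebra is equivalent to topological freeness of $\Gg(\Gr,E)$, which by Table~\ref{table:tight groupoid} is precisely the conjunction of $\Evr$ and $\Cyc$. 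This establishes $\Evr$, $\Cyc$ and $\Min$.

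\emph{Converse and simplicity.} Assume $\Evr$, $\Cyc$ and $\Min$. By Table~\ref{table:tight groupoid} the first two conditions give topological freeness of $\Gg(\Gr,E)$, and by Theorem~\ref{thm:uniqueness_results} topological freeness implies that $C_0(E^0)$ detects ideals in $\OO^P_{\ess}(\Gr,E,\sigma)$ for every twist $\sigma$ and every $\emptyset\neq P\subseteq[1,\infty]$. Since $\Min$ holds, the simplicity criterion (with $*=\ess$) gives that $\OO^P_{\ess}(\Gr,E,\sigma)$ is simple.

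\emph{Pure infiniteness.} Assume in addition that $\Con$ holds. Via the groupoid model $\OO^P_{\ess}(\Gr,E,\sigma)\cong F^P_{\ess}(\Gg(\Gr,E),\LL_\sigma)$, the groupoid $\Gg(\Gr,E)$ is now minimal (by $\Min$ and Table~\ref{table:tight groupoid}), topologically free (by $\Evr$ and $\Cyc$), and locally contracting with respect to $S$ (by $\Con$ and Table~\ref{table:tight groupoid}). Feeding these three properties into the pure infiniteness criterion of \cite{BKM2} yields that $F^P_{\ess}(\Gg(\Gr,E),\LL_\sigma)$, and hence $\OO^P_{\ess}(\Gr,E,\sigma)$, is purely infinite. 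I expect the main obstacle to lie exactly at this last step: checking that the hypotheses of the \cite{BKM2} pure infiniteness theorem are genuinely met in the non-Hausdorff, essential, $L^P$ setting---in particular that the inverse-semigroup notion of local contractiveness ``with respect to $S$'' appearing in Table~\ref{table:tight groupoid} coincides with the groupoid notion required there, and that the argument remains valid in the presence of the topologically nontrivial twist $\LL_\sigma$.
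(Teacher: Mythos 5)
Your proof is correct and follows essentially the same route as the paper: both reduce to the general groupoid theorem (Theorem~\ref{thm:groupoid_simplicity_pure_infiniteness}\ref{enu:groupoid_simplicity_pure_infiniteness2},\ref{enu:groupoid_simplicity_pure_infiniteness3}) via the dictionary $\Evr+\Cyc\Leftrightarrow$ topological freeness of $\Gg(\Gr,E)$, $\Min\Leftrightarrow$ minimality, and $\Con\Rightarrow$ local contractiveness with respect to $S$ (Proposition~\ref{prop:contractiveness_of_groupoid}), the topological triviality of $\LL_\sigma$ over each basic bisection being automatic from its construction out of the inverse-semigroup cocycle $\omega_\sigma$, so the hypothesis of the pure infiniteness criterion you worry about is indeed met. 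The only imprecision is your blanket claim that detection of ideals in the \emph{essential} algebra is equivalent to topological freeness --- the paper proves the converse direction only for the full untwisted algebra (Theorem~\ref{thm:uniqueness_results}\ref{enu:Cuntz--Krieger uniqueness1}) --- but this is harmless where you invoke it, since at that point you have already identified $\OO^P_{\ess}(\Gr,E)$ with $\OO^P(\Gr,E)$.
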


Finally, we note that for contracting   self-similar actions $(\Gr,E)$ the assumptions in the first part of Theorem \ref{thmx:Detection of ideals II} are satisfied. 
Thus, we get the following  generalisation of the recent observation from \cite{Aakre}, that 
simplicity of the reduced $C^*$-algebra $\OO_{\red}(\Gr,E)$ is equivalent to the simplicity of the associated Steinberg algebra $A_{\mathbb{C}}(\Gg(\Gr,E))$ (see Corollary~\ref{cor:contracting_results}): 
\begin{corx}\label{corx:contracting_results}
Let  $(\Gr,E)$ be a contracting self-similar action (or more generally  a self-similar action as in the first part of Theorem \ref{thmx:Detection of ideals II}).  Let $\emptyset\neq  P\subseteq [1,\infty]$ and 
consider the condition $\Hum$, defined on page~\pageref{Condition:Hume}, for the groupoid $\Gg(\Gr,E)$. The following are equivalent: 
\begin{enumerate}
\item\label{item:contracting_results0} 
$\Evr$, $\Cyc$, $\Min$,  and $\Hum$ hold;

\item\label{item:contracting_results3} The algebra $\OO^P(\Gr,E)$ is simple;
\item\label{item:contracting_results3.5}  The Steinberg algebra $A_{\mathbb{C}}(\Gg(\Gr,E))$ is simple.
\end{enumerate}
\end{corx}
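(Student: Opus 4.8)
The plan is to reduce each of \ref{item:contracting_results0}, \ref{item:contracting_results3}, \ref{item:contracting_results3.5} to the same intrinsic groupoid data---minimality of $\Gg(\Gr,E)$, topological freeness of $\Gg(\Gr,E)$, and vanishing of its singular ideal---and then to observe that, for the class of actions under consideration, these are captured by $\Min$, by $\Evr+\Cyc$, and by $\Hum$, respectively. First I would verify that the standing hypotheses of Theorem~\ref{thmx:Detection of ideals II} are in force: a contracting action is finitely non-Hausdorff by Corollary~\ref{cor:contracting_non_Hausdorff}, so each $\mu\in\partial E$ admits at most finitely many inequivalent singular decompositions, and $\Gg_{00}(\Gr,E)$ is amenable by \cite{Miller_Steinberg}*{Theorem 2.18}; under the more general hypothesis these are assumed outright. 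Thus Theorem~\ref{thmx:Detection of ideals II} applies.

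For \ref{item:contracting_results0}$\Leftrightarrow$\ref{item:contracting_results3} I would combine two facts. By the simplicity criterion recorded just before Theorem~\ref{thmx:simplicity} (with trivial twist), $\OO^P(\Gr,E)$ is simple if and only if $C_0(E^0)$ detects ideals in it and $\Min$ holds. By the equivalence of (o1)--(o3) in Theorem~\ref{thmx:Detection of ideals II}, detection of ideals by $C_0(E^0)$ in $\OO^P(\Gr,E)$ for some (equivalently every) $\emptyset\neq P\subseteq[1,\infty]$ is equivalent to $(\Gr,E)$ satisfying $\Evr$ and $\Cyc$ together with $\Gg(\Gr,E)$ satisfying $\Hum$. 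Splicing the two gives $\OO^P(\Gr,E)$ simple $\iff$ $\Evr,\Cyc,\Hum,\Min$, which is \ref{item:contracting_results0}; since the right-hand side is free of $P$, this simultaneously records the $P$-independence. As a consistency check, Theorem~\ref{thmx:simplicity} recovers the forward implications $\Evr,\Cyc,\Min$ directly, while its converse yields simplicity of $\OO^P_{\ess}(\Gr,E)$, which coincides with $\OO^P(\Gr,E)$ once amenability of $\Gg_{00}$ and $\Hum$ collapse the tower $\OO^P=\OO^P_{\red}=\OO^P_{\ess}$ via Theorem~\ref{thmx:singular_ideal}.

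The equivalence \ref{item:contracting_results3}$\Leftrightarrow$\ref{item:contracting_results3.5} is the genuinely new content and, I expect, the main obstacle, because $A_{\mathbb{C}}(\Gg(\Gr,E))$ is a purely algebraic object sitting densely inside the analytic completions rather than being one of the $\OO^P$. The strategy is to show that its simplicity is governed by exactly the same groupoid data. Simplicity of the Steinberg algebra is equivalent to minimality of $\Gg(\Gr,E)$---which is $\Min$ by Proposition~\ref{prop:groupoid_minimality}---together with the algebraic diagonal $C_0(E^0)$ detecting ideals in $A_{\mathbb{C}}(\Gg(\Gr,E))$, and the latter is controlled, just as in the analytic case, by the pair ``topological freeness $+$ vanishing of the algebraic singular ideal.'' Topological freeness of $\Gg(\Gr,E)$ is $\Evr+\Cyc$ by Table~\ref{table:tight groupoid}, and the decisive point---the one that makes the algebraic and completed pictures agree---is that for the finitely non-Hausdorff groupoid $\Gg(\Gr,E)$ the condition $\Hum$ characterises vanishing of the singular ideal uniformly across the algebraic, $C^*$- and $L^P$-settings; this is precisely the transfer supplied by \cite{Brix-Gonzales-Hume-Li:Hausdorff_covers}*{Theorem C} and Theorem~\ref{thmx:singular_ideal}, with \cite{Hume} providing the checkable form of $\Hum$. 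Hence $A_{\mathbb{C}}(\Gg(\Gr,E))$ is simple iff $\Evr,\Cyc,\Hum,\Min$ hold, matching \ref{item:contracting_results0} and closing the cycle; for contracting actions this recovers, and pushes beyond the reduced $C^*$-algebra, the observation of \cite{Aakre}. The subtle part to get right is that it is the non-Hausdorff singular behaviour, not effectiveness, that is decisive here, and that the two-way transfer of $\Hum$ between the algebraic and analytic worlds is exactly what is licensed by finite non-Hausdorffness.
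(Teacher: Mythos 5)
Your proposal is correct and follows essentially the same route as the paper: establish finite non-Hausdorffness and amenability for the contracting case, derive \ref{item:contracting_results0}$\Leftrightarrow$\ref{item:contracting_results3} from the equivalence (o1)$\Leftrightarrow$(o2) of Theorem~\ref{thmx:Detection of ideals II} combined with the standing simplicity criterion ($\Min$ plus detection of ideals by $C_0(E^0)$), and handle the Steinberg algebra by reducing its simplicity to minimality, topological freeness, and vanishing of the algebraic singular ideal, with $\Hum$ doing the transfer between the algebraic and analytic settings exactly as licensed by finite non-Hausdorffness. The paper simply compresses your Steinberg-algebra paragraph into citations of \cite{Steinberg_Szakacs}*{Theorem A'} and \cite{Brix-Gonzales-Hume-Li:Hausdorff_covers}*{Theorem 4.2}, so there is no substantive difference.
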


\subsection*{Organization of the paper}
The first four sections are largely introductory, but each one contains new elements. In Section \ref{twisted_inverse_semigroups}
we explain how to construct twisted groupoids from twisted inverse semigroups. Section \ref{sect:LP-groupoid_algebras}
 briefly discusses the main
result of \cite{BKM2} on twisted groupoid $L^p$-operator algebras  and extends them by the recent criteria for vanishing singular ideals.
We introduce twisted inverse semigroup $L^P$-operator algebras, and present the relevant conditions on inverse semigroups, in Section \ref{sect:Twisted inverse semigroup}. In Section \ref{sect:self-similar} we discuss in detail the four different viewpoints on self-similar actions listed on page \pageref{enu:viewpoint1}.
We start our analysis from the associated inverse semigroups in Section \ref{sect:inverse_semigroup}. Then we pass to our groupoid analysis in Section \ref{sec:self-similar_groupoid}.
In Section \ref{sect:pseudo_freeness} we give a brief summary of characterisations of pseudo-freeness, before moving on to present twists for self-similar actions in Section \ref{sect:twist}. In Section \ref{sect:self-similar_LP-operator} 
we introduce the title objects of the paper and prove our main structural results. For $C^*$-algebras 
we complement these results by exploiting their relative Cuntz-Pimsner picture in Section \ref{sect:C-star}.
We close the paper with an appendix in which we prove that groupoid homomorphism into abelian groups induce gauge actions on twisted groupoid $L^P$-algebras full, reduced and essential.
 
\setcounter{tocdepth}{1}
\tableofcontents

\section{Twisted inverse semigroups and  groupoids}\label{twisted_inverse_semigroups}
\subsection{Twisted inverse semigroups and their actions}
An inverse semigroup is a semigroup $S$ such that for every $t\in S$ there is a unique  $t^*\in S$  satisfying $t = t t^* t$ and $t^* = t^* t t^*$. In this paper we adopt the convention that \emph{all inverse semigroups have zero}, that is an element $0\in S$ satisfying $0t=t0=0$ for every $t\in S$.
If $S$ has no zero, one can always add it. Recall that  idempotents in an inverse semigroup $S$ commute and we have
\[
\EE(S) \coloneqq \{ e \in S : e^2 = e \} = \{ tt^* : t \in S \} = \{ t^*t : t \in S \}.
\]
Thus,  $\EE(S)$ is a semilattice with minimal element $0$.
There is a partial order on $S$ defined by $t \le u$ if and only if $t = u t^* t$ (or $t=tt^*u$), if and only if  $t = u e$  
(or $t=eu$) for some $e\in \EE(S)$,  see, for instance, \cite{Lawson}*{Theorem 1.4.6}. 

\begin{definition}\label{def:2-cocycles_inverse_semigroups}
A   \emph{$2$-cocycle over an inverse semigroup} $S$ with coefficients in $\T=\{z\in \mathbb{C}: |z|=1\}$ is a family $\omega=\{\omega(s,t)\}_{s,t\in S, st\neq 0}\subseteq \T$ 
such that   for all $r,s,t\in S$ with $rst\neq 0$ we have
\begin{equation}\label{eq:inverse_semigroup_cocycle}
	\omega(s,t) \omega(r,s t) =  \omega(r,s) \omega(rs,t).
\end{equation}
If, in addition, $\omega(e,e)=1$ for all $e\in \EE(S)\setminus\{0\}$, then we say $\omega$ is \emph{normalised}, and we call the pair $(S,\omega)$ a \emph{twisted inverse semigroup}. 
%
\end{definition}
\begin{remark}\label{rem:Lausch_cocycles} 
Lausch \cite{Lausch} associated $2$-cocycles to inverse semigroup extensions (using them to classify such extensions). A $2$-cocycle of Definition~\ref{def:2-cocycles_inverse_semigroups} can be identified with a Lausch $2$-cocycle associated to the trivial  
extension of $S$ by the abelian inverse semigroup 
$$
K \coloneqq (\EE(S)\times \T)/(\{0\}\times \T)\cong (\EE(S)\setminus\{0\} \times \T)\cup \{0\},
$$ where we identify $\{0\}\times \T$ with the zero element.
Indeed, see \cite{Steinberg}*{Subsection 3.2}, this extension is equivalent to the $S$-module structure on $K$ given by the obvious projection $p:K\to E(S)$ and the left action of $S$ where
$
s \cdot(e,\lambda) \coloneqq (ses^*,\lambda) 
$ if $se\neq 0$, and $s \cdot(e,\lambda) \coloneqq 0$ otherwise. 
By definition $2$-cocycles associated to the $S$-module  $K$ are 
maps $c:S\times S\to K$ satisfying $p(c(s,t))=st(st)^*$ and 
\[
r \cdot c(s,t) c(r,s t) =  c(r,s) c(rs,t), \qquad r,s,t \in S.
\]
We have a bijective correspondence between $2$-cocycles $\omega:S\times S\to \T$ over $S$ and 
$2$-cocycles $c:S\times S\to K$ associated to the $S$-module $K$, which is given by the formula 
$$
c(s,t)=\begin{cases}
	(st (st)^*,\omega(s,t)) & \text{ if }st\neq 0,
	\\
	0 &\text{ otherwise}.
\end{cases}
$$
A $2$-cocycle $c:S \times S \to K$ is normalised if $c(e,e)\in \EE(K)=\EE(S)\times\{1\}$ for all $e\in \EE(S)$. The bijection between $2$-cocycles restricts to one between normalised $2$-cocycles.
By \cite{Steinberg}*{Proposition 3.10}, any $2$-cocycle is cohomologous to a normalised one.
\end{remark}
\begin{lemma}\label{lem:cocycles_properties} Let $\omega$ be a normalised $2$-cocycle $\omega$ over $S$. For all $s\in S$, $e, f\in \EE(S)$ we have
\begin{enumerate}
	\item\label{ite:cocycles_properties1} $\omega(s,s^*)=\omega(s^*,s)$;
	\item\label{ite:cocycles_properties2} $ef\neq 0$ $\Longrightarrow$ $\omega(e,f)=1$;
	\item\label{ite:cocycles_properties3} $0\neq s^*s \leq e$ $\Longrightarrow$ $\omega(s,e)=\omega(e,s^*)=1$; and
	\item\label{ite:cocycles_properties4} $se\neq 0$  $\Longrightarrow$ $\omega(s,e)=\omega(ses^*,s)$.
\end{enumerate}  
\end{lemma}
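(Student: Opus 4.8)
The plan is to derive all four identities from the single cocycle relation \eqref{eq:inverse_semigroup_cocycle} together with normalisation, by first isolating two elementary \emph{absorption} facts and then applying them to carefully chosen triples. The two workhorses are: \textbf{(A)} if $se=s\neq 0$ with $e\in\EE(S)$, then $\omega(s,e)=1$; and \textbf{(B)} if $fs=s\neq 0$ with $f\in\EE(S)$, then $\omega(f,s)=1$. Both follow at once from \eqref{eq:inverse_semigroup_cocycle}: for \textbf{(A)} I evaluate the cocycle identity at the triple $(s,e,e)$, use $ee=e$, $se=s$ and $\omega(e,e)=1$ to obtain $\omega(s,e)=\omega(s,e)^2$, whence $\omega(s,e)=1$ since $\omega(s,e)\in\T$; for \textbf{(B)} I evaluate at $(f,f,s)$ and argue symmetrically.

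Each part is then a short cancellation. For \ref{ite:cocycles_properties3}, the hypothesis $0\neq s^*s\le e$ gives $(s^*s)e=s^*s=e(s^*s)$, from which $se=s(s^*s)e=s(s^*s)=s$ and $es^*=(es^*s)s^*=(s^*s)s^*=s^*$; applying \textbf{(A)} and \textbf{(B)} yields $\omega(s,e)=\omega(e,s^*)=1$. For \ref{ite:cocycles_properties1} (with $s\neq 0$, else both sides are undefined), I evaluate \eqref{eq:inverse_semigroup_cocycle} at $(s,s^*,s)$, giving
$$\omega(s^*,s)\,\omega(s,s^*s)=\omega(s,s^*)\,\omega(ss^*,s),$$
and cancel the outer factors using \textbf{(A)} with $e=s^*s$ (since $s\cdot s^*s=s$) and \textbf{(B)} with $f=ss^*$ (since $ss^*\cdot s=s$); indeed these are just the special case $e=s^*s$ of part \ref{ite:cocycles_properties3}.

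For \ref{ite:cocycles_properties2}, set $g\coloneqq ef=fe\in\EE(S)$, a nonzero idempotent with $eg=g$ and $fg=g$; thus $\omega(e,ef)=\omega(f,ef)=1$ by \textbf{(B)}, while normalisation gives $\omega(ef,ef)=1$. Evaluating \eqref{eq:inverse_semigroup_cocycle} at $(e,f,ef)$ and using $f\cdot ef=ef$ then collapses to $\omega(e,f)=1$. For \ref{ite:cocycles_properties4}, I first verify that $f\coloneqq ses^*$ is idempotent and that $fs=se$ (using that $e$ commutes with $s^*s$), and evaluate \eqref{eq:inverse_semigroup_cocycle} at $(f,s,e)$ to get
$$\omega(s,e)\,\omega(f,se)=\omega(f,s)\,\omega(se,e);$$
here $\omega(se,e)=1$ by \textbf{(A)} (as $(se)e=se$) and $\omega(f,se)=1$ by \textbf{(B)} (as $f\cdot se=f\cdot fs=fs=se$), so the outer factors cancel and leave $\omega(s,e)=\omega(ses^*,s)$.

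The only place where I would slow down is the bookkeeping of nonzero products, since $\omega$ is defined only on pairs with nonzero product and \eqref{eq:inverse_semigroup_cocycle} applies only to triples with nonzero triple product. In each case, though, the relevant products reduce via the idempotent and absorption relations to one of the standing nonzero quantities (for example $s$, $se$, or $ef$), so no genuine obstacle arises: once \textbf{(A)} and \textbf{(B)} are established, every identity is a one-line cancellation in $\T$.
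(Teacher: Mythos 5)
Your proof is correct. Every step checks out: the two absorption facts \textbf{(A)} and \textbf{(B)} do follow from evaluating the cocycle identity at $(s,e,e)$ and $(f,f,s)$ respectively (the resulting equation $\omega(s,e)^2=\omega(s,e)$ in $\T$ forces the value $1$), and each of the four items then reduces to a single application of \eqref{eq:inverse_semigroup_cocycle} at the triple you name, with the outer factors killed by \textbf{(A)}, \textbf{(B)}, or normalisation. The algebraic manipulations ($se=s$ when $s^*s\le e$; $ses^*$ idempotent with $(ses^*)s=se$, using commutativity of idempotents) are all valid, and you are right that the nonzero-product bookkeeping causes no trouble since every product occurring collapses to $s$, $s^*$, $se$, or $ef$.

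The paper does not give this argument: it proves the lemma by citing Proposition 3.11 of Steinberg's paper on twists and inverse semigroup cohomology, after translating $\T$-valued $2$-cocycles into Lausch $2$-cocycles valued in the module $K=(\EE(S)\times\T)/(\{0\}\times\T)$ via Remark~\ref{rem:Lausch_cocycles} (it adds that one could alternatively "follow the proof" of that proposition). Your route is more self-contained and more elementary: it avoids the detour through the Lausch-module formalism entirely and exposes exactly which instances of the cocycle identity are being used, at the cost of a page of computation that the citation buys for free. Both are legitimate; yours has the pedagogical advantage of isolating the absorption principle, which is the real content behind all four identities.
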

\begin{proof}
Using the correspondence from Remark~\ref{rem:Lausch_cocycles}, the assertion follows from \cite{Steinberg}*{Proposition 3.11} (see items (2), (4), (7), (9) therein).
One can also simply follow the proof of \cite{Steinberg}*{Proposition 3.11}.
\end{proof}

Throughout this paper $X$ will stand for a locally compact Hausdorff space. The set  $\PHomeo(X)$ of \emph{partial homeomorphisms of $X$} (homeomorphisms between open
subsets of $X$) is an inverse semigroup under composition of partial maps, with the empty map as a zero. 

An \emph{action of an inverse semigroup} $S$ on $X$ is a zero-preserving semigroup homomorphism $h:S\to\PHomeo(X)$ that is nondegenerate in the sense that $X$ is the union of the
domains of the elements of $h(S)$. Thus,  if $X_{t^*}$ denotes the domain of $h_t$, then the action $S$ consists of  a family  of  homeomorphisms $h_t : X_{t^*} \to X_{t}$ such that $h_t \circ h_s = h_{t s}$ (as partial maps) for all $s,t\in S$, $h_0=\varnothing$, and  $\bigcup_{t\in S} X_t = X$.

Let $\Cont_0(X)$ be the $\Cst$-algebra of continuous functions that vanish at infinity on $X$. 
Then the continuous bounded functions $\Contb(X)$ form its multiplier algebra, and $\Contu(X)  \coloneqq  \{a\in \Contb(X): |a|\equiv 1\}$ is its (abelian) group of unitary multipliers. An inverse semigroup action $h : S \to \PHomeo(X)$  is equivalent to an action $\alpha : S \to \PAut(\Cont_0(X))$ of $S$ by partial automorphisms of $\Cont_0(X)$, where for each $t\in S$, $\alpha_t:\Cont_0(X_{t^*})\to \Cont_0(X_t)$ is a partial automorphism of $\Cont_0(X)$ given by $\alpha_t(a) = a\circ h_{t^*}$, $a\in \Cont_0(X_{t^*})$.  Thus,  the following is a special  (commutative) case of a twisted inverse semigroup action introduced by Buss and Exel.
\begin{definition}[cf. \cite{Buss_Exel}*{Definition 4.1}]
\label{def:twist_semigroup_action}
A \emph{twist}  of an action $h : S \to \PHomeo(X)$
is a family $u=\{u(s, t)\}_{s,t\in S}$ of continuous unitary multipliers such that $u(s, t)\in \Contu(X_{(s t)^*})$ for all $s,t \in S$, and such that for all $r,s,t\in S$ and $e,f\in \EE(S)$:
\begin{enumerate}[label={(A\arabic*)}]
	\item\label{enu:twisted actions2} $u(s,t)(h_{r^*}(x)) u(r,s t)(x) = u(r,s)(x) u(rs,t)(x)$ for all $x\in X_{r} \cap X_{(s t)^*}$; 
	\item\label{enu:twisted actions3} $u(e,f) = 1_{X_{e f}}$ and $u(t,t^*t) = u(t t^*,t) = 1_{X_t}$, where $1_{X_t}$ is the unit of $\Contu(X_t)$; and
	\item\label{enu:twisted actions4} $u(t^*,e)(x) u(t^*e,t)(x)= u(t^*,t)(x)$ for all $x\in X_{t^* e t}$. 
\end{enumerate}
Since $X_{0}=\varnothing$, by convention we  have  $1_{X_{0}}=0$ and $u(s,t)=0\in \Contu(\varnothing)=\{0\}$ if $st=0$.
We call the pair $(h,u)$ a\emph{ twisted inverse semigroup action}.
\end{definition}
\begin{remark}\label{twists_over_transformation_groupoids}
Initially
Sieben \cite{Sieben} introduced  twisted inverse semigroup actions using stronger conditions. Namely, instead of \ref{enu:twisted actions3} and \ref{enu:twisted actions4} Sieben requires 
that $u(s,t) = 1_{X_{s t}}$ whenever $s$ or $t$ is an idempotent. As noticed in \cite{Buss_Exel}, see  \cite{Buss_Exel}*{Theorem~7.2} or \cite{BKM}*{Subsection~3.4}, 
the  twists of $h$ of Definition~\ref{def:twist_semigroup_action} correspond to Kumjian twists of the transformation groupoid $S\ltimes_{h} X$, defined via Fell line bundles.   Sieben twists of $h$  give rise to topologically trivial twists, or equivalently to groupoid $2$-cocycles on $S\ltimes_{h} X$ (we give definitions below), see \cite{Buss_Exel}*{Proposition 7.4}. 
\end{remark}

A twist over $S$ can be used to twist all its actions.

\begin{proposition}\label{prop:semigroup_twists_to_action_twists}
A normalised $2$-cocycle $\omega$ over $S$ induces a twist $u$ for an action $h:S\to \PHomeo(X)$ 
by setting $u(s,t)  \coloneqq  \omega(s,t)1_{X_{(st)^*}}$ for all $s,t\in S$ with $st\neq 0$.
\end{proposition}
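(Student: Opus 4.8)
The plan is to verify directly that the family $u(s,t)=\omega(s,t)\,1_{X_{(st)^*}}$ satisfies the three requirements \ref{enu:twisted actions2}--\ref{enu:twisted actions4} of Definition~\ref{def:twist_semigroup_action}, together with the membership $u(s,t)\in\Contu(X_{(st)^*})$. The membership is immediate: since $\omega(s,t)\in\T$ we have $|\omega(s,t)|\equiv 1$ on $X_{(st)^*}$, so $u(s,t)$ is a unitary multiplier there, and when $st=0$ the set $X_{(st)^*}$ is empty, so $u(s,t)=0$ matches the stated convention. The decisive structural observation is that each $u(s,t)$ is a \emph{scalar} multiple of a characteristic function; hence the scalar $\omega(s,t)$ is unaffected by the pullbacks $h_{r^*}$ occurring in \ref{enu:twisted actions2}, the partial homeomorphisms only moving the supporting open sets around. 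Consequently, on the open set where all characteristic functions appearing in a given identity equal $1$, that identity collapses to a scalar equation among values of $\omega$, and off this set both sides vanish by the convention $u(s,t)=0$ when $st=0$. Thus the proof reduces to (i) bookkeeping of domains to locate where the indicators are simultaneously $1$, and (ii) the corresponding scalar equations.

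For \ref{enu:twisted actions2}, on the common domain all four factors $u(s,t),u(r,st),u(r,s),u(rs,t)$ are nonzero, which forces $rst\neq 0$, and the condition becomes precisely
\[
\omega(s,t)\,\omega(r,st)=\omega(r,s)\,\omega(rs,t),
\]
the $2$-cocycle relation \eqref{eq:inverse_semigroup_cocycle}. For \ref{enu:twisted actions3} I must check $\omega(e,f)=1$ whenever $ef\neq 0$, which is Lemma~\ref{lem:cocycles_properties}\ref{ite:cocycles_properties2}, and $\omega(t,t^*t)=\omega(tt^*,t)=1$, both of which follow from Lemma~\ref{lem:cocycles_properties}\ref{ite:cocycles_properties3} (take $s=t$, $e=t^*t$ for the first, and $s=t^*$, $e=tt^*$ for the second).

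The substantive step is \ref{enu:twisted actions4}. Once the support analysis confirms that the relevant indicators equal $1$ at each $x\in X_{t^*et}$ (note $t^*et$ is idempotent, $e$ and $tt^*$ commuting, so $(t^*et)^2=t^*e\,tt^*\,et=t^*et$), the condition amounts to the scalar identity
\[
\omega(t^*,e)\,\omega(t^*e,t)=\omega(t^*,t).
\]
I would prove this by chaining \eqref{eq:inverse_semigroup_cocycle} with the normalisation consequences of Lemma~\ref{lem:cocycles_properties}. Applying \eqref{eq:inverse_semigroup_cocycle} to $(t^*,e,t)$ rewrites the left-hand side as $\omega(e,t)\,\omega(t^*,et)$, so it suffices to show $\omega(e,t)\,\omega(t^*,et)=\omega(t^*,t)$. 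Applying \eqref{eq:inverse_semigroup_cocycle} to $(t^*,t,f)$ with $f\coloneqq t^*et$, and using $tf=et$ together with $\omega(t^*t,f)=1$ (Lemma~\ref{lem:cocycles_properties}\ref{ite:cocycles_properties2}), gives $\omega(t^*,t)=\omega(t,f)\,\omega(t^*,et)$; hence the claim reduces to $\omega(e,t)=\omega(t,f)$. Finally, Lemma~\ref{lem:cocycles_properties}\ref{ite:cocycles_properties4} yields $\omega(t,f)=\omega(tft^*,t)=\omega(tt^*e,t)$, while one more application of \eqref{eq:inverse_semigroup_cocycle} to $(e,tt^*,t)$, in which both idempotent-valued factors $\omega(e,tt^*)$ and $\omega(tt^*,t)$ are trivial by Lemma~\ref{lem:cocycles_properties}\ref{ite:cocycles_properties2} and \ref{ite:cocycles_properties3}, gives $\omega(e,t)=\omega(tt^*e,t)$, completing the chain.

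The main obstacle is exactly this verification of \ref{enu:twisted actions4}: unlike \ref{enu:twisted actions2}, it is not a one-line restatement of the cocycle relation but requires combining \eqref{eq:inverse_semigroup_cocycle} with all three normalisation properties of Lemma~\ref{lem:cocycles_properties}. The second, more routine, source of care is the domain bookkeeping — confirming that the supporting open sets overlap as needed so that the indicators are genuinely $1$ at the evaluation points (in particular at $x\in X_{t^*et}$), and that in the degenerate cases where some semigroup product vanishes the convention $u(s,t)=0$ makes both sides of each identity vanish consistently. Everything else is a direct transcription of the scalar identities above.
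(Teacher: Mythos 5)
Your proof is correct and takes essentially the same route as the paper: verify that each axiom of Definition~\ref{def:twist_semigroup_action} reduces to a scalar identity among values of $\omega$, note that \ref{enu:twisted actions2} is just \eqref{eq:inverse_semigroup_cocycle} and \ref{enu:twisted actions3} is Lemma~\ref{lem:cocycles_properties}\ref{ite:cocycles_properties2}--\ref{ite:cocycles_properties3}, and establish \ref{enu:twisted actions4} by combining the cocycle identity with the normalisation consequences in Lemma~\ref{lem:cocycles_properties}. The only difference is that the paper disposes of \ref{enu:twisted actions4} with a shorter chain --- Lemma~\ref{lem:cocycles_properties}\ref{ite:cocycles_properties4} gives $\omega(t^*,e)=\omega(t^*et,t^*)$, then a single application of \eqref{eq:inverse_semigroup_cocycle} to the triple $(t^*e,t,t^*)$ together with items \ref{ite:cocycles_properties1} and \ref{ite:cocycles_properties3} finishes --- whereas your four-step chain through $\omega(e,t)=\omega(tt^*e,t)=\omega(t,t^*et)$ is longer but equally valid.
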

\begin{proof}Axioms~\ref{enu:twisted actions2} and \ref{enu:twisted actions3} follow from \eqref{eq:inverse_semigroup_cocycle} and 
Lemma~\ref{lem:cocycles_properties}, items \ref{ite:cocycles_properties2}, \ref{ite:cocycles_properties3}.
To show \ref{enu:twisted actions4}, let $s\in S$ and $e\in \EE(S)$ be such that $es\neq 0$. Then $s^*es\neq 0$ and
\begin{align*}
	\omega(s^*,e)\omega(s^*e, s)&\stackrel{\ref{lem:cocycles_properties}\ref{ite:cocycles_properties4}}{=}\omega(s^*es,s^*)\omega(s^*e, s)\stackrel{\eqref{eq:inverse_semigroup_cocycle}}{=}\omega(s,s^*)\omega(s^*e,ss^*)
	\\
	&\stackrel{\ref{lem:cocycles_properties}\ref{ite:cocycles_properties3}}{=}\omega(s,s^*)\stackrel{\ref{lem:cocycles_properties}\ref{ite:cocycles_properties1}}{=}\omega(s^*,s). \qedhere
\end{align*}
\end{proof}
\subsection{Twisted \'etale groupoids from twisted inverse semigroups}
An  \emph{\'etale groupoid} is a topological groupoid $\Gg$  with a locally compact Hausdorff unit space $X=\Gg^{0}$ and such that  
the range and source maps $\rg,\sr:\Gg\to X$ are locally injective open maps. 
This implies that $\Gg$ is a locally compact, locally Hausdorff space and $X$ is an open subspace of $\Gg$. 
The groupoid $\Gg$ is  Hausdorff if and only if $X$ is closed in $\Gg$.
We denote the composable pairs in $\Gg$ by $\Gg^2 = \{(\gamma,\eta) \in \Gg \times \Gg \mid \sr(\gamma) = \rg(\eta)\}$. 
A  \emph{bisection} of $\Gg$ is an open set on which $\rg$ and $\sr$ are injective. 
The family $\Bis(\Gg)$ of bisections of $\Gg$
form a unital inverse semigroup where composition is given by  
$$UV  \coloneqq  \{ \gamma \eta \mid (\gamma,\eta) \in \Gg^2 \cap (U \times V)\}$$
 for all $U,V \in \Bis(\Gg)$, the generalised inverse of $U$ is  $U^* = U^{-1} \coloneqq \{\gamma^{-1}\mid \gamma\in \Gg\}$, $X$ is the unit, and $\emptyset$ is the zero element in $\Bis(\Gg)$. \'Etalness of $\Gg$ implies that $\Bis(\Gg)$ covers $\Gg$.

By a \emph{twist} over \(\Gg\) we mean a  Fell line bundle $\LL$ over $\Gg$ in
the sense of \cite{Kumjian}. Thus,  \(\LL=(L_\gamma)_{\gamma\in \Gg}\) is a
locally trivial bundle of one-dimensional complex Banach spaces, together with
multiplication maps \(L_{\gamma}\times L_{\eta}\ni (z_{\gamma},z_{\eta})\mapsto
z_{\gamma}\cdot z_{\eta}\in L_{\gamma\eta}\), \((\gamma,\eta)\in \Gg^2\), and
involution maps \(L_{\gamma}\ni z\mapsto  \overline{z}\in L_{\gamma^{-1}}\),
$\gamma\in \Gg$, that are continuous and consistent with each other in a natural
way. In particular, we assume that $\LL|_X = X \times \mathbb{C}$ is trivial.
Equipping \(\Sigma\defeq\setgiven{z\in \LL}{|z|=1}\)  with the topology and
multiplication  from \(\LL\) yields a topological groupoid, with a natural exact
sequence \(X\times \T\into \Sigma \onto \Gg\) that turns \(\Sigma\) into a central
\(\T\)-extension of \(\Gg\). Every central \(\T\)-extension of \(\Gg\) arises
this way, see \cite{Kumjian}*{Example 2.5.iv}. This gives an equivalence between
Fell line bundles and  twists in the sense of Kumjian-Renault
\cites{Kumjian:Diagonals, Renault:Cartan}.

We say that a Fell line bundle $\LL$ is \emph{topologically trivial} if it is trivial as  a bundle, that is if $\LL\cong \Gg\times \mathbb{C}$ as a vector bundle. 
Such Fell bundles are equivalent to $2$-cocycles.
\begin{example}
A (normalized) \emph{groupoid $2$-cocycle}  is a continuous function \(\sigma\colon \Gg^2 \to \T\) such that
\begin{equation}\label{eq:groupoid_cocycle_identities}
\sigma(\alpha,\beta)\sigma(\alpha\beta,\gamma) = \sigma(\beta,\gamma)\sigma(\alpha,\beta\gamma) \quad \text{and} \quad    \sigma \big( \rg(\gamma),\gamma \big) = 1 = \sigma \big( \gamma , \sr(\gamma) \big),
\end{equation}
for every composable triple $\alpha,\beta,\gamma \in \Gg$, see  \cite{Renault}. For any such $\sigma$
the trivial bundle $\Gg\times \mathbb{C}$ becomes a Fell line bundle with operations given by 
$
(\alpha,w) \cdot (\beta,z)  \coloneqq  \big( \alpha \beta , \sigma(\alpha,\beta) wz \big)$ 
and  
$(\alpha,w)^{*}  \coloneqq  \big( \alpha^{-1} , \overline{\sigma(\alpha^{-1},\alpha) w } \big) .
$ Every topologically trivial Fell bundle comes from a $2$-cocycle.
\end{example}
To each inverse semigroup action
$h:S\to\PHomeo(X)$ we associate the \emph{transformation groupoid} $S \ltimes_{h} X$, defined as follows (see \cite{Paterson}*{p.\ 140} or \cite{Exel}*{Section~4}).
The arrows of 
$$
S\ltimes_{h} X=\{[t,x]: x\in X_{t^*},\, t\in S\}
$$ are equivalence classes of pairs $(t,x)$ for $x\in X_{t^*} \subseteq X$; where two pairs $(t,x)$ and $(t',x')$ are equivalent if $x = x'$ and there is $v\in S$ with $v\le t, t'$ and $x \in X_{v^*}$. The unit space of $S\ltimes_{h} X$ is $\{[e,x]:x\in X_e, e\in \EE(S)\}$, which is naturally identified with $X$ via the map $x\mapsto [e,x]$ for any $e\in \EE(S)$ with $x\in X_e$.
The range and source maps $\rg,\sr: S\ltimes_{h} X \rightrightarrows X$, and multiplication are defined by 
\[
\rg([t,x])  \coloneqq  h_t(x), \quad \sr([t,x])  \coloneqq  x, \quad \text{and} \quad [s,h_t(x)] \cdot [t,x] = [s\cdot t,x],\, x \in X_{(st)^*}.
\] 
We give $S\ltimes_{h} X$ the unique topology such that the unit space $X$ is equipped with the original topology and the sets $U_t  \coloneqq  \{ [t,x] : x \in X_{t^*} \}$ are open bisections of $S\ltimes_{h} X$. 
Then $S\ltimes_{h} X$ is an \'etale groupoid and  the map $S\ni t \mapsto U_t \in \Bis(S\ltimes_{h} X)$ is a zero preserving semigroup homomorphism.

\begin{remark}\label{rem:transformation_subgroupoids}
A subset $Y\subseteq X$ is\emph{ $h$-invariant} if $h_t(Y\cap X_{t^*})=Y\cap X_{t}$ for every $t\in S$. It 
follows from the above construction that for any $h$-invariant set $Y$ we may view $S\ltimes_{h} Y$ as a subgroupoid of $S\ltimes_{h} X$.
Moreover, this subgroupoid is open (resp. closed) in $S\ltimes_{h} X$ if and only if  $Y$ is open (resp. closed) in $X$.
Similarly, if $S_0\subseteq S$ is an inverse subsemigroup is \emph{wide}, that is it  contains all idempotents $\EE(S)$ in $S$, then the preorders in $S_0$ and $S$ are compatible and 
therefore, we may treat  $S_0\ltimes_{h} X$ as an open subgroupoid of $S\ltimes_{h} X$.
\end{remark}
By a  \emph{$1$-cocycle} (or a \emph{prehomomorphism} \cite{Lawson}) with values in a discrete group  $\Gamma$  we mean a map $c:S\setminus \{0\}\to \Gamma$ satisfying $c(st)=c(s) c(t)$ whenever $st\neq 0$.
\begin{lemma}\label{lem:from_cocycles_to_homomorphisms}
For any $1$-cocycle $c:S\setminus \{0\}\to \Gamma$ and any action $h:S\to\PHomeo(X)$, the formula 
$\widetilde{c}[t,x]=c(t)$ defines a continuous groupoid homomorphism $\widetilde{c}: S\ltimes_{h} X \to \Gamma$. Let $S_0 \coloneqq  c^{-1}(1)\cup \{0\}$. Then there is an 
isomorphism of groupoids
$
S_0\ltimes_{h} X \cong \widetilde{c}^{-1}(1),
$
so we may treat $S_0\ltimes_{h} X$ as a clopen wide subgroupoid of $S\ltimes_{h} X$. 
\end{lemma}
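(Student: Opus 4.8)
The plan is to prove the two assertions in turn: first that $\widetilde{c}$ is a well-defined continuous groupoid homomorphism, and then to recognise $\widetilde{c}^{-1}(1)$ as the transformation groupoid of the wide inverse subsemigroup $S_0$. The single fact that drives everything is that $c$ kills nonzero idempotents: if $e\in\EE(S)\setminus\{0\}$ then $e=e^2\neq 0$ forces $c(e)=c(e)^2$ in the group $\Gamma$, hence $c(e)=1$.

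First I would check that $\widetilde{c}[t,x]=c(t)$ is well defined. If $[t,x]=[t',x']$ there is $v\le t,t'$ with $x\in X_{v^*}$; writing $v=te$ for an idempotent $e$ and noting $v\neq 0$ (since $X_{v^*}\ni x$ is nonempty), the $1$-cocycle identity gives $c(v)=c(t)c(e)=c(t)$, and symmetrically $c(v)=c(t')$, so $c(t)=c(t')$. The map is multiplicative because a composable product $[s,h_t(x)]\cdot[t,x]=[st,x]$ satisfies $x\in X_{(st)^*}\neq\emptyset$, so $st\neq 0$ and $c(st)=c(s)c(t)$. Continuity is immediate: $\widetilde{c}$ is constant with value $c(t)$ on each basic open bisection $U_t$, and these cover $S\ltimes_{h} X$, so $\widetilde{c}$ is locally constant into the discrete group $\Gamma$.

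Next I would verify that $S_0=c^{-1}(1)\cup\{0\}$ is a wide inverse subsemigroup. It contains every idempotent by the computation above, so $\EE(S)\subseteq S_0$ and $S_0$ is wide; it is closed under products (if $st\neq 0$ then $c(st)=c(s)c(t)=1$, and $st=0\in S_0$ otherwise) and under inverses, since for $s\neq 0$ we have $s^*s\neq 0$ and $1=c(s^*s)=c(s^*)c(s)$, whence $c(s^*)=c(s)^{-1}$. By Remark~\ref{rem:transformation_subgroupoids}, wideness lets us regard $S_0\ltimes_{h} X$ as an open subgroupoid of $S\ltimes_{h} X$ sharing the unit space $X$.

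Finally I would identify this open subgroupoid with $\widetilde{c}^{-1}(1)$. The inclusion sends a class $[t,x]$ with $t\in S_0$ (so $c(t)=1$) into $\widetilde{c}^{-1}(1)$, and conversely every arrow $[t,x]\in\widetilde{c}^{-1}(1)$ has $c(t)=1$, i.e.\ $t\in S_0$, so the image is exactly $\widetilde{c}^{-1}(1)$. Since $S_0\ltimes_{h} X$ is open in $S\ltimes_{h} X$, this bijective homomorphism is a homeomorphism onto its image, and $\widetilde{c}^{-1}(1)$ is clopen because $\{1\}$ is clopen in the discrete group $\Gamma$. I expect the one genuinely delicate point—the main obstacle—to be injectivity, which amounts to checking that the equivalence relations defining the two transformation groupoids agree: if $t,t'\in S_0$ and some $v\le t,t'$ with $x\in X_{v^*}$ witnesses equivalence in $S\ltimes_{h} X$, then $v\neq 0$ and $v=te$ give $c(v)=c(t)=1$, so $v\in S_0$ already witnesses equivalence in $S_0\ltimes_{h} X$. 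Everything else is a direct application of the $1$-cocycle identity together with the vanishing of $c$ on idempotents.
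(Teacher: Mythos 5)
Your proof is correct and follows essentially the same route as the paper's: well-definedness via $c$ vanishing on nonzero idempotents, continuity from local constancy on the bisections $U_t$, and the identification $\widetilde{c}^{-1}(1)=\bigcup_{t\in c^{-1}(1)}U_t\cong S_0\ltimes_h X$. You simply spell out more of the details (notably that the germ equivalence relations of $S_0\ltimes_h X$ and $S\ltimes_h X$ agree on $S_0$), which the paper leaves implicit.
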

\begin{proof}
Assume $[t,x]=[t',x]$, so that there is $v\in S$ with $v\le t, t'$ and $x \in X_{v^*}$. In particular, $v\neq 0$
and there are idempotents $e,f\in E(S)\setminus \{0\}$ such that $v=te=t'f$. 
This implies that $c(v)=c(t)=c(t')$ as we  have $c(e)=c(f)=1$. Hence, $\widetilde{c}$ is well-defined and $\widetilde{c}|_{U_t}=c(t)$ for $t\in S$.
As $\widetilde{c}$ is locally constant, it is continuous, and since $U_tU_s=U_{st}$ it is a groupoid homomorphism. 
Moreover, $\widetilde{c}^{-1}(1)=\bigcup_{t\in c^{-1}(1)}U_t$ is a clopen wide subgroupoid of $S\ltimes_{h} X$, which can be identified with $S_0\ltimes_{h} X$.
\end{proof}

A twist $u=\{u(s, t)\}_{s,t\in S}$ of $h$, as in Definition~\ref{def:twist_semigroup_action}, 
induces a twist $\LL_{u}$ over $S\ltimes_{h} X$, see \cite{BKM2}*{Subsection 4.3}. Elements of $\LL_u$ are equivalence classes of triples $(a,t,x)$ for
$a\in \Cont_0(X_{t})$, $x\in X_{t^*}$, $t\in S$, where two triples $(a,t,x)$ and $(a',t',x')$ are
\emph{equivalent }\label{page:germ_relations} if 
\begin{enumerate}
\item $x=x'$ and there is $v\in S$ with $v\leq t,t'$, where $x\in X_{v^*}$; and
\item $(a \cdot u(vv^*,t))(h_{v}(x)) = (a' \cdot u(vv^*,t'))(h_{v}(x))$.  
\end{enumerate}
In particular, there is a canonical surjection $\LL_u\ni  [a,t,x] \mapsto [t,x]\in S\ltimes_{h} X$.
We equip $\LL_u$ with the unique topology  such that the local sections $[t,x] \mapsto [a,t,x]$, for $x\in X_{t^*}$, $a\in \Cont_0(X_t)$, $t\in S$, are continuous.
This makes $\LL_u$ a Fell line bundle with operations defined by 
\begin{align*}
\alpha[a,t,x] + \beta[b,t,x] & \coloneqq  [\alpha a+\beta b,t,x],
&\big| [a,t,x] \big| & \coloneqq  \big| a \big( h_{t}(x) \big) \big|,\\
[a,s,h_t(x)] \cdot [b,t,x] & \coloneqq  [a (b\circ h_{s^*}) u(s,t),st,x],
&\overline{[b,t,x]}& \coloneqq  [\overline{b\circ h_{t}}\cdot \overline{u(t,t^*)} , t^* , h_{t}(x)],
\end{align*}
for all $a\in C_0(X_s)$, $b\in C_0(X_t)$, $\alpha,\beta \in \mathbb{C}$, $x\in X_{(st)^*}$, and $s,t\in S$. 
\begin{definition}\label{defn:groupoid_twists_from_semigroup_twists}
Given an inverse semigroup action $h:S\to \PHomeo(X)$ and a twist $u$ of $h$, we call the pair $(S\ltimes_{h} X,\LL_{u})$ described above the \emph{twisted transformation  groupoid} of $(h,u)$.
If  $u$ comes from a  twist $\omega$ of $S$ as in Proposition~\ref{prop:semigroup_twists_to_action_twists}, then we write $\LL_{\omega} \coloneqq \LL_u$.
\end{definition}

\begin{remark}\label{rem:2-cocycle_groupoid} Every twisted \'etale groupoid is the transformation groupoid for some twisted inverse semigroup action, see, for instance \cite{BKM}*{Lemma 4.21}. However, the above twists $\LL_{\omega}$  are rather special as they come from
twisted inverse semigroups.
In the present paper we will be mostly interested in ample groupoids. It is well known, see for instance \cite{Kwasniewski-Meyer-Prasad}*{Remark 4.28},  that for ample Hausdorff and $\sigma$-compact groupoids  every line bundle is topologically trivial, so the groupoid twist might
be identified with a groupoid $2$-cocycle. The problem, however, is that there is no canonical choice of such a cocycle.
In addition, if the ample groupoid is non-Hausdorff, then one can easily construct topologically nontrivial twists, see Example~\ref{ex:nontrivial_twist} below.
Therefore, we are forced to work with Fell line bundles, rather than $2$-cocycles.
\end{remark}
\subsection{Groupoids from inverse semigroups}
Inverse semigroups act naturally on the spectra of semilattices of their idempotents.  
We  recall these standard constructions. 
Let $\EE \coloneqq \EE(S)$ denote  the semilattice of  idempotents in a fixed inverse semigroup $S$.
A \emph{cover} of an idempotent $e\in \EE$ is a finite set $F\subseteq e\EE$ such that for every nonzero $z\leq e$ we have $z\cdot f\neq 0$ for some $f\in F$.
A \emph{homomorphism} from $\EE$ to a Boolean ring $R$ is a map $\phi:\EE\to R$ that preserves $0$ and the meet operations. A map  $\phi : \EE\to R$  is \emph{tight}  (\emph{cover preserving} or \emph{cover-to-join}) if the element $\phi_e - \bigvee_{f\in F} \phi_f = \bigwedge_{f\in F} (\phi_e\setminus \phi_f)$ is zero whenever $F$ covers $e\in \EE$.  By definition $\varnothing$ covers $0$ and so tight maps always preserve zero.

A (proper) \emph{filter} in $\EE$ is an upward-closed and downward directed  subset of $\EE\setminus \{0\}$. 
There is a bijective correspondence between filters and nonzero homomorphisms $\phi:\EE\to \{0,1\}$ (\emph{characters} of $\EE$), given by 
$\phi \leftrightarrow  \supp(\phi )  \coloneqq  \{ e\in \EE : \phi(e) = 1 \}$. The \emph{spectrum} of the semilattice $\EE$ is the set
\[
\widehat{\EE}
 \coloneqq \{ \xi \subseteq \EE : \xi \text{ is a  filter} \}=\{ \supp(\phi): \phi : \EE \to \{0,1\} \ \text{is a  homomorphism} \}
\]
equipped with topology inherited from $\{0,1\}^{\EE}$. Namely, putting  $Z(e)  \coloneqq  \{ \xi \in \widehat{\EE} : e\in \xi \}$ for  $e \in \EE$, the sets $Z(e) \setminus \bigcup_{f \in F} Z(f)$, ranging over all $e\in \EE$ and finite $F\subseteq e\EE$, constitute a basis of compact open sets of $\widehat{\EE}$, turning it into a totally disconnected locally compact Hausdorff space. The  inverse semigroup $S$ acts naturally on $\widehat{\EE}$ by  partial homeomorphisms 
$\widetilde{h}_{t} : Z(t^*t)\to Z(tt^*)$ given by the formulae
\begin{equation}\label{eq:inverse_semigroup_action_spectrum}
\widetilde{h}_{t}(\xi)  \coloneqq \{e\in \EE: t^*et\in \xi\}= \{ e \in \EE : \text{$f \geq t ft^*$  for some $f\in \xi$} \} ,
\end{equation}
for all $t\in S$, $\xi\in \widehat{\EE}$. In terms of characters \eqref{eq:inverse_semigroup_action_spectrum} reads as $\widetilde{h}_{t}(\phi)(e)  \coloneqq  \phi ( t^*et)$ for $e\in \EE$, see \cite{Exel}*{Proposition 10.3}.
A  character $\phi\in\widehat{\EE}$ is tight if and only if the corresponding filter $\xi \subseteq \EE$ is \emph{tight} in the sense that for every $e \in \xi$ and every cover $F$ of $e$ we have $F\cap \xi\neq \varnothing$.  
The \emph{tight} \emph{spectrum} of the semilattice $\EE$ is the following  subspace of $\EE$
\[
\partial\widehat{\EE}
 \coloneqq \{ \xi \subseteq \EE : \xi \text{ is a tight filter} \}=\{ \supp(\phi): \phi : \EE \to \{0,1\} \ \text{is a tight homomorphism} \}.
\]
It can be shown that 
$\partial\widehat{\EE}$  is the closure of \emph{ultrafilters} (maximal filters) in  
$\widehat{\EE}$, see  \cite{Exel}*{Theorem 12.9}, and $\partial\widehat{\EE}$ is invariant under the semigroup action $\widetilde{h}$, see \cite{Exel}*{12.8}.
Hence, we have  two actions  $\widetilde{h}:S\to\PHomeo( \widehat{\EE})$ and $h:S\to\PHomeo( \partial\widehat{\EE})$ and two transformation groupoids
\[
\widetilde{\Gg}(S) \coloneqq S \ltimes_{\widetilde{h}} \widehat{\EE}  \qquad \text{ and } \qquad \Gg(S) \coloneqq S \ltimes_{h} \partial\widehat{\EE}. 
\]

\begin{definition}\label{def:universal_and_tight_groupoids}
The groupoid $\widetilde{\Gg}(S)$ is called \emph{universal} or (contracted) \emph{Paterson groupoid} for $S$. 
The groupoid $\Gg(S)$, which is the restriction of $\widetilde{\Gg}(S)$ to $\partial\widehat{\EE}$,  is called  the \emph{tight groupoid} of $S$, see \cites{Exel, Steinberg0, Steinberg_Szakacs}.
\end{definition}

The preceding concepts are well-illustrated in the context of directed graphs. We establish the relevant definitions and notation next, to be used in the subsequent sections.

\begin{example}[Directed graphs]\label{ex:spectrum_graph_inverse_semigroup}
Let $E = (E^0,E^1,\rg,\sr)$ be a \emph{directed graph} where $E^0$ is the set of \emph{vertices}, $E^1$ is the set of \emph{edges}, and $\rg,\sr \colon E^1 \to E^0$ are the \emph{range} and \emph{source} maps. For $n\geq 1$ we denote by  
$E^n  \coloneqq  \{\mu = \mu_1\cdots \mu_n \mid \mu_i \in E^1,\, \sr(\mu_i) = \rg(\mu_{i+1})\}$, the collection of \emph{paths of length $n$} and adopt the convention that a vertex is a path of length $0$.
The range and source maps extend to  $\rg,\sr \colon E^n \to E^0$ by $\rg(\mu_1 \cdots \mu_n) = \rg(\mu_1)$ and $\sr(\mu_1 \cdots \mu_n) = \sr(\mu_n)$, and the range map extends to 
the set $E^{\infty}  \coloneqq  \{\mu = \mu_1 \mu_2 \cdots \colon \mu_i \in E^1, \sr(\mu_i) = \rg(\mu_{i+1})\}$ of \emph{infinite paths}. On $E^0$ we set $\rg(v) = \sr(v) = v$ for all $v \in E^0$.

We treat  $E^* \coloneqq \bigcup_{n=0}^\infty E^n$ as a small category---the \emph{path category} of $E$---with objects $E^0$ and composition given by the concatenation of paths: 
if $\mu \in E^m$ and $\nu \in E^n$ with $\sr(\mu) = \rg(\nu)$, then $\mu\nu  \coloneqq  \mu_1 \cdots \mu_m \nu_1 \cdots \nu_n \in E^{m+n}$.
Let $v,w \in E^0$. For $m \in \N \cup \{\infty\}$ we write $vE^m = \{\mu \in E^m \mid \rg(\mu) = v\}$ and for $m \in \N$ we write $E^m w = \{ \mu \in E^m \mid \sr(\mu) = w\}$  and $v E^m w = vE^m \cap E^m w$.  
The \emph{path space} or \emph{spectrum 
	of the graph $E$} is  the set 
\[
E^{\le \infty} \coloneqq E^*\cup E^{\infty}
\]
equipped with topology generated by relative complements of  cylinder sets  $Z(\alpha)  \coloneqq  \{ \alpha\mu \in  E^{\le \infty} \mid \mu \in  E^{\le \infty}\}$ 
where  $\alpha\in E^*$. Namely, the sets $Z(\alpha) \setminus \bigcup_{\beta\in F} Z(\alpha\beta)$, where $\alpha\in E^*$ and $F\subseteq \sr(\alpha)E^*$ is finite, from a basis for the topology on
$E^{\le \infty}$. 

We call a vertex $v\in E^0$ a \emph{finite receiver} if $vE^1$ is finite and an \emph{infinite receiver} otherwise.
If $vE^1 = \varnothing$ we call $v$ a \emph{source}. 
We say that a vertex is \emph{singular} if it is a source or infinite receiver. Otherwise we say that it is \emph{regular}.
We call an element in $E^*$ a \emph{boundary path} if its source is a singular vertex and we denote the collection of boundary paths in $E^*$ by $E^*_{\sing}$.
Infinite paths in $E^\infty$ are also considered to be boundary paths.  
The subspace
\[
\partial E \coloneqq  E^*_{\sing} \cup E^{\infty} \subseteq E^{\le \infty}
\]  
is called the \emph{tight spectrum} or a \emph{boundary path space} of $E$. 
It is the closure in $E^{\le \infty}$ of the set of all ``maximal paths'' $E^*_{\src} \cup E^{\infty}$, where $E^*_{\src}$ is the set of all paths that start in a source vertex (and hence cannot be extended).

The spaces $E^{\le \infty}$ and $\partial E$ naturally arise from the standard inverse semigroup associated to $E$. This  inverse semigroup is $S(E) \coloneqq E^*\fibre{\sr}{\sr}E^*\cup \{0\}$
with multiplication 
\[
(\alpha,\beta) (\gamma,\delta) \coloneqq 
\begin{cases} (\alpha \beta', \delta) & \text{if } \gamma=\beta\beta'
	\\
	(\alpha,  \delta \gamma') & \text{if } \beta=\gamma\gamma'
	\\
	0 & \text{otherwise}
\end{cases}
\]
and involution $(\alpha,\beta)^* =(\beta,\alpha)$. The semilattice of idempotents  $\EE(S(E))=\{(\alpha,\alpha):\alpha\in E^*\}\cup \{0\}$ is homeomorphic, 
via  $(\alpha,\alpha)  \mapsto \alpha$, to the semilattice $E^*\cup \{0\}$ with the partial order determined  by 
\[
\alpha\leq \beta\,\, \stackrel{def}{\Longleftrightarrow}\,\, \alpha=\beta\beta' \text{ for some }\beta'\in \sr(\beta)E^*.
\] 
If $\alpha \le \beta$, then we say that $\alpha$ \emph{extends} $\beta$ or that $\beta$ is a \emph{prefix} of $\alpha$. We say that $\alpha$ and $\beta$ are \emph{comparable} if they are comparable in the partial order, that is either $\alpha \le \beta$ or $\beta \le \alpha$. 

A set $F\subseteq E^*$ \emph{covers} $\alpha\in E^*$, in the semigroup $E^*\cup\{0\}\cong \EE(S(E))$, if and only if
each extension of $\alpha$ is comparable with some element of $F$.
Thus,  the set $vE^1$ is a finite cover of  $v\in E^0$ if and only if $v$ is regular.
Moreover, every filter in $E^*\cup \{0\}$ is either of the form
$\phi_{\mu} \coloneqq \{\alpha\in E^*: \mu \leq \alpha\}$ for  $\mu \in E^*$, or  $\phi_{\mu} \coloneqq \{\alpha\in E^*: \mu_1\cdots\mu_n \leq \alpha \text{ for some } n\in \N\}$ for   $\mu=\mu_1\mu_2\cdots \in E^\infty$. 
Thus,  the spectrum of $E^*\cup \{0\}$ consists of the sets 
\[
\phi_{\mu}=\{\alpha\in E^*: \bone_{Z(\alpha)}(\mu)=1\}, \qquad \mu\in E^{\le \infty}.
\]
Furthermore, $\phi_{\mu}$ is an ultrafilter if and only if $\mu\in E^*_{\src} \cup E^{\infty}$, and $\phi_{\mu}$ is  tight if and only if $\mu\in \partial E$.
It follows that we have natural homeomorphisms
$$
E^{\le \infty}\cong \widehat{E^*\cup \{0\}}\cong \widehat{\EE(S(E))} \qquad \text{and} \qquad \partial E\cong \partial \widehat{E^*\cup \{0\}}\cong \partial\widehat{\EE(S(E))},
$$
given by
$E^{\le \infty}\ni \mu\mapsto \phi_{\mu}\in \widehat{E^*\cup \{0\}}$  and its restriction.
Moreover, we have 
$[\alpha,\beta; \phi_{\xi}]=[\gamma,  \delta; \phi_{\eta}]$ in $\widetilde{\Gg}(S(E))$ if and only if  there are $\beta',\delta' \in E^*$ such that
$\alpha \beta'=\gamma \delta'$, $\beta\beta'=\delta\delta',
$ and  $\xi= \eta\in Z(\beta\beta')=Z(\delta\delta')$. Using this, one sees that 
$
\widetilde{\Gg}(S(E))\ni [(\alpha,\beta), \phi_{\beta \xi}]\longmapsto (\alpha\xi,|\alpha|-|\beta|, \beta\xi)\in \widetilde{\Gg}(E)
$
is an isomorphism of topological groupoids, where 
$$
\widetilde{\Gg}(E) \coloneqq \{(\alpha\xi,|\alpha|-|\beta|, \beta\xi):  \alpha,\beta \in E^*, \xi\in E^{\le \infty},\ \sr(\alpha) = \sr(\beta) = \rg(\xi) \}
$$ 
is equipped with the topology generated by the `cylinders' $Z(\alpha,\beta)  \coloneqq  \{ (\alpha \xi, |\alpha|-|\beta|, \beta \xi) \in \widetilde{\Gg}(E) \}$, for $(\alpha,\beta)\in S(E)$, and their relative complements.  The algebraic structure is given by 
$    (\alpha,n,\beta) (\beta,m,\gamma)  \coloneqq  (\alpha,n+m,\gamma)$ and  $(\alpha,n,\beta)^{-1}  \coloneqq  (\beta,-n,\alpha)$.
The groupoid $\widetilde{\Gg}(E)$ contains 
$\Gg(E)  \coloneqq   \{(\alpha\xi,|\alpha|-|\beta|, \beta\xi) \in \widetilde{\Gg}(E) \colon  \xi \in \partial {E}\}$ as a  closed subgroupoid, and we have
$$
\widetilde{\Gg}(E) \cong \widetilde{\Gg}(S(E)), \qquad \Gg(E) \cong \Gg(S(E))
$$
These are the Deaconu--Renault groupoids associated to the one-sided shift map $\sigma_E \colon E^{\le \infty}\setminus E^0\to E^{\le \infty}$ 
and its restriction to $\partial E$, cf. \cite{Renault2}.
Consequently, these ample groupoids are amenable and Hausdorff.
\end{example}
\section{\texorpdfstring{$L^P$}{LP}-operator algebras associated to twisted groupoids}\label{sect:LP-groupoid_algebras}
Phillips \cites{Phi12, Phillips} defined
\emph{$L^p$-operator algebras}, for $p\in[1,\infty)$, as Banach algebras that can be isometrically represented (via bounded operators) on $L^p$-spaces. 
Here we will consider more general $L^P$-operator algebras parametrised by a subset $P\subseteq [1,\infty]$. Since we are interested only in the Banach space structure, it is useful to adopt the following notation. 
\begin{definition} A complex Banach space $Y$ is an  \emph{$L^p$-space}, for $p\in [1,\infty)$, if it is isometrically isomorphic to 
the Lebesgue space $L^p(\mu)$ associated to some measure $\mu$. We call $Y$ an \emph{$L^\infty$-space} if it is isometrically isomorphic to  
$\Cont_0(\Omega)$ for some locally compact Hausdorff space $\Omega$.  
\end{definition}
\begin{remark} $L^2$-spaces are nothing but Hilbert spaces.
The class of $L^\infty$-spaces as defined above is significantly larger than the class of spaces $L^{\infty}(\mu)$ for some measure $\mu$
(as, for instance, it contains the space $c_0$), and  it is easier to find nondegenerate representations on this larger class, see \cite{BKM}*{Remark~2.16} and Lemma~\ref{lem:making_representations_nondegenerate} below. This is one of the reasons why we consider such a more general class of spaces.
\end{remark}
\begin{definition}
By  a \emph{representation} $\pi:A\to B$ between two Banach algebras we mean a contractive homomorphism. If $B=\Bound(Y)$ for a Banach space $Y$ and $\overline{\pi(A)Y}=Y$ we say that $\pi$ is a \emph{nondegenerate representation} on $Y$.
\end{definition}
\begin{remark}
A homomorphism  between $C^*$-algebras is a representation (is contractive)  if and only if it is  $*$-preserving, cf. \cite{BKM}*{Remark 2.9}.
\end{remark}
\subsection{Twisted groupoid \texorpdfstring{$L^P$}{LP}-operator algebras}\label{sec:twisted_groupoid_algebras}
In this section we fix a twisted groupoid $(\Gg,\LL)$, where $\Gg$ is an \'etale groupoid  with locally compact Hausdorff unit space $X$ and  $\LL$ is a Fell line bundle over $\Gg$.
For each open \(U\subseteq \Gg\) we denote by  $\Contc(U,\LL)$ the space of continuous compactly supported sections of 
the restriction $\LL|_U$ of the bundle $\LL$  to the set $U$. 
The associated $*$-algebra is  defined on the set of \emph{quasi-continuous} compactly supported sections, which by definition are elements of
\[
\mathfrak{C}_c(\Gg,\LL)  \coloneqq  \linspan \{ f \in \Contc(U,\LL): U\in  \Bis(\Gg) \} , 
\]
where we treat sections of $\LL|_U$ as sections of $\LL$ that vanish outside $U$. If $\Gg$ is Hausdorff, then  $\mathfrak{C}_c(\Gg,\LL)=\Contc(\Gg,\LL)$ are usual continuous compactly supported sections. 
We equip $\mathfrak{C}_c(\Gg,\LL)$ with multiplication and involution given by 
$$ 
(f*g)(\gamma)  \coloneqq  \sum_{\rg(\eta) = \rg(\gamma)} f(\eta) \cdot g(\eta^{-1} \gamma), \qquad (f^*)(\gamma)  \coloneqq  f(\gamma^{-1})^* ,
$$
where $f,g \in \mathfrak{C}_c(\Gg, \LL)$. Since $X\in \Bis(\Gg)$ and  $\LL|_X=X\times \mathbb{C}$ is trivial, we get that $\Contc(X)$ is a $*$-subalgebra of $\mathfrak{C}_c(\Gg,\LL)$.
If the bundle $\LL$ is topologically trivial, so that the twist comes from a $2$-cocycle $\sigma: \Gg^2 \to \T$, then the algebra $\mathfrak{C}_c(\Gg,\LL)$ may be identified with the algebra 
$\mathfrak{C}_c(\Gg)   \coloneqq  \linspan \{ f \in \Contc(U): U\in  \Bis(\Gg) \}$ of quasi-continuous functions, and under this identification  
for $f,g\in \mathfrak{C}_c(\Gg)$ we have
\begin{equation}\label{eq:convolution_and_involution1}
(f*g)(\gamma)=\sum_{\alpha\beta=\gamma} f(\alpha)g(\beta)\sigma(\alpha,\beta) \quad \text{and} \quad  f^*(\gamma)= \sigma(\gamma,\gamma^{-1})^*f(\gamma^{-1})^*.
\end{equation}
In general, the  range and source $\rg,\sr : \Gg \to X$ induce the following  norms on $\mathfrak{C}_c(\Gg,\LL)$   
\[
\| f \|_{*s}  \coloneqq  \max_{x \in X} \sum_{\sr(\gamma) = x} \big| f(\gamma) \big| , \qquad \| f \|_{*r}  \coloneqq  \max_{x \in X} \sum_{\rg(\gamma) = x} \big| f(\gamma) \big| , \qquad \| f \|_I  \coloneqq  \max \{ \| f \|_{*s} , \| f \|_{*r} \} .
\]
We denote by $F_{*s}(\Gg,\LL)$, $F_{*r}(\Gg,\LL)$ and $F_{I}(\Gg,\LL)$  the corresponding completions of $\mathfrak{C}_c(\Gg,\LL)$ in the respective norms. 

\begin{definition} A \emph{representation} of $(\Gg,\LL)$ on an $L^p$-space $Y$, for some $p\in[1,\infty]$,  is a $\|\cdot\|_{I}$-contractive algebra homomorphism $\psi : \mathfrak{C}_c(\Gg,\LL)\to \Bound(Y)$. We say that $\psi$ is \emph{nondegenerate} if $\overline{\psi(\mathfrak{C}_c(\Gg,\LL))Y}=Y$ (which is equivalent to $\overline{\psi(\Cont_c(X))Y}=Y$).
\end{definition}
\begin{remark}For more general Banach spaces one may want to modify the above definition by replacing $\|\cdot\|_{I}$-contractiveness with some other norm condition, see \cite{BKM}.
\end{remark}
\begin{example}[Regular representations]\label{ex:regular_representation}
For $p\in [1,\infty)$, the Banach space $\ell^p(\Gg,\LL)$  of all sections of $\LL$ for which the norm $\|f\|_{p} = (\sum_{\gamma\in \Gg} |f(\gamma)|^{p})^{1/p}$  is finite is an $L^p$-space.  Similarly, the space of all bounded sections $\ell^{\infty}(\Gg,\LL)$ and its subspace of sections vanishing at infinity $c_0(\Gg,\LL)$ together with the norm 
$\|f\|_{\infty} = \sup_{\gamma\in \Gg}|f(\gamma)|$ are $L^\infty$-spaces. For any $p \in [1 , \infty]$ the convolution formula  
\[
\Lambda_p(f) \xi  \coloneqq  f * \xi, \qquad  \text{ for }f \in \mathfrak{C}_c(\Gg,\LL),\,\, \xi \in \ell^{p}(\Gg,\LL),
\] 
defines an injective representation  $\Lambda_p : \mathfrak{C}_c(\Gg,\LL) \to \Bound(\ell^{p}(\Gg,\LL))$, 
see \cite{BKM}*{Proposition 5.1}. For $p<\infty$  this representation is nondegenerate, while for $p=\infty$ it is not unless $X$ is compact.
In general, $\Lambda_{\infty}$ compresses to an injective nondegenerate representation  $\Lambda_{\infty} : \mathfrak{C}_c(\Gg,\LL) \to \Bound(c_0(\Gg,\LL))$.
\end{example}

\begin{definition}
For any nonempty $P\subseteq [1,\infty]$, we denote by $F^P(\Gg,\LL)$ and $F^P_{\red}(\Gg , \LL)$    completions of $\mathfrak{C}_c(\Gg,\LL)$ in  the norms
\[
\|f\|_{L^P}  \coloneqq  \sup \{ \| \psi(f) \|: \psi \text{ is a representation of }  (\Gg,\LL) \text{ on some }L^p\text{-space for }  p\in P\}. 
\]
and $\|f\|_{L^P,\red} \coloneqq \sup_{p\in P}\|\Lambda_p(f)\|$,  respectively. We call $F^P(\Gg,\LL)$ and $F^P_{\red}(\Gg , \LL)$   the \emph{full} and
the \emph{reduced  $L^P$-twisted groupoid algebra} of $(\Gg , \LL)$, 
respectively. When $P=\{p\}$ we write $F^p(\Gg,\LL) \coloneqq F^P(\Gg,\LL)$ and $F^p_{\red}(\Gg,\LL) \coloneqq F^P_{\red}(\Gg,\LL)$. 
\end{definition}
\begin{remark}\label{rem:general_comments_on_Lp}
For $p=2$, a map  $\psi : \mathfrak{C}_c(\Gg,\LL)\to \Bound(L^2(\mu))$ is a representation  if and only if $\psi$ is a $*$-homomorphism,
\cite{BKM}*{Corollary 4.27, Theorem 5.13}. 
In particular, $F^2(\Gg,\LL)=\Cst(\Gg,\LL)$  and $F^2_{\red}(\Gg,\LL)=\Cst_{\red}(\Gg,\LL)$ are the usual $\Cst$-algebras associated to $(\Gg,\LL)$. 
Reduced $L^p$-groupoid algebras were studied by many authors, see \cites{Gardella_Lupini17,cgt,Austad_Ortega,Hetland_Ortega,BKM}.
By \cite{BKM}*{Theorem 5.13} we always have
\[
F^1(\Gg,\LL) = F^1_{\red}(\Gg,\LL) = F_{*s}(\Gg,\LL) \quad \text{and} \quad F^{\infty}(\Gg,\LL) = F^{\infty}_{\red}(\Gg,\LL) = F_{*r}(\Gg,\LL)
\]
(and for $p=\infty$ one may consider the nondegenerate version of $\Lambda_{\infty}$ compressed to $c_0(\Gg,\LL)$). 
Thus,  if $P\subseteq [1,\infty]$ contains $\{1,\infty\}$, then $F^P(\Gg,\LL) = F^P_{\red}(\Gg,\LL) = F_{I}(\Gg,\LL)$, and if $P\subseteq \{1,\infty\}$, then $F^P(\Gg,\LL) = F^P_{\red}(\Gg,\LL)$.
If $P = P^* \coloneqq  \{ q : 1/p + 1/q = 1, p\in P \}$, then $F^P(\Gg,\LL)$ and $F^P_{\red}(\Gg,\LL)$ are Banach $*$-algebras with the standard involution, and the canonical homomorphism $F^P(\Gg,\LL)\to F^P_{\red}(\Gg,\LL)$ is  $*$-preserving. 
Such Banach $*$-algebras $F^P_{\red}(\Gg,\LL)$ were studied in \cites{Austad_Ortega, Elkiaer}.
The $L^P$-operator algebra for a set of parameters $P\subseteq [1,\infty]$ were first introduced  in  \cite{BK}*{Definition 4.12} for twisted group actions, and for
twisted groupoids in \cite{BKM2}. 
\end{remark}
\begin{remark}\label{rem:amenability}
Recall that an \'etale groupoid $\Gg$ is (topologically) \emph{amenable} if  there is a net $(\xi_i)_{i\in I}$ in $\mathfrak{C}_c(\Gg)$ such that
$\sup_{x\in X} \sum_{\gamma\in \rg^{-1}(x)}|\xi_i(\gamma)|\leq 1$ and the net of functions $\Gr\ni \gamma\mapsto \sum_{\rg(\eta)=\rg(\gamma)}\overline{\xi_i} (\eta)\xi_i(\gamma^{-1} \eta)$ 
converges compactly to $1$ on $\Gg$. For any non-empty $P\subseteq [1,\infty]$, it follows from
\cite{Gardella_Lupini17}*{Theorem 6.19}  that 
\begin{center}
	\emph{if $\Gg$ is amenable and second countable, then $F^P(\Gg)=F^P_{\red}(\Gg)$.}
\end{center}
In fact, we always have $F^P(\Gg,\LL) = F^P_{\red}(\Gg,\LL)$ if either $\{1,\infty\}\subseteq P$ or $P\subseteq \{1,\infty\}$, see Remark~\ref{rem:general_comments_on_Lp}.  
When  $\Gg$ is a transformation groupoid for a group action  amenability of $\Gg$ implies $F^P(\Gg,\LL) = F^P_{\red}(\Gg,\LL)$ for all  $P$ and $\LL$, see \cite{BK}.
It is also now known that $C^*(\Gg) = C^*_{\red}(\Gg)$ for any amenable groupoid $\Gg$, see \cites{Buss_Martinez_approx, Brix-Gonzales-Hume-Li:Hausdorff_covers}, and it could be deduced  from  results of \cite{Buss_Martinez_approx}
that $C^*(\Gg,\LL) = C^*_{\red}(\Gg,\LL)$ when $\Gg$ is amenable and $\LL$ is topologically trivial. 
But,  in general, the expected positive answer to the following problem still needs to be verified, even for $C^*$-algebras.
\end{remark}
\begin{question}\label{problem:amenability_implies_twisted_coincide}
Let $(\Gg,\LL)$ be a twisted  \'etale groupoid where  $\Gg$ is amenable. 
Does it follow that $F^{P}(\Gg,\LL)=F^{P}_{\red}(\Gg,\LL)$ for any non-empty $P\subseteq [1,\infty]$?
\end{question}

A bounded linear map $E:A\to B$ between two Banach algebras is \emph{faithful} if the only (closed two-sided) ideal in $A$ that is contained in $\ker E$, is the zero ideal.
We denote by  $\mathcal{B}(X)$  the  Banach algebra of all bounded Borel functions on $X$. 
\begin{definition}[\cite{BKM2}*{Definition 3.1}]  A Banach algebra completion $B$ of $\mathfrak{C}_c(\Gg,\LL)$ is called a \emph{reduced groupoid Banach algebra} if  the map $\mathfrak{C}_c(\Gg,\LL)\ni f \to f|_{X}$ 
extends to a faithful contractive linear map $B\to \mathcal{B}(X)$, which is isometric on $\Cont_c(X)$. 
\end{definition}
\begin{remark}\label{rem:reduced_Lp_algebras}  Let $\mathfrak{C}_0(\Gg , \LL)$ be the completion  of $\mathfrak{C}_c(\Gg , \LL)$ in the supremum norm $\| \cdot \|_{\infty}$.
A Banach algebra completion $B$ of $\mathfrak{C}_c(\Gg,\LL)$ is a reduced groupoid Banach algebra if and only if
the inclusion  $\mathfrak{C}_c(\Gg,\LL) \subseteq \mathfrak{C}_0(\Gg , \LL)$ extends to an injective contractive linear map $j:B\to \mathfrak{C}_0(\Gg , \LL)$,
 which is isometric on $\Cont_c(X)$, 
see \cite{BKM2}*{Remark 3.7}.  The algebras $F^P_{\red}(\Gg,\LL)$ 
are examples of reduced groupoid Banach algebras, see \cite{BKM2}*{Proposition 3.15}. 
\end{remark}
As we are mainly interested in non-Hausdorff groupoids we need $L^p$-versions of  essential $C^*$-algebras introduced in \cite{Kwasniewski-Meyer:Essential}.
To this end, we use the ``spatial construction'' from  \cite{BKM2}*{Proposition 4.16}.
As in \cite{BKM2}, following Dixmier's terminology,  we say that  $x\in Y$ is a \emph{Hausdorff point} in a topological space $Y$ 
if $x$ and  every
  $y \in Y \setminus \{ x \}$  can be separated by disjoint open sets.
\begin{example}[Essential representations]
By  \cite{BKM2}*{Lemma 4.4},  the set $\Gg_{\Hau}$ of all Hausdorff  points in $\Gg$ is a full subgroupoid of $\Gg$,  and  $\Gg_{\Hau}$ is comeager in $\Gg$ when $\Gg$ has a  countable cover by  open bisections or when $\Gg$ is topologically principal.
It follows that for any $p\in[1,\infty]$, the $L^p$-space $\ell^{p}(\Gg_{\Hau},\LL|_{\Gg_{\Hau}})$ is an invariant subspace for the regular representation  $\Lambda_p$ and hence the  $\Lambda_p$ restricts to a subrepresentation 
$\Lambda_p^{\ess}:F^p(\Gg , \LL) \to \Bound(\ell^{p}(\Gg_{\Hau},\LL|_{\Gg_{\Hau}}))$.
\end{example}
\begin{definition}
Assume $\Gg_{\Hau}$ is comeager in $\Gg$.
For any nonempty subset $P\subseteq [1,\infty]$ we denote by $F^P_{\ess}(\Gg , \LL)$ the   Hausdorff completion of $\mathfrak{C}_c(\Gg,\LL)$ in the seminorm
$$
\|f\|_{L^P,\ess} \coloneqq \sup_{p\in P}\|\Lambda_p^{\ess}(f)\|.
$$
We call $F^P_{\ess}(\Gg , \LL)$ the \emph{essential $L^P$-twisted groupoid algebra} for $(\Gg,\LL)$. When $P=\{p\}$  we  write $F^p_{\ess}(\Gg,\LL) \coloneqq  F^P_{\ess}(\Gg,\LL)$. 
\end{definition}
\begin{convention} Each time we mention $F^P_{\ess}(\Gg , \LL)$ we implicitly assume that  $\Gg_{\Hau}$ is comeager (which holds for instance if $\Gg$ can be covered by  a sequence of  bisections or when $\Gg$ is Hausdorff or when $\Gg$ is topologically principal, see \cite{BKM2}*{Lemma 4.4}).
\end{convention}
By a \emph{strict support} of a section $f:\Gg\to \LL$ we mean the set  $\supp(f)  \coloneqq  \{ \gamma \in \Gg : f(\gamma)\neq 0 \}$.
We denote by $\M(X)$ the ideal in  $\B(X)$  consisting of bounded Borel functions  with meager  strict support. The quotient 
$\mathcal{D}(X) \coloneqq \mathcal{B}(X)/\M(X)$ is sometimes called the Dixmier algebra of $X$. It can be also viewed as local multiplier algebra of injective hull of $\Cont_0(X)$, cf. for instance \cite{Kwasniewski-Meyer:Essential}*{Subsection 4.4}.
\begin{definition}[\cite{BKM2}*{Definition 4.3}]  A Banach algebra Hausdorff completion $B$ of $\mathfrak{C}_c(\Gg,\LL)$ is  an \emph{essential groupoid Banach algebra} if  the map $\mathfrak{C}_c(\Gg,\LL)\ni f \to q(f|_{X})$ induces  a faithful contractive linear map $B\to \mathcal{D}(X) =\mathcal{B}(X)/\M(X)$, which is isometric on $\Cont_c(X)$.
\end{definition}
\begin{remark}\label{rem:essential_groupoid_algebras}
Let $\mathfrak{M}_0(\Gg , \LL)$ be the subspace of $\mathfrak{C}_0(\Gg , \LL)$ consisting of sections with meager strict support. In fact, by \cite{BKM2}*{Proposition 4.6} we have
\begin{align*}
	\mathfrak{M}_0(\Gg , \LL)&=\{f\in \mathfrak{C}_0(\Gg , \LL):\supp(f) \text{ is meager}\}
	\\
	&=\{f\in \mathfrak{C}_0(\Gg , \LL):\supp(f) \text{ has empty interior}\}
\end{align*}
and if $\Gg_{\Hau}$ is comeager then also $\mathfrak{M}_0(\Gg , \LL)=\{f\in \mathfrak{C}_0(\Gg , \LL):\supp(f) \subseteq \Gg\setminus \Gg_{\Hau}\}$.
A Banach algebra Hausdorff completion $B$ of $\mathfrak{C}_c(\Gg,\LL)$ is an essential groupoid Banach algebra if and only if the quotient map
$\mathfrak{C}_c(\Gg,\LL) \to \mathfrak{C}_0(\Gg , \LL)/\mathfrak{M}_0(\Gg , \LL)$ induces an injective contractive linear map
$j^{\es}:B\to  \mathfrak{C}_0(\Gg , \LL)/\mathfrak{M}_0(\Gg , \LL)$, which is isometric on $\Cont_c(X)$, see \cite{BKM2}*{Proposition 4.12}. 
\end{remark}
\begin{remark}
The algebra $F^P_{\ess}(\Gg , \LL)$ is  an essential groupoid Banach algebra,  see \cite{BKM2}*{Corollary 4.19} and   $F^2_{\ess}(\Gg , \LL)=\Cst_{\ess}(\Gg,\LL)$ is the essential $\Cst$-algebra introduced in \cite{Kwasniewski-Meyer:Essential}. 
\end{remark}
\begin{lemma}\label{lem:making_representations_nondegenerate}
For each $p\in [1,\infty]$ there are isometric nondegenerate representations of $F^p(\Gg,\LL)$, $F^p_{\red}(\Gg,\LL)$ and $F^p_{\ess}(\Gg,\LL)$ on some $L^p$-spaces.
\end{lemma}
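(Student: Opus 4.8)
I would handle the reduced and essential algebras first, where the defining representation is handed to us by the norm itself, and then do the full algebra, where the only real work lies.

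For $F^p_{\red}(\Gg,\LL)$ and $F^p_{\ess}(\Gg,\LL)$ the defining (semi)norms are $\|\Lambda_p(\cdot)\|$ and $\|\Lambda_p^{\ess}(\cdot)\|$, so $\Lambda_p$ and $\Lambda_p^{\ess}$ descend to isometric representations of these completions on the $L^p$-spaces $\ell^p(\Gg,\LL)$ and $\ell^p(\Gg_{\Hau},\LL|_{\Gg_{\Hau}})$. For $p<\infty$ the regular representation is nondegenerate by Example~\ref{ex:regular_representation}, and the same pointwise-multiplication argument (using $\Cont_c(X)\subseteq\mathfrak{C}_c(\Gg,\LL)$ and $X\subseteq\Gg_{\Hau}$) gives nondegeneracy of $\Lambda_p^{\ess}$. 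For $p=\infty$ the regular representation is degenerate unless $X$ is compact, so I would instead use the compression of $\Lambda_\infty$ (resp.\ $\Lambda_\infty^{\ess}$) to the $L^\infty$-space $c_0(\Gg,\LL)$ (resp.\ $c_0(\Gg_{\Hau},\LL|_{\Gg_{\Hau}})$), which is injective, nondegenerate, and computes the same ($\|\cdot\|_{*r}$-)norm, again by Example~\ref{ex:regular_representation}. This settles the reduced and essential cases.

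For the full algebra $F^p(\Gg,\LL)$ I would split on $p$. When $p\in\{1,\infty\}$ Remark~\ref{rem:general_comments_on_Lp} gives $F^p(\Gg,\LL)=F^p_{\red}(\Gg,\LL)$, so the previous paragraph applies verbatim. The remaining range $1<p<\infty$ is the heart of the matter. Here I would first assemble an isometric, but a priori degenerate, representation $\Psi$ of $F^p(\Gg,\LL)$ on a single $L^p$-space $Z$, taken as the $\ell^p$-direct sum of a separating set of representations $\{\psi_i\}$: such a set exists by the standard density-character bound (it suffices to use representations on $L^p(\mu)$ with measure spaces of size controlled by $|\mathfrak{C}_c(\Gg,\LL)|$, cf.\ \cite{BKM}), an $\ell^p$-direct sum of $L^p$-spaces is again an $L^p$-space, and $\|\Psi(f)\|=\sup_i\|\psi_i(f)\|=\|f\|_{L^p}$.

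It then remains to pass from $\Psi$ to a nondegenerate representation on an $L^p$-space without changing the norm. The tool is the contractive two-sided approximate identity $(\phi_\lambda)\subseteq\Cont_c(X)$ of $\mathfrak{C}_c(\Gg,\LL)$ with $0\le\phi_\lambda\le1$, $\phi_\lambda\nearrow 1$ on compacta, so that $\|\phi_\lambda\|_I\le1$ and $\phi_\lambda*f,\ f*\phi_\lambda\to f$ in $\|\cdot\|_I$. Writing $Z_0\coloneqq\overline{\Psi(\mathfrak{C}_c(\Gg,\LL))Z}$, the operator-norm identity $\Psi(f)=\lim_\lambda\Psi(f)\Psi(\phi_\lambda)$ together with $\Psi(\phi_\lambda)Z\subseteq Z_0$ and $\|\Psi(\phi_\lambda)\|\le1$ forces $\|\Psi(f)\|=\|\Psi(f)|_{Z_0}\|$, so $\Psi|_{Z_0}$ is still isometric and is nondegenerate (since $\Psi(\phi_\lambda)\to\mathrm{id}$ strongly on $Z_0$). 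The one genuinely nontrivial point---and what I expect to be the main obstacle---is that a closed subspace of $L^p$ need not be an $L^p$-space, so I must exhibit $Z_0$ as one. For $1<p<\infty$ I would exploit reflexivity: the bounded net $(\Psi(\phi_\lambda))$ has a subnet converging in the weak operator topology to some $P$ with $\|P\|\le1$; since $\Psi(\phi_\lambda)\to\mathrm{id}$ on $Z_0$ while $\Psi(\phi_\lambda)Z\subseteq Z_0$ and $Z_0$ is weakly closed, $P$ is a contractive idempotent with range $Z_0$. Thus $Z_0$ is contractively complemented in $Z$, and by the classical structure theorem for ranges of norm-one projections on $L^p$-spaces (Ando, Tzafriri) it is isometrically an $L^p$-space. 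Then $\Psi|_{Z_0}$ is the required isometric nondegenerate representation, completing the case $1<p<\infty$ and hence the proof.
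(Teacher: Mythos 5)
Your treatment of the reduced and essential algebras coincides with the paper's, and your proposal as a whole is correct; the interesting divergence is in the full algebra. The paper disposes of $F^p(\Gg,\LL)$ for $p<\infty$ by citing \cite{BKM}*{Theorem 5.19(1)} outright, and for $p=\infty$ it uses the same theorem to get an isometric representation $\psi$ on some $L^\infty(\mu)$ with $\psi|_{\Cont_0(X)}$ acting by multiplication, and then compresses to $\overline{\psi(\Cont_0(X))L^\infty(\mu)}$, which is an ideal in the commutative $C^*$-algebra $L^\infty(\mu)$ and hence of the form $\Cont_0(\Omega)$. You instead (a) dispatch $p\in\{1,\infty\}$ via the identification $F^p(\Gg,\LL)=F^p_{\red}(\Gg,\LL)$ of Remark~\ref{rem:general_comments_on_Lp}, which is a cleaner route to the $p=\infty$ case than the paper's, and (b) for $1<p<\infty$ rebuild the content of the cited theorem from scratch: a norming $\ell^p$-direct sum, the approximate identity $(\phi_\lambda)\subseteq\Cont_c(X)$ to cut down to the essential subspace $Z_0$ without losing isometry, a weak-operator limit of $(\Psi(\phi_\lambda))$ (using reflexivity) to exhibit $Z_0$ as the range of a norm-one projection, and the Ando--Tzafriri structure theorem to conclude $Z_0$ is isometrically an $L^p$-space. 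This is self-contained where the paper is not, at the cost of importing a nontrivial classical theorem that the paper never needs; both arguments are sound.

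Two minor inaccuracies, neither of which damages the proof. First, your family $\{\psi_i\}$ must be \emph{norming} (for each $f$ and $n$ pick $\psi_{f,n}$ with $\|\psi_{f,n}(f)\|\ge\|f\|_{L^p}-1/n$), not merely separating, for the identity $\|\Psi(f)\|=\|f\|_{L^p}$ to hold; your subsequent computation shows this is what you intend. Second, the inclusion $X\subseteq\Gg_{\Hau}$ you invoke for nondegeneracy of $\Lambda_p^{\ess}$ is false in general — in Example~\ref{ex:nontrivial_twist} of the paper the unit $0$ is not a Hausdorff point — but nondegeneracy still holds because $\Cont_c(X)$ acts on $\ell^p(S,\LL)$ for any subset $S\subseteq\Gg$ by $a*\xi=(a\circ\rg)\xi$, which is manifestly nondegenerate for $p<\infty$.
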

\begin{proof}
For reduced and essential algebras,  and for $p<\infty$ it is clear, as $\Lambda_{p}$ and $\Lambda_p^{\ess}$  are nondegenerate. For 
$p=\infty$ one may consider  compressions of  $\Lambda_{p}$ and $\Lambda_p^{\ess}$ to $c_0(\Gg,\LL)$ and  $c_0(\Gg_{\Hau},\LL)$, respectively.
For the universal algebra $F^p(\Gg,\LL)$ and $p<\infty$ this follows from \cite{BKM}*{Theorem 5.19(1)}. 
The same result gives also an isometric representation $\psi:F^p(\Gg,\LL)\to \B(L^{\infty}(\mu))$ where 
$\psi|_{\Cont_0(X)}$ is given by multiplication operators by functions in $L^{\infty}(\mu)$. This  implies that 
$\overline{\psi(\Cont_0(X))L^{\infty}(\mu)}$ is an ideal in $L^{\infty}(\mu)$  treated as a commutative $C^*$-algebra.
Hence,  $\overline{\psi(\Cont_0(X))L^{\infty}(\mu)}=\Cont_0(\Omega)$ for some locally compact Hausdorff space $\Omega$.
Since $\overline{\psi(F^p(\Gg,\LL))L^{\infty}(\mu)}=\overline{\psi(\Cont_0(X))L^{\infty}(\mu)}=\Cont_0(\Omega)$, we conclude 
that $\psi$ compresses to the isometric nondegenerate representation on the $L^\infty$-space $\Cont_0(\Omega)$.
\end{proof}
\subsection{Structural results}
We recall conditions for simplicity and pure infiniteness of the essential algebras. 
A Banach algebra is \emph{simple} if it does not contain non-trivial (closed two-sided) ideals.
Following \cite{BKM2}*{Definition 6.1}, we say that a simple Banacha algebra $B$ is \emph{purely infinite} if for  every $y\in B\setminus\{0\}$,
 there are $x, z\in B$ such that $xyz$ is an infinite idempotent in $B$.
When $B$ has a unit, this is equivalent to assuming that  $B\not\cong \mathbb{C}$ and for every $y\in B\setminus\{0\}$ there are $x,z\in B$ such that $xyz=1$;  
when $B$ has an approximate unit consisting of idempotents, this is equivalent to the definition given in \cite{cortinas_Montero_rodrogiez}; and when $B$ is a $C^*$-algebra, this agrees with the standard $C^*$-algebraic notion, see  \cite{BKM2}*{Proposition 6.3}.
\begin{definition}\label{defn:groupoid_properties}
Let $\text{Iso}(\Gg) \coloneqq  \{\gamma\in \Gg:\rg(\gamma)=\sr(\gamma)\}$ be the isotropy bundle in $\Gg$.  A set $U\subseteq X$ is \emph{$\Gg$-invariant} if $\sr(\gamma)\in U$ implies $\rg(\gamma)\in U$ for all $\gamma \in G$. We call the groupoid $\Gg$ 
\begin{itemize}
	\item \emph{minimal} if there are no nontrivial $\Gg$-invariant open sets in $X$;
	\item \emph{topologically free} if the interior of $\text{Iso}(\Gg) \setminus X$ in $\Gg$ is empty;
	\item \emph{effective}  if the interior of $\text{Iso}(\Gg)$ in  $\Gg$ is the unit space $X$;
		\item \emph{locally contracting with respect to a family $S\subseteq \Bis(\Gg)$} if for every non-empty open $U\subseteq X$ there are open $V \subseteq U$ and $W\in S$ with $\overline{V} \subseteq \sr(W)$ and $\rg(W\overline{V})\subsetneq V$; 
	\item \emph{locally contracting} if $\Gg$ is locally contracting with respect to $S=\Bis(\Gg)$.
\end{itemize} 
\end{definition}
Local contractiveness is a classical notion, see \cite{A-D}*{Definition 2.1}, and its relative version is taken from \cite{BKM2}*{Definition 6.8}.
Topological freeness of $\Gg$, introduced in \cite{Kwasniewski-Meyer:Essential}*{Definition 2.20}, is equivalent to effectiveness when $\Gg$ is Hausdorff or more generally when the algebraic singular ideal vanishes, see Lemma \ref{lem:effectiveness_is_about_singular_ideal} below.
Topological freeness is equivalent to the generalised intersection property in the spirit of Archbold-Spielberg \cite{Arch_Spiel}.

\begin{theorem}\label{thm:groupoid_simplicity_pure_infiniteness}
 Let $\Gg$ be an \'etale groupoid with a locally compact Hausdorff space $X$.
Let $P\subseteq [1,\infty]$ be a nonempty set of parameters and let $\mathcal{N}$ be the kernel of the canonical homomorphism $F^P(\Gg,\LL)\to F^P_{\ess}(\Gg,\LL)$.
\begin{enumerate}
	\item\label{enu:groupoid_simplicity_pure_infiniteness0} If $\Gg$ is topologically free, then 
  every ideal $I$ in $F^P(\Gg,\LL)$ with $I\cap\Cont_0(X)=\{0\}$ is contained in $\mathcal{N}$.
	When the twist is trivial the converse implication holds.
	\item\label{enu:groupoid_simplicity_pure_infiniteness1} If $\Gg$ is topologically free, then 
	every representation of $F^P_{\ess}(\Gg,\LL)$ which is injective on $\Cont_0(X)$ is injective on  $F^P_{\ess}(\Gg,\LL)$.
	Conversely,  if  every representation of $F^P(\Gg)$ which is injective on $\Cont_0(X)$ is injective on  $F^P(\Gg)$,
	then $\Gg$ is topologically free.

	\item\label{enu:groupoid_simplicity_pure_infiniteness1.5} If $\Gg$ is Hausdorff, then $F^P_{\red}(\Gg,\LL)=F^P_{\ess}(\Gg,\LL)$ and $\Gg$ is topologically free if and only if $C_0(X)\subseteq F^P_{\red}(\Gg,\LL)$ is a maximal commutative subalgebra.
	\item\label{enu:groupoid_simplicity_pure_infiniteness2} If $\Gg$ is topologically free, then  $F^P_{\ess}(\Gg,\LL)$ is simple if and only if  $\Gg$ is minimal.
	The  algebra $F^P(\Gg)$ is simple if and only if $\Gg$ is topologically free and minimal and the canonical map  $F^P(\Gg)\to F^P_{\ess}(\Gg)$ is injective. 
	\item\label{enu:groupoid_simplicity_pure_infiniteness3} If $\Gg$  is topologically free, minimal, and locally contractive with respect to some unital inverse subsemigroup $S\subseteq \Bis(\Gg)$ covering $\Gg$ and such that $\LL|_{U}$ is topologically trivial for every $U\in S$,  then $F^P_{\ess}(\Gg,\LL)$ is  purely infinite simple. 
\end{enumerate}
If the canonical map $F^P_{\red}(\Gg,\LL)\to F^P_{\ess}(\Gg,\LL)$ or $F^P(\Gg,\LL)\to F^P_{\ess}(\Gg,\LL)$ is injective, then the above statements hold with  $F^P_{\ess}(\Gg,\LL)$ replaced by 
$F^P_{\red}(\Gg,\LL)$ or  $F^P(\Gg,\LL)$, respectively.
\end{theorem}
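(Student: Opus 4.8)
The backbone of the whole theorem is the \emph{ideal intersection property}: under topological freeness, $\Cont_0(X)$ detects ideals in the essential algebra. I would establish this first for $F^P_{\ess}(\Gg,\LL)$ itself and then bootstrap to the other statements. The essential algebra carries the faithful contractive map $j^{\es}\colon F^P_{\ess}(\Gg,\LL)\to \mathfrak{C}_0(\Gg,\LL)/\mathfrak{M}_0(\Gg,\LL)$ that is isometric on $\Cont_0(X)$ (Remark~\ref{rem:essential_groupoid_algebras}), and since $\Gg_{\Hau}$ is comeager, $\mathfrak{M}_0(\Gg,\LL)$ consists exactly of sections supported off $\Gg_{\Hau}$. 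Given a nonzero $a\in F^P_{\ess}(\Gg,\LL)$ in an ideal, I pick $f\in\mathfrak{C}_c(\Gg,\LL)$ close to $a$ with $j^{\es}(f)\neq 0$, so $f(\gamma_0)\neq 0$ for some Hausdorff point $\gamma_0$. The decisive geometric input is that topological freeness---emptiness of the interior of $\text{Iso}(\Gg)\setminus X$---lets me choose a unit $x\in X$ near $\rg(\gamma_0)$ carrying no isotropy in the relevant support, and a function $h\in\Contc(X)$ with $0\le h\le 1$ peaked at $x$, so that the compression $h*(\cdot)*h$ reduces $f$, modulo $\mathfrak{M}_0(\Gg,\LL)$, to an element whose only surviving contribution is diagonal and nonzero. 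This is the $L^P$-analogue of the ``moving to the diagonal'' lemma: when $\gamma_0\in X$ one reads off the diagonal directly, and when $\gamma_0\notin X$ topological freeness guarantees $\gamma_0$ is separated from $X$, so it does not obstruct the diagonal detection. Hence any nonzero ideal meets $\Cont_0(X)$.

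With this in hand, part~(1) follows by pulling the property back along the quotient $F^P(\Gg,\LL)\twoheadrightarrow F^P_{\ess}(\Gg,\LL)$: an ideal $I$ with $I\cap\Cont_0(X)=\{0\}$ maps to an ideal of $F^P_{\ess}(\Gg,\LL)$ meeting $\Cont_0(X)$ trivially, hence to $0$, so $I\subseteq\mathcal N$. Part~(2) is the representation-theoretic reformulation: a representation injective on $\Cont_0(X)$ has kernel meeting $\Cont_0(X)$ trivially, so by the intersection property it is injective. For the converses (stated for trivial twist, respectively for the untwisted full algebra) I argue contrapositively: if $\Gg$ is not topologically free there is a nonempty open bisection $U\subseteq\text{Iso}(\Gg)\setminus X$, and from a point of $U$ together with a local trivialisation of $\LL|_U$ I construct a quotient representation killing a section supported in $U$ while restricting faithfully to $\Cont_0(X)$; this simultaneously exhibits a nonzero ideal missing $\Cont_0(X)$ that is not contained in $\mathcal N$. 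Part~(3) is then immediate: for Hausdorff $\Gg$ one has $\Gg_{\Hau}=\Gg$, so $\mathfrak{M}_0(\Gg,\LL)=\{0\}$ and $F^P_{\ess}(\Gg,\LL)=F^P_{\red}(\Gg,\LL)$, while topological freeness coincides with effectiveness, which the intersection argument shows is equivalent to maximal commutativity of $\Cont_0(X)$.

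Parts~(4) and~(5) layer minimality and local contractiveness on top. For simplicity I show that a nonzero ideal $J$ in $F^P_{\ess}(\Gg,\LL)$ has $J\cap\Cont_0(X)=\Cont_0(V)$ for a nonempty open $V\subseteq X$, that $V$ is $\Gg$-invariant because conjugating diagonal functions by bisections implements the groupoid action on $X$, and that minimality forces $V=X$; then $J\supseteq\Cont_0(X)$ contains an approximate unit, so $J=F^P_{\ess}(\Gg,\LL)$. The converse builds the proper ideal generated by $\Cont_0(V)$ from a nontrivial invariant open $V$. For the full algebra one additionally observes that simplicity forces $\mathcal N=0$ (a proper ideal, since $F^P_{\ess}(\Gg,\LL)\neq 0$) and then invokes the essential case together with part~(2). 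Pure infiniteness in part~(5) is the \textbf{main obstacle}: using the relative local-contractiveness data $\overline V\subseteq\sr(W)$ and $\rg(W\overline V)\subsetneq V$ with $\LL|_U$ topologically trivial for $U\in S$, I would manufacture, for each nonzero $y$, an honest untwisted partial-isometry-like element implementing a proper inclusion of diagonal idempotents, and combine it with the intersection property (to first compress $y$ to a nonzero diagonal element) to produce $x,z$ with $xyz$ an infinite idempotent. The delicate points are controlling norms on $L^p$-spaces without a $\Cst$-identity and ensuring the twist does not obstruct the construction, which is exactly why triviality of $\LL|_U$ is assumed.

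Finally, the concluding transfer is soft. The canonical maps $F^P(\Gg,\LL)\twoheadrightarrow F^P_{\red}(\Gg,\LL)\twoheadrightarrow F^P_{\ess}(\Gg,\LL)$ are surjective, being completions of the common dense subalgebra $\mathfrak{C}_c(\Gg,\LL)$ in decreasing norms. Hence if $F^P_{\red}(\Gg,\LL)\to F^P_{\ess}(\Gg,\LL)$ (respectively $F^P(\Gg,\LL)\to F^P_{\ess}(\Gg,\LL)$) is injective, it is a continuous bijective algebra homomorphism, so a topological isomorphism by the open mapping theorem, and it restricts to the identity on $\Cont_0(X)$. Ideal detection, injectivity of restricted representations, maximal commutativity of $\Cont_0(X)$, simplicity, and pure infiniteness are all invariants of such isomorphisms fixing $\Cont_0(X)$, so every statement about $F^P_{\ess}(\Gg,\LL)$ transfers verbatim to $F^P_{\red}(\Gg,\LL)$ or $F^P(\Gg,\LL)$.
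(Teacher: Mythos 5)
The paper's own proof of this theorem is essentially a list of citations to the companion paper [BKM2] (Theorems 5.10, 5.13, 5.16, Proposition 5.11, Theorem D), so your proposal is really an attempt to reconstruct that machinery from scratch. At the level of strategy your reconstruction is the right one --- establish the ideal intersection property for the essential algebra under topological freeness, prove the converses by exhibiting an open bisection inside $\Iso(\Gg)\setminus X$, and layer minimality and local contractiveness on top --- and your ordering (intersection property for $F^P_{\ess}$ first, then pull back to $F^P$) cleanly sidesteps the subtlety the paper flags about deducing \ref{enu:groupoid_simplicity_pure_infiniteness1} from \ref{enu:groupoid_simplicity_pure_infiniteness0}. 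The sketches of the diagonal-compression lemma and of pure infiniteness are plausible but are outlines rather than proofs; I will not press on those.

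There is, however, one concrete error: your argument for the final transfer statement. You assert that the canonical maps $F^P(\Gg,\LL)\to F^P_{\red}(\Gg,\LL)\to F^P_{\ess}(\Gg,\LL)$ are surjective ``being completions of the common dense subalgebra in decreasing norms,'' and then apply the open mapping theorem to an injective such map. A contractive map between Banach completions of a common dense subalgebra has dense image but need not be surjective (compare the inclusion $\ell^1\hookrightarrow\ell^2$), and the paper explicitly states in its proof of item \ref{enu:groupoid_simplicity_pure_infiniteness1} that surjectivity of $F^P(\Gg,\LL)\to F^P_{\ess}(\Gg,\LL)$ is not known. So injectivity does not give you a topological isomorphism, and the ``soft'' transfer collapses. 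The correct argument is different in kind: the notions of reduced and essential groupoid Banach algebra (Definitions in Section~\ref{sect:LP-groupoid_algebras}) are axiomatic --- any Banach algebra completion of $\mathfrak{C}_c(\Gg,\LL)$ admitting the faithful contractive map to $\mathcal{B}(X)$, respectively to $\mathcal{D}(X)$, qualifies --- and the structural results of [BKM2] are proved for arbitrary such completions. When the canonical map $F^P_{\red}(\Gg,\LL)\to F^P_{\ess}(\Gg,\LL)$ (resp.\ $F^P(\Gg,\LL)\to F^P_{\ess}(\Gg,\LL)$) is injective, the algebra $F^P_{\red}(\Gg,\LL)$ (resp.\ $F^P(\Gg,\LL)$) is itself an essential groupoid Banach algebra, so the statements apply to it directly; no isomorphism with $F^P_{\ess}(\Gg,\LL)$ is needed or claimed. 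You should replace the open-mapping argument with this observation (which of course requires your intersection/simplicity/pure-infiniteness arguments to be carried out at the level of abstract essential completions rather than only for the concrete $F^P_{\ess}(\Gg,\LL)$).
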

\begin{proof} Statement \ref{enu:groupoid_simplicity_pure_infiniteness0} follows from   \cite{BKM2}*{Theorems 5.10(1) and 5.13}.
Item \ref{enu:groupoid_simplicity_pure_infiniteness1} is ``almost a special case of \ref{enu:groupoid_simplicity_pure_infiniteness0}''.
Formally, as we do not know whether the map $F^P(\Gg,\LL)\to F^P_{\ess}(\Gg,\LL)$ is surjective, to get the first implication one needs to apply \cite{BKM2}*{Theorem 5.10(2) or Theorem 5.16}.
For the converse implication, note that if every representation of $F^P(\Gg)$ that is injective on $\Cont_0(X)$ is injective in $F^P(\Gg)$, then in particular $\mathcal{N}=0$. Hence,  
$\Gg$ is topologically free by \cite{BKM2}*{Theorem 5.13}.
Item \ref{enu:groupoid_simplicity_pure_infiniteness1.5} follows from \cite{BKM2}*{Proposition 5.11}. 
Item 
\ref{enu:groupoid_simplicity_pure_infiniteness2} follows from  \cite{BKM2}*{Theorems 5.10(3) and 5.13}.
While \ref{enu:groupoid_simplicity_pure_infiniteness3}  holds by \cite{BKM2}*{Theorem D}.
The last part follows from the fact that the algebras $F^P_{\red}(\Gg,\LL)$ or $F^P(\Gg,\LL)$ are essential groupoid Banach algebras, when $F^P_{\red}(\Gg,\LL)\to F^P_{\ess}(\Gg,\LL)$ or $F^P(\Gg,\LL)\to F^P_{\ess}(\Gg,\LL)$ is injective, respectively.
\end{proof}

Finally, we recall the relationship between representations of twisted inverse semigroup actions and the associated groupoids in the case where 
the domains of the action are compact open. The theory simplifies significantly in this case.
\begin{definition}\label{defn:covariant_representation_in_algebra}
Let $(h,u)$ be a twisted inverse semigroup action of $S$  on $X$ such that the domains $X_t\subseteq X$  are compact open for every $t\in S$, cf. Definition~\ref{def:twist_semigroup_action}.  
A \emph{covariant representation  of $(h,u)$ on a Banach space $Y$} is a pair $(\pi,v)$, where $\pi :\Cont_0(X) \to \Bound(Y)$ is a contractive homomorphism and $v : S \to \Bound(Y)_1$ takes values in the semigroup of contractive operators on $Y$,  such that: 
\begin{enumerate}[labelindent=40pt,label={(CR\arabic*)},itemindent=1em] 
	\item\label{item:covariant_representation1} $v_s v_t = \pi(u(s,t)) v_{st}$ for all  $s,t\in S$;
	\item \label{item:covariant_representation2} $v_t \pi(a)  = \pi(a\circ h_{t^*})v_{t}$ for all $a\in \Cont_0(X_{t^*})$; and
	\item\label{item:covariant_representation3} $v_e=\pi(1_{X_e}) $ for every idempotent  $e\in \EE(S)$.
\end{enumerate}
Then  $B(\pi,v)  \coloneqq  \clsp\{ \pi(a_t)v_t : a_t\in I_{t}, \, t\in S \}$ is a Banach algebra generated by the ranges of  $\pi$ and $v$.
We say that $(\pi,v)$ is nondegenerate, isometric, etc.\ if $\pi$ has that property. 
\end{definition}
The following  integration-disintegration result holds by \cite{BKM}*{Theorems 4.25, 5.13}. 
\begin{proposition}\label{prop:disintegration_theorem}
Let $(h,u)$ be a twisted inverse semigroup action of $S$  on $X$ with all domains compact open, and let  $(S\ltimes_{h} X,\LL_{u})$ be  the  associated twisted transformation  groupoid.
For any $L^p$-space $Y$, $p\in [1,\infty]$, we have a bijective correspondence between covariant representations $(\pi , v)$ of $(h,u)$ on $Y$ 
and representations  $\pi\rtimes v$ of $\mathfrak{C}_c(\Gg,\LL)$ on $Y$ where 
$$
\pi\rtimes v(a)=\pi(a),\qquad \pi\rtimes v(1_{U_t})=v_t, \qquad a\in \Cont_0(X),\,\, t\in S.
$$
Moreover, we have $B(\pi,v)=\overline{\pi\rtimes v(\mathfrak{C}_c(\Gg,\LL))}$.
\end{proposition}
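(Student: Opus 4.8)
The plan is to display the two maps of the correspondence explicitly and verify that they are mutually inverse. The starting observation is that, because every domain $X_t\subseteq X$ is compact open, each bisection $U_t=\{[t,x]:x\in X_{t^*}\}$ is compact open and the restriction $\LL_u|_{U_t}$ is trivialised by the canonical unit section $1_{U_t}\colon[t,x]\mapsto[1_{X_t},t,x]$. Hence
$\mathfrak{C}_c(\Gg,\LL)=\linspan\{a\cdot 1_{U_t}: t\in S,\ a\in\Cont_0(X_t)\}$, where $a\cdot 1_{U_t}$ is the section $[t,x]\mapsto[a,t,x]$, and it suffices to define and analyse $\pi\rtimes v$ on these generators and to recover $(\pi,v)$ from a representation by restriction.

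For the integration direction I would, given a covariant representation $(\pi,v)$ as in Definition~\ref{defn:covariant_representation_in_algebra}, set $(\pi\rtimes v)(a\cdot 1_{U_t})\coloneqq\pi(a)v_t$ and extend linearly. The first point is well-definedness: the germ relations defining $\LL_u$ (page~\pageref{page:germ_relations}) say that $a\cdot 1_{U_t}$ and $a'\cdot 1_{U_{t'}}$ agree as sections exactly when, on their overlap, they are related through some $v\le t,t'$ and the twist, and translating this through \ref{item:covariant_representation1}--\ref{item:covariant_representation3} together with Lemma~\ref{lem:cocycles_properties} yields $\pi(a)v_t=\pi(a')v_{t'}$. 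Multiplicativity is then a short computation: the convolution and product formulas for $\LL_u$ give $(a\cdot 1_{U_s})*(b\cdot 1_{U_t})=a\,(b\circ h_{s^*})\,u(s,t)\cdot 1_{U_{st}}$, while on the operator side \ref{item:covariant_representation2} gives $v_s\pi(b)=\pi(b\circ h_{s^*})v_s$ and \ref{item:covariant_representation1} gives $v_sv_t=\pi(u(s,t))v_{st}$, whence $\pi(a)v_s\,\pi(b)v_t=\pi\big(a(b\circ h_{s^*})u(s,t)\big)v_{st}$ matches the image of the convolution.

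The crux is $\|\cdot\|_I$-contractivity of $\pi\rtimes v$. A single generator satisfies $\|\pi(a)v_t\|\le\|a\|_\infty$ since $\pi$ and each $v_t$ are contractive, but controlling an arbitrary finite sum requires rewriting $f\in\mathfrak{C}_c(\Gg,\LL)$ as a sum of sections supported on the compact open bisections $U_t$ and then invoking the building-block estimates of \cite{BKM}*{Theorems 4.25, 5.13}, which bound $\|\pi\rtimes v(f)\|$ by $\|f\|_I$ uniformly over $L^p$-spaces, $p\in[1,\infty]$. This is precisely the step that uses compactness of the domains, and I expect it to be the main technical obstacle; everything else is bookkeeping.

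Finally, for disintegration and bijectivity I would, given a representation $\psi$, put $v_t\coloneqq\psi(1_{U_t})$ and let $\pi$ be the contractive extension of $\psi|_{\Cont_0(X)}$ (well defined because $\|a\|_I=\|a\|_\infty$ on the unit space and $\|1_{U_t}\|_I=1$, so both are contractive). The covariance axioms follow from the same convolution identities read backwards: $1_{U_s}*1_{U_t}=u(s,t)\cdot 1_{U_{st}}$ gives \ref{item:covariant_representation1}; $1_{U_t}*a=(a\circ h_{t^*})\cdot 1_{U_t}$ gives \ref{item:covariant_representation2}; and for $e\in\EE(S)$ the bisection $U_e$ lies in the unit space, so $1_{U_e}=1_{X_e}\in\Cont_0(X)$ and \ref{item:covariant_representation3} holds. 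Since the two constructions agree on the generators, they are mutually inverse, giving the bijection. The equality $B(\pi,v)=\overline{\pi\rtimes v(\mathfrak{C}_c(\Gg,\LL))}$ is then immediate, as $B(\pi,v)=\clsp\{\pi(a_t)v_t\}$ and $\pi\rtimes v(\mathfrak{C}_c(\Gg,\LL))=\linspan\{\pi(a)v_t\}$ share the same closed linear span.
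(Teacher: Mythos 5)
Your proposal is correct and follows essentially the same route as the paper, whose entire proof is the citation to \cite{BKM}*{Theorems 4.25, 5.13}: the algebraic verifications you spell out (the convolution identity matching \ref{item:covariant_representation1}--\ref{item:covariant_representation3}, and the recovery of $(\pi,v)$ from $\psi$ on generators) are the routine part, and you correctly defer the two genuinely hard points --- well-definedness of the linear extension on sums with cancellation (which is not reducible to pairwise agreement of generators, since the germ witnesses $v\le t,t'$ vary from point to point) and the $\|\cdot\|_I$-contractivity estimate uniform over $L^p$-spaces --- to exactly those theorems. So nothing is missing relative to the paper's own argument.
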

\subsection{Comments on singular ideals and effectiveness}
Recall from Remark \ref{rem:reduced_Lp_algebras} that a reduced groupoid Banach algebra $B$ of $(\Gg , \LL)$ comes equipped with an injective contractive map $j:B\to \mathfrak{C}_0(\Gg , \LL)$.  
By \cite{BKM2}*{Proposition 4.12}, $J^{B}_{\sing} \coloneqq j^{-1}(\mathfrak{M}_0(\Gg , \LL))$ is an ideal in $B$ such that
the quotient $B/J^{B}_{\sing} $ naturally  becomes an essential groupoid Banach algebra of $(\Gg , \LL)$, 
and so the reduced algebra $B$ is essential if and only if  $J^{B}_{\sing}=\{0\}$.
\begin{definition}
We call $J^{B}_{\sing}$ defined above the \emph{singular ideal} for the reduced groupoid Banach algebra $B$ of $(\Gg , \LL)$.
We denote by $J_{\sing}^P$ the singular ideal for $F^P_{\red}(\Gg , \LL)$. When $P=\{p\}$  we  write  $J_{\sing}^p \coloneqq  J_{\sing}^P$. 
\end{definition}
\begin{remark} 
The ideal $J_{\sing}^P$ coincides with the kernel of the canonical representation
$F^P_{\red}(\Gg , \LL)\to F^P_{\ess}(\Gg , \LL)$, whenever $F^P_{\ess}(\Gg , \LL)$ is defined ($\Gg_{\Hau}$ is comeager), see comments before \cite{BKM2}*{Proposition 4.16}. 
\end{remark}
Embedding the reduced algebra $B$ via the injective $j$-map into  $\mathfrak{C}_0(\Gg , \LL)$  we have 
$$
\mathfrak{C}_c(\Gg , \LL)\cap  \mathfrak{M}_0(\Gg , \LL)\subseteq J^{B}_{\sing} \subseteq   \mathfrak{M}_0(\Gg , \LL).
$$
Clearly, $\mathfrak{M}_0(\Gg , \LL)=\{0\}$ when $\Gg$ is Hausdorff. 
We also know that  $\mathfrak{M}_0(\Gg , \LL)=\{0\}$ when
$\Gg$ is ample and every compact open set in $\Gg$ is regular open, see  \cite{BKM2}*{Lemma 4.8}.
In the untwisted case, vanishing of $\mathfrak{C}_c(\Gg)\cap \mathfrak{M}_0(\Gg)$ has been completely characterised
 in \cite{Brix-Gonzales-Hume-Li:Hausdorff_covers} and then in \cite{Hume}  by  different conditions.
Moreover, the authors of \cite{Brix-Gonzales-Hume-Li:Hausdorff_covers} found a natural condition under  which vanishing of $\mathfrak{C}_c(\Gg)\cap \mathfrak{M}_0(\Gg)$ is equivalent
to  the equality $\Cst_{\ess}(\Gg)=\Cst_{\red}(\Gg)$. 
We claim that their proof actually shows vanishing of  $\mathfrak{M}_0(\Gg)$. 
To explain this we establish the relevant terminology.
\begin{definition}\label{defn:finite_non_Hausdorffness}
Let $Y$ be a topological space. For each $y\in Y$ we denote by $\{y\}_{Y}$ the set of $z\in Y$ such that  $y$ and $z$ cannot be separated by disjoint open sets (equivalently, $y$ and $z$
are limit points of a net in $Y$). We say that  $Y$ is \emph{finitely non-Hausdorff} if  $\{y\}_{Y}$ is finite for every $y \in Y$.
\end{definition}
\begin{remark}
A point $y\in Y$ is Hausdorff if and only if $\{y\}_{Y}=\{y\}$. The relation of being non-separated in general is not transitive and hence  is not an equivalence relation.
\end{remark}
Finite  non-Hausdorffness  is exactly the condition assumed in \cite{Brix-Gonzales-Hume-Li:Hausdorff_covers}*{Theorem C}, which was phrased using  sets $\overline{X}(x)=\Gg_{x}^{x}\cap \overline{X}$ \label{page:closure_over_point}, where $\overline{X}$ is the closure of $X$ in $\Gg$ and $\Gg_{x}^{x}=\rg^{-1}(x)\cap \sr^{-1}(x)$ is the isotropy group over $x\in X$.
\begin{lemma}\label{lem:finite_non_Hausdorff_groupoid}
An \'etale groupoid $\Gg$ is finitely non-Hausdorff if and only if each $\overline{X}(x)$, $x\in X$, is finite
if and only if the source (equivalently the range map) $\overline{X}\onto X$ is finite-to-one.
\end{lemma}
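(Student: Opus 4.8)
The plan is to reduce everything to one structural fact: a description of the non-separation set $\{\gamma\}_{\Gg}$ in terms of the isotropy group over $\sr(\gamma)$. Concretely, I would first prove the following. \textbf{Claim.} For $\gamma,\eta\in\Gg$ the points $\gamma$ and $\eta$ are non-separated if and only if $\rg(\gamma)=\rg(\eta)$ and $\eta^{-1}\gamma\in\overline{X}$ (in which case automatically $\eta^{-1}\gamma\in\overline{X}(\sr(\gamma))$).

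For the forward direction, recall that $\gamma,\eta$ non-separated means there is a net $(\zeta_i)$ with $\zeta_i\to\gamma$ and $\zeta_i\to\eta$. Since $\rg,\sr$ are continuous into the Hausdorff space $X$, non-separated points must have equal ranges and equal sources; write $r=\rg(\gamma)=\rg(\eta)$ and $s=\sr(\gamma)=\sr(\eta)$. Then $(\eta^{-1},\gamma)$ is composable, so by continuity of inversion and multiplication $\zeta_i^{-1}\zeta_i=\sr(\zeta_i)\to\eta^{-1}\gamma$; as each $\sr(\zeta_i)\in X$ this gives $\eta^{-1}\gamma\in\overline{X}$. For the converse I would use \'etaleness. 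Pick an open bisection $V\ni\eta$ and consider left translation by $V$, the homeomorphism $L_V$ defined on the open set $\rg^{-1}(\sr(V))$ by $\zeta\mapsto v\zeta$, where $v\in V$ is the unique element with $\sr(v)=\rg(\zeta)$. Writing $\delta=\eta^{-1}\gamma\in\overline{X}$, choose units $x_i\in X$ with $x_i\to\delta$; then also $x_i=\rg(x_i)\to\rg(\delta)=s$, so the same net converges to both $\delta$ and $s$. Applying the continuous $L_V$ yields $L_V(x_i)\to L_V(\delta)=\eta\delta=\gamma$ and simultaneously $L_V(x_i)\to L_V(s)=\eta$, so the single net $(L_V(x_i))$ witnesses that $\gamma$ and $\eta$ are non-separated.

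Granting the Claim, I would record two consequences. First, since $\overline{X}$ is invariant under inversion and $(\eta^{-1}\gamma)^{-1}=\gamma^{-1}\eta$, the map $c\mapsto\gamma c$ is a bijection from $\overline{X}(s)=\overline{X}\cap\Gg_s^s$ onto $\{\gamma\}_{\Gg}$, whence $\lvert\{\gamma\}_{\Gg}\rvert=\lvert\overline{X}(\sr(\gamma))\rvert$. As $\sr$ is surjective onto $X$, this gives the equivalence of finite non-Hausdorffness of $\Gg$ with finiteness of every $\overline{X}(x)$, $x\in X$. Second, because $(\rg,\sr)$ is continuous and the diagonal of $X$ is closed ($X$ being Hausdorff), we have $\overline{X}\subseteq\Iso(\Gg)$; hence any $c\in\overline{X}$ with $\sr(c)=x$ also satisfies $\rg(c)=x$, so $\overline{X}(x)$ is exactly the fibre $(\sr|_{\overline{X}})^{-1}(x)$. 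Therefore ``$\overline{X}(x)$ finite for every $x$'' is literally ``$\sr|_{\overline{X}}\colon\overline{X}\onto X$ is finite-to-one'', and the range-map version follows by applying the inversion homeomorphism, which preserves $\overline{X}$ and interchanges $\rg$ and $\sr$.

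The main obstacle is the converse half of the Claim: one must manufacture a single net converging to the two given points from the hypothesis $\eta^{-1}\gamma\in\overline{X}$. The naive attempt to left-multiply a net of units by $\eta$ fails because units are absorbed by composition (the products collapse back to the translated units); the resolution is to replace plain multiplication by the bisection translation $L_V$, which is a genuine homeomorphism of open subsets of $\Gg$ and hence transports the net $x_i\to\delta$ (equivalently $x_i\to s$) to a net converging to both $\gamma=L_V(\delta)$ and $\eta=L_V(s)$. Everything else is routine bookkeeping with continuity of the groupoid operations and the inclusion $\overline{X}\subseteq\Iso(\Gg)$.
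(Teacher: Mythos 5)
Your argument is correct and follows essentially the same route as the paper: the key map $\eta\mapsto\eta^{-1}\gamma$ into $\overline{X}(\sr(\gamma))$, obtained by applying $\zeta\mapsto\zeta^{-1}\zeta=\sr(\zeta)$ to a net witnessing non-separation, is exactly the paper's injection, and the identification of the fibres of $\sr|_{\overline{X}}$ with the sets $\overline{X}(x)$ via $\overline{X}\subseteq\Iso(\Gg)$ is the same. The only difference is that you also prove the converse of the Claim for arbitrary $\gamma$ (via translation by a bisection), upgrading the paper's injection to a bijection; the paper only asserts the resulting equality $\{x\}_{\Gg}=\overline{X}(x)$ at units, which is all the lemma needs, so your extra step is a correct but inessential strengthening.
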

\begin{proof}
For any $\gamma\in \Gg$ we have a well-defined injective map
$[\gamma]_{\Gg}\ni \eta\mapsto \eta^{-1}\gamma\in \overline{X}(\sr(\gamma))$. Indeed, injectivity is clear, and if $(\gamma_n)_n$ is a net in $\Gg$ that has 
$\eta$ and $\gamma$ as limits then by continuity of  multiplication and taking inverses the net    $(\gamma_n^{-1}\gamma_n)_n=(\sr(\gamma_n))_n$ in  $X$ 
has $\eta^{-1}\gamma$ and $\gamma^{-1}\gamma=\sr(\gamma)$ as limits. Moreover, $\eta^{-1}\gamma$ is in the isotropy group by continuity of   $\sr$ and $\rg$.   When $\gamma=x$ is in $X$, this injection becomes the equality $\{x\}_{\Gg}=\overline{X}(x)$.  This gives the first equivalence, and the second one readily follows.
\end{proof}

For any net $(x_n)_n$ in $X$ the set $\Gamma$ of  its limit points in $\Gg$ is either empty or forms a subgroup of the isotropy group $\Gr_{x}^{x}$ for some $x\in X$.
In particular, $\overline{X}(x)$ the union of such subgroups in $\Gr_{x}^{x}$. Recall that a net is \emph{primitive} (or maximal) if  each of its subnets has the same set of limit points. 
Following \cite{Hume}, for each $x\in X$ we denote by $\mathcal{X}(x)$ the family of subgroups of $\Gr_{x}^{x}$ that arise as sets of limit points for primitive nets
in  $X\setminus \Gg_{\Hau}$.  The following condition, as well as other equivalent variants, appears in \cite{Hume}:
\begin{enumerate}\label{Condition:Hume}
\item[$\Hum$]  for each $x\in X$  there is no nonzero  $a\in \Cont_c(\Gg_{x}^{x})$ that satisfies $\sum_{\eta \in \Gamma} a(\gamma \eta)=0$ for all $\Gamma\in \mathcal{X}(x)$ and $\gamma \in \Gg_x^x$. 
\end{enumerate}
This serves as an alternative for the (negation of) condition $\mathcal{S}_{0}$ of  \cite{Brix-Gonzales-Hume-Li:Hausdorff_covers}:
\begin{theorem}
\label{thm:vanishing_of_the_singular_ideal}
For any \'etale groupoid the following conditions are equivalent:
\begin{enumerate}
	\item\label{enu:vanishing_of_the_singular_ideal1} $\mathfrak{C}_c(\Gg)\cap \mathfrak{M}_0(\Gg )=\{0\}$;
	\item\label{enu:vanishing_of_the_singular_ideal2} $\Gg$ satisfies  $\Hum$;
	\item\label{enu:vanishing_of_the_singular_ideal3} $\Gg$ does not satisfy condition $\mathcal{S}_{0}$ of  \cite{Brix-Gonzales-Hume-Li:Hausdorff_covers};

\end{enumerate}
If  $\Gg$ is finitely non-Hausdorff, then the above conditions are  equivalent to $\mathfrak{M}_0(\Gg  )=\{0\}$.
\end{theorem}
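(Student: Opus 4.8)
The plan is to separate the unconditional cycle (1)$\Leftrightarrow$(2)$\Leftrightarrow$(3) from the finitely-non-Hausdorff addendum. For the cycle I would not reprove anything from scratch. Condition (1), namely $\mathfrak{C}_c(\Gg)\cap\mathfrak{M}_0(\Gg)=\{0\}$, is exactly the vanishing of the algebraic singular ideal, and this is characterised in \cite{Brix-Gonzales-Hume-Li:Hausdorff_covers} by the failure of $\mathcal S_{0}$, which gives (1)$\Leftrightarrow$(3), and in \cite{Hume} by $\Hum$, which gives (1)$\Leftrightarrow$(2). So the real work in this part is bookkeeping: I would check that the objects $\mathcal X(x)$ and $\overline X(x)$ and the summation condition in $\Hum$ recalled on page~\pageref{Condition:Hume} are literally those used in \cite{Hume}, and that ``meager strict support'' as in our $\mathfrak{M}_0(\Gg)$ matches their notion of a singular section. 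I would record this as a lemma-by-citation and then assemble the cycle.

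The substance is the last sentence. One direction is immediate, since $\mathfrak{C}_c(\Gg)\cap\mathfrak{M}_0(\Gg)\subseteq\mathfrak{M}_0(\Gg)$, so $\mathfrak{M}_0(\Gg)=\{0\}$ trivially implies (1). For the converse I would argue by contraposition, first reducing to a density statement. A quick observation is that $\mathfrak{M}_0(\Gg)$ is \emph{closed} in $\mathfrak{C}_0(\Gg)$: if $f_n\to f$ uniformly with each strict support $\{f_n\neq 0\}$ meager, then $\{f\neq 0\}\subseteq\bigcup_n\{f_n\neq 0\}$ is a countable union of meager sets, hence meager. Consequently it suffices to prove that, when $\Gg$ is finitely non-Hausdorff, $\mathfrak{C}_c(\Gg)\cap\mathfrak{M}_0(\Gg)$ is dense in $\mathfrak{M}_0(\Gg)$; then (1) forces $\mathfrak{M}_0(\Gg)=\overline{\{0\}}=\{0\}$.

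For the density step I would exploit the ``Hausdorff part'' bookkeeping. The set $\Gg_{\Hau}$ is a dense Hausdorff subgroupoid and every $f\in\mathfrak{M}_0(\Gg)$ vanishes on it, so writing $f=\lim_n f_n$ with $f_n\in\mathfrak{C}_c(\Gg)$ gives $f_n|_{\Gg_{\Hau}}\to f|_{\Gg_{\Hau}}=0$ uniformly. The goal is then to manufacture, for each $n$, a \emph{regular part} $c(f_n)\in\mathfrak{C}_c(\Gg)$ that is continuous, agrees with $f_n$ on $\Gg_{\Hau}$, and satisfies $\|c(f_n)\|_\infty=\|f_n|_{\Gg_{\Hau}}\|_\infty$ (the last equality being automatic once $c(f_n)$ is continuous and $\Gg_{\Hau}$ is dense). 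Granting this, $h_n:=f_n-c(f_n)$ vanishes on $\Gg_{\Hau}$, hence lies in $\mathfrak{C}_c(\Gg)\cap\mathfrak{M}_0(\Gg)$, and $\|f-h_n\|\le\|f-f_n\|+\|c(f_n)\|_\infty=\|f-f_n\|+\|f_n|_{\Gg_{\Hau}}\|_\infty\le 2\|f-f_n\|\to 0$, which is exactly the density claimed. This is where finiteness enters: by Lemma~\ref{lem:finite_non_Hausdorff_groupoid} each $\gamma_0$ has only finitely many non-separable partners $\{\gamma_0\}_{\Gg}=\{\gamma_0,\dots,\gamma_m\}$, all in one source fibre, so the support of $c(f_n)$ can be taken inside a finite union of bisection neighbourhoods of the $\gamma_i$ and is therefore compact, and any net in $\Gg_{\Hau}$ approaching $\gamma_0$ accumulates only in this finite set.

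I expect the main obstacle to be precisely the construction and the \emph{continuity} of $c(g)$ at the non-Hausdorff points. Continuously extending $g|_{\Gg_{\Hau}}$ across the sheets is not a piece-by-piece matter: the naive extension of a single bisection bump can be discontinuous, because there may be nets of Hausdorff arrows $\zeta_k\to\gamma_0$ that stay \emph{separated} from the $\gamma_i$-sheet over $\sr(\zeta_k)$, so one must use the global cancellation of the finitely many pieces of $g$ near each non-Hausdorff configuration rather than treating them separately. This is exactly the local computation carried out in the proof of \cite{Brix-Gonzales-Hume-Li:Hausdorff_covers}*{Theorem~C}; finiteness of $\{\gamma_0\}_{\Gg}$ reduces it to a finite averaging problem over the sheets and over the subgroups in $\mathcal X(\sr(\gamma_0))$, which is also why the whole phenomenon is governed by $\Hum$. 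Once continuity of $c(g)$ is verified, the estimate above and the density are mechanical, and combining them with the general equivalences yields $\mathfrak{M}_0(\Gg)=\{0\}$ under (1). As a sanity check I would run the whole mechanism on the doubled-origin group bundle, where a single bisection bump $g$ produces the regular part valued $1$ at the doubled point and $h=g-c(g)$ is a genuine nonzero element of $\mathfrak{C}_c(\Gg)\cap\mathfrak{M}_0(\Gg)$.
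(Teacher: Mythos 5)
Your treatment of the cycle (1)$\Leftrightarrow$(2)$\Leftrightarrow$(3) coincides with the paper's: after identifying $\mathfrak{C}_c(\Gg)\cap\mathfrak{M}_0(\Gg)$ with the algebraic singular ideal, both equivalences are citations of \cite{Brix-Gonzales-Hume-Li:Hausdorff_covers}*{Theorem 4.1(i)} and \cite{Hume}*{Theorem F}. The closedness of $\mathfrak{M}_0(\Gg)$ and the fact that its elements vanish on $\Gg_{\Hau}$ are also fine.

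The gap is in the addendum, and it is precisely the object your whole argument hinges on: a ``regular part'' $c(g)\in\mathfrak{C}_c(\Gg)$ that is \emph{continuous} on $\Gg$ and agrees with $g$ on $\Gg_{\Hau}$ does not exist in general, and your own sanity check refutes it once carried out. Take the doubled origin $\Gg=\R\cup\{0'\}$ with second bisection $U=(\R\setminus\{0\})\cup\{0'\}$ and $g=a\circ\sr|_{U}\in\Contc(U)$ with $a(0)=1$, so $g(0)=0$ and $g(0')=1$. Any element of $\mathfrak{C}_c(\Gg)$ supported on $\{0,0'\}$ is a scalar multiple of $\bone_{\{0\}}-\bone_{\{0'\}}$ (write it as $b+c$ with $b\in\Contc(\R)$, $c\in\Contc(U)$; continuity of $b$ at $0$ and of $c$ at $0'$ forces the values at $0$ and $0'$ to be opposite). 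Hence any $c(g)\in\mathfrak{C}_c(\Gg)$ agreeing with $g$ on $\Gg_{\Hau}=\R\setminus\{0\}$ equals $g+\lambda(\bone_{\{0\}}-\bone_{\{0'\}})$, taking the value $\lambda$ at $0$ and $1-\lambda$ at $0'$; a net $x_j\to 0$ in $\R\setminus\{0\}$ converges to both origins, so continuity at $0$ forces $\lambda=1$ while continuity at $0'$ forces $\lambda=0$. So no continuous regular part exists inside $\mathfrak{C}_c(\Gg)$: the continuous extension of $g|_{\Gg_{\Hau}}$ lives in $\Contc$ of the Hausdorff cover $\widetilde{\Gg}$, not in $\mathfrak{C}_c(\Gg)$, which is exactly why \cite{Brix-Gonzales-Hume-Li:Hausdorff_covers} need that cover. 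Correspondingly the $h$ of your sanity check is $-\bone_{\{0\}}$ (or $-\bone_{\{0'\}}$), which is \emph{not} in $\mathfrak{C}_c(\Gg)$; the genuine elements of $\mathfrak{C}_c(\Gg)\cap\mathfrak{M}_0(\Gg)$ there are the differences $\bone_{\{0\}}-\bone_{\{0'\}}$. If you drop continuity you lose the bound $\|c(f_n)\|_\infty\le\|f_n|_{\Gg_{\Hau}}\|_\infty$, and that bound --- equivalently the density of $\mathfrak{C}_c(\Gg)\cap\mathfrak{M}_0(\Gg)$ in $\mathfrak{M}_0(\Gg)$ --- is the entire content of the step; it is a restatement of \cite{Brix-Gonzales-Hume-Li:Hausdorff_covers}*{Lemma 3.12} for the image of $\mathfrak{C}_0(\Gg)$ in $\Cont_0(\widetilde{\Gg})$ and cannot be produced by a pointwise extension. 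The paper avoids the decomposition entirely: it runs the proof of \cite{Brix-Gonzales-Hume-Li:Hausdorff_covers}*{Theorem 4.7} in contrapositive form, localising a nonzero $f\in\mathfrak{M}_0(\Gg)$ near a non-Hausdorff point and convolving with functions in $\Contc(U)$, $U\in\Bis(\Gg)$, observing that the only property of the ideal used there is stability under such convolutions, which $\mathfrak{M}_0(\Gg)$ shares with $J^{2}_{\sing}$. Either adopt that route, or prove the distance estimate directly; as written, the decomposition step fails.
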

\begin{proof} Note that $\mathfrak{C}_c(\Gg)\cap \mathfrak{M}_0(\Gg )=J_{\sing}^{2}\cap \mathfrak{C}_c(\Gg)$. 
Hence,  \ref{enu:vanishing_of_the_singular_ideal1}$\Longleftrightarrow$\ref{enu:vanishing_of_the_singular_ideal3} by 
\cite{Brix-Gonzales-Hume-Li:Hausdorff_covers}*{Theorem 4.1(i)}, and \ref{enu:vanishing_of_the_singular_ideal1}$\Longleftrightarrow$\ref{enu:vanishing_of_the_singular_ideal2}
by \cite{Hume}*{Theorem F}, see also \cite{Hume}*{Lemma 5.3}.
Assume $\Gg$ is finitely non-Hausdorff. 
Then \cite{Brix-Gonzales-Hume-Li:Hausdorff_covers}*{Theorem 4.7} states that $J^{2}_{\sing}\neq \{0\}$ implies  $\mathfrak{C}_c(\Gg)\cap \mathfrak{M}_0(\Gg )\neq \{0\}$,
but the proof works when $J^{2}_{\sing}$ is replaced by $\mathfrak{M}_0(\Gg )$. Indeed, the proof uses only that $\mathfrak{M}_0(\Gg )$ is a $\Cont_0(X)$-module or more generally 
that a convolution of $f\in \mathfrak{M}_0(\Gg )$ with any $a\in \Cont_c(U)$, for $U\in \Bis(\Gg)$,  is an element of $\mathfrak{M}_0(\Gg )$.
Also \cite{Brix-Gonzales-Hume-Li:Hausdorff_covers}*{Lemma 3.12} can be stated in a slightly more general way by replacing the image of $C^*_r(G)$ in $C^*_{r}(\widetilde{G})$ with the image 
of $\mathfrak{C}_0(\Gg)$ in $\Cont_0(\widetilde{G})$.
\end{proof}
\begin{corollary}\label{cor:vanishing_of_singulars}
Let $B$ be a reduced Banach algebra of $(\Gg,\LL)$ where the twist $\LL$ is topologically trivial (given by a cocycle).
Then $\Hum$ is a necessary condition for vanishing of the singular ideal $J^{B}_{\sing}$. If 
$\Gg$ is finitely non-Hausdorff it is also sufficient.

In particular, if  $\Gg$ is finitely non-Hausdorff, then $B$ is essential if and only if $\Hum$ holds.
\end{corollary}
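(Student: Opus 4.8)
The plan is to deduce both directions from Theorem~\ref{thm:vanishing_of_the_singular_ideal} by transporting it across the identification of the topologically trivial twisted picture with the untwisted one. Since $\LL$ is topologically trivial it comes from a $2$-cocycle $\sigma\colon \Gg^2\to\T$, and as recorded in \eqref{eq:convolution_and_involution1} the section algebra $\mathfrak{C}_c(\Gg,\LL)$ is identified with the quasi-continuous function algebra $\mathfrak{C}_c(\Gg)$, only the convolution and involution being twisted by $\sigma$. Crucially, this identification preserves strict supports: a section and the function corresponding to it vanish at exactly the same points of $\Gg$. Hence $\mathfrak{M}_0(\Gg,\LL)$ and $\mathfrak{C}_c(\Gg,\LL)\cap\mathfrak{M}_0(\Gg,\LL)$ are carried onto $\mathfrak{M}_0(\Gg)$ and $\mathfrak{C}_c(\Gg)\cap\mathfrak{M}_0(\Gg)$, respectively.

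First I would prove necessity, which needs no finiteness hypothesis. Using the containment $\mathfrak{C}_c(\Gg,\LL)\cap\mathfrak{M}_0(\Gg,\LL)\subseteq J^{B}_{\sing}$ established just before Definition~\ref{defn:finite_non_Hausdorffness}, vanishing of $J^{B}_{\sing}$ forces $\mathfrak{C}_c(\Gg,\LL)\cap\mathfrak{M}_0(\Gg,\LL)=\{0\}$, equivalently $\mathfrak{C}_c(\Gg)\cap\mathfrak{M}_0(\Gg)=\{0\}$ under the identification above. By the equivalence \ref{enu:vanishing_of_the_singular_ideal1}$\Leftrightarrow$\ref{enu:vanishing_of_the_singular_ideal2} of Theorem~\ref{thm:vanishing_of_the_singular_ideal}, this is exactly $\Hum$.

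Next I would treat sufficiency under the standing assumption that $\Gg$ is finitely non-Hausdorff. Here the final clause of Theorem~\ref{thm:vanishing_of_the_singular_ideal} applies: finite non-Hausdorffness upgrades $\Hum$ to the stronger statement $\mathfrak{M}_0(\Gg)=\{0\}$, hence $\mathfrak{M}_0(\Gg,\LL)=\{0\}$ after transporting back. The other established containment $J^{B}_{\sing}\subseteq\mathfrak{M}_0(\Gg,\LL)$ then collapses to $J^{B}_{\sing}=\{0\}$. For the last claim I would simply combine the two directions with the fact, recorded in the definition of the singular ideal, that a reduced groupoid Banach algebra $B$ is essential precisely when $J^{B}_{\sing}=\{0\}$; so when $\Gg$ is finitely non-Hausdorff, $B$ is essential iff $J^{B}_{\sing}=\{0\}$ iff $\Hum$ holds.

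The argument is essentially formal given Theorem~\ref{thm:vanishing_of_the_singular_ideal}, and the only point needing a little care, which I expect to be the main (mild) obstacle, is the justification that the support-based spaces $\mathfrak{M}_0(\Gg,\LL)$ and $\mathfrak{M}_0(\Gg)$ correspond under the topologically trivial identification, so that the theorem, stated for functions, may be invoked verbatim for sections. This is legitimate because meagerness of the strict support is a purely topological property of the open set $\supp(f)\subseteq\Gg$ and is insensitive to the cocycle $\sigma$; the characterisation of $\mathfrak{M}_0$ in Remark~\ref{rem:essential_groupoid_algebras} as the sections whose strict support has empty interior makes this transfer transparent.
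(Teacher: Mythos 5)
Your proposal is correct and follows essentially the same route as the paper: identify $\mathfrak{C}_c(\Gg,\LL)$ with $\mathfrak{C}_c(\Gg)$ via the trivialising cocycle, use the containments $\mathfrak{C}_c(\Gg)\cap\mathfrak{M}_0(\Gg)\subseteq J^{B}_{\sing}\subseteq\mathfrak{M}_0(\Gg)$, and invoke Theorem~\ref{thm:vanishing_of_the_singular_ideal} in both directions. Your extra care about the identification preserving strict supports is a correct (if implicit in the paper) justification rather than a deviation.
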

\begin{proof}
When the twist $\LL$ is topologically trivial we may view $B$ as a completion of $\mathfrak{C}_c(\Gg)$, with operations given by \eqref{eq:convolution_and_involution1}. 
Then $\mathfrak{C}_c(\Gg )\cap  \mathfrak{M}_0(\Gg )\subseteq J^{B}_{\sing} \subseteq   \mathfrak{M}_0(\Gg )$, and so 
Theorem~\ref{thm:vanishing_of_the_singular_ideal} applies.
\end{proof}
\begin{remark}
When the twist is topologically nontrivial the statement in Corollary~\ref{cor:vanishing_of_singulars} fails, see Example~\ref{ex:nontrivial_twist} below. In particular, conditions in Theorem~\ref{thm:vanishing_of_the_singular_ideal}
are not necessary for $\mathfrak{C}_c(\Gg , \LL)\cap  \mathfrak{M}_0(\Gg , \LL)=\{0\}$. 
To this day, it is not known whether vanishing of $\mathfrak{C}_c(\Gg)\cap  \mathfrak{M}_0(\Gg)$  implies vanishing of $\mathfrak{M}_0(\Gg)$ or even $J_{\sing}^{2}$, but the progress in this research is rapid and examples such as those in \cite{Martinez_Szakacs} (which are given by groupoids coming from self-similar actions) suggest that this fails in general. 
\end{remark}

Finally, we explain how effectiveness is related to vanishing of singular ideals.

\begin{lemma}\label{lem:effectiveness_is_about_singular_ideal}
Assume that $\mathfrak{C}_c(\Gg)\cap \mathfrak{M}_0(\Gg)=\{0\}$, which is equivalent to $\Hum$ by Theorem~\ref{thm:vanishing_of_the_singular_ideal}.  In particular, this holds whenever the singular ideal in $F^P_{\red}(\Gg,\LL)$ vanishes, for some $P\subseteq [1,\infty]$ and some topologically trivial twist $\LL$. Then $\Gg$ is effective if and only if it is topologically free.
\end{lemma}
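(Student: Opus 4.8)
The plan is to prove the two implications separately, observing that only one of them uses the standing hypothesis. First I would record that effectiveness implies topological freeness for \emph{every} étale groupoid, with no hypothesis needed: since $X$ is open and contained in $\Iso(\Gg)$, the open set $\operatorname{int}(\Iso(\Gg)\setminus X)$ lies in both $\operatorname{int}(\Iso(\Gg))=X$ and $\Iso(\Gg)\setminus X$, hence in $X\cap(\Iso(\Gg)\setminus X)=\emptyset$. So this direction is immediate from the definitions.

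The content is the converse, that topological freeness implies effectiveness. Write $O\defeq\operatorname{int}(\Iso(\Gg))$; as $X$ is open and $X\subseteq\Iso(\Gg)$, we always have $X\subseteq O$, and effectiveness is the reverse inclusion. The first key step is that topological freeness forces $O\subseteq\overline{X}$: since $O$ is open, $\operatorname{int}(O\setminus X)=O\setminus\overline{X}$, and because $O\setminus X\subseteq\Iso(\Gg)\setminus X$ this interior is empty by topological freeness, giving $O\subseteq\overline{X}$. Thus any element $\gamma_0\in O\setminus X$ lies in the non-Hausdorff boundary $\overline{X}\setminus X$, and being isotropy it is non-separated from the unit $x_0\defeq\sr(\gamma_0)=\rg(\gamma_0)$. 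I would argue by contradiction: assuming $\Gg$ is not effective, such a $\gamma_0$ exists, and the aim is to manufacture from it a nonzero element of $\mathfrak{C}_c(\Gg)\cap\mathfrak{M}_0(\Gg)$.

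To do this, I would choose an open bisection $U$ with $\gamma_0\in U\subseteq O$, so that $\sr|_U\colon U\to V\defeq\sr(U)$ is a homeomorphism onto an open subset of $X$ and, as $U\subseteq\Iso(\Gg)$, the inverse $\beta\defeq(\sr|_U)^{-1}\colon V\to U$ is a continuous section into the isotropy. Picking $a\in\Cont_c(U)$ with $a(\gamma_0)\neq 0$ and setting $\tilde a\defeq a\circ(\sr|_U)^{-1}\in\Cont_c(V)\subseteq\Cont_c(X)$, I take $f\defeq a-\tilde a\in\mathfrak{C}_c(\Gg)$. A direct evaluation shows $f$ vanishes at every unit of $U$ (there $\beta(x)=x$, so $a(x)=\tilde a(x)$) and at every unit outside $V$, while $f(\gamma_0)=a(\gamma_0)\neq0$ because $\tilde a$ is supported on $X\not\ni\gamma_0$; hence $f\neq0$. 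It remains to show $\supp(f)$ has empty interior. The support splits into its part in $\Gg\setminus X$, contained in $U\setminus X\subseteq\overline{X}\setminus X$, which is nowhere dense since $X$ is open and dense in $\overline{X}$; and its part in $X$, contained in $V\setminus(U\cap X)$. Here one checks $U\cap X$ is dense in $V$: every $\gamma\in U\subseteq\overline{X}$ is a limit of units, which eventually lie in $U\cap X$ and whose sources converge to $\sr(\gamma)$, so $V\subseteq\overline{U\cap X}$. Since the two pieces sit in the disjoint open and closed regions $X$ and $\Gg\setminus X$, their union has empty interior, so $f\in\mathfrak{M}_0(\Gg)$ by the description of $\mathfrak{M}_0$ recalled in Remark~\ref{rem:essential_groupoid_algebras}.

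The crux, and the only genuinely delicate point, is this support computation for $f$. The two structural facts that make it work — $U\subseteq\overline{X}$ and the density of $U\cap X$ in $V$ — are exactly the consequences of topological freeness via $O\subseteq\overline{X}$; without them the off-diagonal piece $U\setminus X$ could have nonempty interior and $f$ would fail to be singular. Once $\supp(f)$ is seen to be nowhere dense, the standing hypothesis $\mathfrak{C}_c(\Gg)\cap\mathfrak{M}_0(\Gg)=\{0\}$ forces $f=0$, contradicting $f(\gamma_0)\neq0$. This contradiction shows $\Gg$ is effective and completes the equivalence.
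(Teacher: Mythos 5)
Your proof is correct and follows essentially the same route as the paper's: from a bisection $U\subseteq\operatorname{int}(\Iso(\Gg))$ witnessing non-effectiveness, you build the quasi-continuous function $a-a\circ(\text{anchor map})^{-1}$, show it is nonzero on $U\setminus X$ yet has nowhere dense support (split between $U\setminus X$ and its image in $X$), contradicting $\mathfrak{C}_c(\Gg)\cap\mathfrak{M}_0(\Gg)=\{0\}$. The only cosmetic differences are that you use $\sr$ where the paper uses $\rg$, you phrase the empty-interior computation via $O\subseteq\overline{X}$ and density of $U\cap X$ in $\sr(U)$ rather than directly via empty interiors of $U\setminus X$ and $\rg(U\setminus X)$, and you spell out the (always valid) easy implication.
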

\begin{proof}
Assume that $\Gg$ is topologically free but not effective. So there is a non-empty (open) bisection  $U\subseteq \text{Iso}(\Gg)$
such that $U\setminus X$ is non-empty, but its interior is empty.  Then also $\rg(U)\setminus U=\rg(U\setminus X)$ is non-empty but has an empty interior. 
The union $U\setminus X \sqcup  \rg(U)\setminus U$ also has empty interior. Indeed, for any  open $V\subseteq U\setminus X \sqcup  \rg(U)\setminus U$ using that $X$ is open and 
$V\cap X \subseteq \rg(U)\setminus U$ we get that $V\cap X=\emptyset$. Hence, $V\subseteq U\setminus X$ which implies that $V=\emptyset$.
Now take any $a\in \Cont_c(\rg(U))$ such that 
$a(\rg(U\setminus X))\neq 0$. Then $a\circ r|_{U}^{-1}\in  \Cont_c(U)$ and $b \coloneqq a- a\circ r|_{U}^{-1}\in \mathfrak{C}_c(\Gg)$.
For any $\gamma\in \Gg$, we have $b(\gamma)=-a(\rg(\gamma))$ if $\gamma\in U\setminus X$,  $b(\gamma)=a(\gamma)$ if $\gamma \in \rg(U)\setminus U$ and $b(\gamma)=0$ otherwise.
Hence, $b$ is nonzero and $\supp(b)\subseteq  U\setminus X \sqcup  \rg(U)\setminus U$.
Thus,  $0\neq b\in \mathfrak{C}_c(\Gg)\cap \mathfrak{M}_0(\Gg)$.
\end{proof}
\begin{remark}\label{rem:effectiveness_no_good}
In the study of ideal structure of $\Cst_{\red}(\Gg)$ one usually independently checks vanishing of the singular ideal  and  
effectiveness, cf. \cites{Clark-Exel-Pardo-Sims-Starling:Simplicity_non-Hausdorff, Steinberg_Szakacs}. 
The above lemma shows that these procedures are not independent. We believe that  it is much more natural and (nomen omen) efficient to replace effectiveness with  topological freeness here. 
Topological freeness has a number of natural characterisations, while effectiveness is  almost only useful when it coincides with topological freeness. 
One (negative) application is that, when the twist $\LL$ is topologically trivial and $\Gg$ is topologically free, then the lack of effectiveness   implies
non-vanishing of all singular ideals. However, as the following example shows, this criterion does not work for general twists.
\end{remark}
\begin{example}\label{ex:nontrivial_twist}
Let  $\Gg \coloneqq X\cup \{\star\}$ be the groupoid with the
unit space $X=\{\frac{1}{n}:n\in \Z\}\cup \{0\}$ and $\star$ being a nontrivial element in the isotropy group of $0$. 
We view $\Gg$ as an ample groupoid where $X$ is equipped with the usual topology inherited from $\R$, and 
the bisection $X\setminus \{0\}\cup\{\star\}$ is compact open and homeomorphic to $X$ via the map that sends $\star$ to $0$ and
acts as the identity elsewhere. Then $\Gg$ is an ample groupoid with $\Gg_{\Hau}=\{\frac{1}{n}:n\in \Z\}$.
Since $\Gg=\text{Iso}(\Gg)$ it is not effective, but it is topologically free, as the singleton $\Gg\setminus X=\{\star\}$ is not open in $\Gg$.
Moreover, for any $P\subseteq [1,\infty]$ we have
$$
F^P(\Gg)=F^P_{\red}(\Gg)=\big\{a\oplus b\in \Contc(\Gg_{\Hau})\oplus \mathbb{C}\Z_2: \lim_{n\to \pm \infty} a(1/n)=b(0)+b(1)\big\}
$$
and $F^P_{\ess}(\Gg)=\Cont(X)$. By Lemma~\ref{lem:effectiveness_is_about_singular_ideal}, the singular ideal
$J^P_{\sing}\cong \mathbb{C}$ does not vanish. This agrees with Theorem~\ref{thm:vanishing_of_the_singular_ideal}: it is easy to see that setting $a(0)=1$ and $a(\star)=-1$ gives $a\in \Cont_c(\{0,\star\})$, showing that $\Hum$ fails as $\mathcal{X}(0)$ consists only of $\{0,\star\}$.

The situation changes when we add a nontrivial twist.
Following \cite{DEP25}*{Section 3}, we ``twist the product topology'' on $\LL \coloneqq \Gg\times \mathbb{C}$  by
declaring that the product space $X\times \mathbb{C}$ sits as an open subset of $\LL$ while for  any $z\in \mathbb{C}$ we define
the  neighbourhood subbasis $\{U_{V,N}\}_{V,N}$ for $(\star,z)$, parametrised by an open neighbourhood $V\subseteq \mathbb{C}$ of $z$ and a natural number $N\in \N$,  by the formula
$$
U_{V,N} \coloneqq  \left(\left\{-\frac{1}{n}: n\geq N\right\}\cup \{\star\}\right)\times (-V) \cup \left(\left\{\frac{1}{n}: n\geq N\right\}\cup \{\star\}\right)\times V,
$$
where $-V\coloneq\{-z:z\in V\}$. Then 
$$
F^P(\Gg,\LL)=F^P_{\red}(\Gg,\LL)=F^P_{\ess}(\Gg,\LL)=\big\{a\oplus b\in \Contc(\Gg_{\Hau})\oplus \mathbb{C}\Z_2: \lim_{n\to \pm \infty} a(1/n)=b(0)\pm b(1)\big\}.
$$
\end{example}

\section{Twisted inverse semigroup \texorpdfstring{$L^P$}{LP}-operator algebras}\label{sect:Twisted inverse semigroup}

We now generalise the definition of an inverse semigroup representation \cite{BKM}*{Definition 6.12} to the twisted case.
Recall that an element $b\in B$ in a unital Banach algebra $B$ is \emph{hermitian}  if and only if 
$\|e^{itb}\|\leq 1$ for all $t\in \R$. If $B$ is approximately unital (has a two-sided contractive approximate unit), then following \cite{BP19}*{Definition 2.8} we say that an element in $B$ is hermitian if it is hermitian in the minimal unitisation $\widetilde{B}$ of $B$. 
We say that $v\in B$ has \emph{Moore-Penrose generalised inverse} if there is $v^*\in B$ such that 
$v v^*v=v$ and $v^*v v^*=v^*$, and $v^*v$ and $v^*v$ are hermitian. 
Then $v^*$ is uniquely determined by $v$, and if in addition $\|v\|, \|v^*\|\leq 1$, then $v$ is a \emph{Moore-Penrose partial isometry}, as defined in \cite{Mbekhta}. 
When $B$ is a $C^*$-algebra, then  $b\in B$ is hermitian if and only if  $b=b^*$, and $v\in B$ is a Moore-Penrose partial isometry if and only if it is a partial isometry in the usual sense, and then $v^*$ is its hermitian adjoint.
For a Banach space $Y$ we denote by $\MPIso(Y)$  the set of Moore-Penrose partial isometries in the Banach algebra $\Bound(Y)$.
\begin{proposition}\label{prop:inverse_semigroup_of_partial_isos}
For any $p\in [1,\infty]\setminus\{2\}$, and any $L^p$-space $Y$, the  Moore-Penrose partial isometries $\MPIso(Y)\subseteq \Bound(Y)$ form an inverse semigroup.
\end{proposition}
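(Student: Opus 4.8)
The plan is to verify the two defining features of an inverse semigroup with zero: that $\MPIso(Y)$ is closed under composition (it visibly contains $0$, and composition is associative), and that it is regular with commuting idempotents, whence it is inverse by the standard characterisation, see \cite{Lawson}. The whole argument rests on the rigidity of $L^p$-spaces for $p\neq 2$. Fixing an isometric identification $Y\cong L^p(\mu)$ (or $Y\cong \Cont_0(\Omega)$ when $p=\infty$), I would first record the structural input: for $p\neq 2$ the hermitian elements of $\Bound(Y)$ are exactly the real multiplication operators, so the hermitian idempotents are precisely the \emph{band projections} given by multiplication by a characteristic function of a measurable (resp.\ clopen) set. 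Two such projections manifestly commute, and every $v^*v$, $vv^*$ arising from $v\in\MPIso(Y)$ is of this form.

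Next I would analyse a single $v\in\MPIso(Y)$ spatially. Writing $p=v^*v$ and $r=vv^*$, the relations $v=vv^*v$ and $v^*=v^*vv^*$ give $v=vp=rv$ and $v^*=pv^*=v^*r$, and show that $v$ restricts to a surjective isometry $pY\to rY$ with inverse $v^*|_{rY}$; indeed, for $\xi\in pY$ one has $\|\xi\|=\|v^*v\xi\|\le\|v\xi\|\le\|\xi\|$, forcing $\|v\xi\|=\|\xi\|$. Since $pY$ and $rY$ are themselves $L^p$-spaces, Lamperti's theorem (for $1\le p<\infty$, $p\neq2$) and the Banach--Stone theorem (for $p=\infty$) identify this isometry as a weighted composition operator. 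The key consequence I want to extract is that $v$ \emph{conjugates band projections to band projections}: for any band projection $q$, the operator $vqv^*=v(pqp)v^*$ is again a band projection, in particular hermitian.

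With these tools, closure under composition becomes a short computation. Given $v,w\in\MPIso(Y)$, put $a=v^*v$ and $b=ww^*$; both are band projections, hence commute. I claim $w^*v^*$ is the Moore--Penrose inverse of $vw$. Contractivity of $vw$ and $w^*v^*$ is clear. The hermitian requirements hold because $vw\,w^*v^*=vbv^*$ and $w^*v^*\,vw=w^*aw$ are band projections by the conjugation property above. For the two rectangular relations one computes $vw\,w^*v^*\,vw=v\,b\,a\,w=v\,a\,b\,w=(va)(bw)=vw$, using $ab=ba$ together with $va=v$ and $bw=w$; symmetrically $w^*v^*\,vw\,w^*v^*=w^*\,a\,b\,v^*=w^*\,b\,a\,v^*=w^*v^*$. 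Hence $vw\in\MPIso(Y)$ with $(vw)^*=w^*v^*$, so $\MPIso(Y)$ is a semigroup on which $v\mapsto v^*$ is an involutive anti-automorphism.

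Finally I would show the idempotents of this semigroup commute, by proving that each is a band projection. If $g\in\MPIso(Y)$ satisfies $g^2=g$, then, with $p=g^*g$, the relation $g=gg^*g$ gives $g=gp$, so $g$ vanishes off $pY$ and restricts to the surjective isometry $T\colon pY\to rY$ of the second step. Feeding $g^2=g$ through the spatial form of $T$ forces $r=p$ and $T=\mathrm{id}_{pY}$, so $g=p=g^*g$ is itself a band projection, hence hermitian. Thus all idempotents lie among the commuting band projections, and $\MPIso(Y)$ is regular (witnessed by $v=vv^*v$) with commuting idempotents; by the standard characterisation it is an inverse semigroup, in which $v^*$ is the semigroup inverse of $v$. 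I expect the main obstacle to be the spatial step: extracting from Lamperti's and the Banach--Stone theorems---uniformly across $p\in[1,\infty]\setminus\{2\}$, including the endpoints---that $v$ conjugates band projections to band projections, and then pinning down the idempotents in the last step. Everything else is formal once band projections are known to commute.
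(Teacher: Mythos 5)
Your proposal is correct. The underlying structural inputs are the same as the paper's: for $p\neq 2$ hermitian idempotents are multiplications by characteristic functions (clopen sets in the $\Cont_0(\Omega)$ case), and the restriction of $v$ to $v^*vY$ is a surjective isometry onto $vv^*Y$ whose spatial form (Lamperti, resp.\ Banach--Stone) is a weighted composition operator. Where you differ is in organisation: the paper disposes of $p<\infty$ entirely by citing an earlier result (\cite{BKM}*{Theorem 2.28}) and only writes out the $Y=\Cont_0(\Omega)$ case, and there it fully identifies $\MPIso(Y)$ with the weighted composition operators between clopen subsets and declares the inverse semigroup axioms ``straightforward'' for that concrete class. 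You instead give a single uniform argument for all $p\in[1,\infty]\setminus\{2\}$: you extract from the spatial form only the conjugation property ($vqv^*$ is again a band projection), and then verify closure under composition purely algebraically by checking the Moore--Penrose axioms for $vw$ with candidate inverse $w^*v^*$, using commutativity of the band projections $v^*v$ and $ww^*$; the final step pinning down the idempotents as band projections is also carried out explicitly rather than read off from the weighted-composition picture. Your route is longer but self-contained and endpoint-uniform; the paper's is shorter at the cost of outsourcing the $p<\infty$ case and leaving the $p=\infty$ verification implicit. Two small points to keep in mind when writing this up: the identification of hermitian operators with real multiplication operators on general $L^p(\mu)$ needs the same care (localisability/decomposability of the representing measure) that the cited \cite{BKM}*{Theorem 2.13} handles, and in the idempotent step the norm argument $\|pT\xi\|=\|T\xi\|\Rightarrow(1-p)T\xi=0$ is immediate only for $p<\infty$, so for $p=\infty$ you do need to run $g^2=g$ through the Banach--Stone form as you indicate.
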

\begin{proof}
By \cite{BKM}*{Theorem 2.28} the assertion follows for  $p<\infty$  and for $p=\infty$ if the space $Y$ is isomorphic to $L^{\infty}(\mu)$ for some measure $\mu$.
Since we adopted the convention that $L^\infty$-spaces are more general, we need to consider the case when $Y=\Cont_0(\Omega) $  for a locally compact Hausdorff space $\Omega$. 
By \cite{BKM}*{Theorem 2.13}, hermitian projections in $\Bound(Y)$ are multiplication operators by characteristic function of some closed open subsets of $\Omega$.
This implies that  Moore-Penrose partial isometries are equivalent to invertible isometries $\Cont_0(D)\to\Cont_0(R)$ for some  clopen subsets $D,R\subseteq \Omega$.
Hence, using the Stone-Banach theorem we get that $v\in \Cont_0(\Omega) $ is a Moore-Penrose partial isometry if and only if $v$ is a weighted composition operator of the form
$v\xi=\omega (\xi\circ \varphi)$ where $\omega\in \Cont_u(D)$ and  $\varphi\colon D\to \varphi(D)$ is a homeomorphism between clopen subsets of $\Omega$.
With this presentation it is now straightforward to see that these operators form an inverse semigroup.
\end{proof}
\begin{remark} 
The  elements of $\MPIso(Y)$ considered in Proposition~\ref{prop:inverse_semigroup_of_partial_isos} coincide with $L^p$-partial isometries  on $Y$ in the sense of 
\cite{BKM}*{Definition 2.22},
and have ultrahermitian idempotents in $\Bound(Y)$, see \cite{Gardella_Palmstrom_Thiel}*{Definition 2.15}.  Thus,  one could  conclude  that $\MPIso(Y)$ is an inverse semigroup using  either \cite{BKM}*{Proposition 2.23}  or \cite{Gardella_Palmstrom_Thiel}*{Theorem 2.17}.
When  $Y=L^{p}(\mu)$ for a localisable measure $\mu$, then $\MPIso(Y)$ is the inverse semigroup of  spatial partial isometries, as defined by Phillips, see \cite{BKM}*{Theorem 2.28}.
\end{remark}
\subsection{Representations of twisted inverse semigroups  and  groupoid models}
\begin{definition}\label{def:semigroup_representations} 
A \emph{representation of a twisted inverse semigroup} $(S,\omega)$ on  a Banach space $Y$ is a zero preserving map
$v: S\to \Bound(Y)_1$ into the semigroup of contractive operators on $Y$ such that
\begin{enumerate}[label={(SR\arabic*)}]
	\item\label{enu:semigroup_representation1} $v_s v_t =\omega(s,t) v_{st}$ if $st\neq 0$ and $v_s v_t=0$ otherwise, for all  $s,t\in S$; and
	\item\label{enu:semigroup_representation2} $v|_{\EE}$ takes values in hermitian idempotents.
\end{enumerate}
We define the \emph{range of the representation} $v$ to be 
$$
B(v)  \coloneqq  \clsp\{v_t : \, t\in S \}.
$$
It is the Banach algebra generated by the range of $v$ as a map.
We say that a representation $v$ is \emph{covariant} if, in addition,
\begin{enumerate}[label={(SR\arabic*)}]
	\setcounter{enumi}{2}
	\item\label{enu:semigroup_representation3} $v|_{\EE}$ is tight in the sense that $\prod_{f\in F} (v_e -v_f) = 0$ for every cover $F$ of $e\in \EE$ 
	(equivalently $v|_{\EE}$ is tight as a map into a Boolean ring of idempotents in $\linspan \{ v_e : e\in \EE \}$). 
\end{enumerate}
For nonempty $P\subseteq [1,\infty]$, we denote by   $\OO^P(S,\omega)$ the  Banach algebra which is a range of a direct sum  of universal    covariant representations
of $(S,\omega)$ on $L^p$-spaces for $p\in P$ (we prove its existence by giving a groupoid model in Corollary~\ref{cor:groupoid_presentation_inverse_semigroup_algebras}). Similarly, we let $\TT^P(S,\omega)$ be  the universal Banach algebra  for all (not necessarily covariant) representations on $L^p$-spaces for  $p\in P$. 
We call $\OO^P(S,\omega)$  and  $\TT^P(S,\omega)$  the \emph{(universal) $L^P$-operator algebra} and \emph{(universal) Toeplitz $L^P$-operator algebra} of the twisted inverse semigroup $(S,\omega)$, respectively.

\end{definition}
\begin{remark}\label{rem:inverse_semigroup_representations}
Condition \ref{enu:semigroup_representation1} implies that $v|_{\EE}$ takes values in idempotents, and that for every $t\in S$ the operator $v_t^* \coloneqq \omega(t^*,t)^*v_{t^*}$   is a generalised inverse for $v_t$, i.e. $v_t v_t^*v_t=v_t$ and $v_t^*v_t v_t^*=v_t^*$. Thus,   \ref{enu:semigroup_representation1} and \ref{enu:semigroup_representation2} imply that $v_t^*$ is the (necessarily unique) Moore-Penrose generalised inverse of $v_t$, and hence   $v$ takes values in Moore-Penrose partial isometries.
In particular,  when $Y$ is a Hilbert space,  $v_t^*$ is the hermitian adjoint of $v_t$  (since the $v_t$ are assumed to be contractive,  condition \ref{enu:semigroup_representation2} in this case is automatic).
When $Y$ is an $L^p$-space for $p\in [1,\infty]\setminus\{2\}$,  $v_t^*$ is the unique generalised inverse in the inverse semigroup $\MPIso(Y)$, see Proposition~\ref{prop:inverse_semigroup_of_partial_isos}.\end{remark}

\begin{remark}\label{rem:about_spatial_and_Hilbert_space_partial_isometries}
In the untwisted case, that is when   $\omega(s,t)=1$ for all $s,t\in S$, we will write $\OO^P(S)$ and $\TT^P(S)$ instead of 
$\OO^P(S,\omega)$ and $\TT^P(S,\omega)$.  
In particular,  for $P=\{2\}$ the algebra $\OO^P(S)$  coincides with the Exel's tight $C^*$-algebra $C^*_{\tight}(S)$, see \cite{Exel},
while $\TT^P(S)$  is a contracted version of Paterson's universal  $C^*$-algebra $C^*(S)$, see \cite{Paterson}*{2.1} (contracted here means 
that we identify the zero in $S$ with the zero operator in $C^*(S)$).
\end{remark}
\begin{remark}
We believe the above definition of a representation of $(S,\omega)$ is correct when $Y$ is an $L^p$-space. For more a general Banach space one would want to add 
more restrictions such as \emph{joint-contractiveness} from \cite{BKM}*{Definition 6.6} (which for $L^p$-spaces is automatic).
\end{remark}
To formulate the next fact we introduce an auxiliary terminology.
\begin{definition}
We say that a  representation $\pi:A\to B$ between two approximately unital Banach algebras is \emph{hermitian} if 
it maps hermitian operators to hermitian operators. Accordingly, we say that  a covariant representation $(\pi,v)$, as in Definition~\ref{defn:covariant_representation_in_algebra}, is \emph{hermitian} if  $\pi :\Cont_0(X) \to \Bound(Y)$ is hermitian.
\end{definition}
\begin{remark}
If a representation $\pi:A\to B$ extends to  unital representation $\widetilde{\pi}:\widetilde{A}\to \widetilde{B}$ (so for instance, if  $\pi:A\to \Bound(Y)$ is
nondegenerate), then
$\pi$ is automatically hermitian, see \cite{cgt}*{Lemma 2.4}. In particular, representations in Lemma~\ref{lem:making_representations_nondegenerate} are hermitian. Since representations between $C^*$-algebras are necessarily  $*$-preserving they are always hermitian.
\end{remark}

\begin{proposition}\label{prop:representations_inverse_semigroups_vs_actions}
Let $(S,\omega)$ be a  twisted inverse semigroup  and let $(\widetilde{h},\widetilde{u})$ and  $(h,u)$ be the associated twisted partial actions of $S$ on 
the spectrum $\widehat{\EE}$ and the tight spectrum $\partial\widehat{\EE}$ respectively, cf. Proposition~\ref{prop:semigroup_twists_to_action_twists}. Consider an $L^p$-space $Y$,  for some $p\in [1,\infty]$. 

\begin{enumerate}
	\item\label{item:representations_inverse_semigroups_vs_actions1} 
	The equations $\widetilde{\pi}(1_{Z(e)})=v_e$, $e\in \EE$, yield a bijective correspondence between representations $v$ of 
	$(S,\omega)$ on $Y$ and hermitian covariant representations $(\widetilde{\pi}, v)$ of  $(\widetilde{h},\widetilde{u})$ on $Y$.
	\item\label{item:representations_inverse_semigroups_vs_actions2} 
	The equations $\pi(1_{Z(e)\cap \partial\widehat{\EE}})=v_e$, $e\in \EE$, yield a  bijective correspondence between covariant representations $v$ of 
	$(S,\omega)$ on $Y$ and hermitian covariant representations $(\pi, v)$ of  $(h,u)$  on $Y$.
\end{enumerate}
For any representation $\widetilde{\pi}:\Cont_0(\widehat{\EE})\to B$ in a Banach algebra $B$, putting $v_e \coloneqq \widetilde{\pi}(1_{Z(e)})$, $e\in \EE$, we have that $\widetilde{\pi}$ is isometric on $\Cont_0(\widehat{\EE})$ if and only if $\prod_{f\in F} (v_e -v_f) \neq 0$
for every   finite $F\subseteq e\EE\setminus\{e\}$, $e\in \EE$. Similarly, a representation $\pi:\Cont_0(\partial\widehat{\EE})\to B$ is isometric if and only if for the corresponding operators we have $\prod_{f\in F} (v_e -v_f) \neq 0$ for every $e\in E$ and finite $F\subseteq eE$ that does not cover $e$.

\end{proposition}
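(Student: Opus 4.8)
The plan is to reconstruct the whole correspondence from the idempotent part $v|_{\EE}$ and then propagate it with the cocycle identities. First I would record that \ref{enu:semigroup_representation1} together with Lemma~\ref{lem:cocycles_properties}\ref{ite:cocycles_properties2} gives $v_e v_f = \omega(e,f)v_{ef} = v_{ef}$ for $e,f\in\EE$ (with $v_0 = 0$), so $\{v_e\}_{e\in\EE}$ is a commuting family of hermitian idempotents satisfying $v_e v_f = v_{ef}$. Since $Z(e)\cap Z(f) = Z(ef)$ and evaluation at the principal filters $\uparrow\! e$ shows the functions $\{1_{Z(e)}\}_{e\in\EE}$ to be linearly independent, the rule $1_{Z(e)}\mapsto v_e$ defines unambiguously a homomorphism on the dense subalgebra $\linspan\{1_{Z(e)}\}\subseteq\Cont_0(\widehat\EE)$. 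The crux---and the step I expect to be the main obstacle---is the $L^p$-operator-algebraic input needed to extend this to a contractive representation $\widetilde\pi\colon\Cont_0(\widehat\EE)\to\Bound(Y)$: after disjointifying, every element of $\linspan\{1_{Z(e)}\}$ is a combination $\sum_i c_i 1_{U_i}$ of characteristic functions of pairwise disjoint basic sets $U_i = Z(e_i)\setminus\bigcup_{f\in F_i}Z(f)$, and $\widetilde\pi$ sends $1_{U_i}$ to the mutually orthogonal hermitian idempotents $\prod_{f\in F_i}(v_{e_i}-v_f)$. I would then invoke the fact from \cite{BKM} that the norm of a finite combination of orthogonal hermitian idempotents on an $L^p$-space is $\max\{|c_i| : \prod_{f\in F_i}(v_{e_i}-v_f)\neq 0\}$; this bounds $\|\widetilde\pi(\,\cdot\,)\|$ by $\|\cdot\|_\infty$, yields the extension $\widetilde\pi$, and makes it automatically hermitian since the real functions are exactly the hermitian elements of $\Cont_0(\widehat\EE)$.

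With $\widetilde\pi$ available I would check the covariance axioms \ref{item:covariant_representation1}--\ref{item:covariant_representation3} of Definition~\ref{defn:covariant_representation_in_algebra} for $(\widetilde h,\widetilde u)$. Axiom \ref{item:covariant_representation3} is immediate from $X_e = Z(e)$. For \ref{item:covariant_representation1}, writing $\widetilde u(s,t) = \omega(s,t)1_{Z((st)^*(st))}$ reduces $\widetilde\pi(\widetilde u(s,t))v_{st} = \omega(s,t)v_{(st)^*(st)}v_{st}$ to $\omega(s,t)v_{st} = v_s v_t$, using \ref{enu:semigroup_representation1} and the identity $\omega((st)^*(st),st) = 1$, which follows from \eqref{eq:inverse_semigroup_cocycle} applied to the triple $\big((st)^*(st),(st)^*(st),st\big)$. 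For \ref{item:covariant_representation2}, testing on $a = 1_{Z(e)}$ with $e\le t^*t$ and computing $a\circ\widetilde h_{t^*} = 1_{Z(tet^*)}$ from \eqref{eq:inverse_semigroup_action_spectrum}, the axiom becomes $v_t v_e = v_{tet^*}v_t$, i.e.\ $\omega(t,e) = \omega(tet^*,t)$, which is exactly Lemma~\ref{lem:cocycles_properties}\ref{ite:cocycles_properties4}. Running these computations in reverse shows that a hermitian covariant $(\widetilde\pi,v)$ yields a representation $v$ of $(S,\omega)$: \ref{enu:semigroup_representation2} holds because $v_e = \widetilde\pi(1_{Z(e)})$ is a hermitian idempotent, which is where the qualifier \emph{hermitian} is used.

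Bijectivity in \ref{item:representations_inverse_semigroups_vs_actions1} is then a matter of uniqueness: \ref{item:covariant_representation3} forces $v_e = \widetilde\pi(1_{Z(e)})$, and since $\{1_{Z(e)}\}$ spans a dense subalgebra, $v$ determines $\widetilde\pi$ and vice versa. For \ref{item:representations_inverse_semigroups_vs_actions2} I would pass to the quotient $\Cont_0(\partial\widehat\EE) = \Cont_0(\widehat\EE)/I$, where $I$ is the ideal of functions vanishing on the closed, $h$-invariant set $\partial\widehat\EE$. By Exel's description of tight filters \cite{Exel}, $I$ is the closed ideal generated by the elements $\prod_{f\in F}(1_{Z(e)}-1_{Z(f)})$ with $F$ a cover of $e$; hence $\widetilde\pi$ factors through $\Cont_0(\partial\widehat\EE)$ exactly when $\prod_{f\in F}(v_e-v_f) = 0$ for every cover $F$ of $e$, which is precisely tightness \ref{enu:semigroup_representation3}. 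Restricting $\widetilde\pi$ produces the hermitian covariant $(\pi,v)$ of $(h,u)$ with $\pi(1_{Z(e)\cap\partial\widehat\EE}) = v_e$, its covariance axioms being the restrictions of those for $(\widetilde h,\widetilde u)$; bijectivity again follows from uniqueness on the dense subalgebra.

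For the isometry statements I would reuse the disjointification. For a real $a = \sum_i c_i 1_{U_i}$ with the $U_i$ disjoint basic sets one has $\|\widetilde\pi(a)\| = \max\{|c_i| : \widetilde\pi(1_{U_i})\neq 0\}$ while $\|a\|_\infty = \max\{|c_i| : U_i\neq\emptyset\}$, so $\widetilde\pi$ is isometric if and only if $\widetilde\pi(1_U) = \prod_{f\in F}(v_e-v_f)\neq 0$ for every \emph{nonempty} basic $U = Z(e)\setminus\bigcup_{f\in F}Z(f)$. In $\widehat\EE$ the principal filter $\uparrow\! e$ witnesses $U\neq\emptyset$ for every finite $F\subseteq e\EE\setminus\{e\}$, giving exactly the stated criterion for $\widetilde\pi$. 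The tight case is identical, except that $U$ meets $\partial\widehat\EE$ precisely when $F$ does not cover $e$, which turns the condition ``$F\subseteq e\EE\setminus\{e\}$'' into ``$F$ does not cover $e$'' in the criterion for $\pi$.
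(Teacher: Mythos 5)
Your overall route coincides with the paper's: extend $v|_{\EE}$ to a representation of $\Cont_0(\widehat\EE)$ using the norm behaviour of orthogonal hermitian idempotents (your disjointification argument is precisely the content of \cite{BKM}*{Lemma 6.7}, which the paper cites rather than unpacks), verify the covariance axioms on the generators $1_{Z(e)}$ via Lemma~\ref{lem:cocycles_properties}, pass to the tight quotient through the closed ideal spanned by the cover elements, and read off the isometry criteria from the same disjointification together with the observation that a nonzero idempotent of norm $\le 1$ has norm exactly $1$. Your treatment of \ref{item:covariant_representation2}, namely $v_tv_e=\omega(t,e)v_{te}=\omega(tet^*,t)v_{tet^*t}=v_{tet^*}v_t$ via Lemma~\ref{lem:cocycles_properties}\ref{ite:cocycles_properties4}, is literally the paper's computation, and the quotient and isometry arguments are sound.

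The step that fails is your verification of \ref{item:covariant_representation1}. You reduce $\widetilde\pi(\widetilde u(s,t))v_{st}=\omega(s,t)v_{(st)^*(st)}v_{st}$ to $\omega(s,t)v_{st}$ by invoking ``$\omega((st)^*(st),st)=1$'', but this tacitly uses $(st)^*(st)\cdot(st)=st$, which is false: for an idempotent $e=r^*r$ one has $er=r\,(r^2)^*(r^2)$, a restriction of $r$ that is even zero when $r^2=0$ (take $r=(\alpha,\sr(\alpha),\beta)$ in a graph inverse semigroup with $\alpha,\beta$ incomparable; then $v_{r^*r}v_r=T_\beta T_\beta^*T_\alpha T_\beta^*=0$ while $v_r=T_\alpha T_\beta^*\neq 0$). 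Moreover the cocycle identity applied to the triple $(e,e,r)$ only yields $\omega(e,er)=1$, not $\omega(e,r)=1$. The root of the trouble is the support of the twist: $u(s,t)$ must be a unitary multiplier of $\Cont_0(X_{st})$, i.e.\ it lives over the \emph{range} $Z((st)(st)^*)$ of $h_{st}$ -- this is what the Fell-bundle multiplication defining $\LL_u$ requires (a product $a(b\circ h_{s^*})$ is supported on $X_{st}$, and multiplying it by a function supported on the domain $X_{(st)^*}$ would truncate it), even though Definition~\ref{def:twist_semigroup_action} as printed says $X_{(st)^*}$, which is what you copied. With the correct support the computation closes: $\widetilde\pi(\widetilde u(s,t))v_{st}=\omega(s,t)v_{(st)(st)^*}v_{st}=\omega(s,t)\,\omega((st)(st)^*,st)\,v_{st}=\omega(s,t)v_{st}$, using $(rr^*)r=r$ and Lemma~\ref{lem:cocycles_properties}\ref{ite:cocycles_properties3} applied with $s=r^*$ and $e=rr^*$. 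Replacing your paragraph on \ref{item:covariant_representation1} with this computation repairs the argument; everything else stands.
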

\begin{proof} If $(\widetilde{\pi}, v)$ is  a hermitian covariant representation  of  $(\widetilde{h},\widetilde{u})$, then
$v$ is a representation of $(S,\omega)$. Indeed,  \ref{item:covariant_representation1}  for $(\widetilde{\pi}, v)$, in our setting, is equivalent to  \ref{enu:semigroup_representation1} for $v$,
and by \ref{item:covariant_representation3} for every $e\in \EE$  we have $v_e=\widetilde{\pi}(1_{Z(e)})$, which is hermitian as $1_{Z(e)}$ is hermitian in 
$\Cont_0(\widehat{\EE})$. Conversely, let $v$ be a representation of 
$(S,\omega)$ on $Y$.  By \cite{BKM}*{Lemma 6.7}, there is a unique representation $\widetilde{\pi}:\Cont_0(\widehat{\EE})\to \Bound(L^p(\mu))$
such that $\widetilde{\pi}(1_{Z(e)})=v_e$, $e\in \EE$. Then $\widetilde{\pi}$ is hermitian and  \ref{item:covariant_representation1}, \ref{item:covariant_representation3} for  $(\widetilde{\pi}, v)$ hold.    To check  \ref{item:covariant_representation2}  it suffices to consider $a=1_{Z(e)}$ for $e\in t^*t\EE$, as such functions generate the $\Cst$-algebra $\Cont_0(Z(t^*)) =\Cont_0(Z(t^*t))$. 
We have
\begin{align*}
	v_t \widetilde{\pi}(a)  &=v_t v_{e}= \omega(t,e)v_{te}=\omega(t,e)v_{tet^*t}\stackrel{\ref{lem:cocycles_properties}\ref{ite:cocycles_properties4}}{=} 
		v_{tet^*} v_{t} =\widetilde{\pi}(1_{Z(tet^*)})v_{t}= \widetilde{\pi}(a\circ h_{t^*})v_{t}. 
\end{align*}
Hence, $(\widetilde{\pi}, v)$ is  a hermitian covariant representation $(\widetilde{\pi}, v)$ of  $(\widetilde{h},\widetilde{u})$. 
This proves \ref{item:representations_inverse_semigroups_vs_actions1}.
Note that $\Cont_0(\partial\widehat{\EE})\cong \Cont_0(\widehat{\EE})/\Cont_0(\widehat{\EE}\setminus \partial\widehat{\EE})$ where  $\Cont_0(\widehat{\EE}\setminus \partial\widehat{\EE})=\clsp\{\prod_{f\in F} 1_{Z(e)} - 1_{Z(f)} : \text{$F$ covers $e\in \EE$} \}$, 
by  \cite{Steinberg_Szakacs}*{Corollary 2.14}. Hence, the bijective correspondence in \ref{item:representations_inverse_semigroups_vs_actions1} descends to the bijective correspondence \ref{item:representations_inverse_semigroups_vs_actions2}. 

The last part of the assertion follows from  the last parts of \cite{BKM}*{Theorem 6.9}
and \cite{BKM}*{Theorem 6.15}, respectively.
\end{proof}
\begin{corollary}\label{cor:groupoid_presentation_inverse_semigroup_algebras} 
Let $(S,\omega)$ be a  twisted inverse semigroup. Equip the associated groupoid
$\widetilde{\Gg}(S)=S \ltimes_{h} \widehat{\EE}$, and its restriction $\Gg(S)=S \ltimes_{h} \partial\widehat{\EE}$, with the twist $\LL_{\omega}$  coming from $\omega$, see Definition~\ref{defn:groupoid_twists_from_semigroup_twists}. For any nonempty $P\subseteq [1,\infty]$ we have canonical isometric isomorphisms 
$$
\TT^P(S,\omega)\cong F^P(\widetilde{\Gg}(S),\LL_{\omega}), \qquad \OO^P(S,\omega)\cong F^P( \Gg(S),\LL_{\omega}).
$$
For $p\in [1,\infty]$ we  have a  representation  ${\tv}^{\rd,p} : S \to \Bound\big(\ell^p(\widetilde{\Gg}(S))\big)$ of $(S,\omega)$ given by 
\[
\tv^{\rd,p}_t\xi[s,\phi] = \begin{cases}
	\omega(t,t^*s) \xi \big( [t^*s,\phi] \big), & s^*tt^*s\in \phi, \\
	0 , & \text{otherwise}, 
\end{cases}
\]
where  $\xi \in \ell^p(\widetilde{\Gg}(S))$, $s^*s\in \phi\in \widehat{\EE}$, $t,s \in S$. It compresses to a representation $\tv^{\es,p} : S \to \Bound\big(\ell^p(\widetilde{\Gg}(S)_{\Hau})\big)$ and  covariant representations 
${\vv}^{\rd,p} : S \to \Bound\big(\ell^p(\Gg(S))\big)$, ${\vv}^{\es,p} : S \to \Bound\big(\ell^p(\Gg(S)_{\Hau})\big)$, and 
we have canonical isometric isomorphisms  
\begin{align*}
	B( \tv^{\rd,p})\cong F^p_{\red}(\widetilde{\Gg}(S),\LL_{\omega}),& \qquad B (\tv^{\es,p})\cong F^p_{\ess}(\widetilde{\Gg}(S),\LL_{\omega})
	\\
	B(\vv^{\rd,p})\cong F^p_{\red}(\Gg(S),\LL_{\omega}),&\qquad
	B(\vv^{\es,p})\cong F^p_{\ess}(\Gg(S),\LL_{\omega}).
\end{align*}
\end{corollary}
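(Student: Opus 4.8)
The plan is to read off both families of isomorphisms by composing the two bijective correspondences already established in the excerpt. First, Proposition~\ref{prop:representations_inverse_semigroups_vs_actions} matches (covariant) representations of $(S,\omega)$ with hermitian covariant representations of the associated twisted actions $(\widetilde{h},\widetilde{u})$ on $\widehat{\EE}$ and $(h,u)$ on $\partial\widehat{\EE}$; second, the integration--disintegration Proposition~\ref{prop:disintegration_theorem} matches covariant representations of these actions with representations of the convolution algebras $\mathfrak{C}_c(\widetilde{\Gg}(S),\LL_\omega)$ and $\mathfrak{C}_c(\Gg(S),\LL_\omega)$. Since the idempotent domains $Z(t^*t)$ are compact open in $\widehat{\EE}$, Proposition~\ref{prop:disintegration_theorem} applies directly. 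Concatenating, a representation $v$ of $(S,\omega)$ on an $L^p$-space $Y$ yields a representation $\psi=\widetilde{\pi}\rtimes v$ of $\mathfrak{C}_c(\widetilde{\Gg}(S),\LL_\omega)$ with $\psi(1_{U_t})=v_t$, and conversely, with the covariant version using part (2) of Proposition~\ref{prop:representations_inverse_semigroups_vs_actions} together with $\mathfrak{C}_c(\Gg(S),\LL_\omega)$.

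First I would match the dense subalgebras. By Proposition~\ref{prop:disintegration_theorem} one has $B(\widetilde{\pi},v)=\overline{\psi(\mathfrak{C}_c(\widetilde{\Gg}(S),\LL_\omega))}$, and I claim $B(\widetilde{\pi},v)=\clsp\{v_t:t\in S\}=B(v)$. Indeed $v_t=v_{tt^*}v_t=\widetilde{\pi}(1_{Z(tt^*)})v_t$, while for $e\le tt^*$ one has $\widetilde{\pi}(1_{Z(e)})v_t=v_ev_t=\omega(e,t)v_{et}$, and a product $\prod_{f\in F}(v_e-v_f)v_t$ realises $\widetilde{\pi}(1_{Z(e)\setminus\bigcup_{F}Z(f)})v_t$ as a finite combination of the $v_s$; since $\widetilde{\pi}$ is contractive and total disconnectedness of $\widehat{\EE}$ lets one approximate any $a\in\Cont_c(Z(tt^*))$ uniformly by such simple functions, the closed span of $\{v_t\}$ equals $\clsp\{\widetilde{\pi}(a)v_t\}=B(\widetilde{\pi},v)$. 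Thus $\Phi\colon v_t\mapsto 1_{U_t}$ identifies the algebra generated by $\{v_t\}$ with $\linspan\{1_{U_t}:t\in S\}\subseteq\mathfrak{C}_c(\widetilde{\Gg}(S),\LL_\omega)$, and the latter is $\|\cdot\|_I$-dense in $\mathfrak{C}_c(\widetilde{\Gg}(S),\LL_\omega)$, hence dense in $F^P(\widetilde{\Gg}(S),\LL_\omega)$.

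Next I would match the two universal norms on this common dense subalgebra. By definition $\|x\|$ in $\TT^P(S,\omega)$ is the supremum of $\|v(x)\|$ over representations $v$ of $(S,\omega)$ on $L^p$-spaces with $p\in P$, whereas $\|\Phi(x)\|_{L^P}$ is the supremum of $\|\psi(\Phi(x))\|$ over \emph{all} representations $\psi$ of $\mathfrak{C}_c(\widetilde{\Gg}(S),\LL_\omega)$ on such spaces; the correspondence shows the first supremum equals the second restricted to those $\psi$ whose restriction to $\Cont_c(\widehat{\EE})$ is hermitian, giving $\|x\|_{\TT^P}\le\|\Phi(x)\|_{L^P}$. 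For the reverse inequality I would reduce the supremum defining $\|\cdot\|_{L^P}$ to nondegenerate representations, which by the remark following Lemma~\ref{lem:making_representations_nondegenerate} automatically have hermitian diagonal. This reduction is the main obstacle: one must check that compressing a representation to its essential subspace does not decrease the operator norm and keeps one inside the class of $L^p$-spaces, which is exactly where the $p=\infty$ case (with the passage to $c_0$-type spaces) and the machinery of \cite{BKM} underlying Lemma~\ref{lem:making_representations_nondegenerate} are needed. This yields $\TT^P(S,\omega)\cong F^P(\widetilde{\Gg}(S),\LL_\omega)$ isometrically; the identical argument with Proposition~\ref{prop:representations_inverse_semigroups_vs_actions}(2) and the tight groupoid $\Gg(S)$ gives $\OO^P(S,\omega)\cong F^P(\Gg(S),\LL_\omega)$, and simultaneously establishes the existence of $\TT^P(S,\omega)$ and $\OO^P(S,\omega)$ promised in Definition~\ref{def:semigroup_representations}.

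Finally, for the reduced and essential statements I would feed the concrete regular representations of Example~\ref{ex:regular_representation} through the same dictionary. The regular representation $\Lambda_p$ of $(\widetilde{\Gg}(S),\LL_\omega)$ on $\ell^p(\widetilde{\Gg}(S),\LL_\omega)\cong\ell^p(\widetilde{\Gg}(S))$ is nondegenerate (after compressing to $c_0$ when $p=\infty$), hence has hermitian diagonal, so it corresponds to a representation $\tv^{\rd,p}$ of $(S,\omega)$ with $\tv^{\rd,p}_t=\Lambda_p(1_{U_t})$; evaluating the convolution formula for $\Lambda_p(1_{U_t})$ at $[s,\phi]$, where the single term $\eta=[t,\cdot]$ with $\rg(\eta)=\widetilde{h}_s(\phi)$ survives and the twist contributes the scalar $\omega(t,t^*s)$, gives exactly the displayed formula, the condition $s^*tt^*s\in\phi$ encoding composability. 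Then $B(\tv^{\rd,p})=\clsp\{\Lambda_p(1_{U_t})\}=\overline{\Lambda_p(\mathfrak{C}_c(\widetilde{\Gg}(S),\LL_\omega))}=F^p_{\red}(\widetilde{\Gg}(S),\LL_\omega)$, the middle equality using the density above, and the identification is isometric as both sides are the same subalgebra of $\Bound(\ell^p(\widetilde{\Gg}(S)))$. Compressing $\Lambda_p$ to the comeager Hausdorff part, i.e.\ to the invariant subspace $\ell^p(\widetilde{\Gg}(S)_{\Hau})$, replaces $\Lambda_p$ by $\Lambda_p^{\ess}$ and $\tv^{\rd,p}$ by its compression $\tv^{\es,p}$, giving $B(\tv^{\es,p})\cong F^p_{\ess}(\widetilde{\Gg}(S),\LL_\omega)$; running the same argument for the tight groupoid with covariant representations produces $\vv^{\rd,p},\vv^{\es,p}$ and the remaining two isomorphisms. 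Apart from the nondegeneracy reduction flagged above, everything here is the bookkeeping of matching generators and a single convolution computation.
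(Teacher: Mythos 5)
Your proposal is correct and follows essentially the same route as the paper's proof: composing Proposition~\ref{prop:representations_inverse_semigroups_vs_actions} with Proposition~\ref{prop:disintegration_theorem}, invoking Lemma~\ref{lem:making_representations_nondegenerate} to reduce the universal norm to nondegenerate (hence hermitian) representations, and then pushing the regular and essential representations through the same dictionary. The extra details you supply — the density argument identifying $B(v)$ with $B(\widetilde{\pi},v)$ and the explicit convolution computation verifying the formula for $\tv^{\rd,p}_t$ (which the paper delegates to a citation) — are accurate and consistent with the paper's argument.
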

\begin{proof}
By Proposition~\ref{prop:disintegration_theorem} we have bijective correspondences between representations of groupoids and covariant representations of the 
corresponding actions. By Lemma~\ref{lem:making_representations_nondegenerate} we may consider only nondegenerate representations, which are then hermitian. 
Combining this with Proposition~\ref{prop:representations_inverse_semigroups_vs_actions} we get  bijective correspondence between nondegenerate representations of groupoids and the relevant representations of $S$. 
This yields isomorphisms for the universal algebras. 
Using the disintegrated form of  a regular representation from \cite{BKM}*{Remark 5.2}, it follows that the maps ${\tv}^{\rd,p}$ and ${\vv}^{\rd,p}$ are representations of $S$ corresponding to 
regular representations of $\widetilde{\Gg}(S)$ and $\Gg(S)$. By construction the same is true for essential representations.
\end{proof}
\begin{definition} For  $\emptyset\neq P\subseteq [1,\infty]$, using the notation of Corollary~\ref{cor:groupoid_presentation_inverse_semigroup_algebras}, we call 
$$
\TT^P_{\red}(S,\omega) \coloneqq  B\Big(\bigoplus_{p\in P} \tv^{\rd,p}\Big) 
\quad\text{and}\quad \TT^P_{\ess}(S,\omega) \coloneqq  B\Big(\bigoplus_{p\in P} \tv^{\es,p}\Big)
$$
the \emph{reduced Toeplitz} and the \emph{essential Toeplitz $L^P$-operator algebra} of $(S,\omega)$, respectively. Similarly, we call
$$
\OO^P_{\red}(S,\omega) \coloneqq  B\Big(\bigoplus_{p\in P} \vv^{\rd,p}\Big), 
\quad\text{and}\quad \OO^P_{\ess}(S,\omega) \coloneqq  B\Big(\bigoplus_{p\in P} \vv^{\es,p}\Big)
$$
the \emph{reduced} and  the \emph{essential $L^P$-operator algebra} of $(S,\omega)$, respectively.
These algebras are canonically isometrically isomorphic to $F^P_{\red}(\widetilde{\Gg}(S),\LL_{\omega})$, 
$F^P_{\ess}(\widetilde{\Gg}(S),\LL_{\omega})$, $F^P_{\red}(\Gg(S),\LL_{\omega})$, and  $F^P_{\ess}(\Gg(S),\LL_{\omega})$,
respectively 
\end{definition}
The above algebras are related to each other by  the  canonical representations, making the following diagram commute:  
\begin{equation}\label{eq:inverse_semigroup_algebras}
\xymatrixcolsep{3pc} \xymatrixrowsep{1pc} 
\xymatrix{
\TT^P(S,\omega) \ar@{->}[r] \ar@{->}[d]&  \TT^P_{\red}(S,\omega) \ar@{->}[r] \ar@{->}[d] &  \TT^P_{\ess}(S,\omega) \ar@{->}[d]
\\
\OO^P(S,\omega) \ar@{->}[r] &  \OO^P_{\red}(S,\omega) \ar@{->}[r] &  \OO^P_{\ess}(S,\omega).
}
\end{equation}
\subsection{Inverse semigroup characterisations of groupoid properties}
The properties of the tight groupoid modelling the algebras in the bottom row of 
\eqref{eq:inverse_semigroup_algebras} can be effectively expressed in terms of $S$. 
Hausdorffness of the universal groupoid is also understood.
\begin{definition}\label{def:InverseSemigroupSimplicityProperties}
Let $S$ be an inverse semigroup with zero and denote by $\EE$ the set of idempotents in $S$. 
We say that $e\in \EE$ is \emph{fixed} by $t\in S$ (or is \emph{$t$-fixed}) if we have $(tft^*)\cdot f\neq 0$, for every nonzero idempotent $f\leq e$. 
Elements of \[F_t \coloneqq \{ e\in \EE\setminus\{0\} : e\leq t \} \] are fixed by $t$, and we call them \emph{trivially fixed} by $t$. 
Recall that $F \subseteq  \EE$ is a \emph{cover} of  $e \in \EE$ if for every  $0\neq z \leq e$ there is $f \in F$ with $z f \neq 0$.
We  say that the inverse semigroup $S$ is:
\begin{itemize}
\item \emph{Hausdorff} if for every $s,t\in S$ there is a finite $F\subseteq S$ such that $r\leq s,t$ if and only if $r\leq f$ for some $f\in F$, i.e.  $s^{\downarrow}\cap t^{\downarrow}=F^{\downarrow}$ where $F^{\downarrow}$ is the order ideal generated by $F$.  
	\item \emph{closed} if for every $t\in S$ there is a finite $F\subseteq F_t$ that covers every $e\in F_t$; 
	\item  \emph{minimal} if for every $e, f \in \EE\setminus\{0\}$, there is a finite $T\subseteq S$, such that $\{ t f t^* \}_{t\in T}$  covers  $e$;
	\item \emph{topologically free} if for every $e\in \EE$ fixed by $t\in S$ there is  $f\in F_t$ with $f  e\neq 0$; 
	\item \emph{effective} if for every $e\in \EE$ fixed by $t\in S$ there is a finite  $F\in F_t$ that covers $e$; 
	\item  \emph{locally contracting} if for every $e \in \EE\setminus\{0\}$ there exists $s\in S$, a finite set $F\subseteq e s^*s\EE\setminus\{0\}$, and $f_0\in F$, such that for every $f\in F$ the set $F$ is a cover of $sfs^*$ and $f_0 (s fs^*)=0$;
	\item  \emph{strongly locally contracting} if for every $e \in \EE\setminus\{0\}$ there are $s\in S$ and $f_0,f_1 \in \EE\setminus\{0\}$ such that  $ f_0\leq f_1 \leq e s^*s$, $sf_1 s^*\leq f_1$, and   $f_0 (s f_1 s^*)=0$. 
\end{itemize}

\end{definition}
\begin{remark}\label{rem:fixed_idepomtents_inverse_semigroup}
For any  $e\in \EE$ and $t\in S$ we have 
\(
(te t^*) e\neq 0 \, \Longleftrightarrow\, e t^* e\neq 0 \, \Longleftrightarrow\, e t e\neq 0.
\)
Hence, $e$ is fixed by $t$ if and only if it is fixed by $t^*$, and  every trivially fixed idempotent is fixed.
\end{remark}
\begin{remark} 
In Definition~\ref{def:InverseSemigroupSimplicityProperties} we follow the naming from \cite{BKM2} which differs a bit from that in  \cite{Exel-Pardo:Tight}, cf. \cite{BKM2}*{Remark 7.29}. Topological freeness  was introduced in  \cite{BKM2}*{Definition 7.28}. We gave the name effective to a condition appearing in \cite{Exel-Pardo:Tight}*{Theorem 4.10}, and  we gave the name strongly locally contracting to the conditions of \cite{Exel-Pardo:Tight}*{Proposition 6.7}, which imply (by taking $F = \{f_1\}$) that the semigroup is locally contracting. 
 Hausdorff inverse semigroups were introduced in \cite{Steinberg0} under the name ``weak semilattices''. This was changed to ``Hausdorff'' in \cite{Steinberg_Szakacs}.
\end{remark}
\begin{remark}\label{rmk:terminology_changes}
We have the following correspondences between properties of an inverse semigroup $S$ and the associated groupoids:
\begin{align*}
	S \text{ is Hausdorff } &\Longleftrightarrow \,\, \widetilde{\Gg}(S) \text{ is Hausdorff } 
	\\
	S \text{ is closed } &\Longleftrightarrow \,\, \Gg(S) \text{ is Hausdorff } 
	\\
	S \text{ is minimal } &\Longleftrightarrow \,\, \Gg(S) \text{ is minimal } 
	\\
	S \text{ is topologically free } &\Longleftrightarrow \,\, \Gg(S) \text{ is topologically free } 
	\\
	S \text{ is locally contracting } &\Longrightarrow  \,\, \Gg(S) \text{ is locally contracting with respect to $\overline{S}$}
\end{align*}
where $\overline{S}$ is the canonical image of $S$ in $\Bis(\Gg(S))$ extended by  the unit $X$. Moreover, the last
implication is an equivalence if  every tight filter in $\EE$ is an ultrafilter.
The above relationships follow from  \cite{Steinberg0}*{Theorem 5.17}, \cite{Exel-Pardo:Tight}*{Theorems 3.16 and 5.5}, 
\cite{BKM2}*{Proposition 7.31}  and  \cite{Exel-Pardo:Tight}*{Theorem 6.5},
respectively. 
Also 
by \cite{Exel-Pardo:Tight}*{Theorem 4.10} assuming either that  every tight filter in $\EE$ is an ultrafilter 
or that $S$ is closed, we have 
\[
S \text{ is effective } \Longleftrightarrow \,\, \Gg(S) \text{ is effective}. 
\]
In general effectiveness of $\Gg(S)$ implies effectiveness of $S$, but the converse fails, see Example \ref{ex:infinitely_many_edges} below.
In particular, if $S$ is closed, then effectiveness of  $S$ is equivalent to topological freeness of  $S$, but in general it is stritly stronger.
The groupoid $\widetilde{\Gg}(S)$ is not minimal unless it is equal to  $\Gg(S)$ (that is  all filters are tight) and local contractiveness is only useful in the minimal case.  
\end{remark}
\begin{remark}\label{rem:Hausdorff_vs_closed} In general, Hausdorffness  of $S$ is strictly stronger than closedness. In particular, if $S$ is any non-Hausdorff inverse semigroup, 
then adding  a new zero $0_{\text{new}}$,  the inverse semigroup $S\cup \{0_{\text{new}}\}$ remains non-Hausdorff but is trivially closed, as the old zero in $S$
covers any non-zero idempotent in $S\cup \{0_{\text{new}}\}$. Moreover, by \cite{Steinberg0}*{Theorem 5.17}, an inverse semigroup $S$ is  Hausdorff if and only if it has a strong universal property that for every action $h:S\to \PHomeo(X)$ such that the domains $X_t$, $t\in S$ are clopen in $X$,
the transformation groupoid $S\ltimes_{h} X$ is Hausdorff. In particular,  Hausdorffness of $S$ is close to, but still weaker than, being $E^*$-unitary, cf. Section \ref{sect:pseudo_freeness} below.
\end{remark}
\begin{definition}
An inverse semigroup $S$ is \emph{fundamental}, if for all $s,t\in S$ such that $ses^*=tet^*$ for all $e\in \EE$ implies that
$s=t$. We say that $S$ is \emph{quasi-fundamental}, if for all $s,t\in S\setminus\{0\}$ such that $ses^*=tet^*$ for all $e\in \EE$ there is $u\in S\setminus\{0\}$ 
with  $u\leq s,t$.
\end{definition}
\begin{remark}
The notion of fundamental inverse semigroup is standard, cf. \cite{Lawson}*{Section 5.2}. The  quasi-fundamental version was introduced in \cite{Steinberg_Szakacs}, see 
\cite{Steinberg_Szakacs}*{Lemma 2.1}. If $S$ is the inverse semigroup of all compact open bisection of an ample groupoid $\Gg$, then $S$ is fundamental if and only if $\Gg$ is effective,
and 
$S$ is quasi-fundamental if and only if $\Gg$ is topologically free, see \cite{Steinberg_Szakacs}*{Proposition 2.10}. 
However, for more general inverse semigroups  the above equivalences break down, cf. Proposition~\ref{prop:fundamentalness} below.
\end{remark}
Finally we discuss two basic examples that we will unify in the Section \ref{sect:inverse_semigroup}.
 \begin{example}\label{ex:groupoid_inverse_semigroup}
Let $\Gr$ be a discrete groupoid. We turn it into  the inverse semigroup $S(\Gr)=\Gr\cup \{0\}$ with zero,
by   declaring that $gh=0$ whenever $g$ and $h$ are not composable in $\Gr$ (that is if $\sr(g)\neq \rg(h)$). 
Then $\EE(S(\Gr))=\Gr^{0}\cup \{0\}$ and for every $g, h\in \Gr$ we have $g\leq h$ if and only if $g=h$.
Thus, we have
$$
\widetilde{\Gg}(S(\Gr))=\Gg(S(\Gr))=\Gr
$$
and   the correspondences in Remark \ref{rmk:terminology_changes} are clearly visible. In particular,    $S(\Gr)$ is Hausdorff; $S(\Gr)$ is topologically free  if and only if $\Gr$ is principal, i.e. all stabiliser groups are trivial; and $S(\Gr)$  is never locally contracting (``singletons can not be contracted'').
\end{example}

\begin{example}\label{ex:graph_inverse_semigroup}
Let  $S(E)$ be the inverse semigroup of a directed graph $E = (E^0,E^1,\rg,\sr)$, as described in Example~\ref{ex:spectrum_graph_inverse_semigroup}.
Recall that $\EE(S(E))\cong E^*\cup \{0\}$ and $F\subseteq E^*$ covers $\alpha\in E^*$ if and only if
every extension of $\alpha$ is comparable with an element in $F$.
Note that   $(\alpha,\beta) \le (\gamma,\delta)$ in $S(E)$ if and only if 
there is $\eta\in E^*$ such that $\alpha=\gamma\eta$ and $\beta=\delta\eta$. 
Hence $(\gamma,\delta)^{\downarrow}\cap (\gamma',\delta')^{\downarrow}\neq \{0\}$ implies that 
either $(\gamma,\delta)\leq (\gamma',\delta')$ or  $(\gamma',\delta')\leq (\gamma,\delta)$ and so  $(\gamma,\delta)^{\downarrow}\cap (\gamma',\delta')^{\downarrow}=[(\gamma,\delta)\wedge (\gamma',\delta')]^{\downarrow}$.
Hence
the inverse semigroup $S(E)$ is always Hausdorff.

To characterise the other properties we need some more (standard) terminology. 
The \emph{base vertices} of a path $\alpha=\alpha_1\cdots\alpha_n\cdots \in E^*\cup E^\infty$, $\alpha_i\in E^1$, $i=1,\dots $, are the vertices $\rg(\alpha_i)$, $i=1,\dots, n$, and $\sr(\alpha_n)=\sr(\alpha)$.
The path $\alpha$ \emph{has an entrance} if at least one of its base vertices is the range of two edges.
We say that $\alpha\in E^*\setminus E^0$ is a \emph{cycle} if $\sr(\alpha)=\rg(\alpha)$. 
A direct calculation gives 
that all $(\alpha, \alpha)$-fixed idempotents are trivially fixed, and if $\alpha\neq \beta$, then  $(\alpha,\beta)$ fixes a nonzero idempotent  if and only if $\alpha$ and $\beta$   are comparable and their difference is a subpath of a cycle without entrances.
Consequently, 
\begin{align*}
	S(E) \text{ is topologically free}\,\,\Longleftrightarrow\,\, 
	&\text{every cycle in $E$ has an entrance}.
\end{align*}
For two vertices $v,w\in E^0$ we write $v\leftarrow w$ if  $vE^*w\neq \varnothing$, that is if there is a path $\mu$ in $E$ that ends in $v$ and starts in $w$.
The graph $E$ is \emph{cofinal} 
if the set of vertices of every boundary path $\mu = \mu_1 \mu_2 \cdots $ in $E$ is cofinal in the preordered set $(E^0,\leftarrow)$, that is  for every $v\in E^0$ there is $i$ such
that $v\leftarrow \sr(\mu_i)$. That is, if for every vertex $v$ and every singular vertex $w$, there is a path from $w$ to $v$, and for every infinite path $\mu = \mu_1 \mu_2 \cdots$ there is a path from $\sr(\mu_i)$ to $v$. If $E$ is cofinal, then $E$ has at most one source. 
Cofinality of $E$ can also be characterised using hereditary and saturated sets.
A subset $V\subseteq E^0$ is \emph{hereditary} if $V\ni v\leftarrow w \in E^0$ implies that $w\in V$, and
$V$ is \emph{saturated} if for every regular $w \in E^0$ with $\sr(\rg^{-1}(w))\subseteq V$ we have  $w\in V$.

It should be no surprise to experts, and it follows from Propositions~\ref{prop:minimality_inverse_semigroup} and~\ref{prop:cofinal_equiv_no_invariants} which we prove below, that
\begin{align*}
	\text{$S(E)$ is minimal}\,\Longleftrightarrow\,\text{$E^0$ has no nontrivial hereditary, saturated subsets}
	\,\Longleftrightarrow\,\text{$E$ is cofinal.}
\end{align*}
Additionally, $S(E)$  is   locally contracting if and only if it is strongly locally contracting and this holds if and only if every vertex is the range of a path whose source lies in a cycle with an entrance,
see  Proposition~\ref{prop:locally_contracting_semigroup} below.
\end{example}

\section{Self-similar groupoid actions}\label{Sec:Self-similar groupoid actions}\label{sect:self-similar}

Recall that a groupoid $G$ is a small category  in which every morphism is invertible, and a directed graph $E$ generates a path category $E^*$ in which all non-identity morphisms are not invertible, see Example~\ref{ex:spectrum_graph_inverse_semigroup}. The pair $(G,E)$ is self-similar if the categories  $G$ and $E^*$ act on each other in a consistent way.
Therefore, we start by recalling the relevant notation and definitions, cf. \cite{Mundey_Sims}.

We identify a small category $\CC$ with its set of morphisms and write $\CC^0\subseteq \CC$ for its set of objects. We denote by $\rg,\sr:\CC\to \CC^0$ the range (codomain) and source (domain) maps.  
If $\DD$ is another small category with the same set of objects $\DD^0=\CC^0$, then for any subsets $C\subseteq \CC$, $D\subseteq \DD$ we write
\[
C*D  \coloneqq  C\fibre{\sr}{\rg}D=\{(c,d)\in C\times D: \sr(c)=\rg(d)\}.
\] 
In the sequel, the maps $\rg,\sr$ will be clear from the context.
When declaring that some relations concerning morphisms hold, we implicitly assume that they make sense (so that the sources and ranges of morphisms match). 
A \emph{left action} of a small category $\CC$ on a set $X$ consists of maps $\rg:X\to \CC^0$ and 
$\cdot: 
\CC *X \to  X$
such that 
\[
\rg(x)\cdot x=x\,\,\, \text{ and }\,\,\, (c_1c_2)\cdot x=c_1\cdot (c_2\cdot x) \quad \text{ for all }(c_1,c_2,x)\in \CC^2*X
\] 
(which in particular forces $\rg(c\cdot x)=\rg(c)$ for all $(c,x)\in \CC*X$). We will also usually suppress writing $\cdot$ and say $cx  \coloneqq  c \cdot x$. 
Similarly,  a \emph{right action} of $\CC$ on $X$ consists of maps $\sr:X\to \CC^0$ and 
$\cdot: %
X*\CC  \to  X$
such that $x \cdot \sr(x) =x$ and $ x\cdot (c_1c_2)=(x \cdot c_1)\cdot c_2$ for all $(x, c_1,c_2)\in X*\CC^2$.
A left action $\cdot: 
\CC *X \to  X$ is \emph{faithful} if $\sr(c_1)=\sr(c_2)$ and 
$c_1 x=c_2x$ for every $x\in \sr(c_1)X$ implies that $c_1=c_2$. Faithfulness of a right action is defined similarly.
 
Some authors, \cite{Duwenig_Li}, prefer to work with  matched  pairs  without the assumption  $\DD^0=\CC^0$, but  this can  
always be arranged by passing to ``transformation categories'', cf. \cite{Duwenig_Li}*{Proposition~2.23} and Example~\ref{exm:Exel_Pardo_vs_groupoid_self-similar} below.
\subsection{Self-similar actions as  groupoid actions with a cocycle}

\begin{definition}\label{defn:self_similar_action}
A \emph{self-similar action} of a  groupoid $\Gr$ on a directed graph 
$E = (E^0,E^1,\rg,\sr)$ with $\Gr^0 = E^0$ is a left $\Gr$-action $\Gr* E^1\to  E^1$ on the set of edges $E^1$ 
equipped with a $1$-cocycle \(\Gr* E^{1}\ni(g,e)\mapsto g|_e\in \Gr\), which in this context means that for all $(h,g,e) \in \Gr^2* E^{1}$,
\[
(hg)|_e  = (h|_{ge})  (g|_e), \qquad \sr(g|_e)=\sr(e),  \qquad \rg(g|_e)=\sr(ge).
\]
\end{definition}
\begin{example}
A \emph{self-similar action of a group $\Gamma$ on a set} $X$ is an action of $\Gamma$ by bijections of $X$ equipped with  the section map \(\Gamma \times X\ni(g,e)\mapsto g|_e\in \Gamma\) satisfying the $1$-cocycle identity  $(hg)|_e  = (h|_{ge})  (g|_e)$ for all $h,g\in \Gamma$ and $e\in X$, see \cite{Nekrashevych:Self-similar}. Such actions are nothing but groupoid actions on  the graph with single vertex and the set of edges $E^1=X$. 
\end{example}

\begin{example}\label{exm:Exel_Pardo_vs_groupoid_self-similar}
A  \emph{left automorphism} of the graph $E = (E^0,E^1,\rg,\sr)$ is a bijection $\sigma:E^{0}\sqcup E^{1}\to E^{0}\sqcup E^{1}$ such that $\sigma(E^{i})=E^{i}$ for $i=0,1$ 
and $\rg\circ \sigma= \sigma \circ \rg$ on $E^1$. 
It is an  \emph{automorphism} of $E$ if in addition $\sr\circ \sigma=\sigma \circ \sr$ on $E^1$.
 A \emph{self-similar group action} of $\Gamma$ on $E$ is an action of $\Gamma$ by left automorphisms of $E$  equipped with a $1$-cocycle \(\Gamma \times E^{1}\ni(g,e)\mapsto g|_e\in \Gamma\), which in this context means that  for  all $h,g\in \Gamma$  and $e\in  E^{1}$
\begin{equation}\label{eq:cocycle_for_self_similar}
(hg)|_e  = (h|_{ge})  (g|_e), \qquad \sr(ge)=g|_{e}\sr(e),  \qquad \rg(g e)=g\rg(e).
\end{equation}
Such an action can be naturally treated as a self-similar action of the transformation groupoid $\Gr=\Gamma\times E^0$ on $E$. 
Moreover, every self-similar groupoid action by a transformation groupoid $\Gr=\Gamma\times E^0$ arises this way,
see \cite{Anutnes_Ko_Meyer}*{Proposition~4.3}.
When $\Gamma$ acts by graph automorphisms, not just left automorphisms, then 
 conditions \eqref{eq:cocycle_for_self_similar} reduce to 
\[
(hg)|_e  = (h|_{ge})  (g|_e)\qquad g|_e v=g v,
\]
 for all $h,g\in \Gamma$, $e\in  E^{1}$ and $v\in E^0$, cf. \cite{Laca-Raeburn-Ramagge-Whittaker:Self-similar-groupoids}*{Appendix A}.
Originally,  Exel and Pardo \cite{Exel-Pardo:Self-similar} introduced self-similar actions of groups on graphs  in the latter sense,
and they considered finite graphs without sources.
\end{example}
Self-similar actions on a graph $E$ extend uniquely to actions on the path category $E^*$, as explained in the following proposition, cf.
\cite{Exel-Pardo:Self-similar}*{Proposition~2.4}. 

\begin{proposition}\label{prop:extension_of_self-similar_action_on_graph}
For any  self-similar groupoid action $(\Gr,E)$ 
the left action $\Gr  * E^1 \to E^1$  and  $1$-cocycle $\Gr  * E^1 \to \Gr$ extend uniquely 
to a left action $ \Gr  * E^* \to E^*$ of $\Gr$  and a right action $ \Gr  * E^*\to \Gr$ of $E^*$ such that
\begin{equation}\label{eq:matching_for_self-similar}
	g \cdot (\mu\nu) = (g \cdot \mu) (g|_{\mu} \cdot \nu),\qquad  \text{for }(g,\mu,\nu) \in \Gr*E^**E^*
\end{equation}
(in particular $ \sr(g|_\mu)=\sr(\mu)$, $\rg(g|_\mu)=\sr(g\mu)$).
These extended actions necessarily satisfy 
\begin{equation}\label{eq:matching_for_self-similar2}
	(hg)|_\mu  = (h|_{g\mu})  (g|_\mu),\qquad  \qquad \text{for } (h,g,\mu)\in \Gr^2* E^{*},
\end{equation}
and the left action preserves the length of paths, so $g \cdot (\sr(g)E^n) = \rg(g) E^n$ for  $n \in \N$,  $g \in \Gr$.
\end{proposition}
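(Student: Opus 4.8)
The plan is to define both extended structures by recursion on path length and then verify the action axioms through a chain of inductions, taking the length-$0$ and length-$1$ behaviour as base data.

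First I would record that the desired formula \eqref{eq:matching_for_self-similar} forces uniqueness: decomposing a path $\mu=\mu_1\cdots\mu_n$ into edges and iterating \eqref{eq:matching_for_self-similar} together with the right-action law $g|_{\alpha\beta}=(g|_\alpha)|_\beta$ expresses $g\cdot\mu$ and $g|_\mu$ entirely in terms of the given length-$1$ data, so at most one extension can exist. This same computation dictates the definition. Setting $g\cdot v:=\rg(g)$ and $g|_v:=g$ for $v=\sr(g)\in E^0$, and for $|\mu|\ge 1$ writing $\mu=e\mu'$ with $e\in E^1$ and declaring
\[
g\cdot\mu:=(g\cdot e)(g|_e\cdot\mu'),\qquad g|_\mu:=(g|_e)|_{\mu'},
\]
gives a well-defined recursion, since the leftmost-edge factorisation is unique and $g|_e\cdot\mu'$, $(g|_e)|_{\mu'}$ are already defined (as $\rg(\mu')=\sr(e)=\sr(g|_e)$). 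I would also note the small but necessary observation that identity morphisms restrict trivially, $1_{\rg(e)}|_e=1_{\sr(e)}$: this element is idempotent in $\Gr$ by the given $1$-cocycle identity of Definition~\ref{defn:self_similar_action}, and idempotents in a groupoid are units, necessarily the unit at $\sr(e)$.

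Next I would run the \emph{structural} induction on $|\mu|$, proving simultaneously $\rg(g\cdot\mu)=\rg(g)$, $\sr(g|_\mu)=\sr(\mu)$, $\rg(g|_\mu)=\sr(g\cdot\mu)$, and $|g\cdot\mu|=|\mu|$. These identities guarantee that every concatenation in $E^*$ and every groupoid product appearing later is genuinely composable, and they already give the inclusion $g\cdot(\sr(g)E^n)\subseteq\rg(g)E^n$. With this bookkeeping in place I would establish, again by induction on $|\mu|$ and using associativity of concatenation in $E^*$, the multiplicativity \eqref{eq:matching_for_self-similar} for arbitrary (not merely leftmost-edge) factorisations and the right-action law $g|_{\mu\nu}=(g|_\mu)|_\nu$; together these say that $(g,\mu)\mapsto g|_\mu$ is a right action of $E^*$ on $\Gr$ and that the left map is multiplicative in the sense of \eqref{eq:matching_for_self-similar}.

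The heart of the argument, and where I expect the only real friction, is the pair of intertwined inductions establishing the left-action composition law $(hg)\cdot\mu=h\cdot(g\cdot\mu)$ together with the cocycle identity \eqref{eq:matching_for_self-similar2}, since each is used to pass the other from $\mu'$ to $\mu=e\mu'$. For the composition law I would rewrite $h\cdot(g\cdot\mu)$ using \eqref{eq:matching_for_self-similar} applied to the edge $g\cdot e$, then invoke the length-$1$ data $(hg)\cdot e=h\cdot(g\cdot e)$ and $(hg)|_e=(h|_{ge})(g|_e)$ together with the inductive composition law for $\mu'$; for \eqref{eq:matching_for_self-similar2} I would expand $h|_{g\mu}$ via the right-action law and reduce to \eqref{eq:matching_for_self-similar2} for the shorter path $\mu'$ applied to the pair $(h|_{ge},g|_e)$. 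Once associativity and the identity axiom hold, surjectivity in $g\cdot(\sr(g)E^n)=\rg(g)E^n$ follows because $g^{-1}\cdot$ is a two-sided inverse, completing the proof. The main obstacle is thus purely organisational: keeping the nested inductions in the correct logical order and tracking all sources and ranges so that every expression is defined.
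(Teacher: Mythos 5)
Your proposal is correct and follows essentially the same route as the paper: define the extension recursively on the leftmost edge with $g\cdot v=\rg(g)$, $g|_v=g$ as the base case, and verify the action axioms, the concatenation law \eqref{eq:matching_for_self-similar}, and the cocycle identity \eqref{eq:matching_for_self-similar2} by induction on path length (the paper runs one simultaneous four-part induction where you stage several, but the content of each inductive step is the same). Your explicit remarks on uniqueness and on units restricting to units are sound and merely make explicit what the paper leaves implicit.
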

\begin{proof}
For each $n \ge 0$ let $E^{\le n}$ denote the collection of paths of length at most $n$. 
By \eqref{eq:matching_for_self-similar} the extended left action has to preserve the length of paths. 
For $(g,v)\in \Gr*E^0$, the action axioms  force us to put $g \cdot v \coloneqq  \rg(g)$ and  $g|_v= g$. Thus,  we have defined extensions $ \Gr  * E^{\leq 1} \to E^{\leq 1}$  and $ \Gr  * E^{\leq 1} \to \Gr$. By \eqref{eq:matching_for_self-similar} further extensions have to  satisfy  the following recursive formula:
once  we have defined maps $ \Gr  * E^{\leq n} \to E^{\leq n}$  and $ \Gr  * E^{\leq n} \to \Gr$ for $n\in\N$ then  $ \Gr  * E^{\leq n+1} \to E^{\leq n+1}$  and $ \Gr  * E^{\leq n+1} \to \Gr$ are given by
\begin{equation}\label{eq:recursive_extension_rules}
	g\cdot(e\mu) \coloneqq  (g\cdot e)(g|_{e}\mu),\qquad g|_{e\mu} \coloneqq (g|_{e})|_{\mu}, \qquad (g,e,\mu)\in \Gr*E^{1}*E^{\leq n}.
\end{equation}
One just needs to check that this recursive recipe works and  produces the desired actions. This can  be proved by induction on $n$. Namely, assume that for $n\in\N$ we have well-defined maps $ \Gr  * E^{\leq n} \to E^{\leq n}$  and $ \Gr  * E^{\leq n} \to \Gr$  satisfying
\begin{enumerate}[label=(\alph*)]
	\item\label{ite:extended_actions1}   $g \cdot (\mu\nu) = (g \cdot \mu) (g|_{\mu} \cdot \nu)$ for  $(g,\mu\nu) \in \Gr*E^{\leq n }$, 
	in particular $\sr(g|_\mu)=\sr(\mu)$ and $\rg(g|_\mu)=\sr(g\mu)$;
	\item\label{ite:extended_actions2} $g|_{\mu\nu} = (g|_\mu)|_{\nu}$ for all $(g,\mu\nu) \in \Gr*E^{\leq n }$;
	\item\label{ite:extended_actions3} $(hg)|_\mu  = (h|_{g\mu})  (g|_\mu)$  for $(h,g,\mu)\in \Gr^2* E^{\leq n}$; 
	\item\label{ite:extended_actions4} $(h g)\mu=h (g\mu)$ and $\rg(\mu) \cdot \mu=\mu$ for all $(h,g,\mu)\in \Gr^2* E^{\leq n}$.
\end{enumerate}
For $n=1$ these properties hold, so suppose that $n > 1$.
Assumption  \ref{ite:extended_actions1} implies that  $g \cdot (\sr(g)E^n) = \rg(g) E^n$.
In particular, since $\sr(g\cdot e)=\rg(g|_{e})$ and $\sr(g|_e)=\sr(e)$ for $(g,e)\in \Gr*E^{1}$, the formulas in \eqref{eq:recursive_extension_rules}
make sense. 	To show the inductive step for \ref{ite:extended_actions1} and \ref{ite:extended_actions2} note that for 
$(g,e,\mu\nu)\in \Gr*E^1*E^{\leq n}$,
\begin{align*}
g \cdot (e\mu\nu) &\stackrel{\eqref{eq:recursive_extension_rules}}{=}(ge)(g|_e \mu\nu)
\stackrel{\ref{ite:extended_actions1}}{=}(ge)(g|_e \mu) (g|_e|_{\mu}\nu)\stackrel{\eqref{eq:recursive_extension_rules}}{=} (g \cdot e\mu) (g|_{e\mu} \cdot \nu), 
\\
g|_{e\mu\nu} &\stackrel{\eqref{eq:recursive_extension_rules}}{=} (g|_e) |_{\mu\nu}\stackrel{\ref{ite:extended_actions2}}{=}[(g|_e)|_\mu]|_{\nu}\stackrel{\eqref{eq:recursive_extension_rules}}{=} (g|_{e\mu})|_{\nu}.
\end{align*}
For the inductive step for \ref{ite:extended_actions3} and \ref{ite:extended_actions4} let 
$(h,g,e,\mu)\in \Gr^2*E^1*E^{\leq n}$ and note that 
\begin{align*}
	(hg)|_{e\mu}\stackrel{\eqref{eq:recursive_extension_rules}}{=} & [(hg)|_{e}]|_{\mu}\stackrel{\ref{ite:extended_actions3}}{=}(h|_{ge} \cdot g|_e)|_{\mu}
	\stackrel{\ref{ite:extended_actions3}}{=}(h|_{ge})|_{g|_e \mu} \cdot (g|_e)|_{\mu}\\
	\stackrel{\eqref{eq:recursive_extension_rules}}{=} &h|_{(ge) (g|_e \mu)}  \cdot g|_{e\mu}
	\stackrel{\eqref{eq:recursive_extension_rules}}{=}h|_{g (e\mu)} g|_{e \mu},
\end{align*}
\begin{align*}
	h(g (e\mu))&\stackrel{\eqref{eq:recursive_extension_rules}}{=} h [(g e)(g|_{e}\mu)] 
	\stackrel{\eqref{eq:recursive_extension_rules}}{=} (h g e)
	h|_{ge} (g|_{e}\mu)\stackrel{\ref{ite:extended_actions4}}{=}(h g e)
	(h|_{ge} g|_{e})\mu\\	
	&\stackrel{\ref{ite:extended_actions3}}{=}(h g e) (hg)|_{e} \mu\stackrel{\eqref{eq:recursive_extension_rules}}{=} (hg) e\mu.
\end{align*}
This finishes the proof.
\end{proof}
\begin{corollary}\label{cor:self-similar_actions_on_graphes_path_categories}
Let $\Gr$ be a groupoid and $E$ be a directed graph with $\Gr^0=E^0$.
There is a bijective correspondence between self-similar actions of $\Gr$ on $E$ and pairs of actions $\Gr  * E^* \to E^*$   and $ \Gr  * E^*\to \Gr$ 
satisfying \eqref{eq:matching_for_self-similar} and \eqref{eq:matching_for_self-similar2} and such that $\Gr  * E^* \to E^*$  restricts to $\Gr  * E^{1} \to E^{1}$ (equivalently the left action preserves the length of paths).
\end{corollary}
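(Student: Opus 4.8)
The plan is to exhibit the claimed bijection as a pair of mutually inverse maps, "extend" and "restrict", with Proposition~\ref{prop:extension_of_self-similar_action_on_graph} doing essentially all the work. First I would define the forward map $\Phi$ sending a self-similar action $(\Gr,E)$ in the sense of Definition~\ref{defn:self_similar_action} to the pair of extended actions $(\Gr*E^*\to E^*,\,\Gr*E^*\to\Gr)$ furnished by Proposition~\ref{prop:extension_of_self-similar_action_on_graph}. By construction this pair satisfies \eqref{eq:matching_for_self-similar} and \eqref{eq:matching_for_self-similar2}, and the Proposition records that the left action preserves path length, so it restricts to $\Gr*E^1\to E^1$; hence $\Phi$ lands in the target set. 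Along the way I would dispatch the parenthetical equivalence in the statement: preservation of length trivially gives $E^1\to E^1$, and conversely, if the left action maps $E^1$ into $E^1$, then a one-line induction using \eqref{eq:matching_for_self-similar} shows it preserves length on all of $E^*$.

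Next I would define the backward map $\Psi$ by restriction to edges. Given a pair of actions on $E^*$ satisfying \eqref{eq:matching_for_self-similar} and \eqref{eq:matching_for_self-similar2} whose left action restricts to $E^1$, I would restrict both maps to $\Gr*E^1$ and check that the result is a self-similar action in the sense of Definition~\ref{defn:self_similar_action}. The left-action axioms on $E^1$ are inherited directly from those on $E^*$ (length-preservation keeping us inside $E^1$), and the $1$-cocycle identity $(hg)|_e=(h|_{ge})(g|_e)$ is exactly \eqref{eq:matching_for_self-similar2} specialised to $\mu=e\in E^1$. The only slightly delicate point is the source/range bookkeeping $\sr(g|_e)=\sr(e)$ and $\rg(g|_e)=\sr(ge)$; I would read these off from \eqref{eq:matching_for_self-similar} by expanding $g\cdot(e\nu)=(g\cdot e)(g|_e\cdot\nu)$ for $\nu\in\sr(e)E^*$ and using that the concatenation on the right and the left action $g|_e\cdot\nu$ are defined, which forces precisely the required matching of ranges and sources, just as in the ``in particular'' clause of the Proposition.

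Finally I would verify that $\Phi$ and $\Psi$ are mutually inverse. The composite $\Psi\circ\Phi$ is the identity because the extension produced by Proposition~\ref{prop:extension_of_self-similar_action_on_graph} is, by definition, an extension of the original edge data, so restricting it to $E^1$ returns that data. For $\Phi\circ\Psi$, given an admissible pair $(\cdot,|)$ I would note that $\Psi(\cdot,|)$ is a genuine self-similar action by the previous paragraph, and that $(\cdot,|)$ is itself an extension of $\Psi(\cdot,|)$ satisfying \eqref{eq:matching_for_self-similar}; the \emph{uniqueness} clause of Proposition~\ref{prop:extension_of_self-similar_action_on_graph} then forces $(\cdot,|)$ to coincide with $\Phi(\Psi(\cdot,|))$. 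I expect the main, and essentially the only, obstacle to be the well-definedness of $\Psi$, namely confirming that the abstract matching identities genuinely repackage into the source/range and cocycle conditions of Definition~\ref{defn:self_similar_action}; once that is in place, everything else is a formal consequence of the existence-and-uniqueness already established.
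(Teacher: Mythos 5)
Your proposal is correct and follows essentially the same route as the paper: the forward map is the unique extension supplied by Proposition~\ref{prop:extension_of_self-similar_action_on_graph}, the backward map is restriction to edges, and the uniqueness clause of that proposition makes the two composites identities. The paper compresses all of this into two sentences; your elaboration of the well-definedness of the restriction map and the source/range bookkeeping is just making explicit what the paper leaves implicit.
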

\begin{proof}
By Proposition~\ref{prop:extension_of_self-similar_action_on_graph} any self-similar action extends uniquely to the desired pair of actions. Conversely, any such pair of actions  $\Gr  * E^* \to E^*$   and $ \Gr  * E^*\to \Gr$ comes from the self-similar action given by the restricted actions 
$\Gr  * E^{1} \to E^{1}$ and $\Gr  * E^{1} \to \Gr$.
\end{proof}

\begin{remark} We will often identify self-similar actions with the corresponding extended actions $\Gr  * E^* \to E^*$   and $ \Gr  * E^*\to \Gr$.
We note that for $g \in G$ and $\mu \in \sr(g)E^*$,~\eqref{eq:matching_for_self-similar2} forces
\begin{equation}\label{equ:inverse_vs_restriction}
	(g|_\mu)^{-1} = g^{-1}|_{g\mu},
\end{equation}
cf. \cite{Laca-Raeburn-Ramagge-Whittaker:Self-similar-groupoids}*{Proposition 3.6(4)}.
\end{remark}
The left action above extends to a continuous action on the path space. Recall that a  left action of a discrete groupoid $\Gr$ on a topological space  $X$ is \emph{continuous} if both the  action map $\Gr\times X\supseteq \Gr *X \to  X$ and the anchor map $\rg:X\to \Gr^0$ 
are continuous. Equivalently,  the maps $\rg^{-1}(\sr(g))\ni x \mapsto  g\cdot x\in \rg^{-1}(\rg(g))$ are partial homeomorphisms of $X$,  for all $g\in \Gr$. 
\begin{lemma}\label{lem:actions_on_infinite_paths}
For any self-similar action $(\Gr, E)$  the left action  $\Gr  * E^* \to E^*$  extends uniquely to  a continuous action $\Gr  * E^{\le \infty} \to  E^{\le \infty} $ 
of the groupoid $\Gr$ on the path space $E^{\le \infty}=E^*\cup E^\infty$, with the 
range map $\rg:E^{\le \infty}\to E^0=\Gr^0$ as the anchor map. This action descends to a continuous action $\Gr  * \partial E \to  \partial E$ on the boundary path space $\partial E\subseteq E^{\le \infty}$.
\end{lemma}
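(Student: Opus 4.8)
The plan is to extend the finite action edge by edge along truncations and then verify continuity using the length\nobreakdash-preserving bijections on paths supplied by Proposition~\ref{prop:extension_of_self-similar_action_on_graph}. First I would define the action on infinite paths. Given $(g,\mu)\in \Gr * E^{\le \infty}$ with $\mu = \mu_1\mu_2\cdots \in E^\infty$, write $\mu_{[1,n]} = \mu_1\cdots\mu_n$ for its length-$n$ truncation. Applying \eqref{eq:matching_for_self-similar} to $\mu_{[1,n+1]} = \mu_{[1,n]}\mu_{n+1}$ gives $g\cdot\mu_{[1,n+1]} = (g\cdot\mu_{[1,n]})(g|_{\mu_{[1,n]}}\cdot\mu_{n+1})$, so $g\cdot\mu_{[1,n]}$ is a prefix of $g\cdot\mu_{[1,n+1]}$; since the finite action preserves length, each step adds exactly one edge. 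I therefore set $g\cdot\mu \coloneqq \lim_n g\cdot\mu_{[1,n]}\in E^\infty$, the unique infinite path with $(g\cdot\mu)_{[1,n]} = g\cdot\mu_{[1,n]}$ for all $n$. Together with the action on $E^*$ this produces a map $\Gr * E^{\le \infty}\to E^{\le \infty}$ preserving $E^*$ and $E^\infty$ separately.

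Next I would verify the action axioms and the anchor condition by reducing everything to truncations, where they hold by Proposition~\ref{prop:extension_of_self-similar_action_on_graph}. Concretely, $\rg(g\cdot\mu) = \rg(g\cdot\mu_1) = \rg(g)$; the identity $\rg(\mu)\cdot\mu = \mu$ holds because $\rg(\mu)\cdot\mu_{[1,n]} = \mu_{[1,n]}$ for every $n$; and for $(h,g,\mu)\in\Gr^2 * E^{\le \infty}$ the equality $h\cdot(g\cdot\mu)=(hg)\cdot\mu$ follows by comparing length-$n$ truncations, using that the extension commutes with truncation, $(h\cdot\nu)_{[1,n]} = h\cdot\nu_{[1,n]}$ for $\nu\in E^\infty$.

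Continuity is the technical heart. For fixed $g$ the map $x\mapsto g\cdot x$ on $\rg^{-1}(\sr(g))$ has inverse $y\mapsto g^{-1}\cdot y$, since the action axioms give $g^{-1}\cdot(g\cdot x) = (g^{-1}g)\cdot x = \sr(g)\cdot x = \rg(x)\cdot x = x$, so it is a bijection onto $\rg^{-1}(\rg(g))$. The key spatial fact is that $g\cdot Z(\alpha) = Z(g\cdot\alpha)$ for each $\alpha\in\sr(g)E^*$: as $g\cdot\colon\sr(g)E^{|\alpha|}\to\rg(g)E^{|\alpha|}$ is a length-preserving bijection, a path $x$ extends $\alpha$ if and only if $g\cdot x$ extends $g\cdot\alpha$. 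Since moreover $g\cdot(\alpha\beta) = (g\cdot\alpha)(g|_{\alpha}\cdot\beta)$ and $\{g|_\alpha\cdot\beta : \beta\in F\}$ is a finite subset of $\sr(g\cdot\alpha)E^*$, the map $x\mapsto g\cdot x$ carries the basis of cylinders $Z(\alpha)$ and their relative complements $Z(\alpha)\setminus\bigcup_{\beta\in F}Z(\alpha\beta)$ bijectively to basic sets of the same form; hence it is a homeomorphism. Because $\Gr$ is discrete and the anchor $\rg$ is locally constant (constant on each clopen $Z(v)$, $v\in E^0$), the global action map is continuous. Uniqueness follows because $\mu_{[1,n]}\to\mu$ in $E^{\le \infty}$---any basic neighbourhood of $\mu$ contains all sufficiently long truncations---so continuity forces the limit formula above.

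Finally, to see the action restricts to $\partial E = E^*_{\sing}\cup E^\infty$, note $E^\infty$ is invariant since the action respects length. For a finite boundary path $\mu\in E^*_{\sing}$, the restriction $g|_\mu\in\Gr$ satisfies $\sr(g|_\mu)=\sr(\mu)$ and $\rg(g|_\mu)=\sr(g\cdot\mu)$, and the self-similar action provides a bijection $\sr(\mu)E^1\to\sr(g\cdot\mu)E^1$; hence $\sr(\mu)$ is a source (respectively a finite or infinite receiver) exactly when $\sr(g\cdot\mu)$ is, so $\sr(\mu)$ is singular if and only if $\sr(g\cdot\mu)$ is. Thus $g\cdot\mu\in E^*_{\sing}$, the subspace $\partial E$ is invariant, and the restricted action inherits continuity from the subspace topology with anchor $\rg|_{\partial E}$. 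The main obstacle is the continuity step, namely establishing $g\cdot Z(\alpha)=Z(g\cdot\alpha)$ precisely enough that the entire cylinder basis---including the relative-complement open sets---is preserved; every other part reduces cleanly to the already-established finite-path case.
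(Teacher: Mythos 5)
Your proposal is correct and follows essentially the same route as the paper: define $g\cdot\mu$ on infinite paths via the forced prefix formula, obtain uniqueness from density of truncations, and restrict to $\partial E$ by noting the action preserves singular sources via the edge bijections. The only difference is that you spell out the continuity step (showing $g\cdot Z(\alpha)=Z(g\cdot\alpha)$ and that relative complements of cylinders are preserved), which the paper declares immediate.
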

\begin{proof}
Since  $E^*$ is dense in  $E^{\le \infty}$ the range map $\rg:E^{\le \infty}\to E^0$ is the unique continuous 
extension of its restriction to $E^*$. For any $\mu=\mu_1\mu_2\cdots\in E^\infty$ its finite prefixes $\mu_1\cdots\mu_n\in E^n$ converge to $\mu$ in $E^{\le \infty}$.
For any $g\in \Gr \rg(\mu)$ we have $g(\mu_1\cdots\mu_n)= (g\mu_1)(g|_{\mu_1}\mu_2)\cdots (g|_{\mu_1\cdots\mu_{n-1}}\mu_n)$. Thus, if the 
extended action exists, it has to be given by 
\[
g(\mu_1\mu_2\cdots) \coloneqq  (g\mu_1)(g|_{\mu_1}\mu_2)\cdots (g|_{\mu_1\cdots\mu_{n-1}}\mu_n)\cdots \in E^\infty.
\] 
This defines a homomorphism $\sr(g)E^\infty\mu\mapsto g\mu\in  \rg(g)E^\infty$ uniquely determined by the property that $g(\mu_1\cdots \mu_n)$ is a prefix of $g(\mu)$ for all $n\in \N$. 
It is immediate to see that the constructed  map $\Gr  * E^{\le \infty} \to  E^{\le \infty} $ is a continuous action.

The left action of $g \in \Gr$ establishes a bijection $\sr(g)E^1\cong \rg(g)E^1$, so $\sr(g)$ is a source or infinite receiver if and only if $\rg(g)$ is a source or infinite receiver, respectively. Since $\partial E$ is closed in $E^{\le \infty}$, the action restricts to a continuous action $\Gr  * \partial E \to  \partial E$.
\end{proof}
\begin{remark}\label{rem:transformation_groupoids_from_groupoid_actions}
We may treat $\Gr* E^{\le \infty}$ and  $\Gr  * \partial E$  as transformation groupoids $\Gr\rtimes E^{\le \infty}$ and $\Gr\rtimes \partial E$, respectively. Here two arrows  $(g,\xi)$ and $(h,\eta)$ are composable if and only if $\xi=h\eta$, in which case $(g,\xi)\cdot (h,\eta)=(gh,\eta)$.
\end{remark}
\subsection{Other pictures of self-similar actions}
\begin{definition}\label{def:self_similar_faithful}
A self-similar action $(\Gr,E)$ is \emph{faithful}, if the corresponding left action of $\Gr$ on  $E^*$  is faithful, that is $g \cdot \mu =\mu$ for all $\mu \in \sr(g) E^*$ implies $g\in \Gr^0$. 
\end{definition}
\begin{remark}\label{rem:kernel_of_the_action}
Every self-similar action  $(\Gr,E)$ factors through a faithful self-similar action. Namely,
$$
N \coloneqq \{g\in \Gr: g\mu=\mu \text{ for every }\mu \in \sr(g)E^*\}
$$
is a wide  subgroupoid of $\Gr$, in fact of $\Iso(\Gr)$. We call it the \emph{kernel} of the action of $\Gr$ on $E^*$. It is a normal subgroupoid in the sense that $g N g^{-1}\subseteq N$ for all $g\in \Gr$, and thus $\Gr/N\coloneqq 
\{g N:g\in \Gr\}$ is naturally a groupoid with the same unit space $\Gr^0$, cf. \cite{PT}. Clearly, 
the left action of $\Gr$ on $E^*$ factors through to the faithful action of $\Gr/N$ on $E^*$.
Moreover,  $N$ is closed under sections, because if $g$ fixes all paths, then all their sections also fix all paths. 
Therefore, the right action of $E^*$ on $\Gr$ descends to an action of $E^*$ on $\Gr/N$. 
The pair $(G/N, E^*)$ with these actions satisfy analogues of \eqref{eq:matching_for_self-similar} and \eqref{eq:matching_for_self-similar2}.
Hence, it is a faithful self-similar action. See \cite{Miller_Steinberg}*{2.3} for a correspondence approach to this construction. 
\end{remark}

For a faithful self-similar action, the right action $ \Gr  * E^* \to \Gr$  satisfying \eqref{eq:matching_for_self-similar} is 
uniquely determined by the left action $ \Gr  * E^* \to E^*$, and also  existence of this right action can be easily characterised.  
This leads to the definition of a self-similar action from \cite{Laca-Raeburn-Ramagge-Whittaker:Self-similar-groupoids}, where only faithful self-similar actions were considered.
The authors	 of \cite{Laca-Raeburn-Ramagge-Whittaker:Self-similar-groupoids} introduced such actions using 
the groupoid of partial automorphisms of the forest associated to $E$. The \emph{forest} in question is defined as the disjoint union $T_{E} \coloneqq  \bigsqcup_{v\in E^0} vE^*$ and the subsets $vE^*$, $v\in E^0$, are viewed as  \emph{trees} (oriented trees with the underlying edges $\mu \to \mu e$ directed from the root in $E^0$). An isomorphism between two such trees is a bijection $\Phi:vE^*\to wE^*$ that respects concatenation in the sense that $\Phi(\mu e)\in \Phi(\mu)E^1$ for all $\mu\in vE^*$ and $e\in \sr(\mu)E^1$. By induction this last assumption is equivalent to the condition that 
for all $\mu\in vE^*$ and $\eta \in \sr(\mu)E^n$ there exists a (necessarily unique) $\Phi|_{\mu}(\eta)\in \sr(\Phi(\mu)) E^n$ such that 
\begin{equation}\label{equ:section_of_iso}
\Phi(\mu \eta)=\Phi(\mu)\Phi|_{\mu}(\eta).
\end{equation}

We denote by $\PIso(E^*)$ the set of all  isomorphism between trees, which we view as partial automorphisms of the forest $T_E$. Their composition, whenever non-empty, is again an isomorphism of trees and an inverse of an isomorphism is an isomorphism, see \cite{Laca-Raeburn-Ramagge-Whittaker:Self-similar-groupoids}*{Proposition 3.2}. So $\PIso(E^*)$ forms  a groupoid  whose unit space may be identified with the set of vertices $E^0$. Then the range and source of 
an isomorphism $\Phi:vE^*\to wE^*$ is $w$ and $v$, respectively.

\begin{proposition}
For every directed graph $E$, the pair $(\PIso(E^*), E)$ is   a faithful self-similar action     where  
\[
\PIso(E^*)*E^*\ni (\Phi,\mu)\mapsto \Phi(\mu)\in E^*\,\, \text{ and }\,\,\PIso(E^*)*E^*\ni (\Phi,\mu)\mapsto \Phi|_{\mu}\in \PIso(E^*).
\]
It is universal in the sense that every self-similar groupoid action $(G,E)$ factors through $(\PIso(E^*), E^*)$: there is a unique
groupoid homomorphism $\varphi:G\to \PIso(E^*)$ satisfying
$$
\varphi(g)(\mu)=g\mu \qquad\text{and}\qquad \varphi(g|_{\mu})=\varphi(g)|_{\mu}
$$
	for all $(g,\mu)\in  \Gr  *E^*$. Such  $\varphi$ is injective if and only if $(G,E)$ is faithful. Therefore, faithful self-similar actions on $E$ can be identified with wide subgroupoids $G\subseteq  \PIso(E^*)$ 
that are closed under sections in the sense that $\Phi|_{\mu}\in G$ for all $\Phi\in G$ and $\mu \in \sr(\Phi)E^*$.
\end{proposition}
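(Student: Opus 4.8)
The plan is to verify the self-similar action axioms of Definition~\ref{defn:self_similar_action} directly for $\PIso(E^*)$, then manufacture the comparison homomorphism $\varphi$ out of the extended action supplied by Proposition~\ref{prop:extension_of_self-similar_action_on_graph}, and finally extract the injectivity criterion from Definition~\ref{def:self_similar_faithful}. First I would confirm that $(\PIso(E^*),E)$ is a self-similar action. The left action sends $(\Phi,e)$ to $\Phi(e)$; since an isomorphism of trees respects concatenation, $\Phi(e)\in\rg(\Phi)E^1$, so this lands in $E^1$, and the action axioms $\rg(e)\cdot e=e$ and $(\Psi\Phi)(e)=\Psi(\Phi(e))$ are immediate from the groupoid structure on $\PIso(E^*)$ (composition being composition of partial maps). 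The cocycle $(\Phi,e)\mapsto\Phi|_e$ is the restriction isomorphism determined by \eqref{equ:section_of_iso}; that it lies again in $\PIso(E^*)$ with $\sr(\Phi|_e)=\sr(e)$ and $\rg(\Phi|_e)=\sr(\Phi(e))$ is part of the groupoid structure recalled above. The cocycle identity $(\Psi\Phi)|_e=(\Psi|_{\Phi e})(\Phi|_e)$ I would get by applying both sides to an arbitrary $\eta\in\sr(e)E^*$ and expanding $(\Psi\Phi)(e\eta)$ in two ways via \eqref{equ:section_of_iso}. Faithfulness is automatic: an element of $\PIso(E^*)$ fixing every path in its domain is by definition the identity isomorphism, hence a unit.

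Next I would construct $\varphi$. Using Proposition~\ref{prop:extension_of_self-similar_action_on_graph}, the action $(\Gr,E)$ extends to $\Gr*E^*\to E^*$ and $\Gr*E^*\to\Gr$, and I set $\varphi(g)\colon\sr(g)E^*\to\rg(g)E^*$ by $\varphi(g)(\mu)=g\mu$. That $\varphi(g)$ respects concatenation is exactly \eqref{eq:matching_for_self-similar}: $\varphi(g)(\mu e)=(g\mu)(g|_\mu e)\in\varphi(g)(\mu)E^1$, since the left action preserves length. Bijectivity follows by exhibiting $\varphi(g^{-1})$ as the inverse, because $g^{-1}(g\mu)=\sr(g)\mu=\mu$ by the action axioms, so indeed $\varphi(g)\in\PIso(E^*)$. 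The homomorphism property $\varphi(hg)=\varphi(h)\varphi(g)$ and unit-preservation $\varphi(v)=\mathrm{id}_{vE^*}$ are direct from $(hg)\mu=h(g\mu)$ and $v\mu=\mu$, while $\varphi(g)|_\mu=\varphi(g|_\mu)$ drops out of comparing $\varphi(g)(\mu\eta)=g(\mu\eta)=(g\mu)(g|_\mu\eta)$ against the defining equation \eqref{equ:section_of_iso} for the restriction. Uniqueness is clear, since the requirement $\varphi(g)(\mu)=g\mu$ already pins $\varphi(g)$ down as a map.

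For the injectivity criterion I would argue both directions through the isotropy. If $\varphi$ is injective and $g\mu=\mu$ for all $\mu\in\sr(g)E^*$, then testing on the vertex $\sr(g)$ gives $\rg(g)=g\cdot\sr(g)=\sr(g)$, whence $\varphi(g)=\mathrm{id}_{\sr(g)E^*}=\varphi(\sr(g))$ forces $g=\sr(g)\in\Gr^0$; this is faithfulness. Conversely, if $(\Gr,E)$ is faithful and $\varphi(g)=\varphi(g')$, then $\sr(g)=\sr(g')$, $\rg(g)=\rg(g')$, and $h\coloneqq(g')^{-1}g$ satisfies $h\mu=(g')^{-1}(g'\mu)=\mu$ for all $\mu$, so faithfulness gives $h\in\Gr^0$ and hence $g=g'$. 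The final identification is then formal: for faithful $(\Gr,E)$ the map $\varphi$ embeds $\Gr$ as a subgroupoid of $\PIso(E^*)$ that is wide (since $\varphi(v)=\mathrm{id}_{vE^*}$ exhausts the units $E^0$) and closed under sections (since $\varphi(g)|_\mu=\varphi(g|_\mu)$ lies in the image), and conversely any such subgroupoid carries the restricted self-similar action, which is faithful as a subaction of a faithful one.

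The main obstacle I anticipate is not any single conceptual step but the careful bookkeeping of sources and ranges throughout—in particular confirming that each $\varphi(g)$ is genuinely a bijection between the correct trees and that restriction commutes with $\varphi$—where one must repeatedly invoke the matching relations \eqref{eq:matching_for_self-similar}, \eqref{eq:matching_for_self-similar2} and the inverse formula \eqref{equ:inverse_vs_restriction} rather than reprove them from scratch.
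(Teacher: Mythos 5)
Your proposal is correct and follows essentially the same route as the paper: verify the action and cocycle axioms for $\PIso(E^*)$ directly from the factorisation property \eqref{equ:section_of_iso}, define $\varphi(g)(\mu)=g\mu$, and deduce the injectivity criterion from faithfulness via the isotropy. The one place you under-argue is the claim that $\Phi|_\mu$ itself lies in $\PIso(E^*)$ — this is not "part of the groupoid structure recalled above" but requires the short check that $\Phi|_\mu$ is a bijection (injective because $\Phi$ is, surjective because $\Phi$ is surjective and satisfies \eqref{equ:section_of_iso}), which the paper carries out explicitly.
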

\begin{proof}
It is immediate that the evaluation defines a faithful left action of $\PIso(E^*)$ on $E^*$. 
Let us consider  sections of  $\Phi\in\PIso(E^*)$. For any $\mu\in \sr(\Phi)E^*$ relation \eqref{equ:section_of_iso} defines
a map $\Phi|_{\mu}\colon \sr(\mu)E^*\to \sr(\Phi(\mu))E^*$.  If $\Phi|_{\mu}(\eta)=\Phi|_{\mu}(\eta')$, then 
$\Phi(\mu \eta)=\Phi(\mu \eta')$ which forces $\eta=\eta'$ because $\Phi$ is injective. 
Also, for any $\nu\in \sr(\Phi(\mu))E^*$ there is $\eta$ such that $\Phi(\mu\eta)=\Phi(\mu)\nu$ by surjectivity of $\Phi$ and the ``factorisation property'' \eqref{equ:section_of_iso}. This implies $\Phi|_{\mu}(\eta)=\nu$. Hence, $\Phi|_{\mu}\colon \sr(\mu)E^*\to \sr(\Phi(\mu))E^*$ is a bijection. 
Moreover, for any $\eta\nu\in \sr(\mu)E^*$ we have 
$$
\Phi(\mu)\Phi|_{\mu}(\eta) \Phi|_{\mu}|_{\eta} (\nu)=\Phi(\mu)\Phi|_{\mu}(\eta\nu)=\Phi(\mu\eta\nu)=\Phi(\mu)\Phi|_{\mu\eta}(\nu),
$$
which implies that $\Phi|_{\mu}|_{\eta}=\Phi|_{\mu\eta}$. It follows that $\Phi|_{\mu}$ is an isomorphism of trees and that the map 
$\PIso(E^*)*E^*\ni (\Phi,\mu)\mapsto \Phi|_{\mu}\in \PIso(E^*)$ is a well-defined right action. 
By construction the two actions satisfy \eqref{eq:matching_for_self-similar}. To check  \eqref{eq:matching_for_self-similar2} let 
$(\Phi,\Psi)$ be composable elements of $\PIso(E^*)$ and let $\mu
\eta\in \sr(\Psi)E^*$. Then
\[
(\Phi\circ \Psi)(\mu) (\Phi\circ \Psi)|_{\mu}(\eta)=(\Phi\circ \Psi)(\mu\eta)=\Phi (\Psi(\mu)\Psi|_{\mu} (\eta))=(\Phi\circ \Psi)(\mu) (\Phi|_{\Psi(\mu)}\circ \Psi|_{\mu}) (\eta).
\]
Hence, $(\Phi\circ \Psi)|_{\mu}=\Phi|_{\Psi(\mu)}\circ \Psi|_{\mu}$. This shows that $(\PIso(E^*), E)$ is   a faithful self-similar action.

Now fix a self-similar action $(G,E)$. 
Every $g\in G$ maps $\sr(g)E^*$ bijectively onto $\rg(g)E^*$. It follows from \eqref{eq:matching_for_self-similar}
that for every $g\in G$  the formula $\varphi(g)(\mu)=g\mu$ defines a tree isomorphism $\varphi(g):\sr(g)E^*\to \rg(g)E^*$ and moreover $\varphi(g|_{\mu})=\varphi(g)|_{\mu}$ for $(g,\mu)\in  \Gr  *E^*$.   By the axioms of the left action, $\varphi:G\to \PIso(E^*)$ is a groupoid homomorphism.
If $\varphi$ is injective, we may identify $G$ with $\varphi(G)$, and then
the right action of $E^*$ on $G$ has to be the restriction of the right action of $E^*$ on  $\PIso(E^*)$.
This gives the last part of the assertion.  \end{proof}

\begin{remark} Using the above picture one could define self-similar groupoid actions as pairs $(G,E)$  equipped with a groupoid homomorphism 
$\varphi:G\to \PIso(E^*)$, which acts as the identity on $\Gr^0 = E^0$, and a right action $\cdot \colon \Gr  *E^* \to \Gr$ such that 
$\varphi(g|_{\mu})=\varphi(g)|_{\mu}$ for $(g,\mu)\in  \Gr  *E^*$.
When $\varphi$ is injective,  existence of this right action (which is then necessarily unique) is equivalent to assuming that 	
$\varphi(g)|_{e}\in \varphi(G)$ for all  $(g,e)\in  \Gr *E^1$.
\end{remark}

\begin{remark} 
Faithful self-similar actions are usually constructed from a finite data called automata. An \emph{automaton} over  a finite graph
$E$, see \cite{Laca-Raeburn-Ramagge-Whittaker:Self-similar-groupoids}*{Definition 3.7}, is a finite set $A$ containing $E^0$ together with anchor maps $\rg,\sr:A\to E^{0}$, which are identities on $E^0$, and  an input-output function
\[
A\fibre{\sr}{\rg}E^1\ni (a,e)\longmapsto (a\cdot e, a|_e)\in  E^1\fibre{\sr}{\rg}A
\]
such that for every $a\in A$, $e\mapsto a\cdot e$ is a bijection $\sr(a)E^1 \stackrel{\cong}{\to} \rg(a)E^1$, and for any $(a,e)\in A\fibre{\sr}{\rg}E^1$ we have $\sr(a|_e)=\sr(e)$ and $\rg(a|_e)=\sr(a\cdot e)$. 
By \cite{Laca-Raeburn-Ramagge-Whittaker:Self-similar-groupoids}*{Theorem 3.9}, any such automaton $A$ generates a subgroupoid $\Gr_A$ in $\PIso(E^*)$ that is closed under sections, and so it acts in a faithful and self-similar  way on $E$.
\end{remark}

Another equivalent description of a self-similar action uses the notion of a matched pair of categories from \cite{Mundey_Sims}. It treats $G$ and the path category generated by $E$, as well as relations \eqref{eq:matching_for_self-similar} and \eqref{eq:matching_for_self-similar2}, on  equal footing. 

\begin{definition}[\cite{Mundey_Sims}*{Definition~3.1}]\label{
	Def:matched_pair}
A pair of small categories $(\CC,\DD)$  is \emph{matched} if $\CC^0=\DD^0$ and $\CC$ and $\DD$  act on each other via  left   and  right actions
$\rhd: \CC *\DD \to  \DD$ and $\lhd: \CC *\DD \to  \CC$ such that 
\[
c_2 \la (d_1d_2) = (c_2 \la d_1) ((c_2 \ra d_1) \la d_2) \quad \text{and} \quad (c_1c_2) \ra d_1 = (c_1 \ra (c_2 \la d_1)) (c_2 \ra d_1)
\]
for all
$(c_1,c_2,d_1,d_2) \in \CC^2 *\DD^2$. In particular,  $\sr(c \la d)=\rg(c \ra d)$ for  $(c,d)\in \CC*\DD$.
\end{definition}
\begin{proposition}[{cf. \cite{Mundey_Sims}*{Proposition~3.32}}]
Let $\Gr$ be a  groupoid and let  $E$ be a directed graph   with $\Gr^0 = E^0$.
The formulae 
\[
g\cdot\mu =g\la \mu \quad \text{and} \quad g|_{\mu}=g\ra \mu, \quad (g,\mu) \in \Gr * E^*,
\]
establish a bijective correspondence between self-similar actions 
of $\Gr$ on $E$ and matched pairs $(\Gr,E^*,\la,\ra)$ such that 
$|g \la \mu| = |\mu|$ for all $(g,\mu) \in \Gr *E^*$ (that is the left action preserves the length of paths). 
\end{proposition}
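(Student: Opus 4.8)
The plan is to reduce everything to Corollary~\ref{cor:self-similar_actions_on_graphes_path_categories}, which already identifies self-similar actions of $\Gr$ on $E$ with pairs consisting of a left action $\Gr * E^* \to E^*$ and a right action $\Gr * E^* \to \Gr$ satisfying \eqref{eq:matching_for_self-similar} and \eqref{eq:matching_for_self-similar2} and preserving the length of paths. The only genuinely new observation is that this is exactly the data of a length-preserving matched pair $(\Gr, E^*, \la, \ra)$ in the sense of Definition~\ref{Def:matched_pair}: upon setting $\la \coloneqq \cdot$ and $\ra \coloneqq |$, the two defining identities of a matched pair are term-for-term identical to \eqref{eq:matching_for_self-similar} and \eqref{eq:matching_for_self-similar2}. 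Since $\Gr^0 = E^0 = (E^*)^0$ is part of the hypothesis, the object-matching requirement of Definition~\ref{Def:matched_pair} is automatic.

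For the forward direction I would start with a self-similar action $(\Gr, E)$ and invoke Proposition~\ref{prop:extension_of_self-similar_action_on_graph} (equivalently Corollary~\ref{cor:self-similar_actions_on_graphes_path_categories}) to extend it to the unique left action $g \la \mu \coloneqq g \cdot \mu$ of $\Gr$ on $E^*$ and right action $g \ra \mu \coloneqq g|_\mu$ of $E^*$ on $\Gr$. That $\la$ is a left action, that $\ra$ is a right action, and that $\sr(g \la \mu) = \rg(g \ra \mu)$ are precisely the conclusions already recorded in Proposition~\ref{prop:extension_of_self-similar_action_on_graph}; the last of these is exactly what makes the right-hand side of the first matched-pair identity composable. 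Substituting $(c_2, d_1, d_2) = (g, \mu, \nu)$ into that first identity reproduces $g \cdot (\mu\nu) = (g \cdot \mu)(g|_\mu \cdot \nu)$, i.e.\ \eqref{eq:matching_for_self-similar}, and substituting $(c_1, c_2, d_1) = (h, g, \mu)$ into the second reproduces $(hg)|_\mu = (h|_{g\mu})(g|_\mu)$, i.e.\ \eqref{eq:matching_for_self-similar2}; both hold by Proposition~\ref{prop:extension_of_self-similar_action_on_graph}. As the extended left action preserves length, $(\Gr, E^*, \la, \ra)$ is a matched pair with $|g \la \mu| = |\mu|$.

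For the reverse direction I would take a length-preserving matched pair $(\Gr, E^*, \la, \ra)$ and read the two substitutions backwards: spelled out with $\la$ and $\ra$, the matched-pair axioms are verbatim \eqref{eq:matching_for_self-similar} and \eqref{eq:matching_for_self-similar2} for the pair of actions $(\la, \ra)$, and the condition $|g \la \mu| = |\mu|$ forces $\la$ to restrict to a map $\Gr * E^1 \to E^1$. Hence $(\la, \ra)$ is one of the pairs classified by Corollary~\ref{cor:self-similar_actions_on_graphes_path_categories}, so it comes from a unique self-similar action, namely the restriction of $(\la, \ra)$ to $E^1$ together with the induced cocycle. Mutual inverseness is then immediate, since both constructions are simply the bijection of Corollary~\ref{cor:self-similar_actions_on_graphes_path_categories} composed with the purely notational relabelling $\cdot \leftrightarrow \la$, $| \leftrightarrow \ra$, which alters no underlying maps.

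I expect no serious obstacle here; the one point demanding care is to confirm that the matched-pair identities are not strictly stronger than \eqref{eq:matching_for_self-similar} and \eqref{eq:matching_for_self-similar2}. This is clear because each identity is obtained from the corresponding equation by the evident renaming of variables, and the composability ranges coincide once one observes $\rg(g \ra \mu) = \sr(g \la \mu)$; in particular the quantifier $(c_1,c_2,d_1,d_2)\in \Gr^2 * (E^*)^2$ in Definition~\ref{Def:matched_pair} specialises to the quantifiers $(g,\mu,\nu)\in \Gr * E^* * E^*$ and $(h,g,\mu)\in \Gr^2 * E^*$ appearing in \eqref{eq:matching_for_self-similar} and \eqref{eq:matching_for_self-similar2} without any loss of generality.
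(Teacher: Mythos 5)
Your proof is correct and follows exactly the paper's route: the paper likewise observes that under the relabelling $\la\leftrightarrow\cdot$, $\ra\leftrightarrow|$ the matched-pair identities of Definition~\ref{Def:matched_pair} are verbatim \eqref{eq:matching_for_self-similar} and \eqref{eq:matching_for_self-similar2}, and then invokes Corollary~\ref{cor:self-similar_actions_on_graphes_path_categories}. Your additional check that the quantifier ranges agree (using that units supply the extraneous composable factors) is a sound, if more explicit, rendering of the same one-line argument.
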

\begin{proof} For any pair of actions $ \Gr  * E^* \to E^*$   and  $ \Gr  * E^*\to \Gr$,  under the suggested notation, 
the relations \eqref{eq:matching_for_self-similar} and \eqref{eq:matching_for_self-similar2} become matching relations from Definition~\ref{
	Def:matched_pair}. Hence, the assertion follows from Corollary~\ref{cor:self-similar_actions_on_graphes_path_categories}.
	\end{proof}
	\begin{remark}\label{rem:Zappa--Szep_product}
To any matched pair of categories we may associate its Zappa--Sz\'ep product category, see \cite{Mundey_Sims}*{Definition~3.6}.
In the case of the self-similar action of $\Gr$ on $E$, the corresponding  \emph{Zappa--Sz\'ep product category} is $E^*\bowtie \Gr  \coloneqq  E^**\Gr$ with the composition law
\[
(\mu,g)\bowtie (\nu,h) \coloneqq (\mu(g\nu),g|_{\nu}h) \qquad \text{for }  (\mu,g,\nu,h)\in E^**\Gr*E^**\Gr.
\]
We identify $(E^*  \bowtie \Gr)^0$ with $G^0=E^0$ and the maps $\Gr\ni g\mapsto (\rg(g),g)\in E^* \bowtie \Gr$ and $E^*\ni \mu\mapsto (\mu,\sr(\mu))\in E^* \bowtie \Gr$
are faithful functors. The category $E^*\bowtie \Gr$ is left cancellative, cf. \cite{Mundey_Sims}*{Example 7.3}.
\end{remark}

Yet another description explains an asymmetry between the left and right actions in the above considerations. Namely, it can be viewed as an analogue of an asymmetry we can see in $\Cst$-correspondences where only the right actions are equipped with an inner product.

\begin{definition}[\cite{Anutnes_Ko_Meyer}*{Definition~3.1}]
A \emph{self-correspondence over  a discrete groupoid}  \(\Gr\)  is a set $X$ equipped with commuting 
left and right $\Gr$-actions and such that the right $\Gr$-action is free, which means that
the map $X*\Gr \ni (x, g)\mapsto (x\cdot g, x)  \in  X\times X$ is injective.
\end{definition}

\begin{proposition}[\cite{Anutnes_Ko_Meyer}*{Example~4.4}]\label{prop:correspondence_from_self_similar}
Let $\Gr$ be a  groupoid. For any self-similar action of $G$ on a directed graph  $E = (E^0,E^1,\rg,\sr)$ the formulas 
\[
X=E^1* G,\qquad g \cdot (e,h)=(g \cdot e,g|_eh),\qquad (e,h)\cdot k=(e,hk),
\]
$\sr(e,h)=\sr(h)$, and $\rg(e,h)=\rg(e)$, for $(g,e,h,k) \in \Gr * E^1  *\Gr^2$, define a self-correspondence over $\Gr$.
Moreover, up to an isomorphism every self-correspondence over $\Gr$ arises in this way, and the associated directed graph is determined up to isomorphism by the 
correspondence.
\end{proposition}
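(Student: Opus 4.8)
The proof has two halves: verifying that the displayed formulas define a self-correspondence, and reconstructing a self-similar action from an abstract self-correspondence.

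For the first half I would check the axioms directly. The fibre-product conditions make every formula meaningful: $g\cdot(e,k)=(g\cdot e,g|_e k)$ lands in $E^1*\Gr$ because $\sr(g\cdot e)=\sr(ge)=\rg(g|_e)$ and $\sr(g|_e)=\sr(e)=\rg(k)$ makes $g|_e k$ composable; likewise $(e,k)\cdot m=(e,km)$ stays in $E^1*\Gr$. That $\cdot$ is a left action reduces to two points. The unit law $\rg(e)\cdot(e,k)=(e,k)$ requires $\rg(e)|_e=\sr(e)$; putting $h=g=\rg(e)$ in $(hg)|_e=(h|_{ge})(g|_e)$ shows $\rg(e)|_e$ is an idempotent of the isotropy group at $\sr(e)$, hence the unit $\sr(e)$. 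Associativity $(g_1g_2)\cdot(e,k)=g_1\cdot(g_2\cdot(e,k))$ is exactly the left-action axiom on edges together with the cocycle identity \eqref{eq:matching_for_self-similar2} for $\mu=e$. The right-action axioms are immediate from units and associativity in $\Gr$, the two actions commute because $g|_e(km)=(g|_e k)m$, and freeness holds: if $((e,km),(e,k))=((e',k'm'),(e',k'))$ then $e=e'$, $k=k'$, and $km=km'$, whence $m=m'$ by left-cancellation in the groupoid. This gives a self-correspondence.

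For the converse, let $X$ be a self-correspondence over $\Gr$. I would put $E^0\defeq\Gr^0$ and take $E^1\defeq X/\Gr$, the set of orbits of the free right action, with $\rg([x])\defeq\rg(x)$, which is well defined since the right action preserves the left anchor $\rg$. Because the two actions commute, the left $\Gr$-action descends to a length-preserving action $\Gr*E^1\to E^1$ with the correct range. To produce the source map and the cocycle I would choose in each orbit a distinguished representative, i.e.\ a section $\tau\colon E^1\to X$ of the quotient map, and set $\sr([x])\defeq\sr(\tau([x]))$; for $(g,[x])\in\Gr*E^1$ I define $g|_{[x]}\in\Gr$ to be the unique element with $g\cdot\tau([x])=\tau(g\cdot[x])\cdot(g|_{[x]})$, which exists and is unique precisely by freeness of the right action. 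One then verifies that $(\Gr,E)$ is a self-similar action, the cocycle identity \eqref{eq:matching_for_self-similar2} following from uniqueness of factorisations and functoriality of the left action, and that $(e,k)\mapsto\tau(e)\cdot k$ is an isomorphism of self-correspondences from the model $E^1*\Gr$ onto $X$ (right-equivariance is clear, and left-equivariance is exactly the defining relation for $g|_{[x]}$).

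The main obstacle is the reconstruction of the source data and the assertion that the graph is determined up to isomorphism by $X$. The difficulty is that a single right orbit, regarded only as a free right $\Gr$-set, remembers the source vertex of an edge merely up to its $\Gr$-orbit, so the section $\tau$ cannot be taken arbitrarily: one must rigidify the choice of representatives using the interplay of the left and right anchors, and then show that any two admissible choices yield isomorphic self-similar actions. I would organise this by proving that the two assignments $X\mapsto(\Gr,E)$ and $(\Gr,E)\mapsto E^1*\Gr$ are mutually quasi-inverse up to natural isomorphism, so that isomorphic correspondences produce isomorphic self-similar actions; the freeness of the right action (guaranteeing a well-defined cocycle) and the compatibility of the two anchor maps are the two ingredients that must be used in tandem at this step, and this is where I expect essentially all of the work to lie.
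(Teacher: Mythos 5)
Your proposal is correct and follows essentially the same route as the paper: a direct verification of the axioms for $X=E^1*\Gr$ (with freeness coming from cancellation in $\Gr$, and associativity of the left action being exactly the cocycle identity), and for the converse the choice of a fundamental domain/section $\tau$ for the right-orbit space, with $g|_e$ defined by the unique factorisation $g\cdot\tau(e)=\tau(g\cdot e)\cdot g|_e$ guaranteed by freeness. The paper's own proof is in fact terser than yours and, like your sketch, does not spell out the final "determined up to isomorphism" claim in detail (it defers to the cited reference), so the obstacle you flag is not resolved differently there.
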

\begin{proof}
 For the first part, the right action on $X=E^1* G$ is well-defined and free as it is given by the right action of $G$ on itself. The composition law for the left action on $X$ is exactly the $1$-cocycle identity for the self-similar action. 

Now, consider  a self-correspondence over $\Gr$ on a set $X$. Put $E^0=G^0$ and let $E^{1}\subseteq X$ be a fundamental domain $E^{1}\subseteq X$ for the orbit space $X/\Gr$ of the right $\Gr$-action (that is $E^{1}$ contains exactly one point from each of these orbits). 
Then, together with anchor maps restricted from $X$ to $E^1$, we get a directed graph $E = (E^0,E^1,\rg,\sr)$.  Since the right $\Gr$-action is free we get that
$E^1*\Gr\ni (e,g)\mapsto e\cdot g\in X$ is an isomorphism of right $\Gr$-sets. Under this isomorphism the right $\Gr$-action on $X\cong E^1*\Gr$ is  as described in the assertion. 
For any $(g,e)\in \Gr*E^{1}$ there is  a unique element in $\sr(ge)\Gr$ that we denote by $g|_e$ such that  $(g e) g|_e\in E^{1}$. Since  the actions commute, the map $ \Gr*E^{1}\ni (g,e)\mapsto g e g|_e\in E^{1}$ is a left action of $\Gr$ on $E^{1}$. Using this left action on $E^1$,  the left $\Gr$-action on $X$, under the isomorphism $X\cong E^1*\Gr$, is  as described in the assertion. This  forces the map $\Gr*E^{1}\ni (g,e)\mapsto g|_e\in\Gr$ to satisfy the relations of Definition~\ref{defn:self_similar_action}. 
\end{proof}

\section{The inverse semigroup analysis} 
\label{sect:inverse_semigroup}
In this section we analyse the inverse semigroup associated to a self-similar action $(\Gr,E)$. We are mainly concerned with the properties described in Definition~\ref{def:InverseSemigroupSimplicityProperties}. In particular, we generalise and improve upon a number of results 
from \cites{Exel-Pardo:Self-similar, Exel-Pardo-Starling:Self-similar, Deaconu}.

\subsection{The inverse semigroups and their partial order}
\begin{definition}
\label{def:inverse_semigroup_ssa}
The \emph{inverse semigroup of the  self-similar action} $(\Gr,E)$ is 
$$
S(\Gr,E)=E^* \fibre{\sr}{\rg} \Gr\fibre{\sr}{\sr}E^*\cup \{0\}=\left\{(\alpha,g,\beta): \alpha,\beta\in E^*, g\in \sr(\alpha)\Gr{\sr(\beta)}\right\}\cup \{0\}
$$
with the multiplication given by
\[
(\alpha,g,\beta) (\gamma,h,\delta)=
\begin{cases} (\alpha (g\beta'), g|_{\beta'}h, \delta), & \text{if } \gamma=\beta\beta',
\\
(\alpha,  g (h^{-1}|_{\gamma'})^{-1}, \delta (h^{-1}\gamma')), & \text{if } \beta=\gamma\gamma',
\\
0, & \text{otherwise}.
\end{cases}
\]
Then $(\alpha,g,\beta)^*=(\beta,g^{-1},\alpha)$ and every nonzero idempotent in  $S(\Gr,E)$ is of the form
\(
(\alpha, \sr(\alpha),\alpha)
\) for some $\alpha\in E^*$. We denote by $\EE(\Gr,E) \coloneqq \EE(S(\Gr,E)) $ the associated semilattice of idempotents.
\end{definition}

The inverse semigroups $\Gr\cup \{0\}$ and $S(E)$ discussed in Examples \ref{ex:groupoid_inverse_semigroup} and \ref{ex:graph_inverse_semigroup} are the extreme cases of Definition~\ref{def:inverse_semigroup_ssa}. In the first one the graph $E$ has no edges, and in the second one the groupoid $\Gr$ consists only of units. 
In general, the map
\begin{equation}\label{eq:semilattice_isomorphism}
E^*\ni \alpha \longmapsto f_\alpha \coloneqq (\alpha,\sr(\alpha), \alpha)\in \EE(\Gr,E)
\end{equation}
yields a semigroup isomorphism  $E^*\cup\{0\}\cong  \EE(\Gr,E)$, cf. Example~\ref{ex:graph_inverse_semigroup}.
The maps $S(E)\ni (\alpha,\beta)\mapsto (\alpha,\sr(\alpha),\beta)\in S(\Gr,E)$ and
$\Gr\ni g\mapsto (\rg(g),g,\sr(g))\in S(\Gr,E)$ determine embeddings $S(E)\hookrightarrow S(\Gr,E)$ and $\Gr\cup\{0\}\hookrightarrow S(\Gr,E)$
of inverse semigroups from Examples \ref{ex:graph_inverse_semigroup} and \ref{ex:groupoid_inverse_semigroup} into $S(\Gr,E)$, and these subsemigroups generate $S(\Gr,E)$ as a semigroup.

There is a natural $1$-cocycle map $c \colon S(\Gr,E)\setminus\{0\} \to \Z$ given by 
\begin{equation}\label{eq:length_cocycle}
c((\alpha,g,\beta)) \coloneqq |\alpha|-|\beta|,\qquad (\alpha,g,\beta)\in  S(\Gr,E)
\end{equation}
that we call the \emph{length cocycle}. The associated kernel inverse subsemigroup is
\begin{equation}\label{eq:core_subsemigroup}
S_0(\Gr,E) \coloneqq  c^{-1}(0)\cup \{0\}= \{(\alpha,g,\beta)\in S(\Gr,E): |\alpha|=|\beta|\}\cup \{0\}.
\end{equation}
It contains  the inverse subsemigroup generated by the  idempotents $\EE(\Gr,E)$ and the image of $G\cup \{0\}$ in $S(\Gr,E)$. This inverse semigroup is
\[
S_{00}(\Gr,E) \coloneqq   \{(g\beta,g|_{\beta},\beta): (g,\beta)\in \Gr *E^*\}\cup \{0\},
\]
and it is isomorphic to the inverse semigroup $\Gr *E^*\cup \{0\}$ where 
\[
(g,\beta) (h,\alpha) \coloneqq 
\begin{cases} (gh, \alpha), & \text{if } h\alpha=\beta\beta',
\\
(gh,  h^{-1}\beta), & \text{if } \beta=(h\alpha)\gamma',
\\
0, & \text{otherwise}.
\end{cases}
\]
Note that  $\EE(\Gr,E)\subseteq S_{00}(\Gr,E) \subseteq S_{0}(\Gr,E)\subseteq S(\Gr,E)$, and so the partial order in $S_{00}(\Gr,E)$ and $S_{0}(\Gr,E)$ is the 
one inherited from $S(\Gr,E)$.

\begin{lemma}\label{lem:inverse_semigroup_preorder}
We have   $(\alpha,g,\beta) \le (\gamma,h,\delta)$ in $S(\Gr,E)$ if and only if  $\beta = \delta \delta'$, $\alpha = \gamma (h \cdot \delta')$, and $g = h|_{\delta'}$ for some $\delta' \in \sr(\delta)E^*$, and so the diagram 
\[
\begin{tikzcd}[ampersand replacement=\&]
	\bullet \& \bullet \& \bullet \\
	\bullet \& \bullet \& \bullet
	\arrow["\gamma"', from=1-2, to=1-1]
	\arrow["{h \cdot\delta'}"', from=1-3, to=1-2]
	\arrow["\delta"', from=2-2, to=2-1]
	\arrow["{\delta'}"', from=2-3, to=2-2]
	\arrow["{g = h|_{\delta'}}"', from=2-3, to=1-3, dashed]
	\arrow["h"', from=2-2, to=1-2, dashed]
\end{tikzcd}
\]
commutes. 
In particular, the order ideal generated by $(\gamma,h,\delta)$ is 
$$
(\gamma,h,\delta)^{\downarrow}=\{\gamma (h \cdot \delta'), h|_{\delta'},\delta \delta'): \delta' \in \sr(\delta)E^*\},
$$
and $f_\alpha \le (\gamma,h,\delta)$ if and only if $\gamma = \delta$, $\alpha = \gamma \gamma'$ for some $\gamma' \in \sr(\gamma) E^*$ such that $h$ strongly fixes $\gamma'$ in the sense that $h\gamma'=\gamma'$ and $h|_{\gamma'}=\sr(\gamma')$.
\end{lemma}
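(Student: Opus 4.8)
The plan is to use the standard description of the order recalled in the introduction, $t \le u \iff t = u\,t^*t$, and simply compute both sides using the multiplication of Definition~\ref{def:inverse_semigroup_ssa}. First I would record the idempotent $t^*t$ for $t = (\alpha,g,\beta)$. A direct application of the multiplication formula to $(\beta,g^{-1},\alpha)(\alpha,g,\beta)$, using the conventions $g\cdot v = \rg(g)$ and $g|_v = g$ for a vertex $v$, collapses everything to $t^*t = (\beta,\sr(\beta),\beta) = f_\beta$ (as one expects, since $t^*t$ should be the idempotent attached to the source path $\beta$). Thus $t \le u$ reduces to the single equation $t = u\,f_\beta$.

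Next I would compute $u\,f_\beta = (\gamma,h,\delta)(\beta,\sr(\beta),\beta)$ via the two nonzero branches of the multiplication, according to how $\delta$ and $\beta$ compare. In the branch $\beta = \delta\delta'$ the product is $(\gamma\,(h\cdot\delta'),\,h|_{\delta'},\,\beta)$, where I use that $h|_{\delta'}\cdot\sr(\beta) = h|_{\delta'}$ since $\sr(h|_{\delta'}) = \sr(\delta') = \sr(\beta)$. In the branch $\delta = \beta\beta''$ (with $\beta''$ nonempty) the unit $\sr(\beta)$ renders all sections and actions trivial and the product is just $u = (\gamma,h,\delta)$, whose source slot $\delta$ differs from $\beta$; hence this branch can never equal $t$, and the only way to satisfy $t = u\,f_\beta$ is through the first branch. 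Reading off the three coordinates of the equation $(\alpha,g,\beta) = (\gamma\,(h\cdot\delta'),h|_{\delta'},\beta)$ yields exactly $\beta = \delta\delta'$, $\alpha = \gamma\,(h\cdot\delta')$, and $g = h|_{\delta'}$, which is the claimed characterisation; the commuting square is merely a pictorial restatement of these three identities together with the source/range relations $\sr(h|_{\delta'}) = \sr(\delta')$ and $\rg(h|_{\delta'}) = \sr(h\cdot\delta')$ from Proposition~\ref{prop:extension_of_self-similar_action_on_graph}. The description of $(\gamma,h,\delta)^{\downarrow}$ is then immediate: as $\delta'$ ranges over $\sr(\delta)E^*$ we obtain precisely the listed elements.

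For the final statement I would specialise $t = f_\alpha = (\alpha,\sr(\alpha),\alpha)$, so the characterisation demands some $\delta' \in \sr(\delta)E^*$ with $\alpha = \delta\delta'$, $\alpha = \gamma\,(h\cdot\delta')$, and $\sr(\alpha) = h|_{\delta'}$. Comparing the two expressions for $\alpha$ and using that the left action preserves lengths (so $|h\cdot\delta'| = |\delta'|$) forces $|\gamma| = |\delta|$; then matching the first $|\delta| = |\gamma|$ edges of $\delta\delta' = \gamma\,(h\cdot\delta')$ forces $\gamma = \delta$. Writing $\gamma' \coloneqq \delta'$, the surviving identities become $\alpha = \gamma\gamma'$, $h\cdot\gamma' = \gamma'$, and $h|_{\gamma'} = \sr(\gamma')$ --- that is, $h$ strongly fixes $\gamma'$. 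The converse is a direct substitution back into the characterisation. The only step requiring genuine care is this length argument pinning down $\gamma = \delta$; everything else is routine unwinding of the multiplication, so I expect the sole obstacle to be careful bookkeeping of the degenerate vertex/unit cases in the product formula rather than any conceptual difficulty.
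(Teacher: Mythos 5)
Your proposal is correct and follows essentially the same route as the paper: reduce $t\le u$ to the single equation $t = u\,t^*t$ with $t^*t = f_\beta$, compute the product $(\gamma,h,\delta)f_\beta$ branch by branch in the multiplication formula, and observe that the branch $\delta=\beta\beta''$ with $\beta''$ nonempty cannot produce the correct source slot. The paper dismisses the final statement about $f_\alpha\le(\gamma,h,\delta)$ as immediate, whereas you spell out the length argument forcing $\gamma=\delta$; that extra detail is accurate and harmless.
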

\begin{proof} 
Since $(\alpha,g,\beta)^*(\alpha,g,\beta)=(\beta,\sr(\beta),\beta)$, we have  $(\alpha,g,\beta) \le (\gamma,h,\delta)$ if and only if 
the product $
(\gamma, h ,\delta) (\beta,\sr(\beta),\beta)
$ is equal to $(\alpha,g,\beta)$.  The product is nonzero if either $\beta = \delta \delta'$ or  $\delta=\beta\beta'$.
If $\beta = \delta \delta'$,  the product evaluates to $(\gamma (h \cdot \delta'), h|_{\delta'}, \delta \delta')$, so it is equal to $(\alpha,g,\beta)$ if and only if 
the relations in the assertion hold. If $\delta=\beta\beta'$, 
the product evaluates to  $(\gamma, h ,  \delta)$,
so it is equal to   $(\alpha,g,\beta)$ if and only if the relations in the assertion hold 
for $\delta'=\sr(\beta)$.

The remaining statements follow immediately from the first. 
\end{proof}

The final two relations in Lemma~\ref{lem:inverse_semigroup_preorder} motivate the following definition.  
\begin{definition}[\cite{Exel-Pardo:Self-similar}*{Definition~5.2}]\label{def:strongly_fixed}
We say that $g \in \Gr$ \emph{strongly fixes} $\alpha \in \sr(g) E^*$ or that $\alpha$ is \emph{strongly $g$-fixed} if 
\(
g \cdot \alpha = \alpha\) and \(g|_{\alpha} = \sr(\alpha).
\)
If, in addition, no proper prefix of $\alpha$ is strongly fixed by $g$ we say that $\alpha$ is a \emph{minimal strongly $g$-fixed path}. 
\end{definition}
\begin{remark}\label{rem:about_strongly_fixed}
If  $\alpha$ is strongly $g$-fixed, then it is also strongly $g^{-1}$-fixed, because using  \eqref{equ:inverse_vs_restriction}
we then have $g^{-1}|_\alpha=(g^{-1}|_{g\alpha})=(g|_{\alpha})^{-1}=\sr(\alpha)$. 
A vertex $v\in E^0$ is strongly $g$-fixed if  and only if $g=v$, because we always have $g|_{v}=g$. 
A unit $x\in \Gr^0$ strongly fixes every path $\mu\in  xE^*$ as we always have $x\mu=\mu$ and 
$x|_{\mu}=\sr(\mu)$. 
\end{remark}
The property of being strongly fixed respects the partial order on $E^*$.
\begin{lemma}\label{lem:strongly_fix_respects_the_order}
Let $g\in \Gr$, $\alpha\in \sr(g)E^*$ and $\beta \in \sr(\alpha)E^*$. 
The composed path $\alpha\beta$  is strongly $g$-fixed if and only if $\alpha$ is $g$-fixed and $\beta$ is strongly $g|_\alpha$-fixed.
In particular, every extension of a strongly $g$-fixed  path is strongly $g$-fixed, and so a path is strongly $g$-fixed if and only if  it is  an extension of a minimal strongly $g$-fixed path. 
\end{lemma}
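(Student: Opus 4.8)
The plan is to reduce the whole statement to the two extension identities recorded in Proposition~\ref{prop:extension_of_self-similar_action_on_graph}, namely
\[
g\cdot(\alpha\beta)=(g\cdot\alpha)\,(g|_{\alpha}\cdot\beta)\qquad\text{and}\qquad g|_{\alpha\beta}=(g|_{\alpha})|_{\beta},
\]
together with the fact, also established there, that the left action preserves path length. Note first that $\sr(g|_\alpha)=\sr(\alpha)=\rg(\beta)$, so $g|_\alpha\cdot\beta$ and $(g|_\alpha)|_\beta$ make sense, and the statement ``$\beta$ is strongly $g|_\alpha$-fixed'' is meaningful.

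For the main equivalence I would unwind the definition of strongly $g$-fixed for the path $\alpha\beta$, which asks for $g\cdot(\alpha\beta)=\alpha\beta$ and $g|_{\alpha\beta}=\sr(\alpha\beta)=\sr(\beta)$. Applying the first identity gives $(g\cdot\alpha)(g|_\alpha\cdot\beta)=\alpha\beta$; since $|g\cdot\alpha|=|\alpha|$ by length preservation, unique factorisation of $\alpha\beta$ at position $|\alpha|$ forces $g\cdot\alpha=\alpha$ and $g|_\alpha\cdot\beta=\beta$ separately. Applying the second identity rewrites the section condition as $(g|_\alpha)|_\beta=\sr(\beta)$. Collecting these three conditions, $g\cdot\alpha=\alpha$ says exactly that $\alpha$ is $g$-fixed, while the pair $g|_\alpha\cdot\beta=\beta$ and $(g|_\alpha)|_\beta=\sr(\beta)$ says exactly that $\beta$ is strongly $g|_\alpha$-fixed; this is the claimed ``if and only if''. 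The only point needing a little care is the length/unique-factorisation step that splits the single equation $g\cdot(\alpha\beta)=\alpha\beta$ into two, but this is immediate once length preservation is invoked, so I do not expect a genuine obstacle here.

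The first ``in particular'' then follows quickly: if $\alpha$ is strongly $g$-fixed and $\beta\in\sr(\alpha)E^*$, then $\alpha$ is a fortiori $g$-fixed, and $g|_\alpha=\sr(\alpha)$ is a unit, so by Remark~\ref{rem:about_strongly_fixed} it strongly fixes every path in $\sr(\alpha)E^*$, in particular $\beta$; the main equivalence then gives that $\alpha\beta$ is strongly $g$-fixed. For the last assertion I would argue as follows. If $\mu$ is strongly $g$-fixed, its set of strongly $g$-fixed prefixes is nonempty and, since the prefixes of a finite path are totally ordered by length, it has a shortest element $\nu$; no proper prefix of $\nu$ is a strongly $g$-fixed prefix of $\mu$, hence $\nu$ is a minimal strongly $g$-fixed path and $\mu$ extends it. Conversely, any extension of a minimal strongly $g$-fixed path is strongly $g$-fixed by the extension-closedness just proved. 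This completes the proof, and as indicated the work is entirely bookkeeping with the two matching identities, with no substantive difficulty.
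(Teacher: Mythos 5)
Your proof is correct and follows essentially the same route as the paper's: both arguments split $g\cdot(\alpha\beta)=(g\cdot\alpha)(g|_\alpha\cdot\beta)$ using length preservation, combine this with $g|_{\alpha\beta}=(g|_\alpha)|_\beta$, and deduce the extension-closedness from the fact that a unit strongly fixes everything. You merely spell out the unique-factorisation step and the existence of a shortest strongly fixed prefix, which the paper leaves implicit.
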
 
\begin{proof}
Since $g(\alpha\beta)=(g\alpha) (g|_{\alpha}\beta)$, we see that $g$ fixes $\alpha\beta$ if and only if it fixes $\alpha$ and $g|_{\alpha}$ fixes $\beta$. 
Combining this with $g|_{\alpha\beta}=(g|_{\alpha})|_{\beta}$ gives the first part of the assertion. 
If $\alpha$ is strongly $g$-fixed, then $\beta$ is trivially strongly fixed by $g|_{\alpha}=\sr(\alpha)$. 
This implies the second part of the assertion.
\end{proof}

By Lemma~\ref{lem:inverse_semigroup_preorder}, the set of idempotents trivially fixed by  $t = (\alpha,g,\beta) \in S(\Gr,E)$ is given by
\begin{equation}\label{eq:F_t_for_self-similar}
F_t = \begin{cases}
\{f_{\alpha \alpha'} \colon g \text{ strongly fixes } \alpha' \in \sr(g)E^* \} & \text{if } \alpha = \beta\\
\varnothing & \text{if } \alpha \ne \beta.
\end{cases}
\end{equation}

\subsection{Closedness}

The following generalises \cite{Exel-Pardo:Self-similar}*{Theorem 12.2} and \cite{Exel-Pardo-Starling:Self-similar}*{Theorem 4.2}, 
and characterises when the inverse semigroup considered is closed.

\begin{proposition}\label{prop:closedness} 
The condition 
\begin{enumerate}
\item[$\Fin$] every $g \in \Gr$ admits at most  finitely many minimal strongly $g$-fixed paths;
\end{enumerate}
is equivalent to each of the following:
\begin{enumerate}
\item \label{itm:closed_1} the inverse semigroup $S(\Gr,E)$ is closed;
\item \label{itm:closed_2} the inverse semigroup $S_0(\Gr,E)$ is closed;
\item \label{itm:closed_2.5} the inverse semigroup $S_{00}(\Gr,E)$ is closed.
\end{enumerate} 
\end{proposition}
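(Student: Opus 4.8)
The plan is to reduce the whole statement to an analysis of the trivially fixed idempotents $F_t$, which for $t=(\alpha,g,\beta)$ are described explicitly in \eqref{eq:F_t_for_self-similar}. Since the semilattice $\EE(\Gr,E)$ and its partial order are common to $S_{00}(\Gr,E)\subseteq S_0(\Gr,E)\subseteq S(\Gr,E)$, the set $F_t$ and the notion of cover are computed identically in all three. Moreover $F_t\neq\varnothing$ forces $\alpha=\beta$, in which case $g\in\sr(\alpha)\Gr\sr(\alpha)$ is an isotropy element and $F_t=\{f_{\alpha\alpha'}:\alpha'\text{ strongly }g\text{-fixed}\}$, while for $\alpha\neq\beta$ the empty set vacuously covers $F_t$. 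Thus closedness of any of the three semigroups is a statement only about elements of the form $(\alpha,g,\alpha)$, and I would treat the three cases by a single argument, exploiting that $\Fin$ is a condition on \emph{every} element of $\Gr$.

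For the implication $\Fin\Rightarrow$ closed, fix $t=(\alpha,g,\alpha)$ with $F_t\neq\varnothing$. By Lemma~\ref{lem:strongly_fix_respects_the_order} a path is strongly $g$-fixed exactly when it extends a minimal strongly $g$-fixed path, so under $\Fin$ there are only finitely many minimal strongly $g$-fixed paths $\mu_1,\dots,\mu_k$, and every $f_{\alpha\alpha'}\in F_t$ has $\alpha'=\mu_i\nu$ for some $i$. I would then set $F=\{f_{\alpha\mu_1},\dots,f_{\alpha\mu_k}\}\subseteq F_t$, which is finite, and verify that it covers each $e=f_{\alpha\alpha'}\in F_t$: using the cover criterion for $\EE(\Gr,E)\cong E^*\cup\{0\}$ recalled in Example~\ref{ex:graph_inverse_semigroup}, every extension of $\alpha\alpha'=\alpha\mu_i\nu$ also extends $\alpha\mu_i$ and is therefore comparable with an element of $F$. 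This yields closedness of $S(\Gr,E)$, and simultaneously of $S_0(\Gr,E)$ and $S_{00}(\Gr,E)$, since the relevant $t$ and $F$ already lie in these subsemigroups.

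For the converse I would argue contrapositively and uniformly: assume $\Fin$ fails, so some $g\in\Gr$ admits infinitely many distinct minimal strongly $g$-fixed paths $(\mu_i)_i$. As strong fixing forces $g$ to be isotropy at $v\coloneqq\sr(g)=\rg(g)$, the element $t=(v,g,v)$ lies in $S_{00}(\Gr,E)$ (hence in $S_0(\Gr,E)$ and $S(\Gr,E)$), and $F_t=\{f_{\alpha'}:\alpha'\text{ strongly }g\text{-fixed}\}\ni f_{\mu_i}$ for every $i$. Suppose $F\subseteq F_t$ were a finite cover; each of its finitely many members $f_{\gamma_j}$ has $\gamma_j$ strongly $g$-fixed, hence extending some $\mu_{k_j}$. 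Choosing $i$ with $\mu_i\notin\{\mu_{k_j}\}_j$, I would check that $\mu_i$ is incomparable with every $\gamma_j$ (minimality and distinctness of the $\mu$'s exclude both $\mu_i\le\gamma_j$ and $\gamma_j\le\mu_i$), so that $z=f_{\mu_i}$, which satisfies $z\le f_{\mu_i}$, witnesses that $F$ fails to cover $f_{\mu_i}\in F_t$, a contradiction. I expect this converse to be the main obstacle: it requires both the combinatorial incomparability argument and the observation that the single witness $t=(v,g,v)$ lives in all three subsemigroups, so that the three non-closedness conclusions follow at once.
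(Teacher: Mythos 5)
Your proof is correct and follows essentially the same route as the paper's: the forward direction uses the same cover $\{f_{\alpha\mu_i}\}$ indexed by the minimal strongly $g$-fixed paths, and the converse uses the same witness $t=(v,g,v)\in S_{00}(\Gr,E)$ together with the incomparability of distinct minimal strongly $g$-fixed paths (your contrapositive phrasing versus the paper's direct counting argument is only a cosmetic difference).
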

\begin{proof} 

Let us pick $t\in  S(\Gr,E)$ and note that we may assume that  $t = (\alpha,g,\alpha)$ for some $\alpha\in \sr(g)E^*$, as otherwise  $F_t= \emptyset$, by  \eqref{eq:F_t_for_self-similar}. Then denoting by  $M_g$ the finite set of all minimal strongly $g$-fixed paths, one sees
that the set
$
\{f_{\alpha \delta} \colon \delta \in M_g \}
$
is a  cover of every $f_{\alpha \alpha'} \in F_t$. Indeed, by \eqref{eq:F_t_for_self-similar}  the set $F_t$ is parametrised by  strongly $g$-fixed paths $\alpha' \in \sr(g)E^*$ and for any such path there  are $\delta \in M_g$ and $\delta' \in \sr(\delta)E^*$ such that $\alpha' = \delta \delta'$,
and so $f_{\alpha \alpha'}\leq f_{\alpha \delta}$. 
Hence, $\Fin$ implies \ref{itm:closed_1}. 

Implications \ref{itm:closed_1}$\Rightarrow$\ref{itm:closed_2}$\Rightarrow$\ref{itm:closed_2.5} are obvious. 
Assume \ref{itm:closed_2.5} and pick $g\in \Gr$. We may assume that the set $M_g$ of all minimal strongly $g$-fixed paths is non-empty. 
Then we necessarily have $\rg(g)=\sr(g)$ (it suffices that $g$ fixes some path) and so $t \coloneqq (\rg(g),g,\sr(g))$ is a valid element of $S_{00}(\Gr,E)$. 
By assumption there is a finite set $F \subseteq F_t$ that covers $f_{\delta}$ for every $\delta\in M_g$. 
So for every $\delta\in M_g$ there  is $f_{\alpha'} \in F$, where $\alpha' \le  \delta$ ($\alpha'$ is an extension of $\delta$). 
Since $F$ is finite so is $M_g$. Thus,  \ref{itm:closed_2.5} implies $\Fin$.
\end{proof}
\begin{remark}
Every $v \in \Gr^0$ is the unique minimal strongly fixed $v$-path. Hence, 
in condition $\Fin$  we only need to look at $g \in \Gr\setminus \Gr^0$. 
Also, by Lemma~\ref{lem:strongly_fix_respects_the_order}, two minimal strongly $g$-fixed paths are different if they are incomparable.  
Accordingly, $\Fin$ can be equivalently phrased as 
\begin{enumerate}
\item[$\Fin$] every $g \in \Gr$ admits finitely many mutually incomparable  strongly $g$-fixed paths.
\end{enumerate}
\end{remark}
\begin{remark}
Using Lemma \ref{lem:inverse_semigroup_preorder} one may show that $\Fin$ is also equivalent to Hausdorffness of any of the inverse semigroups $S(\Gr,E)$, $S_{0}(\Gr,E)$ or $S_{00}(\Gr,E)$. We will prove it on the groupoid level, see Corollary \ref{cor:Hausdorff_equiv_closed} below.
\end{remark}
\subsection{Quasi-fundamentalness and  core subsemigroups}
We begin by analysing the product $tf_{\gamma}t^*$ for $t\in S(\Gr,E)$ and $f_{\gamma}\in \EE(\Gr,E)$.
\begin{lemma}\label{lem:pre_fixed_characterisation}
Let $t = (\alpha,g,\beta)$ and $f_{\gamma} = (\gamma,\sr(\gamma),\gamma)$, $\gamma\in E^*$. Then 
\begin{equation}\label{eq:pre_fixed_characterisation}
tf_{\gamma}t^* = \begin{cases}
	f_{\alpha(g\beta')} & \text{if } \gamma = \beta \beta', \\
	f_{\alpha} & \text{if } \beta = \gamma \gamma', \\
	0 & \text{otherwise}.
\end{cases}
\end{equation}
In particular, $tf_{\gamma}t^* \cdot f_{\gamma} \ne 0$ if and only if either $\gamma = \beta \beta'$ and $\gamma$ is comparable with 	$\alpha(g \beta')$; or
$\beta = \gamma \gamma'$ and $\gamma$ is comparable with $\alpha$. 
\end{lemma}
\begin{proof}
For the first statement we calculate
\begin{align*}
tft^* 
&=
\begin{cases}
	(\alpha(g\beta'), g|_{\beta'}, \beta\beta') (\beta,g^{-1},\alpha) & \text{if } \gamma = \beta \beta'\\
	(\alpha,g,\gamma\gamma')(\gamma\gamma',g^{-1}, \alpha) & \text{if } \beta = \gamma\gamma'\\
	0 & \text{otherwise}
\end{cases}= 
\begin{cases}
	(\alpha(g\beta'), \sr(g\beta') ,\alpha(g\beta')) & \text{if } \gamma = \beta \beta'\\
	(\alpha,\sr(\alpha),\alpha) & \text{if } \beta = \gamma\gamma'\\
	0 & \text{otherwise}.
\end{cases}
\end{align*}
The second statement follows immediately from the first. 
\end{proof}
It seems, to us, that the condition introduced in the next proposition has not appeared in the literature previously.
It was noticed in \cite{Aakre} that for a faithful self-similar action the inverse semigroup $S(\Gr,E)$ is always fundamental.
\begin{proposition}\label{prop:fundamentalness} 
The condition 
\begin{enumerate}
\item[$\Evr$] if $g\in \Gr$ fixes every  path in $\sr(g)E^*$, then $g$ strongly fixes some path in $\sr(g)E^*$;
\end{enumerate}
is equivalent to each of the following:
\begin{enumerate}
\item \label{itm:funamentalness_1} the inverse semigroup $S(\Gr,E)$ is quasi-fundamental;
\item \label{itm:funamentalness_2} the inverse semigroup $S_0(\Gr,E)$ is topologically free;
\item \label{itm:funamentalness_3} the inverse semigroup $S_{00}(\Gr,E)$ is topologically free.
\end{enumerate} 
The inverse semigroup $S(\Gr,E)$ is fundamental if and only if the self-similar action is faithful.
\end{proposition}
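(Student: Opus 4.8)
The plan is to translate each inverse-semigroup property into a statement about the self-similar data $(\Gr,E)$ using the coordinate descriptions in Lemmas~\ref{lem:inverse_semigroup_preorder}, \ref{lem:strongly_fix_respects_the_order}, and~\ref{lem:pre_fixed_characterisation}, and then read off $\Evr$. The backbone of the argument is a Munn-type reduction: for nonzero $s=(\alpha,g,\beta)$ and $t=(\gamma,h,\delta)$ in $S(\Gr,E)$, the condition $ses^*=tet^*$ for all $e\in\EE(\Gr,E)$ is equivalent to $\alpha=\gamma$, $\beta=\delta$, and $k\coloneqq g^{-1}h$ fixing every path in $\sr(g)E^*$. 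I would prove this by feeding the idempotents $e=f_\beta$, $e=f_\delta$, and $e=f_{\beta\eta}$ (for $\eta\in\sr(\beta)E^*$) into Lemma~\ref{lem:pre_fixed_characterisation}: since $ss^*=f_\alpha$ and $tt^*=f_\gamma$, equality forces $f_\alpha$ and $f_\gamma$ to be comparable, and (reading lengths) $\alpha=\gamma$, $\beta=\delta$; then $f_{\alpha(g\eta)}=f_{\alpha(h\eta)}$ for all $\eta$ yields $g\eta=h\eta$, i.e.\ $k$ acts trivially.

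For $\Evr\Leftrightarrow$\ref{itm:funamentalness_1}, with this reduction quasi-fundamentality of $S(\Gr,E)$ becomes the assertion that whenever $k=g^{-1}h$ fixes every path there is a nonzero $u\le s,t$. By the order-ideal formula of Lemma~\ref{lem:inverse_semigroup_preorder} such a $u$ equals $(\alpha(g\eta),g|_\eta,\beta\eta)=(\alpha(h\eta),h|_\eta,\beta\eta)$ for some $\eta$; as $g\eta=h\eta$ holds automatically, existence of $u$ reduces to $g|_\eta=h|_\eta$ for some $\eta$. The cocycle identity \eqref{eq:matching_for_self-similar2} together with $k\eta=\eta$ gives $h|_\eta=g|_\eta\,k|_\eta$, so $g|_\eta=h|_\eta$ is equivalent to $k|_\eta=\sr(\eta)$, that is, to $\eta$ being strongly $k$-fixed (Definition~\ref{def:strongly_fixed}). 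Hence quasi-fundamentality says exactly that every $k$ fixing all paths strongly fixes some path, and taking $s=f_v$, $t=(v,k,v)$ shows every such $k$ arises this way; this gives \ref{itm:funamentalness_1}$\Leftrightarrow\Evr$.

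For $\Evr\Leftrightarrow$\ref{itm:funamentalness_2},\ref{itm:funamentalness_3} I would first show that an element $t=(\alpha,g,\beta)$ of $S_0(\Gr,E)$ or $S_{00}(\Gr,E)$ that fixes some idempotent must satisfy $\alpha=\beta$ with $g$ isotropy at $\sr(\alpha)$: here the length restriction $|\alpha|=|\beta|$ is essential, since evaluating fixedness on $f_\beta$ via Lemma~\ref{lem:pre_fixed_characterisation} forces $f_\alpha$ and $f_\beta$ comparable of equal length, hence equal. Analysing fixedness of $f_{\alpha\alpha'}$ through the sub-idempotents $f_{\alpha\alpha'\eta}$ then shows, using Lemma~\ref{lem:strongly_fix_respects_the_order}, that $g$ fixes $\alpha'$ and $g|_{\alpha'}$ fixes every path in $\sr(\alpha')E^*$; applying $\Evr$ to $g|_{\alpha'}$ produces a strongly $g|_{\alpha'}$-fixed $\delta'$, so that $\delta\coloneqq\alpha'\delta'$ is strongly $g$-fixed and $f_{\alpha\delta}\in F_t$ by \eqref{eq:F_t_for_self-similar} meets $f_{\alpha\alpha'}$, witnessing topological freeness. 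Conversely, if $\Evr$ fails there is $k$ fixing all of $\sr(k)E^*$ but strongly fixing nothing; then $t=(v,k,v)\in S_{00}(\Gr,E)$ with $v=\sr(k)$ fixes $f_v$ while $F_t=\varnothing$ by \eqref{eq:F_t_for_self-similar}, so neither $S_0(\Gr,E)$ nor $S_{00}(\Gr,E)$ is topologically free.

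Finally, ``$S(\Gr,E)$ fundamental $\Leftrightarrow$ faithful'' uses the same Munn reduction: $ses^*=tet^*$ for all $e$ gives $\alpha=\gamma$, $\beta=\delta$, and $k=g^{-1}h$ acting trivially on $\sr(g)E^*$; faithfulness forces $k\in\Gr^0$, i.e.\ $g=h$ and $s=t$, whereas an unfaithful action supplies a nontrivial kernel element $k$ at a vertex $v$, and $s=(v,k,v)\ne f_v=t$ have identical Munn images. The main obstacle I expect is the bookkeeping in determining which idempotents are fixed by a given $t$: one must separate the cases where the fixed $f_\gamma$ extends, is a prefix of, or is incomparable with the relevant coordinate, and exclude spurious fixed idempotents produced by non-branching (``thin'') parts of the graph. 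The clean resolution is exactly the length-cocycle constraint defining $S_0(\Gr,E)$ and $S_{00}(\Gr,E)$, which collapses ``comparable'' to ``equal'' and pins the outer coordinates; understanding how the same comparability slack is instead absorbed into quasi-fundamentality of the full $S(\Gr,E)$ (rather than into topological freeness) is the subtle point explaining why the single condition $\Evr$ governs all three properties.
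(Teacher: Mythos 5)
Your proposal is correct and follows essentially the same route as the paper's proof: both feed the idempotents $f_\beta$, $f_\delta$, $f_{\beta\eta}$ into Lemma~\ref{lem:pre_fixed_characterisation} to pin down $\alpha=\gamma$, $\beta=\delta$ and reduce equality of Munn images to $g^{-1}h$ fixing all paths, use the cocycle identity to convert existence of a common lower bound into existence of a strongly $g^{-1}h$-fixed path, use $t=(v,k,v)$ with $F_t=\varnothing$ as the counterexample when $\Evr$ fails, and propagate strongly fixed paths via Lemma~\ref{lem:strongly_fix_respects_the_order} when $\Evr$ holds. The only difference is presentational (you isolate the Munn-type reduction as a standalone lemma, while the paper runs the same computation inline), and your closing caveat about the prefix/extension case split matches the case analysis the paper actually performs.
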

\begin{proof} 
Assume that $\Evr$ fails, and so there is $g\in \Gr$ with $v:=\sr(g)$ that fixes every  path in $vE^*$, but none of them is strongly $g$-fixed. 
Put $t:=(v,g,v) \in S_{00}(\Gr,E)$. Using \eqref{eq:pre_fixed_characterisation}, and that all paths in $vE^*$  are fixed,   
for any $\gamma\in vE^*$ we get $tf_{\gamma}t^*= f_{\gamma}$. In particular, $f_v$ is $t$-fixed but $F_t=\emptyset$ by \eqref{eq:F_t_for_self-similar}. 
Hence $S_{00}(\Gr,E)$, and all the more $S_0(\Gr,E)$, are not topologically free. 
Moreover, putting $s:=f_v$ we see that $t f_{\gamma}t^{*}= s f_{\gamma}s^{*}$ is either zero or $f_{\gamma}$ if $\gamma\in vE^*$.
By Lemma~\ref{lem:inverse_semigroup_preorder},   $0\neq u\leq t=f_v$ implies that  $u=f_v$, and $f_v\leq s$ implies that $g$ strongly fixes $v$.
Hence $S(\Gr,E)$ is not quasi-fundamental.
This proves that any of the conditions \ref{itm:funamentalness_1}--\ref{itm:funamentalness_3} implies  $\Evr$.

Now assume $\Evr$. We show that $S_{0}(\Gr,E)$ is  topologically free. Let $t=(\alpha,g,\beta)\in S_{0}(\Gr,E)$ and fix $f_{\gamma}\in \EE(\Gr,E)$.
Thus we have $|\alpha|=|\beta|$ and $t f_{\gamma\gamma'} t^*\cdot  f_{\gamma\gamma'}\neq 0$ for every $\gamma'\in \sr(\gamma)E^*$. Therefore, the second part of Lemma \ref{lem:pre_fixed_characterisation} implies that 
$\alpha=\beta$ and $g$ fixes all paths in $\sr(g)E^*$. 
Hence there is a strongly $g$-fixed path  $\gamma'\in\sr(g)E^*$ by $\Evr$. If $|\gamma|\leq |\alpha|$, then  $\delta:=\alpha\gamma'\in F_{t}$, by \eqref{eq:F_t_for_self-similar}, and $f_{\delta} f_{\gamma}=f_{\gamma}\neq 0$. 
Assume then that $|\gamma|\geq |\alpha|$ so that $\alpha\alpha'=\gamma$. Then $g|_{\alpha'}$ fixes all paths in $\sr(\gamma)E^*$. Hence there is 
a strongly $g|_{\alpha'}$-fixed path $\gamma'\in\sr(g)E^*$ by $\Evr$. Then $\alpha'\gamma'$ is  $g$-strongly fixed by Lemma~\ref{lem:strongly_fix_respects_the_order}.
Thus for $\delta:=\gamma\gamma'=\alpha\alpha'\gamma'$ we have $\delta\in F_{t}$  and $f_{\delta} f_{\gamma}=f_{\delta}\neq 0$.
Accordingly, $S_{0}(\Gr,E)$ is  topologically free, and so   all the more $S_{00}(\Gr,E)$ is  topologically free. 

Finally, we show that $\Evr$ implies that $S(\Gr,E)$ is quasi-fundamental. Take two  elements $t=(\alpha,g,\beta)$ and $s=(\eta,h,\delta)$ in $S(\Gr,E)$ such that 
  $tf_\gamma t^*=sf_\gamma s^*$  for every $\gamma\in E^*$. 
In view of \eqref{eq:pre_fixed_characterisation}, the equalities $t f_{\beta} t^*=f_{\alpha}=s f_{\beta} s^*$ and $t f_{\eta} t^*=f_{\delta}=s f_{\eta} s^*$  imply
that  we have $\alpha=\eta$ and $\beta=\delta$. Therefore for any $\beta'\in \sr(g)E^*=\sr(h)E^*$ the equality 
$tf_{\beta\beta'} t^*=sf_{\beta\beta'} s^*$ means that  $g\beta'=h\beta'$. In other words, $g^{-1}h$ fixes all paths in $\sr(g)E^*$. Hence there is
a $g^{-1}h$-strongly fixed  $\beta'\in \sr(g)E^*$ by $\Evr$. Equality $g^{-1}h|_{\beta'}=\sr(\beta)$ is equivalent to $g|_{\beta'}=h|_{\beta'}$, and so 
 $(\alpha g\beta', g|_{\beta'}, \beta\beta')=(\alpha h\beta', h|_{\beta'}, \beta\beta'
)$. Denoting this element by $u$ we have $0\neq u\leq s,t$ by Lemma~\ref{lem:inverse_semigroup_preorder}. 
$S(\Gr,E)$ is quasi-fundamental. 

This shows that $\Evr$ implies the conditions \ref{itm:funamentalness_1}--\ref{itm:funamentalness_3}. The above argument also shows
that $S(\Gr,E)$ is fundamental if and only if every $g\in \Gr$ that fixes all   paths in $\sr(g)E^*$ has to be a unit, that is $(\Gr,E)$ is faithful.
\end{proof}
\begin{example}
Any action of a  group $\Gamma$ by automorphisms of a graph $E$  may be viewed as a self-similar action of $\Gamma$ on $E$ with
the trivial $1$-cocycle $\Gamma\times E^1\to \Gamma$ where $g|_{e}=g$ for all $g\in \Gamma$ and $e\in E^1$. 
This in turn may be treated as a self-similar action of the transformation groupoid $\Gr=\Gamma\times E^1$, see Example \ref{exm:Exel_Pardo_vs_groupoid_self-similar}. 
Such actions satisfy $\Fin$. They were used in \cite{Larki_Hasiri2} to model crossed products for group actions on graph algebras by
$C^*$-algebras associated to the self-similar action. 
For such an action condition $\Evr$ holds if and only if for every $g\in \Gamma\setminus\{1\}$ and every $v\in E^0$ 
there is a path $\gamma\in vE^{*}$ that contains an edge not fixed by $g$.
\end{example}
\subsection{Topological freeness and effectiveness}
We now turn to description of topological freeness of $S(\Gr,E)$, which generalises topological freeness of $S(E)$ described in Example~\ref{ex:graph_inverse_semigroup}.
\begin{definition}[{cf. \cite{Exel-Pardo:Self-similar}*{Definition~14.1}}] 
For any $g \in \Gr$, a \emph{$g$-cycle}  is a finite path  $\alpha \in E^* \setminus E^0$ such that $g\sr(\alpha)=\rg(\alpha)$. A \emph{$\Gr$-cycle} is a path that is a $g$-cycle for some $g\in \Gr$.
\end{definition}
\begin{remark}\label{rem:g_cycles}
Cycles in $E$ are nothing but $g$-cycles for $g\in \Gr^0$. In general,  $\rg(g)E^*\sr(g)\setminus  E^0$ is the set of $g$-cycles and $\sr(g)E^*\rg(g)\setminus E^0$ is the set of $g^{-1}$-cycles. 
\end{remark}
\begin{lemma}\label{lem:g_paths_entrances}
A path $\alpha\in \sr(g)E^*$ has an entrance if and only if 
$g\alpha$ has an entrance.
\end{lemma}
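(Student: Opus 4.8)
The plan is to reduce the statement to the observation that the left action of $\Gr$ preserves the in-degree of every base vertex. First I would recall from Example~\ref{ex:graph_inverse_semigroup} that $\alpha$ has an entrance precisely when at least one of its base vertices $v$ satisfies $\lvert vE^1\rvert\geq 2$, and that the base vertices of $\alpha$ are exactly the sources $\sr(\mu)$ as $\mu$ ranges over the prefixes of $\alpha$ (with the empty prefix $\rg(\alpha)$ and the full prefix $\alpha$ both included).

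Next I would set up the correspondence between the prefixes of $\alpha$ and those of $g\alpha$. By \eqref{eq:matching_for_self-similar}, if $\alpha=\mu\mu'$ then $g\alpha=(g\mu)(g|_{\mu}\mu')$, so $g\mu$ is a prefix of $g\alpha$; since the left action preserves lengths and is bijective on $\sr(g)E^*$ (Proposition~\ref{prop:extension_of_self-similar_action_on_graph} and Lemma~\ref{lem:actions_on_infinite_paths}), the assignment $\mu\mapsto g\mu$ is a length-preserving bijection from the prefixes of $\alpha$ onto the prefixes of $g\alpha$. Consequently the base vertices of $g\alpha$ are exactly the vertices $\sr(g\mu)$ with $\mu$ a prefix of $\alpha$.

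The key step is then to compare the in-degrees of the corresponding base vertices $\sr(\mu)$ and $\sr(g\mu)$. For each prefix $\mu$ of $\alpha$ the cocycle element $g|_{\mu}\in \Gr$ satisfies $\sr(g|_{\mu})=\sr(\mu)$ and $\rg(g|_{\mu})=\sr(g\mu)$ by Proposition~\ref{prop:extension_of_self-similar_action_on_graph}. As noted in the proof of Lemma~\ref{lem:actions_on_infinite_paths}, the action of any element of $\Gr$ restricts to a bijection between the edges received by its source and those received by its range; applying this to $g|_{\mu}$ gives a bijection $\sr(\mu)E^1=\sr(g|_{\mu})E^1\cong \rg(g|_{\mu})E^1=\sr(g\mu)E^1$, whence $\lvert \sr(\mu)E^1\rvert=\lvert \sr(g\mu)E^1\rvert$. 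Thus $\sr(\mu)$ is the range of at least two edges if and only if $\sr(g\mu)$ is, and running this equivalence over all prefixes $\mu$ yields that $\alpha$ has an entrance if and only if $g\alpha$ does.

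I do not expect a genuine obstacle here, since the statement is essentially an equivariance property of the in-degree function. The only points requiring care are bookkeeping ones: checking that the empty prefix (where $g|_{\rg(\alpha)}=g$ recovers the base bijection $\sr(g)E^1\cong\rg(g)E^1$) and the full prefix $\mu=\alpha$ (where $g|_{\alpha}$ governs the terminal vertex, sending $\sr(\alpha)$ to $\sr(g\alpha)$) are both covered by the uniform argument, and that ``range of two edges'' is correctly read off as the condition $\lvert vE^1\rvert\geq 2$ rather than as an out-degree condition.
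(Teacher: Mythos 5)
Your proof is correct and follows essentially the same route as the paper's (one-sentence) proof: both reduce the statement to the fact that the left action preserves the in-degrees $\lvert vE^1\rvert$ of corresponding base vertices, via the bijections $\sr(g|_{\mu})E^1\cong\rg(g|_{\mu})E^1$ induced by the section elements. Your version merely spells out the prefix bookkeeping that the paper leaves implicit.
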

\begin{proof} Write $\alpha=\alpha_1\cdots\alpha_n$, where each $\alpha_i\in E^1$. 
As the action of $\Gr$ on $E^*$ respects the length of paths,  we get $g\alpha=\beta_1\cdots \beta_n$, where each $\beta_i\in E^1$, and $|\rg(\alpha_i)E^1|=|\rg(\beta_i)E^1|$  for all $i$.
\end{proof}
\begin{lemma}\label{lem:infinite_path_from_g_cycle}
Let $\alpha$ be a $g$-cycle. Let $\alpha_1  \coloneqq  \alpha$ and $g_1  \coloneqq  g^{-1}$, and recursively define 
\[
\alpha_{n+1} \coloneqq  g_n\alpha_n \qquad \text{and} \qquad g_{n+1}  \coloneqq  g_n|_{\alpha_n} \qquad \text{ for $n > 1$}.
\]
Then the concatenation $\alpha_{\infty} \coloneqq \alpha_1\alpha_2\alpha_3\cdots $ yields a well-defined infinite path.
Moreover, if  $\alpha$ has no entrance, then so does $\alpha_{\infty}$, and  
$\rg(\alpha)E^*$ consists of finite subpaths of $\alpha_{\infty}$.
\end{lemma}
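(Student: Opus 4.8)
The plan is to handle the three assertions in turn, reading the recursion as defining $\alpha_{n+1}$ and $g_{n+1}$ for every $n\geq 1$. First I would show $\alpha_\infty$ is a well-defined infinite path by tracking ranges and sources. The bookkeeping amounts to the invariants $\sr(g_n)=\rg(\alpha_n)$ and $\rg(g_n)=\sr(\alpha_n)$ for all $n\geq1$. The base case follows from $g_1=g^{-1}$ together with the $g$-cycle condition $g\sr(\alpha)=\rg(\alpha)$, which forces $\sr(g)=\sr(\alpha)$ and $\rg(g)=\rg(\alpha)$. For the inductive step I would invoke the identities $\sr(g|_\mu)=\sr(\mu)$ and $\rg(g|_\mu)=\sr(g\mu)$ from Proposition~\ref{prop:extension_of_self-similar_action_on_graph}; these yield $\rg(\alpha_{n+1})=\rg(g_n)=\sr(\alpha_n)$ and $\sr(g_{n+1})=\sr(\alpha_n)$, so $\sr(g_{n+1})=\rg(\alpha_{n+1})$ and the next step $g_{n+1}\alpha_{n+1}$ is legitimate. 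The equality $\rg(\alpha_{n+1})=\sr(\alpha_n)$ is precisely the composability needed for the concatenation to be a path, and since the left action preserves length each $\alpha_n$ has length $|\alpha|\geq 1$, so $\alpha_\infty$ is genuinely infinite.

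For the second assertion I would propagate the absence of entrances along the recursion via Lemma~\ref{lem:g_paths_entrances}. Because $\rg(\alpha_n)=\sr(g_n)$ we have $\alpha_n\in\sr(g_n)E^*$, so the lemma applies and shows that $\alpha_n$ has an entrance if and only if $\alpha_{n+1}=g_n\alpha_n$ does; an induction from the hypothesis on $\alpha_1=\alpha$ then gives that no $\alpha_n$ has an entrance. It remains to pass from the blocks to $\alpha_\infty$: every base vertex of $\alpha_\infty$ is the range of one of its edges, that edge lies in a unique block $\alpha_n$, and hence each base vertex of $\alpha_\infty$ is a range-type base vertex of some $\alpha_n$ (the junction vertices $\sr(\alpha_n)=\rg(\alpha_{n+1})$ being accounted for inside $\alpha_{n+1}$). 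As none of these is the range of two edges, $\alpha_\infty$ has no entrance.

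Finally, for $\rg(\alpha)E^*$ consisting of the finite subpaths of $\alpha_\infty$, I would use that a base vertex of an infinite path with no entrance emits a unique edge. Concretely, each base vertex $v$ of $\alpha_\infty$ satisfies $|vE^1|=1$: it is the range of an edge of $\alpha_\infty$, and of no second edge, by the absence of entrances. Since $\rg(\alpha)=\rg(\alpha_\infty)$, an induction on length forces any finite $\mu\in\rg(\alpha)E^*$ to agree edge by edge with the initial segment of $\alpha_\infty$ of the same length, so $\mu$ is a prefix of $\alpha_\infty$; conversely every prefix lies in $\rg(\alpha)E^*$, giving the claimed equality. I expect the main difficulty to be purely organisational---keeping the source/range bookkeeping of the first step consistent and cleanly identifying the base vertices of $\alpha_\infty$ with the range-type base vertices of the blocks $\alpha_n$---while the conceptual weight is carried entirely by Lemma~\ref{lem:g_paths_entrances} and the length-preservation of the action.
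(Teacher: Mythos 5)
Your proof is correct and follows essentially the same route as the paper: the well-definedness is the same source/range bookkeeping the paper dismisses as ``straightforward'', the absence of entrances is propagated block-by-block via Lemma~\ref{lem:g_paths_entrances} exactly as in the paper, and your final step (each base vertex of $\alpha_\infty$ receives a unique edge, so any $\mu\in\rg(\alpha)E^*$ is forced edge-by-edge to be a prefix) is just the direct-induction phrasing of the paper's contradiction argument via the longest common prefix. No gaps.
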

\begin{proof}
The first part is straightforward. If $\alpha$ has no entrance, then every $\alpha_n$ has no entrance, by Lemma~\ref{lem:g_paths_entrances},
and so  $\alpha_{\infty}$ has no entrance. If there is a path $\gamma \in \rg(\alpha)E^*$ which is not a subpath of $\alpha_\infty$, then we may write $\gamma$ as $\gamma=\mu\gamma'$
where $\mu$ is the longest common subpath of $\gamma$ and $\alpha_{\infty}\in \rg(\alpha)E^{\infty}$. Then $\sr(\mu)$ is a base point of $\alpha_\infty$ that  receives at least two different edges, and so $\alpha_{\infty}$ has an entrance. 
\end{proof}

\begin{definition}[{cf. \cite{Exel-Pardo:Self-similar}*{Definition~14.9}}] 
We say that  $g \in \Gr$ \emph{has slack} if there 
is a finite set $F\subseteq  \sr(g)E^*$ consisting of strongly $g$-fixed paths and such that  every  path in $\sr(g)E^*$ is comparable with a path in $F$.
\end{definition}
\begin{remark}
Every unit $g\in \Gr^0$ has slack. Since extensions of strongly fixed paths are strongly fixed (see Lemma~\ref{lem:strongly_fix_respects_the_order}),
if $E^1$ is finite, then   $g \in \Gr$ has slack if and only if there 
is $n\in \N$ such that every $\gamma \in \sr(g)E^*$ with $|\gamma|\geq n$ is strongly fixed by $g$. Thus,  our  definition is consistent with \cite{Exel-Pardo:Self-similar}*{Definition~14.9} formulated for finite graphs.
\end{remark}


\begin{proposition}\label{prop:topological_freeness_semigroup}
In addition to condition $\Evr$ from Proposition \ref{prop:fundamentalness}, consider the following two more conditions:
\begin{enumerate}
\item[$\Cyc$]  for every $g\in \Gr$ every $g$-cycle  has an entrance;
\item[$\Sla$] if $g\in \Gr$ fixes every  path in $\sr(g)E^*$, then $g$ has slack.
\end{enumerate}
Then  
\begin{enumerate}
\item 
$S(\Gr,E)$ is topologically free if and only if $\Cyc$ and $\Evr$ hold; 
\item  $S(\Gr,E)$ is effective if and only if $\Cyc$ and $\Sla$ hold.
\end{enumerate}
\end{proposition}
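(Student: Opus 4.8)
I would prove the two equivalences in parallel, organising everything around a single structural dichotomy for a fixed idempotent. Throughout, recall from Definition~\ref{def:InverseSemigroupSimplicityProperties} that $S(\Gr,E)$ is topologically free precisely when every $f_\gamma\in\EE(\Gr,E)$ fixed by some $t=(\alpha,g,\beta)$ admits $f\in F_t$ with $f f_\gamma\neq 0$, and effective when such an $f_\gamma$ is covered by a finite $F\subseteq F_t$. Since covering a nonzero idempotent forces a nonempty intersecting member, effectiveness implies topological freeness; and $\Sla$ implies $\Evr$ (a slack set of strongly fixed paths is in particular nonempty). Thus I may freely invoke Proposition~\ref{prop:fundamentalness} and, in the effective case, treat $\Cyc\wedge\Sla$ as strengthening $\Cyc\wedge\Evr$.

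For the forward implications I argue contrapositively. If $\Evr$ fails, the element $t=(v,g,v)$ produced in the proof of Proposition~\ref{prop:fundamentalness} has $f_v$ fixed (by Lemma~\ref{lem:pre_fixed_characterisation}, $t f_\gamma t^*=f_\gamma$ for every $\gamma\in vE^*$) while $F_t=\varnothing$ by \eqref{eq:F_t_for_self-similar}; hence $S(\Gr,E)$ is not topologically free. If $\Cyc$ fails, pick a $g$-cycle $\alpha$ without entrance and set $t=(\rg(\alpha),g,\alpha)$, so that $\alpha\neq\rg(\alpha)$ gives $F_t=\varnothing$. Using Lemma~\ref{lem:infinite_path_from_g_cycle}, the induced path $\alpha_\infty$ has no entrance and $\rg(\alpha)E^*$ consists of its prefixes, so any two paths from $\rg(\alpha)$ are comparable; Lemma~\ref{lem:pre_fixed_characterisation} then shows $t f_{\alpha\xi}t^* f_{\alpha\xi}=f_{g\xi}\wedge f_{\alpha\xi}\neq 0$ for every $\xi$, so $f_\alpha$ is fixed with no trivially fixed idempotent below. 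This covers both $(\Rightarrow)$ statements.

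For the converse I assume $\Cyc\wedge\Evr$ (resp.\ $\Cyc\wedge\Sla$) and take $t=(\alpha,g,\beta)$ fixing $f_\gamma$. Using Remark~\ref{rem:fixed_idepomtents_inverse_semigroup} to pass to $t^*$ if needed, and extending $\gamma$ (fixing is inherited by extensions, by downward closure of $\EE$), I reduce to $\gamma=\beta\beta'$ with $|\alpha|\ge|\beta|$. The decisive step is to show $\alpha=\beta$. Here fixedness says $\beta\beta'\xi$ is comparable with $\alpha(g(\beta'\xi))$ for \emph{all} $\xi\in\sr(\gamma)E^*$; a sink at $\sr(\gamma)$ or the length-preservation of the action forces $\alpha=\beta$ immediately when $|\alpha|=|\beta|$, so I may assume $d=|\alpha|-|\beta|>0$ and that no extension of $\gamma$ meets a sink, producing an infinite path $\zeta=\beta'\eta$ with $\zeta=\mu\,(g\zeta)$ where $\alpha=\beta\mu$ and $\mu$ is a $g^{-1}$-cycle. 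Iterating this relation identifies $\zeta$ with the canonical path $\mu_\infty$ of Lemma~\ref{lem:infinite_path_from_g_cycle}, and $g\zeta$ is the shift $\sigma^{d}\zeta$. The comparability for every branching continuation $\xi$ then forbids any base vertex of $\zeta$ beyond the prefix $\beta'$ from emitting two edges: a branch edge $e\neq\zeta_{i+1}$ at such a vertex would force $e=\zeta_{i+1}$ after matching prefixes through $g\zeta=\sigma^d\zeta$, a contradiction. Choosing a translate $\mu$-segment lying past $\beta'$ and transporting entrance-freeness back along the action via Lemma~\ref{lem:g_paths_entrances} shows $\mu$ has no entrance, contradicting $\Cyc$. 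This entrance-extraction is the main obstacle, since it must leverage the quantifier over \emph{all} continuations rather than a single orbit.

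Once $\alpha=\beta$ is established, the conclusion is routine. Fixedness of $f_\gamma$ by $(\alpha,g,\alpha)$ forces $\gamma=\alpha\gamma_0$ with $g$ fixing $\gamma_0$ and $g|_{\gamma_0}$ fixing every path in $\sr(\gamma_0)E^*$. Applying $\Evr$ yields a single strongly $g|_{\gamma_0}$-fixed path $\rho$; by Lemma~\ref{lem:strongly_fix_respects_the_order} the path $\gamma_0\rho$ is strongly $g$-fixed, so $f_{\alpha\gamma_0\rho}\in F_t$ by \eqref{eq:F_t_for_self-similar} and meets $f_\gamma$, giving topological freeness. For effectiveness, $\Sla$ provides instead a \emph{finite} family $\{\rho_j\}$ of strongly $g|_{\gamma_0}$-fixed paths with every path in $\sr(\gamma_0)E^*$ comparable to some $\rho_j$; then $\{f_{\alpha\gamma_0\rho_j}\}\subseteq F_t$ is finite and covers $f_\gamma$ in the sense of Example~\ref{ex:graph_inverse_semigroup}, completing the argument.
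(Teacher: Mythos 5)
Your overall strategy matches the paper's: analyse fixed idempotents via Lemma~\ref{lem:pre_fixed_characterisation}, use $\Cyc$ to force $|\alpha|=|\beta|$, and then invoke $\Evr$ (resp.\ $\Sla$) to produce a trivially fixed idempotent (resp.\ a finite cover). However, there is a genuine gap in your forward direction for effectiveness. You handle only the failure of $\Evr$ and the failure of $\Cyc$, and then assert that ``this covers both $(\Rightarrow)$ statements.'' It does not: to prove that effectiveness implies $\Sla$ you must also treat the case where $\Evr$ (and $\Cyc$) hold but $\Sla$ fails, and this case genuinely occurs --- Example~\ref{ex:not_exel-pardo_top_free} exhibits an action that is topologically free (so $\Evr$ and $\Cyc$ hold) but not effective precisely because $\Sla$ fails. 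Since $\Sla$ is strictly stronger than $\Evr$, deducing ``not effective'' from ``not topologically free'' cannot reach this case. The missing argument is short but necessary: if $\Sla$ fails, take $g$ witnessing the failure and $t=(\sr(g),g,\sr(g))$; then $t$ fixes $f_{\sr(g)}$, yet by \eqref{eq:F_t_for_self-similar} every finite $F\subseteq F_t$ consists of idempotents $f_{\ol\beta}$ with $\ol\beta$ strongly $g$-fixed, and the failure of slack produces a path in $\sr(g)E^*$ incomparable with all of them, so no finite $F\subseteq F_t$ covers $f_{\sr(g)}$.

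A secondary issue: your opening reduction ``extend $\gamma$ so that $\gamma=\beta\beta'$'' is justified only halfway. Downward closure does show the extended idempotent is still fixed, and for topological freeness a witness meeting the extension meets the original. But for effectiveness a finite cover of $f_{\tilde\gamma}$ is not a priori a cover of $f_\gamma$ when $\tilde\gamma$ properly extends $\gamma$. It is true here --- fixedness of $f_\gamma$ forces, via Lemma~\ref{lem:pre_fixed_characterisation}, every extension of $\gamma$ to be comparable with $\beta$, so one may take $\tilde\gamma=\beta$ and check that a cover of $f_\beta$ covers $f_\gamma$ --- but this needs to be said; the paper avoids the issue by treating the two subcases ($\varepsilon$ a prefix of $\beta$, or $\beta$ a prefix of $\varepsilon$) directly without replacing the fixed idempotent. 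Your treatment of the case $|\alpha|\neq|\beta|$ (extracting the cycle $\mu$, identifying $\zeta$ with the path of Lemma~\ref{lem:infinite_path_from_g_cycle}, and transporting entrance-freeness back with Lemma~\ref{lem:g_paths_entrances}) is correct and is a legitimate variant of the paper's argument, which instead picks a single branching path $\alpha''$ off the entrance and contradicts comparability for the specific continuation $\beta'=g^{-1}\alpha''$.
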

\begin{proof}	
To show sufficiency of the above conditions, suppose that $t = (\alpha,g,\beta)\in S(\Gr,E)$ fixes $ f_{\varepsilon} = (\varepsilon, \sr(\varepsilon),\varepsilon)$. For any $\gamma\in E^*$ recall that $f_{\gamma}\leq f_{\varepsilon}$  if and only if $\gamma \le \varepsilon$.  For any $\beta'\in \sr(g)E^*=\sr(\beta)E^*$, 
putting $\gamma \coloneqq \beta\beta'$ the relation $(tf_{\gamma}t^*)\cdot  f_{\gamma} \ne 0$   is equivalent to saying 
that $\beta\beta'$ is comparable with $\alpha(g \beta')$ (see Lemma~\ref{lem:pre_fixed_characterisation}). By passing to $t^*$ if necessary   we may assume that $|\alpha|\leq |\beta|$ (see Remark~\ref{rem:fixed_idepomtents_inverse_semigroup}).

First suppose that $|\alpha|<|\beta|$. Then $\beta=\alpha\alpha'$ for some $\alpha'\in \sr(\alpha)E^*\setminus E^0$ and the comparability condition says that and 
$\beta\beta'=\alpha\alpha'\beta'$ is an extension  of $\alpha (g\beta')$  for every $\beta'\in \sr(g)E^*=\sr(\beta)E^*=\sr(\alpha')E^*$.
In particular, 
$
\rg(\alpha')=\rg(g\beta')=g \rg(\beta')=g \sr(\alpha'),
$
and so $\alpha'$ is a $g$-cycle. Thus,  assuming $\Cyc$, $\alpha'$ has an entrance and so there is  $\alpha''\in \rg(\alpha')E^*$ incomparable with $\alpha$.  Putting $\beta' \coloneqq g^{-1}\alpha''\in \sr(\beta)E^*$ we get that $\beta\beta'=\alpha\alpha'\beta'$ can not be an extension of $\alpha (g\beta')=\alpha\alpha''$, which is a contradiction. Thus,  condition $\Cyc$ excludes the case $|\alpha|<|\beta|$.

Suppose then that $|\alpha|=|\beta|$.  Then comparability of $\beta\beta'$ and $\alpha(g \beta')$  means that $\alpha=\beta$ and $\beta'=g \beta'$. 
In particular, $g$ fixes every $\beta'\in \sr(g)E^*$. 
Consider two subcases. 

Assume $\varepsilon \le \beta$. If  $\Evr$ holds there exists a strongly $g$-fixed $\ol{\beta} \in \sr(g)E^*$.
Putting $\gamma \coloneqq \beta\ol{\beta}$, we have $f_\gamma\in F_t$ by \eqref{eq:F_t_for_self-similar},  and  
$f_\gamma \cdot f_\varepsilon=f_\gamma\neq 0$. Similarly, if we assume $\Sla$, then there is a finite $F\subseteq \sr(g)E^*$ 
such that $\{f_{\beta\ol{\beta}}: \ol{\beta}\in F\}\subseteq F_t$ covers $f_{\varepsilon} $.

Assume $\beta \le \varepsilon$, then we have $\varepsilon=\beta\beta'$ for some $\beta'\in \sr(g)E^*$. Note that every $\beta''\in \sr(\beta')E^*$ is fixed by $g|_{\beta'}$ as 
we have $\beta'\beta''=g(\beta'\beta'')=g\beta' g|_{\beta'}\beta''=\beta' g|_{\beta'}\beta''$. Hence, if we assume $\Evr$ there is a strongly $g|_{\beta'}$-fixed $ \ol{\beta} \in \sr(\beta')E^*=\sr(\varepsilon)E^*$. Then  $\beta'\ol{\beta}$ is strongly $g$-fixed because $g|_{\beta'\ol{\beta}}=(g|_{\beta'})|_{\ol{\beta}}=\sr(\ol{\beta})$.  Putting $\gamma \coloneqq \varepsilon\ol{\beta}$, we get $f_\gamma\in F_t$ by \eqref{eq:F_t_for_self-similar},  and  
$f_\gamma \cdot f_\varepsilon=f_\gamma\neq 0$. 
Similarly, if we assume $\Sla$, then there is a finite $F\subseteq \sr(\varepsilon)E^*$ 
such that $\{f_{\varepsilon\ol{\beta}}: \ol{\beta}\in F\}\subseteq F_t$ covers $f_{\varepsilon} $.

This finishes the proof of sufficiency  of $\Cyc$ and $\Evr$ for topological freeness, and $\Cyc$ and $\Sla$ for effectiveness of $S(\Gr,E)$.
To check the necessity suppose first that $\Cyc$ fails, and so there is $g$-cycle $\alpha$ without an entrance. Putting  $t \coloneqq (\rg(g),g,\alpha)\in S(\Gr,E)$ we get from \eqref{eq:F_t_for_self-similar} that $F_t=\varnothing$. We claim that $t$ fixes $f_{\alpha} \coloneqq   (\alpha, \sr(\alpha),\alpha)$ which provides a contradiction to topological freeness (and hence all the more to effectivness) of $S(\Gr,E)$.
Indeed, by  Lemma  \ref{lem:infinite_path_from_g_cycle} every path in $\rg(g)E^*$ is a prefix  of a (unique) infinite path $\alpha_{\infty}\in \rg(g)E^{\infty}$.
In other words all paths in $\rg(g)E^*$ are comparable and so for $\gamma,\beta \in \rg(g)E^*$  we have $f_{\gamma}\leq f_{\beta}$ if and only if 
$|\gamma|\geq |\beta|$. Thus,  if $f_{\gamma}\leq f_{\alpha}$, that is if  $\gamma=\alpha\alpha'$ for some $\alpha'\in \sr(\alpha)E^*$, then  we get 
$tf_{\gamma} t^* \cdot f_{\gamma}\stackrel{\eqref{eq:pre_fixed_characterisation}}{=}f_{g\alpha'}  \cdot f_{\gamma} =f_{g\alpha'}\neq 0 $, because $|\gamma|\geq |g\alpha'|$.   This proves the claim.

Now suppose that $\Evr$ fails, so that there is $g\in\Gr$ that fixes all paths in  $\sr(g)E^*$ but does not strongly fix any of them.
This implies that  $t \coloneqq (\sr(g),g,\sr(g))\in S(\Gr,E)$ fixes all idempotents $f_{\alpha}$ for $\alpha\in \sr(g)E^*$ (see Lemma~\ref{lem:pre_fixed_characterisation}) and  $F_t=\varnothing$ (see \eqref{eq:F_t_for_self-similar}), so $S(\Gr,E)$ is not topologically free.
Similarly, if $\Sla$ fails, there is $g\in\Gr$ such that $t \coloneqq (\sr(g),g,\sr(g))\in S(\Gr,E)$ fixes idempotents $f_{\sr(g)}$ but for every finite $F \subseteq  \sr(g)E^*$ consisting of strongly $g$-fixed paths there is a path in $\alpha\in \sr(g)E^*$ which is not comparable with any of paths in $F$.
The latter says  that every finite $F\subseteq F_{t}$ does not cover $f_{\sr(g)}$, so  $S(\Gr,E)$ is not effective.
\end{proof}
\begin{remark} 
Proposition~\ref{prop:topological_freeness_semigroup} can be viewed as a far-reaching generalisation of \cite{Exel-Pardo:Self-similar}*{Theorem 14.10} 
which aimed at characterisation of effectiveness of the associated tight groupoid $\Gg(\Gr,E)$.
We show in Example \ref{ex:infinitely_many_edges} below that, in general, effectiveness of $S(\Gr,E)$ is strictly weaker than that of  $\Gg(\Gr,E)$.
\end{remark}
\subsection{Minimality}
To characterise minimality, we combine the path preorder relation $v\leftarrow w$ on $E^0=\Gr^0$ (that is $vE^*w\neq \varnothing$, see Example~\ref{ex:graph_inverse_semigroup}) with the orbit equivalence relation $v\sim w$ in the groupoid $\Gr$ (that is $v\Gr w\neq \varnothing$). 
\begin{definition}\label{def:Zappa--Szep_preorder}
For $v, w\in E^0$ we write $v\ll w$ if there exists $v'\in E^0$ such that $vE^*v'\neq \varnothing$  and $v'\Gr w\neq \varnothing$.
We say that $(\Gr,E)$ is \emph{cofinal} if the set of base points of any boundary path $\mu = \mu_1 \mu_2 \cdots \in \partial E$ is cofinal in $(E^0,\ll)$, that is for every $v\in E^0$ there is $i$ such that  $v\ll \sr(\mu_i)$.
\end{definition}
\begin{remark}  The relation $\ll$ is a generalisation of the relation introduced in \cite{Exel-Pardo:Self-similar}*{Definition 13.3}, and cofinality of $(\Gr,E)$ is a generalisation of 
a condition called \emph{weak $\Gamma$-transitivity} in \cite{Exel-Pardo:Self-similar}*{Definition 13.4}. 
\end{remark}
The relation $\ll$ is natural from the point of view of the Zappa--Sz\'ep product category
$E^* \bowtie \Gr$,  cf. Remark~\ref{rem:Zappa--Szep_product}. 
Indeed, we have $v\ll w$ if and only if   there is an arrow from $w$ to $v$ in $ E^* \bowtie \Gr$.
In particular, this immediately implies that $\ll$ is a preorder, which was an issue in \cite{Exel-Pardo:Self-similar}. More specifically, $v\ll w$ is equivalent to existence of a pair $(\alpha,g)\in v(E^* \bowtie \Gr)w$, which means that the relations $v=\rg(\alpha)$ and $\sr(\alpha)=gw$  make sense
and hold (here $\sr(\alpha)=\rg(g)$ plays the role of $v'$ in Definition~\ref{def:Zappa--Szep_preorder}). This is weaker than assuming existence of   a pair $(g,\alpha)\in v\Gr *E^*w$, which means that the relations $v=g \rg(\alpha)$ and $\sr(\alpha)=w$  make sense
and hold. Indeed,  if $(g,\alpha) \in v \Gr * E^* w$, then we  can consider it as a composable pair in $E^* \bowtie \Gr$, namely $((v,g),(\alpha,w))$, and the composition in this category gives  $(v,g) (\alpha,w) = (g\alpha, g|_{\alpha})\in E^* \bowtie \Gr= E^**\Gr$. 
Pictorially, for any vertices $v,w\in E^0$ we have the following implication
\begin{equation}\label{eq:transitivity_implication}
\exists_{(g,\alpha)}\,\,
\begin{tikzcd}[ampersand replacement=\&]
v	 \bullet \,\,\,\&  \\
\bullet \& \bullet w
\arrow["{\alpha}"', from=2-2, to=2-1]
\arrow["g"', from=2-1, to=1-1, dashed]
\end{tikzcd}\quad 
\Longrightarrow\quad 
\exists_{(\alpha,g)}\,\,
\begin{tikzcd}[ampersand replacement=\&]
v	 \bullet \&   \bullet  \\
\& \,\,\,\, \bullet w 
\arrow["{\alpha}"', from=1-2, to=1-1]
\arrow["g"', from=2-2, to=1-2, dashed]
\end{tikzcd}
\end{equation}
where the consequent means that $v\ll w$.	
In the context of Exel-Pardo  group  actions  the implication \eqref{eq:transitivity_implication} can be reversed, see \cite{Exel-Pardo:Self-similar}*{Proposition 13.2}, and so it does not matter which condition one uses. In our general context, it is important to use the weaker condition, and  implication \eqref{eq:transitivity_implication} is exactly what one needs to prove the following.
\begin{lemma}
The relation $\ll$ is the smallest preorder relation on $E^0$ containing the path preorder relation $\leftarrow$ and orbit equivalence $\sim$. 
\end{lemma}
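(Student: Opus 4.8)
The plan is to verify the three conditions that characterise $\ll$ as the smallest preorder containing $\leftarrow$ and $\sim$: that $\ll$ contains both generating relations, that $\ll$ is itself a preorder, and that $\ll$ is contained in every preorder that contains $\leftarrow$ and $\sim$.

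Containment of the generators is immediate. If $v\leftarrow w$, i.e. $vE^*w\neq\varnothing$, then choosing $v'=w$ together with the unit $w\in w\Gr w$ witnesses $v\ll w$; dually, if $v\sim w$, i.e. $v\Gr w\neq\varnothing$, then choosing $v'=v$ together with the length-zero path $v\in vE^*v$ witnesses $v\ll w$. The same two observations, applied with $v'=v$, give reflexivity of $\ll$.

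The substantive point is transitivity, and here I would lean on the implication \eqref{eq:transitivity_implication} (equivalently, on composition of arrows in $E^*\bowtie\Gr$). Suppose $v\ll w$ and $w\ll u$, witnessed by paths $p_1\in vE^*v'$, $p_2\in wE^*w'$ and morphisms $g_1\in v'\Gr w$, $g_2\in w'\Gr u$. The obstacle is that the ``middle'' data $g_1$ and $p_2$ occur in the wrong order: we have a morphism out of $w$ followed by a path into $w$, rather than a path followed by a morphism. This is exactly the configuration resolved by \eqref{eq:transitivity_implication}: since $\sr(g_1)=w=\rg(p_2)$, the pair $(g_1,p_2)$ lies in $\Gr*E^*$, and forming the left action $g_1\cdot p_2\in v'E^*$ and the restriction $g_1|_{p_2}$ turns it into a path $g_1\cdot p_2$ from $s\coloneqq\sr(g_1\cdot p_2)$ to $v'$ followed by a morphism $g_1|_{p_2}\in s\Gr w'$. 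Concatenating $p_1(g_1\cdot p_2)\in vE^*s$ and composing $(g_1|_{p_2})g_2\in s\Gr u$ then exhibits $s$ as a vertex witnessing $v\ll u$.

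Finally, minimality is a one-line order-theoretic argument. If $\preceq$ is any preorder with $\leftarrow\,\subseteq\,\preceq$ and $\sim\,\subseteq\,\preceq$, then from $v\ll w$ we extract a vertex $v'$ with $v\leftarrow v'$ and $v'\sim w$; hence $v\preceq v'\preceq w$, and transitivity of $\preceq$ gives $v\preceq w$. Thus $\ll\,\subseteq\,\preceq$, and combined with the previous paragraphs this identifies $\ll$ as the smallest such preorder. I expect transitivity to be the only step requiring genuine care, precisely because one must invoke the asymmetric swap \eqref{eq:transitivity_implication} to realign the groupoid-data and the path-data; this is also why it is essential to work with the weaker, correctly-oriented definition of $\ll$ rather than the ostensibly more natural one.
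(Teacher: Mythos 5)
Your proof is correct and follows essentially the same route as the paper: the paper also reduces everything to transitivity of $\ll$ (noting that containment of the generators and minimality are immediate from the decomposition $v\leftarrow v'\sim w$) and establishes transitivity via composition in the Zappa--Sz\'ep category, i.e.\ via the swap encoded in \eqref{eq:transitivity_implication}, which is exactly the $(g_1,p_2)\mapsto(g_1\cdot p_2,\,g_1|_{p_2})$ manoeuvre you carry out explicitly.
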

\begin{proof}
Clearly, $\ll$ contains $\leftarrow $  and  $\sim$, and every transitive relation on $E^0$ containing   $\leftarrow $  and  $\sim$ necessarily contains $\ll$.
Thus,  it suffices to show that $\ll$ is transitive, which follows from the above description as coming from morphisms in the Zappa--Sz\'ep category. It can also be readily proved using \eqref{eq:transitivity_implication}. 
\end{proof}

We use the relation $\ll$ to describe the conjugacy classes of idempotents in $S(\Gr,E)$.
For instance, Lemma~\ref{lem:pre_fixed_characterisation} says that for every $v\in E^0$ we have
\begin{equation}\label{eq:conjugate_in_semigroup_characterisation}
\{tf_vt^*: t\in S(\Gr,E)\}\cup \{0\}=\{f_\alpha: \alpha\in E^*,\,\, v\ll \sr(\alpha)\}\cup \{0\}, 
\end{equation}
which is crucial in the proof of the following

\begin{proposition}\label{prop:minimality_inverse_semigroup} 
The inverse semigroup $S(\Gr,E)$ is minimal if and only if $(\Gr,E)$ is cofinal.
\end{proposition}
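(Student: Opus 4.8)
The plan is to translate the covering condition that defines minimality (Definition~\ref{def:InverseSemigroupSimplicityProperties}) into a statement about boundary paths through the tight spectrum, and then to recognise that statement as cofinality. Throughout I identify $\EE(\Gr,E)$ with $E^*\cup\{0\}$ via \eqref{eq:semilattice_isomorphism}, so that a family $\{f_{\delta_j}\}$ covers $f_\alpha$ precisely when $Z(\alpha)\cap\partial E\subseteq\bigcup_j Z(\delta_j)$ inside the boundary path space $\partial E\cong\partial\widehat{\EE}$ of Example~\ref{ex:spectrum_graph_inverse_semigroup}. For a \emph{finite} family this is the standard reformulation of covers in terms of tight filters; and since every $Z(\delta)$ is compact open while $Z(\alpha)\cap\partial E$ is compact, a finite subcover drawn from a family $\{Z(\delta)\}$ exists if and only if that whole family already covers $Z(\alpha)\cap\partial E$. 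This compactness reduction is what converts the quantifier ``there is a finite $T$'' in the definition of minimality into an infinitary, path-theoretic condition.

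The first key step is to compute, for an arbitrary idempotent $f_\beta$, the set of its nonzero conjugates $\{t f_\beta t^*:t\in S(\Gr,E)\}$. Here I would take $t=(\beta,\sr(\beta),\sr(\beta))\in S(\Gr,E)$ and check (directly, or via Lemma~\ref{lem:pre_fixed_characterisation}) that $tt^*=f_\beta$ and $t^*t=f_{\sr(\beta)}$; the substitution $u\mapsto ut$ then shows that $f_\beta$ and the vertex idempotent $f_{\sr(\beta)}$ have the same conjugacy class. Feeding this into the already-established description \eqref{eq:conjugate_in_semigroup_characterisation} of the conjugates of a vertex idempotent yields the clean identity
\[
\{t f_\beta t^*:t\in S(\Gr,E)\}\setminus\{0\}=\{f_\delta:\delta\in E^*,\ \sr(\beta)\ll\sr(\delta)\},
\]
so that the conjugacy class of $f_\beta$ depends only on the source vertex $\sr(\beta)$.

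Combining the two steps, I would fix $e=f_\alpha$ and $f=f_\beta$ and argue that a finite $T$ with $\{t f_\beta t^*\}_{t\in T}$ covering $f_\alpha$ exists if and only if every boundary path $\mu$ extending $\alpha$ already extends some $\delta$ with $\sr(\beta)\ll\sr(\delta)$, equivalently has a base point $p$ with $\sr(\beta)\ll p$ (a prefix $\delta$ of $\mu$ has $\sr(\delta)$ a base point of $\mu$, and since $\leftarrow\,\subseteq\,\ll$ one may lengthen short prefixes along $\mu$). Quantifying over all $\alpha$ (equivalently over all $\mu\in\partial E$, by taking $\alpha=\rg(\mu)$) and over all $\beta$ (equivalently over all source vertices $w=\sr(\beta)$, which range over all of $E^0$) turns minimality into exactly the requirement that for every $w\in E^0$ and every $\mu\in\partial E$ there is $i$ with $w\ll\sr(\mu_i)$, which is cofinality in the sense of Definition~\ref{def:Zappa--Szep_preorder}.

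The main obstacle, and the only place demanding genuine care, is the dictionary between the purely algebraic notion of cover in $\EE(\Gr,E)$ and the topological covering of the compact set $Z(\alpha)\cap\partial E$. One must justify that a nonzero idempotent always survives in the tight spectrum—every nonempty $Z(z)$ meets $\partial\widehat{\EE}$, since the principal filter at $z$ extends to an ultrafilter lying in $Z(z)$—so that failure of the algebraic covering condition produces an uncovered tight filter, and then invoke compactness to extract the finite subcover required by minimality from the a priori infinite cofinality-type condition. Once this equivalence is in place, the remaining bookkeeping—matching the base points $\rg(\mu_i)$, $\sr(\mu_i)$ with prefixes of $\mu$, and treating the degenerate case of a source-vertex boundary path—is routine, handled by transitivity of $\ll$ together with $\leftarrow\,\subseteq\,\ll$.
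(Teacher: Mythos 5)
Your proof is correct, and the forward direction (minimality $\Rightarrow$ cofinality) is essentially the paper's argument: both rest on the description \eqref{eq:conjugate_in_semigroup_characterisation} of the conjugates of a vertex idempotent together with the observation that a finite cover of (a prefix of) a boundary path must contain a prefix of that path. Where you genuinely diverge is in the converse. The paper constructs the required finite cover explicitly: it first checks that cofinality forces the relevant vertices to be regular, then runs the $F_n$/$G_n$ induction and a K\H{o}nig-type argument to show some level $G_N$ is empty, so that $\bigcup_{k\le N}F_k$ is a finite cover. You instead pass to the tight spectrum, observe that $Z(\alpha)\cap\partial E$ is compact while each $Z(\delta)$ is open, and extract the finite subcover from the (a priori infinite) family $\{Z(\delta):\sr(\beta)\ll\sr(\delta)\}$ by compactness; the dictionary between outer algebraic covers and topological covers of the tight spectrum (which you correctly flag as the one point needing care, via extending principal filters to ultrafilters) makes this legitimate. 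Your route also adds the clean reduction that $f_\beta$ is conjugate to $f_{\sr(\beta)}$ via $t=(\beta,\sr(\beta),\sr(\beta))$, so that the whole conjugacy class of $f_\beta$ is $\{f_\delta:\sr(\beta)\ll\sr(\delta)\}$ — the paper proves the needed inclusion inline instead, restricted to extensions of a fixed path. What the paper's approach buys is a purely combinatorial, self-contained argument inside the semilattice $E^*\cup\{0\}$ that makes visible exactly where saturation and regularity enter; what yours buys is brevity, since compactness of the boundary path space absorbs both the finiteness bookkeeping and the K\H{o}nig argument in one stroke, at the cost of routing the proof through the topological model of the tight spectrum.
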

\begin{proof}  
Suppose that $S(\Gr,E)$ is minimal. Fix $v \in E^0$ and $\mu = \mu_1 \mu_2 \cdots \in \partial E$. We identify  $\EE(\Gr,E)$ with $E^* \cup \{0\}$.
If $\mu \in \partial E\setminus E^{\infty}$, then $\sr(\mu)$ is either a source or an infinite receiver. Thus,  a finite set $T_0\subseteq E^*$ covers $\mu$ in $E^*\cup \{0\}$ if and only if $T_0$ contains a prefix of $\mu$.   Similarly, if $\mu\in E^\infty$ and  $T_0$ covers a  prefix of  $\mu$, then $T_0$ has to contain a prefix of $\mu$.
By  \eqref{eq:pre_fixed_characterisation}, for every finite $T\subseteq S(\Gr,E)$ the set  $\{t f_{v} t^*: t\in T\}$ (modulo the zero element) is of the form $\{f_{\alpha}: \alpha\in T_0\}$
for a finite $T_0\subseteq  E^*$ such that  $v\ll \sr(\alpha)$ for every $\alpha\in T_0$.
Thus,  applying minimality of $S(\Gr,E)$ to $f_v$ and $f_{\sr(\mu)}$, if $\mu\notin E^{\infty}$, or to $f_v$ and  $f_{\mu'}$ for some prefix $\mu'$ of $\mu$, if $\mu\in E^{\infty}$, 
we conclude that  there is $i$ such that  $v\ll \sr(\mu_i)$.
This shows that minimality of $S(\Gr,E)$ implies cofinality of $(\Gr,E)$.

Conversely, suppose that $(\Gr,E)$ is cofinal. Fix $\gamma, \mu\in E^*$ and put $v \coloneqq \sr(\gamma)$ and $w \coloneqq \sr(\mu)$. 
We show that
for any $T\subseteq  wE^*$ such that $v\ll \sr(\alpha)$ for all $\alpha\in T$, we have 
\[\{f_{\mu\alpha}: \alpha\in T\}\subseteq  \{t f_{\gamma} t^*: t\in S(\Gr,E)\}.
\]
Indeed, if $u \in E^0$, $\beta\in vE^*u$, and $g\in u\Gr \sr(\alpha) $, then putting $t \coloneqq (\mu\alpha,g, \gamma\beta) $  we get $t f_{\gamma} t^*=f_{\mu\alpha}$ by \eqref{eq:pre_fixed_characterisation}. 

We identify a finite $T$ as above such that  $\{\mu\alpha: \alpha\in T\}$ covers $\mu$. If $v\ll w$  then $T \coloneqq \{w\}$ does the job.
So suppose that $v\not\ll w$.  By cofinality,  $w$ is regular and so the set $wE^1$ is finite non-empty. In particular, the sets $F_1 \coloneqq \{e\in wE^1: v\ll \sr(e) \}$ 
and $G_1 \coloneqq \{e\in wE^1: v\not\ll \sr(e) \}$ are finite. 
In general, for  $n\geq 1$ the sets 
\begin{align*}
F_{n}& \coloneqq \{\alpha_1\cdots\alpha_n\in wE^{n}: v\ll \sr(\alpha_n) \text{ and }v \not\ll \sr(\alpha_k) \text{ for all }k=1,\dots,n-1\}, \text{ and }\\
G_n& \coloneqq \{\alpha_1\cdots\alpha_n\in wE^n: v \not\ll \sr(\alpha_k) \text{ for all }k=1,\dots,n\}, 
\end{align*}
are finite, as $\sr(G_n)$ consists of regular elements  and every element in $G_{n+1}\cup F_{n+1}$ is an extension of an element in $G_n$. 
By  cofinality there exists $N\in \N$ such that $G_{N}=\varnothing$ as otherwise there would be an infinite path $\alpha=\alpha_1\alpha_2\cdots$ such that $v \not\ll \sr(\alpha_k)$ for all $k\in \N$.
For this $N$, every element in $wE^*$ is comparable with a path in the finite set $T \coloneqq \bigcup_{k=1}^N F_k$. Hence, 
$\{\mu\alpha: \alpha\in T\}$ covers $\mu$, and equivalently $\{f_{\mu\alpha}: \alpha\in T\}$ covers $\mu$. So $S(\Gr,E)$ is minimal. 
\end{proof}

The appropriate notion of an invariant subset of $E^0$ for the self-similar action $(\Gr,E)$ is a set which is $G$-invariant, hereditary and saturated (cf. Examples \ref{ex:graph_inverse_semigroup}).
\begin{definition} We say that a subset  $ V\subseteq E^0$ is \emph{$(\Gr,E)$-invariant} if it is $G$-invariant, hereditary and saturated.
\end{definition}
Note that $V\subseteq E^0$ is $G$-invariant and  hereditary if and only if it is $\ll$ upward closed, so this condition could be viewed as a positive invariance. 
Similarly, $V\subseteq E^0$ is $G$-invariant and  saturated if and only if $V$ contains every regular vertex $w\in E^0$ for which there exists $g\in \Gr w$ such  that $gw E^1\subseteq E^1V$,
which could be viewed as a negative invariance.  

\begin{lemma}\label{lem:H_n(v)}
Fix  $v\in E^0$ and let $H_0(v) \coloneqq \{w\in E^0: v\ll w\}$. For $n \ge 1$ inductively define $H_{n+1}(v)$ as the union of $H_n(v)$ and $\Gr$-orbits of all those regular vertices 
$w\in E^{0}$ such that  $\sr(e) \in H_{n}(v)$  for all $e \in wE^{1}$. Then every $H_n(v)$ is  $\ll$ upward closed and $H(v)  \coloneqq \bigcup_{n=0}^\infty H_n(v)$ is the smallest $(\Gr,E)$-invariant set containing $v$. 
\end{lemma}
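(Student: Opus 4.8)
The plan is to prove the two assertions separately: first that each $H_n(v)$ is $\ll$ upward closed, and then that $H(v)=\bigcup_n H_n(v)$ is the smallest $(\Gr,E)$-invariant set containing $v$. Throughout I will use the equivalence recorded just before the lemma --- that a subset of $E^0$ is $\ll$ upward closed if and only if it is $\Gr$-invariant and hereditary --- together with transitivity and reflexivity of $\ll$ and the fact that $\sim$ and $\leftarrow$ are both contained in $\ll$.

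For the first assertion I would argue by induction on $n$, in each step verifying $\Gr$-invariance and hereditariness (which by the quoted equivalence is the same as being $\ll$ upward closed). The base case $H_0(v)=\{w: v\ll w\}$ is immediate from transitivity of $\ll$. For the inductive step, write $H_{n+1}(v)=H_n(v)\cup \Gr\cdot R_{n+1}$, where $R_{n+1}$ is the set of regular $w$ with $\sr(e)\in H_n(v)$ for all $e\in wE^1$, and $\Gr\cdot R_{n+1}$ is the union of the $\Gr$-orbits of the elements of $R_{n+1}$. Here $\Gr$-invariance is clear, since both $H_n(v)$ (by induction) and $\Gr\cdot R_{n+1}$ (by construction) are unions of $\Gr$-orbits. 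For hereditariness it suffices, by concatenating edges, to show that $u\in H_{n+1}(v)$ and $e\in uE^1$ force $\sr(e)\in H_{n+1}(v)$. If $u\in H_n(v)$ this follows from the inductive hypothesis. If instead $u\sim w$ for some $w\in R_{n+1}$, I would pick $h\in w\Gr u$ and transport $e$ to the edge $he\in wE^1$ (which lies at $w$ since $\rg(he)=\rg(h)=w$); as $w\in R_{n+1}$ this gives $\sr(he)\in H_n(v)$, while the cocycle of Definition~\ref{defn:self_similar_action} yields $h|_e\in\Gr$ with $\sr(h|_e)=\sr(e)$ and $\rg(h|_e)=\sr(he)$, so $\sr(e)\sim\sr(he)\in H_n(v)$, whence $\sr(e)\in H_n(v)$ by $\Gr$-invariance of $H_n(v)$.

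For the second assertion I would first check that $H(v)$ is $(\Gr,E)$-invariant and contains $v$. Membership $v\in H_0(v)$ holds by reflexivity of $\ll$; $\Gr$-invariance and hereditariness pass from the $H_n(v)$ to their union $H(v)$, since arbitrary unions of $\Gr$-invariant (respectively hereditary) sets retain those properties; and saturation follows because any regular $w$ with $\sr(e)\in H(v)$ for all $e\in wE^1$ has $wE^1$ finite, so all these finitely many sources lie in a single $H_N(v)$, giving $w\in R_{N+1}$ and hence $w\in H_{N+1}(v)\subseteq H(v)$. To see $H(v)$ is smallest, I would take an arbitrary $(\Gr,E)$-invariant $V$ with $v\in V$ and prove $H_n(v)\subseteq V$ by induction on $n$: $H_0(v)\subseteq V$ because $V$ is $\ll$ upward closed and $v\in V$; and assuming $H_n(v)\subseteq V$, each $w\in R_{n+1}$ lies in $V$ by saturation (its edge-sources lie in $H_n(v)\subseteq V$), and then its entire $\Gr$-orbit lies in $V$ by $\Gr$-invariance, so $H_{n+1}(v)\subseteq V$.

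The main obstacle will be the hereditary part of the inductive step for $H_{n+1}(v)$ in the case where $u$ lies in the $\Gr$-orbit of a newly adjoined regular vertex; this is the only place that genuinely exploits the self-similar structure, via transporting an edge through a groupoid element and reading off $\sr(he)\sim\sr(e)$ from the restriction cocycle $h|_e$. Everything else reduces to routine bookkeeping with the preorder $\ll$ and with unions of $\Gr$-invariant, hereditary, and saturated subsets of $E^0$.
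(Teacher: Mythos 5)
Your proof is correct and follows essentially the same route as the paper: induction on $n$, with the key self-similar input being the transport of an edge $e\in uE^1$ to $he\in wE^1$ and the observation that $h|_e$ witnesses $\sr(e)\sim\sr(he)$. The only difference is organizational — you split $\ll$-upward-closedness into $\Gr$-invariance plus hereditariness rather than unpacking a pair $(\alpha,g)$ witnessing $u\ll w$ directly, and you spell out the saturation and minimality arguments that the paper treats as immediate — so if anything your write-up is more complete at the points where the paper is terse.
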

\begin{proof} Since $H_0(v)$ is upwards closed, it suffices to prove that if $H_{n}(v)$ is $\ll$ upward closed, then so is $H_{n+1}(v)$. 
Take any $u\in H_{n+1}(v)$ and $w\in E^0$ such that $u\ll w$. If $u\in H_{n}(v)$, then $w\in  H_{n}(v)\subseteq  H_{n+1}(v)$ because $H_{n}(v)$ is upward closed.
Otherwise, $u$ is regular and $uE^1\subseteq E^1 H_{n}(v)$.   Take  $(\alpha,g)\in E^*\fibre{\sr}{\sr}G$ so that $u=\rg(\alpha)$ and $g\sr(\alpha)=w$.
If $|\alpha|\geq 1$, then $w \in  H_{n}(v)\subseteq  H_{n+1}(v)$. Otherwise $\alpha$ is a vertex, and so  $gu=w$, which gives $w \in   H_{n+1}(v)$ by construction.
\end{proof}

\begin{proposition}\label{prop:cofinal_equiv_no_invariants}
The self-similar action $(\Gr,E)$ is cofinal if and only if there are no nontrivial $(\Gr,E)$-invariant sets in $E^0$.
\end{proposition}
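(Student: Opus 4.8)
The plan is to prove both implications directly, leaning on the machinery already developed, in particular Lemma~\ref{lem:H_n(v)}, which produces the smallest $(\Gr,E)$-invariant set $H(v)$ containing a given vertex $v$. The statement has the shape ``cofinal $\iff$ no nontrivial invariant sets,'' and the natural strategy is to show that cofinality forces $H(v) = E^0$ for every $v$, while the existence of a proper nontrivial invariant set produces a boundary path witnessing the failure of cofinality.

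First I would prove the contrapositive of the ``if'' direction: assume $(\Gr,E)$ is \emph{not} cofinal and construct a nontrivial $(\Gr,E)$-invariant set. Non-cofinality means there is a vertex $v\in E^0$ and a boundary path $\mu = \mu_1\mu_2\cdots \in \partial E$ such that $v \not\ll \sr(\mu_i)$ for all $i$. The candidate invariant set is $V \coloneqq \{w\in E^0 : w \not\gg \sr(\mu_i) \text{ for all base points } \sr(\mu_i) \text{ of } \mu\}$, or more cleanly its complement built from $\ll$-downward data; I would instead work with $H(\sr(\mu))$-type sets. The cleaner route is to observe that the set $W \coloneqq \{w \in E^0 : w \ll \sr(\mu_i) \text{ for some } i\}$ is $\ll$-downward closed along $\mu$, and to take its complement or a saturation thereof. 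The key technical point is that the base points of a boundary path $\mu$ form, by construction of $\partial E$, a set that cannot be ``escaped'' by the saturation process: precisely because $\mu$ is a boundary path (its source is singular or it is infinite), no regular vertex forces all its successors into the cofinal tail. Here I expect to invoke the inductive structure of $H_n(v)$ from Lemma~\ref{lem:H_n(v)} to verify saturation does not collapse the set.

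For the ``only if'' direction, assume $(\Gr,E)$ is cofinal and suppose toward a contradiction that $V\subseteq E^0$ is a nontrivial $(\Gr,E)$-invariant set, so both $V$ and $E^0\setminus V$ are nonempty. Pick $v\in V$ and $w_0 \in E^0\setminus V$. Since $V$ is $\ll$-upward closed (being $\Gr$-invariant and hereditary, per the remark following the definition of $(\Gr,E)$-invariance), I would build a boundary path $\mu$ starting at $w_0$ all of whose base points lie in $E^0\setminus V$, thereby contradicting cofinality applied to the vertex $v$. The construction proceeds by choosing edges out of each successive source vertex while staying in $E^0\setminus V$: at a regular vertex $w\in E^0\setminus V$, saturation of $V$ guarantees that not all edges $e\in wE^1$ can have $\sr(e)\in V$ after accounting for the $\Gr$-action (otherwise $w$ would be forced into $V$), so some edge keeps us in the complement; at a singular vertex we stop and obtain a finite boundary path. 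This yields $\mu\in\partial E$ with all base points outside $V$, hence none of them can satisfy $v\ll \sr(\mu_i)$ since $v\in V$ and $V$ is upward closed — contradicting cofinality.

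The main obstacle I anticipate is the careful bookkeeping in the saturation step, namely correctly using the ``negative invariance'' characterisation: $V$ is $\Gr$-invariant and saturated iff it contains every regular $w$ for which there exists $g\in \Gr w$ with $gwE^1\subseteq E^1 V$. To build the escaping path I must ensure that at each regular vertex in the complement there is genuinely an edge leading to a successor outside $V$ even after twisting by the groupoid action, and this requires unwinding the interaction between the $\Gr$-action on edges and saturation rather than treating $E$ as an ordinary graph. Matching this with the asymmetric relation $\ll$ and the implication~\eqref{eq:transitivity_implication} is the delicate part; the rest is the routine induction already packaged in Lemma~\ref{lem:H_n(v)}, which I would cite to conclude that $H(w_0)\subseteq E^0\setminus V$ cannot happen once cofinality is in force, closing the argument.
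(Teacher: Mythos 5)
Your proposal is correct and essentially reproduces the paper's argument: for ``cofinal $\Rightarrow$ no nontrivial invariant sets'' you greedily build a boundary path avoiding $V$ by using saturation at each regular vertex outside $V$ and then contradict cofinality via $\ll$-upward closure (the paper does the same slightly more elaborately through the finite path-sets $G_n$ and an implicit K\"onig-type argument), while for the converse your plan of saturating the upward-closed set of vertices that cannot reach the witnessing boundary path is exactly the paper's $H(v)$ construction from Lemma~\ref{lem:H_n(v)}, together with the induction you defer but correctly identify as the key point (singular vertices are never added by the saturation step, and $\sr(\mu_{i+1})\notin H_n(v)$ forces $\sr(\mu_i)\notin H_{n+1}(v)$). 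The obstacle you anticipate about the $\Gr$-action interfering with the saturation step in the first direction is a non-issue: saturation is defined purely graph-theoretically, so at a regular $w\notin V$ some edge in $wE^1$ has source outside $V$ with no twisting to unwind.
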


\begin{proof}
Assume that $(\Gr,E)$ is not cofinal. There two cases to consider.
Firstly,   there might be  a singular vertex $w\in E^0$ and a vertex $v\in E^0$ such that  $v\not\ll w$. By Lemma~\ref{lem:H_n(v)},  $w\not\in H_0(v)$, and since $w$ is singular, we inductively get 
that $w\not\in H_n(v)$ for every $n\in \N$. Hence, $w\not\in H(v)$, and so $H(v)$ is a nontrivial  $(\Gr,E)$-invariant set.
Secondly, it may happen that there is an infinite path $\mu\in E^\infty$ and a vertex $v\in E^0$ such that  $v\not\ll w$ for every  $w$  in the set $B_\mu=\{\sr(\mu_i)\}_{i=1}^\infty$ of base points  of $\mu=\mu_1\mu_2\cdots$.
This implies that  $H_0(v)\cap B_{\mu}=\varnothing$, and since $\sr(\mu_{i+1})\not\in H_0(v)$ implies $\sr(\mu_i)=\rg(\mu_{i+1})\not\in H_1(v)$, we also have $H_1(v)\cap B_{\mu}=\varnothing$. 
Proceeding inductively,  $H_n(v)\cap  B_{\mu}=\varnothing$ for every $n\in \N$. Hence, $H(v)$ is a nontrivial  $(\Gr,E)$-invariant set.

Conversely, assume that $(\Gr,E)$ is cofinal. Seeking a contradiction, suppose that there is a nontrivial $(\Gr,E)$-invariant set $V\subseteq E^0$.  By cofinality,
and since  $V$ is $\ll$-upward closed, $V$ contains all singular vertices and intersects the set of base points of every infinite path. Take any $w\in E^0\setminus V$. Then $w$ is necessarily  regular, and so the sets $F_1 \coloneqq wE^1V$ 
and $G_1 \coloneqq wE^1\setminus V$ are finite. The set $G_1$ is non-empty because $V$ is hereditary and $w\not\in V$. 
In general, for  $n\geq 1$ we let
\begin{align*}
F_{n}& \coloneqq \{\alpha_1\cdots\alpha_n\in wE^{n}: \sr(\alpha_n)\in V \text{ and } \sr(\alpha_k)\not\in V \text{ for all }k=1,\dots,n-1\}, \text{ and }\\
G_n& \coloneqq \{\alpha_1\cdots\alpha_n\in wE^n:  \sr(\alpha_k)\not\in V \text{ for all }k=1,\dots,n\}. 
\end{align*}
Then $\sr(G_n)$ consists of regular elements  and  $G_{n+1}\cup F_{n+1}$ is the set of one-edge  extensions of  elements in $G_n$.
In particular, every $G_{n+1}$ is non-empty. Indeed, if $G_{n+1}=\varnothing$, then $F_{n+1}$ is the set of  all one edge  extensions of  elements in $G_n$,
and so $\sr(G_n)E^1\subseteq E^1\sr(F_{n+1})\subseteq  E^1V$ which implies  $\sr(G_n)\subseteq V$ as $V$ is saturated.
But $\sr(G_n)\subseteq V$ only if $G_{n}=\varnothing$. Proceeding inductively, this implies that $G_1=\varnothing$ which is a contradiction.  

Now, since every $G_{n}$ is non-empty, there is an infinite path $\mu=\mu_1\mu_2\cdots\in E^\infty$ such that 
$\mu_1\cdots\mu_{n}\in G_n$ for every $n\in \N$. Thus,  , $V\cap \{\sr(\mu_k):k\in \N\}\neq \varnothing$ (a  property of $V$) and  
$\sr(\mu_k)\not\in V$ for every $k\in \N$ (by definition of the $G_n$), a contradiction.
\end{proof}

\subsection{Local contractiveness} 

We now pass to discussing local contractivity of the inverse semigroup $S(\Gr,E)$.
To this end we introduce one more condition that $(\Gr,E)$ may satisfy.
\begin{enumerate}
\item[\Con] Every vertex is reachable from a $\Gr$-cycle with an entrance. That is, for any $v\in E^0$ there is $\mu\in vE^*$ such that $\sr(\mu)$ is a base point of a $\Gr$-cycle with an entrance.
\end{enumerate}

\begin{proposition}\label{prop:locally_contracting_semigroup}
For any self-similar action $(\Gr,E)$, the following conditions are equivalent:
\begin{enumerate}
\item\label{enu:locally_contracting_semigroup1}  $S(\Gr,E)$ is locally contracting;
\item\label{enu:locally_contracting_semigroup2} $S(\Gr,E)$ is strongly locally contracting; and
\item\label{enu:locally_contracting_semigroup3} $(\Gr,E)$ satisfies $\Con$.
\end{enumerate}
If any of the above equivalent conditions hold, then every $\Gr$-cycle has an entrance.
\end{proposition}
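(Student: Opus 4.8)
The plan is to prove the equivalences in the cyclic order $\ref{enu:locally_contracting_semigroup3}\Rightarrow\ref{enu:locally_contracting_semigroup2}\Rightarrow\ref{enu:locally_contracting_semigroup1}\Rightarrow\ref{enu:locally_contracting_semigroup3}$, since $\ref{enu:locally_contracting_semigroup2}\Rightarrow\ref{enu:locally_contracting_semigroup1}$ is the general fact recorded in the remark following Definition~\ref{def:InverseSemigroupSimplicityProperties} (strong local contractiveness specialises to local contractiveness, taking the cover $F=\{f_1\}$). Throughout I identify $\EE(\Gr,E)$ with $E^*\cup\{0\}$ via $\alpha\mapsto f_\alpha$ and use the product formula \eqref{eq:pre_fixed_characterisation}. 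A preliminary observation I will use twice is a \emph{re-basing} of $\Gr$-cycles: if a vertex $w$ is a base point of a $g$-cycle $\alpha$, write $\alpha=pq$ with $p\in\rg(\alpha)E^*w$ and $q\in wE^*\sr(\alpha)$, and set $\alpha'\coloneqq q(g^{-1}p)$. Then $\alpha'$ is an $h$-cycle with $h=(g^{-1}|_p)^{-1}$ and $\rg(\alpha')=w$, and by Lemma~\ref{lem:g_paths_entrances} (applied to $p$ and $g^{-1}p$), together with the fact that the base vertices of $p$ and $q$ exhaust those of $\alpha$, the cycle $\alpha'$ has an entrance if and only if $\alpha$ does.

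For $\ref{enu:locally_contracting_semigroup3}\Rightarrow\ref{enu:locally_contracting_semigroup2}$, fix a nonzero idempotent, necessarily of the form $f_\gamma$, and put $v\coloneqq\sr(\gamma)$. Condition $\Con$ supplies $\mu\in vE^*$ whose source lies on a $\Gr$-cycle with an entrance; after re-basing I may assume this cycle $\alpha$ satisfies $\rg(\alpha)=\sr(\mu)$ and $g\sr(\alpha)=\rg(\alpha)$. Writing $\kappa\coloneqq\gamma\mu$ (so $f_\kappa\le f_\gamma$ and $\rg(\alpha)=\sr(\kappa)$), I take $s\coloneqq(\kappa\alpha,g^{-1},\kappa)$. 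A direct computation with the multiplication of $S(\Gr,E)$ gives $s^*s=f_\kappa$ and $ss^*=f_{\kappa\alpha}$, so with $f_1\coloneqq f_\kappa=f_\gamma s^*s$ one has $sf_1s^*=f_{\kappa\alpha}\le f_1$. Since $\alpha$ has an entrance, there is $\delta'\in\sr(\kappa)E^*$ incomparable with $\alpha$: if the entrance is at an internal base vertex $\rg(\alpha_k)$ take $\delta'=\alpha_1\cdots\alpha_{k-1}e'$ for a second edge $e'$ into $\rg(\alpha_k)$, and the remaining case of an entrance at $\sr(\alpha)$ reduces to this because $g$ maps $\sr(\alpha)E^1$ bijectively onto $\rg(\alpha)E^1$. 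Setting $f_0\coloneqq f_{\kappa\delta'}$ gives $f_0\le f_1$ and $f_0(sf_1s^*)=f_{\kappa\delta'}f_{\kappa\alpha}=0$, so $S(\Gr,E)$ is strongly locally contracting.

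For $\ref{enu:locally_contracting_semigroup1}\Rightarrow\ref{enu:locally_contracting_semigroup3}$ I pass to the tight groupoid: by Remark~\ref{rmk:terminology_changes}, local contractiveness of $S(\Gr,E)$ implies that $\Gg(\Gr,E)$ is locally contracting with respect to $\overline S$. Fixing $v\in E^0$ and applying this to the open set $Z(v)\cap\partial E$, I obtain $s=(\zeta_1,h,\zeta_2)\in S(\Gr,E)$ and a nonempty cylinder $V=Z(\kappa)\cap\partial E$ with $\kappa\in\zeta_2E^*\cap vE^*$ on which the partial homeomorphism induced by $s$, namely $\xi\mapsto\zeta_1\big(h(\sigma_E^{|\zeta_2|}\xi)\big)$, maps $\overline V$ into a proper subcylinder of $V$. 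The covering condition forces the image to remain inside $V$, so this map restricts to a genuine contraction of $V$ into itself whose image omits a fixed subcylinder. Extracting a point in the intersection of the iterated images and using the finiteness of the defining data of the contraction together with the self-similar structure produces an eventually periodic path, i.e.\ a $\Gr$-cycle $\rho$ with $\rg(\rho)$ reachable from $v$, while the omitted subcylinder forces a base vertex of $\rho$ to receive a second edge, giving the entrance; thus $\Con$ holds. This periodic-point extraction — promoting the one-step contraction to a closed $\Gr$-cycle and reading off the entrance from the omitted region — is the step I expect to require the most care.

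Finally, to deduce that every $\Gr$-cycle has an entrance once the equivalent conditions hold, suppose some $g$-cycle $\alpha$ had none. By Lemma~\ref{lem:infinite_path_from_g_cycle}, $\alpha$ generates an infinite path $\alpha_\infty$ with no entrance, and $\rg(\alpha)E^*$ consists of prefixes of $\alpha_\infty$; hence for every vertex $u$ on $\alpha_\infty$ the set $uE^*$ is the single ray of $\alpha_\infty$ from $u$ onward and has no entrance. Applying $\Con$ at $v=\rg(\alpha)$ yields $\mu\in\rg(\alpha)E^*$ with $\sr(\mu)$ a base point of some $\Gr$-cycle $\beta$ with an entrance; re-basing $\beta$ at $\sr(\mu)$ produces a $\Gr$-cycle $\beta'$ with $\rg(\beta')=\sr(\mu)$ that still has an entrance. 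But $\beta'\in\sr(\mu)E^*$ is a subpath of $\alpha_\infty$, whose base vertices are base vertices of $\alpha_\infty$ and therefore receive only one edge each, contradicting that $\beta'$ has an entrance. Hence no such $\alpha$ exists.
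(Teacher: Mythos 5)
Your arguments for \ref{enu:locally_contracting_semigroup3}$\Rightarrow$\ref{enu:locally_contracting_semigroup2}, for \ref{enu:locally_contracting_semigroup2}$\Rightarrow$\ref{enu:locally_contracting_semigroup1}, and for the final claim about entrances coincide, up to notation and the explicit re-basing of cycles, with the paper's proof: the paper also takes $s=(\mu\alpha\gamma,g^{-1},\mu\alpha)$, $f_1=f_{\mu\alpha}$, $f_0=f_{\mu\alpha\gamma'}$ for $\gamma'$ incomparable with the cycle $\gamma$, and also deduces the last claim from the last part of Lemma~\ref{lem:infinite_path_from_g_cycle}. Those parts are fine.

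The implication \ref{enu:locally_contracting_semigroup1}$\Rightarrow$\ref{enu:locally_contracting_semigroup3} is where your proposal has a genuine gap. The paper never leaves the inverse semigroup: from the local-contraction data $s=(\alpha,g,\beta)$ and $F=\{f_{\beta\beta_0},\dots,f_{\beta\beta_n}\}$ it reads off, via \eqref{eq:pre_fixed_characterisation}, that each $\alpha g\beta_i$ is covered by $\{\beta\beta_j\}_j$ and incomparable with $\beta\beta_0$, forces $\alpha$ and $\beta$ to be comparable, and then splits into the three cases $\alpha=\beta\beta'$, $\beta=\alpha\alpha'$, $\alpha=\beta$. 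The first two cases yield a $\Gr$-cycle directly (namely $\beta'$ or $\alpha'$), with the entrance supplied by Lemma~\ref{lem:infinite_path_from_g_cycle}; the third case $\alpha=\beta$ produces no cycle at all and must be \emph{ruled out}, which the paper does by a delicate pigeonhole argument on the finite cover (the inductive choice of indices $i_k$ and the minimality contradiction). Your sketch does not engage with this case: when the contracting element lies in $S_0(\Gr,E)$ the induced map on $\partial E$ preserves the length grading, there is no shift and hence no ``eventually periodic path'' to extract, so your periodic-point heuristic simply has nothing to bite on there, yet the hypothesis of local contractiveness does not exclude such elements a priori.

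Two further points in the same step are unjustified. First, the groupoid definition of local contractiveness only provides \emph{some} open $V\subseteq Z(v)\cap\partial E$ with $\rg(W\overline V)\subsetneq V$; you cannot assume $V$ is a cylinder $Z(\kappa)\cap\partial E$, and shrinking $V$ to a cylinder destroys the containment $\rg(W\overline V)\subsetneq V$. Second, even in the length-changing cases, the assertion that iterating the partial map and intersecting images ``produces an eventually periodic path, i.e.\ a $\Gr$-cycle'' is not an argument: what is needed is a finite path $\rho$ with $h\sr(\rho)=\rg(\rho)$ for a single $h\in\Gr$, and because the group element acting on the tail changes at every iterate via the cocycle $g\mapsto g|_\mu$, nothing forces the iterated images to stabilise around such a $\rho$. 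The paper obtains the cycle in one step from the comparability of $\alpha$ and $\beta$, not from a dynamical limit. I would recommend replacing your step \ref{enu:locally_contracting_semigroup1}$\Rightarrow$\ref{enu:locally_contracting_semigroup3} by the direct semigroup computation and, in particular, supplying a proof that the case $\alpha=\beta$ cannot occur.
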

\begin{proof}
\ref{enu:locally_contracting_semigroup3}$\Rightarrow$\ref{enu:locally_contracting_semigroup2}. 
Let $\mu \in E^*$ and put $v=\sr(\mu)$. 
By assumption there are $\alpha\in vE^*$, $g\in \sr(\alpha)\Gr$, and a $g$-cycle $\gamma\in \rg(g)E^*\sr(g)=\sr(\alpha)E^*\sr(g)$ with an entrance, so we may find $\gamma'\in \sr(\alpha)E^*$ such that
$\gamma$ and $\gamma'$ are not comparable.  Then $s \coloneqq (\mu\alpha\gamma, g^{-1} ,\mu\alpha)\in S(\Gr,E)$. Putting $f_1 \coloneqq f_{\mu\alpha}$, $f_0 \coloneqq f_{\mu\alpha\gamma'}$
and  using  \eqref{eq:pre_fixed_characterisation} we get
$sf_1s^*=f_{\mu\alpha\gamma}\leq f_1$, $0\neq f_0\leq f_1=s^*s\leq f_\mu$ and $f_0 \cdot s=0$. Hence, $ S(\Gr,E)$  is strongly locally contracting.

\ref{enu:locally_contracting_semigroup2}$\Rightarrow$\ref{enu:locally_contracting_semigroup1}. See Remark~\ref{rmk:terminology_changes}.

\ref{enu:locally_contracting_semigroup1}$\Rightarrow$\ref{enu:locally_contracting_semigroup3}. 
Fix $v\in E^0$. Since $S(\Gr,E)$ is locally contractive, there exists $s=(\alpha, g, \beta)\in S(\Gr,E)$ and $F=\{f_0,\dots, f_n\}\subseteq  f_v s^*s\EE(\Gr,E)\setminus\{0\}$ such that $F$ is a cover of $s f_i s^*$ and $f_0(sf_is^*) = 0$ for all $0 \le i \le n$. 
Since $s^*s=f_{\beta}$, $f_v s^*s\neq 0$ if and only if $\rg(\beta)=v$.  The relation  $f_i\in  f_vs^*s\EE(\Gr,E)\setminus\{0\}$ means that $f_{i}=f_{\beta\beta_i}$
for some $\beta_i\in \sr(\beta)E^*$, and we have 
$s f_i s^*=s f_{\beta\beta_i} s^*=f_{\alpha g\beta_i}$ by  \eqref{eq:pre_fixed_characterisation}. 
Hence, each $\alpha g\beta_i$ is covered by $\{\beta\beta_{j}\}_{j=1}^n$ and incomparable with $\beta\beta_0$.
In particular,  $\beta$ and $\alpha$ are comparable, so $\rg(\alpha) =  \rg(\beta) = v$. We consider three cases.

If $\alpha=\beta\beta'$ for some $\beta'\in \sr(\beta)E^*\setminus E^{0}$, then  $\beta'$ is a $g^{-1}$-cycle as  $\sr(\beta')=\sr(\alpha)=g\sr(\beta)= g\rg(\beta')$. 
Also, each $\beta' g\beta_i$ is incomparable with $\beta_0\in \sr(\beta)E^*=\sr(\beta')E^*$.  
By  Lemma  \ref{lem:infinite_path_from_g_cycle}, this cannot happen if  $\beta'$ has no entrance.
Hence, $\beta'\in \sr(\beta)E^*$ is a $g^{-1}$-cycle with an entrance.

If $\beta=\alpha\alpha'$ for some $\alpha'\in \sr(\alpha)E^*\setminus E^{0}$, then  $\alpha'$ is a $g$-cycle as $\rg(\alpha')=\sr(\alpha)=g\sr(\beta)= g \sr(\alpha')$. 
Also, each $ g\beta_i\in \sr(\alpha)E^*$ is  incomparable with $\alpha'\beta_0$. By  Lemma  \ref{lem:infinite_path_from_g_cycle} this cannot happen if  $\alpha'$ has no entrance.
Hence, $\alpha'\in \sr(\alpha)E^*$ is a $g$-cycle with an entrance.

If $\alpha=\beta$, then each $ g\beta_i $ is covered by $\{\beta_{j}\}_{j=1}^n$ and incomparable with $\beta_0$.
This leads to a contradiction.  Indeed, put $i_0=0$ and for $k\geq 1$ inductively choose $i_k\in \{1,\dots, n\}$ such that  $\beta_{i_k}$ is the shortest path amongst $\{\beta_{j}\}_{j=1}^n$ which is  comparable with  $g\beta_{k-1}$. 
Since $\{\beta_{j}\}_{j=1}^n$ is finite there exist  $k,l\geq 1 $ such that
$\beta_{i_k}=\beta_{i_{k+l}}$. Assume $k$ is the smallest with this property.  
Note that  $g\beta_{i_{k-1}}$ and $g\beta_{i_{k+l-1}}$ are comparable with $\beta_{i_k}$.
Since comparability of paths is transitive and preserved under $\Gr$-action, 
this means that $\beta_{i_{k-1}}$ and $\beta_{i_{k+l-1}}$ are comparable.
Since $\beta_{i_0}=\beta_0$ is incomparable with $\beta_{i_m}$ for all $m\geq 1$, we have $k-1\geq 1$. 
Thus,  $\beta_{i_{k-1}}$ is not longer than $\beta_{i_{k+l-1}}$ as it was chosen the shortest  comparable with 
$g\beta_{i_{k-2}}$, and $\beta_{i_{k+l-1}}$ is not longer than $\beta_{i_{k-1}}$ as it is the shortest comparable with $g\beta_{i_{k+l-2}}$.
In other words, $\beta_{i_{k-1}}=\beta_{i_{k+l-1}}$.
But this contradicts minimality of $k$.

For the final assertion, the last part of Lemma  \ref{lem:infinite_path_from_g_cycle}
implies that every base point of a $\Gr$-cycle without entrance fails to satisfy \ref{enu:locally_contracting_semigroup3}.
\end{proof}
\begin{remark}
If every vertex is a range of a path whose source lies on a cycle (which holds, for example, when $E^0$ is finite and there are no sources),
then the above equivalent conditions of Proposition~\ref{prop:locally_contracting_semigroup} hold if and only if every cycle in $E$ has an entrance.
This in particular gives \cite{Exel-Pardo:Self-similar}*{Theorem 15.1}.
\end{remark}

\begin{example}\label{ex:not_exel-pardo_top_free}
Here we give an example of a self-similar group action for which $S(\Gr,E)$ is topologically free, minimal, and locally contracting, but not effective and (hence, necessarily) not closed. 
Let $E$ be the directed graph
\[
\begin{tikzpicture}
[baseline=-0.25ex,
vertex/.style={
circle,
fill=black,
inner sep=1.5pt
},
edges/.style={
-stealth,
shorten >= 3pt,
shorten <= 3pt
},
scale =1]
\clip  (-1,-0.5) rectangle (1,0.5);

\node[vertex] (v) at (0,0) {};%

\node[anchor= south] at (v) {\scriptsize{$v$}};

\draw[edges] (v.east) to [out = -40, in = 40,loop, min distance=10mm] node[anchor=west, inner sep = 2pt]{\scriptsize{$f$}} (v.east);
\draw[edges] (v.west) to [out = 220, in = -220,loop,min distance=10mm] node[anchor=east, inner sep = 2pt]{\scriptsize{$e$}} (v.west);
\end{tikzpicture}
\]
and let $\Gr = \Z$. 
Identify $0 \in \Z$ with $v$ and define a self-similar action of $\Z$ on $E$, given on generators by
\begin{align*}
1 \cdot e &= e, & 1|_e &= 1, &
1 \cdot f &= f, \text{ and} & 1|_f &= 0.
\end{align*}
Observe that each $k \in \Z$ fixes every $\alpha \in E^*$. 
For each $k \in \Z$ and $n \in \N$, the path $e^n f$ is a minimal strongly $k$-fixed path, so by Proposition~ \ref{prop:closedness} the inverse semigroup $S( \Z,E)$ is not closed. All (nontrivial) paths are cycles with an entrance, and each $k \in \Z$ strongly fixes $f$, so  $S( \Z,E)$ is topologically free by Proposition~\ref{prop:topological_freeness_semigroup}. Since $E^0 = \{v\}$ there are no nontrivial $( \Z,E)$-invariant sets of vertices, so by Propositions~\ref{prop:cofinal_equiv_no_invariants} and \ref{prop:minimality_inverse_semigroup} the inverse semigroup $S( \Z,E)$ is minimal. Since $v$ lies on a $ \Z$-cycle with an entrance, $S( \Z,E)$ is locally contracting by Proposition~\ref{prop:locally_contracting_semigroup}. 

On the other hand, $S( \Z,E)$ does not satisfy condition $\Sla$ in Proposition \ref{prop:topological_freeness_semigroup}: each $k \in \Z$ fixes $Z(v)$, but for each $n \in \N$ the path $e^n$ is not strongly $k$-fixed, so $k$ is not slack at $v$.

\end{example}

\section{The groupoid analysis}{}\label{sec:self-similar_groupoid}
In this section we analyse the basic properties (described in Definition 
\ref{defn:groupoid_properties}) of groupoids  associated to a  self-similar groupoid action  $(\Gr,E)$.

\subsection{Groupoids associated to self-similar actions}
We start by describing the universal and tight groupoid (see Definition~\ref{def:universal_and_tight_groupoids}) associated to the inverse semigroup $S(\Gr,E)$.
The path  space $E^{\le \infty}$ and boundary path space $\partial E$  were introduced in Example~\ref{ex:spectrum_graph_inverse_semigroup}.
\begin{proposition}\label{prop:transformation_groupoids_self_similar_0}
Let $(\Gr,E)$ be a self-similar groupoid action. We have an action of   the inverse semigroup $S(\Gr,E)$   on the path space $E^{\le \infty}$ of $E$, where for $(\alpha,g,\beta) \in S(\Gr,E)$ the partial homeomorphisms 
$\widetilde{h}_{(\alpha,g,\beta)} \colon Z(\beta) \to Z(\alpha)$ are given by
\[
\widetilde{h}_{(\alpha,g,\beta)}(\beta \eta) = \alpha(g \cdot \eta), \qquad \eta\in E^{\le \infty},\, \rg(\eta)=\sr(\beta).
\]
The boundary path space $\partial E$ is a closed $\widetilde{h}$-invariant subset of $E^{\le \infty}$, and so $\widetilde{h}$ restricts to an action $h$ on  $\partial E$. 
In fact, the map $E^{\le \infty} \ni \mu\mapsto \xi_{\mu}\in\widehat{\EE(G,E)}$  
given by $\xi_{\mu} \coloneqq \{(\alpha,\sr(\alpha),\alpha):\alpha\in E^*,\, \bone_{Z(\alpha)}(\mu)=1\}$, 
intertwines $\widetilde{h}$ with the canonical action of $S(\Gr,E)$  on $\widehat{\EE(G,E)}$, and  restricts 
to the homeomorphism $\partial E\cong \partial\widehat{\EE(G,E)}$. Thus, we have natural isomorphisms
\[
\widetilde{\Gg}(S(\Gr,E))\cong S(\Gr,E) \ltimes_{\widehat{h}} E^{\le \infty} \quad \text{and} \quad \Gg(S(\Gr,E))\cong S(\Gr,E)\ltimes_{h} \partial E
\]
of topological groupoids.
\end{proposition}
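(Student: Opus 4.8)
The plan is to reduce the whole statement to the abstract spectral construction of Section~\ref{twisted_inverse_semigroups}, so that only one genuinely computational step remains. First I would record the identification of spectra. By \eqref{eq:semilattice_isomorphism} the assignment $\alpha\mapsto f_\alpha=(\alpha,\sr(\alpha),\alpha)$ is a semilattice isomorphism $E^*\cup\{0\}\cong\EE(\Gr,E)$; pushing filters forward along it and composing with the homeomorphism $E^{\le\infty}\cong\widehat{E^*\cup\{0\}}$, $\mu\mapsto\phi_\mu=\{\alpha\in E^*:\bone_{Z(\alpha)}(\mu)=1\}$, from Example~\ref{ex:spectrum_graph_inverse_semigroup}, produces exactly the map $\mu\mapsto\xi_\mu$ of the statement. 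Hence $\mu\mapsto\xi_\mu$ is a homeomorphism $E^{\le\infty}\cong\widehat{\EE(\Gr,E)}$. Since tightness of a filter is preserved by a semilattice isomorphism and $\phi_\mu$ is tight precisely when $\mu\in\partial E$ (again Example~\ref{ex:spectrum_graph_inverse_semigroup}), this homeomorphism restricts to $\partial E\cong\partial\widehat{\EE(\Gr,E)}$.

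The core of the argument is to transport the canonical action of $S(\Gr,E)$ on $\widehat{\EE(\Gr,E)}$, defined in \eqref{eq:inverse_semigroup_action_spectrum}, through this homeomorphism and to match it with the explicit formula. For $t=(\alpha,g,\beta)$ one computes $t^*t=f_\beta$ and $tt^*=f_\alpha$, so the domain $Z(t^*t)$ and range $Z(tt^*)$ correspond under $\mu\mapsto\xi_\mu$ to the cylinders $Z(\beta)$ and $Z(\alpha)$, as claimed. To identify the map itself I would verify, for $\mu=\beta\eta\in Z(\beta)$ and every $\gamma\in E^*$, the equivalence $f_\gamma\in\widetilde h_t(\xi_\mu)\iff f_\gamma\in\xi_{\alpha(g\cdot\eta)}$. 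By \eqref{eq:inverse_semigroup_action_spectrum} the left side means $t^*f_\gamma t\in\xi_\mu$, and applying Lemma~\ref{lem:pre_fixed_characterisation} to $s=t^*$ evaluates $t^*f_\gamma t=sf_\gamma s^*$ as $f_{\beta(g^{-1}\cdot\alpha')}$ when $\gamma=\alpha\alpha'$, as $f_\beta$ when $\gamma$ is a prefix of $\alpha$, and as $0$ otherwise. A short case analysis then matches these outcomes against the condition that $\gamma$ be a prefix of $\alpha(g\cdot\eta)$, using that the left $\Gr$-action is length-preserving and satisfies $g^{-1}Z(\alpha')=Z(g^{-1}\cdot\alpha')$ (so that $\eta\in Z(g^{-1}\cdot\alpha')\iff g\cdot\eta\in Z(\alpha')$). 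This case-by-case check is the one place where a real, if routine, verification is needed, and I expect it to be the main obstacle.

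With the actions matched, the remaining assertions are formal. Closedness and $\widetilde h$-invariance of $\partial E$ follow by transporting the corresponding properties of $\partial\widehat{\EE(\Gr,E)}$ — which is closed and invariant by \cite{Exel}*{12.8} — through the homeomorphism, and could alternatively be read off from Lemma~\ref{lem:actions_on_infinite_paths}; this yields the restricted action $h$ on $\partial E$. Finally, because $\mu\mapsto\xi_\mu$ is an $S(\Gr,E)$-equivariant homeomorphism that restricts to one on the boundary, it induces isomorphisms of the corresponding transformation groupoids. By Definition~\ref{def:universal_and_tight_groupoids} these are precisely $\widetilde\Gg(S(\Gr,E))\cong S(\Gr,E)\ltimes_{\widetilde h}E^{\le\infty}$ and $\Gg(S(\Gr,E))\cong S(\Gr,E)\ltimes_h\partial E$, which completes the proof.
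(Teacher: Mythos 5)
Your proposal is correct and follows essentially the same route as the paper's proof: the same identification $E^{\le\infty}\cong\widehat{\EE(\Gr,E)}$ via Example~\ref{ex:spectrum_graph_inverse_semigroup}, the same intertwining check via \eqref{eq:inverse_semigroup_action_spectrum} and Lemma~\ref{lem:pre_fixed_characterisation}, and the same three-way case analysis on comparability of $\gamma$ with $\alpha$. The remaining assertions are then deduced formally, exactly as in the paper.
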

\begin{proof} Since $\EE(G,E)\cong \EE(E)$, it follows from Example~\ref{ex:spectrum_graph_inverse_semigroup} that the map 
$E^{\le \infty}\ni \mu\mapsto \xi_\mu\in  \widehat{\EE(G,E)}$   where 
$\xi_{\mu} \coloneqq \{f_{\alpha}:\alpha\in E^*,\, \bone_{Z(\alpha)}(\mu)=1\}$ 
is a homeomorphism $
E^{\le \infty}\cong  \widehat{\EE(G,E)}$, which restricts to the homeomorphism $\partial E\cong  \partial\widehat{\EE(G,E)}$.
This homeomorphism intertwines the appropriate actions. Indeed, let $t=(\alpha,g,\beta)$. Then $\mu\in Z(\beta)$ 
if and only if $f_\beta \in \xi_{\mu}$. 
Let
$\mu=\beta\overline{\mu}\in Z(\beta)$ for some $\overline{\mu} \in E^{\le \infty}$. For the action $\widetilde{h}$ on  $E^{\le \infty}$ we have	$\widetilde{h}_t(\mu)=\widetilde{h}_{(\alpha,g,\beta)}(\beta \overline{\mu}) = \alpha(g \cdot \overline{\mu})$, and so 
for $\eta\in E^*$ we have
\[
f_\eta \in \xi_{\widetilde{h}_t(\mu)} \Longleftrightarrow 1=\bone_{Z(\eta)}(\alpha(g \cdot \overline{\mu})).
\]
On the other hand, using the same symbol  $\widetilde{h}$ for the canonical action on $\widehat{\EE(G,E)}$,
\begin{align*}
f_\eta \in  \widetilde{h}_t(\xi_{\mu})&\stackrel{\eqref{eq:inverse_semigroup_action_spectrum}}{\Longleftrightarrow} t^*f_{\eta}t\in \xi_{\mu} \stackrel{\eqref{eq:pre_fixed_characterisation}}{\Longleftrightarrow} 
(f_{\beta(g^{-1}\alpha')}\in\xi_{\mu}  \text{ and } \eta=\alpha\alpha') \text{ or } (f_{\beta}\in \xi_{\mu}  \text{ and } \alpha=\eta\eta')  
\\
&\stackrel{\mu=\beta\overline{\mu}}{\Longleftrightarrow} 1=\bone_{Z(g^{-1}\alpha')}(\overline{\mu}) \text{ and } \eta=\alpha\alpha' \text{ or }  \alpha=\eta\eta'.
\end{align*}
Now we have three cases. If $\eta$ and $\alpha$ are not comparable, then the above conditions imply that $f_\eta \not\in \xi_{\widetilde{h}_t(\mu)} $ and $f_\eta \not\in  \widetilde{h}_t(\xi_{\mu})$.
If $\alpha=\eta\eta'$, then $f_\eta \in \xi_{\widetilde{h}_t(\mu)} $ and $f_\eta \in  \widetilde{h}_t(\xi_{\mu})$.
If $\eta=\alpha\alpha'$, then
\[
\bone_{Z(\eta)}(\alpha(g \cdot \overline{\mu}))=\bone_{Z(\alpha')}(g \cdot \overline{\mu})=\bone_{Z(g^{-1}\alpha')}(\overline{\mu}),
\]
and so $f_\eta \in \xi_{\widetilde{h}_t(\mu)}$ if and only if $f_\eta \in  \widetilde{h}_t(\xi_{\mu})$. Thus,  $\widetilde{h}_t(\xi_{\mu})=\xi_{\widetilde{h}_t(\mu)}$ as claimed. 
This implies the remaining assertions. 
\end{proof}

To describe, somewhat more concretely, the  above transformation groupoids we consider the following equivalence relation on the set of quadruples $(\alpha, g ,\beta; \xi)$, where $(\alpha,g,\beta) \in S(\Gr,E) \text{ and } \xi \in Z(\beta)$:
\[
(\alpha, g ,\beta; \xi) \sim (\gamma, h, \delta; \eta) \iff
\begin{cases}
(\alpha,g,\beta) \text{ and } (\gamma, h, \delta) \text{ share a  lower bound }(\epsilon, k ,\zeta) \\
\text{ and }\xi =\eta \in Z(\zeta)
\end{cases}
\]
Denote the equivalence class of $(\alpha, g ,\beta;\xi)$ by $[\alpha,g,\beta; \xi]$.
We  equip 
\[
\widetilde{\Gg}(\Gr,E)  \coloneqq  \{[\alpha,g,\beta; \beta \xi] \colon (\alpha,g,\beta) \in S(\Gr,E) \text{ and } \xi \in Z(\sr(\beta))\}
\]
with the structure of an \'etale  groupoid \label{page:self-similar_groupoid} with multiplication  given by 
\[
[\alpha,g,\beta;  \beta\xi] \cdot [\gamma, h,\delta; \delta \eta]  \coloneqq  [(\alpha,g,\beta)\cdot (\gamma,h,\delta); \delta \eta] \qquad \text{whenever $\beta \xi = \gamma(h \cdot \eta)$,}
\]
inverse $[\alpha,g,\beta;\beta\xi]^{-1} = [\beta, g^{-1}, \alpha; \alpha(g \cdot \xi)]$, 
and a basis for the topology given by the  bisections
\[
U(\alpha,g,\beta;V)  \coloneqq  \{ [\alpha,g,\beta;\xi] \mid \xi \in V \}
\]
where $V \subseteq Z(\beta)$ is open. 
The unit space 
\[
\widetilde{\Gg}(\Gr,E)^0 = \{[\rg(\xi), \rg(\xi), \rg(\xi); \xi] \colon \xi \in E^{\le \infty}\} 
\]
is naturally homeomorphic to $E^{\le \infty}$, and we use this homeomorphism to identify  $\Gg(\Gr,E)^0$  with $E^{\le \infty}$. 
Then the source and range maps are given by
\begin{equation}\label{eq:source_and_range_maps}
\sr([\alpha,g,\beta;  \beta\xi])= \beta\xi, 
\qquad 
\rg([\alpha,g,\beta;  \beta\xi]) = \alpha(g \cdot \xi).
\end{equation} 
The boundary path space  $\partial E$ is a closed $\widetilde{\Gg}(\Gr,E)$-invariant subset of $E^{\le \infty}$ and hence
\[
\Gg(\Gr,E)  \coloneqq  \{[\alpha,g,\beta; \xi] \colon (\alpha,g,\beta) \in S(\Gr,E) \text{ and } \xi \in Z(\beta)\cap  \partial E\}
\]
is a closed full subgroupoid of $\widetilde{\Gg}(\Gr,E)$.
By construction these groupoids are isomorphic to the transformation groupoids  described in Proposition~\ref{prop:transformation_groupoids_self_similar_0}, and so
we have natural groupoid isomorphisms
\begin{equation}\label{eq:transformation_groupoids_self_similar}
\widetilde{\Gg}(\Gr,E)\cong \widetilde{\Gg}(S(\Gr,E)) \qquad   \Gg(\Gr,E) \cong \Gg(S(\Gr,E)).
\end{equation}
These groupoids are $\Z$-graded by the continuous groupoid homomorphism $c:\widetilde{\Gg}(\Gr,E)\to \Z$
given by \(
c([\alpha,g,\beta; \xi])=|\alpha|-|\beta|.
\)
We identify the two clopen subgroupoids
\begin{align*}
	\widetilde{\Gg}_0(\Gr,E)  &\coloneqq  c^{-1}(0)=\{[\alpha,g,\beta; \xi] \in \widetilde{\Gg}(\Gr,E)\colon |\alpha|=|\beta|\} \subseteq \widetilde{\Gg}(\Gr,E), \text{ and }\\
	\Gg_0(\Gr,E) &\coloneqq  \widetilde{\Gg}_0(\Gr,E)\cap \Gg(\Gr,E) \subseteq \Gg(\Gr,E).
\end{align*}
Here, we again have natural groupoid isomorphisms
\[
\widetilde{\Gg}_0(\Gr,E)\cong \widetilde{\Gg}(S_0(\Gr,E)), \qquad   \Gg_0(\Gr,E)\cong \Gg(S_0(\Gr,E)),
\]
where $S_0(\Gr,E)$ is the core subsemigroup of $S(\Gr,E)$, given by \eqref{eq:core_subsemigroup}.
Since $S_{00}(\Gr,E)\cong  \Gr *E^*\cup \{0\}$ is a wide inverse subsemigroup of $S_0(\Gr,E)$,  by the last part of Remark~\ref{rem:transformation_subgroupoids},
we also get the following open subgroupoids
\begin{align*}
\widetilde{\Gg}_{00}(\Gr,E)
& \coloneqq \{[g\beta, g|_{\beta}, \beta;\beta\xi]:(g,\beta)\in \Gr *E^* ,\, \xi\in \sr(\beta)E^{\le \infty}\}\cong \widetilde{\Gg}(S_{00}(\Gr,E)), 
\\
\Gg_{00}(\Gr,E )& \coloneqq  \widetilde{\Gg}_{00}(\Gr,E)\cap \Gg(\Gr,E)\cong \Gg(S_{00}(\Gr,E))
\end{align*}
of  $\widetilde{\Gg}_0(\Gr,E)$ and $\Gg_0(\Gr,E)$ respectively.
Thus, the discussed groupoids form the diagram:
\begin{equation}\label{eq:various_groupoids}
\begin{tikzcd}  [row sep=small]
\widetilde{\Gg}(\Gr,E)\arrow[r, phantom, sloped, "\supseteq"] & \widetilde{\Gg}_0(\Gr,E)\arrow[r, phantom, sloped, "\supseteq"] & \widetilde{\Gg}_{00}(\Gr,E)\\
\Gg(\Gr,E)\arrow[r, phantom, sloped, "\supseteq"] \arrow[u, phantom, sloped, "\subseteq"] & \Gg_0(\Gr,E)\arrow[r, phantom, sloped, "\supseteq"] \arrow[u, phantom, sloped, "\subseteq"] & \Gg_{00}(\Gr,E) \arrow[u, phantom, sloped, "\subseteq"] 
\end{tikzcd}
\end{equation}
where the horizontal inclusions describe open subgroupoids while the vertical ones are closed subgroupoid inclusions.
We may also use that the  inverse semigroups $\EE(E)\subseteq S(E)_0\subseteq S(E)$ sit naturally as wide subgroups in 
$S(G,E)$ to complement the diagram \eqref{eq:various_groupoids}   with the following diagrams of  inclusions of wide open subgroupoids:
\begin{equation}\label{eq:graph_groupoids_into_self-similar_groupoids}
\begin{tikzcd}  [row sep=small, column sep=small]
\widetilde{\Gg}(\Gr,E)\arrow[r, phantom, sloped, "\supseteq"] & \widetilde{\Gg}_0(\Gr,E)\arrow[r, phantom, sloped, "\supseteq"] & \widetilde{\Gg}_{00}(\Gr,E)
\\
\widetilde{\Gg}(E)\arrow[r, phantom, sloped, "\supseteq"] \arrow[u, phantom, sloped, "\subseteq"] & \widetilde{\Gg}_0(E)\arrow[r, phantom, sloped, "\supseteq"] \arrow[u, phantom, sloped, "\subseteq"] 
& E^{\le\infty} \arrow[u, phantom, sloped, "\subseteq"] 
\end{tikzcd}\quad\quad
\begin{tikzcd}  [row sep=small, column sep=small]
\Gg(\Gr,E)\arrow[r, phantom, sloped, "\supseteq"] & \Gg_0(\Gr,E)\arrow[r, phantom, sloped, "\supseteq"] & \Gg_{00}(\Gr,E)
\\
\Gg(E)\arrow[r, phantom, sloped, "\supseteq"] \arrow[u, phantom, sloped, "\subseteq"] & \Gg_0(E)\arrow[r, phantom, sloped, "\supseteq"] \arrow[u, phantom, sloped, "\subseteq"] 
& \partial E \arrow[u, phantom, sloped, "\subseteq"] 
\end{tikzcd}.
\end{equation}
\subsection{The germ relation}
To analyse  all these groupoids it is crucial to understand  the equivalence $\sim$ better and describe the equivalence classes that give units or isotropy arrows.
\begin{lemma}
\label{lem:lower_bound_description}
Let $(\alpha,g,\beta),(\gamma,h,\delta) \in S(\Gr,E)$ and $\xi,\eta \in E^{\le \infty}$ with $\xi \in Z(\beta)$ and $\eta \in Z(\delta)$. 
Then
$[\alpha,g,\beta; \xi]=[\gamma, h, \delta; \eta]$ if and only if $\xi = \eta$ and there are $\beta',\delta' \in E^*$ satisfying
\begin{equation}\label{eq:lower_bound_existence}
\alpha g\beta'=\gamma h\delta',\qquad  g|_{\beta'}=h|_{\delta'},  \qquad \beta\beta'=\delta\delta',
\end{equation}
and  $\xi\in Z(\beta\beta')=Z(\delta\delta')$. 	Further, this can only happen if and only if $\xi = \eta$ and either
\begin{enumerate}
\item \label{itm:one_way} 

$\alpha = \gamma \ol{\gamma}$ and $\beta=\delta\overline{\delta}$  for some $\ol{\gamma},\overline{\delta}\in E^*$ such that $h  \ol{\delta} = \ol{\gamma}$ and there is  a  strongly $g^{-1} (h|_{\ol{\delta}})$-fixed   path $\beta' \in \sr(\beta) E^*$ 
with $\xi \in Z(\beta\beta')$,  or
\item \label{itm:or_another} 
$\gamma = \alpha \ol{\alpha}$, $\delta = \beta \ol{\beta}$ and for some $\ol{\alpha},\ol{\beta }\in E^*$ such that $g  \ol{\beta} = \ol{\alpha}$ and 
there is a strongly $h^{-1}(g|_{\ol{\beta}})$-fixed path $\delta' \in \sr(\delta)E^*$ 	with $\xi \in Z(\delta\delta')$.
\end{enumerate}

\end{lemma}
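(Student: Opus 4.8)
The plan is to read off the first equivalence directly from the description of the partial order in Lemma~\ref{lem:inverse_semigroup_preorder} together with the defining germ relation, and then to extract the two cases by a comparability analysis of the word equation $\beta\beta' = \delta\delta'$.

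First I would unwind the definition of the germ relation. By definition $[\alpha,g,\beta;\xi] = [\gamma,h,\delta;\eta]$ holds if and only if $\xi = \eta$ and $(\alpha,g,\beta)$ and $(\gamma,h,\delta)$ admit a common lower bound $(\epsilon,k,\zeta)$ with $\xi \in Z(\zeta)$. Applying Lemma~\ref{lem:inverse_semigroup_preorder} to $(\epsilon,k,\zeta) \le (\alpha,g,\beta)$ and to $(\epsilon,k,\zeta) \le (\gamma,h,\delta)$ produces $\beta' \in \sr(\beta)E^*$ and $\delta' \in \sr(\delta)E^*$ with $\zeta = \beta\beta' = \delta\delta'$, $\epsilon = \alpha(g\beta') = \gamma(h\delta')$, and $k = g|_{\beta'} = h|_{\delta'}$, which are precisely the relations~\eqref{eq:lower_bound_existence}; conversely, given such $\beta',\delta'$ with $\xi \in Z(\beta\beta')$, the triple $(\alpha(g\beta'),\, g|_{\beta'},\, \beta\beta')$ is a common lower bound lying below $\xi$. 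This settles the first ``if and only if''.

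Next I would run the comparability analysis, which is what produces cases~\ref{itm:one_way} and~\ref{itm:or_another}. The equation $\beta\beta' = \delta\delta'$ forces $\beta$ and $\delta$ to be comparable, giving the two cases $\beta = \delta\ol{\delta}$ or $\delta = \beta\ol{\beta}$. In case~\ref{itm:one_way}, write $\delta' = \ol{\delta}\beta'$; expanding $h\delta' = (h\ol{\delta})(h|_{\ol{\delta}}\cdot\beta')$ via~\eqref{eq:matching_for_self-similar} and comparing prefixes of length $|\alpha|$ in $\alpha(g\beta') = \gamma(h\delta')$ shows $\alpha = \gamma(h\ol{\delta})$, so with $\ol{\gamma} := h\ol{\delta}$ we obtain $\alpha = \gamma\ol{\gamma}$ and $\ol{\gamma} = h\ol{\delta}$. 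Cancelling this common prefix leaves $g\cdot\beta' = (h|_{\ol{\delta}})\cdot\beta'$, while $g|_{\beta'} = h|_{\delta'} = (h|_{\ol{\delta}})|_{\beta'}$ by~\eqref{eq:matching_for_self-similar2}. Writing $k := h|_{\ol{\delta}}$, these two identities say exactly that $\beta'$ is strongly $g^{-1}k$-fixed: applying $g^{-1}$ to $g\beta' = k\beta'$ gives $(g^{-1}k)\beta' = \beta'$, and the cocycle identity~\eqref{eq:matching_for_self-similar2} combined with~\eqref{equ:inverse_vs_restriction} gives $(g^{-1}k)|_{\beta'} = (g|_{\beta'})^{-1}(k|_{\beta'}) = \sr(\beta')$. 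Case~\ref{itm:or_another} is the symmetric computation with the roles of $(\alpha,g,\beta)$ and $(\gamma,h,\delta)$ interchanged.

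The converse in each case reverses these manipulations: given the data of~\ref{itm:one_way}, set $\delta' := \ol{\delta}\beta'$, and strong fixation of $\beta'$ unpacks (via~\eqref{eq:matching_for_self-similar2} and~\eqref{equ:inverse_vs_restriction}) to $g\cdot\beta' = k\cdot\beta'$ and $g|_{\beta'} = k|_{\beta'}$, from which $\alpha(g\beta') = \gamma(h\delta')$ and $\beta\beta' = \delta\delta'$ follow at once. The step I expect to require the most care is the bookkeeping of sources and ranges guaranteeing that $g^{-1}k$ is a legitimate element of $\Gr$ based at $\sr(\beta)$ and that every restriction and composition written is defined; the genuine content is the translation between ``$g$ and $k = h|_{\ol{\delta}}$ agree on $\beta'$ to first order'' and ``$g^{-1}k$ strongly fixes $\beta'$'', which is precisely where~\eqref{eq:matching_for_self-similar2} and~\eqref{equ:inverse_vs_restriction} do the work.
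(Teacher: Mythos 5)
Your proposal is correct and follows essentially the same route as the paper's proof: both read off \eqref{eq:lower_bound_existence} from Lemma~\ref{lem:inverse_semigroup_preorder} applied to a common lower bound, split into the two cases via comparability of $\beta$ and $\delta$, and translate $g\beta'=(h|_{\ol\delta})\beta'$, $g|_{\beta'}=(h|_{\ol\delta})|_{\beta'}$ into strong fixation by $g^{-1}(h|_{\ol\delta})$ using \eqref{eq:matching_for_self-similar2} and \eqref{equ:inverse_vs_restriction}. The only slip is cosmetic: the identity $h|_{\ol\delta\beta'}=(h|_{\ol\delta})|_{\beta'}$ comes from the concatenation rule for sections (Proposition~\ref{prop:extension_of_self-similar_action_on_graph}), not from \eqref{eq:matching_for_self-similar2}.
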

\begin{proof}
That $\xi = \eta$ follows immediately from the definition of $\sim$.
Lemma~\ref{lem:inverse_semigroup_preorder} implies that 
$(\epsilon, k ,\zeta) \leq (\alpha,g,\beta),(\gamma,h,\delta)$ in $S(\Gr,E)$ if 
and only if there are $\beta',\delta' \in E^*$ satisfying
\eqref{eq:lower_bound_existence}
in which case $(\epsilon, k ,\zeta)=(\alpha g\beta', g|_{\beta'}, \beta\beta')=(\gamma h\delta', h|_{\delta'}, \delta\delta'
)$. The first part of the assertion follows.
Now note that the first and last equation of \eqref{eq:lower_bound_existence} show that either $\alpha \le \gamma$  and  $\beta \le \delta$, or $\gamma \le \alpha$ 
and $\delta \le \beta$. 	We restrict our attention to the first case, which corresponds to \ref{itm:one_way}. The other case, which corresponds to \ref{itm:or_another},  follows by a symmetric argument. 
Thus, let us assume that  $\alpha \le \gamma$  and  $\beta \le \delta$, that is $\alpha = \gamma \ol{\gamma}$ and $\beta = \delta \ol{\delta}$ for some  $\ol{\gamma}, \ol{\delta} \in E^*$. So assuming \eqref{eq:lower_bound_existence}, 
we get $\ol{\gamma} g\beta'=h\delta'$ and $\ol{\delta}\beta'=\delta'$. 
Thus,			\[
\ol{\gamma} g\beta'=  h \delta' = h (\ol{\delta}\beta') = (h \ol{\delta})(h|_{\ol{\delta}} \beta').  
\]
Since  $|g\beta'|=|\beta'|=|h|_{\ol{\delta}}  \beta'|$, we infer that  $\ol{\gamma} = h  \ol{\delta}$ 
and $g\beta'= h|_{\ol{\delta}} \cdot \beta'$. Hence,   $g^{-1}h|_{\ol{\delta}} \cdot \beta'=\beta'$. 
Moreover, by the middle identity of \eqref{eq:lower_bound_existence}, $g|_{\beta'} = h|_{\delta'} = h|_{\ol{\delta}\beta'}$, so
\[
(g^{-1} h|_{\ol{\delta}})|_{\beta'} 
= (g^{-1}|_{h|_{\ol{\delta}} \cdot \beta'})(h|_{\ol{\delta}\beta'})
= (g^{-1}|_{g \cdot \beta'})(g|_{\beta'})
= (g^{-1}g)|_{\beta'} = \sr(\beta').
\]
Thus, $\beta'$ is strongly fixed by $g^{-1} (h|_{\ol{\delta}})$. 
Therefore, $[\alpha,g,\beta; \xi]=[\gamma, h, \delta; \xi]$ implies that \ref{itm:one_way} holds. 

Conversely, suppose that \ref{itm:one_way} holds  and put   $\delta'  \coloneqq  \ol{\delta}\beta'$.
Then we immediately get $\beta\beta'=\delta\delta'$.	
Since $\beta'$ is strongly fixed by $g^{-1} (h|_{\ol{\delta}})$ we  have  $g\beta'=(h|_{\ol{\delta}})\beta'$
and  $\sr(\beta')=[g^{-1} (h|_{\ol{\delta}})]|_{\beta'}=(g^{-1} |_{h|_{\ol{\delta}}\beta'}) h|_{\ol{\delta}\beta'}=(g^{-1} |_{g\beta'}) h|_{\delta'}$. 
Therefore,  
\[
\gamma h \delta'=\gamma h (\ol{\delta}\beta')=\gamma (h\ol{\delta}) (h|_{\ol{\delta}}\beta')=\gamma \ol{\gamma} g\beta'=\alpha g\beta'
\]
and
$
g|_{\beta'}=(g^{-1} |_{g\beta'})^{-1}=h|_{\delta'}$. This proves \eqref{eq:lower_bound_existence} and so  $[\alpha,g,\beta; \xi]=[\gamma, h, \delta; \xi]$. 
\end{proof}
\begin{remark}\label{rem:groupoid_epimorphisms} 
We see that the typical situation in which an equality of germs
takes place is
$$
[\alpha,g,\beta; \beta\gamma\xi]= [ \alpha g\gamma,g|_{\gamma}, \beta\gamma;\beta\gamma\xi].
$$
In particular, $[\rg(g),g,\sr(g); \beta\xi]=[g\beta, g|_{\beta}, \beta;\beta\xi]$ for every $(g,\beta)\in \Gr *E^*$, and so
\begin{align*}
\widetilde{\Gg}_{00}(\Gr,E)
=\{[\rg(g),g,\sr(g); \xi]:g\in \Gr ,\, \xi\in \sr(g)E^{\le \infty}\}. 
\end{align*}
In particular, $\Gg_{00}(\Gr,E)$ coincides with the groupoid denoted by $\mathcal{H}_{0}$ in \cite{Miller_Steinberg}.
Note that the map  $(g,\xi)\mapsto [\rg(g),g,\sr(g); \xi]$ yields groupoid epimorphisms 
$\Gr\rtimes E^{\le \infty}\onto \widetilde{\Gg}_{00}(\Gr,E)$ and $\Gr\rtimes \partial E\onto\Gg_{00}(\Gr,E)$ 
for the transformation groupoids from 
Remark~\ref{rem:transformation_groupoids_from_groupoid_actions}. 

\end{remark}
By Lemma~\ref{lem:strongly_fix_respects_the_order} a finite path is strongly $g$-fixed if and only if it  contains a strongly $g$-fixed prefix.
This motivates us to extend this notion (Definition~\ref{def:strongly_fixed}) to infinite paths as follows.

\begin{definition}
An infinite path is \emph{strongly $g$-fixed} if it has a strongly $g$-fixed finite prefix.
\end{definition}

\begin{proposition}\label{prop:transformation_groupoids_self_similar}
Let $[\alpha,g,\beta; \beta\xi]\in \widetilde{\Gg}(\Gr,E)$. Then
\begin{enumerate}
\item\label{enu:transformation_groupoids_self_similar1} $[\alpha,g,\beta; \beta\xi]$ is a unit if and only if $\alpha=\beta$ and $\xi$ is strongly $g$-fixed; 
\item\label{enu:transformation_groupoids_self_similar2} 
$[\alpha,g,\beta; \beta\xi]$ is an isotropy arrow if and only if  one of the following hold:
\begin{enumerate}[label=\textup{(\alph*)}]
\item\label{enu:transformation_groupoids_self_similar2a}  $\alpha = \beta$ and $\xi$ is $g$-fixed;
\item \label{enu:transformation_groupoids_self_similar2b} $\beta=\alpha (g\alpha')$ where $\alpha'\in \sr(g)E^*$ is a  $g|_{\alpha'}$-cycle  and $\xi$ is the  infinite path  associated to this $g|_{\alpha'}$-cycle as in  Lemma~\ref{lem:infinite_path_from_g_cycle}; or
\item\label{enu:transformation_groupoids_self_similar2c}  $\alpha= \beta\beta'$ where $\beta'\in \sr(g)E^*$ is a  $g^{-1}$-cycle and $\xi$ is the infinite path  associated to this 	$g^{-1}$-cycle  as in  Lemma~\ref{lem:infinite_path_from_g_cycle}.
\end{enumerate}
\end{enumerate}
\end{proposition}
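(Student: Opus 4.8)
The plan is to compute the source and range of $[\alpha,g,\beta;\beta\xi]$ directly from \eqref{eq:source_and_range_maps} and then translate the resulting equalities into statements about strongly fixed paths and cycles, using the germ description in Lemma~\ref{lem:lower_bound_description}.

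For part \ref{enu:transformation_groupoids_self_similar1}, the element is a unit precisely when $\sr([\alpha,g,\beta;\beta\xi])=\rg([\alpha,g,\beta;\beta\xi])$, that is $\beta\xi=\alpha(g\cdot\xi)$, \emph{and} the germ equals the germ of its own source. The length-preservation of the left action forces $|\alpha|=|\beta|$, and comparing the equation $\beta\xi=\alpha(g\cdot\xi)$ prefix-by-prefix gives $\alpha=\beta$ and $g\cdot\xi=\xi$, so $\xi$ is $g$-fixed. To upgrade ``$g$-fixed'' to ``strongly $g$-fixed'' I would invoke Lemma~\ref{lem:lower_bound_description}: the germ $[\beta,g,\beta;\beta\xi]$ equals the unit germ $[\beta,\sr(\beta),\beta;\beta\xi]$ (take $\gamma=\delta=\beta$, $h=\sr(\beta)$) if and only if there is a strongly $g^{-1}\cdot\sr(\beta)=g^{-1}$-fixed path $\beta'\in\sr(\beta)E^*$ with $\xi\in Z(\beta\beta')$, i.e. $\xi$ has a strongly $g^{-1}$-fixed finite prefix; by Remark~\ref{rem:about_strongly_fixed} this is equivalent to $\xi$ having a strongly $g$-fixed prefix, which is the definition of $\xi$ being strongly $g$-fixed. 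I would first handle the case $\xi\in E^\infty$ using this, and separately note that for finite $\xi$ the cylinder $Z(\beta\beta')$ containing $\xi$ forces $\beta\beta'$ to be a prefix of $\xi$, so the same conclusion holds.

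For part \ref{enu:transformation_groupoids_self_similar2}, being an isotropy arrow means $\sr=\rg$ without requiring the germ to be a unit, i.e. $\beta\xi=\alpha(g\cdot\xi)$ alone. I would split on the comparison of $\alpha$ and $\beta$ dictated by this equation. The length-preserving property no longer applies (we do not assume $|\alpha|=|\beta|$), so $\beta\xi=\alpha(g\cdot\xi)$ means $\alpha$ and $\beta$ are comparable prefixes of the common path. If $\alpha=\beta$, reading off $g\cdot\xi=\xi$ gives case \ref{enu:transformation_groupoids_self_similar2a}. If $\beta=\alpha\bar\beta$ with $\bar\beta$ nontrivial, I would write $\bar\beta=g\alpha'$ for the appropriate $\alpha'\in\sr(g)E^*$ (using that $g$ acts bijectively on $\sr(g)E^{|\alpha'|}$) and analyse the fixed-point equation to show $\alpha'$ is a $g|_{\alpha'}$-cycle whose associated infinite path from Lemma~\ref{lem:infinite_path_from_g_cycle} is exactly $\xi$; this is case \ref{enu:transformation_groupoids_self_similar2b}. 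The symmetric case $\alpha=\beta\beta'$ gives \ref{enu:transformation_groupoids_self_similar2c} after passing to the inverse, using \eqref{equ:inverse_vs_restriction} and Remark~\ref{rem:g_cycles}.

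The main obstacle I anticipate is the bookkeeping in cases \ref{enu:transformation_groupoids_self_similar2b} and \ref{enu:transformation_groupoids_self_similar2c}: showing that the fixed-point condition $\beta\xi=\alpha(g\cdot\xi)$ with $\beta$ strictly longer than $\alpha$ forces $\xi$ to be \emph{precisely} the canonical infinite path $\alpha_\infty$ built from the cycle, and not merely \emph{some} $g|_{\alpha'}$-fixed path. The key is that once $\xi=\alpha'(g|_{\alpha'}\cdot\text{tail})$ with $\alpha'$ a cycle, the recursion defining $\alpha_\infty$ in Lemma~\ref{lem:infinite_path_from_g_cycle} is forced step by step by the self-similar matching relation \eqref{eq:matching_for_self-similar}, so $\xi$ is uniquely determined. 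I would carry out this identification by induction on prefixes, matching each successive block of $\xi$ against the recursively defined $\alpha_{n+1}=g_n\alpha_n$, and verify that the cycle condition $\rg(\alpha')=\sr(\alpha')$ (after applying the relevant restriction of $g$) is exactly what the equation $\beta\xi=\alpha(g\cdot\xi)$ encodes.
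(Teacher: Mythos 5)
Your proposal follows essentially the same route as the paper: part \ref{enu:transformation_groupoids_self_similar1} is settled by testing equality of the germ $[\alpha,g,\beta;\beta\xi]$ with the unit germ $[\rg(\beta),\rg(\beta),\rg(\beta);\beta\xi]$ via Lemma~\ref{lem:lower_bound_description}, and part \ref{enu:transformation_groupoids_self_similar2} by reading off $\beta\xi=\alpha(g\cdot\xi)$ from \eqref{eq:source_and_range_maps} and splitting on the comparison of $|\alpha|$ and $|\beta|$, with Lemma~\ref{lem:infinite_path_from_g_cycle} identifying $\xi$ in the two cycle cases; your prefix-by-prefix induction is exactly the verification the paper leaves implicit there.

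One step in your part \ref{enu:transformation_groupoids_self_similar1} needs repair as stated: length preservation does \emph{not} force $|\alpha|=|\beta|$ from the equation $\beta\xi=\alpha(g\cdot\xi)$ alone when $\xi$ is infinite --- the isotropy arrows of cases \ref{enu:transformation_groupoids_self_similar2b} and \ref{enu:transformation_groupoids_self_similar2c} are precisely counterexamples. The correct source of $\alpha=\beta$ is the germ equality itself: in Lemma~\ref{lem:lower_bound_description} with $\gamma=\delta=h=\rg(\beta)$ one has $\alpha=\gamma\ol{\gamma}$, $\beta=\delta\ol{\delta}$ and $\ol{\gamma}=h\ol{\delta}=\ol{\delta}$, whence $\alpha=\beta$. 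Since you invoke that lemma anyway for the strong-fixedness, the argument goes through once this is rerouted.
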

\begin{proof}
\ref{enu:transformation_groupoids_self_similar1}.  We have  
$[\alpha,g,\beta; \beta\xi]\in \widetilde{\Gg}(\Gr,E)^0$ if and only if 
$[\alpha,g,\beta; \beta\xi]=[\rg(\beta), \rg(\beta), \rg(\beta); \beta\xi]$, which by  Lemma~\ref{lem:lower_bound_description} holds  if and only if $\alpha=\beta$ and $\xi$ contains a strongly $g$-fixed prefix (equivalently $\xi$ is strongly $g$-fixed). 

\ref{enu:transformation_groupoids_self_similar2}.  By \eqref{eq:source_and_range_maps} we have 
$\sr([\alpha,g,\beta;  \beta\xi]) =\rg([\alpha,g,\beta; \beta\xi])$ if and only if $\beta\xi=\alpha(g \cdot \xi)$.
If $|\alpha|=|\beta|$, then $\beta\xi=\alpha(g \cdot \xi)$ is equivalent to  $\alpha=\beta$ and $g\xi=\xi$, which is \ref{enu:transformation_groupoids_self_similar2a}.
If  $|\beta|>|\alpha|$, then  $\beta\xi=\alpha(g \cdot \xi)$
 if and only if $\beta=\alpha(g\alpha')$ for  $\alpha'\in E^{|\beta|-|\alpha|}$ such that 
$\xi=\alpha'\xi'$ and $\xi'\in \sr(\alpha')E^{\infty}$ satisfies $g|_{\alpha'}(\xi')=\xi$.
The latter two relations are equivalent to that
$\alpha'\in \sr(g)E^*$ is a  $g|_{\alpha'}$-cycle  and $\xi$ is the associated infinite path.
Hence, $\beta\xi=\alpha(g \cdot \xi)$ is equivalent to \ref{enu:transformation_groupoids_self_similar2b} in this case.
Similarly, if 
$|\alpha|>|\beta|$, then  $\beta\xi=\alpha(g \cdot \xi)$
 if and only if $\alpha=\beta\beta'$ and $\beta'\in E^{|\alpha|-|\beta|}$ satisfies 
$\xi =\beta'g\xi$, which holds if and only if \ref{enu:transformation_groupoids_self_similar2c} holds.
\end{proof}
As a side remark we note that the open subgroupoid $\Gg_{00}(\Gr,E)$ of  $\Gg(\Gr,E)$ might not be closed, and hence all the more $\widetilde{\Gg}_{00}(\Gr,E)$ is not closed in   $\widetilde{\Gg}(\Gr,E)$ in general.  This was an issue in a preliminary version of \cite{Miller_Steinberg}.
\begin{lemma}\label{lem:H_0_characterisation}
We have $[\alpha,g,\beta; \beta\xi]\in \widetilde{\Gg}_{00}(\Gr,E)$ if and only if there is $h\in \Gr$  such that $h\beta =\alpha$ and $g^{-1}(h|_{\beta})$  strongly fixes $\xi$.
\end{lemma}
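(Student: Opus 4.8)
The statement to prove is Lemma~\ref{lem:H_0_characterisation}: that $[\alpha,g,\beta;\beta\xi]\in \widetilde{\Gg}_{00}(\Gr,E)$ if and only if there is $h\in\Gr$ with $h\beta=\alpha$ and $g^{-1}(h|_{\beta})$ strongly fixing $\xi$. The plan is to unpack both sides through the explicit description of $\widetilde{\Gg}_{00}(\Gr,E)$ from Remark~\ref{rem:groupoid_epimorphisms} and then apply the germ-identification criterion of Lemma~\ref{lem:lower_bound_description}. Recall from Remark~\ref{rem:groupoid_epimorphisms} that
\[
\widetilde{\Gg}_{00}(\Gr,E)=\{[\rg(h),h,\sr(h);\eta]:h\in\Gr,\ \eta\in\sr(h)E^{\le\infty}\}.
\]
So membership $[\alpha,g,\beta;\beta\xi]\in\widetilde{\Gg}_{00}(\Gr,E)$ means precisely that there exists $h\in\Gr$ and $\eta\in\sr(h)E^{\le\infty}$ with $[\alpha,g,\beta;\beta\xi]=[\rg(h),h,\sr(h);\eta]$, where the source of the latter forces $\eta=\beta\xi$ (by~\eqref{eq:source_and_range_maps}).

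The core of the argument is to feed the equality $[\alpha,g,\beta;\beta\xi]=[\rg(h),h,\sr(h);\beta\xi]$ into Lemma~\ref{lem:lower_bound_description}. First I would record that any such $h$ can be taken with $\rg(h)=\alpha$: indeed, comparing the two germs forces (via either case~\ref{itm:one_way} or~\ref{itm:or_another} of that lemma, combined with the equal-length constraint $|\rg(h)|=0=|\alpha|$ coming from $\rg(h)$ being a vertex) that $\rg(h)=\alpha$, so in particular $\alpha$ is a vertex and $h\in\alpha\Gr$. Writing $t=(\alpha,g,\beta)$ and $s=(\rg(h),h,\sr(h))=(\alpha,h,\sr(h))$, Lemma~\ref{lem:lower_bound_description} tells us $[t;\beta\xi]=[s;\beta\xi]$ iff there exist $\beta',\delta'\in E^*$ with
\begin{equation*}
\alpha(g\beta')=\alpha(h\delta'),\qquad g|_{\beta'}=h|_{\delta'},\qquad \beta\beta'=\sr(h)\delta'=\delta',
\end{equation*}
and $\xi\in Z(\beta\beta')$. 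The last equation gives $\delta'=\beta\beta'$, so $h\beta=\alpha$ is forced by matching the length-$|\beta|$ prefixes of $g\beta'$ and $h\delta'=h(\beta\beta')=(h\beta)(h|_\beta\beta')$. After cancelling $\alpha$ and the common prefix $h\beta=\alpha g\varnothing$, the remaining relations $g\beta'=(h|_\beta)\beta'$ and $g|_{\beta'}=h|_{\delta'}=(h|_\beta)|_{\beta'}$ say exactly that $\beta'$ is strongly fixed by $g^{-1}(h|_\beta)$ — the same computation already carried out in the proof of Lemma~\ref{lem:lower_bound_description}, specialised to the present case. Since $\xi\in Z(\beta\beta')$ means $\xi$ has $\beta'$ as a finite prefix, having such a strongly $g^{-1}(h|_\beta)$-fixed prefix is by definition the assertion that $g^{-1}(h|_\beta)$ strongly fixes $\xi$.

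For the converse I would simply run this reasoning backwards: given $h\in\Gr$ with $h\beta=\alpha$ and a strongly $g^{-1}(h|_\beta)$-fixed prefix $\beta'$ of $\xi$, set $\delta'=\beta\beta'$ and verify the three equations of~\eqref{eq:lower_bound_existence} directly, which by Lemma~\ref{lem:lower_bound_description} yields $[\alpha,g,\beta;\beta\xi]=[\alpha,h,\sr(h);\beta\xi]\in\widetilde{\Gg}_{00}(\Gr,E)$. The only genuinely delicate point — the part I expect to be the main obstacle — is the bookkeeping needed to extract $h\beta=\alpha$ and the reduction $\rg(h)=\alpha$ cleanly from Lemma~\ref{lem:lower_bound_description}: one must be careful that $\rg(h)$ being a vertex collapses the dichotomy between cases~\ref{itm:one_way} and~\ref{itm:or_another}, and that the strong-fixing computation is applied to the correct element $g^{-1}(h|_\beta)$ rather than a section thereof. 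Everything else is a routine translation between the germ relation, the explicit form of $\widetilde{\Gg}_{00}(\Gr,E)$, and the definition of strongly fixed paths (using Lemma~\ref{lem:strongly_fix_respects_the_order} to pass between the finite prefix and the infinite path).
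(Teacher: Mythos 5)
Your overall route is the one the paper takes: identify $\widetilde{\Gg}_{00}(\Gr,E)$ via Remark~\ref{rem:groupoid_epimorphisms} as the set of germs $[\rg(h),h,\sr(h);\eta]$, note that matching sources forces $\eta=\beta\xi$, and then feed the germ equality into Lemma~\ref{lem:lower_bound_description}; the paper's proof is exactly a one-line application of case~\ref{itm:one_way} of that lemma. However, your intermediate reduction contains a genuine error. You claim that the equality $[\alpha,g,\beta;\beta\xi]=[\rg(h),h,\sr(h);\beta\xi]$ forces $\rg(h)=\alpha$ and hence that $\alpha$ is a vertex, invoking an ``equal-length constraint $|\rg(h)|=0=|\alpha|$''. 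No such constraint exists: Lemma~\ref{lem:lower_bound_description} does not require $|\alpha|=|\gamma|$ --- its two cases are precisely $\alpha\le\gamma$ and $\gamma\le\alpha$ in the prefix order. Taking $\gamma=\rg(h)$ and $\delta=\sr(h)$ (both vertices) puts you in case~\ref{itm:one_way} with $\ol{\gamma}=\alpha$ and $\ol{\delta}=\beta$ completely arbitrary, and the conclusion of that case is then literally $h\beta=\alpha$ together with the existence of a strongly $g^{-1}(h|_{\beta})$-fixed prefix of $\xi$, which is the statement of the lemma. Your claim would make the lemma vacuous for $|\alpha|>0$ and is contradicted by the identity $[g\beta,g|_{\beta},\beta;\beta\xi]=[\rg(g),g,\sr(g);\beta\xi]$ recorded in Remark~\ref{rem:groupoid_epimorphisms}, which exhibits elements of $\widetilde{\Gg}_{00}(\Gr,E)$ whose first coordinate is a path of arbitrary length.

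The error propagates into your prefix-matching step: you compare the length-$|\beta|$ prefixes of $g\beta'$ and $h\delta'$, but under your own assumption that $\alpha$ is a vertex the first equation of \eqref{eq:lower_bound_existence} would read $g\beta'=(h\beta)(h|_{\beta}\beta')$, whose two sides have different lengths unless $\beta$ is also a vertex. The correct bookkeeping compares $\alpha(g\beta')$ with $(h\beta)(h|_{\beta}\beta')$, deduces $|\alpha|=|\beta|$ from counting lengths, and then reads off $\alpha=h\beta$, $g\beta'=h|_{\beta}\beta'$ and $g|_{\beta'}=(h|_{\beta})|_{\beta'}$, i.e.\ that $\beta'$ is strongly $g^{-1}(h|_{\beta})$-fixed. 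None of this recomputation is actually needed, since case~\ref{itm:one_way} of Lemma~\ref{lem:lower_bound_description} already packages it; once the false reduction ``$\alpha$ is a vertex'' is removed, your argument collapses to the paper's.
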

\begin{proof}
By Lemma~\ref{lem:lower_bound_description}\ref{itm:one_way},  $[\alpha,g,\beta; \beta\xi]=[\rg(h), h, \sr(h); \beta\xi]$ for some $h$
if and only if $h\beta=\alpha$ and $g^{-1}(h|_\beta)$ fixes strongly a prefix of $\xi$.
\end{proof}

\begin{example}
Let $E$ be the graph with a single vertex $v$ and four loops $E^{1}=\{\alpha,\beta,e,f\}$, with a self-similar action of $\Gr=\Z_2$ given by
$$
1\alpha=\beta,\qquad 1\beta=\alpha,\qquad  1e=e, \qquad 1f=f,
$$
$$
1|_{\alpha}=0,\qquad 1|_{\beta}=0, \qquad 1|_{e}=1, \qquad 1|_{f}=0.
$$
Let $e^{\infty}=ee\cdots \in E^{\infty}$ and $f^{\infty}=ff\cdots \in E^{\infty}$. By Lemma~\ref{lem:H_0_characterisation}, $[\alpha, 1, \beta; \beta e^{\infty}]$  is not in $\Gg_{00}(\Gr,E)$ because $e^{\infty}$ is not strongly fixed by $1$.
But $1$ strongly fixes $\beta e^n f^{\infty}$ and so $[\alpha,1,\beta;\beta e^n f^{\infty}]= [v,1,v;\beta e^n f^{\infty}]$ is $\Gg_{00}(\Gr,E)$ for $n\geq 1$. 
Since $[\alpha,1,\beta;\beta e^n f^{\infty}]$ tends to $[\alpha, 1, \beta; \beta e^{\infty}]$ we conclude that  $\Gg_{00}(\Gr,E)$ is not closed.
\end{example}

\subsection{Non-Hausdorffness}

We start by showing  that the groupoids in  the diagram \eqref{eq:various_groupoids} 
are simultaneously Hausdorff or not.

\begin{proposition}\label{prop:Hausdorff_extended_groupoid}
If one of the groupoids in  the diagram \eqref{eq:various_groupoids} is Hausdorff, then
all of them are. This happens if and only if $(\Gr,E)$ satisfies $\Fin$, that is if for every $g \in \Gr$ there are  finitely many minimal strongly $g$-fixed paths.
\end{proposition}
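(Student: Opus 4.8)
The plan is to prove the two implications separately, reducing both to the inverse-semigroup characterizations already established. The key structural facts are: (i) the groupoid isomorphisms in \eqref{eq:transformation_groupoids_self_similar} and their variants, identifying $\widetilde{\Gg}(\Gr,E)\cong\widetilde{\Gg}(S(\Gr,E))$, $\Gg(\Gr,E)\cong\Gg(S(\Gr,E))$, and analogously for the $0$- and $00$-subscripted groupoids; (ii) the correspondences in Remark~\ref{rmk:terminology_changes}, namely that $S$ is Hausdorff iff $\widetilde{\Gg}(S)$ is Hausdorff, and $S$ is closed iff $\Gg(S)$ is Hausdorff; and (iii) Proposition~\ref{prop:closedness}, which says $\Fin$ is equivalent to closedness of each of $S(\Gr,E)$, $S_0(\Gr,E)$, $S_{00}(\Gr,E)$.

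First I would handle the direction $\Fin \Rightarrow$ all Hausdorff. Assuming $\Fin$, Proposition~\ref{prop:closedness} gives that $S(\Gr,E)$, $S_0(\Gr,E)$, and $S_{00}(\Gr,E)$ are all closed, so by Remark~\ref{rmk:terminology_changes} the three tight groupoids $\Gg(\Gr,E)$, $\Gg_0(\Gr,E)$, $\Gg_{00}(\Gr,E)$ are Hausdorff. For the universal groupoids, I would strengthen this: I claim that under $\Fin$ the inverse semigroups $S(\Gr,E)$, $S_0(\Gr,E)$, $S_{00}(\Gr,E)$ are not merely closed but \emph{Hausdorff}. This is exactly the content of the Remark following Proposition~\ref{prop:fundamentalness} (``Using Lemma~\ref{lem:inverse_semigroup_preorder} one may show that $\Fin$ is also equivalent to Hausdorffness\ldots''), which I would now make explicit using Lemma~\ref{lem:inverse_semigroup_preorder}: for $s=(\alpha,g,\beta)$ and $t=(\gamma,h,\delta)$, the order ideals $s^{\downarrow}$ and $t^{\downarrow}$ are generated by common lower bounds indexed by strongly-fixed paths, and $\Fin$ guarantees finitely many minimal such bounds, giving the finite set $F$ required in the definition of a Hausdorff inverse semigroup. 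Then Remark~\ref{rmk:terminology_changes} yields that $\widetilde{\Gg}(S_*(\Gr,E))\cong\widetilde{\Gg}_*(\Gr,E)$ is Hausdorff for $*=\Space,0,00$.

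For the converse, I would argue contrapositively: if $\Fin$ fails, then some groupoid in \eqref{eq:various_groupoids} is non-Hausdorff, and in fact all are. Since each groupoid in the diagram is an open subgroupoid of $\widetilde{\Gg}(\Gr,E)$ containing $\widetilde{\Gg}_{00}(\Gr,E)$ (or its tight counterpart), and since Hausdorffness passes to open subgroupoids, it suffices to show that failure of $\Fin$ forces $\Gg_{00}(\Gr,E)$ (the smallest groupoid) to be non-Hausdorff; then none of the larger groupoids can be Hausdorff either. Concretely, if $g\in\Gr$ has infinitely many minimal strongly $g$-fixed paths, I would produce an explicit pair of germs that cannot be separated: using Proposition~\ref{prop:transformation_groupoids_self_similar}\ref{enu:transformation_groupoids_self_similar1} a germ $[\rg(g),g,\sr(g);\xi]$ is a unit precisely when $\xi$ is strongly $g$-fixed, and I would find a net of boundary paths $\xi_n$ that are strongly $g$-fixed (so their germs are units) converging to a limit $\xi$ that is \emph{not} strongly $g$-fixed, so that $[\rg(g),g,\sr(g);\xi]$ is a non-unit isotropy arrow limiting the units $[\rg(g),g,\sr(g);\xi_n]=[\xi_n]$; these cannot be separated from the unit $[\xi]$, witnessing non-Hausdorffness, in the spirit of the Example following Lemma~\ref{lem:H_0_characterisation}.

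The main obstacle I anticipate is the converse direction: one must ensure that when $\Fin$ fails there genuinely exists a convergent net of strongly-fixed paths whose limit fails to be strongly fixed, rather than merely infinitely many incomparable minimal fixed paths with no accumulation in $\partial E$. This requires care with the topology on the boundary path space $\partial E$ and the compactness/accumulation behavior of cylinder sets $Z(\beta)$; the incomparable minimal strongly $g$-fixed paths $\{\delta_k\}$ must be assembled (possibly after passing to extensions landing in $\partial E$) into a net with the right limiting germ. The cleaner route, which I would ultimately prefer, is to bypass this delicate net construction entirely and instead invoke the equivalence $\Fin\Leftrightarrow$ closedness of $S_{00}(\Gr,E)$ from Proposition~\ref{prop:closedness} together with $S_{00}$ closed $\Leftrightarrow$ $\Gg_{00}(\Gr,E)\cong\Gg(S_{00}(\Gr,E))$ Hausdorff from Remark~\ref{rmk:terminology_changes}, so that the failure of $\Fin$ immediately gives non-Hausdorffness of $\Gg_{00}$ purely algebraically, and then propagate non-Hausdorffness upward through the open inclusions of \eqref{eq:various_groupoids}. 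This makes the explicit germ computation optional illustration rather than a load-bearing step.
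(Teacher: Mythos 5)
Your proposal is correct, and its converse direction coincides with the paper's: failure of $\Fin$ makes $S_{00}(\Gr,E)$ non-closed by Proposition~\ref{prop:closedness}, hence $\Gg_{00}(\Gr,E)\cong\Gg(S_{00}(\Gr,E))$ non-Hausdorff by Remark~\ref{rmk:terminology_changes}, and non-Hausdorffness propagates to every groupoid in the diagram since each contains $\Gg_{00}(\Gr,E)$ as a subspace; you were right to discard the net construction, which the paper also avoids. The forward direction is where you genuinely diverge. The paper reduces to showing that $\widetilde{\Gg}(\Gr,E)^0$ is closed in $\widetilde{\Gg}(\Gr,E)$ and separates a non-unit germ $[\beta,g,\beta;\beta\xi]$ from the unit space by the explicit basic open set $U(\beta,g,\beta;V)$ with $V=Z(\beta\xi')\setminus\bigcup_{i}Z(\beta\gamma_i)$, where $\gamma_1,\dots,\gamma_n$ are the finitely many minimal strongly $g$-fixed paths; all other groupoids are then Hausdorff as subspaces. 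You instead upgrade closedness of the inverse semigroups to Hausdorffness in the sense of Definition~\ref{def:InverseSemigroupSimplicityProperties} and invoke the equivalence ``$S$ Hausdorff $\Leftrightarrow\widetilde{\Gg}(S)$ Hausdorff'' recorded in Remark~\ref{rmk:terminology_changes}. That is viable, but note one trap: you must not cite Corollary~\ref{cor:Hausdorff_equiv_closed} for the implication $\Fin\Rightarrow S_*(\Gr,E)$ Hausdorff, since the paper derives that corollary \emph{from} the present proposition; you must carry out the computation yourself, as you propose. Your sketch of it is essentially right and amounts to the computation underlying Lemma~\ref{lem:lower_bound_description}: a nonzero common lower bound of $(\alpha,g,\beta)$ and $(\gamma,h,\delta)$ forces, say, $\alpha=\gamma\ol{\gamma}$, $\beta=\delta\ol{\delta}$ with $\ol{\gamma}=h\ol{\delta}$, and the common lower bounds are then exactly the elements $(\alpha(g\beta'),g|_{\beta'},\beta\beta')$ for $\beta'$ strongly $g^{-1}(h|_{\ol{\delta}})$-fixed, so by Lemma~\ref{lem:strongly_fix_respects_the_order} the finitely many minimal such $\beta'$ furnish the required finite set $F$. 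The paper's route buys a self-contained topological argument; yours buys the stronger inverse-semigroup statement directly at the cost of an appeal to the external Steinberg equivalence for the universal groupoids.
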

\begin{proof}	
If any of the groupoids in \eqref{eq:various_groupoids} is Hausdorff, then so is $\Gg_{00}(\Gr,E)$, as it is contained in
all of them. 
Hausdorffness of $\Gg_{00}(\Gr,E)\cong \Gg(S_{00}(\Gr,E))$ is equivalent to $\Fin$ by Proposition~\ref{prop:closedness} 
and Remark~\ref{rmk:terminology_changes}.
Thus, it suffices to  show that $\Fin$ implies that $\widetilde{\Gg}(\Gr,E)$ is Hausdorff, equivalently, that $\widetilde{\Gg}(\Gr,E)^0$ is closed in $\widetilde{\Gg}(\Gr,E)$. So let us assume $\Fin$ and fix $[\alpha,g,\beta;  \beta\xi] \in \widetilde{\Gg}(\Gr,E)\setminus \widetilde{\Gg}(\Gr,E)^0$.   
By Proposition~\ref{prop:transformation_groupoids_self_similar}\ref{enu:transformation_groupoids_self_similar1},
either  $\alpha=\beta$ and  $\xi$ is not  strongly fixed by $g$ or $\alpha \neq \beta$. 
If $\alpha\neq \beta$, then  $U(\alpha,g,\beta;Z(\beta))$ is a neighbourhood of $[\alpha,g,\beta;  \beta\xi]$ that is disjoint 
from   $\widetilde{\Gg}(\Gr,E)^0$. So assume that  $\alpha=\beta$ and  $\xi$ is not  strongly fixed by $g$. 
By $\Fin$ there are finitely many minimal strongly $g$-fixed paths $\gamma_1,\dots,\gamma_n\in \sr(g)E^*$. By Lemma~\ref{lem:strongly_fix_respects_the_order} none of these paths can be a subpath of $\xi$. Denoting by $\xi'$ the subpath of $\xi$ of length $\min\{ |\xi|, \max_{1 \le i \le n}|\gamma_i|\}$, 
the open set $V \coloneqq Z(\beta\xi')\setminus \bigcup_{i=1}^n Z(\beta\gamma_i)$ contains $\beta\xi$, and if $\beta\mu\in V$, then $\mu$ is not strongly $g$-fixed.
Hence, by Proposition~\ref{prop:transformation_groupoids_self_similar}\ref{enu:transformation_groupoids_self_similar1}, the open set $U(\alpha,g,\beta;V)$  
contains  $[\alpha,g,\beta;  \beta\xi]$ and is disjoint from $\widetilde{\Gg}(\Gr,E)^0$. So  $\widetilde{\Gg}(\Gr,E)^0$ is closed in $\widetilde{\Gg}(\Gr,E)$.
\end{proof}
\begin{corollary}\label{cor:Hausdorff_equiv_closed}
For any $*=\Space,0,00$, the following are equivalent:
\begin{enumerate}
\item $(\Gr,E)$ satisfies $\Fin$;
\item $S_{*}(\Gr,E)$ is closed;
\item $S_{*}(\Gr,E)$ is Hausdorff.
\end{enumerate}
\end{corollary}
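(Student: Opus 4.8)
The plan is to split the three-way equivalence into the part that is already available and the part that still needs the groupoid machinery. The equivalence of (1) and (2) for each $*=\Space,0,00$ is precisely the content of Proposition~\ref{prop:closedness}, so I would simply cite it and concentrate on linking condition (3) to the other two. Rather than arguing directly with the partial order of $S_*(\Gr,E)$ (which is possible but tedious), I would push everything down to the level of the associated groupoids, where Proposition~\ref{prop:Hausdorff_extended_groupoid} has already carried out the analytic work.

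The key observation is that Remark~\ref{rmk:terminology_changes} supplies, for an arbitrary inverse semigroup, two dictionary entries: the semigroup is Hausdorff exactly when its universal groupoid is Hausdorff, and it is closed exactly when its tight groupoid is Hausdorff. Applying these to $S_*(\Gr,E)$ and invoking the canonical isomorphisms $\widetilde{\Gg}_*(\Gr,E)\cong \widetilde{\Gg}(S_*(\Gr,E))$ and $\Gg_*(\Gr,E)\cong \Gg(S_*(\Gr,E))$ recorded around \eqref{eq:transformation_groupoids_self_similar}, I would translate (3) into the assertion that $\widetilde{\Gg}_*(\Gr,E)$ is Hausdorff and (2) into the assertion that $\Gg_*(\Gr,E)$ is Hausdorff. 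Proposition~\ref{prop:Hausdorff_extended_groupoid} then states that every groupoid appearing in diagram \eqref{eq:various_groupoids}---in particular both $\widetilde{\Gg}_*(\Gr,E)$ and $\Gg_*(\Gr,E)$---is Hausdorff if and only if $(\Gr,E)$ satisfies $\Fin$. Chaining these equivalences closes the loop and shows that (1), (2), and (3) coincide for each $*$.

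The only point requiring genuine care is bookkeeping: one must pair the semigroup-theoretic notion of \emph{Hausdorff} with the \emph{universal} (Paterson) groupoid and the notion of \emph{closed} with the \emph{tight} groupoid, and not interchange them, since the two conditions differ in general (cf. Remark~\ref{rem:Hausdorff_vs_closed}). There is essentially no analytic obstacle left at this stage: the substantive implication $\Fin \Rightarrow$ Hausdorffness of the universal groupoid $\widetilde{\Gg}(\Gr,E)$ was already proved in Proposition~\ref{prop:Hausdorff_extended_groupoid} by an explicit separation-of-points argument, and the fact that Hausdorffness of any one groupoid in \eqref{eq:various_groupoids} forces it for all of them is exactly what makes the statement uniform in $*$. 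Thus the corollary is a clean packaging of Propositions~\ref{prop:closedness} and~\ref{prop:Hausdorff_extended_groupoid} together with the correspondences of Remark~\ref{rmk:terminology_changes}.
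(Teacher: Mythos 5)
Your argument is correct and is essentially the paper's own proof: the paper disposes of the corollary with the single line ``Combine Proposition~\ref{prop:Hausdorff_extended_groupoid} and Remark~\ref{rmk:terminology_changes},'' which is precisely your translation of closedness and Hausdorffness of $S_*(\Gr,E)$ into Hausdorffness of the tight and universal groupoids, followed by the uniform equivalence with $\Fin$ from Proposition~\ref{prop:Hausdorff_extended_groupoid}. Your additional citation of Proposition~\ref{prop:closedness} for $(1)\Leftrightarrow(2)$ is harmless and consistent with how that proposition is already used inside the proof of Proposition~\ref{prop:Hausdorff_extended_groupoid}.
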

\begin{proof}
Combine Proposition \ref{prop:Hausdorff_extended_groupoid} and Remark~\ref{rmk:terminology_changes}.
\end{proof}
We now describe the non-Hausdorff parts $\widetilde{\Gg}(\Gr,E)\setminus \widetilde{\Gg}(\Gr,E)_{\Hau}$ and $\Gg(\Gr,E)\setminus\Gg(\Gr,E)_{\Hau}$.
As these are full subgroupoids of  $\widetilde{\Gg}(\Gr,E)$ and $\Gg(\Gr,E)$ it suffices to determine
their unit spaces $E^{\leq \infty} \setminus  \widetilde{\Gg}(\Gr,E)_{\Hau}$ and $\partial E \setminus  \Gg(\Gr,E)_{\Hau}$.

\begin{definition}
A \emph{singular decomposition} of  $\mu\in E^\infty$ is a pair $(\alpha,g)\in E^**\Gr$ such that $\mu$ is not strongly $g$-fixed, but $\mu$ does have a decomposition
$\mu=\alpha\xi$ where every prefix of $\xi$ has a strongly $g$-fixed extension. 
Two singular decompositions $(\alpha,g)$ and $(\beta,h)$ of  $\mu$ are \emph{equivalent} 
if there is a prefix of $\gamma$ of $\mu$ such that $\gamma=\alpha\alpha'=\beta\beta'$
and $g|_{\alpha'}=h|_{\beta'}$. 
\end{definition}
\begin{remark}\label{rem:singular_infinity}
If $(\alpha,g)$ is a singular decomposition of $\mu$, then for any $\alpha'$ such that $\alpha\alpha'$ is a prefix of $\mu$ the pair
$(\alpha\alpha',g|_{\alpha'})$ is a singular decomposition of $\mu$ equivalent to $(\alpha,g)$.
Thus, within a fixed class of singular decompositions, we may pick arbitrarily long prefixes.
\end{remark}
\begin{definition}
A \emph{singular decomposition} of $\mu \in E^*$   is a pair $(\alpha,g)\in E^**\Gr$ 
such that  $\mu=\alpha\xi$ and $\xi$ is not strongly $g$-fixed, but there is an infinite set $I\subseteq \sr(\mu)E^1$ with the property that each $\xi e$, $e\in I$, has a strongly $g$-fixed extension.
Two singular decompositions  $(\alpha,g)$ and $(\beta,h)$ of  $\mu$ are \emph{equivalent} 
if there is  a common extension $\alpha\alpha'=\beta \beta'$  of  $\alpha$ and $\beta$, that  is compatible with $\mu$ and  $\alpha g\alpha'=\beta h\beta'$ and  $g|_{\alpha'}=h|_{\beta'}$. 
\end{definition}
\begin{remark}\label{rem:no_singular_decomposition}
When a finite path $\mu \in E^*$   has a singular decomposition its source $\sr(\mu)$ has to be an infinite receiver. 
In particular, we necessarily have $\mu \in \partial E$ and if  $E$ is row-finite there are no finite paths admitting
singular decompositions.
\end{remark}
In accordance with the notation from page~\pageref{page:closure_over_point}, for $\mu \in \partial E$ we denote by $\overline{X}(\mu)=\Gg(\Gr,E)_{\mu}^{\mu}\cap \overline{\partial E}$ those arrows in the isotropy group over $\mu$ that are also in the  closure of $\partial E$ in $\Gg(\Gr,E)$.
\begin{proposition}\label{prop:non-Hausdorff_points_description}
For any self-similar action $(\Gr,E)$ we have $E^{\leq \infty} \setminus  \widetilde{\Gg}(\Gr,E)_{\Hau}=\partial E \setminus  \Gg(\Gr,E)_{\Hau}$
and a path $\mu$ is in this set if and only if it admits a singular decomposition. More precisely, for every $\mu\in  \partial E$  we have
\begin{equation}\label{eq:singular_isotropy}
\overline{X}( \mu)=\{\mu\}\cup \{[\alpha, g, \alpha, \mu]: (\alpha,g)\in E^**\Gr \text{ is a singular decomposition of }\mu\}
\end{equation}
and the cardinality of $\overline{X}(\mu)$ is equal to the cardinality of
equivalence classes singular decompositions of $\mu$ plus one.
\end{proposition}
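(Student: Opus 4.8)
The plan is to analyze the closure of the unit space $\partial E$ inside $\Gg(\Gr,E)$ point-by-point over a fixed $\mu$, translating topological non-separation into the combinatorial data of singular decompositions. By Lemma~\ref{lem:finite_non_Hausdorff_groupoid} and the discussion on page~\pageref{page:closure_over_point}, we have $\{ \mu \}_{\widetilde{\Gg}(\Gr,E)} = \overline{X}(\mu)$, so a path $\mu$ is a non-Hausdorff point exactly when $\overline{X}(\mu) \ne \{\mu\}$; thus establishing \eqref{eq:singular_isotropy} simultaneously proves the equality $E^{\leq \infty} \setminus \widetilde{\Gg}(\Gr,E)_{\Hau} = \partial E \setminus \Gg(\Gr,E)_{\Hau}$ and the characterisation via singular decompositions, together with the cardinality count. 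First I would observe that since $\overline{X}(\mu) \subseteq \Gg(\Gr,E)_{\mu}^{\mu}$ consists of isotropy arrows that are limits of units, every such arrow is a limit of a net $(\beta_n \xi_n) \to \mu$ of units, and continuity of $\rg, \sr$ forces the limiting arrow to be an isotropy element $[\alpha, g, \beta; \mu]$ fixing $\mu$; I would use Proposition~\ref{prop:transformation_groupoids_self_similar}\ref{enu:transformation_groupoids_self_similar2} to enumerate the three shapes such isotropy can take.

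The key step is to show that the only isotropy arrows over $\mu$ that lie in $\overline{\partial E}$ but are not equal to $\mu$ itself are precisely those of type~\ref{enu:transformation_groupoids_self_similar2a} with $\alpha = \beta$ and arising from a singular decomposition. I would argue that an arrow $\gamma = [\alpha, g, \beta; \mu]$ is non-separated from $\mu$ if and only if every basic neighbourhood $U(\alpha,g,\beta;V)$ of $\gamma$ meets the unit space, i.e. contains some $[\alpha,g,\beta; \beta\eta]$ that is a unit. By Proposition~\ref{prop:transformation_groupoids_self_similar}\ref{enu:transformation_groupoids_self_similar1} such a germ is a unit exactly when $\alpha = \beta$ and $\eta$ is strongly $g$-fixed; this immediately eliminates types~\ref{enu:transformation_groupoids_self_similar2b} and~\ref{enu:transformation_groupoids_self_similar2c} (where $\alpha \ne \beta$, and the neighbourhood $U(\alpha,g,\beta;Z(\beta))$ is disjoint from the unit space), leaving only type~\ref{enu:transformation_groupoids_self_similar2a}, where $\mu$ is $g$-fixed but not strongly $g$-fixed. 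I would then translate the condition ``every neighbourhood of $\mu$ contains arbitrarily long prefixes admitting strongly $g$-fixed extensions, while $\mu$ itself is not strongly fixed'' directly into the definitions of singular decomposition for $\mu \in E^\infty$ and $\mu \in E^*$ respectively, treating the basic neighbourhoods $Z(\beta\xi') \setminus \bigcup Z(\beta\gamma_i)$ as in the proof of Proposition~\ref{prop:Hausdorff_extended_groupoid}. For finite $\mu$ the infinite-set condition in the definition appears because the neighbourhood basis at $\mu \in E^*$ involves cofinitely many edges out of $\sr(\mu)$, which is why Remark~\ref{rem:no_singular_decomposition} forces $\sr(\mu)$ to be an infinite receiver.

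For the cardinality statement I would set up an explicit bijection between equivalence classes of singular decompositions of $\mu$ and the non-unit elements of $\overline{X}(\mu)$, sending the class of $(\alpha,g)$ to the arrow $[\alpha, g, \alpha; \mu]$. Here the crucial check is well-definedness and injectivity: two singular decompositions $(\alpha,g)$ and $(\beta,h)$ give the same germ $[\alpha,g,\alpha;\mu] = [\beta,h,\beta;\mu]$ precisely when they share a common lower bound in $S(\Gr,E)$, and by Lemma~\ref{lem:lower_bound_description} this is exactly the equivalence relation on singular decompositions (the matching conditions $\alpha g\alpha' = \beta h\beta'$ and $g|_{\alpha'} = h|_{\beta'}$ on a common extension); Remark~\ref{rem:singular_infinity} ensures that passing to longer prefixes stays within a class, matching the flexibility in choosing the representative in the germ. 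Surjectivity onto the non-unit part of $\overline{X}(\mu)$ is the content of the previous paragraph, and adding one accounts for the unit $\mu$ itself.

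The main obstacle I anticipate is the bookkeeping in the finite-path case $\mu \in E^*$: the neighbourhood basis at $\mu$ uses relative complements $Z(\mu) \setminus \bigcup_{e \in F} Z(\mu e)$ over finite $F \subseteq \sr(\mu)E^1$, so non-separation of $[\alpha,g,\alpha;\mu]$ from $\mu$ demands that for \emph{every} finite excluded set $F$ there remains an edge $e \notin F$ with $\mu e$ (or a further extension) strongly $g$-fixed — this is exactly why the definition of singular decomposition for finite paths quantifies over an \emph{infinite} set $I \subseteq \sr(\mu)E^1$ rather than a single strongly-fixed extension, and I would need to carefully reconcile this with the ``every prefix has a strongly fixed extension'' phrasing used in the infinite-path case. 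Verifying that these two superficially different definitions both correspond to the single topological condition of non-separation, uniformly across $\mu \in E^\infty$ and $\mu \in E^*$, is where the argument requires the most care.
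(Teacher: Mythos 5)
Your proposal is correct and follows essentially the same route as the paper: dispose of the isolated points (source / finite-receiver case) to get $E^{\leq\infty}\setminus\widetilde{\Gg}(\Gr,E)_{\Hau}=\partial E\setminus\Gg(\Gr,E)_{\Hau}$, discard isotropy arrows with $\alpha\neq\beta$ via the bisection $U(\alpha,g,\beta;Z(\beta))$, match the remaining type-\ref{enu:transformation_groupoids_self_similar2a} arrows against the definition of singular decomposition, and identify germs via Lemma~\ref{lem:lower_bound_description} to get the cardinality count. The reconciliation you flag between the infinite-path and finite-path definitions (cofinite edge sets at an infinite receiver versus arbitrarily long prefixes) is exactly the case split the paper's proof performs.
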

\begin{proof}
Let $\mu \in E^{\leq \infty}$. If $\sr(\mu)$ is a source or a finite receiver then the singleton $\{\mu\}=Z(\mu)\setminus\bigcup_{e\in \sr(\mu)E^{1}} Z(\mu e)$ is open. 
Then  $\mu \in  \widetilde{\Gg}(\Gr,E)_{\Hau}$ and so $\overline{X}(\mu)=\{\mu\}$. As  $\mu$ has no singular decompositions, by  Remark~\ref{rem:no_singular_decomposition}, we get \eqref{eq:singular_isotropy}. Thus, $E^{\leq \infty} \setminus  \widetilde{\Gg}(\Gr,E)_{\Hau}=\partial E \setminus  \Gg(\Gr,E)_{\Hau}$
and we may assume that $\mu \in \partial E$. An element $[\alpha,g,\beta; \beta\xi]\in \Gg(\Gr,E)$ is not in $\overline{\partial E}$ when 
$\alpha\neq \beta$ (consider $U(\alpha,g,\beta; Z(\beta))$). Thus, to determine  $\overline{X}(\mu)=\Gg(\Gr,E)_{\mu}^{\mu}\cap \overline{\partial E}$ we only need 
to consider elements $\gamma_{\alpha,g}=[\alpha,g,\alpha; \alpha\xi]$ where $\mu=\alpha\xi$ and $\xi$ is $g$-fixed, see Proposition~\ref{prop:transformation_groupoids_self_similar}\ref{enu:transformation_groupoids_self_similar2}. 
Any such element belongs to $\Gg(\Gr,E)_{\mu}^{\mu}$. We have $\gamma_{\alpha,g}\neq \mu$ if and only if $\xi$ is not strongly $g$-fixed, cf. Proposition~\ref{prop:transformation_groupoids_self_similar}\ref{enu:transformation_groupoids_self_similar1}.
Assuming this we see that $\gamma_{\alpha,g} \in \overline{\partial E}$ if and only if $(\alpha,g)$ is a singular decomposition of $\mu$.
This gives \eqref{eq:singular_isotropy}. 
Moreover, $\gamma_{\alpha,g}=\gamma_{\beta,h}$ if and only if the singular decompositions $(\alpha,g)$ and $(\beta,h)$ are 
equivalent, by Lemma~\ref{lem:lower_bound_description}.
\end{proof}
\begin{corollary}\label{cor:finite_Hausdorffness}
Finite Hausdorffness of any of the groupoids $\widetilde{\Gg}(\Gr,E)$, $\widetilde{\Gg}_0(\Gr,E)$, $\Gg(\Gr,E)$, $\Gg_0(\Gr,E)$ is equivalent 
to existence of at most finitely many inequivalent singular decompositions of each path $\mu\in \partial E$.
\end{corollary}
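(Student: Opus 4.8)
The plan is to reduce finite non-Hausdorffness of each of the four groupoids to the statement about singular decompositions, using the description of the non-separated points already obtained in Proposition~\ref{prop:non-Hausdorff_points_description}.

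First I would recall that by Lemma~\ref{lem:finite_non_Hausdorff_groupoid} an \'etale groupoid $\Gg$ is finitely non-Hausdorff if and only if each $\overline{X}(x)$ is finite, where $\overline{X}(x)=\Gg_x^x\cap\overline{X}$. Thus finite non-Hausdorffness of $\Gg(\Gr,E)$ is, by definition, the assertion that $\overline{X}(\mu)$ is finite for every $\mu\in\partial E$. Proposition~\ref{prop:non-Hausdorff_points_description} gives the exact formula \eqref{eq:singular_isotropy} for $\overline{X}(\mu)$, together with the count: the cardinality of $\overline{X}(\mu)$ equals the number of equivalence classes of singular decompositions of $\mu$ plus one. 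Consequently $\overline{X}(\mu)$ is finite precisely when $\mu$ admits only finitely many inequivalent singular decompositions. This directly settles the case of $\Gg(\Gr,E)$.

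Next I would handle $\widetilde{\Gg}(\Gr,E)$. The key input is the first equality in Proposition~\ref{prop:non-Hausdorff_points_description}, namely $E^{\leq\infty}\setminus\widetilde{\Gg}(\Gr,E)_{\Hau}=\partial E\setminus\Gg(\Gr,E)_{\Hau}$, which shows that the non-Hausdorff unit points of the large groupoid all lie in $\partial E$. Since $\overline{X}(\mu)$ is computed from germs of the form $[\alpha,g,\alpha;\alpha\xi]$ that lie in the closure of the unit space, and since the closure $\overline{\partial E}$ computations in the proof of Proposition~\ref{prop:non-Hausdorff_points_description} only used neighbourhoods of the form $U(\alpha,g,\beta;Z(\beta))$ which are available in $\widetilde{\Gg}(\Gr,E)$ as well, the same formula \eqref{eq:singular_isotropy} describes the non-separated set over each $\mu$ inside $\widetilde{\Gg}(\Gr,E)$. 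Hence finite non-Hausdorffness of $\widetilde{\Gg}(\Gr,E)$ is governed by the identical count.

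Finally, for the core subgroupoids $\widetilde{\Gg}_0(\Gr,E)$ and $\Gg_0(\Gr,E)$ I would observe that every germ appearing in \eqref{eq:singular_isotropy} has the form $[\alpha,g,\alpha;\mu]$, which has length cocycle $c=|\alpha|-|\alpha|=0$ and therefore already lies in the degree-zero part. Thus the non-separated sets over points of the degree-zero groupoids coincide with those of the ambient groupoids: intersecting with $c^{-1}(0)$ does not remove any of the relevant isotropy arrows, nor does passing to a closed subgroupoid change which points fail to be Hausdorff, since $\partial E$ and $E^{\leq\infty}$ are the common unit spaces and the separating bisections restrict. The main obstacle I anticipate is the bookkeeping in this last step: one must check carefully that a germ non-separated from $\mu$ inside the larger groupoid remains non-separated inside the core subgroupoid (equivalently that the sets $\overline{X}(\mu)$ are literally the same, not merely of the same cardinality), which follows because each such germ is itself of degree zero and the subspace topologies agree on the clopen subgroupoid $c^{-1}(0)$. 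Once this identification of $\overline{X}(\mu)$ across all four groupoids is in place, the equivalence with finitely many inequivalent singular decompositions is immediate from Proposition~\ref{prop:non-Hausdorff_points_description}.
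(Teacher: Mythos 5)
Your proposal is correct and follows exactly the route the paper intends: the corollary is stated as an immediate consequence of Proposition~\ref{prop:non-Hausdorff_points_description} (via Lemma~\ref{lem:finite_non_Hausdorff_groupoid}), and your argument simply fleshes out the details the paper leaves implicit. In particular, your observations that the non-separated unit points of $\widetilde{\Gg}(\Gr,E)$ all lie in $\partial E$, and that the relevant isotropy germs $[\alpha,g,\alpha;\mu]$ have degree zero and hence survive unchanged in the clopen core subgroupoids, are precisely the points one needs to check, and you check them correctly.
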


As an application we now generalise \cite{Brix-Gonzales-Hume-Li:Hausdorff_covers}*{Corollary 7.13} from group actions on sets to groupoid actions on graphs, which was also independently proved in \cite{Aakre}. 
This concerns contracting self-similar actions. For group actions on sets such actions were formalised in  \cite{Nekrashevych:Self-similar}, and
Nekrashevych's definition was  generalised to groupoid actions in \cite{BBGHSW24}, as follows.

\begin{definition}\label{def:contracting_actions} 
A self-similar groupoid action $(\Gr,E)$ on a finite graph $E$ without sources \emph{is contracting} 
if there is a finite subset $\NN\subseteq \Gr $ such that, for all $g\in \Gr$, there exists $n\geq 0$ such that $g|_{\mu}\in \NN$ for all $\mu \in \sr(g)E^*$ with $|\mu|\geq n$. The smallest such $\NN$ is called the \emph{nucleus}.
\end{definition}
For any contracting self-similar action the nucleus exists,  is unique and is closed under sections; that is,
if $n\in \NN$ and $\mu \in \sr(n)E^*$, then $n|_{\mu}\in\NN$, see \cite{BBGHSW24}.
\begin{corollary}\label{cor:contracting_non_Hausdorff}
Let $(\Gr,E)$ be a contracting self-similar groupoid action with nucleus $\NN$. Then for any 
$\mu\in \partial E= E^{\infty}$ and sufficiently long prefix $\alpha$ of $\mu$ we have
$$
\overline{X}( \mu)=\{\mu\}\cup \{[\alpha, n, \alpha, \mu]: n\in \NN \text{and $(\alpha,n)$ is a singular decomposition of }\mu\}.
$$
In particular,  $|\overline{X}( \mu)|\leq |\NN|$. Thus,  $\widetilde{\Gg}(\Gr,E)$ and $\Gg(\Gr,E)$ are finitely non-Hausdorff. 
\end{corollary}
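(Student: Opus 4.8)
The plan is to build on Proposition~\ref{prop:non-Hausdorff_points_description}, which already identifies $\overline{X}(\mu)$ with $\{\mu\}$ together with the germs $[\alpha,g,\alpha;\mu]$ coming from singular decompositions $(\alpha,g)$ of $\mu$. The content of the corollary is then twofold: after passing to a long enough prefix $\alpha$ of $\mu$, every such germ admits a representative whose group element lies in the nucleus $\NN$, and the total number of germs is at most $|\NN|$. Since $E$ is finite and has no sources, every vertex is regular and $\partial E=E^{\infty}$, so by Remark~\ref{rem:no_singular_decomposition} only infinite paths carry singular decompositions, and we may fix $\mu\in E^{\infty}$ throughout.

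First I would record the forward-pushing identity at the level of germs. If $[\alpha,g,\alpha;\mu]\in\overline{X}(\mu)$ with $\mu=\alpha\xi$ and $\xi$ is $g$-fixed, then the first edge $e$ of $\xi$ satisfies $g\cdot e=e$, and Lemma~\ref{lem:lower_bound_description} (cf. Remark~\ref{rem:singular_infinity} and Remark~\ref{rem:groupoid_epimorphisms}) gives $[\alpha,g,\alpha;\mu]=[\alpha e,\,g|_{e},\,\alpha e;\mu]$; iterating, $[\alpha,g,\alpha;\mu]=[\alpha\alpha',g|_{\alpha'},\alpha\alpha';\mu]$ for every prefix $\alpha\alpha'$ of $\mu$. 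Now, for each $k$, let $\alpha^{(k)}$ be the length-$k$ prefix of $\mu$ and set
\[
\Gamma_k:=\{[\alpha^{(k)},n,\alpha^{(k)};\mu]:n\in\NN,\ (\alpha^{(k)},n)\text{ a singular decomposition of }\mu\}\cup\{\mu\}\subseteq\overline{X}(\mu).
\]
The key injectivity observation is that assigning to each germ in $\Gamma_k$ a representing $n\in\NN$ (sending the unit $\mu$ to $\sr(\alpha^{(k)})$, which lies in $\NN$ for large $k$ by applying Definition~\ref{def:contracting_actions} to the unit $\rg(\mu)$, since $\rg(\mu)|_{\alpha^{(k)}}=\sr(\alpha^{(k)})$) is injective: two germs sharing the representative $[\alpha^{(k)},n,\alpha^{(k)};\mu]$ coincide. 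Hence $|\Gamma_k|\le|\NN|$ for all large $k$.

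Next I would establish the monotonicity $\Gamma_k\subseteq\Gamma_{k+1}$ by pushing a representative forward along the single edge $\mu_{k+1}$: this replaces $n\in\NN$ by $n|_{\mu_{k+1}}$, which again lies in $\NN$ because the nucleus is closed under sections (the property recorded after Definition~\ref{def:contracting_actions}). Combined with the uniform bound $|\Gamma_k|\le|\NN|$, the nondecreasing sequence $(\Gamma_k)$ must stabilise, so there is $L$ with $\Gamma_k=\Gamma_L$ for all $k\ge L$. Finally $\bigcup_k\Gamma_k=\overline{X}(\mu)$: any singular-decomposition germ $[\beta,g,\beta;\mu]$ can be pushed to a prefix of length $\ge|\beta|+n_0$, where $n_0$ is the contraction threshold for $g$, so that the group element enters $\NN$ and the germ appears in some $\Gamma_k$; the unit $\mu$ lies in every $\Gamma_k$ with $k$ large. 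Therefore $\overline{X}(\mu)=\Gamma_L$, which is exactly the displayed formula with $\alpha=\alpha^{(L)}$, and $|\overline{X}(\mu)|\le|\NN|$. Finite non-Hausdorffness of $\widetilde{\Gg}(\Gr,E)$ and $\Gg(\Gr,E)$ then follows from Corollary~\ref{cor:finite_Hausdorffness}.

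The main obstacle is the uniformity of the prefix $\alpha$: the contraction threshold $n_0$ genuinely depends on $g$, so no single bound is visible a priori, and one cannot simply take a maximum over the germs before knowing there are finitely many of them. The stabilisation argument—extracting a uniform cardinality bound $|\Gamma_k|\le|\NN|$ at each finite level and combining it with monotonicity—is precisely what breaks this apparent circularity, and verifying the monotonicity step is exactly where section-closedness of the nucleus is used.
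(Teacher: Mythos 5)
Your proof is correct and follows essentially the same route as the paper's: both build the ascending chain of sets of nucleus-representable singular decompositions indexed by prefix length (your $\Gamma_k$, the paper's $F_k$), bound each level by $|\NN|$, use section-closedness of the nucleus for monotonicity and contractiveness plus Remark~\ref{rem:singular_infinity} to show the chain exhausts $\overline{X}(\mu)$, and conclude by stabilisation together with Proposition~\ref{prop:non-Hausdorff_points_description}. The only cosmetic difference is that you work at the level of germs rather than of singular decompositions.
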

\begin{proof}
Since $E$ has no sources, $\partial E= E^{\infty}$. Let $\mu\in  E^{\infty}$ and write $\mu_k$ for the prefix of $\mu$ with length $|\mu_k|=k$.
Denote by  $F_{k}$ the set of singular decompositions $(\mu_k,n)$ where  $n\in \N$. 
Remark~\ref{rem:singular_infinity} and contractiveness imply that any singular decomposition of $\mu$ falls into  $F_k$ for some $k$. Hence, $\overline{X}( \mu)\subseteq \bigcup_{k\in \N}F_k$. As nucleus is closed under sections,  Remark~\ref{rem:singular_infinity} 
implies that $F_{k}$, $k\in \N$ forms an ascending sequence of sets. However, for each $k\in \N$ we have $|F_k|< |\NN|$, and
so $|\bigcup_{k\in \N}F_k|< \NN.$ As a consequence there is $L\in  N$ such that $\bigcup_{k\in \N}F_k=F_{L}$. 
Now the assertion follows from Proposition~\ref{prop:non-Hausdorff_points_description}.
\end{proof}

\subsection{Dynamical properties}
We describe conditions equivalent to topological freeness for the groupoids of \eqref{eq:various_groupoids}.

\begin{theorem}\label{thm:topological_freeness_self_similar_transformation_groupoids}
Let $(\Gr,E)$ be a self-similar action and consider the following conditions
\begin{enumerate}
\item[\Cyc] \label{enu:cycles}  every $\Gr$-cycle  has an entrance. 

\item[\Evr] \label{enu:fixes_implies_strongly_fixes} if $g\in \Gr$ fixes every  path in $\sr(g)E^*$, then $g\in\Gr$ strongly fixes some path in $\sr(g)E^*$; 
\item[\Rec] \label{enu:receivers} if $g\in \Gr$ fixes some $\alpha \in E^*$ such that $\sr(\alpha)$ is a finite receiver, then $\alpha$ strongly $g$-fixed; 
\end{enumerate}
Then the following statements are true:
\begin{enumerate}
\item\label{enu:topological_freeness2} $\Gg(\Gr,E)$  is topologically  free if and only if $\Evr$ and $\Cyc$ hold.

\item\label{enu:topological_freeness3} Topological freeness of any of $\Gg_0(\Gr,E)$ and $\Gg_{00}(\Gr,E)$ is equivalent to $\Evr$.

\item\label{enu:topological_freeness1}  Topological freeness of any of the groupoids $\widetilde{\Gg}(\Gr,E)$, $\widetilde{\Gg}_0(\Gr,E)$ and $\widetilde{\Gg}_{00}(\Gr,E)$  is equivalent to $\Evr$ and $\Rec$.
\end{enumerate}
\end{theorem}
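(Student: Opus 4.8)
The plan is to characterise topological freeness by translating the groupoid-theoretic condition into a statement about germs using the explicit germ description from Proposition~\ref{prop:transformation_groupoids_self_similar}, and then to read off the combinatorial conditions $\Evr$, $\Cyc$, $\Rec$. Recall that a groupoid $\Gg$ is topologically free if and only if the interior of $\mathrm{Iso}(\Gg)\setminus X$ is empty, equivalently if every non-unit isotropy arrow is a limit of units along nets, equivalently if no non-empty open bisection is contained in $\mathrm{Iso}(\Gg)\setminus X$. Since all the groupoids in \eqref{eq:various_groupoids} have a basis of bisections $U(\alpha,g,\beta;V)$ with $V\subseteq Z(\beta)$ open, failure of topological freeness amounts to the existence of $(\alpha,g,\beta)$ and a non-empty open $V$ such that every germ $[\alpha,g,\beta;\mu]$, $\mu\in V$, is a non-unit isotropy arrow. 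I would prove part~\ref{enu:topological_freeness2} first, then adapt the argument for the open subgroupoids.

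For part~\ref{enu:topological_freeness2}, I expect the heavy lifting to mirror Proposition~\ref{prop:topological_freeness_semigroup}, since $\Gg(\Gr,E)\cong\Gg(S(\Gr,E))$ and, by Remark~\ref{rmk:terminology_changes}, $S(\Gr,E)$ topologically free $\Leftrightarrow$ $\Gg(S(\Gr,E))$ topologically free. So in principle part~\ref{enu:topological_freeness2} follows immediately by combining Proposition~\ref{prop:topological_freeness_semigroup}(1) with the isomorphism $\Gg(\Gr,E)\cong\Gg(S(\Gr,E))$ from \eqref{eq:transformation_groupoids_self_similar}. I would state it that way and spend the real work on the remaining two parts, where the inverse-semigroup dictionary does not directly apply (topological freeness of the \emph{core} subgroupoids $\Gg_0,\Gg_{00}$ is \emph{weaker} than that of $S_0,S_{00}$ being topologically free in the naive tight sense, and the universal groupoids $\widetilde{\Gg}_*$ have no inverse-semigroup characterisation of topological freeness, as the introduction explicitly notes).

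For part~\ref{enu:topological_freeness3}, the key point is that the grading $c$ restricts the source/range lengths: in $\Gg_0$ and $\Gg_{00}$ one has $|\alpha|=|\beta|$, so by Proposition~\ref{prop:transformation_groupoids_self_similar}\ref{enu:transformation_groupoids_self_similar2} the only isotropy arrows are of type~\ref{enu:transformation_groupoids_self_similar2a}, i.e. $\alpha=\beta$ with $\xi$ merely $g$-fixed. Thus the cycle phenomena \ref{enu:transformation_groupoids_self_similar2b}--\ref{enu:transformation_groupoids_self_similar2c} that force condition $\Cyc$ are absent, which is exactly why $\Cyc$ drops out and only $\Evr$ remains. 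Concretely, I would show that failure of topological freeness of $\Gg_0$ or $\Gg_{00}$ forces a $g$ fixing every path in some $\sr(g)E^*$ with no strongly fixed path (negating $\Evr$), using that a bisection $U(\beta,g,\beta;V)$ sitting inside $\mathrm{Iso}\setminus X$ means $g$-fixes but strongly-fixes no $\mu\in V$; conversely, if $\Evr$ holds, then near any $g$-fixed $\mu$ one finds strongly $g$-fixed extensions (using Lemma~\ref{lem:strongly_fix_respects_the_order} to propagate strong fixing through prefixes), producing nearby units. For the $\Gg_{00}$ case I would use Lemma~\ref{lem:H_0_characterisation} to ensure the relevant germs actually lie in $\Gg_{00}$.

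For part~\ref{enu:topological_freeness1}, the difference is that the unit space is now all of $E^{\le\infty}$, so finite paths with infinite receivers as sources become relevant, and this is where $\Rec$ enters. I would argue that failure of topological freeness of $\widetilde{\Gg}_*$ is witnessed either by an infinite-path phenomenon identical to the $\partial E$ case (yielding $\Evr$) or by a finite path $\alpha$ with $\sr(\alpha)$ a finite receiver that is $g$-fixed but not strongly $g$-fixed. The crucial subtlety is that $Z(\alpha)$ for $\alpha$ ending at a finite receiver is \emph{not} a limit of longer cylinders in the same uniform way as at infinite receivers, so an isolated such $\alpha$ can produce an open bisection of non-unit isotropy arrows precisely when $\Rec$ fails; conversely $\Rec$ rules this out. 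The main obstacle I anticipate is the careful topological bookkeeping distinguishing finite receivers (where singletons can be open and $\Rec$ is needed) from infinite receivers and regular vertices (governed by $\Evr$), and verifying that the basic open sets $Z(\alpha)\setminus\bigcup_{e}Z(\alpha e)$ behave correctly under the germ description; I would lean on Proposition~\ref{prop:transformation_groupoids_self_similar}\ref{enu:transformation_groupoids_self_similar1} to test the unit condition pointwise along nets and on the explicit basis $U(\alpha,g,\beta;V)$ to manufacture or obstruct the offending bisections.
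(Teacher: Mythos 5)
Your overall strategy is workable, and for items (1) and (3) it essentially coincides with the paper's proof; but your reason for abandoning the inverse-semigroup dictionary in item (2) is mistaken, and this is where your route diverges unnecessarily. The identifications $\Gg_0(\Gr,E)\cong\Gg(S_0(\Gr,E))$ and $\Gg_{00}(\Gr,E)\cong\Gg(S_{00}(\Gr,E))$ do hold (these cores \emph{are} the tight groupoids of the wide inverse subsemigroups $S_0(\Gr,E)$ and $S_{00}(\Gr,E)$), and by Remark~\ref{rmk:terminology_changes} topological freeness of an inverse semigroup is equivalent to that of its tight groupoid. So item (2) follows at once from Proposition~\ref{prop:fundamentalness}, exactly as item (1) follows from Proposition~\ref{prop:topological_freeness_semigroup}; this is what the paper does. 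What is genuinely weaker than topological freeness of $\Gg(\Gr,E)$ is topological freeness of its \emph{cores}, and that is precisely the content of Proposition~\ref{prop:fundamentalness} (quasi-fundamentality of $S(\Gr,E)$), not a failure of the dictionary. If you insist on a direct germ argument for item (2), be aware that the step ``near any $g$-fixed $\mu$ one finds strongly $g$-fixed extensions'' hides real work: before $\Evr$ can be applied you must produce a prefix $\mu'$ with $g|_{\mu'}$ fixing \emph{all} of $\sr(\mu')E^*$, and since your open set $V\subseteq\partial E$ only controls boundary paths in a set of the form $Z(\eta)\setminus\bigcup_i Z(\eta\eta_i)$, you must first argue that $g$-fixedness propagates from boundary paths down to all sufficiently long finite paths avoiding the $\eta_i$. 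You would be re-deriving the combinatorics of Proposition~\ref{prop:fundamentalness}.

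For item (3) your sketch is the paper's argument in outline: the paper reduces, via the fact that topological freeness passes to wide open subgroupoids, to showing that $\Evr$ and $\Rec$ imply $\widetilde{\Gg}(\Gr,E)$ is topologically free and that topological freeness of $\widetilde{\Gg}_{00}(\Gr,E)$ implies $\Evr$ and $\Rec$. Two points you should make explicit to close the argument: first, for the full universal groupoid you must dispatch the case $|\alpha|\neq|\beta|$ — density of $E^*$ in $E^{\le\infty}$ lets you pick a finite path in $V$, which cannot be the cycle-generated infinite path of Proposition~\ref{prop:transformation_groupoids_self_similar}\ref{enu:transformation_groupoids_self_similar2}, and this is why $\Cyc$ disappears in item (3); second, in the converse direction $\Rec$ is used not only to rule out open singletons at finite receivers but to guarantee that no path in the offending open set $V$ has a finite receiver as its source, which is what allows you to choose a finite $\xi\in V$ all of whose extensions lie in $V$ and then feed $g|_{\xi}$ into $\Evr$. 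With those steps supplied, your proof of item (3) is the paper's.
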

\begin{proof}
Items \ref{enu:topological_freeness2} and \ref{enu:topological_freeness3} follow from Propositions~\ref{prop:topological_freeness_semigroup} and~\ref{prop:fundamentalness}  and \cite{BKM2}*{Proposition 7.31}, see Remark~\ref{rmk:terminology_changes}.

\ref{enu:topological_freeness1}.
Since topological freeness passes to wide open subgroupoids, it suffices to show that $\Evr$ and $\Rec$ imply 
that $\widetilde{\Gg}(\Gr,E)$ is topologically free, and that topological freeness of $\widetilde{\Gg}_{00}(\Gr,E)$ implies $\Evr$ and $\Rec$.
Let us start with the latter.

Suppose first that $\Evr$ fails and so there is $g\in \Gr$ which fixes all paths in  $\sr(g)E^*$ but does not strongly fix any of them. 
Then  $U(\rg(\xi),g,\rg(\xi);\sr(g)E^*)$ is a non-empty open subset of $\widetilde{\Gg}_{00}(\Gr,E)$ contained in the isotropy bundle and  disjoint with the unit space $E^{\leq\infty}$, cf. Proposition~\ref{prop:transformation_groupoids_self_similar}. Hence, it witnesses the failure of topological freeness of $\widetilde{\Gg}_{00}(\Gr,E)$.
Now suppose that \textup{(Rec)} fails, so there is    $\xi\in \sr(g)E^*$ that starts in a finite receiver $\sr(\xi)$  and such that  $\xi$ is $g$-fixed but not strongly $g$-fixed. 
That $\sr(\xi)$ is a finite receiver implies that  the singleton
$\{\xi\}=Z(\xi)\setminus \bigcup_{e\in \sr(\xi)E^1} Z(\xi e)$ is open in $E^{\leq \infty}$ and so the singleton $\{[\rg(\xi),g,\rg(\xi), \xi]\}= U(\rg(\xi),g,\rg(\xi); \{\xi\})$ is open in $\widetilde{\Gg}_{00}(\Gr,E)$.
This singleton is in  $\Iso(\widetilde{\Gg}_{00}(\Gr,E))\setminus E^{\leq \infty}$, again by Proposition~\ref{prop:transformation_groupoids_self_similar}.
Hence, $\widetilde{\Gg}_{00}(\Gr,E)$ is not topologically free.

Now assume that $\Evr$ and  $\Rec$ hold. 
We need  to show that $\widetilde{\Gg}(\Gr,E)$ is topologically free,
equivalently, that every non-empty basic set  $U(\alpha,g,\beta;\beta V)$, 
which is disjoint with $\widetilde{\Gg}(\Gr,E)^0\cong E^{\le \infty}$
is not contained in the isotropy bundle. 
Let then  $V \subseteq Z(\sr(\beta))$ be a non-empty  open set and such that $U(\alpha,g,\beta;\beta V)\cap \widetilde{\Gg}(\Gr,E)^0=\varnothing$.
By Proposition~\ref{prop:transformation_groupoids_self_similar}\ref{enu:transformation_groupoids_self_similar1}, we   either have $\alpha \neq  \beta$ or $\alpha=\beta$ and  every path in $V$ is not strongly $g$-fixed. If $\alpha\neq \beta$, then taking any finite path $\xi\in V$ the element $[\alpha,g,\beta;\beta\xi]\in U(\alpha,g,\beta;\beta V)$
is not an isotropy arrow, by Proposition~\ref{prop:transformation_groupoids_self_similar}\ref{enu:transformation_groupoids_self_similar2}.  
Hence, we may assume that $\alpha=\beta$ and every path in $V$ is not strongly $g$-fixed. In view of  Proposition~\ref{prop:transformation_groupoids_self_similar}\ref{enu:transformation_groupoids_self_similar2}
we need to show that there is  $\xi\in V$ which is not $g$-fixed. Assume on the contrary, that for every $\xi\in V$ we have  $g\xi=\xi$.
By  \textup{(Rec)}, paths in  $V$ do  not  start in a finite receiver. 
As we may assume that $V$ is of the form $Z(\xi_0)\setminus \bigcup_{i=1}^n Z(\xi_0\alpha_i)$, 
this  implies that   sufficiently long paths $\xi$ in $V\cap E^*$ have the property 
that every extension  of $\xi$ is in $V$. Let us pick  $\xi\in V\cap E^*$ with such a property. As $g$ fixes everything in $V$ it follows that  $g|_{\xi}$ fixes all paths in $\sr(\xi)E^*$. 
Hence, by $\Evr$ there is $\mu\in \sr(\xi)E^*$ which is strongly fixed by $g|_{\xi}$.
But then $\xi\mu\in V$ is strongly $g$-fixed, by Lemma~\ref{lem:strongly_fix_respects_the_order}, which contradicts our assumption. 
\end{proof}
\begin{example} \label{ex:two_edges}
Let $E$ be the directed graph
\[
\begin{tikzpicture}
[baseline=-0.25ex,
vertex/.style={
circle,
fill=black,
inner sep=1.5pt
},
edge/.style={
-stealth,
shorten >= 3pt,
shorten <= 3pt
},
scale =1]

\node[vertex] (u) at (-1,0) {};%
\node[vertex] (v) at (0,0) {};%
\node[vertex] (w) at (1,0) {};%

\node[anchor= south] at (u) {\scriptsize{$u$}};
\node[anchor= south] at (v) {\scriptsize{$v$}};
\node[anchor= south] at (w) {\scriptsize{$w$}};

\draw[edge] (v.west) to node[anchor=south, inner sep = 2pt]{\scriptsize{$e$}} (u.east);
\draw[edge] (w.west) to node[anchor=south, inner sep = 2pt]{\scriptsize{$f$}} (v.east);
\end{tikzpicture}
\]
and let $\Gr$ be the group bundle over $E^0$ with fibres $G_u  \coloneqq  \rg^{-1}(u) \cong \Z$, $G_v  \coloneqq  \rg^{-1}(v) \cong \Z$, and $G_w  \coloneqq  \rg^{-1}(w) = \{0\}$. For each vertex $x \in E^0$ and $k \in \Z$ let $k_x$ denote the corresponding element of $G_x$. We define a self-similar action of $\Gr$ on $E$, given on generators by
\begin{align*}
1_u \cdot e &= e & 1_u|_e &= 1_v, &
1_v \cdot f &= f, \text{ and} & 1_v|_f &= 0_w.
\end{align*}
Every $k_u \in G_u$ fixes $uE^*$ and strongly fixes $ef$, every $k_v \in G_v$ fixes $vE^*$ and strongly fixes $f$, and $0_w$ strongly fixes $w$, so \textup{(Evr)} is satisfied. 
There are no $G$-cycles so \text{(Cyc)} is vacuously satisfied. 
 On the other hand $v$ is a finite receiver and $e$ is $1_u$-fixed but not strongly $1_u$-fixed, so \textup{(Rec)} is not satisfied. In particular, for $*=\Space,\,0,\,00$, the groupoids $\Gg_*(\Gr,E)$ are topologically free, but $\widetilde{\Gg}_*(\Gr,E)$ are not. 
\end{example}

\begin{example}
Let $E$ be the directed graph 
\[
\begin{tikzpicture}
[baseline=-0.25ex,
vertex/.style={
circle,
fill=black,
inner sep=1.5pt
},
edge/.style={
-stealth,
shorten >= 3pt,
shorten <= 3pt
},
scale =1]
\clip  (-.5,-0.5) rectangle (2,0.5);

\node[vertex] (v) at (0,0) {};%
\node[vertex] (w) at (1,0) {};%

\node[anchor= south] at (v) {\scriptsize{$v$}};
\node[anchor= south] at (w) {\scriptsize{$w$}};

\draw[edge] (w.west) to node[anchor=south, inner sep = 2pt]{\scriptsize{$e$}} (v.east);
\draw[edge] (w.north east) to [out = 40, in = -40,loop,min distance=10mm] node[anchor=west, inner sep = 2pt]{\scriptsize{$f$}} (w.south east);
\end{tikzpicture}
\]
and let $\Gr = E^0$ consist only of units. Then there is a unique self-similar action of $\Gr$ on $E$. Each $g \in \Gr$ strongly fixes every path in $\sr(g)E^*$, so $\Evr$ and $\Rec$ are satisfied.  The cycle $f$ does not have an entrance so $\Cyc$ is not satisfied. In particular, $\widetilde{\Gg}_*(\Gr,E)$ for $*=\Space,\,0,\,00$ and $\Gg_{**}(\Gr,E)$ for $**=0,\,00$ are all topologically free, but $\Gg(\Gr,E)$ is not. 
\end{example}

\begin{example}
Let $(\Gr,E)$ be the self-similar action of Example~\ref{ex:not_exel-pardo_top_free}. Since $E$ admits infinitely many strongly $1$-fixed paths, the associated groupoids of \eqref{eq:various_groupoids} are non-Hausdorff. Both $\Evr$ and $\Cyc$ hold as outlined in Example~\ref{ex:not_exel-pardo_top_free}.  On the other hand  $e$ is a path whose source is a finite receiver that is not strongly $1$-fixed, so \textup{(Rec)} does not hold. 
\end{example}

We now characterise effectiveness of
the tight groupoid $\Gg(\Gr,E)$, and as a consequence, we obtain that  in general it is not equivalent to effectiveness of the inverse semigroup $S(\Gr,E)$.
\begin{theorem}\label{thm:effective_tight_groupoid}
For any self-similar action $(\Gr,E)$, the tight groupoid $\Gg(\Gr,E)$  is effective
if and only if $(\Gr,E)$ satisfies $\Cyc$, $\Sla$ and there is no  $g\in \Gr\setminus \Gr^0$ such that $\sr(g)$ is an infinite receiver, and $g$ fixes all paths in $\sr(g)E^*$ 
except for those that are extensions of elements in some finite 
set $F\subseteq \sr(g)E^1$.
\end{theorem}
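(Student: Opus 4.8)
The plan is to characterise effectiveness of $\Gg(\Gr,E)$ by analysing when the interior of its isotropy bundle can contain arrows outside the unit space. Recall from Definition~\ref{defn:groupoid_properties} that $\Gg(\Gr,E)$ is effective precisely when the interior of $\Iso(\Gg(\Gr,E))$ equals $\partial E$, i.e. when no non-empty open subset of the isotropy bundle is disjoint from the unit space. Since the basic open sets are $U(\alpha,g,\beta;\beta V)$ with $V\subseteq Z(\sr(\beta))\cap\partial E$ open, and since effectiveness is strictly stronger than topological freeness (cf. Lemma~\ref{lem:effectiveness_is_about_singular_ideal} and Remark~\ref{rem:effectiveness_no_good}), I would first deduce from Theorem~\ref{thm:topological_freeness_self_similar_transformation_groupoids}\ref{enu:topological_freeness2} that effectiveness already forces $\Cyc$. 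The genuinely new content lies in isolating, among the isotropy arrows of type \ref{enu:transformation_groupoids_self_similar2a} in Proposition~\ref{prop:transformation_groupoids_self_similar}, those that sit in the \emph{interior} of the isotropy bundle rather than merely in the bundle itself.

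First I would reduce to the form $\alpha=\beta$: an arrow $[\alpha,g,\beta;\beta\xi]$ with $\alpha\neq\beta$ lies in the isotropy bundle only via cases \ref{enu:transformation_groupoids_self_similar2b}/\ref{enu:transformation_groupoids_self_similar2c}, which by Lemma~\ref{lem:infinite_path_from_g_cycle} require a $\Gr$-cycle without an entrance; under $\Cyc$ these cases contribute isolated isotropy points associated to the unique infinite path of the cycle, and I would check these cannot fill out an open set when $\Cyc$ holds. This leaves $U(\alpha,g,\alpha;\alpha V)$ with every $\xi\in V$ being $g$-fixed. The key local analysis is then: such a set is contained in $\Iso(\Gg(\Gr,E))$ but disjoint from $\partial E$ exactly when $g$ fixes all paths in $V$ but no path in $V$ is strongly $g$-fixed. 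The two non-$\Cyc$ obstructions to effectiveness correspond to the two ways this can be engineered by choosing $V$ as a basic neighbourhood $Z(\sr(g))\cap\partial E$ minus finitely many cylinders. When $\sr(g)$ is a source or finite receiver, paths are forced to be boundary paths immediately, and the failure of effectiveness is detected by the failure of $\Sla$, exactly as in the semigroup-level Proposition~\ref{prop:topological_freeness_semigroup}. When $\sr(g)$ is an infinite receiver, one can excise only finitely many edges $F\subseteq\sr(g)E^1$ and still retain a non-empty open set of boundary paths, giving the third, genuinely infinite-receiver, obstruction in the statement.

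For the forward direction I would therefore argue the contrapositive: assuming effectiveness, $\Cyc$ holds by topological freeness; if $\Sla$ failed there would be $g$ fixing all of $\sr(g)E^*$ with no finite cover of $\sr(g)$ by strongly $g$-fixed paths, and I would build from this an open $U(\sr(g),g,\sr(g);V)$ inside the isotropy bundle disjoint from $\partial E$, using Proposition~\ref{prop:transformation_groupoids_self_similar}; and if the third condition failed, the infinite-receiver $g$ together with the finite exceptional set $F$ would directly furnish such an open set via $V=\big(Z(\sr(g))\setminus\bigcup_{e\in F}Z(e)\big)\cap\partial E$. For the converse I would assume all three conditions and show every basic $U(\alpha,g,\alpha;\alpha V)$ inside the isotropy bundle meets $\partial E$: by $\Cyc$ and the argument of Theorem~\ref{thm:topological_freeness_self_similar_transformation_groupoids}\ref{enu:topological_freeness1} combined with the failure of the third obstruction, $V$ must contain a strongly $g$-fixed path, producing a unit in the set.

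The main obstacle I anticipate is the careful bookkeeping in the infinite-receiver case: one must precisely match ``$g$ fixes all paths except extensions of a finite set $F$'' with the existence of a non-empty \emph{boundary-path} open set avoiding strongly fixed paths, and verify that the boundary path space $\partial E$ (not merely $E^{\le\infty}$) genuinely intersects or avoids these sets as required. Because $\partial E$ is the closure of maximal paths rather than all of $E^{\le\infty}$, the density behaviour of $\partial E$ inside the cylinders $Z(\alpha)$ must be tracked, and this is where the distinction between effectiveness of $\Gg(\Gr,E)$ and of $S(\Gr,E)$ (promised in Example~\ref{ex:infinitely_many_edges} and Corollary~\ref{cor:effectiveness_fails}) ultimately originates; handling it correctly is the crux of the proof.
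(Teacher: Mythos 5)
There is a genuine gap, and it is the same conceptual error in both directions: you systematically replace the criterion for effectiveness with the criterion for topological freeness. Effectiveness of $\Gg(\Gr,E)$ means the interior of $\Iso(\Gg(\Gr,E))$ \emph{equals} the unit space, i.e.\ every open bisection contained in the isotropy bundle must be \emph{contained in} $\partial E$; topological freeness only forbids open subsets of $\Iso$ that are \emph{disjoint from} $\partial E$. In your converse direction you propose to show that every basic $U(\alpha,g,\alpha;\alpha V)\subseteq\Iso$ ``meets $\partial E$'' by ``producing a unit in the set''. That proves topological freeness, not effectiveness. The paper's proof instead shows that \emph{every} $\xi\in V$ is strongly $g$-fixed, via a case analysis: for infinite $\xi$ one uses $\Sla$ applied to $g|_{\xi_n}$ for a long enough prefix $\xi_n$; for finite $\xi$ with $\sr(\xi)$ a source one uses $\Sla$ again; and for finite $\xi$ with $\sr(\xi)$ an infinite receiver one extracts the finite exceptional set $F$ of edges meeting the excised cylinders and invokes the third hypothesis to conclude $g|_\xi=\sr(\xi)$. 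This last step is where the third condition actually does its work in the sufficiency direction, and it is absent from your plan.

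The same conflation breaks your necessity argument for $\Sla$: you propose that failure of $\Sla$ yields an open $U(\sr(g),g,\sr(g);V)$ inside the isotropy bundle \emph{disjoint} from $\partial E$. No such set need exist --- in Example~\ref{ex:not_exel-pardo_top_free} the groupoid is topologically free (so no open subset of $\Iso$ avoids the unit space) yet $\Sla$ fails and the groupoid is not effective. What failure of $\Sla$ gives you is an open subset of $\Iso$ that is not \emph{contained} in the unit space, and the clean way to get necessity of $\Cyc$ and $\Sla$ (the paper's route) is to note that effectiveness of $\Gg(\Gr,E)$ implies effectiveness of the inverse semigroup $S(\Gr,E)$ by \cite{Exel-Pardo:Tight}*{Theorem 4.10}, which is equivalent to $\Cyc$ and $\Sla$ by Proposition~\ref{prop:topological_freeness_semigroup}. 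Your necessity argument for the third condition is essentially right once the target is corrected: the set $U(\sr(g),g,\sr(g);V)$ with $V=(Z(\sr(g))\setminus\bigcup_{e\in F}Z(e))\cap\partial E$ lies in $\Iso$ and contains the non-unit point $[\sr(g),g,\sr(g);\sr(g)]$ (a unit here would force $g$ to strongly fix $\sr(g)$, hence $g\in\Gr^0$); one does not need, and should not claim, disjointness from $\partial E$.
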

\begin{proof}
That effectiveness of $\Gg(\Gr,E)$  implies $\Cyc$  and  $\Sla$ follows from 
Proposition~\ref{prop:topological_freeness_semigroup} and \cite{Exel-Pardo:Tight}*{Theorem 4.10}, cf. Remark~\ref{rmk:terminology_changes}.
Let us assume that  there is $g\in \Gr\setminus \Gr^0$ such that $\sr(g)$ is an infinite receiver and $g$ fixes all paths in $\sr(g)E^*$ 
except for those that are extensions of elements in some finite 
set $F\subseteq \sr(g)E^1$. Then   $V \coloneqq Z(\sr(g))\setminus \bigcup_{e\in F} Z(e)\cap \partial E$ is an open subset of $\partial E$ containing $\sr(g)=\rg(g)$. By Proposition~\ref{prop:transformation_groupoids_self_similar}\ref{enu:transformation_groupoids_self_similar2},  the non-empty basic bisection
$U(\sr(g),g,\sr(g); V)$ is  contained  in $\Iso(\Gg(\Gr,E))$. However, it is not contained in the unit space. Namely, if we assume that the point $[\sr(g),g,\sr(g);\sr(g)] \in U(\sr(g),g,\sr(g); V)$ is in the unit space,  then by Proposition~\ref{prop:transformation_groupoids_self_similar}\ref{enu:transformation_groupoids_self_similar2}, $g$ strongly fixes $\sr(g)$. This forces $g$ to be the unit $\sr(g)$, cf. Remark~\ref{rem:about_strongly_fixed}, contradicting our choice of $g$. Hence, $\Gg(\Gr,E)$ is not effective.

Conversely,  let us assume $\Sla$ and $\Cyc$  and that there is no $g\in \Gr\setminus \Gr^0$ such that $\sr(g)$ is an infinite receiver and 
$g$ fixes all paths in $\sr(g)E^*$ 
except for those that are extensions of elements in some finite 
set $F\subseteq \sr(g)E^1$.
Let $U(\alpha,g,\beta;\beta V)$ be a non-empty bisection  contained in $\Iso(\Gg(\Gr,E))$.  
We may assume that $g\notin \Gr^0$, as otherwise $U(\alpha,g,\beta;\beta V)$ is contained in the unit space.
We may also assume that $V$ is a basic open set of the form $Z(\eta)\setminus \bigcup_{i=1}^k Z(\eta\eta_i)\cap \partial E$.
Let us consider two cases:

Assume that  $|\alpha|\neq |\beta|$. Then by Proposition~\ref{prop:transformation_groupoids_self_similar}\ref{enu:transformation_groupoids_self_similar2},
the open set $\beta V\subseteq \partial E$ is necessarily a  singleton consisting of an infinite path $\alpha_{\infty}$ coming from a $G$-cycle $\alpha_1$ as
described in Lemma~\ref{lem:infinite_path_from_g_cycle}. By the form of $V$, this means that every extension of $\beta\eta$ is a prefix of $\alpha_{\infty}$. But by $\Cyc$, $\alpha_1$ has to have an entrance and so every prefix of $\alpha_{\infty}$ will have infinitely many distinct extensions.
Hence, $\Cyc$ excludes this case.

Now suppose that $|\alpha|=|\beta|$. Then by Proposition~\ref{prop:transformation_groupoids_self_similar}\ref{enu:transformation_groupoids_self_similar2},
we have $\alpha=\beta$ and every $\xi\in V$ is $g$-fixed. By  Proposition~\ref{prop:transformation_groupoids_self_similar}\ref{enu:transformation_groupoids_self_similar1} we need to show that every $\xi\in V$ is strongly $g$-fixed, that is $g|_{\xi}=\sr(\xi)$. Let $\xi\in V$. Let us consider two  subcases.

Assume first that $\xi$ is infinite. Then writing $\xi=\xi_n\xi'$ for $\xi_n\in E^n$ and $\xi'\in \sr(\xi_n)E^\infty$ for  sufficiently large $n$ ($n\geq \max_i |\eta\eta_i|$), we get
that  every extension of $\xi_n$  in $\partial E$ is in $V$. Hence, for every $\eta\in \sr(\xi_n)\partial E$  the path $\xi_n\eta$ is $g$-fixed.
This implies that $g|_{\xi_n}$ fixes all paths in $\sr(\xi_n)E^{*}$. Hence, by $\Sla$ there is a strongly $g|_{\xi_n}$-fixed path which is a prefix of   $\xi'\in \sr(\xi_n)E^\infty$.  In other, words  $\xi'$ is strongly $g|_{\xi_n}$-fixed. Therefore, $\xi=\xi_n\xi'$ is strongly $g$-fixed, see Lemma~\ref{lem:strongly_fix_respects_the_order}.

Assume that $\xi$ is finite.  Since $\xi$ is $g$-fixed,   we have $g|_{\xi}$ fixes  $\sr(\xi)$. 
Thus, if $\sr(\xi)$ is a source, $g|_{\xi}$ fixes the unique element of $\sr(\xi)E^{*}=\{\sr(\xi)\}$, and so by  $\Sla$  we get that $g|_{\xi}=\sr(\xi)$, that is $\xi$ is strongly $g$-fixed.
Hence, we may assume that $\sr(\xi)$ is an infinite receiver. Let  $F$ be the set of edges $e\in \sr(\xi)E^1$ such that $\xi e$  is comparable with $\eta\eta_i $ for some $i$. Then $|F|\leq k<\infty$ and for every $\mu\in \sr(\xi)E^*$ which is not an extension of an  element in  $F$ we have that $\xi\mu\in V$. 
Thus, $\xi\mu$ is $g$-fixed and as a consequence $\mu$ is $g|_{\xi}$-fixed. In other words, $\sr(g|_{\xi})=\sr(\xi)$ is an infinite receiver and $g|_{\xi}$
fixes all $\sr(g|_{\xi})E^*$ except the paths that are comparable with  $F$. Hence, $g|_{\xi}=\sr(g|_{\xi})=\sr(\xi)$ by assumption.
\end{proof}
\begin{example} \label{ex:infinitely_many_edges}
Let $E$ be the directed graph
\[
\begin{tikzcd}
\stackrel{u}{\bullet} \arrow[rr, "e_1", bend left=22] \arrow[rr, "e_{-1}"', bend right=22] &  & \stackrel{v}{\bullet} &  & \stackrel{w}{\bullet}\arrow[ll, "f_1", bend left=49] \arrow[ll, dotted, no head, "\raisebox{5pt}{$\vdots$}" description] \arrow[ll, "f_2", bend left=20]  \arrow[ll, "f_n"  description, bend right]  \arrow[ll, no head, dotted, bend right=49]
\end{tikzcd}. 
\]
Then $v$ is the range of infinitely many edges $f_n$, $n\geq 1$, as well as $e_{\pm 1}$.
Let $\Gr$ be the group bundle over $E^0=\{u,v,w\}$ with fibres $G_u  \coloneqq  \{0\}$, $G_v  \coloneqq \Z_2=\{0,1\}$, and $G_w  \coloneqq  \{0\}$. 
We define a self-similar action of $\Gr$ on $E$,  determined by the relations 
\[
1\cdot e_{\pm} = e_{\mp 1}, \qquad		1 \cdot f_n = f_n, \text{ for }n\geq 1.
\]
Then $\Cyc$ and $\Sla$ are trivially satisfied as there are no $G$-cycles and the only nontrivial element in the group bundle, $1\in G_v$, does not fix 
all paths in $vE^*$. 
Hence, the associated inverse semigroup $S(\Gr,E)$ is effective by 
Proposition~\ref{prop:topological_freeness_semigroup}. But by Theorem~\ref{thm:effective_tight_groupoid}, the associated tight groupoid
$\Gg(\Gr,E)$ is not effective, as 
$1\in \Gr_{v}$ is not a unit, but it fixes all paths in $vE^{*}\setminus (e_1E^*\cup e_{-1}E^*)=\{f_n:n\geq 1\}$.
In fact, $\{f_n:n\geq 1\}$ are strongly $1$-fixed edges, and so $\Gg(\Gr,E)$ is non-Hausdorff by Proposition~\ref{prop:Hausdorff_extended_groupoid}.
As $\Gg(\Gr,E)$ is topologically free but not effective, we have 
an even stronger statement. Namely, by Remark~\ref{rem:effectiveness_no_good} 
the singular ideal in $F^P_{\red}(\Gg(\Gr,E))$ does not vanish for every nonempty  $P\subseteq [1,\infty]$.
\end{example}
\begin{corollary}\label{cor:effectiveness_fails} If the groupoid $\Gg(\Gr,E)$ is effective, 
then the inverse semigroup $S(\Gr,E)$ is effective. The converse implication holds 
when $E$ is row-finite or if $\Fin$ holds, but in general it fails
(and then the ``algebraic singular ideal'' $\mathfrak{C}_c(\Gg)\cap \mathfrak{M}_0(\Gg)\neq \{0\}$ is nonzero).
\end{corollary}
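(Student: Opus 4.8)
The plan is to deduce all parts of the corollary from the inverse-semigroup/groupoid dictionary recorded in Remark~\ref{rmk:terminology_changes} together with Lemma~\ref{lem:effectiveness_is_about_singular_ideal}, specialising everything to $S=S(\Gr,E)$ via the isomorphism $\Gg(\Gr,E)\cong \Gg(S(\Gr,E))$ from~\eqref{eq:transformation_groupoids_self_similar}. First I would record the general implication: Remark~\ref{rmk:terminology_changes} states (on the strength of \cite{Exel-Pardo:Tight}*{Theorem 4.10}) that effectiveness of $\Gg(S)$ always forces effectiveness of $S$. Under the above identification this says precisely that effectiveness of $\Gg(\Gr,E)$ implies effectiveness of $S(\Gr,E)$, giving the first assertion with no extra work.

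For the converse under $\Fin$ or row-finiteness I would invoke the sharper half of the same dictionary: by Remark~\ref{rmk:terminology_changes}, $S$ is effective if and only if $\Gg(S)$ is effective \emph{provided} that either $S$ is closed or every tight filter in $\EE(S)$ is an ultrafilter. If $\Fin$ holds, then $S(\Gr,E)$ is closed by Corollary~\ref{cor:Hausdorff_equiv_closed}, so the first hypothesis applies directly. If instead $E$ is row-finite, I would verify that every tight filter in $\EE(\Gr,E)$ is an ultrafilter. Using the semilattice isomorphism~\eqref{eq:semilattice_isomorphism} $\EE(\Gr,E)\cong E^*\cup\{0\}$ and the filter description in Example~\ref{ex:spectrum_graph_inverse_semigroup}, tight filters correspond to $\partial E=E^*_{\sing}\cup E^\infty$ while ultrafilters correspond to $E^*_{\src}\cup E^\infty$. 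Row-finiteness rules out infinite receivers, so the singular vertices are exactly the sources; hence $E^*_{\sing}=E^*_{\src}$ and the two classes of filters coincide, yielding the converse in this case as well.

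For the failure in general I would simply cite Example~\ref{ex:infinitely_many_edges}: there $\Cyc$ and $\Sla$ hold (no $\Gr$-cycles, and the only nontrivial element $1\in\Gr_v$ does not fix all of $vE^*$), so $S(\Gr,E)$ is effective by Proposition~\ref{prop:topological_freeness_semigroup}, whereas $1\in\Gr_v$ fixes every path in $vE^*$ outside $e_1E^*\cup e_{-1}E^*$, so $\Gg(\Gr,E)$ is not effective by Theorem~\ref{thm:effective_tight_groupoid}. Finally, for the statement about the algebraic singular ideal I would argue that whenever the converse fails the ideal is automatically nonzero. Assume $S(\Gr,E)$ is effective but $\Gg(\Gr,E)$ is not. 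Effectiveness of an inverse semigroup implies topological freeness, since any cover $F\subseteq F_t$ of a nonzero idempotent $e$ must be nonempty and hence contains some $f$ with $fe\neq 0$; thus $S(\Gr,E)$ is topologically free, whence $\Gg(\Gr,E)$ is topologically free by Remark~\ref{rmk:terminology_changes}. Then $\Gg(\Gr,E)$ is topologically free but not effective, and the contrapositive of Lemma~\ref{lem:effectiveness_is_about_singular_ideal} forces $\mathfrak{C}_c(\Gg)\cap\mathfrak{M}_0(\Gg)\neq\{0\}$.

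I expect the main obstacle to be the row-finite case, where one must carefully match the purely order-theoretic notions of tight filter and ultrafilter on $\EE(\Gr,E)$ with the graph-theoretic description of Example~\ref{ex:spectrum_graph_inverse_semigroup} and confirm that row-finiteness collapses singular vertices to sources; the remaining steps are each a direct application of a result already established in the excerpt.
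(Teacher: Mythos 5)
Your proposal is correct and follows essentially the same route as the paper: the dictionary of Remark~\ref{rmk:terminology_changes} (i.e.\ \cite{Exel-Pardo:Tight}*{Theorem 4.10}) combined with closedness under $\Fin$ and the tight-filter/ultrafilter identification under row-finiteness, Example~\ref{ex:infinitely_many_edges} for the failure, and Lemma~\ref{lem:effectiveness_is_about_singular_ideal} for the singular ideal. Your extra observation that effectiveness of $S(\Gr,E)$ always yields topological freeness of $\Gg(\Gr,E)$ (so the lemma applies in any counterexample, not just the cited one) is a small but valid sharpening of the paper's argument.
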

\begin{proof}
The assertion about implications follow from  \cite{Exel-Pardo:Tight}*{Theorem 4.10}, but also from 
Proposition~\ref{prop:topological_freeness_semigroup} and Theorem~\ref{thm:effective_tight_groupoid}.
Equivalence in general fails by Example~\ref{ex:infinitely_many_edges}, and then $\mathfrak{C}_c(\Gg)\cap \mathfrak{M}_0(\Gg)\neq \{0\}$ by Lemma~\ref{lem:effectiveness_is_about_singular_ideal}.
\end{proof}
Let us now pass to local contractiveness, see Definition 
\ref{defn:groupoid_properties} and Remark~\ref{rmk:terminology_changes}. 
The following generalises \cite{Exel-Pardo:Self-similar}*{Theorem 16.1} and \cite{Exel-Pardo-Starling:Self-similar}*{Theorem 4.6}.
\begin{proposition}\label{prop:contractiveness_of_groupoid}
Let \(
S
\) be the canonical image of $S(\Gr,E)$ in $\Gg(\Gr,E)$ and consider the following conditions
\begin{enumerate}
\item\label{enu:contractiveness_of_groupoid1}  $(\Gr,E)$ satisfies $\Con$ above;
\item\label{enu:contractiveness_of_groupoid2} the groupoid $\Gg(\Gr,E)$ is locally contracting with respect to $S$;
\item\label{enu:contractiveness_of_groupoid3}   the groupoid $\Gg(\Gr,E)$ is locally contracting; and
\item\label{enu:contractiveness_of_groupoid4} every $G$-cycle has an entrance (that is \Cyc~ holds).
\end{enumerate}
Then \ref{enu:contractiveness_of_groupoid1}$\Rightarrow$\ref{enu:contractiveness_of_groupoid2}$\Rightarrow$\ref{enu:contractiveness_of_groupoid3}$\Rightarrow$\ref{enu:contractiveness_of_groupoid4}.
If $E$ is row-finite then \ref{enu:contractiveness_of_groupoid1}$\Leftrightarrow$\ref{enu:contractiveness_of_groupoid2}.
If every vertex is the range of a path whose source is a base point of a $G$-cycle (which is automatic when $E$ is finite and has no sources), then all conditions \ref{enu:contractiveness_of_groupoid1}--\ref{enu:contractiveness_of_groupoid4}
are equivalent.
\end{proposition}
\begin{proof}
Implication \ref{enu:contractiveness_of_groupoid1}$\Rightarrow$\ref{enu:contractiveness_of_groupoid2} follows from Proposition~\ref{prop:locally_contracting_semigroup}
and \cite{Exel-Pardo:Tight}*{Theorem 6.5}, cf. Remark~\ref{rmk:terminology_changes}. It also gives \ref{enu:contractiveness_of_groupoid1}$\Leftrightarrow$\ref{enu:contractiveness_of_groupoid2}
when $E$ is row-finite (as row-finiteness of $E$ is equivalent to assuming that every tight filter in $\EE$ is an ultrafilter).
Implication \ref{enu:contractiveness_of_groupoid2}$\Rightarrow$\ref{enu:contractiveness_of_groupoid3} is trivial. 
If $\alpha$ is a $G$-cycle without entrance, then using 
Lemma~\ref{lem:infinite_path_from_g_cycle} we produce  $\alpha_{\infty}\in E^\infty$ such that the singleton $\{\alpha_{\infty}\}$ is open in $\partial E$, and so it cannot be contracted. This proves that \ref{enu:contractiveness_of_groupoid3}$\Rightarrow$\ref{enu:contractiveness_of_groupoid4}.
If every vertex is a range of a path whose source is a base point of a $G$-cycle, then clearly  \ref{enu:contractiveness_of_groupoid4} implies \ref{enu:contractiveness_of_groupoid1},  see Proposition~\ref{prop:locally_contracting_semigroup}\ref{enu:locally_contracting_semigroup3}.
\end{proof}
The following is an immediate consequence of Propositions~\ref{prop:minimality_inverse_semigroup} and~\ref{prop:cofinal_equiv_no_invariants}, 
and Remark~\ref{rmk:terminology_changes}.
\begin{proposition}\label{prop:groupoid_minimality}
The following conditions are equivalent:
\begin{enumerate}
\item\label{enu:groupoid_minimality1} the groupoid $\Gg(\Gr,E)$ is minimal;
\item\label{enu:groupoid_minimality2}  $(\Gr,E)$ is cofinal; and
\item\label{enu:groupoid_minimality3} there are no nontrivial $G$-invariant, hereditary and saturated sets in $E^0$.
\end{enumerate}
\end{proposition}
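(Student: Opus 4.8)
The plan is to assemble the proposition from the inverse-semigroup results already proved, using the groupoid model of $\Gg(\Gr,E)$. The equivalence \ref{enu:groupoid_minimality2}$\Leftrightarrow$\ref{enu:groupoid_minimality3} requires no new argument: it is precisely the content of Proposition~\ref{prop:cofinal_equiv_no_invariants}, once one recalls that a subset of $E^0$ is $(\Gr,E)$-invariant exactly when it is $G$-invariant, hereditary and saturated.

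For \ref{enu:groupoid_minimality1}$\Leftrightarrow$\ref{enu:groupoid_minimality2}, I would first pass through the isomorphism $\Gg(\Gr,E)\cong\Gg(S(\Gr,E))$ of topological groupoids recorded in \eqref{eq:transformation_groupoids_self_similar}. Since this isomorphism is in particular a homeomorphism of unit spaces carrying $\Gg$-invariant open sets to $\Gg$-invariant open sets, it preserves minimality, so that $\Gg(\Gr,E)$ is minimal if and only if $\Gg(S(\Gr,E))$ is minimal. Next, the correspondence collected in Remark~\ref{rmk:terminology_changes} (ultimately \cite{Exel-Pardo:Tight}) gives that $\Gg(S(\Gr,E))$ is minimal if and only if the inverse semigroup $S(\Gr,E)$ is minimal in the sense of Definition~\ref{def:InverseSemigroupSimplicityProperties}. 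Finally, Proposition~\ref{prop:minimality_inverse_semigroup} identifies minimality of $S(\Gr,E)$ with cofinality of $(\Gr,E)$, which closes the chain.

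I do not anticipate any real obstacle here, as all the substantive work has been carried out in the cited results; the only point to verify carefully is that the identification $\Gg(\Gr,E)\cong\Gg(S(\Gr,E))$ is genuinely an isomorphism of topological groupoids, so that the notion of minimality transports, and this is already established in the construction of $\Gg(\Gr,E)$ preceding \eqref{eq:transformation_groupoids_self_similar}. Combining the two equivalences then shows that all three conditions are equivalent.
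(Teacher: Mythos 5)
Your proposal is correct and follows exactly the route the paper takes: the paper states the proposition as an immediate consequence of Propositions~\ref{prop:minimality_inverse_semigroup} and~\ref{prop:cofinal_equiv_no_invariants} together with Remark~\ref{rmk:terminology_changes}, which is precisely the chain of citations you assemble (with the identification $\Gg(\Gr,E)\cong\Gg(S(\Gr,E))$ from \eqref{eq:transformation_groupoids_self_similar} used implicitly). No gaps.
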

\begin{remark}
Since $\partial E$ is a closed $\tilde{\Gg}(\Gr,E)$-invariant subspace of $E^{\leq \infty}$, the groupoid $\tilde{\Gg}(\Gr,E)$ is not minimal unless $\tilde{\Gg}(\Gr,E)=\Gg(\Gr,E)$, which holds if and only if all vertices in $E^0$ are either sources or infinite receivers.
\end{remark}

\section{Digression on pseudo freeness}\label{sect:pseudo_freeness}
The following pseudo freeness condition was coined in \cite{Exel-Pardo:Self-similar}*{Definition 5.4}, cf. \cite{Deaconu}*{Definition 5.1}, 
and is assumed in a number of papers.   It can be viewed as a very strong form of Hausdorffness, which from our perspective is much too restrictive. Nevertheless, it can be rephrased using a number of natural regularity conditions that play a role in the literature. Therefore, for the sake of completeness, we briefly discuss these conditions here.

We recall that an inverse semigroup $S$ is called \emph{$E^*$-unitary} (or \emph{$0$-$E$-unitary}) if any element $t\in S$ that trivially fixes an idempotent is idempotent itself, i.e. if $0\neq e\leq t$ for some $e\in \EE(S)$ implies that $t\in \EE(S)$, cf. Definition~\ref{def:InverseSemigroupSimplicityProperties}.
This is a well-established notion, see for instance \cite{Lawson}*{Chapter 9}. If $S$ is $E^*$-unitary, then any action of $S$ yields a Hausdorff transformation groupoid $S\rtimes X$ (cf. \cite{Exel-Pardo:Tight}*{Theorem 3.15}  or \cite{Kwasniewski-Meyer:Essential}*{Lemma 2.6}). 
Recall also the epimorphisms $\Gr\rtimes E^{\le \infty}\onto \widetilde{\Gg}_{00}(\Gr,E)$ and $\Gr\rtimes \partial E\onto\Gg_{00}(\Gr,E)$ from Remark~\ref{rem:groupoid_epimorphisms}.
\begin{definition}
A self-similar groupoid action $(\Gr,E)$ is \emph{pseudo free} if every $g\in \Gr\setminus \Gr^0$ admits no strongly $g$-fixed edges.
\end{definition}
\begin{proposition}\label{prop:pseudo_freeness}
For any self-similar groupoid action $(\Gr,E)$ the following  are equivalent:
\begin{enumerate}
\item\label{enu:pseudo_freeness1} $(\Gr,E)$ is pseudo free;
\item\label{enu:pseudo_freeness2} every $g\in \Gr\setminus \Gr^0$ admits no strongly $g$-fixed paths;
\item\label{enu:pseudo_freeness2.5}  $g\alpha=h\alpha$ and $g|_{\alpha}=h|_{\alpha}$ implies $g=h$, for every $\alpha\in E^*$ and $g,h\in \Gr \rg(\alpha) $;
\item\label{enu:pseudo_freeness3} the canonical groupoid epimorphism $\Gr\rtimes E^{\le \infty}\onto \widetilde{\Gg}_{00}(\Gr,E)$ is an isomorphism;
\item\label{enu:pseudo_freeness4} the canonical groupoid epimorphism $\Gr\rtimes \partial E\onto\Gg_{00}(\Gr,E)$ is an isomorphism;
\item\label{enu:pseudo_freeness6} the inverse semigroup $S(\Gr,E)$ is  $E^*$-unitary;
\item\label{enu:pseudo_freeness7}  the left action of $\Gr$ on the associated groupoid correspondence $X=E^1* \Gr$ is free, cf.  Proposition~\ref{prop:correspondence_from_self_similar}; and
\item\label{enu:pseudo_freeness8} the associated Zappa--Sz\'ep product category  $E^*\bowtie \Gr = E^**\Gr$
is (right) cancellative, cf. Remark~\ref{rem:Zappa--Szep_product}.
\end{enumerate} 
\end{proposition}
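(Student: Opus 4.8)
The plan is to establish the equivalences in a cycle (or a collection of short implications), using the concrete descriptions of the inverse semigroup $S(\Gr,E)$, its partial order, and the associated groupoids that were developed earlier in the excerpt. The logical backbone I would use is:
\[
\ref{enu:pseudo_freeness1}\Leftrightarrow\ref{enu:pseudo_freeness2}\Leftrightarrow\ref{enu:pseudo_freeness2.5}\Leftrightarrow\ref{enu:pseudo_freeness6},\qquad
\ref{enu:pseudo_freeness6}\Leftrightarrow\ref{enu:pseudo_freeness3}\Leftrightarrow\ref{enu:pseudo_freeness4},\qquad
\ref{enu:pseudo_freeness2.5}\Leftrightarrow\ref{enu:pseudo_freeness7}\Leftrightarrow\ref{enu:pseudo_freeness8}.
\]
First I would dispose of \ref{enu:pseudo_freeness1}$\Leftrightarrow$\ref{enu:pseudo_freeness2}: the forward direction follows immediately from Lemma~\ref{lem:strongly_fix_respects_the_order}, since a strongly $g$-fixed path $\alpha=\alpha_1\cdots\alpha_n$ forces its first edge $\alpha_1$ to be $g$-fixed and its section $g|_{\alpha_1}$ to strongly fix the tail; iterating (or using that the first edge is strongly fixed only if $g$ acts as a unit on it) shows that the presence of a strongly fixed path of any length produces a strongly fixed edge unless $g\in\Gr^0$. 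The converse is trivial. The equivalence with \ref{enu:pseudo_freeness2.5} is just an unravelling of definitions: $g\alpha=h\alpha$ and $g|_\alpha=h|_\alpha$ says exactly that $g^{-1}h$ strongly fixes $\alpha$ (using \eqref{equ:inverse_vs_restriction} and Remark~\ref{rem:about_strongly_fixed}), so absence of nontrivial strongly fixed paths is the statement that $g^{-1}h\in\Gr^0$, i.e. $g=h$.

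Next I would handle the inverse-semigroup characterisation \ref{enu:pseudo_freeness6}. By Lemma~\ref{lem:inverse_semigroup_preorder} (see also \eqref{eq:F_t_for_self-similar}), a nonzero idempotent $f_\alpha$ satisfies $f_\alpha\le t=(\gamma,h,\delta)$ precisely when $\gamma=\delta$, $\alpha=\gamma\gamma'$, and $h$ strongly fixes $\gamma'$. Thus $S(\Gr,E)$ is $E^*$-unitary — every $t$ with $0\ne f_\alpha\le t$ is idempotent — if and only if strong fixing by a nonunit never occurs, which is exactly \ref{enu:pseudo_freeness2}. For \ref{enu:pseudo_freeness3} and \ref{enu:pseudo_freeness4}, recall from Remark~\ref{rem:groupoid_epimorphisms} the canonical epimorphisms $(g,\xi)\mapsto[\rg(g),g,\sr(g);\xi]$. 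These maps fail to be injective exactly when two distinct pairs have the same germ, and Lemma~\ref{lem:H_0_characterisation} (together with Lemma~\ref{lem:lower_bound_description}) tells us that $[\rg(g),g,\sr(g);\xi]=[\rg(h),h,\sr(h);\xi]$ with $g\ne h$ forces $g^{-1}h$ to strongly fix a prefix of $\xi$. Hence injectivity over all of $E^{\le\infty}$ (respectively all of $\partial E$) is equivalent to \ref{enu:pseudo_freeness2.5}; the only subtlety is that $\partial E$ must be large enough to detect all stabilising elements, which holds because every edge appears as (a prefix of) a boundary path, so the restriction to $\partial E$ loses no information.

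Finally I would treat the correspondence and category pictures. For \ref{enu:pseudo_freeness7}, Proposition~\ref{prop:correspondence_from_self_similar} gives $X=E^1*\Gr$ with left action $g\cdot(e,h)=(g\cdot e, g|_e h)$; freeness of this left action means $g\cdot(e,h)=(e,h)$ forces $g=\rg(e)$, i.e. $g\cdot e=e$ and $g|_e=\sr(e)$ with $g$ a unit — this is precisely the nonexistence of strongly $g$-fixed edges for nonunits, so \ref{enu:pseudo_freeness7}$\Leftrightarrow$\ref{enu:pseudo_freeness1}. For \ref{enu:pseudo_freeness8}, using the composition law $(\mu,g)\bowtie(\nu,h)=(\mu(g\nu),g|_\nu h)$ from Remark~\ref{rem:Zappa--Szep_product}, right cancellativity unwinds to the implication in \ref{enu:pseudo_freeness2.5}: two arrows with the same right factor agree iff their defining data $(g\alpha,g|_\alpha)$ coincides only for equal $g$. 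I expect the main obstacle to be the bookkeeping in \ref{enu:pseudo_freeness3}$\Leftrightarrow$\ref{enu:pseudo_freeness4}, namely verifying that restricting the injectivity test from $E^{\le\infty}$ to $\partial E$ is harmless; the point is that a strongly fixed edge yields a boundary path fixing witness, since one can always extend a minimal strongly fixed path to a boundary path, so no failure of freeness can hide outside $\partial E$.
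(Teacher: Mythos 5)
Your proposal is correct and follows essentially the same route as the paper: the same clusters of equivalences, reduced via Lemma~\ref{lem:strongly_fix_respects_the_order}, the description of $F_t$ in \eqref{eq:F_t_for_self-similar}, Lemma~\ref{lem:lower_bound_description} for the germ computation, and the explicit formulas for the correspondence and the Zappa--Sz\'ep composition. The one point you flag as a potential obstacle (passing from $E^{\le\infty}$ to $\partial E$ in \ref{enu:pseudo_freeness3}$\Leftrightarrow$\ref{enu:pseudo_freeness4}) is handled correctly by your observation that every finite strongly fixed path extends to a boundary path; the paper simply treats this as immediate.
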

\begin{proof}  Implication \ref{enu:pseudo_freeness1}$\Rightarrow$\ref{enu:pseudo_freeness2}  follows from Lemma~\ref{lem:strongly_fix_respects_the_order}.
Implication  \ref{enu:pseudo_freeness2}$\Rightarrow$\ref{enu:pseudo_freeness2.5} is straightforward, see \cite{Deaconu}*{Remark 5.2}. 
The converse implications are trivial and so \ref{enu:pseudo_freeness1}--\ref{enu:pseudo_freeness2.5} are equivalent.
By Lemma~\ref{lem:lower_bound_description} we have 	 $[\rg(g),g,\sr(g); \xi]=[\rg(h), h, \sr(h); \xi]$
if and only if there is $\alpha\in E^*$  such that  $g\alpha=h\alpha$ and $g|_{\alpha}=h|_{\alpha}$. 
Hence, injectivity of any of the maps in \ref{enu:pseudo_freeness3} and \ref{enu:pseudo_freeness4} is equivalent to \ref{enu:pseudo_freeness2.5}.
Thus, \ref{enu:pseudo_freeness1}--\ref{enu:pseudo_freeness4} are equivalent.

Now let $t\in S(\Gr,E)$. By \eqref{eq:F_t_for_self-similar} the existence of  a nonzero idempotent $e \in \EE(\Gr,E)$  with  $e\leq t$ is 
equivalent to having $t=(\alpha,g,\alpha)$ with  $g$ strongly fixing some $\alpha'\in \sr(g)E^*$. On the other hand,
$t=(\alpha,g,\alpha)$ is an idempotent if and only if $g$ is the unit. This shows that \ref{enu:pseudo_freeness6}$\Leftrightarrow$\ref{enu:pseudo_freeness2}.

Condition \ref{enu:pseudo_freeness7} means that the map 
$$
\Gr*X=\Gr*E^1* \Gr \ni ( g,e,h)\mapsto (ge, g|_{e}h, e,h)\in  E^1* \Gr\times E^1* \Gr=X\times X
$$ 
is injective.  Thus, the implications \ref{enu:pseudo_freeness7}$\Rightarrow$\ref{enu:pseudo_freeness1}
and \ref{enu:pseudo_freeness2.5}$\Rightarrow$\ref{enu:pseudo_freeness7} are clear. 

Finally, recall that the category $E^*\bowtie \Gr$  is always left cancellative and so \ref{enu:pseudo_freeness8} is equivalent to right cancellativity of $E^*\bowtie \Gr$. 
Hence, \ref{enu:pseudo_freeness8} reads as the following implication, cf. Remark~\ref{rem:Zappa--Szep_product}:
for all  $(\mu,g), (\mu'g'), (\nu,h)\in E^*\bowtie \Gr= E^**\Gr$ with $\sr(g)=\sr(g')=\rg(\nu)$ we have
\[
(\mu(g\nu),g|_{\nu}h)=(\mu'(g'\nu),g'|_{\nu}h) \,\, \Longrightarrow\,\, (\mu,g)= (\mu',g').
\]
But using that the  action of $\Gr$ on $E^*$ preserves length and that $h\in \Gr$ is invertible, 
the equality $(\mu(g\nu),g|_{\nu}h)=(\mu'(g'\nu),g'|_{\nu}h)$ is equivalent to 
the equalities $\mu=\mu'$,  $g\nu =g'\nu$ and    $g|_{\nu}=g'|_{\nu}$. 
Accordingly, the above implication is equivalent to \ref{enu:pseudo_freeness2.5}. Hence, all conditions \ref{enu:pseudo_freeness1}--\ref{enu:pseudo_freeness8} are equivalent.
\end{proof}
\begin{remark}
An inverse semigroup  $S$ is \emph{strongly $E^*$-unitary} if there is a $1$-cocycle $c:S\setminus \{0\}\to \Gamma$ into some group $\Gamma$ such that $c^{-1}(1)=\EE(S)\setminus \{0\}$. In general, this is a  strictly stronger condition than being $E^*$-unitary, see \cite{Bulman-Fleming_Fountain_Gould}. 
It is shown in \cite{Exel-Starling:Self-similar}*{Theorem 4.4}, see also \cite{Larki_Hasiri}*{Theorem 10.4}, that for group actions on (row-finite) graphs pseudo freeness is also equivalent to strong $E^*$-unitariness. It is not clear whether arguments of \cite{Exel-Starling:Self-similar}  can be generalised to 
groupoid actions.
\end{remark}

\begin{corollary}
If  $(\Gr,E)$ is a  pseudo free  self-similar action, then all the groupoids in  the diagram \eqref{eq:various_groupoids} are Hausdorff and
$\widetilde{\Gg}_{00}(\Gr,E)$ is clopen in $\widetilde{\Gg}(\Gr,E)$  (and so $\Gg_{00}(\Gr,E)$ is clopen in $\Gg(\Gr,E)$). 
\end{corollary}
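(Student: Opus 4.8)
The statement follows by combining Proposition~\ref{prop:pseudo_freeness} with the structural results on Hausdorffness established earlier. The plan is to deduce both assertions from the single characterisation that pseudo freeness is equivalent to $(\Gr,E)$ satisfying the much stronger ``no nontrivial strongly fixed paths'' condition, which immediately forces $\Fin$ and trivialises the non-Hausdorff points.

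\textbf{First step: Hausdorffness.}
First I would observe that pseudo freeness implies $\Fin$. By Proposition~\ref{prop:pseudo_freeness}\ref{enu:pseudo_freeness2}, no $g\in\Gr\setminus\Gr^0$ admits any strongly $g$-fixed path, while by Remark~\ref{rem:about_strongly_fixed} every unit $v\in\Gr^0$ strongly fixes exactly the paths in $vE^*$ with unique minimal such path $v$ itself. Hence every $g\in\Gr$ has at most one minimal strongly $g$-fixed path, so condition $\Fin$ holds trivially. By Proposition~\ref{prop:Hausdorff_extended_groupoid}, $\Fin$ is equivalent to Hausdorffness of all the groupoids in the diagram~\eqref{eq:various_groupoids}, which gives the first assertion.

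\textbf{Second step: clopenness of $\widetilde{\Gg}_{00}(\Gr,E)$.}
Since the groupoids are Hausdorff and $\widetilde{\Gg}_{00}(\Gr,E)$ is already known to be an open subgroupoid of $\widetilde{\Gg}(\Gr,E)$ (see~\eqref{eq:various_groupoids}), it remains only to show it is closed. I would use the explicit description from Lemma~\ref{lem:H_0_characterisation}: an element $[\alpha,g,\beta;\beta\xi]$ lies in $\widetilde{\Gg}_{00}(\Gr,E)$ if and only if there is $h\in\Gr$ with $h\beta=\alpha$ and $g^{-1}(h|_\beta)$ strongly fixing $\xi$. Under pseudo freeness, Proposition~\ref{prop:pseudo_freeness}\ref{enu:pseudo_freeness2} forces $g^{-1}(h|_\beta)=\sr(\beta)$, i.e. $g=h|_\beta$. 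Thus membership in $\widetilde{\Gg}_{00}(\Gr,E)$ is equivalent to the existence of $h\in\Gr$ with $h\beta=\alpha$ and $g=h|_\beta$, a condition independent of $\xi$. Consequently each basic bisection $U(\alpha,g,\beta;Z(\beta))$ is either entirely contained in $\widetilde{\Gg}_{00}(\Gr,E)$ or entirely disjoint from it, so $\widetilde{\Gg}_{00}(\Gr,E)$ is a union of basic bisections and its complement is open. Therefore $\widetilde{\Gg}_{00}(\Gr,E)$ is clopen in $\widetilde{\Gg}(\Gr,E)$; intersecting with the closed subgroupoid $\Gg(\Gr,E)$ shows $\Gg_{00}(\Gr,E)$ is clopen in $\Gg(\Gr,E)$.

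\textbf{Main obstacle.}
The only delicate point is verifying that the membership condition for $\widetilde{\Gg}_{00}(\Gr,E)$ genuinely becomes $\xi$-independent, i.e. that the germ $[\alpha,g,\beta;\beta\xi]$ either lies in $\widetilde{\Gg}_{00}(\Gr,E)$ for all $\xi\in Z(\sr(\beta))$ or for none. The substance here is that pseudo freeness removes the strong-fixing clause in Lemma~\ref{lem:H_0_characterisation}, leaving a purely combinatorial constraint on $(\alpha,g,\beta)$; once this is observed, the clopenness is immediate from the definition of the basic bisections.
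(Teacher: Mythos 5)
Your proposal is correct and follows essentially the same route as the paper: Hausdorffness via $\Fin$ and Proposition~\ref{prop:Hausdorff_extended_groupoid}, and closedness of $\widetilde{\Gg}_{00}(\Gr,E)$ by observing that pseudo freeness makes the membership criterion of Lemma~\ref{lem:H_0_characterisation} independent of $\xi$, so that each basic bisection $U(\alpha,g,\beta;Z(\beta))$ lies entirely inside or entirely outside $\widetilde{\Gg}_{00}(\Gr,E)$. Your first step is in fact slightly more explicit than the paper's, which only cites Proposition~\ref{prop:Hausdorff_extended_groupoid} without spelling out that pseudo freeness implies $\Fin$.
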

\begin{proof}
Hausdorffness follows from Proposition~\ref{prop:Hausdorff_extended_groupoid}. 
By Lemma~\ref{lem:H_0_characterisation} and pseudo freeness,  $[\alpha,g,\beta; \beta\xi]\in \widetilde{\Gg}_{00}(\Gr,E)$ if and only if there is $h\in \Gr$  such that $h\beta =\alpha$ and $h|_{\beta}=g$. This condition does not depend on $\xi$. Hence, if $[\alpha,g,\beta; \beta\xi]\not\in \widetilde{\Gg}_{00}(\Gr,E)$, then
$U(\alpha,g,\beta;Z(\beta))\cap \widetilde{\Gg}_{00}(\Gr,E)=\emptyset$, and so $\widetilde{\Gg}_{00}(\Gr,E)$ is closed in $\widetilde{\Gg}(\Gr,E)$.
\end{proof}

\section{The twist}\label{sect:twist}

Fix  a self-similar action $(\Gr,E)$. We construct $2$-cocycles for self-similar actions using  as little data as possible. 
In particular, we may always assume that they are trivial on the graph.
\begin{definition}\label{dfn:ss_cocycle}
A   \emph{twist} or  a \emph{normalised $\T$-valued $2$-cocycle} for a self-similar action $(\Gr,E)$ is a pair $\sigma = (\sigma_{\Gr},\sigma_{\bowtie})$ where
\begin{enumerate}
\item $\sigma_{\Gr} \colon \Gr^2 \to \T$ satisfies \eqref{eq:groupoid_cocycle_identities}, i.e. it is a normalised  $\T$-valued groupoid 2-cocycle;
\item $\sigma_{\bowtie} \colon \Gr * E^1\to \T$ is such that for all $(g,h,e) \in \Gr * \Gr * E^1$ we have $\sigma_{\bowtie}(\rg(e),e) = 1$ and 
\begin{equation} 
\label{eq:ssc_edges_G} \sigma_{\bowtie}(h,e)\ol{\sigma_{\bowtie}(gh,e)} \sigma_{\bowtie} (g,h e) =\ol{ \sigma_{\Gr}(g|_{he}, h|_{e})} \sigma_{\Gr}(g,h).
\end{equation}
\end{enumerate}
\end{definition}
\begin{remark}\label{rem:trivial_bowtie}
By \eqref{eq:ssc_edges_G}, we can put $\sigma_{\bowtie}\equiv 1$ in the  cocycle  for  $(\Gr,E)$ if and only $\sigma_{\Gr}$ is invariant under sections  in the sense 
that $\sigma_{\Gr}(g,h)=\sigma_{\Gr}(g|_{he}, h|_{e})$ for all $(g,h,e) \in \Gr^2 * E^1$. On the other hand,
if we assume that $\sigma_{\Gr}\equiv 1$, then \eqref{eq:ssc_edges_G} reduces to 
$\sigma_{\bowtie}(gh,e)=\sigma_{\bowtie}(h,e) \sigma_{\bowtie} (g,h e)$, which means that  $\sigma_{\bowtie}$ is a $\T$ valued $1$-cocycle for the action of $\Gr$ on $E^1$.
Therefore, twists of the form $(1,\sigma_{\bowtie})$ correspond to twists considered in \cite{Cortinas}.
However,  the purely algebraic set up in \cite{Cortinas} is slightly different as Corti\~nas allows his  $1$-cocycles to take values in the multiplicative group of a ring.
\end{remark}
\begin{lemma}\label{lem:self-similar-cocycle-is-total-cocycle}
Let $\sigma$ be a  twist for $(G,E)$. The map $\sigma_{\bowtie}$ extends to a map $\sigma_{\bowtie} \colon \Gr * E^* \to \T$ determined inductively by
\begin{equation}\label{eq:cocycle_paths}
\sigma_{\bowtie}(h,e \mu)  \coloneqq  \sigma_{\bowtie}(h,e) \sigma_{\bowtie}(h|_{e},\mu), \qquad (h,e,\mu) \in \Gr * E^1 * E^*,
\end{equation}
and $\sigma_{\bowtie}(h,\sr(h))=1$ for any $h\in \Gr$. 	 Then for all $(g, h, \lambda, \mu) \in \Gr * \Gr * E^* *
E^*$ 
\begin{enumerate}
\item\label{itm:ssc_normalised} $\sigma_{\bowtie}(h,\sr(h))=1$ and $\sigma_{\bowtie}(\rg(\mu),\mu) = 1$;
\item\label{itm:ssc_bowtie} $\sigma_{\bowtie}(h,\lambda \mu) = \sigma_{\bowtie}(h,\lambda) \sigma_{\bowtie}(h|_{\lambda}, \mu)$;
\item\label{itm:ssc_G} $\sigma_{\bowtie}(h,\lambda)\ol{\sigma_{\bowtie}(gh,\lambda)} \sigma_{\bowtie} (g,h \lambda) = \ol{\sigma_{\Gr}(g|_{h\lambda}, h|_{\lambda})} \sigma_{\Gr}(g,h)$.
\end{enumerate}
\end{lemma}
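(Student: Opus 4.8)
The plan is to prove Lemma~\ref{lem:self-similar-cocycle-is-total-cocycle} by first establishing that the inductive formula \eqref{eq:cocycle_paths} genuinely defines a map on all of $\Gr * E^*$, and then verifying properties \ref{itm:ssc_normalised}--\ref{itm:ssc_G} by induction on the length of the path argument. The well-definedness is immediate: every nonvertex path $\mu \in E^*$ factors uniquely as $\mu = e\mu'$ with $e \in E^1$, so \eqref{eq:cocycle_paths} together with the base case $\sigma_{\bowtie}(h,\sr(h)) = 1$ determines $\sigma_{\bowtie}(h,\mu)$ unambiguously by recursion on $|\mu|$. Item \ref{itm:ssc_normalised} then follows directly: the first equality is the base case by fiat, and $\sigma_{\bowtie}(\rg(\mu),\mu) = 1$ follows by induction, using that $\rg(\mu)|_e = \rg(\mu)$ acts as a unit and $\sigma_{\bowtie}(\rg(e),e) = 1$ from Definition~\ref{dfn:ss_cocycle}.

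For item \ref{itm:ssc_bowtie}, I would induct on $|\lambda|$. The base case $\lambda = \rg(h) \in E^0$ is handled by \ref{itm:ssc_normalised} since $h|_{\rg(h)} = h$. For the inductive step write $\lambda = e\lambda'$ with $e \in E^1$; then applying \eqref{eq:cocycle_paths} twice and the identity $h|_{\lambda} = (h|_e)|_{\lambda'}$ (from Proposition~\ref{prop:extension_of_self-similar_action_on_graph}) lets me reduce the claim for $(\lambda,\mu)$ to the inductive hypothesis applied to $(\lambda',\mu)$ with the section $h|_e$ in place of $h$. This is a routine cocycle-bookkeeping computation that I would not spell out in full.

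The substantive part is item \ref{itm:ssc_G}, the mixed cocycle identity relating $\sigma_{\bowtie}$ and $\sigma_{\Gr}$; here I would again induct on $|\lambda|$, with the base case $|\lambda| = 1$ being exactly \eqref{eq:ssc_edges_G}. For the inductive step I write $\lambda = e\lambda'$ and expand every $\sigma_{\bowtie}(\cdot, e\lambda')$ term using \ref{itm:ssc_bowtie}. After this expansion, the edge-level factors $\sigma_{\bowtie}(h,e)\overline{\sigma_{\bowtie}(gh,e)}\sigma_{\bowtie}(g,he)$ can be rewritten via the base identity \eqref{eq:ssc_edges_G} as $\overline{\sigma_{\Gr}(g|_{he},h|_e)}\sigma_{\Gr}(g,h)$, while the remaining $\lambda'$-level factors should be collapsed by the inductive hypothesis applied to the sections $g|_{he}$ and $h|_e$ (noting $(gh)|_e = (g|_{he})(h|_e)$, so that the relevant section of $\lambda'$ matches up). The expected obstacle is purely algebraic: I must check that the two resulting $\sigma_{\Gr}$-contributions combine correctly, which requires invoking the groupoid $2$-cocycle identity \eqref{eq:groupoid_cocycle_identities} for $\sigma_{\Gr}$ applied to the composable triple built from $g|_{he\lambda'}$, $h|_{e\lambda'}$ and the factorisation of $(gh)|_e$. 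Tracking the arguments and the complex conjugations so that everything cancels to leave precisely $\overline{\sigma_{\Gr}(g|_{h\lambda}, h|_{\lambda})}\sigma_{\Gr}(g,h)$ is the delicate step, but it is a finite verification driven entirely by the associativity-type relations \eqref{eq:matching_for_self-similar2} and \eqref{eq:groupoid_cocycle_identities} already available.
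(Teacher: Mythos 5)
Your proposal follows the paper's proof essentially verbatim: well-definedness by recursion on path length, item \ref{itm:ssc_normalised} directly from the definitions, and induction on $|\lambda|$ for \ref{itm:ssc_bowtie} and \ref{itm:ssc_G}, peeling off the first edge $e$ of $\lambda = e\lambda'$, using \eqref{eq:ssc_edges_G} as the base case and applying the inductive hypothesis to the sections $g|_{he}$, $h|_e$ together with the identities $(g|_{he})|_{h|_e\lambda'} = g|_{h\lambda}$ and $(h|_e)|_{\lambda'} = h|_{\lambda}$. The one inaccuracy is your anticipated appeal to the groupoid $2$-cocycle identity \eqref{eq:groupoid_cocycle_identities} in the inductive step of \ref{itm:ssc_G}: it is not needed, because the factor $\sigma_{\Gr}(g|_{he},h|_e)$ produced by the inductive hypothesis cancels directly against the conjugate factor $\ol{\sigma_{\Gr}(g|_{he},h|_e)}$ coming from \eqref{eq:ssc_edges_G}, leaving exactly the claimed right-hand side.
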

\begin{proof}
\ref{itm:ssc_normalised} follows immediately from \eqref{eq:cocycle_paths} and that $\sigma_{\bowtie}(\rg(e),e) = 1$ for all $e \in E^1$.
For \ref{itm:ssc_bowtie} we induct on the length of $\lambda$. The base case is given by \eqref{eq:cocycle_paths}. Fix $k \ge 1$ and suppose that \ref{itm:ssc_bowtie} holds for all $(h,\lambda,\mu) \in \Gr * E^k *\Gr$. Fix $(h,\lambda,\mu) \in \Gr * E^{k+1} *\Gr$ and write $\lambda = e \lambda'$ for $e \in E^1$ and $\lambda' \in E^k$. Using the inductive hypothesis at the second equality we have
\begin{align*}
\sigma_{\bowtie}(h,\lambda \mu) 
&\stackrel{\eqref{eq:cocycle_paths}}{=}
\sigma_{\bowtie}(h,e) \sigma_{\bowtie}(h|_e,\lambda'\mu)
= \sigma_{\bowtie}(h,e) \sigma_{\bowtie}(h|_e,\lambda')
\sigma_{\bowtie} \big((h|_{e})|_{\lambda'}),\mu\big)\\
&\stackrel{\eqref{eq:cocycle_paths}}{=}
\sigma_{\bowtie}(h,e\lambda')
\sigma_{\bowtie} (h|_{e\lambda'},\mu)
=
\sigma_{\bowtie}(h,\lambda)
\sigma_{\bowtie} (h|_{\lambda},\mu).
\end{align*}

For \ref{itm:ssc_G} we also induct on the length of $\lambda$. The base case is \eqref{eq:ssc_edges_G}. Fix $k \ge 1$ and suppose that \ref{itm:ssc_G} holds for all $(g,h,\lambda) \in \Gr * \Gr * E^k$.  Fix $(g,h,\lambda) \in \Gr * \Gr * E^{k+1}$ and write $\lambda = e \lambda'$ for $e \in E^1$ and $\lambda' \in E^k$. Then $h\lambda = he h|_e \lambda'$, so using the inductive hypothesis at the third equality,
\begin{align*}
\sigma_{\bowtie}(h,\lambda) &\ol{\sigma_{\bowtie}(gh,\lambda)} \sigma_{\bowtie} (g,h \lambda) \\
&\stackrel{\eqref{eq:cocycle_paths}}{=}
\sigma_{\bowtie}(h,e) 	\sigma_{\bowtie}(h|_e,\lambda')
\ol{\sigma_{\bowtie}(gh,e)}
\ol{\sigma_{\bowtie}((gh)|_e,\lambda')}
\sigma_{\bowtie}(g,he)
\sigma_{\bowtie}(g|_{he},h|_e \lambda')\\
&\stackrel{\eqref{eq:ssc_edges_G}}{=}
\sigma_{\Gr}(g|_{he}, h|_{e}) \ol{\sigma_{\Gr}(g,h)}
\sigma_{\bowtie}(h|_e,\lambda')
\ol{\sigma_{\bowtie}(g|_{he} h|_e ,\lambda')}
\sigma_{\bowtie}(g|_{he},h|_e \lambda')\\
&\,\,=\ol{ \sigma_{\Gr}(g|_{he}, h|_{e})} \sigma_{\Gr}(g,h)
\ol{\sigma_{\Gr}\big((g|_{he})|_{h|_{e}\lambda'}  , (h|_{e})|_{\lambda'}\big)}
\sigma_{\Gr} (g|_{he},h|_e)\\
&\,\,= \sigma_{\Gr}(g,h)
\ol{\sigma_{\Gr}\big((g|_{he})|_{h|_{e}\lambda'}  , (h|_{e})|_{\lambda'}\big)}.
\end{align*}
Since $(g|_{he})|_{h|_{e}\lambda'} = g|_{he h|_{e}{\lambda'}} = g|_{h\lambda}$ and $(h|_e)|_{\lambda'} = h|_{e\lambda'} = h|_{\lambda}$ we are done. 
\end{proof}
Lemma~\ref{lem:self-similar-cocycle-is-total-cocycle} says that each $2$-cocycle for the self-similar aciton $(\Gr,E)$ extends  
to a $2$-cocycle for the matched pair of categories $(\Gr,E^*)$ defined as follows  \cite{Mundey_Sims}*{Definition~7.12}.

\begin{definition} We say that
$\varphi\colon \Gr^{2}
\sqcup (\Gr * E^*) \sqcup  E^{*2} \to \T$ is a \emph{total $2$-cocycle} on $(\Gr,
E^*)$, if $\varphi_{2,0}  \coloneqq  \varphi|_{\Gr^{2}}$,
$\varphi_{1,1}  \coloneqq  \varphi|_{\Gr * E^*}$ and $\varphi_{0,2}  \coloneqq 
\varphi|_{E^{*2}}$ satisfy
\begin{enumerate}
\item $\varphi_{2,0}\colon \Gr^{2} \to \T$ is a normalised $\T$-valued $2$-cocycle in the
sense of \cite{Renault};
\item $\varphi_{0,2}\colon E^{*2} \to \T$ is a normalised $\T$-valued categorical
$2$-cocycle in the sense of \cite{KPSiv};
\item $\varphi_{1,1}(h,\lambda) = 1$ whenever $h \in \Gr^0$, or $\lambda \in E^0$, and for $(g, h, \lambda, \mu) \in \Gr * \Gr * E^* *
E^*$,
\begin{align*}
\varphi_{1,1}(h|_\lambda,\mu)  \ol{\varphi_{1,1}(h, \lambda\mu)}  \varphi_{1,1}(h, \lambda)  \varphi_{0,2}(\lambda,\mu)  \ol{\varphi_{0,2}((h\lambda,h|_{\lambda}\mu))} &= 1\quad\text{and}\\
\varphi_{2,0}(g|_{h\lambda},h|_\lambda)  \ol{\varphi_{2,0}(g,h)}\,\ol{\varphi_{1,1}(h, \lambda)} \varphi_{1,1}(gh,\lambda)  \ol{\varphi_{1,1}(g, h\lambda)} &= 1.
\end{align*}
\end{enumerate}
\end{definition}
\begin{proposition}
Let $(\Gr,E)$ be a self-similar action. For every  twist $\sigma=(\sigma_{\bowtie},\sigma_{\Gr})$ of $(\Gr,E)$ gives a total $2$-cocycle   
$(1,\ol{\sigma}_{\bowtie},\sigma_{\Gr})$ on $(\Gr,E^*)$ where $\ol{\sigma}_{\bowtie}$ is the complex conjugate of the extended map from Lemma~\ref{lem:self-similar-cocycle-is-total-cocycle} and $1 \colon E^{*2} \to \T$ is the constant function with value $1$. Conversely, every  
total  $2$-cocycle $\varphi = (\varphi_{0,2},\varphi_{1,1},\varphi_{2,0})$  on $(\Gr,E^*)$ is cohomologous to one of the form $(1,\ol{\sigma}_{\bowtie},\sigma_{\Gr})$.
\end{proposition}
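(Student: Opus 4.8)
The plan is to establish the two directions separately. For the forward direction, given a twist $\sigma=(\sigma_{\Gr},\sigma_{\bowtie})$ of $(\Gr,E)$, I claim that the triple $(1,\ol{\sigma}_{\bowtie},\sigma_{\Gr})$ is a total $2$-cocycle on $(\Gr,E^*)$, where $\ol{\sigma}_{\bowtie}$ is the complex conjugate of the extension from Lemma~\ref{lem:self-similar-cocycle-is-total-cocycle}. This is essentially a matter of matching up the three defining conditions of a total $2$-cocycle against what we already know. Condition~(1), that $\sigma_{\Gr}$ is a normalised groupoid $2$-cocycle, holds by hypothesis~(i) of Definition~\ref{dfn:ss_cocycle}. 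Condition~(2), that the constant function $1\colon E^{*2}\to\T$ is a normalised categorical $2$-cocycle, is immediate since the constant value $1$ trivially satisfies the cocycle identity and normalisation. For condition~(3) I would first check the normalisation $\ol{\sigma}_{\bowtie}(h,\lambda)=1$ when $h\in\Gr^0$ or $\lambda\in E^0$: the case $\lambda\in E^0$ is Lemma~\ref{lem:self-similar-cocycle-is-total-cocycle}\ref{itm:ssc_normalised}, and the case $h\in\Gr^0$ follows by an easy induction on $|\lambda|$ using \eqref{eq:cocycle_paths} together with $\sigma_{\bowtie}(\rg(e),e)=1$. The two remaining cocycle identities in condition~(3) are where the real content lies.

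The first identity in condition~(3), with $\varphi_{0,2}=1$, reduces to
\[
\ol{\sigma}_{\bowtie}(h|_\lambda,\mu)\,\sigma_{\bowtie}(h,\lambda\mu)\,\ol{\sigma}_{\bowtie}(h,\lambda)=1,
\]
which after conjugating is exactly the multiplicativity relation $\sigma_{\bowtie}(h,\lambda\mu)=\sigma_{\bowtie}(h,\lambda)\sigma_{\bowtie}(h|_\lambda,\mu)$ recorded in Lemma~\ref{lem:self-similar-cocycle-is-total-cocycle}\ref{itm:ssc_bowtie}. The second identity in condition~(3), written out for $(1,\ol{\sigma}_{\bowtie},\sigma_{\Gr})$, becomes
\[
\sigma_{\Gr}(g|_{h\lambda},h|_\lambda)\,\ol{\sigma_{\Gr}(g,h)}\,\sigma_{\bowtie}(h,\lambda)\,\ol{\sigma}_{\bowtie}(gh,\lambda)\,\sigma_{\bowtie}(g,h\lambda)=1,
\]
which is precisely a rearrangement of Lemma~\ref{lem:self-similar-cocycle-is-total-cocycle}\ref{itm:ssc_G} (conjugating that identity and moving the $\sigma_{\Gr}$ factors across). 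So the forward direction is just careful bookkeeping: translate each clause of the total cocycle definition into the extended-map identities already proved, being attentive to which factors are conjugated. I would present this as a short verification rather than a computation, pointing to the relevant parts of Lemma~\ref{lem:self-similar-cocycle-is-total-cocycle} at each step.

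For the converse, I want to show every total $2$-cocycle $\varphi=(\varphi_{0,2},\varphi_{1,1},\varphi_{2,0})$ on $(\Gr,E^*)$ is cohomologous to one of the form $(1,\ol{\sigma}_{\bowtie},\sigma_{\Gr})$. The strategy is to trivialise the graph part $\varphi_{0,2}$ by a coboundary. Since $E^*$ is the free path category of the graph $E$ and its categorical cohomology in this setting is controlled by the graph (the relevant $H^2$ vanishes, or at least $\varphi_{0,2}$ is a coboundary), there is a $1$-cochain $b\colon E^*\to\T$ whose coboundary is $\varphi_{0,2}$; replacing $\varphi$ by the cohomologous cocycle $\varphi\cdot\delta b$ we may assume $\varphi_{0,2}\equiv 1$. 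Here I would invoke the cohomological machinery of \cite{Mundey_Sims} for matched pairs of categories, which is the framework in which total $2$-cocycles are defined and in which coboundaries act componentwise. Once $\varphi_{0,2}\equiv 1$, the first identity of condition~(3) forces $\varphi_{1,1}$ to be multiplicative in the path variable exactly as in Lemma~\ref{lem:self-similar-cocycle-is-total-cocycle}\ref{itm:ssc_bowtie}, so setting $\sigma_{\bowtie}\coloneqq\ol{\varphi}_{1,1}$ and $\sigma_{\Gr}\coloneqq\varphi_{2,0}$ and restricting to edges recovers a twist of $(\Gr,E)$ in the sense of Definition~\ref{dfn:ss_cocycle}, with the extension of $\sigma_{\bowtie}$ agreeing with $\ol{\varphi}_{1,1}$.

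The main obstacle is the cohomological trivialisation of $\varphi_{0,2}$ in the converse direction: I need that every normalised categorical $2$-cocycle on the path category $E^*$ is a coboundary, and that the coboundary used to kill it does not disturb the form of the other two components beyond what is allowed (i.e.\ that after the change of cochain the graph part really is trivial and the mixed part remains of the claimed type). This requires the precise cohomology conventions for matched pairs from \cite{Mundey_Sims}, including how a $1$-cochain on the total category splits into group-, mixed-, and graph-components and how its coboundary distributes across $\varphi_{2,0}$, $\varphi_{1,1}$, $\varphi_{0,2}$. I expect this to be the delicate point, since one must ensure the chosen coboundary can be taken supported on the graph variables so that $\varphi_{2,0}$ and the essential structure of $\varphi_{1,1}$ survive; the freeness of $E^*$ as a path category should make the required vanishing of graph $2$-cohomology available, and I would cite the relevant computation from \cite{Mundey_Sims} or \cite{KPSiv} rather than reproving it.
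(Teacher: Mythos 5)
Your proposal is correct and follows essentially the same route as the paper: the forward direction is the verification that Lemma~\ref{lem:self-similar-cocycle-is-total-cocycle} supplies exactly the identities in the definition of a total $2$-cocycle, and the converse trivialises $\varphi_{0,2}$ via a $1$-cochain $\tau$ on $E^*$ (using $H^2(E^*;\T)=0$) whose total coboundary $d^1_{Tot}(\tau,0)$ leaves $\varphi_{2,0}$ untouched and shifts $\varphi_{1,1}$ by $d^{1,0}_h(\tau)$, which is precisely how the paper resolves the "delicate point" you flag.
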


\begin{proof} The first part is clear by Lemma~\ref{lem:self-similar-cocycle-is-total-cocycle}. 
Fix a normalised total $2$-cocycle $\varphi$. The cohomology group $H^2(E^*;\T) = 0$, so there exists a cochain $\tau \colon E^* \to \T$ such that $d^{1}(\tau) = \varphi_{0,2}$, (see for example \cite{Mundey_Sims}*{Proposition~6.1})  where $d^{1}$ is the first differential in the categorical cochain complex associated to $E^*$ with coefficients in $\T$, \cite{Mundey_Sims}*{Definition~4.1}.
Using additive notation in the cochain groups, it follows that in the total cochain complex associated to $(G,E)$ (see \cite{Mundey_Sims}*{\S 4.4}) that
$\varphi + d^1_{Tot}(\tau,0)   =  (1,\varphi_{1,1} - d^{1,0}_h(\tau), \varphi_{2,0})$  is a normalised total $2$-cocycle that is cohomologous to $\varphi$. Since $\varphi + d^1_{Tot}(\tau,0)$ is a total $2$-cocycle, $\ol{\sigma}_{\bowtie}  \coloneqq  \varphi_{1,1} - d^{1,0}_h(\tau)$ and $\sigma_{\Gr}  \coloneqq  \varphi_{2,0}$ define a normalised self-similar $2$-cocycle. 
\end{proof}
\begin{remark}
We may associate to $(\Gr,E)$ the Zappa--Sz\'ep product category $E^* \bowtie \Gr$ described in Remark~\ref{rem:Zappa--Szep_product}, and we may apply to $E^* \bowtie \Gr$ the machinery developed in   \cite{Mundey_Sims}. In particular, a \emph{categorical $2$-cocycle} on $E^* \bowtie \Gr$  is a map $c:(E^* \bowtie \Gr)^2\to \T$ satisfying
\[
c((\mu,h),(\nu,k)) c((\lambda,g),(\mu (h\nu), h|_{\nu} k))
=
c((\lambda g \mu, g|_{\mu}h), (\nu,k)) c((\lambda,g),(\mu,h))
\]
for all $(\lambda,g,\mu,h,\nu,k) \in (E^* \bowtie \Gr)^{2}$. A categorical $2$-cocycle is normalised if 
$$c((\lambda,g),(\sr(g),\sr(g))) = c((\rg(\lambda),\rg(\lambda)),(\lambda,g)) =  1$$ for all $(\lambda,g) \in E^* \bowtie \Gr$. 
By the results of \cite{Mundey_Sims}*{p.53}   any normalised categorical $2$-cocycle  $c:(\Gr \bowtie E)^2\to \T$ on the category  $G\bowtie E$ is determined, up to cohomology class, by a total $2$-cocycle $\varphi = (\varphi_{2,0},\varphi_{1,1},\varphi_{0,2})$ via the formula
\[
c((\lambda, g),(\mu,h))=\varphi_{0,2}(\lambda, g\mu) \varphi_{1,1}(g,\mu)\varphi_{2,0}(g|_{\mu},h).
\]
Conversely, every total $2$-cocycle is determined, up to cohomology class, by a categorical $2$-cocycle. 
\end{remark}
The $2$-cocycle for self-similar action induces a $2$-cocycle for the associated inverse semigroup.

\begin{proposition}\label{prop:twist_self_similar_inverse_semigroup}
Suppose $\sigma = (\sigma_G,\sigma_{\bowtie})$ is a twist of a self-similar action $(\Gr,E)$. For $(\alpha,g,\beta), (\gamma,h,\delta) \in S(\Gr,E)$ the formula
\[
\omega_{\sigma}((\alpha,g,\beta), (\gamma,h,\delta))
 \coloneqq  \begin{cases}
\sigma_{\bowtie}(g,\beta') \sigma_{\Gr}(g|_{\beta'},h) & \text{if } \gamma = \beta \beta'\\
\sigma_{\Gr}(g, (h^{-1}|_{\gamma'})^{-1}) \sigma_{\bowtie} (h,h^{-1} \gamma')  & \text {if } \beta = \gamma \gamma' 
\end{cases}
\]
defines a normalised $2$-cocycle $\omega_{\sigma}=\{\omega_{\sigma}(s,t)\}_{s,t\in S(\Gr,E), st\neq 0}$ on $S(\Gr,E)$.
\end{proposition}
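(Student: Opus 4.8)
The plan is to verify two things: first, that $\omega_\sigma$ is well-defined (its two cases agree on the overlap where $\beta=\gamma$ and $\beta'=\gamma'=\sr(\beta)$, and more generally that the value does not depend on how one writes the product), and second, that it satisfies the $2$-cocycle identity \eqref{eq:inverse_semigroup_cocycle} together with normalisation. For well-definedness, when $\gamma=\beta$ we have $\beta'=\sr(\beta)$, so the first case gives $\sigma_\bowtie(g,\sr(g))\,\sigma_\Gr(g,h)=\sigma_\Gr(g,h)$ using Lemma~\ref{lem:self-similar-cocycle-is-total-cocycle}\ref{itm:ssc_normalised}; the second case with $\gamma'=\sr(\beta)$ gives $\sigma_\Gr(g,\sr(h)\cdots)$ which by normalisation of $\sigma_\Gr$ also reduces to $\sigma_\Gr(g,h)$. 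So the two formulas agree. Normalisation $\omega_\sigma(e,e)=1$ for idempotents $e=(\alpha,\sr(\alpha),\alpha)$ follows directly: the product $ee=e$ falls into the case $\gamma=\beta=\alpha$ with $\beta'=\sr(\alpha)$ and $g=\sr(\alpha)$, giving $\sigma_\bowtie(\sr(\alpha),\sr(\alpha))\sigma_\Gr(\sr(\alpha),\sr(\alpha))=1$.

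The bulk of the work is the cocycle identity. I would fix $r=(\alpha,g,\beta)$, $s=(\gamma,h,\delta)$, $t=(\epsilon,k,\zeta)$ with $rst\neq 0$ and compute both $\omega_\sigma(s,t)\,\omega_\sigma(r,st)$ and $\omega_\sigma(r,s)\,\omega_\sigma(rs,t)$. The multiplication in $S(\Gr,E)$ (Definition~\ref{def:inverse_semigroup_ssa}) branches according to whether one path extends the other, so the computation naturally splits into cases depending on the comparabilities among $\beta,\gamma$ and among $\delta,\epsilon$. The key algebraic inputs will be the three identities of Lemma~\ref{lem:self-similar-cocycle-is-total-cocycle}: the multiplicativity \ref{itm:ssc_bowtie} of $\sigma_\bowtie$ along concatenations, the compatibility \ref{itm:ssc_G} relating $\sigma_\bowtie$ and $\sigma_\Gr$ under the groupoid action, and normalisation. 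I also expect to use the groupoid cocycle identity \eqref{eq:groupoid_cocycle_identities} for $\sigma_\Gr$ and the self-similar relations \eqref{eq:matching_for_self-similar}, \eqref{eq:matching_for_self-similar2}, \eqref{equ:inverse_vs_restriction} to rewrite restrictions such as $g|_{\beta'}$, $(hg)|_\mu$, and inverses.

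The main obstacle will be bookkeeping in the mixed case, where, say, $\gamma$ extends $\beta$ but $\zeta$ is extended by (a transform of) $\delta$, so that the two sides use different branches of the multiplication. In such cases the intermediate products $rs$ and $st$ involve transported sections and the cancellation of the $\sigma_\bowtie$ and $\sigma_\Gr$ factors is not immediate; one must carefully track how a path $\beta'$ gets split or acted upon by $g$ and how the corresponding section $g|_{\beta'}$ is fed into the groupoid cocycle. My strategy is to reduce the number of genuinely distinct cases by exploiting the symmetry $\omega_\sigma(s,t)=\overline{\omega_\sigma(t^*,s^*)}$-type relations coming from the involution, and to treat the ``aligned'' case ($\gamma=\beta\beta'$, $\epsilon=\delta\delta''$, etc.) first as the template, deducing the remaining cases by inverting. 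Throughout, the identity \ref{itm:ssc_G} of Lemma~\ref{lem:self-similar-cocycle-is-total-cocycle} is exactly the cohomological obstruction built into the definition, so I expect each case to collapse to a tautology once the terms are grouped as $\sigma_\Gr$-factors against $\sigma_\bowtie$-factors in the pattern of \ref{itm:ssc_G}.

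Rather than present all cases in full, I would carry out the representative aligned case in detail and then indicate that the others follow by the same manipulations together with the involution symmetry, since each is governed by the single master identity of Lemma~\ref{lem:self-similar-cocycle-is-total-cocycle}\ref{itm:ssc_G}. This keeps the proof readable while making clear that no new phenomenon arises in the remaining branches.
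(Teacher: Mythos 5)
Your plan matches the paper's proof: normalisation is immediate from the normalisation of $\sigma_{\Gr}$ and $\sigma_{\bowtie}$, and the cocycle identity is verified by a direct case analysis on the comparabilities of the paths ($\gamma=\beta\beta'$ versus $\beta=\gamma\gamma'$, then the analogous split for the second pair), with Lemma~\ref{lem:self-similar-cocycle-is-total-cocycle} and the groupoid $2$-cocycle identity for $\sigma_{\Gr}$ producing all the cancellations — the paper likewise computes two representative subcases in full and declares the remaining ones ``similar''. One caution: the involution shortcut you propose, a relation of the form $\omega_{\sigma}(s,t)=\overline{\omega_{\sigma}(t^*,s^*)}$, is not established anywhere and does not hold on the nose for a general normalised $2$-cocycle (the two sides differ by factors such as $\sigma_{\Gr}(a,a^{-1})$, cf.\ Lemma~\ref{lem:cocycles_properties}), so you should not lean on it to eliminate cases; the fallback you mention — carrying out the remaining branches by the same manipulations — is what is actually needed and is what the paper does.
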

\begin{proof}
Normalisation follows from the fact that $\sigma_G$ is normalised and $\sigma_{\bowtie}(\rg(\alpha),\alpha) = 1$ for all $\alpha \in E^*$.
Fix $r = (\alpha,g,\beta)$, $s = (\gamma,h,\delta)$, and $t = (\zeta, k ,\eta)$ in $S(\Gr,E)$, so that $rst \ne 0$. Let $ M \coloneqq  \omega_{\sigma}(s,t) \overline{\omega_{\sigma}(rs,t)} \omega_{\sigma} (r,st) \overline{\omega_{\sigma}(r,s)}$. It suffices to show that $M = 1$. 	
Consider the case that $\gamma = \beta \beta'$, so $rs = (\alpha (g \beta'), g|_{\beta'} h, \delta)$. First, suppose that $\zeta = \delta \delta'$, so $st = (\gamma(h \delta'), h|_{\delta'}k, \eta)$. Then
\begin{align*}
M &= \sigma_{\bowtie}(h,\delta')
\sigma_G (h|_{\delta'}, k) 
\ol{\sigma_{\bowtie}(g|_{\beta'}h, \delta')} 
\ol{\sigma_G((g|_{\beta'} h)|_{\delta'},k)}\\
&\qquad \times 
\sigma_{\bowtie} (g, \beta'(h \delta'))
\sigma_{G} (g|_{\beta'(h \delta')}, h|_{\delta'}k ) 
\ol{\sigma_{\bowtie}(g,\beta')}
\ol{\sigma_G(g|_{\beta'},h)}.
\end{align*}
Since $\sigma_G$ is a $2$-cocycle on $\Gr$,
\[
\sigma_G(h|_{\delta'}, k) \ol{\sigma_G(g|_{\beta'(h\delta')} h|_{\delta'}, k)}
\sigma_G(g|_{\beta(h\delta')}, h|_{\delta'}k) = \sigma_G(g|_{\beta' (h\delta')}, h|_{\delta}'),
\]	
and as $(g|_{\beta'} h)|_{\delta'} = g|_{\beta' (h \delta')} h|_{\delta'}$, we have
\[
M = \sigma_{\bowtie}(h,\delta')\ol{\sigma_{\bowtie}(g|_{\beta'}h, \delta')} 
\sigma_{\bowtie}(g, \beta'(h \delta')) \ol{\sigma_{\bowtie}(g,\beta')}
\ol{\sigma_G(g|_{\beta'},h)}
\sigma_G(g|_{\beta' (h\delta')}, h|_{\delta}').
\]
By Lemma~\ref{lem:self-similar-cocycle-is-total-cocycle}~\ref{itm:ssc_bowtie}, $\sigma_{\bowtie} (g, \beta'(h \delta')) = \sigma_{\bowtie} (g,\beta') \sigma_{\bowtie} (g|_{\beta'}, h\delta')$, so
\[
M = \sigma_{\bowtie}(h,\delta') 
\ol{\sigma_{\bowtie}(g|_{\beta'}h, \delta')} 
\sigma_{\bowtie} (g|_{\beta'}, h\delta')\,
\ol{\sigma_G(g|_{\beta'},h)}
\sigma_G(g|_{\beta' (h\delta')}, h|_{\delta}') = 1
\]
with the final equality  given by Lemma~\ref{lem:self-similar-cocycle-is-total-cocycle}~~\ref{itm:ssc_G}.
Now suppose that $\delta = \zeta \zeta'$, so that $st = (\gamma, h(k^{-1}|_{\zeta'})^{-1}, \eta (k^{-1} \zeta'))$. Then
\begin{align*}
M &= 	
\sigma_{\Gr}(h, (k^{-1}|_{\zeta'})^{-1}) 
\sigma_{\bowtie} (k,k^{-1} \zeta')   
\ol{\sigma_{\Gr} (g|_{\beta'} h, (k^{-1}|_{\zeta'})^{-1})}
\ol{\sigma_{\bowtie} (k, k^{-1}\zeta' )}\\
&\qquad \times  
\sigma_{\bowtie} (g, \beta') 
\sigma_{\Gr}(g|_{\beta'}, h(k^{-1}|_{\zeta'})^{-1})
\ol{\sigma_{\bowtie}(g,\beta') }
\ol{\sigma_G(g|_{\beta'},h)}\\
&= 	\sigma_{\Gr}(h, (k^{-1}|_{\zeta'})^{-1}) 
\ol{\sigma_{\Gr} (g|_{\beta'} h, (k^{-1}|_{\zeta'})^{-1})}
\sigma_{\Gr}(g|_{\beta'}, h(k^{-1}|_{\zeta'})^{-1})
\ol{\sigma_G(g|_{\beta'},h)}.
\end{align*}
Since $\sigma_{\Gr}$ is a $2$-cocycle on $\Gr$,
\[
\ol{\sigma_{\Gr} (g|_{\beta'} h, (k^{-1}|_{\zeta'})^{-1})}
\sigma_{\Gr}(g|_{\beta'}, h(k^{-1}|_{\zeta'})^{-1})
\ol{\sigma_G(g|_{\beta'},h)} = \ol{\sigma_{\Gr}(h, (k^{-1}|_{\zeta'})^{-1})},
\]
and so $M = 1$. 

If $\beta = \gamma \gamma'$, then similar casewise arguments to the above show that $M =1$. 
So $\omega_{\sigma}$ is a $2$-cocycle on $S(\Gr,E)$. 
\end{proof}

Treating $\widetilde{\Gg}(\Gr,E)$ and $\Gg(\Gr,E)$ as transformation groupoids, via Proposition~\ref{prop:transformation_groupoids_self_similar_0}
 and \eqref{eq:transformation_groupoids_self_similar}, 
$\sigma$ induces, through  $\omega_{\sigma}$, see  Definition~\ref{defn:groupoid_twists_from_semigroup_twists},
 twists  on  $\widetilde{\Gg}(\Gr,E)$  and $\Gg(\Gr,E)$ that we denote by $\LL_{\sigma}$ (formally the twist on $\Gg(\Gr,E)$ is the restriction of that on $\widetilde{\Gg}(\Gr,E)$).

\begin{remark}
Let $\sigma = (\sigma_G,\sigma_{\bowtie})$ be a 2-cocycle for a self-similar action $(G,E)$, and let $\omega_{\sigma}$ be the associated  $2$-cocycle of 
Proposition~\ref{prop:twist_self_similar_inverse_semigroup}.
For all $(\alpha,g,\beta), (\gamma,h,\delta) \in S(\Gr,E)$ we have
\[
\omega_{\sigma}(f_{\alpha}, (\gamma,h,\delta)) = \begin{cases}
\sigma_{\bowtie}(h, h^{-1} \alpha') & \text{ if } \alpha = \gamma \gamma' \\
1 & \text{otherwise}
\end{cases}
\]	
and 
\[
\omega_{\sigma}((\alpha,g,\beta), f_{\gamma}) = 
\begin{cases}
\sigma_{\bowtie}(g,\beta') & \text{ if } \gamma = \beta \beta' \\
1 & \text{ otherwise}.	
\end{cases}
\]
Hence, the normalisation condition of Sieben \cite{Sieben} is  usually not satisfied,  which means that on the nose the associated groupoid twist $\LL_{\sigma}$ will look like a Kumjian twist rather than a twist  by a groupoid $2$-cocycle, cf. Remark~\ref{twists_over_transformation_groupoids}.
Since the universal and tight groupoids are ample, one may hope to turn it into cocycle by showing $\LL_{\sigma}$ is topologically trivial, cf. Remark  \ref{rem:2-cocycle_groupoid}. But in general $\LL_{\sigma}$ is topologically nontrivial even when the groupoid cocycle $\sigma_{G}$ is trivial.
\end{remark}

\begin{example}
Let $E$ be the graph consisting of a single vertex $v$ and a single edge $e$. Let $\Gr = \Z$ and define a self-similar action by $ne = e$ and $n|_{e} = n$ for all $n \in \Z$. Fix $z \in \T$ and put
\[
\sigma_{\bowtie} (n,\mu) = z^{n \cdot |\mu|} \quad \text{and} \quad \sigma_{\Gr}(n,m) = 1, \qquad n,m \in \Z,\, \mu \in E^*. 
\]
Then $\sigma  \coloneqq  (\sigma_{\bowtie},\sigma_{\Gr})$ is a $2$-cocycle for $(G,E)$. 
Since $\sigma_G = 1$, using the total homology of the double complex of \cite{Mundey_Sims}, $\sigma$ is homologous to the trivial $2$-cocycle if and only if there exist $f \colon \Gr \to \T$ and $g \colon E^* \to \T$ such that for all $n \in \Z$ and $\mu \in E^*$,
$
\sigma_{\bowtie}(n,\mu) = g(\mu)\ol{g(n \mu)}f(n)\ol{f(n|_{\mu})}. 
$
Since $n\mu = \mu$ and $\mu|_{n} = n$, this is equivalent to $\sigma_{\bowtie}(n,\mu) = 1$.	 
So for $z \ne 1$ the cocycle $\sigma$ is nontrivial. 
The corresponding $2$-cocycle on $S(G,E)$ is given by
\[
\omega_{\sigma}((\alpha,n,\beta),(\gamma,m,\delta)) = 
\begin{cases}
z^{n(|\gamma| - |\beta|)}	& \text{if } |\gamma| \ge |\beta|\\
z^{m(|\beta| - |\gamma|)}	& \text{if } |\beta| \ge |\gamma|,
\end{cases}
\]
The associated tight groupoid is isomorphic to the group $\Z^2$, via the map $[\alpha,n,\beta;e^{\infty}]\mapsto (|\alpha|-|\beta|, n)$, 
and so the induced twist must be topologically trivial. In fact the induced twist is given by the cocycle $\LL_{\sigma}((k,l),(n,m))= z^{ln}$ and  $(\Z^2, \LL_{\sigma})$ is 
the standard twisted group model for rotation algebras.
\end{example}

\begin{example}\label{ex:topologically_nontrivial_twist_from_self-similar}
Let $E$ be the directed graph 
\[
\begin{tikzpicture}
[baseline=-0.25ex,
vertex/.style={
circle,
fill=black,
inner sep=1.5pt
},
edge/.style={
-stealth,
shorten >= 3pt,
shorten <= 3pt
},
scale =1]
\clip  (-.9,-0.9) rectangle (2,0.8);

\node[vertex] (w) at (-0.4,0.3) {};%
\node[vertex] (v) at (1,0) {};%
\node[vertex] (u) at (-0.4,-0.3) {};%

\node[anchor= south] at (w) {\scriptsize{$w_1$}};
\node[anchor= south] at (v) {\scriptsize{$v$}};
\node[anchor= north] at (u) {\scriptsize{$w_{-1}$}};

\draw[edge] (w.east) to node[anchor=south, inner sep = 2pt]{\scriptsize{$e_1$}} (v.north west);
\draw[edge] (u.east) to node[anchor=north, inner sep = 3pt]{\scriptsize{$e_{-1}$}} (v.south west);
\draw[edge] (v.north east) to [out = 40, in = -40,loop,min distance=10mm] node[anchor=west, inner sep = 2pt]{\scriptsize{$e$}} (v.south east);
\end{tikzpicture}
\]
and  let $\Gr$  the group bundle over $E^0=\{v,w_{\pm1}\}$ with fibres  $G_v  \coloneqq \Z_2=\{0,1\}$, $G_{w_{\pm 1}}  \coloneqq  \{0\}$. 
We define a self-similar action of $\Gr$ on $E$ by the relations 
\[
1\cdot e= e,\qquad 1|_{e}=1,  \qquad		1 \cdot e_{\pm 1}= e_{\pm},\qquad 1|_{e_{\pm 1}}=0.
\]
We  equip it with the twist $\sigma = (\sigma_{\Gr},\sigma_{\bowtie})$ where the only nontrivial value
is $\sigma_{\bowtie}(1,e_{-1})=-1$.
The associated twisted groupoid $(\Gg(\Gr,E), \LL_{\sigma})$ is isomorphic to the one considered in Example~\ref{ex:nontrivial_twist}, and
hence, $\LL_{\sigma}$ is topologically nontrivial.

More specifically, the unit space  $\partial E= \{e^\infty\}\cup \{e^n e_{\pm 1}: n\in \N\} \cup \{w_{\pm 1}\}$
is homeomorphic to $X=\{\frac{1}{n}:n\in \Z\}\cup \{0\}$, via the homeomorphism given by  $w_{\pm 1}\mapsto \pm 1$, $e^n e_{\pm 1} \mapsto \pm \frac{1}{n}$
and $e^{\infty}\mapsto 0$. Moreover,  $\star \coloneqq [v,1,v; e^{\infty}]$ is the only nontrivial element in $\Gg(\Gr,E)$
and $U=\{\star\} \cup \{e^n e_{\pm 1}: n\in \N\} \cup \{w_{\pm 1}\}$ is the largest bisection containing $\star$.
The only nontrivial values of the extended map $\sigma_{\bowtie} \colon \Gr * E^* \to \T$
are $\sigma_{\bowtie}(1,e^n e_{-1})=-1$ for $n\geq 0$.   
Recalling the equivalence relation on page~\pageref{page:germ_relations}, for $a\in C(U)$ and $a'\in C(\partial E)$ 
we have $[a,(v,1,v),e^n e_{\pm1}]= [a,(v,v,v),e^n e_{\pm 1}]$ if and only if $a(e^n e_{\pm 1})=\pm a'(e^n e_{\pm 1})$.
\end{example}

\section{Twisted \texorpdfstring{$L^P$}{LP}-operator algebras associated to self-similar actions}
\label{sect:self-similar_LP-operator}
\subsection{Universal \texorpdfstring{$L^P$}{LP}-algebras}
Let $(\Gr,E, \sigma)$ be a fixed twisted self-similar groupoid action. 
We start by introducing  representations of the graph $E = (E^0,E^1,\rg,\sr)$ on $L^p$-spaces.  
\begin{definition} 
Let $Y$ be an $L^p$-space for some $p\in[1,\infty]$. 
A \emph{Cuntz--Krieger $E$-family}  in  $Y$ is a pair $(W,T)$ where $W=\{W_v\}_{v\in E^0}\subseteq \Bound(Y)$ consists of pairwise orthogonal hermitian idempotents, and $T=\{T_e\}_{e\in E^1}\subseteq \Bound(Y)$ consists of  Moore-Penrose partial isometries with mutually orthogonal range projections that in addition satisfy 
\begin{enumerate}[label={(CK\arabic*)}]
\item\label{defn:Cuntz--Krieger1} $T_e^*T_e=W_{\sr(e)}$ and $T_e T_e^*\leq W_{\rg(e)}$ for all $e\in E^{1}$;
\item\label{defn:Cuntz--Krieger2} $W_v=\sum_{e \in \rg^{-1}(v)} T_e T_e^*$ for all $v\in E^{0}_{\reg}$.
\end{enumerate}
Here $T_e^*$ denotes the (unique) Moore-Penrose generalised inverse of $T_e$. A pair $(W,T)$  as above but not necessarily satisfying \ref{defn:Cuntz--Krieger2}  is called an \emph{$E$-family}.
\end{definition}
For each $p\in [1,\infty]$  a \emph{graph $L^p$-operator algebra} $F^p(E)$  is a universal for Cuntz--Krieger covariant $E$-families on $L^p$-spaces. It was studied in
\cites{cortinas_rodrogiez, cortinas_Montero_rodrogiez}, \cite{BKM}*{Subsection 7.3}, and \cite{BKM2}*{Subsection 7.4} where, in particular, simplicity and pure infiniteness criteria were established.
We generalise these results to twisted self-similar groupoid actions, while also considering Toeplitz versions of these algebras. 
We start by extending \cite{Mundey_Sims}*{Definition~7.14} (formulated for self-similar  actions on row-finite $k$-graphs)
from Hilbert spaces to $L^p$-spaces.
\begin{definition}\label{defn:reps_of_twisted_self_similar}
Let $\sigma = (\sigma_G,\sigma_{\bowtie})$ be a twist of a self-similar action $(G,E)$. 
A \emph{$\sigma$-twisted representation} of  $(\Gr,E)$ (or a \emph{representation} of  $(\Gr,E,\sigma)$) on an $L^p$-space $Y$ is a pair $(W,T)$ of maps  $W \colon \Gr \to \Bound(Y)_1$ and   $T\colon E^1\to \Bound(Y)_1$ into contractive operators such that
\begin{enumerate}[label={(EP\arabic*)}]
\item\label{enu:reps_of_twisted_self_similar1}  $W_g W_h = \sigma_{G}(g,h)W_{gh}$ for all $(g,h) \in \Gg^2$;
\item\label{enu:reps_of_twisted_self_similar2}  $\{W_v \mid v \in E^0\} \cup \{T_e \mid e \in E^1\}$ is an $E$-family on $Y$;
\item\label{enu:reps_of_twisted_self_similar3} 	$ W_g T_e = \sigma_{\bowtie}(g,e)T_{g e} W_{g|_e}$ for all $(g,e) \in \Gr \fibre{\sr}{\rg} E^1$.
\end{enumerate}
We say that  such a representation $(W,T)$ is \emph{(Cuntz--Krieger) covariant} if the 
$E$-family in \ref{enu:reps_of_twisted_self_similar2} is Cuntz--Krieger  (satisfies \ref{defn:Cuntz--Krieger2}).
By a \emph{Banach algebra generated by $(W,T)$} we mean the Banach subalgebra of $\Bound(Y)$  generated by $T_e$, $T_e^*$, $W_g$, for $e\in E^1$, $g\in \Gr$, 
and we denote it by $B(W,T)$.
\end{definition}
\begin{remark}\label{rem:reps_of_twisted_self_similar}
Let $(W,T)$ be a $\sigma$-twisted representation of  $(\Gr,E)$ on an $L^p$-space $Y$. Note that $(W,T)$ take values in the set $\MPIso(Y)$ of Moore-Penrose partial isometries on $Y$.
If $p\neq 2$, then  $\MPIso(Y)$ is an inverse semigroup, see  Proposition~\ref{prop:inverse_semigroup_of_partial_isos}, and   for  $e\in E^1$,  $T_e^*$ is the unique generalised inverse of $T_e$ in $\MPIso(Y)$, and for   $g\in \Gr$ the unique generalised inverse of $W_{g}$ in $\MPIso(Y)$ is given by
\begin{equation}\label{eq:inverse_of_W_g}
W_{g}^*=\ol{\sigma_{G}(g^{-1},g)}W_{g^{-1}}.
\end{equation}
For $p=2$, $\MPIso(Y)$ is the set of standard  partial isometries on the Hilbert space $Y$, and so $\MPIso(Y)$ is not a semigroup.
However,  the semigroup of operators generated by $T_e$, $T_e^*$, $W_{g}$, $e\in E^1$, $g\in \Gr$,  is an inverse semigroup, where the involution coincides with the hermitian adjoint (in particular \eqref{eq:inverse_of_W_g} is still valid), cf. Theorem~\ref{thm:presentations_of_twisted_self_similar_algebras} below.  
\end{remark} 
\begin{notation}\label{notation:Cuntz--Krieger_semigroup}
Let now $(W,T)$ be a $\sigma$-twisted representation of $(\Gr,E)$. We put 
$
T_v \coloneqq T_v^* \coloneqq W_v$ for $v\in E^0$ and  $T_{\mu} \coloneqq T_{\mu_1} T_{\mu_2}\cdots T_{\mu_n}$ and $T_{\mu}^* \coloneqq T_{\mu_n}^* T_{\mu_{n-1}}^*\cdots T_{\mu_1}^*$  for $\mu=\mu_1\cdots \mu_n\in E^n$, $n\geq 1$. By Remark~\ref{rem:reps_of_twisted_self_similar}, when $p\neq 2$ 
then operators $T_\alpha$, $T_\alpha^*$, $W_{g}$, $\alpha\in E^*$, $g\in \Gr$, belong to  the inverse semigroup $\MPIso(Y)$ and  $T_{\alpha},T_{\alpha}^*$ are  conjugate Moore-Penrose partial isometries for every $\alpha\in E^*$.
For $p=2$ the latter is also true, as then $T_{\alpha}^*$ is a hermitian adjoint of $T_{\alpha}$.
\end{notation}
\begin{definition}\label{defn:universal_self_similar_algebras}
Let  $P\subseteq [1,\infty]$ be non-empty.  The (universal) \emph{$L^P$-operator algebra of $(\Gr,E,\sigma)$},  denoted by  $\OO^P(\Gr,E,\sigma)$, is  the  Banach algebra  generated by  a $\sigma$-twisted covariant representation $(W^P,T^P)$ on an $\ell^{\infty}$-direct sum of $L^p$-spaces for $p\in P$, that is universal: for any   $\sigma$-twisted covariant representation $(W,T)$ on  an $L^p$-space with $p\in P$,  the maps $W^P_g\mapsto  W_g$ for $g \in \Gr$, and $T^P_e\mapsto T_e$ and $(T^P_e)^*\mapsto T_e^*$ for $e \in E^1$, extend to a representation from
$\OO^P(\Gr,E,\sigma)$ to the Banach algebra generated by  $(W,T)$.

Similarly, we define the \emph{Toeplitz $L^P$-operator algebra of $(\Gr,E,\sigma)$}, denoted by $\TT^P(\Gr,E,\sigma)$, as  the  Banach algebra generated  by a
$\sigma$-twisted representation $(\tw^P,\tt^P)$ which is universal for all $\sigma$-twisted representations of   $(\Gr,E)$ on $L^p$-spaces for $p\in P$.  
\end{definition}

The universal Banach algebras defined above exist by the following result.

\begin{theorem}\label{thm:presentations_of_twisted_self_similar_algebras}
Let $(\Gr,E, \sigma)$  be a twisted  self-similar action, and let 
$\omega_{\sigma}$ and $\LL_\sigma$  be the associated twists of the inverse semigroup $S(\Gr,E)$ and the universal groupoid $\widetilde{\Gg}(\Gr,E)$, respectively, see Proposition  \ref{prop:twist_self_similar_inverse_semigroup}. 
For any $L^p$-space $Y$, $p\in[1,\infty]$, we have a bijective correspondence between representations $(W,T)$ of $(\Gr,E, \sigma)$ and representations $v$ of $(S(\Gr,E),\omega_{\sigma})$ on $Y$. Moreover, $(W,T)$ is   covariant if and only if $v$ is covariant. 
Thus, for any non-empty $P\subseteq [1,\infty]$ we
have natural isometric isomorphisms 
$$
\TT^P(\Gr,E,\sigma)\cong \TT^P(S(\Gr,E),\omega_{\sigma})\cong F^P(\widetilde{\Gg}(\Gr,E),\LL_{\sigma}),
$$  
$$
\OO^P(\Gr,E,\sigma)\cong \OO^P(S(\Gr,E),\omega_{\sigma})\cong F^P(\Gg(\Gr,E),\LL_{\sigma}).
$$ 
\end{theorem}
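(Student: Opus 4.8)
The plan is to establish the correspondence between representations of $(\Gr,E,\sigma)$ and representations of $(S(\Gr,E),\omega_\sigma)$ at the level of individual $L^p$-spaces, and then to leverage the already-established groupoid presentation of twisted inverse semigroup algebras from Corollary~\ref{cor:groupoid_presentation_inverse_semigroup_algebras}. The second isomorphism in each line is then immediate: once we know $\TT^P(\Gr,E,\sigma)\cong\TT^P(S(\Gr,E),\omega_\sigma)$ and $\OO^P(\Gr,E,\sigma)\cong\OO^P(S(\Gr,E),\omega_\sigma)$, the identifications with $F^P(\widetilde{\Gg}(\Gr,E),\LL_\sigma)$ and $F^P(\Gg(\Gr,E),\LL_\sigma)$ follow by combining Corollary~\ref{cor:groupoid_presentation_inverse_semigroup_algebras} with the groupoid isomorphisms of \eqref{eq:transformation_groupoids_self_similar} (which intertwine $\LL_\sigma$ with $\LL_{\omega_\sigma}$ by construction, see the paragraph following Proposition~\ref{prop:twist_self_similar_inverse_semigroup}). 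So the entire content is the first isometric isomorphism, and that reduces to a bijection between the two types of representations that is compatible with taking generated Banach algebras.

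\emph{First}, given a representation $v$ of $(S(\Gr,E),\omega_\sigma)$ on $Y$, I would define $W_g \coloneqq v_{(\rg(g),g,\sr(g))}$ for $g\in\Gr$ and $T_e \coloneqq v_{(e,\sr(e),\sr(e))}$ for $e\in E^1$, using the embeddings $\Gr\cup\{0\}\hookrightarrow S(\Gr,E)$ and $S(E)\hookrightarrow S(\Gr,E)$ described after Definition~\ref{def:inverse_semigroup_ssa}. Then I would verify \ref{enu:reps_of_twisted_self_similar1}--\ref{enu:reps_of_twisted_self_similar3} by direct computation inside $S(\Gr,E)$: relation \ref{enu:reps_of_twisted_self_similar1} follows from \ref{enu:semigroup_representation1} together with the formula for $\omega_\sigma$ on composable groupoid elements, which by Proposition~\ref{prop:twist_self_similar_inverse_semigroup} evaluates to $\sigma_{\Gr}(g,h)$ when $\gamma=\beta=\sr(g)$; the orthogonality of the $W_v$ and the partial-isometry conditions \ref{defn:Cuntz--Krieger1} come from the idempotent structure of $\EE(\Gr,E)\cong E^*\cup\{0\}$; and \ref{enu:reps_of_twisted_self_similar3} is exactly the product $(\rg(g),g,\sr(g))\cdot(e,\sr(e),\sr(e)) = (ge, g|_e, e) = (ge,\sr(ge),\sr(ge))\cdot(\rg(g|_e),g|_e,\sr(g|_e))$ tracked through $v$, with the cocycle bookkeeping producing precisely $\sigma_{\bowtie}(g,e)$. \emph{Conversely}, given $(W,T)$, I would set $v_{(\alpha,g,\beta)} \coloneqq \lambda_{\alpha,g,\beta}\,T_\alpha W_g T_\beta^*$ for a suitable scalar $\lambda_{\alpha,g,\beta}\in\T$ chosen to absorb the extended-cocycle factors, using Notation~\ref{notation:Cuntz--Krieger_semigroup}; checking \ref{enu:semigroup_representation1} then amounts to reconciling the two cases of the multiplication in Definition~\ref{def:inverse_semigroup_ssa} against \ref{enu:reps_of_twisted_self_similar3} and Lemma~\ref{lem:self-similar-cocycle-is-total-cocycle}, and \ref{enu:semigroup_representation2} is automatic from the hermitian-idempotent hypotheses on $W|_{E^0}$.

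\emph{Second}, I would check that the covariance conditions match: the Cuntz--Krieger relation \ref{defn:Cuntz--Krieger2} on the $E$-family is equivalent to tightness \ref{enu:semigroup_representation3} of $v|_{\EE}$. This is the standard graph-algebra translation---a finite set $F$ covers $f_\alpha$ in $\EE(\Gr,E)$ exactly when, for $\alpha=v\in E^0_{\reg}$, it refines $\{f_e : e\in\rg^{-1}(v)\}$---and was already recorded in Example~\ref{ex:spectrum_graph_inverse_semigroup} and Example~\ref{ex:graph_inverse_semigroup}, so I can cite it rather than re-derive it. Having the representation bijection preserve both the generated operator set (the operators $T_\alpha W_g T_\beta^*$ generate the same closed subalgebra as the $v_{(\alpha,g,\beta)}$, since they differ by unimodular scalars) and covariance, I conclude that the defining universal properties of $\TT^P(\Gr,E,\sigma)$ and $\TT^P(S(\Gr,E),\omega_\sigma)$ (respectively $\OO^P$ and $\OO^P$) coincide, yielding the claimed isometric isomorphisms.

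\emph{The main obstacle} I expect is the scalar-cocycle bookkeeping in the converse direction: defining $v_{(\alpha,g,\beta)}$ from $(W,T)$ requires the correct choice of normalising phase $\lambda_{\alpha,g,\beta}$ so that \ref{enu:semigroup_representation1} holds on the nose with the factor $\omega_\sigma$ from Proposition~\ref{prop:twist_self_similar_inverse_semigroup}, and verifying this forces one to unwind the inductive extension of $\sigma_{\bowtie}$ to $\Gr*E^*$ and its interaction \ref{itm:ssc_G} with $\sigma_{\Gr}$ across both branches of the semigroup multiplication (the $\gamma=\beta\beta'$ and $\beta=\gamma\gamma'$ cases). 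This is exactly the same computation that underlies the proof that $\omega_\sigma$ is a genuine $2$-cocycle, so Lemma~\ref{lem:self-similar-cocycle-is-total-cocycle} and Proposition~\ref{prop:twist_self_similar_inverse_semigroup} will carry essentially all the analytic weight; the remaining work is organisational rather than conceptual.
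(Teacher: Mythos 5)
Your proposal is correct and follows essentially the same route as the paper's proof: define $W_g=v_{(\rg(g),g,\sr(g))}$, $T_e=v_{(e,\sr(e),\sr(e))}$ in one direction, $v_{(\alpha,g,\beta)}=T_\alpha W_g T_\beta^*$ in the other, match covariance with tightness, and then invoke Corollary~\ref{cor:groupoid_presentation_inverse_semigroup_algebras} for the groupoid identifications. The only (harmless) deviation is your normalising phase $\lambda_{\alpha,g,\beta}$: the paper's computation shows $\lambda\equiv 1$ already works, precisely because $\omega_\sigma$ in Proposition~\ref{prop:twist_self_similar_inverse_semigroup} was defined so that $T_\alpha W_g T_\beta^*\cdot T_\gamma W_h T_\delta^*=\omega_\sigma(s,t)\,T\,W\,T^*$ holds on the nose after extending \ref{enu:reps_of_twisted_self_similar3} to paths via Lemma~\ref{lem:self-similar-cocycle-is-total-cocycle}.
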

\begin{proof} Recall that we have the embeddings $S(E)\ni (\alpha,\beta)\mapsto (\alpha,\sr(\alpha),\beta)\in S(\Gr,E)$ and
$\Gr\ni g\mapsto (\rg(g),g,\sr(g))\in S(\Gr,E)$.
Let $v:S(\Gr,E)\to \Bound(Y)_1$  be a representation of $(S(\Gr,E),\omega_{\sigma})$. Put
\[
W_{g} \coloneqq v_{(\rg(g),g,\sr(g))},\qquad T_{e} \coloneqq v_{(e,\sr(e),\sr(e))}, \qquad g\in \Gr,\, e\in E^1.
\]
For  $(g,h) \in \Gg^2$, using condition \ref{enu:semigroup_representation1} of Definition~\ref{def:semigroup_representations} and  that $	\sigma_{\bowtie}(g,\sr(g))=1$,  we have
\begin{align*}
W_{g}W_{h}&=v_{(\rg(g),g,\sr(g))} v_{(\rg(h),h,\sr(h))}=\omega_{\sigma}((\rg(g),g,\sr(g)),(\rg(h),h,\sr(h))) v_{(\rg(g),g,\sr(g))(\rg(h),h,\sr(h))}
\\
&=
\sigma_{\bowtie}(g,\sr(g)) \sigma_{\Gr}(g|_{\sr(g)},h) v_{(\rg(g),gh,\sr(h))}
=\sigma_{G}(g,h)W_{gh}.
\end{align*}
Similarly, for  $(g,e) \in E^1 \fibre{\sr}{\rg} \Gr$, using that $\sigma_{G}(g|_{e},\sr(e))=1$ we get
\[
W_g T_e 
=\omega_{\sigma}((\rg(g),g,\sr(g)),(e,\sr(e),\sr(e))) v_{(ge,g|_{e},\sr(e))}
=\sigma_{\bowtie}(g,e)v_{(ge,g|_{e},\sr(e))},
\]
while using that  $\sigma_{\bowtie}(\sr(ge),\sr(ge))=\sigma_{G}(\sr(ge),g|_e)=1$ and $\sr(e)=\sr(g|_e)$ we get
$$
T_{ge}\cdot W_{g|_e}=\omega_{\sigma}((ge,\sr(ge),\sr(ge)),(\rg(g|_e),g|_e,\sr(g|_e))) v_{(ge,g|_{e},\sr(g|_{e}))}=v_{(ge,g|_{e},\sr(e))}.
$$
Thus, conditions \ref{enu:reps_of_twisted_self_similar1}, \ref{enu:reps_of_twisted_self_similar3} in Definition~\ref{defn:reps_of_twisted_self_similar} hold.
For $e\in E^{1}$  put  $T^*_e \coloneqq v_{(\sr(e),\sr(e),e)}$ and note that 
\[
T_e^*T_e=v_{(\sr(e),\sr(e),\sr(e))}=W_{\sr(e)}, \qquad T_eT_e^*=v_{(e,\sr(e),e)}\leq v_{(\rg(e),\rg(e),\rg(e))}=W_{\rg(e)}.
\]
This, and  condition \ref{enu:semigroup_representation2} in Definition~\ref{def:semigroup_representations}, implies that  $T_e^*T_e$
and $T_eT_e^*$ are hermitian idempotents. It also implies that $T_e^*$ is a generalised inverse of $T_{e}$. 
Hence, $T_e^*$ is the unique Moore-Penrose generalised inverse of $T_e$ and so  $\{W_v \mid v \in E^0\} \cup \{T_e \mid e \in E^1\}$ is an $E$-family.
Thus, $(W,T)$ is a representation of $(\Gr,E, \sigma)$. It is known, see for instance   \cite{BKM}*{Theorem 6.24}, that $\{W_v \mid v \in E^0\} \cup \{T_e \mid e \in E^1\}$ is a Cuntz--Krieger $E$-family
if and only if $\EE(S(E))\cong E^*\cup \{0\}\ni \mu\mapsto v_{(\rg(\mu),\mu, \sr(\mu))}$ is a tight representation. 
Since $\EE(\Gr,E)\cong \EE(S(E))$, this in turn is  equivalent to $v|_{\EE(\Gr,E)}$ being tight. Hence, $(W,T)$ is covariant if and only if $v$ is covariant.

Now, let $(W,T)$ be a representation of $(\Gr,E, \sigma)$ on $Y$.
Extend the map $T$  to paths as described in Notation \ref{notation:Cuntz--Krieger_semigroup}.
It is routine to check that 
\[ 
T_{\beta}^* T_{\gamma} \coloneqq 
\begin{cases} T_{\beta'} & \text{if } \gamma=\beta\beta'
\\
T_{\gamma'}^* & \text{if } \beta=\gamma\gamma'
\\
0 & \text{otherwise},
\end{cases}
\] 
and  the map $S(E)\ni (\alpha,\beta)\mapsto T_{\alpha}T_{\beta}^*\in \Bound(Y)_1$ is a semigroup homomorphism (a  representation of $S(E)$ on $Y$).
We claim that \ref{enu:reps_of_twisted_self_similar3} in Definition~\ref{defn:reps_of_twisted_self_similar} 
generalises to paths of arbitrary length $n\in \N$, so that  
\begin{equation}\label{equ:reps_of_twisted_self_similar3}
W_g T_\alpha =\sigma_{\bowtie}(g,\alpha)T_{g\alpha} W_{g|_{\alpha}}\qquad \text{ for all } (g,\alpha) \in  \Gr\fibre{\sr}{\rg}E^n.
\end{equation}
Indeed, assume that \eqref{equ:reps_of_twisted_self_similar3} holds for some  $n$ and let $(g,\alpha) \in  \Gr\fibre{\sr}{\rg}E^{n+1}$. Write $\alpha=e\alpha'$ for $(e,\alpha')\in E^1\fibre{\sr}{\rg}E^n$.
Then
\begin{align*}
W_g T_\alpha &=W_g T_{e}T_{\alpha'} \stackrel{
\ref{enu:reps_of_twisted_self_similar3}}{=} 
\sigma_{\bowtie}(g,e)T_{g e} W_{g|_e}T_{\alpha'}\stackrel{\eqref{equ:reps_of_twisted_self_similar3}}{=}
\sigma_{\bowtie}(g,e)\sigma_{\bowtie}(g|_e,\alpha')T_{g e}T_{\alpha'}W_{g|_e|\alpha'}
\\
&\stackrel{\ref{lem:self-similar-cocycle-is-total-cocycle}\ref{itm:ssc_bowtie}}{=}
\sigma_{\bowtie}(g,e\alpha') T_{ge\alpha'} W_{g|_e|\alpha'}
\stackrel{\eqref{eq:recursive_extension_rules}}{=}
\sigma_{\bowtie}(g,\alpha)T_{g\alpha} W_{g|_{\alpha}}.
\end{align*}
Hence, \eqref{equ:reps_of_twisted_self_similar3}  holds by induction.
By passing in \eqref{equ:reps_of_twisted_self_similar3} to adjoints, either in the inverse semigroup
$\MPIso(Y)$ when $p\neq 2$ or in the hermitian sense when $p=2$,
and using \eqref{eq:inverse_of_W_g}
we get 
\begin{equation}\label{equ:reps_of_twisted_self_similar3*}
\sigma_{\bowtie}(g,\alpha) \sigma_{G}(g|_\alpha^{-1}, g|_\alpha) T_\alpha^*  W_{g^{-1}} =\sigma_G(g^{-1},g) W_{g|_\alpha^{-1}} T_{g\alpha}^*
\quad \text{ for all } (g,\alpha) \in  \Gr\fibre{\sr}{\rg}E^n.
\end{equation}
For $(\alpha,g,\beta)\in S(\Gr,E)$ we put 
\[
v_{(\alpha,g,\beta)} \coloneqq T_{\alpha} W_{g} T_{\beta}^*.
\]
Now let $s=(\alpha,g,\beta), t=(\gamma,h,\delta)\in S(\Gr,E)$ and consider three cases.


(1). Assume $\gamma=\beta\beta'$, so that $T_{\beta}^* T_{\gamma}=
T_{\beta'}$. Then
\begin{align*}
v_{s}\cdot v_{t}
&=T_{\alpha} W_{g} (T_{\beta}^* T_{\gamma}) W_{h} T_{\delta}^*
=T_{\alpha} (W_{g} T_{\beta'}) W_{h} T_{\delta}^* 
\\
&\stackrel{\eqref{equ:reps_of_twisted_self_similar3}}{=}\sigma_{\bowtie}(g,\beta')T_{\alpha} T_{g\beta'} W_{g|_{\beta'}}W_{h} T_{\delta}^*
\stackrel{
\ref{enu:reps_of_twisted_self_similar1}}{=}
\sigma_{\bowtie}(g,\beta')\sigma_{G}(g|_{\beta'},h) T_{\alpha g\beta'} W_{g|_{\beta'}h} T_{\delta}^*
\\
&
\stackrel{\ref{prop:twist_self_similar_inverse_semigroup}}{=}\omega_{\sigma}((\alpha,g,\beta), (\gamma,h,\delta)) v_{(\alpha (g\beta'), g|_{\beta'}h, \delta)}
=\omega_{\sigma}(s,t) v_{st}.
\end{align*}

(2). Assume that $\beta=\gamma\gamma'$, so that $T_{\beta}^* T_{\gamma}=T_{\gamma'}^*$. Then
\begin{align*}
v_{s}v_{t}
&=T_{\alpha} W_{g} (T_{\beta}^* T_{\gamma}) W_{h} T_{\delta}^*=T_{\alpha} W_{g} (T_{\gamma'}^* W_{h}) T_{\delta}^* 
\\
&
\stackrel{\eqref{equ:reps_of_twisted_self_similar3*}}{=}
\ol{\sigma_{\bowtie}(h^{-1},\gamma')}
\sigma_{G}(h,h^{-1})\ol{\sigma_G(h^{-1}|_{\gamma'}^{-1},h^{-1}|_{\gamma'})}
T_{\alpha} W_{g} W_{h^{-1}|_{\gamma'}^{-1}} T_{h^{-1}\gamma'}^* T_{\delta}^*
\\
&\stackrel{
\ref{enu:reps_of_twisted_self_similar1}}{=}
\ol{\sigma_{\bowtie}(h^{-1},\gamma')}
\sigma_{G}(h,h^{-1})\ol{\sigma_G(h^{-1}|_{\gamma'}^{-1},h^{-1}|_{\gamma'})}
\sigma_G(g, h^{-1}|_{\gamma'}^{-1})
T_{\alpha}W_{g (h^{-1}|_{\gamma'})^{-1}} T_{\delta (h^{-1}\gamma')}^*
\\
&\stackrel{
\ref{enu:reps_of_twisted_self_similar1}}{=}
\ol{\sigma_{\bowtie}(h^{-1},\gamma')}
\sigma_{G}(h,h^{-1})\ol{\sigma_G(h^{-1}|_{\gamma'}^{-1},h^{-1}|_{\gamma'})}
\sigma_G(g, h^{-1}|_{\gamma'}^{-1})
v_{(\alpha,g (h^{-1}|_{\gamma'})^{-1}, \delta (h^{-1}\gamma'))}
\\
&\stackrel{(*)}{=}\sigma_{\Gr}(g, (h^{-1}|_{\gamma'})^{-1}) \sigma_{\bowtie} (h,h^{-1} \gamma')v_{st}=\omega_{\sigma}(s,t)v_{st},
\end{align*}
where
$$
\sigma_{\bowtie} (h,h^{-1} \gamma')=\ol{\sigma_{\bowtie}(h^{-1},\gamma')}
\sigma_{G}(h,h^{-1})\ol{\sigma_G(h^{-1}|_{\gamma'}^{-1},h^{-1}|_{\gamma'})}.
\eqno{(*)}
$$
To see  $(*)$ note that by  Lemma~\ref{lem:self-similar-cocycle-is-total-cocycle}\ref{itm:ssc_G} we have
\[
\sigma_{\bowtie}(h^{-1},\gamma') \sigma_{\bowtie} (h,h^{-1} \gamma') =\sigma_{\bowtie} (h,h^{-1} \gamma')\ol{\sigma_{G}(h,h^{-1})}\sigma_{\bowtie}(h^{-1},\gamma')
=\ol{\sigma_G(h|_{h^{-1}\gamma'},h^{-1}|_{\gamma'})}\sigma_{G}(h,h^{-1}),
\]
and we have $h|_{h^{-1}\gamma'}=h|_{\gamma'}^{-1}$ by \eqref{equ:inverse_vs_restriction}.

(3). If  $\gamma$ and $\beta$ are incomparable, then $st=0$, $T_{\beta}^* T_{\gamma}=0$ and $v_s v_t=0$. 

Hence, $v$ is a representation of $(S(\Gr,E),\omega_{\sigma})$ and this finishes the proof of the first part of the assertion.
This readily gives the isometric isomorphisms $\TT^P(\Gr,E,\sigma)\cong \TT^P(S(\Gr,E),\omega_{\sigma})$ and $\OO^P(\Gr,E,\sigma)\cong \OO^P(S(\Gr,E),\omega_{\sigma})$.
Isomorphisms $\TT^P(S(\Gr,E),\omega_{\sigma})\cong F^P(\widetilde{\Gg}(\Gr,E),\LL_{\sigma})$ and $\OO^P(S(\Gr,E),\omega_{\sigma})\cong F^P(\Gg(\Gr,E),\LL_{\sigma})$ hold by Corollary~\ref{cor:groupoid_presentation_inverse_semigroup_algebras}. 
\end{proof}

Recall the subsemigroups $S_{00}(\Gr,E)\subseteq S_0(\Gr,E)\subseteq S(\Gr,E)$, see page~\pageref{eq:core_subsemigroup}.
\begin{corollary}\label{cor:subalgebras}
Let $(W,T)$ be  $\sigma$-twisted representation of  $(\Gr,E)$ on an $L^p$-space (or a direct sum of these). The Banach algebra $B(W,T)$ generated by $(W,T)$ is 
$$
B(W,T)=\clsp\{T_{\alpha}W_g T_\beta^*: (\alpha,g,\beta) \in S(\Gr,E)\},
$$
and it  contains the following Banach subalgebras
\begin{align*}
B(W,T)_0& \coloneqq \clsp\{T_{\alpha}W_g T_\beta^*: (\alpha,g,\beta) \in S_0(\Gr,E)\},
\\
B(W,T)_{00}& \coloneqq \clsp\{W_{g} T_{\beta}T_\beta^*: (g,\beta)\in \Gr *E^*\},
\\
B(T)& \coloneqq \clsp\{T_{\alpha}T_\beta^*: (\alpha,\beta) \in S(E)\}, 
\\
B(W)& \coloneqq \clsp\{W_g : g\in \Gr\},
\end{align*}
as well as the commutative $C^*$-algebra
$
D(T) \coloneqq \clsp\{T_{\beta}T_\beta^*: \beta\in E^*\}
$, which is a quotient of the algebra $\Cont_0( E^{\leq \infty})$. Moreover,
$$
D(T),B(W)\subseteq B(W,T)_{00}\subseteq B(W,T)_0\subseteq B(W,T).
$$
\end{corollary}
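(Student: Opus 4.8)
The plan is to deduce everything from the inverse-semigroup picture established in Theorem~\ref{thm:presentations_of_twisted_self_similar_algebras}. There the representation $(W,T)$ of $(\Gr,E,\sigma)$ is shown to correspond to a representation $v$ of the twisted inverse semigroup $(S(\Gr,E),\omega_{\sigma})$ with $v_{(\alpha,g,\beta)}=T_{\alpha}W_g T_{\beta}^*$. First I would record the elementary identities $v_{(e,\sr(e),\sr(e))}=T_e$, $v_{(\sr(e),\sr(e),e)}=T_e^*$ and $v_{(\rg(g),g,\sr(g))}=W_g$, which follow from Notation~\ref{notation:Cuntz--Krieger_semigroup}, the Moore--Penrose identity $T_eT_e^*T_e=T_e$, and normalisation of $\sigma_{\Gr}$. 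These show that the generators $T_e,T_e^*,W_g$ all lie in the range of $v$, while conversely each $v_{(\alpha,g,\beta)}$ is a product of such generators; hence $B(W,T)$ coincides with the range $B(v)=\clsp\{v_t:t\in S(\Gr,E)\}$ of Definition~\ref{def:semigroup_representations}. This is the first displayed formula.

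For the listed subalgebras the strategy is to exhibit each one as the range $B(v|_{S'})$ of the restriction of $v$ to a wide inverse subsemigroup $S'\subseteq S(\Gr,E)$; since $\omega_{\sigma}$ restricts to a $2$-cocycle on $S'$ and the closed span $\clsp\{v_t:t\in S'\}$ is then multiplicatively closed, each such range is automatically a Banach subalgebra. Concretely I would take $S'=S_0(\Gr,E)$ for $B(W,T)_0$ (directly from \eqref{eq:core_subsemigroup}), $S'=S(E)$ for $B(T)$ (using $v_{(\alpha,\sr(\alpha),\beta)}=T_{\alpha}T_{\beta}^*$), $S'=\Gr\cup\{0\}$ for $B(W)$, and $S'=\EE(\Gr,E)$ for $D(T)$ (using $v_{f_{\beta}}=T_{\beta}T_{\beta}^*$). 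The only case needing a computation is $B(W,T)_{00}$: applying \eqref{equ:reps_of_twisted_self_similar3} gives $W_g T_{\beta}T_{\beta}^*=\sigma_{\bowtie}(g,\beta)\,T_{g\beta}W_{g|_{\beta}}T_{\beta}^*=\sigma_{\bowtie}(g,\beta)\,v_{(g\beta,g|_{\beta},\beta)}$, so the spanning sets of $B(W,T)_{00}$ and of $B(v|_{S_{00}(\Gr,E)})$ agree up to unimodular scalars and therefore have the same closed span. The chain of inclusions then follows from the inclusions of wide inverse subsemigroups $\EE(\Gr,E)\subseteq S_{00}(\Gr,E)\subseteq S_0(\Gr,E)\subseteq S(\Gr,E)$ together with $\Gr\cup\{0\}\subseteq S_{00}(\Gr,E)$; the latter two hold because $(\rg(g),g,\sr(g))=(g\sr(g),g|_{\sr(g)},\sr(g))$ and $f_{\beta}=(\beta,\rg(\beta)|_{\beta},\beta)$, where $\rg(\beta)|_{\beta}=\sr(\beta)$ by Remark~\ref{rem:about_strongly_fixed}.

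The one genuinely nontrivial point, and the step I expect to be the main obstacle, is the assertion that $D(T)$ is a commutative $C^*$-algebra realised as a quotient of $\Cont_0(E^{\le\infty})$. For $p=2$ this is immediate, since $v$ is then built from a $*$-representation, but for general $p$ it is a feature of the $L^p$-theory rather than a formal consequence. Here I would invoke Proposition~\ref{prop:representations_inverse_semigroups_vs_actions} to extend $v|_{\EE(\Gr,E)}$ to a hermitian representation $\widetilde\pi\colon\Cont_0(\widehat{\EE(\Gr,E)})\to\Bound(Y)$ with $\widetilde\pi(1_{Z(f_{\beta})})=T_{\beta}T_{\beta}^*$, use the homeomorphism $\widehat{\EE(\Gr,E)}\cong E^{\le\infty}$ from Proposition~\ref{prop:transformation_groupoids_self_similar_0}, and then appeal to the structure theory of representations of commutative $C^*$-algebras on $L^p$-spaces from \cite{BKM} to conclude that $D(T)=\overline{\widetilde\pi(\Cont_0(E^{\le\infty}))}$ is isometrically a quotient of $\Cont_0(E^{\le\infty})$ by the ideal on which $\widetilde\pi$ vanishes; commutativity is then inherited from that of $\EE(\Gr,E)$. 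This argument is insensitive to replacing $Y$ by a direct sum of $L^p$-spaces, which disposes of the parenthetical case in the statement.
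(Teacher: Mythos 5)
Your proposal is correct and follows essentially the same route as the paper: extend $(W,T)$ to a representation $v$ of $(S(\Gr,E),\omega_{\sigma})$ via Theorem~\ref{thm:presentations_of_twisted_self_similar_algebras}, identify each listed algebra as the range of $v$ restricted to a wide inverse subsemigroup ($S_0(\Gr,E)$, $S_{00}(\Gr,E)$, $S(E)$, $\Gr\cup\{0\}$, $\EE(\Gr,E)$), and realise $D(T)$ as the image of $\Cont_0(E^{\leq\infty})$. The paper's proof is a terser version of exactly this argument; your explicit check that $W_g T_{\beta}T_{\beta}^*$ agrees with $v_{(g\beta,g|_{\beta},\beta)}$ up to a unimodular scalar is the only computation the paper leaves implicit.
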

\begin{proof} By Theorem~\ref{thm:presentations_of_twisted_self_similar_algebras}, we may extend $(W,T)$ to a representation $v$ of $(S,\omega_{\sigma})$.  
Then the described linear spaces are Banach algebras generated by the range of $v$ and its restrictions to inverse subsemigroups of $S(\Gr,E)$; namely, $S_{0}(\Gr,E)$, $S_{00}(\Gr,E)$, ${S(E)}$,  ${\Gr \cup \{0\}}$ and $\EE(\Gr,E)$, cf. Section~\ref{sect:inverse_semigroup}. Also, $D(T)$ is the image of the  subalgebra $\Cont_0( E^{\leq \infty})\subseteq F^p(\widetilde{\Gg}(\Gr,E),\LL_{\sigma})\cong \TT^p(\Gr,E,\sigma)$. 
\end{proof}

\subsection{Reduced and essential algebras and their subalgebras}\label{sec:representations_of_twisted_self_similar_actions}
We will now examine in more detail algebras mentioned in Corollary~\ref{cor:subalgebras} in the case when they come from  regular and essential representations.
Let $\widetilde{\Gg}(\Gr,E)= \{[\alpha,g,\beta; \xi] \colon (\alpha,g,\beta) \in S(\Gr,E), \xi \in Z(\beta)\}$ be the groupoid described on page~\pageref{page:self-similar_groupoid}. Let $p\in [1,\infty]$.  Theorem~\ref{thm:presentations_of_twisted_self_similar_algebras} applied to the representation ${\tv}^{\rd,p} : S(\Gr,E) \to \Bound\big(\ell^p(\widetilde{\Gg}(\Gr,E))\big)$ described in Corollary~\ref{cor:groupoid_presentation_inverse_semigroup_algebras} yields  the representation $(\tw^{\rd,p},\ttt^{\rd,p})$ of $(\Gr,E, \sigma)$ 
on $\ell^p(\widetilde{\Gg}(\Gr,E))$ (for $p=\infty$ we may consider $c_0(\widetilde{\Gg}(\Gr,E))$. On the basic elements we have
\begin{align*}
\tw_{g}^{\rd,p} \bone_{[\alpha, h,\beta; \eta]}&=[\rg(\alpha)= \sr(g)]
\sigma_{\bowtie}(g,\alpha) \sigma_{\Gr}(g|_{\alpha},h)
\bone_{[g\alpha, g|_{\alpha} h,\beta; \eta]},
\\
\ttt_{e}^{\rd,p}\bone_{[\alpha, g,\beta; \eta]}&=[\sr(e)=\rg(\alpha)]\bone_{[e\alpha, g,\beta; \eta]}.
\end{align*}
Then the  Moore-Penrose generalised inverse of  $\ttt_{e}^{\rd,p}$ is given by
$$
(\ttt_{e}^{\rd,p})^*\bone_{[\alpha, g,\beta; \eta]}=[\alpha=e\alpha']\bone_{[\alpha', g,\beta; \eta]},
$$
where $[\alpha, g,\beta; \eta]\in \widetilde{\Gg}(\Gr,E)$, $e\in E^1$.
In particular,  we have
$$
B(\tw^{\rd,p},\ttt^{\rd,p})=\TT^p_{\red}(S(\Gr,E),\omega_{\sigma})\cong F^p_{\red}(\widetilde{\Gg}(\Gr,E),\LL_{\sigma}).
$$
The subspaces $\ell^p(\widetilde{\Gg}(\Gr,E)_{\Hau})$, $\ell^p(\Gg(\Gr,E))$, $\ell^p(\Gg(\Gr,E)_{\Hau})$ of $\ell^p(\widetilde{\Gg}(\Gr,E))$ 
are invariant for $(\tw^{\rd,p},\ttt^{\rd,p})$, and denoting by $(\tw^{\es,p},\ttt^{\es,p})$, $(\ww^{\rd,p},\tt^{\rd,p})$, $(\ww^{\es,p},\tt^{\es,p})$ the respective restrictions of
$(\tw^{\rd,p},\ttt^{\rd,p})$ we get representations that generate $\OO^p_{\ess}(S(\Gr,E),\omega_{\sigma})$, $\OO^p_{\red}(S(\Gr,E),\omega_{\sigma})$ and $\OO^p_{\ess}(S(\Gr,E),\omega_{\sigma})$, respectively. More generally, for any $\emptyset\neq P\subseteq [1,\infty]$ we denote by 
$(\tw^{\rd,P},\ttt^{\rd,P})$, $(\tw^{\es,P},\ttt^{\es,P})$, $(\ww^{\rd,P},\tt^{\rd,P})$ and $(\ww^{\es,P},\tt^{\es,P})$ direct sums of the corresponding representations   over all $p\in P$.

\begin{definition}\label{def:reduced_and_essential_self-similar-algebras} 
Let $(\Gr,E,\sigma)$ be a twisted self-similar action, and let 
$\omega_{\sigma}$  and $\LL_{\sigma}$ be the associated twist of  $S(\Gr,E)$ and $\widetilde{\Gg}(\Gr,E)$, see Proposition  \ref{prop:twist_self_similar_inverse_semigroup}. 
Let  $\emptyset\neq P\subseteq [1,\infty]$. We call 
$$
\TT^P_{\red}(\Gr,E,\sigma) \coloneqq  B( \tw^{\rd,P},\ttt^{\rd,P} )=\TT^P_{\red}(S(\Gr,E),\omega_{\sigma})\cong F^P_{\red}(\widetilde{\Gg}(\Gr,E),\LL_{\sigma}),
$$
$$
\TT^P_{\ess}(\Gr,E,\sigma) \coloneqq  B( \tw^{\es,P},\ttt^{\es,P} )=\TT^P_{\ess}(S(\Gr,E),\omega_{\sigma})\cong F^P_{\ess}(\widetilde{\Gg}(\Gr,E),\LL_{\sigma}),
$$
$$
\OO^P_{\red}(\Gr,E,\sigma) \coloneqq  B(  \ww^{\rd,P},\tt^{\rd,P} )=\OO^P_{\red}(S(\Gr,E),\omega_{\sigma})\cong F^P_{\red}(\Gg(\Gr,E),\LL_{\sigma}),
$$
$$
\OO^P_{\ess}(\Gr,E,\sigma) \coloneqq  B(  \ww^{\es,P},\tt^{\es,P})=\OO^P_{\ess}(S(\Gr,E),\omega_{\sigma})\cong F^P_{\ess}(\Gg(\Gr,E),\LL_{\sigma})
$$
the \emph{reduced Toeplitz}, the \emph{essential Toeplitz},
the \emph{reduced} and  the \emph{essential $L^P$-operator algebra} of $(\Gr,E,\sigma)$, respectively.
In addition, using the notation from Corollary~\ref{cor:subalgebras} and Definition~\ref{defn:universal_self_similar_algebras}, we define for $*=0,00$  the following \emph{core subalgebras}
$$
\OO^P(\Gr,E,\sigma)_{*} \coloneqq  B(\ww^{P}, \tt^{P})_{*},\qquad	\TT^P(\Gr,E,\sigma)_* \coloneqq  B( \tw^{P}, \ttt^{P}  )_*,
$$
$$
\OO^P_{\red}(\Gr,E,\sigma)_{*} \coloneqq  B( \ww^{\rd,P}, \tt^{\rd,P} )_{*},\qquad	\TT^P_{\red}(\Gr,E,\sigma)_* \coloneqq  B( \tw^{\rd,P}, \ttt^{\rd,P} )_*,
$$
$$
\OO^P_{\ess}(\Gr,E,\sigma)_{*} \coloneqq  B( \ww^{\es,P},\tt^{\es,P} )_{*},\qquad	\TT^P_{\ess}(\Gr,E,\sigma)_* \coloneqq  B( \tw^{\es,P}, \ttt^{\es,P} )_*.
$$
\end{definition}
\begin{remark}\label{rem:gauge_actions_on_reduced_essential} 
By Corollary~\ref{cor:gauge_action_inverse_semigroup}, for each $*=\Space\,,\red,\ess$, the algebras 
$\OO^P_{*}(\Gr,E,\sigma)$ and	$\TT^P_{*}(\Gr,E,\sigma)$ are equipped with gauge circle actions  coming from the length cocycle 
\eqref{eq:length_cocycle}.  Subalgebras $\OO^P_{*}(\Gr,E,\sigma)_0$ and	$\TT^P_{*}(\Gr,E,\sigma)_0$ are fixed-point
algebras for these actions.
The smaller core subalgebras $\OO^P_{*}(\Gr,E,\sigma)_{00}$ and $\TT^P_{*}(\Gr,E,\sigma)_{00}$ are generated by  diagonal subalgebras and 
representations of $(G,\sigma_G)$. They carry crucial information about Hausdorffness and amenability of the underlying groupoids.
\end{remark}
\begin{remark}\label{remembeddings_of_algebras}
By Lemma~\ref{lem:reduced_essential_passes_to_subgroupoids}, for each $*=0,00$ the core subalgebras $\OO^P_{\red}(\Gr,E,\sigma)_{*}$ and $\TT^P_{\red}(\Gr,E,\sigma)_*$ are reduced Banach algebras
of  
$(\Gg_{*}(\Gr,E), \LL_{\sigma})$  and $(\widetilde{\Gg}_{*}(\Gr,E), \LL_{\sigma})$, respectively. 
Similarly,
$\OO^P_{\ess}(\Gr,E,\sigma)_{*}$ and $\TT^P_{\ess}(\Gr,E,\sigma)_*$ are essential Banach algebras
of  
$(\Gg_{*}(\Gr,E), \LL_{\sigma})$  and $(\widetilde{\Gg}_{*}(\Gr,E), \LL_{\sigma})$, respectively.
\end{remark}
\begin{remark}\label{rem:amenablity_of_1_and_infty}
By  \cite{BKM}*{Theorem 5.13}, see Remark \ref{rem:general_comments_on_Lp}, if $P\subseteq \{1,\infty\}$, then
for any $*=\Space ,0,00$, we have
$$
\OO^P(\Gr,E,\sigma)_{*}=\OO^P_{\red}(\Gr,E,\sigma)_{*} \quad \text{and} \quad \TT^P(\Gr,E,\sigma)_{*}=\TT^P_{\red}(\Gr,E,\sigma)_{*}.
$$
In particular, the same holds if  $\{1,\infty\}\subseteq P$ as then the associated $L^P$-operator algebras coincide with $L^{\{1,\infty\}}$-operator algebras. If the corresponding groupoid is amenable,
we have $\OO^P(\Gr,E)_{*}=\OO^P_{\red}(\Gr,E)_{*}$ or  $\TT^P(\Gr,E)_{*}=\TT^P_{\red}(\Gr,E)_{*}$ for every $P\subseteq [1,\infty]$, see Remark \ref{rem:amenability}.
\end{remark}
\begin{lemma}\label{lem:embeddings_of_graph_algebras}
For any $\emptyset \neq P\subseteq [1,\infty]$ we have natural  injective homomorphisms, which are isometric into the full and reduced algebras,
$$
\Cont_0(\partial E)\subseteq \OO^P(E)\hookrightarrow \OO^P(\Gr,E,\sigma), \OO^P_{\red}(\Gr,E,\sigma), \OO^P_{\ess}(\Gr,E,\sigma),
$$
$$
\Cont_0( E^{\leq \infty})\subseteq\TT^P(E)\hookrightarrow \TT^P(\Gr,E,\sigma), \TT^P_{\red}(\Gr,E,\sigma), \TT^P_{\ess}(\Gr,E,\sigma).
$$
\end{lemma}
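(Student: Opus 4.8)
The plan is to realise all six embeddings through a single mechanism: the graph inverse semigroup $S(E)$ sits inside $S(\Gr,E)$ as a wide inverse subsemigroup, inducing the wide open subgroupoid inclusions $\widetilde{\Gg}(E)\subseteq\widetilde{\Gg}(\Gr,E)$ and $\Gg(E)\subseteq\Gg(\Gr,E)$ recorded in \eqref{eq:graph_groupoids_into_self-similar_groupoids}. By Theorem~\ref{thm:presentations_of_twisted_self_similar_algebras} the self-similar algebras are $F^P_*(\widetilde{\Gg}(\Gr,E),\LL_\sigma)$ and $F^P_*(\Gg(\Gr,E),\LL_\sigma)$, so it suffices to produce isometric (resp.\ injective) embeddings of the \emph{twisted} graph algebras $F^P_*(\widetilde{\Gg}(E),\LL_\sigma|)$ and $F^P_*(\Gg(E),\LL_\sigma|)$ and to identify these with the ordinary graph algebras $\TT^P(E)$ and $\OO^P(E)$. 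Concretely, at the level of families, the universal (reduced, essential) $\sigma$-twisted representation of $(\Gr,E)$ contains a Cuntz--Krieger $E$-family via \ref{enu:reps_of_twisted_self_similar2} of Definition~\ref{defn:reps_of_twisted_self_similar}, and the subalgebras $B(T)$ and $D(T)$ of Corollary~\ref{cor:subalgebras} are the images of these embeddings.

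The first key step is to check that $\LL_\sigma$ restricts trivially to the graph subgroupoids, equivalently that $\omega_\sigma|_{S(E)}\equiv 1$. Elements of $S(E)\subseteq S(\Gr,E)$ have the form $(\alpha,\sr(\alpha),\beta)$, whose middle entry is a unit. Evaluating the formula of Proposition~\ref{prop:twist_self_similar_inverse_semigroup} on such a pair, in either branch every factor is a value of $\sigma_{\bowtie}$ of the shape $\sigma_{\bowtie}(\rg(\mu),\mu)$ or a value of $\sigma_{\Gr}$ with a unit argument; these are $1$ by the normalisation Lemma~\ref{lem:self-similar-cocycle-is-total-cocycle}\ref{itm:ssc_normalised} together with \eqref{eq:groupoid_cocycle_identities} (using that a unit strongly fixes every path, Remark~\ref{rem:about_strongly_fixed}, so its sections are units). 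Hence $\omega_\sigma|_{S(E)}\equiv1$, the induced Fell bundle $\LL_\sigma|_{\widetilde{\Gg}(E)}$ is the trivial bundle, and $F^P_*(\widetilde{\Gg}(E),\LL_\sigma|)=F^P_*(\widetilde{\Gg}(E))\cong\TT^P_*(E)$, with the analogous identifications over $\Gg(E)\cong\Gg(S(E))$ for $\OO^P_*(E)$.

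The reduced and essential embeddings then follow from the restriction result for wide open subgroupoids already used in Remark~\ref{remembeddings_of_algebras}, namely Lemma~\ref{lem:reduced_essential_passes_to_subgroupoids}: since the restricted twist is trivial, $F^P_{\red}(\widetilde{\Gg}(E))$ and $F^P_{\ess}(\widetilde{\Gg}(E))$ embed isometrically into $F^P_{\red}(\widetilde{\Gg}(\Gr,E),\LL_\sigma)$ and $F^P_{\ess}(\widetilde{\Gg}(\Gr,E),\LL_\sigma)$ respectively (and likewise over $\Gg$). To pass to the full algebras and to reconcile full, reduced and essential graph norms, I invoke that $\widetilde{\Gg}(E)$ and $\Gg(E)$ are amenable and Hausdorff (Example~\ref{ex:spectrum_graph_inverse_semigroup}) and that $\Gr,E$ are countable: amenability gives $\TT^P(E)=\TT^P_{\red}(E)$ and $\OO^P(E)=\OO^P_{\red}(E)$ by Remark~\ref{rem:amenability}, while Hausdorffness gives $F^P_{\red}(E)=F^P_{\ess}(E)$ by Theorem~\ref{thm:groupoid_simplicity_pure_infiniteness}\ref{enu:groupoid_simplicity_pure_infiniteness1.5}. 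Consequently the composite $\TT^P(E)\to\TT^P(\Gr,E,\sigma)\to\TT^P_{\red}(\Gr,E,\sigma)$ coincides with the isometric embedding $\TT^P(E)=\TT^P_{\red}(E)\hookrightarrow\TT^P_{\red}(\Gr,E,\sigma)$; since the second arrow is contractive, the first is isometric, and the essential case is injective for the same reason. The inclusions $\Cont_0(\partial E)\subseteq\OO^P(E)$ and $\Cont_0(E^{\le\infty})\subseteq\TT^P(E)$ are the diagonal subalgebras $D(T)$, which sit isometrically in each graph algebra as the closure of $\Cont_c$ of the unit space by the defining property of reduced and essential groupoid Banach algebras, and these are carried along compatibly.

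The main obstacle I expect is the passage from contractive to \emph{isometric} for the full-algebra embeddings: full algebras of open subgroupoids need not embed isometrically in general, so the argument must route through the reduced algebra and genuinely exploit amenability of the graph groupoids to collapse $\TT^P(E)=\TT^P_{\red}(E)$; the only delicate verification is the compatibility of the three groupoid models (full, reduced, essential) with the inverse-semigroup identifications of Theorem~\ref{thm:presentations_of_twisted_self_similar_algebras}, which ensures the various squares of canonical maps commute so that the norm comparisons can be chained.
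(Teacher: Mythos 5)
Your proposal is correct and follows essentially the same route as the paper: restrict to the wide open graph subgroupoids, use amenability of $\widetilde{\Gg}(E)$ and $\Gg(E)$ to collapse the full and reduced graph norms, sandwich the canonical contractive maps to obtain isometry into the full and reduced algebras, and use faithfulness of the canonical (generalised) expectation onto $\mathcal{D}(E^{\le\infty})$ to get injectivity into the essential algebras. Your explicit verification that $\omega_\sigma|_{S(E)}\equiv 1$ makes precise a step the paper leaves implicit; the only minor imprecision is that Lemma~\ref{lem:reduced_essential_passes_to_subgroupoids} by itself only places the closure of the graph part in the \emph{class} of reduced Banach algebras of $\widetilde{\Gg}(E)$, so the lower bound in your sandwich should be justified, as in the paper, by compressing the regular representation to the invariant subspace $\ell^p(\widetilde{\Gg}(E))$.
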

\begin{proof}
We  consider the case $P=\{p\}$. One gets general case by considering direct sums of representations.
We  use the embeddings 
$\widetilde{\Gg}(E)\subseteq \widetilde{\Gg}(\Gr,E)$ and $\Gg(E)\subseteq \Gg(\Gr,E)$ from \eqref{eq:graph_groupoids_into_self-similar_groupoids}.
The compression of the action of the algebra $B(\ttt^{\rd,p})$ to the (invariant) subspace $\ell^p(\widetilde{\Gg}(E))\subseteq \ell^p(\widetilde{\Gg}(\Gr,E))$
coincides with the canonical representation of  $\TT_{\red}^p(S(E))=F_{\red}^p(\widetilde{\Gg}(E))$. Since the groupoid $\widetilde{\Gg}(E)$ is amenable,  
we have  $F_{\red}^p(\widetilde{\Gg}(E))=F^p(\widetilde{\Gg}(E))$ by  \cite{Gardella_Lupini17}*{Theorem 6.19}. Thus,  $\TT_{\red}^p(S(E))=F^p(\widetilde{\Gg}(E))=\TT^p(S(E))=\TT^{p}(E)$. 
This implies that the canonical maps $\TT^{p}(E)\to B(\ttt^{\rd,p})\to \TT_{\red}^p(S(E))$ are isometric. Similar arguments give $\OO^p(E)\cong B(\tt^{\rd,p})$.

Now  composing the inclusion $\TT^p(E)\subseteq \TT^P_{\red}(\Gr,E,\sigma)\cong F^p_{\red}(\widetilde{\Gg}(\Gr,E),\LL_{\sigma})$ with the canonical  homomorphism $\TT^p_{\red}(\Gr,E,\sigma)\to \TT^p_{\ess}(\Gr,E,\sigma)\cong F^p_{\ess}(\widetilde{\Gg}(\Gr,E),\LL_{\sigma})$ we get a contractive homomorphism $\TT^p(E)\to \TT^p_{\ess}(\Gr,E,\sigma)$ that intertwines the canonical  expectations $\TT^p_{\ess}(\Gr,E,\sigma)\to \mathcal{D}(E^{\le \infty})$ and
$\TT^p(E)\to C_0(E^{\le \infty})\subseteq \mathcal{D}(E^{\le \infty})$, cf. Remark~\ref{rem:essential_groupoid_algebras}. Since these expectations are faithful we conclude that 
$\TT^p(E)\to \TT^p_{\ess}(\Gr,E,\sigma)$ is injective.
Similar arguments give $\OO^p(E)\hookrightarrow\OO^p_{\ess}(\Gr,E,\sigma)$.
Since $\OO^p(E)$  and $\TT^p(E)$ contain $\Cont_0(\partial E)$ 
and  $\Cont_0(E^{\leq\infty})$ as subalgebras, this finishes the proof.
\end{proof}

To say something about the canonical representation of $(\Gr,\sigma_{\Gr})$ in the above algebras
we need  more terminology.
Suppose that $\Gr$ acts self-similarly on $E$. Since the left action of $g \in \Gr$ establishes a bijection $\sr(g)E^1\cong \rg(g)E^1$
we see that $\sr(g)$ is a source or infinite receiver if and only if $\rg(g)$ has this property. 
In particular, the complementary sets  $E^0_{\reg}$ and $E^{0}_{\sing}$  are $\Gr$-invariant and $\Gr=\Gr_{\reg}\sqcup \Gr_{\sing}$ decomposes   \label{page:groupoid_decomposition_singular_regular} into a disjoint union
of the corresponding  restrictions
\[
\Gr_{\reg}  \coloneqq \Gr|_{E^0_{\reg}}=\{g \in \Gr \mid \rg(g), \sr(g)\in E^0_{\reg} \}, \quad \Gr_{\sing}  \coloneqq \Gr|_{E^0_{\sing}}=\{g \in \Gr \mid \rg(g), \sr(g)\in E^0_{\sing} \}
\]
is a subgroupoid of  $\Gr$.
We let $K_0 \coloneqq E^0$ and recursively define
$$
K_{n+1} \coloneqq E^{0}_{\sing}\sqcup \{g\in \Gr_{\reg}:  \text{$ge=e$ and $g|_{e}\in K_n$ for each $e\in \sr(g)E^1$}\}, \qquad n \geq 0.
$$
Then $K_0\subseteq K_1\subseteq K_2\subseteq \cdots$ and  $K \coloneqq \bigcup_{n=0}^\infty K_n$
is the smallest wide subgroupoid of $\Gr$ such that for any $g\in \Gr_{\reg}$
\begin{equation}\label{eq:tight_kernel_implication}
\text{$ge=e$ and $g|_{e}\in K$ for all $e\in \sr(g)E^1$ }\,\Longrightarrow\,\,\, g\in K.
\end{equation}
This groupoid $K$ is an algebraic analogue of a construction of a reduction ideal for $C^*$-correspondences, see 
\cite{Kwasniewski_Lebedev}*{Definition 5.2}. We follow the naming of Miller and Steinberg who called $K$ the tight kernel, see  \cite{Miller_Steinberg}*{Subsection 2.3}. 
\begin{definition}[\cite{Miller_Steinberg}]
We call $K$ constructed above the \emph{tight kernel} of $(\Gr,E)$. Explicitly, 
$$
K= \{g\in \Gr_{\reg}: \exists_{n\geq 1} \text{ $g$ strongly fixes  all paths in $\sr(g)E^n$ and in $\sr(g)E^{k}_{\sing}$ for $k\leq n$}\}.
$$
\end{definition}
\begin{remark}It is immediate that $K$ is a normal subgroupoid of $\Gr$ and that elements in $K$ fix all paths in $E^*$ (as extension of strongly fixed paths are strongly fixed). Thus, $K$ is a subgroupoid of the kernel $N$ of the action of $\Gr$, cf. Remark~\ref{rem:kernel_of_the_action}. By \eqref{eq:tight_kernel_implication}, $K$ is closed under taking sections.
Hence, the quotient $G/K$ groupoid acts self-similarily on $E$ by the formulas
$$
(gK) \mu =g\mu, \qquad (gK)|_{\mu}=g|_{\mu} K, \qquad (g,\mu)\in \Gr*E^*.
$$
By \cite{Miller_Steinberg}*{Theorem 2.12}, the surjective semigroup homomorphism $S(\Gr,E)\ni (\alpha,g,\beta)\mapsto (\alpha,gK,\beta)\in S(\Gr/K,E)$ induces 
isomorphism of tight groupoids 
$$
\Gg(\Gr, E)\cong \Gg(\Gr/K, E).
$$
\end{remark}
\begin{lemma}\label{lem:divide_by_K}
Suppose that $\sigma_{\Gr}$ is a twist of $\Gr$ which is invariant under sections, equivalently $\sigma \coloneqq (\sigma_{\Gr},1)$ is 
a twist of $(\Gr,E)$, cf Remark~\ref{rem:trivial_bowtie}. Then $\sigma/K \coloneqq (\sigma_{\Gr/K},1)$, where $\sigma_{\Gr/K}(gK,hK) \coloneqq \sigma_{\Gr}(g,h)$ for $(g,h)\in \Gr^2$,
is a well-defined twist of $(\Gr/K,E)$ and 
$$
(\Gg(\Gr, E),\LL_{\sigma})\cong (\Gg(\Gr/K, E),\LL_{\sigma/K}).
$$
Moreover, for every covariant representation $(W,T)$ of $(\Gr,E,\sigma)$, 
the $\sigma_{\Gr}$-twisted representation $\Gr\ni g\mapsto W_g$ of $\Gr$ descends to
a $\sigma/K$-twisted representation 
$\Gr/K\ni gK\mapsto  W_g$ of $\Gr/K$. 
\end{lemma}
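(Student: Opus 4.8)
The plan is to verify three things in turn: that $\sigma/K$ is a well-defined twist of $(\Gr/K,E)$, that the twisted tight groupoids are isomorphic, and that the representation descends. I would handle them in exactly this order, since each relies on the previous.

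First, to see that $\sigma_{\Gr/K}$ is well-defined I must check that $\sigma_{\Gr}(g,h)$ depends only on the cosets $gK$ and $hK$. Suppose $g' = gk$ and $h' = hk'$ for $k,k'\in K$ with $(g',h')$ composable. Since $K$ is normal and elements of $K$ fix all paths and have all sections again in $K$, the invariance of $\sigma_{\Gr}$ under sections (Remark~\ref{rem:trivial_bowtie}) should force $\sigma_{\Gr}(g',h')=\sigma_{\Gr}(g,h)$. The cleanest route is to use that $\sigma_{\Gr}(a,b)=\sigma_{\Gr}(a|_{b\mu},b|_\mu)$ for all admissible $\mu$, and to choose $\mu$ long enough that the sections $k|_\mu$ and $k'|_\mu$ become units, which is possible because $k,k'$ strongly fix all sufficiently long paths by the explicit description of $K$. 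Combined with the cocycle identities \eqref{eq:groupoid_cocycle_identities} and the normalisation $\sigma_{\Gr}(x,k|_\mu)=1=\sigma_{\Gr}(k|_\mu,x)$ when $k|_\mu$ is a unit, this yields coset-independence. That $(\sigma_{\Gr/K},1)$ is again a twist of $(\Gr/K,E)$ then follows because the cocycle identity and the section-invariance condition both descend termwise under the quotient homomorphism $\Gr\to\Gr/K$, the latter being automatic since $(gK)|_\mu=g|_\mu K$.

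Second, for the groupoid isomorphism I would start from the isomorphism of \emph{untwisted} tight groupoids $\Gg(\Gr,E)\cong\Gg(\Gr/K,E)$ cited from \cite{Miller_Steinberg}*{Theorem 2.12}, induced by $(\alpha,g,\beta)\mapsto(\alpha,gK,\beta)$. The point is to check that this isomorphism intertwines the two Fell line bundles $\LL_\sigma$ and $\LL_{\sigma/K}$. Since both twists are built from the inverse-semigroup $2$-cocycles $\omega_\sigma$ and $\omega_{\sigma/K}$ via Proposition~\ref{prop:twist_self_similar_inverse_semigroup} and Definition~\ref{defn:groupoid_twists_from_semigroup_twists}, it suffices to show $\omega_\sigma$ descends to $\omega_{\sigma/K}$ under $(\alpha,g,\beta)\mapsto(\alpha,gK,\beta)$. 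With $\sigma_\bowtie\equiv 1$ the formula in Proposition~\ref{prop:twist_self_similar_inverse_semigroup} reduces to values of $\sigma_{\Gr}$ evaluated on sections $g|_{\beta'}$, which descend by the first step; I would then trace through the germ-equivalence on page~\pageref{page:germ_relations} to confirm that the induced map on equivalence classes of triples $(a,t,x)$ is a well-defined isomorphism of Fell line bundles.

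Third, given a covariant representation $(W,T)$ of $(\Gr,E,\sigma)$, I would show the assignment $gK\mapsto W_g$ is well-defined by proving $W_{gk}=W_g$ for $k\in K$. This is where I expect the main obstacle. The idea is that $k\in K$ fixes all paths in $\sr(k)E^*$, and since $(W,T)$ is \emph{Cuntz--Krieger covariant} one can expand $W_{\sr(k)}=\sum_{|\mu|=n}T_\mu T_\mu^*$ over regular vertices for suitable $n$; using \eqref{equ:reps_of_twisted_self_similar3} with $\sigma_\bowtie\equiv 1$ gives $W_k T_\mu=T_{k\mu}W_{k|_\mu}=T_\mu W_{k|_\mu}$, and choosing $n$ large enough that $k|_\mu$ is a unit collapses each summand. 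The delicate part is managing the singular vertices, where one cannot decompose $W_v$ into range projections; here I would lean on the precise definition of $K$ (which at each stage only absorbs $g$ that strongly fix all paths into $E^k_{\sing}$ for $k\le n$) together with the covariance to reduce to the regular case inductively in $n$, so that the sum over length-$n$ paths together with the singular constituents reconstitutes $W_{\sr(k)}$. Once $W_{gk}=W_g$ is established, the relation $W_{gK}W_{hK}=\sigma_{\Gr/K}(gK,hK)W_{ghK}$ is immediate from \ref{enu:reps_of_twisted_self_similar1} and the definition of $\sigma_{\Gr/K}$, so the descended map is a genuine $\sigma/K$-twisted representation of $\Gr/K$.
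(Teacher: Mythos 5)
Your proposal is correct, and for the first two parts it follows the paper's own proof essentially verbatim: well-definedness of $\sigma_{\Gr/K}$ is obtained by evaluating the section-invariance identity at a path strongly fixed by the element of $K$ (the paper does the two one-sided cases $\sigma_{\Gr}(g,hk)$ and $\sigma_{\Gr}(gk,h)$ separately and composes them), and the bundle isomorphism is reduced to the descent of $\omega_\sigma$ through the germ relation over the isomorphism $\Gg(\Gr,E)\cong\Gg(\Gr/K,E)$ of \cite{Miller_Steinberg}, which the paper simply declares immediate.

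The one genuine divergence is in the third part. You prove $W_k=W_{\sr(k)}$ by unrolling the Cuntz--Krieger relation $n$ steps, decomposing $W_{\sr(k)}$ into $\sum_\mu T_\mu T_\mu^*$ over length-$n$ paths together with shorter paths terminating at singular vertices, and then invoking the explicit description of $K$ (namely that $k$ strongly fixes all of $\sr(k)E^n$ and all of $\sr(k)E^j_{\sing}$ for $j\le n$) to collapse every summand. This does work: the singular-vertex bookkeeping you flag as delicate is exactly what the $E^j_{\sing}$ clause in the description of $K$ is designed to absorb, and all sums involved are finite since regular vertices are finite receivers. The paper instead inducts along the filtration $K_0\subseteq K_1\subseteq\cdots$ defining $K$: for $k\in K_{n+1}\cap\Gr_{\reg}$ one writes $W_k=W_k\sum_{e\in\sr(k)E^1}T_eT_e^*=\sum_{e}T_{ke}W_{k|_e}T_e^*$ and applies the inductive hypothesis to $k|_e\in K_n$. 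This uses only the one-step covariance relation at a single regular vertex, so the singular case never arises beyond the trivial observation that $E^0_{\sing}\subseteq K_{n+1}$ consists of units; your route buys nothing extra and costs the bookkeeping. Whichever version you use, you should record the small step from $W_k=W_{\sr(k)}$ to $W_{gk}=W_g$: it requires $\sigma_{\Gr}(g,k)=1$, which follows from the coset-invariance you proved in the first part since $\sigma_{\Gr}(g,k)=\sigma_{\Gr}(g,\rg(k)\cdot k)=\sigma_{\Gr}(g,\sr(g))=1$.
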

\begin{proof}
Let $(g,h,k)\in \Gr^2*K$. Find $\mu$ which is strongly fixed by $k$. Then  using that $\sigma_{\Gr}$ is invariant under sections
we get
$
\sigma_{\Gr}(g,hk)= \sigma_{\Gr}(g|_{hk\mu},hk|_{\mu})=\sigma_{\Gr}(g|_{h},h|_{\mu})=\sigma_{\Gr}(g,k).
$
Similarly, for $(g,k,h)\in \Gr*K*\Gr$ finding $\mu$ which is (strongly) fixed by $k$ we get
$
\sigma_{\Gr}(gk,h)= \sigma_{\Gr}(gk|_{\mu},h|_{h^{-1}\mu})=\sigma_{\Gr}(g|_{\mu},h|_{h^{-1}\mu})=\sigma_{\Gr}(g,h).
$
Hence, $\sigma_{\Gr/K}$ is well defined. Clearly, it is an invariant under sections $2$-cocycle for $\Gr/K$.
Thus, $\sigma/K \coloneqq (\sigma_{\Gr/K},1)$ is a twist of $(\Gr/K,E)$. It is immediate that
the associated twists $\LL_{\sigma}$ and $\LL_{\sigma/K}$ are isomorphic through the isomorphism respecting $\Gg(\Gr, E)\cong \Gg(\Gr/K, E)$.

For the second part of the assertion it suffices to show that $W_{k}=W_{\sr(k)}$ for all $k\in K$.
This is trivial when $k\in K_0$. Assume this holds for all elements in $K_n$ and let $k\in K_{n+1}$. 
Thus, $k\in \Gr_{\reg}$ and  $ke=e$, $k|_{e}\in K_n$ for each $e\in \sr(k)E^1$.
Using this we get 
$$
W_{k}=W_{k}W_{\sr(k)} \stackrel{\ref{defn:Cuntz--Krieger2}}{=}W_{k}\sum_{e\in \sr(k)E^1} T_{e}T_{e}^*
\stackrel{\ref{enu:reps_of_twisted_self_similar3} }{=} \sum_{e\in \sr(k)E^1} T_{ke}W_{k|_{e}}T_{e}^*=\sum_{e\in \sr(k)E^1} T_{e}T_{e}^*
=W_{\sr(k)}.
$$
Hence, the assertion holds by induction.
\end{proof}
\begin{definition}
We say that $(\Gr,E)$ is \emph{tightly faithful} if $K=\Gr^0$, equivalently there is no $g\in \Gr_{\reg}\setminus \Gr^0$ that strongly fixes all elements in  $\sr(g)\partial E$.
\end{definition}
\begin{remark}
If $(\Gr,E)$ is faithful or pseudo free then it is tightly faithful, but in general all three properties are different.
The authors of \cite{Miller_Steinberg} say that an action is \emph{loosely faithful} if $K=N$. 
Hence, an action is faithful if and only if it is both tightly and loosely faithful.
\end{remark}

\begin{proposition}\label{prop:embeddings_of_algebras} Let $(\Gr,E,\sigma)$ be a twisted self-similar action and let $\emptyset \neq P\subseteq [1,\infty]$. 
\begin{enumerate}  
\item\label{enu:embeddings_of_algebras2}  The subalgebra $B(\tw^{\rd,P} )\subseteq \TT^P_{\red}(\Gr,E,\sigma)$ is an exotic algebra of $(\Gr,\sigma_{\Gr})$, in the sense that we have a canonical representation 
$B(\tw^{\rd,P} )\to F^P_{\red}(\Gr,\sigma_{\Gr})$.
\item\label{enu:embeddings_of_algebras3}  If   $E$ is row-finite, then the subalgebra $B(\tw^{\es,P} )\subseteq \TT^P_{\ess}(\Gr,E,\sigma)$ is an exotic algebra of $(\Gr,\sigma_{\Gr})$ in the sense that we have a canonical representation 
$B(\tw^{\es,P} )\to F^P_{\red}(\Gr,\sigma_{\Gr})$.
\item\label{enu:embeddings_of_algebras3.5} Assume $\sigma_{\bowtie}\equiv 1$ is trivial. Then 
the $\sigma_{\Gr}$-twisted representation $\Gr\ni g\mapsto \ww^{\rd,P}_{g}\in  \OO^P_{\red}(\Gr,E,\sigma)$ is injective if and only if $(\Gr,E)$ is tightly faithful.
In general, it descends to an injective representation 
$\Gr/K\ni gK\mapsto  \ww^{\rd,P}_{g}\in\OO^P_{\red}(\Gr,E,\sigma)$ 
of $(\Gr/K,\sigma/K)$.
\item \label{enu:embeddings_of_algebras4} If $(\Gr,E)$ is pseudo free, then  $B(\ww^{\rd,P} )$ is exotic in the sense that we have a canonical representation 
$B(\ww^{\rd,P} )\to F^P_{\red}(\Gr,\sigma_{\Gr})$.
\end{enumerate}
\end{proposition}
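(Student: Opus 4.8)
The plan is to treat parts \ref{enu:embeddings_of_algebras2}, \ref{enu:embeddings_of_algebras3} and \ref{enu:embeddings_of_algebras4} with a single compression engine, and to isolate part \ref{enu:embeddings_of_algebras3.5} as the one requiring a kernel computation. In all cases the map $g\mapsto \tw^{\rd,P}_g$ (resp. $g\mapsto\ww^{\rd,P}_g$) is a $\sigma_{\Gr}$-twisted representation of $\Gr$ by Moore--Penrose partial isometries, i.e.\ a representation of $(S(\Gr),\sigma_{\Gr})$ in the sense of Example~\ref{ex:groupoid_inverse_semigroup}; hence the universal property (via $\TT^P(S(\Gr),\sigma_{\Gr})=F^P(\Gr,\sigma_{\Gr})$, Corollary~\ref{cor:groupoid_presentation_inverse_semigroup_algebras}) gives a canonical surjection $F^P(\Gr,\sigma_{\Gr})\onto B(\tw^{\rd,P})$. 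The content of ``exotic'' is the complementary map onto the reduced algebra, and to build it I would fix $p\in P$, a vertex $w\in E^0$, and consider
\[
V_{w}\coloneqq \clsp\{\bone_{[\rg(h),h,w;\eta]} : h\in \Gr_w\},
\]
taking $\eta=w$ for parts \ref{enu:embeddings_of_algebras2}/\ref{enu:embeddings_of_algebras3} and any $\eta\in w\partial E$ for part \ref{enu:embeddings_of_algebras4}. Using the explicit formula for $\tw^{\rd,p}_g$ (recorded before Definition~\ref{def:reduced_and_essential_self-similar-algebras}), together with $g|_{\rg(h)}=g$ and $\sigma_{\bowtie}(g,\rg(h))=1$, I get
\[
\tw^{\rd,p}_g\,\bone_{[\rg(h),h,w;\eta]} = [\sr(g)=\rg(h)]\,\sigma_{\Gr}(g,h)\,\bone_{[\rg(gh),gh,w;\eta]},
\]
so $V_w$ is invariant and the compression of $g\mapsto \tw^{\rd,p}_g$ to $V_w$ is exactly the twisted left regular representation $\lambda^{w}_{p}$ of $(\Gr,\sigma_{\Gr})$ on $\ell^p(\Gr_w)$, provided $h\mapsto[\rg(h),h,w;\eta]$ is injective on $\Gr_w$. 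By Lemma~\ref{lem:lower_bound_description} two such germs coincide iff $h^{-1}h'$ strongly fixes a prefix of $\eta$; for $\eta=w$ this forces $h=h'$ (Lemma~\ref{lem:inverse_semigroup_preorder}), and for part \ref{enu:embeddings_of_algebras4} it forces $h=h'$ by pseudo freeness (Proposition~\ref{prop:pseudo_freeness}). Taking suprema over $w$ and $p$ yields $\|\tw^{\rd,P}(a)\|\ge\|a\|_{L^P,\red}$ on the dense subalgebra, whence $\tw^{\rd,P}_g\mapsto\lambda_g$ extends to the desired contraction $B(\tw^{\rd,P})\to F^P_{\red}(\Gr,\sigma_{\Gr})$, and identically for $B(\ww^{\rd,P})$.

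For the essential statement \ref{enu:embeddings_of_algebras3} the only extra input is that the spanning vectors of $V_w$ lie in $\ell^p(\widetilde{\Gg}(\Gr,E)_{\Hau})$, so the same compression computes a subrepresentation of $\tw^{\es,p}$. Each arrow $[\rg(h),h,w;w]$ has source $w$, and for row-finite $E$ the vertex $w$ is a finite receiver or a source, hence admits no singular decomposition and is a Hausdorff point of the unit space by Remark~\ref{rem:no_singular_decomposition} and Proposition~\ref{prop:non-Hausdorff_points_description}. Since the injection $[\gamma]_{\widetilde{\Gg}(\Gr,E)}\hookrightarrow\overline{X}(\sr(\gamma))$ from the proof of Lemma~\ref{lem:finite_non_Hausdorff_groupoid} shows that an arrow with Hausdorff source is itself Hausdorff, every such arrow is Hausdorff, so $V_w\subseteq\ell^p(\widetilde{\Gg}(\Gr,E)_{\Hau})$ and the previous paragraph applies verbatim.

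For part \ref{enu:embeddings_of_algebras3.5} the hypothesis $\sigma_{\bowtie}\equiv1$ makes $\sigma_{\Gr}$ invariant under sections (Remark~\ref{rem:trivial_bowtie}), so Lemma~\ref{lem:divide_by_K} applies and $g\mapsto\ww^{\rd,P}_g$ descends to a $\sigma/K$-twisted representation $gK\mapsto\ww^{\rd,P}_g$ of $\Gr/K$. I would then identify the kernel directly: if $\ww^{\rd,P}_g=\ww^{\rd,P}_{g'}$, comparing the source and range projections $\ww_g^*\ww_g=\ww_{\sr(g)}$ and $\ww_g\ww_g^*=\ww_{\rg(g)}$ forces $\sr(g)=\sr(g')=:v$ and $\rg(g)=\rg(g')$, and then $\ww_g^*\ww_{g'}=\ww_g^*\ww_g$ gives $\ww_{k}\in\T\,\ww_v$ for $k\coloneqq g^{-1}g'\in v\Gr v$. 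Evaluating on the base vectors $\bone_{\eta}$, $\eta\in v\partial E$, via $\ww^{\rd,P}_k\bone_\eta=\bone_{[v,k,v;\eta]}$, shows $\ww_k\in\T\ww_v$ holds if and only if every $\eta\in v\partial E$ is strongly $k$-fixed. Invoking the characterisation of the tight kernel, this happens precisely when $k\in K$, so the descended representation is injective and $g\mapsto\ww^{\rd,P}_g$ is injective on $\Gr$ iff $K=\Gr^0$, i.e.\ iff $(\Gr,E)$ is tightly faithful.

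The hard part will be the final step of \ref{enu:embeddings_of_algebras3.5}: proving that ``$k$ strongly fixes all of $\sr(k)\partial E$'' is equivalent to $k\in K$. The forward direction follows from Lemma~\ref{lem:strongly_fix_respects_the_order} and the explicit description of $K$, but the converse needs care and is exactly the place where tight (rather than mere) faithfulness enters. For singular $v$ the vertex $v$ itself lies in $v\partial E$ and is strongly $k$-fixed only if $k=v$, which rules out non-unit singular $k$; for regular $v$ I would argue that the tree of sections branches only at regular (hence finite-receiver) vertices and that every branch terminates in a unit — at singular vertices by the previous sentence, along infinite paths because strong fixation supplies a finite strongly-fixed prefix — so König's lemma bounds the depth and places $k$ in some $K_n$. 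I expect to lean here on the explicit description of $K$ and on \cite{Miller_Steinberg}. Everything else is bookkeeping of the $\T$-phases $\sigma_{\Gr},\sigma_{\bowtie}$, routine given Lemma~\ref{lem:self-similar-cocycle-is-total-cocycle} and the normalisation identities.
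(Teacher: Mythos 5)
Your proof is correct, and the core engine is the same as the paper's: compress the representation to the invariant subspace spanned by the germs $\bone_{[\rg(h),h,w;\eta]}$, check via Lemma~\ref{lem:lower_bound_description} that $h\mapsto[\rg(h),h,w;\eta]$ is injective, and recognise the compression as the twisted regular representation of $\Gr$; your fibrewise decomposition over $w\in E^0$ is just the paper's single subgroupoid $\widetilde{\Gg}(\Gr)=\{[\rg(g),g,\sr(g);\sr(g)]:g\in\Gr\}$ sliced by source, and your part \ref{enu:embeddings_of_algebras4} norm estimate is the paper's computation repackaged. Two sub-arguments genuinely differ. In part \ref{enu:embeddings_of_algebras3} the paper shows the arrows $[\rg(h),h,w;w]$ are Hausdorff by observing they are isolated points of $\widetilde{\Gg}(\Gr,E)$ (row-finiteness makes $\{w\}$ open in $E^{\le\infty}$) and isolated points lie in every comeager set; your route via Remark~\ref{rem:no_singular_decomposition}, Proposition~\ref{prop:non-Hausdorff_points_description} and the injection $\{\gamma\}_{\Gg}\hookrightarrow\overline{X}(\sr(\gamma))$ is equally valid and arguably more transparent. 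In part \ref{enu:embeddings_of_algebras3.5} the paper argues directly: for each non-unit $g$ it exhibits a basis vector $\bone_\eta$ with $\ww_g\bone_\eta\neq\ww_{\sr(g)}\bone_\eta$ (taking $\eta=\sr(g)$ when $\sr(g)$ is singular, and $\eta=\mu$ not strongly $g$-fixed otherwise), whereas you first reduce to the isotropy case via the source and range projections and then evaluate on units; both work, and your reduction is the cleaner way to get genuine injectivity ($\ww_g\neq\ww_{g'}$ for all $g\neq g'$) rather than just $\ww_g\neq\ww_{\sr(g)}$. Finally, the equivalence you flag as the hard step — that $k\in\Gr_{\reg}$ strongly fixes all of $\sr(k)\partial E$ iff $k\in K$ — is not proved in the paper at all: it is asserted inside Definition of tight faithfulness. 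Your K\H{o}nig-type sketch (the tree of non-strongly-fixed prefixes branches only at finite receivers, terminates in units at singular vertices and along infinite paths, hence is finite) is sound and actually fills a gap the paper leaves to the reader.
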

\begin{proof}

\ref{enu:embeddings_of_algebras2}. 
Note that the closed subgroupoid 
\begin{equation}\label{eq:subgroupoid_image_of_G}
\widetilde{\Gg}(\Gr) \coloneqq \{[\rg(g),g,\sr(g);\sr(g)]: g\in \Gr\}
\end{equation}
of $\widetilde{\Gg}(\Gr,E)$  is isomorphic to $\Gr$,  and the subspace $\ell^p(\widetilde{\Gg}(\Gr))\subseteq \ell^p(\widetilde{\Gg}(\Gr,E))$ is invariant under the action of $B(\tw^{\rd,p} )$. Moreover, restricting this action to  $\ell^p(\widetilde{\Gg}(\Gr))\cong \ell^p(\Gr)$,  we get a map $\Gr\in g\mapsto    \tw^{\rd,p}_{g}\in \Bound(\ell^p(\widetilde{\Gg}(\Gr)))$ which is equivalent to the
regular representation of $(\Gr,\sigma)$. This gives a natural representation $B(\tw^{\rd,p} ) \to F^p_{\red}(\Gr,\sigma_{\Gr})$.

\ref{enu:embeddings_of_algebras3}. If  $E$ is row-finite (there are no infinite receivers), than
every point in the subgroupoid $\widetilde{\Gg}(\Gr)$, given by \eqref{eq:subgroupoid_image_of_G}, is open in $\widetilde{\Gg}(\Gr,E)$. Hence, $\widetilde{\Gg}(\Gr)\subseteq \widetilde{\Gg}(\Gr,E)_{\Hau}$, as the latter is comeager in $\widetilde{\Gg}(\Gr,E)$.
As in \ref{enu:embeddings_of_algebras2}, the supspace $\ell^p(\widetilde{\Gg}(\Gr))\subseteq \ell^p(\widetilde{\Gg}(\Gr,E)_{\Hau})$ is invariant under the action of $B(\tw^{\es,p} )$, 
and the restriction to $\ell^p(\widetilde{\Gg}(\Gr))\cong \ell^p(\Gr)$ gives  representation $B(\tw^{\es,p} )\to F^p_{\red}(\Gr,\sigma)$.

\ref{enu:embeddings_of_algebras3.5}. By Lemma~\ref{lem:divide_by_K}, $\ww^{\rd,P}$ descends to $\Gr/K$ and so it cannot be injective 
if $K$ is nontrivial. So assume  $(\Gr,E)$ is tightly faithful and let $g\in \Gr\setminus \Gr^0$. 
If $g\not\in \Gr_{\reg}$, that is $\sr(g)\in E^0_{\sing}$, then $\gamma \coloneqq [\rg(g), g, \sr(g), \sr(g)]$ is an element of $\Gg(\Gr,E)$  different than $[\sr(g), \sr(g), \sr(g), \sr(g)]$ which we identify with $\sr(g)\in \partial E$. Since 
$\ww^{\rd,P}_{g} \bone_{\sr(g)}=\bone_{\gamma}\neq \bone_{\sr(g)}=\ww^{\rd,P}_{\sr(g)} \bone_{\sr(g)} $, we get that $\ww^{\rd,P}_{g}\neq   \ww^{\rd,P}_{\sr(g)}$. 
If $\gamma \in \Gr_{\reg}$, then by tight faithfulness, there is $\mu\in \sr(g)\partial E$, which is not  strongly $g$-fixed. Then $\gamma \coloneqq [\rg(g), g, \sr(g), \mu]$ and 
$\eta \coloneqq [\sr(g), \sr(g), \sr(g), \mu]$ are different elements of $\Gg(\Gr,E)$. Since 
$\ww^{\rd,P}_{g} \bone_{\eta}=\bone_{\gamma}\neq \bone_{\eta}=\ww^{\rd,P}_{\sr(g)} \bone_{\eta} $, we get that $\ww^{\rd,P}_{g}\neq   \ww^{\rd,P}_{\sr(g)}$.
Hence, $\ww^{\rd,P}$ is an injective representation of $(\Gr, \sigma_{\Gr})$.

\ref{enu:embeddings_of_algebras4}. 
Consider $f=\sum_{g\in F} \alpha_g \bone_{g}\in \Contc(\Gr,\sigma_{\Gr})$ and 
$\xi= \sum_{h\in H} \beta_h \bone_{h}\in \ell^p(\Gr)$ where $F, H\subseteq \Gr$ are finite and $\alpha_g$'s and $\beta_h$'s are complex numbers.
For each $v\in E^0=\Gr^0$  choose any $\xi_v\in v\partial E$.
By pseudo freeness, see  Proposition~\ref{prop:pseudo_freeness}\ref{enu:pseudo_freeness2.5},
we have $[(\rg(g),g,\sr(g),\xi]=[(\rg(h),h,\sr(h),\xi']$ if and only if $\xi=\xi'$ and $g=h$. Hence, the map $H\ni h\mapsto [\rg(h),h,\sr(h);\xi_{\sr(h)}]\in \Gg(\Gr,E)$
is injective, so for $\xi'= \sum_{h\in H} \beta_h \bone_{[\rg(h),h,\sr(h);\xi_{\sr(h)}]} \in \ell^p(\Gg(\Gr,E))$ we have
$\|\xi\|_{p}=\|\xi'\|_{p}$, and also
\begin{align*}
\|\Lambda_p(f) \xi\|_{p}&= \|f * \xi\|_{p}=
\begin{cases}
\big(\sum_{k\in \Gr} |\sum_{gh=k}\sigma_{\Gr}(g,h) \alpha_g\beta_h|^p\big)^{1/p}, & p<\infty
\\
\sup_{k\in \Gr} |\sum_{gh=k}\sigma_{\Gr}(g,h) \alpha_g\beta_h|, & p=\infty
\end{cases}
\\
&=\left \|\sum_{k\in \Gr} \sum_{gh=k} \sigma_{\Gr}(g,h) \alpha_g\beta_h \bone_{[\rg(k),k,\sr(k);\xi_{\sr(k)}]} \right\|_{p}= \left \|\sum_{g\in F} \alpha_g \ww_{g}^{\rd,p} \xi'\right\|_{p}.
\end{align*}
This implies that $\|\Lambda_p(f)\|_{\red}^{p} \leq \|\sum_{g\in F} \alpha_g \ww_{g}^{\rd,p}\|$. Hence, 
the map $B(\ww^{\rd,p})\ni \sum_{g\in F} \alpha_g \ww_{g}^{\rd,p} \mapsto \sum_{g\in F} \alpha_g \bone_{g}\in F^{p}_{\red}(\Gr,\sigma_{\Gr})$ is well-defined and contractive.
\end{proof}
\begin{remark}\label{rem:various_inclusions}
Without the row-finiteness assumption item \ref{enu:embeddings_of_algebras3} above fails, see Remark~\ref{rem:row_finiteness_and_toeplitz_groupoids} below.
Pseudo freeness in  \ref{enu:embeddings_of_algebras4} seems too strong, while tight faithfulness in \ref{enu:embeddings_of_algebras3.5} seems too weak
to show that $B(\ww^{\rd,P} )$ is an exotic algebra of $(\Gr,\sigma_{\Gr})$. For amenable actions with tight faithfulness we could prove that 
the $C^*$-algebraic singular ideal vanishes, in the same manner as in Corollary~\ref{cor:row_finiteness_and_toeplitz_groupoids} below. Thus, we have the following  open problem, which is already important in the $C^*$-algebraic case.
\end{remark}
\begin{question}\label{problem:embed_into_OO}
For which twisted self-similar actions $(\Gr,E,\sigma)$ is the closure of an image of $\Cont_c(\Gr,\sigma_{\Gr})$ 
in $\OO^P(\Gr,E,\sigma)$ an exotic Banach algebra of $(\Gr,\sigma_{\Gr})$?
\end{question}

\subsection{Main structural results}

The diagonal subalgebras $\Cont_0(\partial E)$ and  $\Cont_0( E^{\leq \infty})$ in Remark~\ref{rem:various_inclusions}, both contain a copy of the algebra $\Cont_0(E^0)$. Namely,  it is the closed linear span of characteristic functions $\{\bone_{Z(v)\cap \partial E}\}_{v\in E^0} $ and $\{\bone_{Z(v)}\}_{v\in E^0} $, respectively.
The algebra $\Cont_0(E^0)$ plays a crucial role in ``uniqueness theorem'' for Cuntz algebras. For Toeplitz algebras we instead need to consider the larger
subalgebra
$
\clsp\{ \bone_{Z(\alpha)}: \alpha \in E^0 \cup E^1 \}
$
of  $\Cont_0(E^{\le \infty})$. Its spectrum can be naturally identified with the discrete space $E^{\le 1} \coloneqq E^0 \cup E^1$, so we denote this subalgebra by $\Cont_0(E^{\le 1})$.

\begin{lemma}\label{lem:isometric_on_boundary_path}
Every representation $\psi:\OO^P(E)\to B$ that is nonzero on each projection corresponding to vertices in $E^0$ (is injective on $C_0(E^0)$) is isometric on $\Cont_0(\partial E)$.
\end{lemma}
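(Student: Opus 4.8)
The plan is to reduce the statement to a separation-of-points argument in the commutative algebra $\Cont_0(\partial E)$, using the structure of the boundary path space. The key observation is that $\Cont_0(\partial E)$ is a commutative $C^*$-algebra, so a homomorphism defined on it is isometric precisely when it is injective, and injectivity for such algebras can be checked via separation of points by the cylinder functions $\bone_{Z(\alpha)\cap\partial E}$ for $\alpha\in E^*$. First I would recall from Lemma~\ref{lem:embeddings_of_graph_algebras} that $\Cont_0(\partial E)\subseteq \OO^P(E)$ and that $\psi|_{\Cont_0(\partial E)}$ is a genuine $*$-homomorphism of $C^*$-algebras (a representation of $C^*$-algebras is $*$-preserving, hence contractive, and is isometric if and only if injective).

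The main step is to show that injectivity of $\psi$ on $\Cont_0(E^0)$ already forces injectivity on the larger algebra $\Cont_0(\partial E)$. The essential point is that every cylinder $Z(\alpha)\cap\partial E$ with $\alpha\in E^*$ is nonempty when $\psi(\bone_{Z(\sr(\alpha))\cap\partial E})\neq 0$, and more importantly that the projection $\bone_{Z(\alpha)\cap\partial E}$ can be recovered inside $\OO^P(E)$ as $T_\alpha T_\alpha^*$. Thus $\psi(\bone_{Z(\alpha)\cap\partial E})=\psi(T_\alpha)\psi(T_\alpha)^*$, and since $\psi(T_\alpha)^*\psi(T_\alpha)=\psi(W_{\sr(\alpha)})=\psi(\bone_{Z(\sr(\alpha))\cap\partial E})$ is nonzero by hypothesis, the range projection $\psi(T_\alpha)\psi(T_\alpha)^*$ is a nonzero hermitian idempotent. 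Hence $\psi$ is nonzero on each cylinder projection. Because the $\{\bone_{Z(\alpha)\cap\partial E}\}_{\alpha\in E^*}$ generate $\Cont_0(\partial E)$ and the spectrum $\partial E$ is the closure of maximal paths, separation of these projections by $\psi$ yields that $\psi$ does not kill any nonzero element: any nonzero $a\in\Cont_0(\partial E)$ is nonzero on some basic open set $Z(\alpha)\setminus\bigcup_{i}Z(\alpha\beta_i)$, and compressing by the corresponding projections shows $\psi(a)\neq 0$.

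The argument I expect to be the main obstacle is passing from ``nonzero on each cylinder projection'' to ``injective on all of $\Cont_0(\partial E)$'', because the cylinder projections do not form a complete lattice and $\partial E$ is only locally compact and totally disconnected. The clean way to handle this is to invoke that $\psi$ restricted to the commutative $C^*$-algebra $\Cont_0(\partial E)$ is dual to a continuous partial map on spectra: injectivity of $\psi|_{\Cont_0(E^0)}$ means the corresponding map is defined on all of $\partial E$, and the covariance relations \ref{defn:Cuntz--Krieger1}--\ref{defn:Cuntz--Krieger2} together with the recursive structure of cylinders force this map to be injective. Concretely, I would verify that for $\alpha\neq\beta$ incomparable the idempotents $\psi(\bone_{Z(\alpha)\cap\partial E})$ and $\psi(\bone_{Z(\beta)\cap\partial E})$ are orthogonal (inherited from $T_\alpha^*T_\beta=0$) and that $\psi(\bone_{Z(\alpha)\cap\partial E})-\sum_{e}\psi(\bone_{Z(\alpha e)\cap\partial E})$ is nonzero whenever the cylinder difference is nonzero in $\Cont_0(\partial E)$, the latter being exactly the content of the Cuntz--Krieger relation at regular vertices and the definition of $\partial E$ at singular ones. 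Once all generating differences of the form $\bone_{Z(\alpha)}-\bigvee_{i}\bone_{Z(\alpha\beta_i)}$ map to nonzero elements, faithfulness of the canonical conditional expectation (or a direct Stone--Weierstrass-type density argument) completes the proof that $\psi$ is isometric on $\Cont_0(\partial E)$.
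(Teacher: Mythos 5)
Your overall strategy matches the paper's: show that all the cylinder projections $V_\mu=\psi(T_\mu T_\mu^*)$ are nonzero (the inductive step via $T_e^*T_e=W_{\sr(e)}$ is exactly what the paper does), and then upgrade this to isometry on $\Cont_0(\partial E)$ using the lattice structure of cylinders. However, there is a genuine gap in your reduction step. You assert that $\psi|_{\Cont_0(\partial E)}$ is ``a genuine $*$-homomorphism of $C^*$-algebras'' and hence ``isometric if and only if injective.'' The codomain $B$ is only a Banach algebra here --- typically an algebra of operators on an $L^p$-space with $p\neq 2$, or an abstract quotient such as an essential algebra --- so it carries no $C^*$-structure, and an injective contractive homomorphism from a commutative $C^*$-algebra into a general Banach algebra need not be isometric (spectral radius can only decrease). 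The entire point of the last part of Proposition~\ref{prop:representations_inverse_semigroups_vs_actions} (i.e.\ \cite{BKM}*{Theorems 6.9, 6.15}) is to supply a substitute for this $C^*$-fact: for representations sending the generating idempotents to hermitian idempotents, isometry on $\Cont_0(\partial\widehat{\EE})$ is \emph{equivalent} to the nonvanishing of the products $\prod_{\beta\in F}(V_\mu-V_\beta)$ over finite non-covering families $F$. Your proof must route through this criterion (or reprove it); Gelfand duality and ``faithfulness of the conditional expectation'' are not available in the target.

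A secondary, smaller imprecision: once you accept the product criterion, the verification has to handle an \emph{arbitrary} finite $F\subseteq\mu E^*$ that fails to cover $\mu$, not just the family $\rg^{-1}(\sr(\mu))$ appearing in the Cuntz--Krieger relation (which at a regular vertex is a cover, so that difference is zero on both sides and tells you nothing; at an infinite receiver the sum over all edges is not even a finite expression). The paper's argument is the one you gesture at with ``compressing by the corresponding projections'': since $F$ does not cover $\mu$, there is a finite extension $\alpha$ of $\mu$ incomparable with every element of $F$, whence $V_\alpha\cdot\prod_{\beta\in F}(V_\mu-V_\beta)=V_\alpha\neq 0$. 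Making that single multiplication explicit closes the second gap; the first one requires invoking the Banach-algebra isometry criterion rather than the $C^*$-identity you used.
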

\begin{proof}
Let $(W,T)$ be the  Cuntz--Krieger $E$-family  generating $\OO^P(E)$. Put $V_{\mu} \coloneqq \psi(T_{\mu}T_{\mu}^*)$ for $\mu \in E^*\setminus E^0$ and 
$V_v \coloneqq \psi(W_{v})$ for $v\in E^0$. By assumption   $V_v\neq 0$ for every $v\in E^0$. 
Since  $\psi(T_e T_e^*)=\psi(W_{\rg(e)})\neq 0$ we have $\psi(T_e)\neq 0$. 
If $(e,f)\in E^2$, then the $E$-family relations imply that $T_e^* (T_{ef} T_{ef}^*) T_{f}=W_{\rg(f)}$.
This implies that $\psi(T_{ef})\neq 0$. Proceeding inductively one concludes that $\psi(T_{\mu})\neq 0$ and so also $V_{\mu}\neq 0$ for every $\mu \in E^*$.
By  the last part of Proposition~\ref{prop:representations_inverse_semigroups_vs_actions}
we need to show that  $\prod_{\beta\in F} (V_\mu -V_\beta) \neq 0$ for every $\mu\in E^*$ and finite $F\subseteq \mu E^*$ that does not cover $\mu$ in the semigroup $E^*\cup 0$.
As explained in Example \ref{ex:spectrum_graph_inverse_semigroup}, the latter means that there is an extension  $\alpha$ of $\mu$ which is not comparable with any path in $F$.
Assuming this, $E$-family relations imply that $V_{\alpha} \cdot  V_{\mu} = V_{\alpha}\neq 0$ and 
$V_{\alpha}\cdot V_{\beta}= 0$ for all $\beta \in F$. 
Hence,  $V_{\alpha}\cdot \prod_{\beta\in F} (V_{\mu} -V_{\beta})=V_{\alpha} \neq 0$, and so $\prod_{\beta\in F} (V_{\mu} -V_{\beta}) \neq 0$. 
\end{proof}

The following proposition for $C^*$-algebras was proved in \cite{Fowler_Raeburn:Toeplitz}*{Theorem 4.1} using $C^*$-correspondence techniques (cf. also \cite{Carlsen_Kwasniewski_Ortega}*{Theorem 9.1}).
We use groupoid models.
\begin{proposition}\label{prop:Coburn-Toeplitz_for_graphs}
For any representation $\psi:\TT^P(E) \to B$  the following are equivalent:
\begin{enumerate}
\item\label{enu:Coburn-Toeplitz_for_graphs1} $\psi$ is injective on $\TT^P(E)$;
\item\label{enu:Coburn-Toeplitz_for_graphs2}  $\psi$ is injective on $C_0(E^{\le \infty})$;
\item\label{enu:Coburn-Toeplitz_for_graphs3}  $\psi$ is injective on the algebra  $\Cont_0(E^{\leq 1})=\clsp\{ \bone_{Z(\alpha)}: \alpha \in E^0 \cup E^1 \}\subseteq C_0(E^{\le \infty})$; 
\item\label{enu:Coburn-Toeplitz_for_graphs4} the representation $\psi$ satisfies the following condition
\begin{equation}\label{eq:Coburn--Toeplitz_condition}
\psi(\bone_{Z(v)})\neq\sum_{e \in \rg^{-1}(v)} \psi(\bone_{Z(e)})\text{ for all }v\in E^{0}_{\reg}\text{ and }\psi(\bone_{Z(v)})\neq 0\text{ for all }v\in E^0_{\sing}.
\end{equation}
\end{enumerate}
In particular, for a representation  $(W,T)$ of $E$ on an $L^p$-space $Y$ the associated representation $\TT^{p}(E)\to \Bound(Y)$ is injective if and only if 
$W_v\neq 0$ for all $v\in E^0$ and
\[
W_v\neq \sum_{e \in \rg^{-1}(v)} T_e T_e^* \qquad \text{ for all }v\in E^{0}_{\reg}.
\]
\end{proposition}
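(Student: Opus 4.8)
The plan is to prove the chain of equivalences \ref{enu:Coburn-Toeplitz_for_graphs1}$\Rightarrow$\ref{enu:Coburn-Toeplitz_for_graphs2}$\Rightarrow$\ref{enu:Coburn-Toeplitz_for_graphs3}$\Rightarrow$\ref{enu:Coburn-Toeplitz_for_graphs4}$\Rightarrow$\ref{enu:Coburn-Toeplitz_for_graphs1} and then deduce the final statement from the equivalence \ref{enu:Coburn-Toeplitz_for_graphs1}$\Leftrightarrow$\ref{enu:Coburn-Toeplitz_for_graphs4}. Throughout I will use the identification $\TT^P(E)\cong F^P(\widetilde{\Gg}(E))$ (the untwisted, $\Gr=E^0$ special case of Theorem~\ref{thm:presentations_of_twisted_self_similar_algebras}), under which $\Cont_0(E^{\le\infty})$ is the canonical abelian subalgebra $\Cont_0(\widetilde{\Gg}(E)^0)$ sitting inside the Toeplitz algebra. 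Since $\widetilde{\Gg}(E)$ is the Deaconu--Renault groupoid of the shift, it is amenable and Hausdorff by Example~\ref{ex:spectrum_graph_inverse_semigroup}, so $\TT^P(E)=\TT^P_{\red}(E)=\TT^P_{\ess}(E)$ is a reduced groupoid Banach algebra; in particular it comes with a faithful conditional-expectation-type map onto $\Cont_0(E^{\le\infty})$.

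The implication \ref{enu:Coburn-Toeplitz_for_graphs1}$\Rightarrow$\ref{enu:Coburn-Toeplitz_for_graphs2} is trivial, and \ref{enu:Coburn-Toeplitz_for_graphs2}$\Rightarrow$\ref{enu:Coburn-Toeplitz_for_graphs3} holds because $\Cont_0(E^{\le 1})\subseteq \Cont_0(E^{\le\infty})$. For \ref{enu:Coburn-Toeplitz_for_graphs3}$\Rightarrow$\ref{enu:Coburn-Toeplitz_for_graphs4} I would argue contrapositively: if \eqref{eq:Coburn--Toeplitz_condition} fails then either some $\psi(\bone_{Z(v)})=0$ for singular $v$, which already destroys injectivity on $\Cont_0(E^{\le 1})$, or there is a regular $v$ with $\psi(\bone_{Z(v)})=\sum_{e\in\rg^{-1}(v)}\psi(\bone_{Z(e)})$. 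In the latter case, since $v$ is regular $Z(v)=\bigsqcup_{e\in\rg^{-1}(v)}Z(e)$ is a genuine disjoint decomposition of the compact open set $Z(v)$ into finitely many pieces, so the element $\bone_{Z(v)}-\sum_{e\in\rg^{-1}(v)}\bone_{Z(e)}$ is \emph{zero} in $\Cont_0(E^{\le\infty})$. The subtlety is that in $\Cont_0(E^{\le 1})$ — whose spectrum is the \emph{discrete} space $E^{\le 1}=E^0\cup E^1$ — the elements $\bone_{Z(v)}$ and $\sum_e\bone_{Z(e)}$ are linearly independent, so a representation killing their difference is exactly a representation failing to be injective on $\Cont_0(E^{\le 1})$. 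I would make this precise by checking the matching relation between $\Cont_0(E^{\le 1})$ and the graph family and identifying the failure as nontriviality of $\ker(\psi|_{\Cont_0(E^{\le 1})})$.

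The main work is \ref{enu:Coburn-Toeplitz_for_graphs4}$\Rightarrow$\ref{enu:Coburn-Toeplitz_for_graphs1}. Here I would first upgrade \eqref{eq:Coburn--Toeplitz_condition} to injectivity of $\psi$ on all of $\Cont_0(E^{\le\infty})$, and then invoke a uniqueness/intersection mechanism to propagate injectivity from the diagonal to the whole algebra. For the first part, condition \eqref{eq:Coburn--Toeplitz_condition} says precisely that $\psi$ is nonzero on every basic generator $\bone_{Z(\alpha)}\setminus\bigcup_{\beta}\bone_{Z(\alpha\beta)}$ of the form appearing in the last part of Proposition~\ref{prop:representations_inverse_semigroups_vs_actions}; indeed for $v$ regular the cover $vE^1$ is a finite cover of $v$ and \eqref{eq:Coburn--Toeplitz_condition} forbids $\psi$ from realising this as an equality, while for $v$ singular $vE^1$ is never a cover so the nonvanishing $\psi(\bone_{Z(v)})\ne0$ is the relevant condition. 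An induction on path length (exactly as in the proof of Lemma~\ref{lem:isometric_on_boundary_path}, using the $E$-family relations $T_e^*T_{ef}T_{ef}^*T_f=W_{\rg(f)}$ to pass from $Z(\alpha)$ to $Z(\alpha e)$) then shows $\psi(\bone_{Z(\alpha)})\ne0$ for \emph{every} $\alpha\in E^*$, and that $\psi$ separates the relevant basic differences, so $\psi$ is injective on $\Cont_0(E^{\le\infty})$.

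For the propagation step I would use that $\widetilde{\Gg}(E)$ is Hausdorff and topologically free in the untwisted setting (its isotropy comes only from cycles, and as a Deaconu--Renault groupoid of the shift it is topologically principal), so Theorem~\ref{thm:groupoid_simplicity_pure_infiniteness}\ref{enu:groupoid_simplicity_pure_infiniteness1}--\ref{enu:groupoid_simplicity_pure_infiniteness1.5} apply: any representation of $F^P(\widetilde{\Gg}(E))=\TT^P(E)$ injective on $\Cont_0(\widetilde{\Gg}(E)^0)=\Cont_0(E^{\le\infty})$ is injective on the whole algebra. Combining this with injectivity on $\Cont_0(E^{\le\infty})$ just established gives \ref{enu:Coburn-Toeplitz_for_graphs1}. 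The hardest point to get right is verifying that the topological-freeness hypothesis of Theorem~\ref{thm:groupoid_simplicity_pure_infiniteness} genuinely holds for $\widetilde{\Gg}(E)$ rather than only for $\Gg(E)$ — one must be careful that on the full path space $E^{\le\infty}$ the relevant isotropy is still ``thin'', which for the \emph{untwisted} Toeplitz groupoid follows because $\widetilde{\Gg}(E)$ is topologically principal (the units with trivial isotropy are dense). The final assertion is then immediate: a representation $(W,T)$ of $E$ on $Y$ with integrated form $\TT^p(E)\to\Bound(Y)$ is injective exactly when \eqref{eq:Coburn--Toeplitz_condition} holds, and translating $\psi(\bone_{Z(v)})=W_v$, $\psi(\bone_{Z(e)})=T_eT_e^*$ turns \eqref{eq:Coburn--Toeplitz_condition} into the stated inequalities $W_v\ne0$ for all $v$ and $W_v\ne\sum_{e\in\rg^{-1}(v)}T_eT_e^*$ for regular $v$.
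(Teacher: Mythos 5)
Your overall route is the same as the paper's: the nontrivial content is (i) the implication \ref{enu:Coburn-Toeplitz_for_graphs2}$\Rightarrow$\ref{enu:Coburn-Toeplitz_for_graphs1}, obtained from topological freeness of $\widetilde{\Gg}(E)$ together with Theorem~\ref{thm:groupoid_simplicity_pure_infiniteness} (the paper gets topological freeness from Theorem~\ref{thm:topological_freeness_self_similar_transformation_groupoids} with $\Gr=E^0$, you get it from topological principality of the Deaconu--Renault groupoid --- both are fine), and (ii) the implication \ref{enu:Coburn-Toeplitz_for_graphs4}$\Rightarrow$\ref{enu:Coburn-Toeplitz_for_graphs2} via the isometry criterion in the last part of Proposition~\ref{prop:representations_inverse_semigroups_vs_actions}. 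On point (ii) you are under-specific: ``$\psi$ separates the relevant basic differences'' hides the actual work, namely the verification that $\prod_{\beta\in F}(V_\mu-V_\beta)\neq 0$ for \emph{every} $\mu\in E^*$ and \emph{every} finite $F\subseteq \mu E^*\setminus\{\mu\}$, which requires a three-way case analysis on $\sr(\mu)$ (source, infinite receiver, regular), and in the regular case the compression $T_\mu^*(\,\cdot\,)T_\mu$ to reduce to the hypothesis at the vertex $\sr(\mu)$. That is where the implication actually lives, and your sketch does not reach it.

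More seriously, your argument for \ref{enu:Coburn-Toeplitz_for_graphs3}$\Rightarrow$\ref{enu:Coburn-Toeplitz_for_graphs4} rests on a false claim: for a regular vertex $v$ one does \emph{not} have $Z(v)=\bigsqcup_{e\in\rg^{-1}(v)}Z(e)$ in $E^{\le\infty}$. The vertex $v$, viewed as a path of length zero, lies in $Z(v)$ but in no $Z(e)$, so $\bone_{Z(v)}-\sum_{e\in\rg^{-1}(v)}\bone_{Z(e)}=\bone_{\{v\}}\neq 0$ in $\Cont_0(E^{\le\infty})$. (The disjoint decomposition you describe holds only on the boundary path space $\partial E$; it is precisely the difference between the Toeplitz and the Cuntz--Krieger algebra.) Your subsequent assertion that this element is nevertheless nonzero in $\Cont_0(E^{\leq 1})$ is internally contradictory, since $\Cont_0(E^{\leq 1})$ is by definition a subalgebra of $\Cont_0(E^{\le\infty})$; and if your premise were true, condition \eqref{eq:Coburn--Toeplitz_condition} would fail for every representation of a graph with a regular vertex, contradicting the proposition itself. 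The correct (and one-line) argument is: $\bone_{\{v\}}=\bone_{Z(v)}-\sum_{e\in\rg^{-1}(v)}\bone_{Z(e)}$ is a nonzero element of $\Cont_0(E^{\leq 1})$ (evaluate at the point $v\in E^{\le\infty}$), so injectivity of $\psi$ on $\Cont_0(E^{\leq 1})$ forces $\psi(\bone_{Z(v)})\neq\sum_{e}\psi(\bone_{Z(e)})$, while $\bone_{Z(v)}\neq 0$ handles singular $v$. This is why the paper can dismiss \ref{enu:Coburn-Toeplitz_for_graphs3}$\Rightarrow$\ref{enu:Coburn-Toeplitz_for_graphs4} as obvious.
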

\begin{proof} Implications 
\ref{enu:Coburn-Toeplitz_for_graphs1}$\Rightarrow$\ref{enu:Coburn-Toeplitz_for_graphs2}$\Rightarrow$\ref{enu:Coburn-Toeplitz_for_graphs3}$\Rightarrow$\ref{enu:Coburn-Toeplitz_for_graphs4} are obvious. Also since $\TT^P(E)\cong F^P(\widetilde{\Gg}(E))$, by Theorem~\ref{thm:presentations_of_twisted_self_similar_algebras}, and  $\widetilde{\Gg}(E)$ is topologically free, by Theorem~\ref{thm:topological_freeness_self_similar_transformation_groupoids}\ref{enu:topological_freeness1},  the implication \ref{enu:Coburn-Toeplitz_for_graphs2}$\Rightarrow$\ref{enu:Coburn-Toeplitz_for_graphs1} holds by 
Theorem~\ref{thm:groupoid_simplicity_pure_infiniteness}\ref{enu:groupoid_simplicity_pure_infiniteness1}.

Thus, it suffices to prove that \ref{enu:Coburn-Toeplitz_for_graphs4}$\Rightarrow$\ref{enu:Coburn-Toeplitz_for_graphs1}.
Assume  \eqref{eq:Coburn--Toeplitz_condition}. Denote by $(W,T)$  the   $E$-family  generating $\TT^P(E)$, and  put  $V_{\mu} \coloneqq \psi(T_{\mu}T_{\mu}^*)$ for $\mu \in E^*\setminus E^0$, and 
$V_v \coloneqq \psi(W_{v})=\psi(\bone_{Z(v)})$ for $v\in E^0$. By \eqref{eq:Coburn--Toeplitz_condition},  $V_v\neq 0$ for all $v\in E^0$
and 
as in the proof Lemma~\ref{lem:isometric_on_boundary_path} we  get that  $V_{\mu}\neq 0$ for all $\mu \in E^*$.
By  the last part of Proposition~\ref{prop:representations_inverse_semigroups_vs_actions}, $\psi$ is isometric on  $C_0(E^{\le \infty})$ 
if and only if   $\prod_{\beta\in F} (V_\mu -V_\beta) \neq 0$ for every   $\mu\in E^*$ and every finite $F\subseteq \mu E^*\setminus\{\mu\}$. 
This condition readily implies that  $\psi(W_v)> \sum_{e \in \rg^{-1}(v)} \psi(T_e T_e^*)$ for  any  $v\in E^{0}_{\reg}$ (take $\mu=v$ and $F=\rg^{-1}(v)$).
Conversely, suppose that $\psi(W_v)> \sum_{e \in \rg^{-1}(v)} \psi(T_e T_e^*)$ for  every  $v\in E^{0}_{\reg}$. 
Take any   $\mu\in E^*$ and any finite $F\subseteq \mu E^*\setminus\{\mu\}$.
If $v \coloneqq \sr(\mu)$ is a source, then $F$ has to be empty. If $v$ is an infinite receiver, then there is an edge $e\in \rg^{-1}(v)$ which is not  a prefix of any path in $F$.
Then $V_{\mu e} \cdot  V_{\mu} = V_{\mu e}\neq 0$ and 
$V_{\mu e}\cdot V_{\beta}= 0$ for all $\beta \in F$ which implies that  $\prod_{\beta\in F} (V_\mu -V_\beta)$ is nonzero (it contains $V_{\mu e}$ as a subprojection).
If $v=\sr(\mu)\in E^{0}_{\reg}$ is regular, then by the assumption $V_v> \sum_{e \in \rg^{-1}(v)} V_{e}$. 
Since $\phi(T_{\mu}^*)V_{\mu}\phi(T_{\mu}^*)=V_{v}$ and  $\phi(T_{\mu}^*)V_{\mu e}\phi(T_{\mu}^*)=V_{e}$, this implies that 
$V_{\mu}- \sum_{e \in \rg^{-1}(v)}V_{\mu e}\neq 0$. This latter (nonzero) projection is a subprojection of $\prod_{\beta\in F} (V_\mu -V_\beta)$.
\end{proof}
\begin{definition}[cf. \cite{BK}*{Definition 5.6}]\label{def:visible} 
We say that an inclusion $A\subseteq B$ of Banach algebras  has the \emph{intersection property}, or that $A$ \emph{detects ideals} in $B$, if for every nonzero ideal $J$ in $B$ we have $J\cap A \neq \{ 0 \}$. 
\end{definition}
\begin{remark}An inclusion $A\subseteq B$ has the intersection property if and only if every representation $\psi$ of $B$ which is injective on $A$, is injective on $B$.
Results assuring the latter are often called ``uniqueness theorems''.
Proposition  \ref{prop:Coburn-Toeplitz_for_graphs} says that 
$\Cont_0(E^{\leq 1})\subseteq   \TT^P(E)$ has the intersection property.
Lemma~\ref{lem:isometric_on_boundary_path} implies that $\Cont_0(E^0) \subseteq \OO^P(E)$ has the intersection property if and only if $\Cont_0(\partial E)\subseteq \OO^P(E)$ has the intersection property.
\end{remark}
\begin{theorem}[Intersection properties]\label{thm:uniqueness_results} 
Let $(\Gr,E)$ be a self-similar groupoid action with a twist $\sigma$ and  let $P\subseteq [1,\infty]$ be a non-empty set. 
\begin{enumerate}
\item\label{enu:Cuntz--Krieger uniqueness1'} \Evr~ and \Cyc~ imply that  every ideal $I$ in $\OO^P(\Gr,E,\sigma)$ with $I\cap \Cont_0(E^0)=\{0\}$ is contained in the kernel of the canonical  map $\OO^P(\Gr,E,\sigma) \to  \OO^P_{\ess}(\Gr,E,\sigma)$.
The converse implication holds when the twist is trivial.

\item\label{enu:Cuntz--Krieger uniqueness1} \Evr~ and \Cyc~ imply that  $\Cont_0(E^0)\subseteq \OO^P_{\ess}(\Gr,E,\sigma)$ has the intersection property. 
If $\Cont_0(E^0)\subseteq \OO^P(\Gr,E)$ has the intersection property, then  $\Evr$ and $\Cyc$ hold.

\item\label{enu:Cuntz--Krieger uniqueness2'} \Evr~and \Rec~ imply that  every kernel of a representation $\psi$ of $\TT^P(\Gr,E,\sigma)$ satisfying \eqref{eq:Coburn--Toeplitz_condition} is contained in the kernel of the canonical  map $\TT^P(\Gr,E,\sigma) \to  \TT^P_{\ess}(\Gr,E,\sigma)$.
The converse implication holds when the twist is trivial.

\item\label{enu:Cuntz--Krieger uniqueness2} $\Evr$~and \Rec~ imply that    $C_0(E^{\leq 1})\subseteq \TT^P_{\ess}(\Gr,E,\sigma)$ has the intersection property.
If  $C_0(E^{\leq 1})\subseteq\TT^P(\Gr,E)$ has the intersection property, then $\Evr$~and $\Rec$ hold.

\item\label{enu:Cuntz--Krieger uniqueness3} \Evr~  implies that the two inclusions
$\Cont_0(\partial E)\subseteq \OO^P_{\ess}(\Gr,E,\sigma)_0,\, \OO^P_{\ess}(\Gr,E,\sigma)_{00}$ 
have the intersection property.

\item\label{enu:Cuntz--Krieger uniqueness4} $\Evr$ and $\Rec$  imply that   
the inclusions $\Cont_0( E^{\leq \infty})\subseteq \TT^P_{\ess}(\Gr,E,\sigma)_{*}$ have  the intersection property for all $*=\Space , 0, 00$.
\end{enumerate}

\end{theorem}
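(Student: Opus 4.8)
The plan is to reduce everything to the groupoid machinery already developed. By Theorem~\ref{thm:presentations_of_twisted_self_similar_algebras} and Definition~\ref{def:reduced_and_essential_self-similar-algebras}, we have canonical identifications
\[
\OO^P_{\ess}(\Gr,E,\sigma)\cong F^P_{\ess}(\Gg(\Gr,E),\LL_{\sigma}),\qquad \TT^P_{\ess}(\Gr,E,\sigma)\cong F^P_{\ess}(\widetilde{\Gg}(\Gr,E),\LL_{\sigma}),
\]
together with the analogous identifications of the core subalgebras with the essential algebras of the subgroupoids $\Gg_*(\Gr,E)$, $\widetilde{\Gg}_*(\Gr,E)$ via Remark~\ref{remembeddings_of_algebras}. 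The central tool is Theorem~\ref{thm:groupoid_simplicity_pure_infiniteness}\ref{enu:groupoid_simplicity_pure_infiniteness1}, which says that for a topologically free \'etale groupoid every representation of the essential algebra injective on $C_0(X)$ is injective on the whole algebra. Dynamical freeness is already matched to the self-similar conditions by Theorem~\ref{thm:topological_freeness_self_similar_transformation_groupoids}: $\Gg(\Gr,E)$ is topologically free iff $\Evr$ and $\Cyc$ hold; each $\Gg_*(\Gr,E)$ for $*=0,00$ is topologically free iff $\Evr$ holds; and each $\widetilde{\Gg}_*(\Gr,E)$ is topologically free iff $\Evr$ and $\Rec$ hold. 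So the strategy throughout is to translate each claimed intersection property into the statement that a $C_0(X)$-injective representation of the relevant essential groupoid algebra is automatically injective, and then invoke topological freeness of the appropriate groupoid.

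For \ref{enu:Cuntz--Krieger uniqueness1} and \ref{enu:Cuntz--Krieger uniqueness2} I would proceed as follows. For the forward direction of \ref{enu:Cuntz--Krieger uniqueness1}, assuming $\Evr$ and $\Cyc$, the groupoid $\Gg(\Gr,E)$ is topologically free, so by Theorem~\ref{thm:groupoid_simplicity_pure_infiniteness}\ref{enu:groupoid_simplicity_pure_infiniteness1} it suffices to check that injectivity on $C_0(E^0)$ forces injectivity on $C_0(\partial E)$; this is precisely Lemma~\ref{lem:isometric_on_boundary_path}, after composing with the embedding $\OO^P(E)\hookrightarrow \OO^P_{\ess}(\Gr,E,\sigma)$ from Lemma~\ref{lem:embeddings_of_graph_algebras}. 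The converse direction of \ref{enu:Cuntz--Krieger uniqueness1} (for the universal algebra and untwisted case) follows from the converse part of Theorem~\ref{thm:groupoid_simplicity_pure_infiniteness}\ref{enu:groupoid_simplicity_pure_infiniteness1}: if $C_0(E^0)\subseteq \OO^P(\Gr,E)$ detects ideals, then so does $C_0(\partial E)$, hence every $C_0(\partial E)$-injective representation of $\OO^P(\Gr,E)\cong F^P(\Gg(\Gr,E))$ is injective, forcing $\Gg(\Gr,E)$ topologically free, i.e.\ $\Evr$ and $\Cyc$. For \ref{enu:Cuntz--Krieger uniqueness2} the same pattern applies with $\widetilde{\Gg}(\Gr,E)$ in place of $\Gg(\Gr,E)$, using Proposition~\ref{prop:Coburn-Toeplitz_for_graphs} (which identifies the condition \eqref{eq:Coburn--Toeplitz_condition} with injectivity on $C_0(E^{\le 1})$ and hence on $C_0(E^{\le \infty})$) in place of Lemma~\ref{lem:isometric_on_boundary_path}; topological freeness of $\widetilde{\Gg}(\Gr,E)$ is equivalent to $\Evr$ and $\Rec$.

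Statements \ref{enu:Cuntz--Krieger uniqueness1'} and \ref{enu:Cuntz--Krieger uniqueness2'} are the ideal-theoretic refinements: here I would invoke Theorem~\ref{thm:groupoid_simplicity_pure_infiniteness}\ref{enu:groupoid_simplicity_pure_infiniteness0} instead, which locates every $C_0(X)$-trivial ideal of the full algebra inside the kernel $\mathcal N$ of the canonical map to the essential algebra, with the converse holding for trivial twists. Combined with Lemma~\ref{lem:isometric_on_boundary_path} and Proposition~\ref{prop:Coburn-Toeplitz_for_graphs} respectively, and the translation $\mathcal N=\ker(F^P(\Gg,\LL)\to F^P_{\ess}(\Gg,\LL))$, these yield the claims directly. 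Finally, \ref{enu:Cuntz--Krieger uniqueness3} and \ref{enu:Cuntz--Krieger uniqueness4} concern the core subalgebras: by Remark~\ref{remembeddings_of_algebras} these are the essential algebras of $\Gg_*(\Gr,E)$ and $\widetilde{\Gg}_*(\Gr,E)$, whose topological freeness is equivalent to $\Evr$ (for $*=0,00$ in the Cuntz case) and to $\Evr$ plus $\Rec$ (for the Toeplitz case), again by Theorem~\ref{thm:topological_freeness_self_similar_transformation_groupoids}; one then reapplies Theorem~\ref{thm:groupoid_simplicity_pure_infiniteness}\ref{enu:groupoid_simplicity_pure_infiniteness1} together with the relevant injectivity lemma restricted to the subgroupoid. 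The main obstacle I anticipate is bookkeeping rather than conceptual: one must verify that $C_0(\partial E)$ (resp.\ $C_0(E^{\le\infty})$, $C_0(E^{\le 1})$) really is the copy of $C_0(X)$ sitting inside each core subalgebra under the groupoid identification, and that injectivity of a representation on the smaller diagonal $C_0(E^0)$ genuinely propagates to the full diagonal in the subalgebra setting; this requires care because the unit spaces of $\Gg_0$ and $\Gg_{00}$ coincide with $\partial E$, so the abstract diagonal and the graph-algebraic diagonal must be matched precisely through Lemma~\ref{lem:embeddings_of_graph_algebras} and the last part of Proposition~\ref{prop:representations_inverse_semigroups_vs_actions}.
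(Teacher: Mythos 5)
Your proposal is correct and follows essentially the same route as the paper's proof: the same reduction to the twisted groupoid models, the same invocation of Theorem~\ref{thm:groupoid_simplicity_pure_infiniteness}\ref{enu:groupoid_simplicity_pure_infiniteness0}--\ref{enu:groupoid_simplicity_pure_infiniteness1} combined with Theorem~\ref{thm:topological_freeness_self_similar_transformation_groupoids}, and the same use of Lemmas~\ref{lem:isometric_on_boundary_path} and~\ref{lem:embeddings_of_graph_algebras}, Proposition~\ref{prop:Coburn-Toeplitz_for_graphs}, and Remark~\ref{remembeddings_of_algebras} to propagate injectivity between the various diagonals and to handle the core subalgebras. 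The bookkeeping you flag at the end is exactly what the paper's proof dispatches with those same references, so there is no gap.
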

\begin{proof} 
\ref{enu:Cuntz--Krieger uniqueness1'}. 
Recall that $\OO^P(\Gr,E,\sigma)\cong F^P(\Gg(\Gr,E),\LL_{\sigma})$, $\OO^P_{\ess}(\Gr,E,\sigma)\cong F^P_{\ess}(\Gg(\Gr,E),\LL_{\sigma})$ and $\Gg(\Gr,E)$ is topologically free if and only if 
$\Evr$ and $\Cyc$ hold, by  Theorem~\ref{thm:topological_freeness_self_similar_transformation_groupoids}.
By  Lemmas~\ref{lem:isometric_on_boundary_path}, \ref{lem:embeddings_of_graph_algebras}, an ideal $I$ in $\OO^P(\Gr,E,\sigma)$ satisfies  $I\cap \Cont_0(E^0)=\{0\}$
if and only if $I\cap \Cont_0(\partial E)=\{0\}$. We get the assertion by Theorem~\ref{thm:groupoid_simplicity_pure_infiniteness}\ref{enu:groupoid_simplicity_pure_infiniteness0}.

\ref{enu:Cuntz--Krieger uniqueness1}. We get the assertion in a similar way as in \ref{enu:Cuntz--Krieger uniqueness1} but we need to appeal to Theorem~\ref{thm:groupoid_simplicity_pure_infiniteness}\ref{enu:groupoid_simplicity_pure_infiniteness1}. In particular, by  Lemma~\ref{lem:isometric_on_boundary_path}, \ref{lem:embeddings_of_graph_algebras},
$\Cont_0(E^0)\subseteq \OO^P_{\ess}(\Gr,E,\sigma)$ has the intersection property 
if and only if $\Cont_0(\partial E)\subseteq \OO^P_{\ess}(\Gr,E,\sigma)$ has the intersection property.

\ref{enu:Cuntz--Krieger uniqueness2'}.  Recall that $\TT^P(\Gr,E,\sigma)\cong F^P(\widetilde{\Gg}(\Gr,E),\LL_{\sigma})$, $\TT_{\ess}^P(\Gr,E,\sigma)\cong F^P_{\ess}(\widetilde{\Gg}(\Gr,E),\LL_{\sigma})$ and $\widetilde{\Gg}(\Gr,E)$ is topologically free if and only if 
$\Evr$ and $\Rec$ hold, by Theorem~\ref{thm:topological_freeness_self_similar_transformation_groupoids}.
Let $I$ be a kernel of a representation $\psi$ of $\TT^P(\Gr,E,\sigma)$ (every ideal in $\TT^P(\Gr,E,\sigma)$ is of this form).
By Lemmas~\ref{lem:isometric_on_boundary_path}, \ref{lem:embeddings_of_graph_algebras}, 
$I\cap \Cont_0(E^{\leq 1})=\{0\}$ if and only if $I\cap \Cont_0(E^{\leq \infty})=\{0\}$. 
Thus
we get the assertion by Theorem~\ref{thm:groupoid_simplicity_pure_infiniteness}\ref{enu:groupoid_simplicity_pure_infiniteness0}.

\ref{enu:Cuntz--Krieger uniqueness2}.  The argument in \ref{enu:Cuntz--Krieger uniqueness2'} and Theorem~\ref{thm:groupoid_simplicity_pure_infiniteness}\ref{enu:groupoid_simplicity_pure_infiniteness1} 
gives the claim.

\ref{enu:Cuntz--Krieger uniqueness3}. Let $*=0,00$.
By Theorem~\ref{thm:topological_freeness_self_similar_transformation_groupoids}, topological freeness of  $\Gg_{*}(\Gr,E)$ 
is equivalent to $\Evr$.
By Remark~\ref{remembeddings_of_algebras},   $\OO^P_{\ess}(\Gr,E,\sigma)_{*}$  is an essential Banach algebra of the twisted groupoid 
$(\Gg_{*}(\Gr,E), \LL_{\sigma})$.
Hence, the assertion follows from  \cite{BKM2}*{Theorem 5.10(2)}.

\ref{enu:Cuntz--Krieger uniqueness4}. Let $*=0,00$. By Theorem~\ref{thm:topological_freeness_self_similar_transformation_groupoids}, topological freeness of  $\widetilde{\Gg}_{*}(\Gr,E)$ 
is equivalent to $\Evr$ and $\Rec$.
By Remark~\ref{remembeddings_of_algebras},    $\TT^P_{\ess}(\Gr,E,\sigma)_*$ is an essential Banach algebras of the twisted groupoid 
$(\widetilde{\Gg}_{*}(\Gr,E), \LL_{\sigma})$.
Hence, the assertion follows from  \cite{BKM2}*{Theorem 5.10(2)}.
\end{proof}
\begin{corollary}\label{cor:detection_of_ideals_amenable}
Theorem~\ref{thmx:Detection of ideals II}  in the introduction holds.
\end{corollary}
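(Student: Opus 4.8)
The goal is to prove Theorem~\ref{thmx:Detection of ideals II}, which characterises when $C_0(E^0)$ (respectively $C_0(E^{\leq\infty})$) detects ideals in the $L^P$-operator algebras of a self-similar action under amenability assumptions. The plan is to assemble this from the machinery already developed, chiefly Theorem~\ref{thm:uniqueness_results} (the intersection property statements), the groupoid models from Theorem~\ref{thm:presentations_of_twisted_self_similar_algebras}, Corollary~\ref{cor:vanishing_of_singulars} (vanishing of singular ideals characterised by $\Hum$ for finitely non-Hausdorff groupoids), and the coincidence of full and reduced algebras in the amenable case (Remark~\ref{rem:amenability}, Remark~\ref{rem:amenablity_of_1_and_infty}).

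First I would treat the Cuntz--Krieger part, conditions (o1)--(o3). The hypothesis that each $\mu\in\partial E$ has at most finitely many inequivalent singular decompositions means, by Corollary~\ref{cor:finite_Hausdorffness}, that $\Gg(\Gr,E)$ is finitely non-Hausdorff. Combined with amenability of $\Gg_{00}(\Gr,E)$ (which by \cite{Miller_Steinberg}*{Theorem 2.18} propagates to amenability of $\Gg(\Gr,E)$), Remark~\ref{rem:amenability} gives $\OO^P(\Gr,E)=\OO^P_{\red}(\Gr,E)$ for every $P$, and by Theorem~\ref{thm:presentations_of_twisted_self_similar_algebras} both are isomorphic to $F^P(\Gg(\Gr,E))$. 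The implication (o1)$\Rightarrow$(o2): given $\Evr$ and $\Cyc$, Theorem~\ref{thm:topological_freeness_self_similar_transformation_groupoids} makes $\Gg(\Gr,E)$ topologically free, and given $\Hum$, Corollary~\ref{cor:vanishing_of_singulars} (with the untwisted, topologically trivial twist) makes the reduced algebra essential, i.e. the canonical map $F^P_{\red}\to F^P_{\ess}$ is injective. Then Theorem~\ref{thm:uniqueness_results}\ref{enu:Cuntz--Krieger uniqueness1}, transported along this injective essentialisation via the last clause of Theorem~\ref{thm:groupoid_simplicity_pure_infiniteness}, yields the intersection property for $C_0(E^0)\subseteq\OO^P(\Gr,E)$. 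The implication (o2)$\Rightarrow$(o3) is trivial. For (o3)$\Rightarrow$(o1): if $C_0(E^0)$ detects ideals in $\OO^P(\Gr,E)$ for some $P$, then by Theorem~\ref{thm:uniqueness_results}\ref{enu:Cuntz--Krieger uniqueness1} we obtain $\Evr$ and $\Cyc$; moreover the intersection property forces the singular ideal to vanish (since the singular ideal is a nonzero ideal with zero intersection with the diagonal, its non-vanishing would contradict detection), and by Corollary~\ref{cor:vanishing_of_singulars} in the finitely non-Hausdorff case this vanishing is equivalent to $\Hum$.

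The Toeplitz part, conditions (t1)--(t3), runs in parallel. Here amenability of $\Gr$ gives amenability of $\widetilde{\Gg}(\Gr,E)$ (by Theorem~\ref{thmx:nuclearity}, or directly), hence $\TT^P(\Gr,E,\sigma)=\TT^P_{\red}(\Gr,E,\sigma)\cong F^P(\widetilde{\Gg}(\Gr,E),\LL_\sigma)$. Finite non-Hausdorffness of $\widetilde{\Gg}(\Gr,E)$ again follows from the finiteness of inequivalent singular decompositions via Corollary~\ref{cor:finite_Hausdorffness}. Then $\Evr+\Rec$ gives topological freeness of $\widetilde{\Gg}(\Gr,E)$ by Theorem~\ref{thm:topological_freeness_self_similar_transformation_groupoids}\ref{enu:topological_freeness1}, $\Hum$ for $\widetilde{\Gg}(\Gr,E)$ gives the essentialisation isomorphism, and Theorem~\ref{thm:uniqueness_results}\ref{enu:Cuntz--Krieger uniqueness2} (noting $C_0(E^{\leq 1})$ detects ideals iff $C_0(E^{\leq\infty})$ does, by the remark following Proposition~\ref{prop:Coburn-Toeplitz_for_graphs}) supplies detection. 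The converse direction mirrors the Cuntz--Krieger case.

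The main obstacle I anticipate is not any single implication but the careful bookkeeping needed to transport the intersection-property statements of Theorem~\ref{thm:uniqueness_results}, which are phrased for the \emph{essential} algebra $\OO^P_{\ess}$, back to the \emph{full} algebra $\OO^P$: this transport is legitimate precisely when the map $F^P\to F^P_{\ess}$ is injective, which is where $\Hum$ enters through Corollary~\ref{cor:vanishing_of_singulars} and the finitely-non-Hausdorff hypothesis. One must verify that the twist $\LL_\sigma$ does not obstruct this step; since Theorem~\ref{thmx:Detection of ideals II}\ref{enux:Cuntz--Krieger uniqueness1} concerns the \emph{untwisted} algebra $\OO^P(\Gr,E)$ on the Cuntz--Krieger side, Corollary~\ref{cor:vanishing_of_singulars} applies directly with topologically trivial twist, avoiding the pathology of Example~\ref{ex:nontrivial_twist}. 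The remaining care is simply to confirm that the equivalences hold uniformly over all $P$ in the forward direction and for some $P$ in the reverse, which is exactly the content of the amenable-case coincidence of full and reduced algebras.
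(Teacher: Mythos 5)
Your proof is correct and follows essentially the same route as the paper: finite non-Hausdorffness of $\Gg(\Gr,E)$ and $\widetilde{\Gg}(\Gr,E)$ from Corollary~\ref{cor:finite_Hausdorffness}, amenability forcing full $=$ reduced, the $\Hum$ characterisation of vanishing singular ideals (Theorem~\ref{thmx:singular_ideal}/Corollary~\ref{cor:vanishing_of_singulars}), and Theorem~\ref{thm:uniqueness_results} supplying the link to $\Evr$, $\Cyc$, $\Rec$. The only cosmetic difference is that you reassemble the biconditional from items \ref{enu:Cuntz--Krieger uniqueness1} and \ref{enu:Cuntz--Krieger uniqueness2} together with an explicit transport through the essentialisation map, whereas the paper invokes items \ref{enu:Cuntz--Krieger uniqueness1'} and \ref{enu:Cuntz--Krieger uniqueness2'}, which already package the equivalence ``intersection property in the full algebra $\iff$ injectivity of the essentialisation map plus the dynamical conditions.''
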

\begin{proof} 
Theorem \ref{thm:uniqueness_results}\ref{enu:Cuntz--Krieger uniqueness1'} implies  that
$\Cont_0(E^0)\subseteq \OO^P(\Gr,E)$ has the intersection property if and only if the   map $\OO^P(\Gr,E) \to  \OO^P_{\ess}(\Gr,E)$ is injective
and both $\Evr$ and $\Cyc$ hold.   Similarly, Theorem \ref{thm:uniqueness_results}\ref{enu:Cuntz--Krieger uniqueness2'} gives that 
$\Cont_0(E^{\leq 1})\subseteq \TT^P(\Gr,E)$ has the intersection property if and only if  the   map $\TT^P(\Gr,E) \to  \TT^P_{\ess}(\Gr,E)$ is injective
and both $\Evr$ and $\Cyc$ hold. 

By the assumption in Theorem~\ref{thmx:Detection of ideals II} and Corollary \ref{cor:finite_Hausdorffness}, both groupoids $\Gg(\Gr,E)$ and $\widetilde{\Gg}(\Gr,E)$
are finitely non-Hausdorff. Hence, by Theorem \ref{thmx:singular_ideal},  injectivity of $\OO^P_{\red}(\Gr,E) \to  \OO^P_{\ess}(\Gr,E)$ and $\TT^P_{\red}(\Gr,E) \to  \TT^P_{\ess}(\Gr,E)$
is equivalent to $\Hum$ for $\Gg(\Gr,E)$ and $\widetilde{\Gg}(\Gr,E)$, respectively. 

Amenability of $\Gg_{00}(\Gr,E)$  or  $\Gr$ is equivalent to amenability of $\Gg(\Gr,E)$  and  $\widetilde{\Gg}(\Gr,E)$, respectively (see \cite{Miller_Steinberg}*{Theorem 2.18} and Theorem \ref{thm:Toeplitz_nuclearity} below).  
Assuming this we get that $\OO^P_{}(\Gr,E)=\OO^P_{\red}(\Gr,E)$ and $\TT^P(\Gr,E)=\TT^P_{\red}(\Gr,E)$, respectively, see Remark \ref{rem:amenablity_of_1_and_infty}.

All this holds independently of the choice of $P$. Hence the assertion follows.
\end{proof}
In the setting of $C^*$-algebras a Cartan inclusion is a structure consisting of an algebra $B$ together with a maximal abelian subalgebra  $A\subseteq B$ equipped with a faithful contractive projection $E:B\onto A$. 
We now describe analogous structures for $L^P$-algebras associated to self-similar actions.
We recall that the symbol $\Space$ stands for the empty space.

\begin{theorem}\label{thm:Cartan_algebras} 
Let $(\Gr,E,\sigma)$ be a  twisted self-similar groupoid action.  Let $\emptyset\neq P\subseteq [1,\infty]$ and $*=\Space\,,0, 00$.
Assume that $(\Gr,E)$ satisfies \Fin. Then  
$$\OO^P_{\red}(\Gr,E,\sigma)_{*}=\OO^P_{\ess}(\Gr,E,\sigma)_{*} \,\,\text{ and }\,\, \TT^P_{\red}(\Gr,E,\sigma)_{*}=\TT^P_{\ess}(\Gr,E,\sigma)_{*}.
$$
These algebras  are equipped with canonical 
faithful contractive projections $\OO^P_{\red}(\Gr,E,\sigma)_{*}\onto \Cont_0(\partial E)$  and
$\TT^P_{\red}(\Gr,E,\sigma)_{*}\onto \Cont_0(E^{\leq \infty})$, and 
\begin{enumerate}
\item\label{item:Cartan_algberas1}  $\Cont_0(\partial E)\subseteq \OO^P_{\red}(\Gr,E,\sigma)$ is maximal abelian if and only if $\Evr$ and $\Cyc$ hold; 
\item\label{item:Cartan_algberas2}  $\Cont_0(\partial E)\subseteq \OO^P_{\red}(\Gr,E,\sigma)_{0}$ is maximal abelian if and only if $\Cont_0(\partial E)\subseteq \OO^P_{\red}(\Gr,E,\sigma)_{00}$ is maximal abelian  if and only if 
$\Evr$ hold;
\item\label{item:Cartan_algberas3}  
for any of the inclusions $C_0( E^{\le \infty})\subseteq  \TT_{\red}^P(\Gr,E,\sigma), \TT_{\red}^P(\Gr,E,\sigma)_{0},  \TT_{\red}^P(\Gr,E,\sigma)_{00}$  being maximal abelian is equivalent to $\Evr$  and $\Rec$. 
\end{enumerate}
\end{theorem}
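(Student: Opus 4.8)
The strategy is to translate every statement into the groupoid language supplied by Theorem~\ref{thm:presentations_of_twisted_self_similar_algebras} and the structural dictionary of Section~\ref{sect:LP-groupoid_algebras}, and then invoke the general results on twisted groupoid $L^P$-algebras. The key observation is that $\Fin$ is exactly the Hausdorffness condition: by Proposition~\ref{prop:Hausdorff_extended_groupoid} and Corollary~\ref{cor:Hausdorff_equiv_closed}, $\Fin$ holds if and only if all the groupoids in \eqref{eq:various_groupoids} are Hausdorff. Once a groupoid $\Gg$ is Hausdorff, Theorem~\ref{thm:groupoid_simplicity_pure_infiniteness}\ref{enu:groupoid_simplicity_pure_infiniteness1.5} gives $F^P_{\red}(\Gg,\LL)=F^P_{\ess}(\Gg,\LL)$ for free. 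Applying this to $\Gg(\Gr,E)$, $\Gg_0(\Gr,E)$, $\Gg_{00}(\Gr,E)$ and their universal (Toeplitz) counterparts, together with the identifications of Definition~\ref{def:reduced_and_essential_self-similar-algebras}, yields the asserted equalities $\OO^P_{\red}(\Gr,E,\sigma)_{*}=\OO^P_{\ess}(\Gr,E,\sigma)_{*}$ and $\TT^P_{\red}(\Gr,E,\sigma)_{*}=\TT^P_{\ess}(\Gr,E,\sigma)_{*}$ for every $*=\Space,0,00$.

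The existence of the faithful contractive projections is then automatic: for a Hausdorff twisted groupoid, the canonical map $\mathfrak{C}_c(\Gg,\LL)\ni f\mapsto f|_{X}$ extends, via the reduced groupoid Banach algebra structure (Remark~\ref{rem:reduced_Lp_algebras}), to a faithful contractive map onto $\Cont_0(X)$; here $X$ is $\partial E$ in the Cuntz--Krieger case and $E^{\le\infty}$ in the Toeplitz case. Since the reduced and essential algebras coincide under $\Fin$ and the reduced algebras $F^P_{\red}(\Gg,\LL)$ are reduced groupoid Banach algebras, this produces the desired faithful projections $\OO^P_{\red}(\Gr,E,\sigma)_{*}\onto\Cont_0(\partial E)$ and $\TT^P_{\red}(\Gr,E,\sigma)_{*}\onto\Cont_0(E^{\le\infty})$. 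For the maximal-abelian equivalences I would again use Theorem~\ref{thm:groupoid_simplicity_pure_infiniteness}\ref{enu:groupoid_simplicity_pure_infiniteness1.5}, which states that for a Hausdorff groupoid $\Gg$, the diagonal $\Cont_0(X)\subseteq F^P_{\red}(\Gg,\LL)$ is maximal abelian precisely when $\Gg$ is topologically free. It then remains to feed in the characterisations of topological freeness from Theorem~\ref{thm:topological_freeness_self_similar_transformation_groupoids}: for item~\ref{item:Cartan_algberas1} the relevant groupoid is $\Gg(\Gr,E)$, whose topological freeness equals $\Evr+\Cyc$; for item~\ref{item:Cartan_algberas2} the groupoids $\Gg_0(\Gr,E)$ and $\Gg_{00}(\Gr,E)$ are topologically free exactly when $\Evr$ holds; and for item~\ref{item:Cartan_algberas3} the universal groupoids $\widetilde{\Gg}(\Gr,E)$, $\widetilde{\Gg}_0(\Gr,E)$, $\widetilde{\Gg}_{00}(\Gr,E)$ are topologically free exactly when $\Evr+\Rec$ hold.

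The main obstacle I anticipate is not the dynamical bookkeeping but the verification that the diagonal subalgebra appearing in the groupoid picture genuinely coincides with $\Cont_0(\partial E)$ (respectively $\Cont_0(E^{\le\infty})$) as stated, rather than with a smaller algebra such as $\Cont_0(E^0)$. This requires Lemma~\ref{lem:embeddings_of_graph_algebras} to identify the copies of the graph algebras, together with the observation that the unit space of each core subgroupoid $\Gg_*(\Gr,E)$ (resp. $\widetilde{\Gg}_*(\Gr,E)$) is the \emph{full} boundary path space $\partial E$ (resp. $E^{\le\infty}$), since restricting to a wide open subgroupoid does not change the unit space. A secondary point of care is that Theorem~\ref{thm:groupoid_simplicity_pure_infiniteness}\ref{enu:groupoid_simplicity_pure_infiniteness1.5} must be applied to the twisted groupoid $(\Gg_*,\LL_\sigma)$, and one should confirm that the maximal-abelian characterisation there is insensitive to the (possibly topologically nontrivial, cf. Example~\ref{ex:topologically_nontrivial_twist_from_self-similar}) twist; this is exactly what that item asserts, so no extra argument is needed, but the logical dependence should be stated explicitly. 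Finally, one must note that $\Gg_{00}(\Gr,E)$ need not be closed in $\Gg(\Gr,E)$ (see the example following Lemma~\ref{lem:H_0_characterisation}); however, under $\Fin$ all these groupoids are themselves Hausdorff, so the needed hypothesis of item~\ref{enu:groupoid_simplicity_pure_infiniteness1.5} is met for each of them individually.
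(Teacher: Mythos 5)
Your proposal is correct and follows essentially the same route as the paper: reduce everything to Hausdorffness of the groupoids in \eqref{eq:various_groupoids} via Proposition~\ref{prop:Hausdorff_extended_groupoid}, identify the core algebras as reduced Banach algebras of the corresponding subgroupoids, and then combine Theorem~\ref{thm:groupoid_simplicity_pure_infiniteness}\ref{enu:groupoid_simplicity_pure_infiniteness1.5} with the topological-freeness characterisations of Theorem~\ref{thm:topological_freeness_self_similar_transformation_groupoids}. The extra care you take over the identification of the diagonal and the twist-insensitivity is sound but not a departure from the paper's argument.
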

\begin{proof}By Proposition~\ref{prop:Hausdorff_extended_groupoid} the groupoids   $\Gg(\Gr,E)_{*}$ and $\widetilde{\Gg}(\Gr,E)_{*}$ are Hausdorff.
Hence, the first part follows. In particular, the algebras $\OO^P_{\red}(\Gr,E,\sigma)_{*}$ and $\TT^P_{\red}(\Gr,E,\sigma)_{*}$ are reduced Banach algebras of 
 $\Gg(\Gr,E)_{*}$ and $\widetilde{\Gg}(\Gr,E)_{*}$, respectively, cf.  Remark~\ref{rem:reduced_Lp_algebras}.
Thus statements \ref{item:Cartan_algberas1}--\ref{item:Cartan_algberas3} follow from Theorem~\ref{thm:topological_freeness_self_similar_transformation_groupoids} and
\cite{BKM2}*{Proposition 5.11}, cf. Theorem~\ref{thm:groupoid_simplicity_pure_infiniteness}\ref{enu:groupoid_simplicity_pure_infiniteness1.5}.
\end{proof}
\begin{corollary}\label{cor:Theorem C}
Theorem~\ref{thmx:Cartan} in the introduction holds.
\end{corollary}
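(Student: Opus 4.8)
The plan is to fix $P=\{2\}$ throughout and to reduce the statement to the twisted groupoid picture, where, in the Hausdorff case, a Cartan inclusion collapses to a maximal abelian one. By Theorem~\ref{thm:presentations_of_twisted_self_similar_algebras} and Definition~\ref{def:reduced_and_essential_self-similar-algebras}, for $P=\{2\}$ the algebras $\OO_{\red}(\Gr,E,\sigma)_*$ and $\TT_{\red}(\Gr,E,\sigma)_*$ are the reduced twisted groupoid $C^*$-algebras $C^*_{\red}(\Gg_*(\Gr,E),\LL_\sigma)$ and $C^*_{\red}(\widetilde{\Gg}_*(\Gr,E),\LL_\sigma)$, with $C_0(\partial E)$ and $C_0(E^{\le\infty})$ the diagonal subalgebras $C_0$ of the respective unit spaces. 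Recall that a Cartan inclusion in the sense of \cite{Renault:Cartan, Kwasniewski-Meyer:Cartan} is a maximal abelian subalgebra that contains an approximate unit, is regular (its normalisers densely span), and is the image of a faithful conditional expectation.

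For the forward implication, suppose one of the inclusions is Cartan for some $*$. A Cartan inclusion carries a faithful conditional expectation onto the diagonal, and by uniqueness of such expectations it must coincide, on the dense convolution $*$-algebra, with the canonical restriction map $f\mapsto f|_X$. Its existence is, by the discussion preceding Theorem~\ref{thm:Cartan_algebras} (condition $\Fin$ is equivalent to the existence of the canonical conditional expectation onto the diagonal, cf. also Proposition~\ref{prop:Hausdorff_extended_groupoid} and Corollary~\ref{cor:Hausdorff_equiv_closed}), equivalent to $\Fin$; hence $\Fin$ holds. With $\Fin$ in hand, a Cartan inclusion is in particular maximal abelian, so Theorem~\ref{thm:Cartan_algebras} forces $\Evr$, since each of its maximal-abelian characterisations \ref{item:Cartan_algberas1}--\ref{item:Cartan_algberas3} contains $\Evr$. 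This establishes the first assertion of Theorem~\ref{thmx:Cartan}.

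For the converse, assume $\Fin$. By Proposition~\ref{prop:Hausdorff_extended_groupoid} all the groupoids in \eqref{eq:various_groupoids} are Hausdorff, so Theorem~\ref{thm:groupoid_simplicity_pure_infiniteness}\ref{enu:groupoid_simplicity_pure_infiniteness1.5} (equivalently the first part of Theorem~\ref{thm:Cartan_algebras}) yields $\OO_{\red}(\Gr,E,\sigma)_*=\OO_{\ess}(\Gr,E,\sigma)_*$ and $\TT_{\red}(\Gr,E,\sigma)_*=\TT_{\ess}(\Gr,E,\sigma)_*$ together with the canonical faithful conditional expectations onto the diagonals; for $*=0,00$ the core subalgebras are themselves reduced twisted groupoid algebras of the Hausdorff subgroupoids by Remark~\ref{remembeddings_of_algebras}. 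For a Hausdorff twisted \'etale groupoid the diagonal $C_0(X)$ automatically contains an approximate unit and is regular: by Corollary~\ref{cor:subalgebras} each algebra is the closed span of the elements $T_\alpha W_g T_\beta^*$, and each such element normalises the diagonal, being supported on a compact open bisection (cf. Proposition~\ref{prop:disintegration_theorem}, where $T_\alpha W_g T_\beta^*$ corresponds to $\pi\rtimes v(1_{U_t})$ for $t=(\alpha,g,\beta)$). Consequently, by \cite{Renault:Cartan}, the diagonal inclusion is Cartan precisely when it is maximal abelian, and the three biconditionals of Theorem~\ref{thmx:Cartan} follow verbatim from the maximal-abelian characterisations of Theorem~\ref{thm:Cartan_algebras}, which in turn rest on Theorem~\ref{thm:topological_freeness_self_similar_transformation_groupoids}.

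The main obstacle is the forward direction: deducing $\Fin$ from the mere existence of a faithful conditional expectation onto the diagonal, which requires pinning down that any such expectation must be the canonical restriction map and that its faithfulness is obstructed exactly when $\Fin$ fails (equivalently, when the groupoid is non-Hausdorff). A secondary, more routine point is verifying regularity of the diagonal in the twisted setting, where $\LL_\sigma$ may be topologically nontrivial (see Example~\ref{ex:topologically_nontrivial_twist_from_self-similar}); here one only needs that normalisers supported on bisections still densely span, which holds because the twist is trivial over the unit space, so that $\Cont_0(X)\subseteq\mathfrak{C}_c(\Gg,\LL)$ as in Subsection~\ref{sec:twisted_groupoid_algebras}.
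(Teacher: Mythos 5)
Your converse direction and your deduction of $\Evr$ match the paper exactly: once $\Fin$ is in hand the groupoids are Hausdorff, the inclusions carry the canonical faithful expectations, Cartan reduces to maximal abelian, and Theorem~\ref{thm:Cartan_algebras} does the rest. The restriction to $P=\{2\}$ is also correct, since Theorem~\ref{thmx:Cartan} concerns the $C^*$-algebras.

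The gap is precisely the step you flag as ``the main obstacle'' in the forward direction, and you do not close it. You assert that the Cartan conditional expectation onto $C_0(\partial E)$ ``must coincide, on the dense convolution $*$-algebra, with the canonical restriction map $f\mapsto f|_X$ by uniqueness of such expectations.'' But the canonical restriction map is not a priori a conditional expectation onto $C_0(\partial E)$ at all: when the groupoid is non-Hausdorff, $f|_X$ for $f\in\mathfrak{C}_c(\Gg,\LL)$ need only be a bounded Borel function, so the canonical map lands in $\mathcal{B}(\partial E)$ (or, after quotienting by meager-support functions, in the Dixmier algebra $\mathcal{D}(\partial E)$), and uniqueness of conditional expectations onto a Cartan masa cannot compare it with a hypothetical $C_0(\partial E)$-valued expectation. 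The paper bridges this by working with \emph{pseudo-expectations}: the composite $\mathbb{E}\colon \OO_{\red}(\Gr,E,\sigma)_{*}\to\mathcal{D}(\partial E)$ is a pseudo-expectation in the sense of \cite{Kwasniewski-Meyer:Aperiodicity}, and by \cite{Kwasniewski-Meyer:Aperiodicity}*{Theorem 3.6} together with \cite{Kwasniewski-Meyer:Cartan}*{Corollary 7.6} a Cartan inclusion admits a \emph{unique} pseudo-expectation, which must therefore be the genuine expectation onto the masa. Only then does one conclude that $\mathbb{E}$ takes values in $C_0(\partial E)$, equivalently that $\Gg(\Gr,E)$ (resp.\ $\widetilde{\Gg}(\Gr,E)$) is Hausdorff, equivalently $\Fin$ by Proposition~\ref{prop:Hausdorff_extended_groupoid}. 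Without invoking this uniqueness-of-pseudo-expectations theorem (or an equivalent substitute), the implication ``Cartan $\Rightarrow$ $\Fin$'' does not follow from what you have written.
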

\begin{proof}
Composing the canonical generalised expectations with quotients by meager support functions we  get contractive maps 
$\mathbb{E}:\OO_{\red}(\Gr,E,\sigma)_{*}\to \mathcal{D}(\partial E)$ and $\widetilde{\mathbb{E}}: \TT_{\red}(\Gr,E,\sigma)_{*}\to \mathcal{D}( E^{\leq \infty})$, 
which are pseudo-expectations in the sense of \cite{Kwasniewski-Meyer:Aperiodicity}.
By  \cite{Kwasniewski-Meyer:Aperiodicity}*{Theorem 3.6} and \cite{Kwasniewski-Meyer:Cartan}*{Corollary 7.6}
a Cartan $C^*$-inclusion has a unique pseudo-expectation and so it has to be the genuine faithful expectation onto 
the masa subalgebra. Thus, if $\Cont_0(\partial E)\subseteq \OO_{\red}(\Gr,E,\sigma)_{*}$   or $\Cont_0( E^{\leq \infty})\subseteq \TT_{\red}(\Gr,E,\sigma)_{*}$ is Cartan
then either $\mathbb{E}$ takes values in $\Cont_0(\partial E)$ or  $\widetilde{\mathbb{E}}$ takes values in $\Cont_0( E^{\leq \infty})$. 
The latter is equivalent to that either $\Gg(\Gr,E)$ or $\widetilde{\Gg}(\Gr,E)$ is Hausdorff, but each of these alternatives is equivalent
to $\Fin$ by Proposition~\ref{prop:Hausdorff_extended_groupoid}. This shows necessity of $\Fin$.
The remaining part of the assertion of Theorem~\ref{thmx:Cartan} follows directly from Theorem~\ref{thm:Cartan_algebras}.
\end{proof}

\begin{theorem}
\label{thm:simplicity}
Let $(\Gr,E,\sigma)$ be a  twisted self-similar groupoid action and let $\emptyset\neq P\subseteq [1,\infty]$.
\begin{enumerate}
\item If $(\Gr,E)$ is cofinal and satisfies  $\Evr$ and $\Cyc$, then  $\OO^P_{\ess}(\Gr,E,\sigma)$ is simple.  
If  in addition it satisfies $\Con$, then $\OO^P_{\ess}(\Gr,E,\sigma)$ is purely infinite simple.

\item  $\OO^P(\Gr,E)$ is simple if and only if $(\Gr,E)$  is cofinal, satisfies $\Evr$, $\Cyc$    and the canonical map $\OO^P(\Gr,E)\to \OO^P_{\ess}(\Gr,E) $
is injective. If $\OO^P(\Gr,E)$ is simple and $\Con$ holds, 
then $\OO^P(\Gr,E)$ is purely infinite.
\end{enumerate}
\end{theorem}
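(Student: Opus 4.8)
The plan is to transport both statements to the groupoid picture and then read them off the abstract criteria of Theorem~\ref{thm:groupoid_simplicity_pure_infiniteness}. I would use the isometric isomorphisms $\OO^P(\Gr,E,\sigma)\cong F^P(\Gg(\Gr,E),\LL_{\sigma})$ and $\OO^P_{\ess}(\Gr,E,\sigma)\cong F^P_{\ess}(\Gg(\Gr,E),\LL_{\sigma})$ from Theorem~\ref{thm:presentations_of_twisted_self_similar_algebras} and Definition~\ref{def:reduced_and_essential_self-similar-algebras}, together with the established dictionary: by Theorem~\ref{thm:topological_freeness_self_similar_transformation_groupoids}\ref{enu:topological_freeness2} the groupoid $\Gg(\Gr,E)$ is topologically free exactly when $\Evr$ and $\Cyc$ hold, and by Proposition~\ref{prop:groupoid_minimality} it is minimal exactly when $(\Gr,E)$ is cofinal.

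For part (1), assuming cofinality, $\Evr$ and $\Cyc$, the groupoid $\Gg(\Gr,E)$ is topologically free and minimal, so Theorem~\ref{thm:groupoid_simplicity_pure_infiniteness}\ref{enu:groupoid_simplicity_pure_infiniteness2} gives at once that $\OO^P_{\ess}(\Gr,E,\sigma)$ is simple. To upgrade to pure infinite simplicity under $\Con$, I would take $S$ to be the canonical image of $S(\Gr,E)$ in $\Bis(\Gg(\Gr,E))$ together with the unit $X=\partial E$, which is a unital inverse subsemigroup covering $\Gg(\Gr,E)$; Proposition~\ref{prop:contractiveness_of_groupoid} (implication \ref{enu:contractiveness_of_groupoid1}$\Rightarrow$\ref{enu:contractiveness_of_groupoid2}) then shows that $\Con$ makes $\Gg(\Gr,E)$ locally contracting with respect to $S$. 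Feeding topological freeness, minimality, and this relative local contractiveness into Theorem~\ref{thm:groupoid_simplicity_pure_infiniteness}\ref{enu:groupoid_simplicity_pure_infiniteness3} yields pure infinite simplicity, provided one checks the remaining hypothesis that $\LL_{\sigma}|_U$ is topologically trivial for each $U\in S$.

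Part (2) concerns the untwisted full algebra, and here I would appeal directly to the biconditional in Theorem~\ref{thm:groupoid_simplicity_pure_infiniteness}\ref{enu:groupoid_simplicity_pure_infiniteness2}: $\OO^P(\Gr,E)\cong F^P(\Gg(\Gr,E))$ is simple if and only if $\Gg(\Gr,E)$ is topologically free and minimal and the canonical map $F^P(\Gg(\Gr,E))\to F^P_{\ess}(\Gg(\Gr,E))$ is injective. Translating the first two clauses through the dictionary reproduces $\Evr$, $\Cyc$, cofinality, and injectivity of $\OO^P(\Gr,E)\to\OO^P_{\ess}(\Gr,E)$, which is both directions of the stated equivalence. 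For the pure infiniteness clause, simplicity forces that canonical map to be injective, so by the last sentence of Theorem~\ref{thm:groupoid_simplicity_pure_infiniteness} the algebra $F^P(\Gg(\Gr,E))$ is itself an essential groupoid Banach algebra and statement~\ref{enu:groupoid_simplicity_pure_infiniteness3} applies with $F^P_{\ess}$ replaced by $F^P$; since the twist is trivial the local-triviality hypothesis is vacuous, and local contractiveness from $\Con$ (exactly as in part (1)) completes the argument.

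The hard part will be the triviality check for $\LL_{\sigma}|_U$, $U\in S$, in the twisted pure-infiniteness case, since $\LL_{\sigma}$ is genuinely topologically nontrivial in general, cf. Example~\ref{ex:topologically_nontrivial_twist_from_self-similar}. I expect to handle it by unwinding the construction of $\LL_{\sigma}=\LL_{\omega_{\sigma}}$ in Proposition~\ref{prop:semigroup_twists_to_action_twists} and Definition~\ref{defn:groupoid_twists_from_semigroup_twists}: on a basic bisection $U_t=\{[t,x]:x\in X_{t^*}\}$ with $t\in S(\Gr,E)$ the domains are compact open, so $1_{X_t}\in\Contc(X_t)$ and the section $[t,x]\mapsto[1_{X_t},t,x]$ is continuous and nowhere vanishing, hence trivializes $\LL_{\sigma}$ over $U_t$ (on $X$ the bundle is trivial by convention). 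This local triviality along the generating inverse semigroup $S$, as opposed to global triviality, is precisely what Theorem~\ref{thm:groupoid_simplicity_pure_infiniteness}\ref{enu:groupoid_simplicity_pure_infiniteness3} demands.
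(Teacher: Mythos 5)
Your proposal is correct and follows essentially the same route as the paper: translate via $\OO^P_{(\ess)}(\Gr,E,\sigma)\cong F^P_{(\ess)}(\Gg(\Gr,E),\LL_\sigma)$, use Theorem~\ref{thm:topological_freeness_self_similar_transformation_groupoids} and Proposition~\ref{prop:groupoid_minimality} as the dictionary, and conclude from Theorem~\ref{thm:groupoid_simplicity_pure_infiniteness}\ref{enu:groupoid_simplicity_pure_infiniteness2},\ref{enu:groupoid_simplicity_pure_infiniteness3} together with the $\Con$-to-local-contractiveness translation. Your explicit verification that $\LL_\sigma$ is trivial over each basic bisection $U_t$ (via the nowhere-vanishing continuous section $[t,x]\mapsto[1_{X_t},t,x]$) is a correct filling-in of a hypothesis the paper's proof leaves implicit.
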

\begin{proof} 
By Theorem  \ref{thm:topological_freeness_self_similar_transformation_groupoids}, 
$\Evr$ and $\Cyc$ is equivalent to topological freeness of $\Gg(\Gr,E)$.
By Proposition~\ref{prop:groupoid_minimality}, $(\Gr,E)$ is cofinal if and only if  $\Gg(\Gr,E)$
is minimal. Hence, the assertion follows from 
Theorem~\ref{thm:groupoid_simplicity_pure_infiniteness}\ref{enu:groupoid_simplicity_pure_infiniteness2},\ref{enu:groupoid_simplicity_pure_infiniteness3} and
Proposition  \ref{prop:locally_contracting_semigroup}.
\end{proof}


\begin{corollary}\label{cor:contracting_results}
Corollary~\ref{corx:contracting_results} in the introduction holds.
\end{corollary}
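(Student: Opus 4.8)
The plan is to deduce Corollary~\ref{corx:contracting_results} by assembling the pieces already established, once we verify that a contracting self-similar action meets the hypotheses of the first part of Theorem~\ref{thmx:Detection of ideals II}. First I would recall that by Corollary~\ref{cor:contracting_non_Hausdorff} a contracting action has $\widetilde{\Gg}(\Gr,E)$ and $\Gg(\Gr,E)$ finitely non-Hausdorff, and in particular each $\mu\in\partial E$ admits at most $|\NN|$ elements in $\overline{X}(\mu)$, hence only finitely many inequivalent singular decompositions; by Corollary~\ref{cor:finite_Hausdorffness} this is exactly the standing assumption ``each path admits at most finitely many inequivalent singular decompositions'' required in the first part of Theorem~\ref{thmx:Detection of ideals II}. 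Moreover a contracting action lives on a finite graph without sources, so $\partial E=E^\infty$, and the amenability of $\Gg_{00}(\Gr,E)$ (equivalently of $\Gg(\Gr,E)$) for contracting actions is guaranteed by the discussion following Theorem~\ref{thmx:nuclearity2} (cf.\ the final sentence of Theorem~\ref{thmx:nuclearity2} and \cite{Miller_Steinberg}*{Theorem 2.18}). Thus the hypotheses of the first part of Theorem~\ref{thmx:Detection of ideals II} are all in force.

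Next I would establish the equivalence \ref{item:contracting_results0}$\Leftrightarrow$\ref{item:contracting_results3}. Recall the general principle noted in the introduction and made precise in Theorem~\ref{thm:simplicity}: for any $*=\Space,\red,\ess$ the algebra $\OO^P_{*}(\Gr,E,\sigma)$ is simple if and only if $C_0(E^0)$ detects ideals in it and $\Min$ holds. Applying this to the full algebra $\OO^P(\Gr,E)$ with trivial twist, simplicity is equivalent to the conjunction of $\Min$ (equivalently cofinality, by Proposition~\ref{prop:groupoid_minimality}) together with the statement that $C_0(E^0)$ detects ideals in $\OO^P(\Gr,E)$. Now I invoke the established equivalence \ref{enux:Cuntz--Krieger uniqueness0}$\Leftrightarrow$\ref{enux:Cuntz--Krieger uniqueness1}$\Leftrightarrow$\ref{enux:Cuntz--Krieger uniqueness2} of Theorem~\ref{thmx:Detection of ideals II} (proved as Corollary~\ref{cor:detection_of_ideals_amenable}): under our standing hypotheses, $C_0(E^0)$ detecting ideals in $\OO^P(\Gr,E)$ for some (equivalently every) $\emptyset\neq P\subseteq[1,\infty]$ is equivalent to $(\Gr,E)$ satisfying $\Evr$ and $\Cyc$ and $\Gg(\Gr,E)$ satisfying $\Hum$. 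Combining this with the $\Min$ condition gives precisely: $\OO^P(\Gr,E)$ is simple $\Longleftrightarrow$ $\Evr$, $\Cyc$, $\Min$, and $\Hum$ all hold. This is the equivalence of \ref{item:contracting_results0} and \ref{item:contracting_results3}, uniformly in $P$.

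Finally I would handle \ref{item:contracting_results3}$\Leftrightarrow$\ref{item:contracting_results3.5}, linking simplicity of $\OO^P(\Gr,E)$ to simplicity of the complex Steinberg algebra $A_{\mathbb C}(\Gg(\Gr,E))$. The cleanest route is to observe that both simplicity statements are characterised by the same dynamical conditions on the groupoid. On one side, the previous paragraph shows $\OO^P(\Gr,E)$ simple $\Leftrightarrow$ $\Evr+\Cyc+\Min+\Hum$, and since this is independent of $P$ one may take $P=\{2\}$ to get the $C^*$-statement $\OO(\Gr,E)$ (equivalently $\OO_{\red}(\Gr,E)$, by amenability) simple under the same conditions. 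On the other side, the Steinberg algebra $A_{\mathbb C}(\Gg(\Gr,E))$ of the (finitely non-Hausdorff, amenable) tight groupoid is simple if and only if $\Gg(\Gr,E)$ is minimal, effective, and the algebraic singular ideal vanishes; by Lemma~\ref{lem:effectiveness_is_about_singular_ideal} effectiveness coincides with topological freeness once $\Hum$ holds, and by Theorem~\ref{thm:vanishing_of_the_singular_ideal} vanishing of the algebraic singular ideal is exactly $\Hum$. Translating through Theorem~\ref{thm:topological_freeness_self_similar_transformation_groupoids} and Proposition~\ref{prop:groupoid_minimality} again yields the same four conditions $\Evr+\Cyc+\Min+\Hum$. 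Hence \ref{item:contracting_results3.5} is equivalent to \ref{item:contracting_results0}, closing the cycle.

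The main obstacle I anticipate is the bookkeeping in \ref{item:contracting_results3.5}: one must cite the correct simplicity criterion for Steinberg algebras of non-Hausdorff ample groupoids and be careful that ``effectiveness'' there is really the algebraic condition that, under $\Hum$, matches topological freeness via Lemma~\ref{lem:effectiveness_is_about_singular_ideal}. Everything else is a matter of feeding the finitely-non-Hausdorff and amenability inputs for contracting actions into theorems already proved, but pinning down that the Steinberg-algebra simplicity is governed by the identical list of conditions—rather than by a subtly different effectiveness-versus-topological-freeness dichotomy—is the step requiring the most care, and is exactly where the recent observation of \cite{Aakre} is being generalised.
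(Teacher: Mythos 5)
Your proposal is correct and follows essentially the same route as the paper: verify via Corollary~\ref{cor:contracting_non_Hausdorff} and \cite{Miller_Steinberg} that the tight groupoid of a contracting action is finitely non-Hausdorff and amenable, feed this into the equivalence \ref{enux:Cuntz--Krieger uniqueness0}$\Leftrightarrow$\ref{enux:Cuntz--Krieger uniqueness1} of Theorem~\ref{thmx:Detection of ideals II} together with the ``simple $\Leftrightarrow$ detection $+$ $\Min$'' principle, and then match the resulting list $\Evr+\Cyc+\Min+\Hum$ against the Steinberg-algebra simplicity criterion. The only cosmetic difference is that you unpack the latter criterion through Lemma~\ref{lem:effectiveness_is_about_singular_ideal} and Theorem~\ref{thm:vanishing_of_the_singular_ideal}, where the paper simply cites \cite{Steinberg_Szakacs}*{Theorem A'} and \cite{Brix-Gonzales-Hume-Li:Hausdorff_covers}*{Theorem 4.2}.
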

\begin{proof} 
By  \cite{Miller_Steinberg}*{Corollary 2.19} and Corollary~\ref{cor:contracting_non_Hausdorff}
the groupoid $\Gg(\Gr,E)$ is  amenable and finitely non-Hausdorff. Hence the equivalence \ref{enux:Cuntz--Krieger uniqueness0}$\Leftrightarrow$\ref{enux:Cuntz--Krieger uniqueness1} 
in Theorem \ref{thmx:Detection of ideals II} 
implies that simplicity of  $\OO^P(\Gr,E)$  is equivalent to the set of conditions $\Evr$, $\Cyc$, $\Min$  and $\Hum$.
This set of conditions is equivalent to simplicity of $A_{\mathbb{C}}(\Gg(\Gr,E))$ by \cite{Steinberg_Szakacs}*{Theorem A'} and 
\cite{Brix-Gonzales-Hume-Li:Hausdorff_covers}*{Theorem 4.2}, cf. Table \ref{table:tight groupoid} and Theorem \ref{thmx:singular_ideal}.  
\end{proof}

\section{The \texorpdfstring{$C^*$}{C*}-correspondence analysis}
\label{sect:C-star}

Throughout this section we fix a twist $\sigma = (\sigma_G,\sigma_{\bowtie})$  for a self-similar action $(G,E)$.
Here, we specialise our analysis to the case $P=\{2\}$ and so the associated Banach algebras become $C^*$-algebras and   representations 
can be considered on Hilbert spaces or in $\Cst$-algebras. As it  is customary, and as we did in the introduction, in this context we omit writing the subscript $\{2\}$. 
Thus, we write $\TT(\Gr,E,\sigma) \coloneqq \TT^2(\Gr,E,\sigma)$ and $\OO(\Gr,E,\sigma) \coloneqq \OO^2(\Gr,E,\sigma)$ for
the \emph{Toeplitz $C^*$-algebra} and the \emph{$\Cst$-algebra}  of $(\Gr,E,\sigma)$, respectively,
and we  adopt a similar convention for $\Cst$-algebras 
$\TT_{\red}(\Gr,E,\sigma)$,   
$\TT_{\ess}(\Gr,E,\sigma)$,
$\OO_{\red}(\Gr,E,\sigma)$ and  $\OO_{\ess}(\Gr,E,\sigma)$ covered by Definition~\ref{def:reduced_and_essential_self-similar-algebras}. 
We will model these algebras as relative Cuntz--Pimsner algebras.

We recall that a \emph{$\Cst$-correspondence} from a $\Cst$-algebra $A$ to a $\Cst$-algebra $B$ is a right Hilbert $B$-module $X$ together with a left action of $A$ implemented by
a $*$-homomorphism $\phi:A\to \LL(X)$ into the $\Cst$-algebra of adjointable operators on $X$, see \cites{lance, Brix-Mundey-Rennie:Graph-moves}.
A \emph{frame} for a right Hilbert $A$-module $X$ is a family $\{x_i\}_{i\in I}\subseteq X$ such that for each $\xi \in X$, we have $\xi = \sum_{i} x_i \cdot \langle x_i \mid \xi \rangle_A$ with convergence in norm. 
When $A=B$ we say $X$ is a $\Cst$-correspondence over $A$. We will unify the following two examples:
\begin{example}
The  \emph{graph correspondence} $X(E)$ of $E=(E^0,E^1,\rg,\sr)$  is a $\Cst$-correspondence over $\Cont_0(E^0)$. It is the completion of $\Contc(E^1)$ in the norm induced by the $\Cont_0(E^0)$-valued inner product
\[
\langle \xi \mid \eta \rangle (v) = \sum_{\sr(e) = v} \overline{\xi(e)}\eta(e), \quad \text{for } \xi,\,\eta \in C_c(E^1) \text{ and } v \in E^0,
\]
with left and right actions of $\Cont_0(E^0)$ determined by
\[
a \cdot \xi \cdot b (e) = a(\rg(e)) \xi(e) b(\sr(e)), \quad \text{for } \xi \in C_c(E^1),\, a,b \in \Cont_0(E^0), \text{ and } e \in E^1.
\]
The point mass functions on edges $\{\bone_e\}_{e \in E^1}$
form a frame for $X(E)$.
\end{example}
\begin{example}
Let $\Cst_{\lambda}(\Gr,\sigma_G)$ be a $\Cst$-algebra obtained as a Hausdorff completion    of the $\sigma_G$-twisted convolution $*$-algebra $\Contc(\Gr,\sigma_G)$
in some $\Cst$-seminorm $\|\cdot\|_{\lambda}$. We may  treat $\Cst_{\lambda}(\Gr,\sigma_G)$ as a trivial $\Cst$-correspondence over itself, with the inner product $\langle a \mid b \rangle \coloneqq a^*b$, $a,b\in \Cst_{\lambda}(\Gr,\sigma_G)$. 
Let $\{\bone_g\}_{g \in \Gr}$ be point mass functions on arrows of $\Gr$. The image of $\{\bone_x\}_{x \in \Gr^0}$ in $\Cst_{\lambda}(\Gr,\sigma_G)$
is a frame for $\Cst_{\lambda}(\Gr,\sigma_G)$.
\end{example}
\begin{remark}\label{rem:tensor_correspondence}
We may link the above examples using that $E^0=\Gr^0$.  Namely, the inclusion $\Contc(E^0)\subseteq \Contc(\Gr,\sigma_G)$ 
induces a $*$-homomorphism from  $\Cont_0(E^0)$ to a Hausdorff completion $\Cst_{\lambda}(\Gr,\sigma_G)$ of $\Contc(\Gr,\sigma_G)$.
Thus, we may  view $\Cst_{\lambda}(\Gr,\sigma_G)$  as a $\Cst$-correspondence from  $\Cont_0(E^0)$ to $\Cst_{\lambda}(\Gr,\sigma_G)$, and we may form 
the (internal) tensor product
$$
X(E) \ox_{\Cont_0(E^0)} \Cst_\lambda(\Gr,\sigma_G).
$$
This is naturally  a $\Cst$-correspondence from  $\Cont_0(E^0)$ to $\Cst_{\lambda}(\Gr,\sigma_G)$.
It follows from \cite{Brix-Mundey-Rennie:Graph-moves}*{Proposition~2.16} that the image of 
$\{\bone_{e}\otimes \bone_{\sr(e)}\}_{e \in E^1}$ in $X(E) \ox_{\Cont_0(E^0)} \Cst_\lambda(\Gr,\sigma_G)$ is a frame.
This frame is \emph{orthogonal} in the sense that 
$$
\langle \bone_{e}\otimes \bone_{\sr(e)} \mid \bone_{f}\otimes \bone_{\sr(f)}\rangle = [e=f]\bone_{\sr(e)}, \qquad e,f\in E^{1}.
$$
We may use it to define a left action of $\Cst_{\lambda}(\Gr,\sigma_G)$ on  $X(E) \ox_{\Cont_0(E^0)} \Cst_\lambda(\Gr,\sigma_G)$,
whenever the completion $\Cst_{\lambda}(\Gr,\sigma_G)$ is ``self-similar'' in the following sense.
\end{remark}
\begin{definition}
An algebraic \emph{correspondence over a pre-$\Cst$-algebra $A_0$}, cf. \cite{Nekrashevych:Cuntz--Pimsner}*{Definition 3.1}, is an $A_0$-bimodule $X_0$ together with a right $A_0$-valued inner product    such that $\langle a\xi, \eta\rangle=\langle \xi, a^*\eta\rangle$, for $a\in A_0$, $\xi, \eta\in X_0$. For any $\Cst$-seminorm $\|\cdot\|_{\lambda}$ on $A_0$ we may consider Hausdorff completions  $A_{\lambda}$ and $X_{\lambda}$  of $A$ and $X$ in $\|\cdot\|_{\lambda}$ and $\sqrt{\|\langle \cdot,\cdot \rangle\|_{\lambda}}$, respectively. This  produces a right Hilbert $A_{\lambda}$-module  $X_{\lambda}$, see \cite{lance}*{page 4}, which may fail to be a $\Cst$-correspondence over $A_\lambda$ in the sense that the left action of $A_0$ on $X_0$ may not induce the  left action of $A_\lambda$ on $X_{\lambda}$. Extending \cite{Nekrashevych:Cuntz--Pimsner}*{Definition 3.4}, we  say that the $\Cst$-seminorm $\|\cdot\|_{\lambda}$ is \emph{self-similar} for $X_0$ if $X_{\lambda}$ is a $\Cst$-correspondence over $A_{\lambda}$, that is if
$\|\langle a\xi, a\xi\rangle\|_{\lambda}\leq\|a\|_{\lambda}^2\cdot \|\langle \xi,\xi \rangle\|_{\lambda}$ for $a\in A_0$, $\xi\in X_0$.
\end{definition}
\begin{lemma}\label{lem:self-similar_vs_positive}
Let $X$  be a $C^*$-correspondence over $A$, which is a completion of a correspondence $X_0$  over $A_0$.
Let $A_\lambda$ be a Hausdorff completion of $A_0$ in a $\Cst$-seminorm $\|\cdot\|_{\lambda}$ not exceeding the one on $A$. 
Then $\|\cdot\|_{\lambda}$ is self-similar if and only if the kernel $I$ of the canonical  $*$-epimorphism 
$A\onto A_\lambda$ is positively $X$-invariant, i.e. $IX\subseteq XI$.
%
\end{lemma}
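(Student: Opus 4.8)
The plan is to identify both completions with quotients of $X$ and $A$ by the ideal $I$, thereby reducing the statement to the purely formal question of when the left action descends to the quotient. First I would record the two elementary consequences of the assumption that $\|\cdot\|_\lambda$ does not exceed the norm on $A$: the canonical surjection $q\colon A\onto A_\lambda$ has kernel $I$, and since $\|\xi\|_\lambda^2=\|\langle\xi,\xi\rangle\|_\lambda\le\|\langle\xi,\xi\rangle\|_A=\|\xi\|_X^2$, the identity on $X_0$ extends to a contraction $X\to X_\lambda$. Writing $XI \coloneqq \clsp\{\xi a : \xi\in X,\ a\in I\}$ for the induced closed submodule, I would invoke the standard description $XI=\{\xi\in X:\langle\xi,\xi\rangle\in I\}$ (see \cite{lance}) together with the identity $\|\langle\xi,\xi\rangle\|_\lambda=\|\langle\xi,\xi\rangle+I\|_{A/I}$ to conclude that the kernel of $X\to X_\lambda$ is exactly $XI$ and that the induced map $X/XI\to X_\lambda$ is isometric with dense range. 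Thus $X_\lambda\cong X/XI$ as a right Hilbert $A_\lambda\cong A/I$-module, with inner product $\langle\xi+XI,\eta+XI\rangle=\langle\xi,\eta\rangle+I$ and $q$-compatible right action.

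With this identification the problem becomes one about the left action $\phi\colon A\to\LL(X)$ alone. I would first observe that $\phi(a)$ always maps $XI$ into $XI$, since $\phi(a)(\eta b)=(\phi(a)\eta)b\in XI$ for $b\in I$; consequently $\phi$ induces a $*$-homomorphism $\tilde\phi\colon A\to\LL(X/XI)$ given by $\tilde\phi(a)(\xi+XI)=\phi(a)\xi+XI$, with adjointability coming from $\langle\phi(a)\xi,\eta\rangle=\langle\xi,\phi(a^*)\eta\rangle$. The heart of the argument is then the equivalence: $\tilde\phi$ factors through $A/I$ (equivalently $\tilde\phi(I)=0$) if and only if $\phi(I)X\subseteq XI$, which is precisely $IX\subseteq XI$ once one unwinds that $\tilde\phi(a)(\xi+XI)=XI$ means $\phi(a)\xi\in XI$.

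For the two implications I would argue as follows. If $IX\subseteq XI$, then $\tilde\phi(I)=0$, so $\tilde\phi=\bar\phi\circ q$ for a $*$-homomorphism $\bar\phi\colon A_\lambda\to\LL(X_\lambda)$; this $\bar\phi$ is exactly a left action making $X_\lambda$ a $\Cst$-correspondence over $A_\lambda$, and for $a\in A_0$, $\xi\in X_0$ it satisfies $\bar\phi(q(a))(\xi+XI)=a\xi+XI$, whence $\|a\xi\|_\lambda\le\|q(a)\|\,\|\xi\|_\lambda=\|a\|_\lambda\|\xi\|_\lambda$, i.e.\ self-similarity. Conversely, if $\|\cdot\|_\lambda$ is self-similar, then by definition the $A_0$-action on $X_0$ extends to a left action $\bar\phi\colon A_\lambda\to\LL(X_\lambda)$; then $\bar\phi\circ q$ and $\tilde\phi$ are continuous $*$-homomorphisms $A\to\LL(X/XI)$ which agree on the dense subalgebra $A_0$ (each $\bar\phi(q(a))$ and $\tilde\phi(a)$ agree on the dense submodule $X_0$, hence on $X$), and therefore agree on all of $A$; thus $\tilde\phi=\bar\phi\circ q$ kills $I$, giving $IX\subseteq XI$.

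The hard part will be Step~1, the isometric identification $X_\lambda\cong X/XI$: one must verify that the Hilbert-module quotient $X/XI$ carries the inner-product norm $\sqrt{\|\langle\xi,\xi\rangle+I\|}$ and that this coincides with both the Banach-space quotient norm and with $\|\cdot\|_\lambda$, which is where I would rely on the standard theory of quotients of Hilbert modules by ideals in \cite{lance}. A secondary subtlety—that self-similarity is phrased only through the $A_0$-action on $X_0$, so information about $A_0$ must be promoted to information about the whole of $I\subseteq A$—is precisely what the ``agree on a dense subalgebra'' argument above resolves, sparing me from having to assume that $A_0\cap I$ is dense in $I$.
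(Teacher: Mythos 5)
Your proposal is correct and follows essentially the same route as the paper: identify $X_\lambda$ with the quotient module $X/XI$ (via the computation $\|f\|_\lambda^2=\|q^I(\langle f,f\rangle)\|=\|q^{XI}(f)\|^2$) and then observe that self-similarity is exactly the statement that the left action descends to $A/I$, which holds if and only if $I$ is positively $X$-invariant. The only difference is that the paper outsources the descent equivalence to a citation (\cite{fmr}*{Lemma 2.3}) where you prove it directly, including the density argument promoting the $A_0$-action on $X_0$ to all of $A$; both are fine.
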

\begin{proof}
Note that $A_\lambda\cong A/I$ and recall that $X/X I$ is naturally  a right Hilbert $A/I$-module with the structure induced by 
the quotient maps $q^{XI}:X\to X/XI$   and $q^{I}:A\onto A/I$,  cf. \cite{fmr}*{Lemma 2.1}. In particular, 
for $f\in X_0$ we get $\|f\|_{\lambda}^2=\|\langle f, f\rangle \|_{\lambda}=\|q^I(\langle f, f\rangle) \|= \|q^{XI}(f)\|^2$. Thus,  $X_0\ni f \mapsto q^{XI}(f)\in X/X I$
induces an isometry  $X_{\lambda}\to X/XI$, and it is  easy to see that in fact it is an isomorphism of Hilbert modules $X_\lambda\cong X/X I$.
The left action of $A$ on $ X/X I$ descends to a well-defined action of $A_\lambda\cong A/I$ if and only if $I$ is positively $X$-invariant, cf. \cite{fmr}*{Lemma 2.3}
\end{proof}
Below we 
use a convention 
that an expression \([sentence]\) is zero if
the \(sentence\) is false and 1 otherwise. We denote by $\{\bone_{e,g}\}_{(e,g)\in E^1 \fibre{\sr}{\rg} \Gr}$
the obvious linear basis for $\Contc(E^1 \fibre{\sr}{\rg} \Gr)$.
\begin{proposition}\label{prop:universal_completion_is_self-similar}
The space $\Contc(E^1 \fibre{\sr}{\rg} \Gr)$ is an algebraic correspondence over $\Contc(\Gr,\sigma_G)$
with operations given on basis elements by the formulas
\begin{align}%
\bone_{e,g} \cdot \bone_h &= [\rg(h) = \sr(g)]  \sigma_G(g,h) \bone_{e,gh}, \label{equ:universal_completion_is_self-similar1}
\\
\langle \bone_{e,g} \mid \bone_{f,h}\rangle&=[e = f] \ol{\sigma_G(g,g^{-1}h)} \ \bone_{g^{-1}h}, \label{equ:universal_completion_is_self-similar2}
\\
\bone_g \cdot \bone_{e,h}&=[\sr(g)=\rg(e)] \sigma_{\bowtie}(g,e)\sigma_{\Gr}(g|_{e},h) \bone_{ge,g|_eh} \label{equ:universal_completion_is_self-similar3}
\end{align}
for all $(e,g), (f,h)\in E^1 \fibre{\sr}{\rg} \Gr$.  
The maximal $\Cst$-norm on $\Contc(\Gr,\sigma_G)$ is self-similar, that is $\Contc(E^1 \fibre{\sr}{\rg} \Gr)$ completes to a $\Cst$-correspondence $X(\Gr,E,\sigma)$
over $\Cst(\Gr,\sigma_G)$.  The map $\bone_{e,g}\mapsto \bone_{e}\otimes \bone_{g}$ extends to an isomorphism of right Hilbert $\Cst$-modules
\begin{equation}\label{eq:universal_correspondence_isomosrphism}
X(\Gr,E,\sigma) \cong X(E) \ox_{\Cont_0(E^0)} \Cst(\Gr,\sigma_G).
\end{equation}
\end{proposition}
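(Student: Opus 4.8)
The plan is to verify the three structural formulas directly on basis
elements, then upgrade from the algebraic level to the $C^*$-completion.
First I would check that \eqref{equ:universal_completion_is_self-similar1}
and \eqref{equ:universal_completion_is_self-similar2} endow
$\Contc(E^1 \fibre{\sr}{\rg} \Gr)$ with the structure of an algebraic
correspondence over $\Contc(\Gr,\sigma_G)$: the right-module axiom and
associativity of the right action follow from the $2$-cocycle identity
\eqref{eq:groupoid_cocycle_identities} for $\sigma_G$ applied to
$\bone_{e,g}\cdot(\bone_h \bone_k)$ versus $(\bone_{e,g}\cdot \bone_h)\cdot
\bone_k$; the inner-product compatibility
$\langle \bone_{e,g}\cdot \bone_h \mid \bone_{f,k}\rangle
=\langle \bone_{e,g}\mid \bone_{f,k}\cdot \bone_h^*\rangle$ again reduces to a
$\sigma_G$-cocycle manipulation, using $\bone_h^*=\ol{\sigma_G(h^{-1},h)}\bone_{h^{-1}}$.
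The key defining relation $\langle a\xi\mid\eta\rangle=\langle\xi\mid a^*\eta\rangle$
for the algebraic correspondence must be checked with $a=\bone_g$ and
$\xi,\eta$ basis elements, which is where the left action
\eqref{equ:universal_completion_is_self-similar3} interacts with
\eqref{equ:universal_completion_is_self-similar2}; this computation uses both
the $\sigma_G$-cocycle identity and the self-similar cocycle identity
Lemma~\ref{lem:self-similar-cocycle-is-total-cocycle}\ref{itm:ssc_G}, together
with the inverse-versus-restriction relation \eqref{equ:inverse_vs_restriction}.

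Having established the algebraic correspondence, I would prove the
self-similarity of the maximal $C^*$-norm by exhibiting the isomorphism
\eqref{eq:universal_correspondence_isomosrphism} at the level of right Hilbert
modules. The cleanest route is to invoke Remark~\ref{rem:tensor_correspondence}:
the inclusion $\Contc(E^0)\subseteq \Contc(\Gr,\sigma_G)$ makes
$\Cst(\Gr,\sigma_G)$ a correspondence from $\Cont_0(E^0)$ to itself, so the
internal tensor product $X(E) \ox_{\Cont_0(E^0)} \Cst(\Gr,\sigma_G)$ is
defined. I would then show that the linear map sending $\bone_{e,g}$ to the
elementary tensor $\bone_e \otimes \bone_g$ respects the right action and the
inner product; for the inner product one compares
\eqref{equ:universal_completion_is_self-similar2} with the tensor-product
formula $\langle \bone_e\otimes \bone_g\mid \bone_f\otimes \bone_h\rangle
=\langle\bone_g\mid\langle\bone_e\mid\bone_f\rangle\cdot\bone_h\rangle$ and uses
that $\{\bone_{e}\otimes \bone_{\sr(e)}\}_{e\in E^1}$ is an orthogonal frame by
\cite{Brix-Mundey-Rennie:Graph-moves}*{Proposition~2.16}. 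Since this map is
an isometric surjection onto a dense subset, it extends to the asserted
isomorphism of right Hilbert $\Cst(\Gr,\sigma_G)$-modules, which automatically
makes the completion $X(\Gr,E,\sigma)$ a genuine $\Cst$-correspondence over
$\Cst(\Gr,\sigma_G)$ (the left action being the one transported along
\eqref{eq:universal_correspondence_isomosrphism} and given on generators by
\eqref{equ:universal_completion_is_self-similar3}).

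The delicate point is justifying that the maximal $C^*$-norm is
\emph{self-similar} rather than merely that some completion carries a
correspondence structure. Here I would appeal to
Lemma~\ref{lem:self-similar_vs_positive}: self-similarity of $\|\cdot\|_\lambda$
is equivalent to positive $X$-invariance $IX\subseteq XI$ of the kernel $I$ of
$A\onto A_\lambda$. For the \emph{maximal} norm $A=\Cst(\Gr,\sigma_G)$ itself,
so $I=\{0\}$ and positive invariance is automatic; thus the left action of the
whole of $\Cst(\Gr,\sigma_G)$ is bounded on $X(E) \ox_{\Cont_0(E^0)}
\Cst(\Gr,\sigma_G)$ precisely because it is realised as the left creation
action on an honest internal tensor product, whose adjointability is standard.
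I expect the main obstacle to be the verification of
\eqref{equ:universal_completion_is_self-similar3} as an \emph{adjointable}
bimodule map compatible with the inner product, i.e. checking the algebraic
correspondence axiom $\langle \bone_g \bone_{e,h}\mid \bone_{f,k}\rangle
=\langle \bone_{e,h}\mid \bone_g^* \bone_{f,k}\rangle$; this is the one identity
that genuinely couples $\sigma_{\bowtie}$ and $\sigma_G$, and unwinding it
requires Lemma~\ref{lem:self-similar-cocycle-is-total-cocycle} in full
(parts \ref{itm:ssc_bowtie} and \ref{itm:ssc_G}) together with
\eqref{equ:inverse_vs_restriction}, much as in the casewise cocycle
computations already carried out in the proof of
Theorem~\ref{thm:presentations_of_twisted_self_similar_algebras} and
Proposition~\ref{prop:twist_self_similar_inverse_semigroup}. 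Once that single
compatibility is secured, the remaining assertions are routine consequences of
the tensor-product construction.
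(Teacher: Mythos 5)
Your algebraic verification of \eqref{equ:universal_completion_is_self-similar1}--\eqref{equ:universal_completion_is_self-similar3} and the identification of $\Contc(E^1\fibre{\sr}{\rg}\Gr)$ with a dense submodule of $X(E)\ox_{\Cont_0(E^0)}\Cst(\Gr,\sigma_G)$ match the paper's route, including the use of the orthogonal frame $\{\bone_{e,\sr(e)}\}_{e\in E^1}$ and the coupling of $\sigma_{\bowtie}$ with $\sigma_G$ through Lemma~\ref{lem:self-similar-cocycle-is-total-cocycle} and \eqref{equ:inverse_vs_restriction}. The gap is in your justification that the \emph{maximal} norm is self-similar. Lemma~\ref{lem:self-similar_vs_positive} cannot be applied with $A=\Cst(\Gr,\sigma_G)$ and $I=\{0\}$: its hypothesis is that $X$ is \emph{already} a $\Cst$-correspondence over $A$, i.e.\ that the left action of all of $\Cst(\Gr,\sigma_G)$ is defined and bounded on the completion --- which is exactly the assertion you are trying to prove. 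That lemma is designed for descending from an established completion to smaller ones (as in Corollary~\ref{cor:self-similar_completions}), not for establishing the base case. Likewise, the left action in \eqref{equ:universal_completion_is_self-similar3} is \emph{not} the ``left creation action on an honest internal tensor product'': as Remark~\ref{rem:tensor_correspondence} stresses, the canonical left action on $X(E)\ox_{\Cont_0(E^0)}\Cst(\Gr,\sigma_G)$ is only of $\Cont_0(E^0)$, and the whole content of the proposition is that the new, twisted left action of $\Cst(\Gr,\sigma_G)$ is bounded.

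The missing mechanism is representation-theoretic. Writing $U_g$ for left multiplication by $\bone_g$ on the dense submodule, one computes on the frame that $U_gU_h=\sigma_{\Gr}(g,h)U_{gh}$ and that $\langle U_g(\zeta)\mid\eta\rangle=\langle\zeta\mid\ol{\sigma_{\Gr}(g,g^{-1})}U_{g^{-1}}(\eta)\rangle$; combining these gives $\langle U_g(\eta)\mid U_g(\eta)\rangle=\langle\eta\mid U_{\sr(g)}(\eta)\rangle$, and since $U_{\sr(g)}$ is an orthogonal projection this shows each $U_g$ is contractive, hence extends to an adjointable partial isometry on the completed Hilbert module. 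The assignment $g\mapsto U_g$ is then a $\sigma_{\Gr}$-twisted unitary representation of $\Gr$ in the $\Cst$-algebra $\LL(X(\Gr,E,\sigma))$, and it is the \emph{universal property} of $\Cst(\Gr,\sigma_G)$ --- not Lemma~\ref{lem:self-similar_vs_positive} --- that produces the bounded $*$-homomorphism $\Cst(\Gr,\sigma_G)\to\LL(X(\Gr,E,\sigma))$, i.e.\ self-similarity of the maximal norm. Your check of the identity $\langle\bone_g\cdot\bone_{e,h}\mid\bone_{f,k}\rangle=\langle\bone_{e,h}\mid\bone_g^*\cdot\bone_{f,k}\rangle$ on generators is a necessary ingredient, but by itself it only gives formal adjointability on a dense subspace, not boundedness on the completion.
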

\begin{proof}
It is straightforward to see that the map $\bone_{e,g}\mapsto \bone_{e}\otimes \bone_{g}$ extends to a linear isomorphism 
$\Contc(E^1 \fibre{\sr}{\rg} \Gr)\stackrel{\cong}{\rightarrow} \operatorname{span}\{\bone_{e}\otimes \bone_{g} \mid (e,g) \in E^1\fibre{\sr}{\rg}\Gr \}\subseteq X(E) \ox_{\Cont_0(E^0)} \Cst(\Gr,\sigma_G)$. 
Thus, we use it to identify $\Contc(E^1 \fibre{\sr}{\rg} \Gr)$ with the dense subspace of $X(E) \ox_{\Cont_0(E^0)} \Cst(\Gr,\sigma_G)$.
Under this identification one readily sees that \eqref{equ:universal_completion_is_self-similar1} holds. 
For all $(e,g) , (f,h)\in E^1 \fibre{\sr}{\rg} \Gr$ with $\rg(h) = \rg(g)$ the $2$-cocycle conditions \eqref{eq:groupoid_cocycle_identities} for $\sigma_G$ give
$
\sigma_{G}(g^{-1},g)=\sigma_{G}(g,g^{-1})=\sigma_G(g,g^{-1}h)\sigma_G(g^{-1},h)
$.
Hence,
\begin{align}\label{eq:inner_product_alternativ}
\langle \bone_{e,g} \mid \bone_{f,h}\rangle
&=[e = f]    \bone_{g}^* \bone_{h}
=[e = f] \ol{\sigma_G(g^{-1},g)} \sigma_G(g^{-1}, h) \bone_{g^{-1}h} \\
&=[e = f] \ol{\sigma_G(g,g^{-1}h)} \ \bone_{g^{-1}h}. \nonumber
\end{align}
This shows that formulas  \eqref{equ:universal_completion_is_self-similar1} and \eqref{equ:universal_completion_is_self-similar2} determine the structure 
of pre-Hilbert $\Contc(\Gr,\sigma_G)$-module on $\Contc(E^1 \fibre{\sr}{\rg} \Gr)$  whose completion induced by the universal norm on $\Contc(\Gr,\sigma_G)$
is a Hilbert $\Cst(\Gr,\sigma_G)$-module $X(\Gr,E,\sigma)$ for which the isomorphism \eqref{eq:universal_correspondence_isomosrphism} holds.

Thus, we identify $X(\Gr,E,\sigma)$ with $X(E) \ox_{\Cont_0(E^0)} \Cst(\Gr,\sigma_G)$. We need to show that 
\eqref{equ:universal_completion_is_self-similar3} determines a homomorphism $\Cst(\Gr,\sigma_G)\to \LL(X(\Gr,E,\sigma))$.
By  universality of $\Cst(\Gr,\sigma_G)$, this boils down to showing that  \eqref{equ:universal_completion_is_self-similar3}
determines a $\sigma_{\Gr}$-twisted unitary representation  $g \mapsto U_g$ of $\Gr$ in $\LL(X(G,E,\sigma))$.
To this end, we use the orthogonal frame $\{\bone_{e,\sr(e)}\}_{e \in E^1}$  for $X(G,E,\sigma)$, cf.  Remark~\ref{rem:tensor_correspondence}. 
For each $g\in \Gr$,  the multiplication by $\bone_{g}$, given by \eqref{equ:universal_completion_is_self-similar3}, defines a linear operator $U_g: \Contc(E^1 \fibre{\sr}{\rg} \Gr)\to\Contc(E^1 \fibre{\sr}{\rg} \Gr)$ which is a right $\Contc( \Gr,\sigma_{\Gr})$-module map. In fact, we have
\begin{equation}\label{eq:unitary_rep}
U_g(\eta) 
=  \sum_{e \in \sr(g)E^1} \sigma_{\bowtie}(g,e) \bone_{g \cdot e,g|_e} \cdot \langle \bone_{e,\sr(e)} \mid \eta \rangle, \qquad \eta \in \Contc(E^1 \fibre{\sr}{\rg} \Gr). 
\end{equation}
In particular, $U_g$ is uniquely determined by its values on the frame   $\{\bone_{e,\sr(e)}\}_{e \in E^1}$ --
it is a unique linear  right $\Contc( \Gr,\sigma_{\Gr})$-module map such that    
$U_g(\bone_{e,\sr(e)})=  \bone_g \cdot \bone_{e,\sr(e)}=[\sr(g)=\rg(e)] \sigma_{\bowtie}(g,e)\bone_{ge,g|_{e}}$.
For any $(g,h)\in \Gr^2$ and $e\in E^1$ we have	 
\begin{align*}
U_{g}U_{h} \bone_{e,\sr(e)}
&\stackrel{\eqref{equ:universal_completion_is_self-similar3}}{=} U_{g} [\sr(h)=\rg(e)] \sigma_{\bowtie}(h,e) \bone_{he,h|_e}
\\
&\stackrel{\eqref{equ:universal_completion_is_self-similar3}}{=}[\sr(h)=\rg(e)] \sigma_{\bowtie}(h,e)\sigma_{\bowtie}(g,he) \sigma_{\Gr}(g|_{he},h|_{e}) \bone_{ghe,g|_{he}h|_e} 
\\
&\stackrel{\eqref{eq:ssc_edges_G}}{=}[\sr(gh)=\rg(e)] \sigma_{\bowtie}(gh,e)  \sigma_{\Gr}(g,h) \bone_{ghe,gh|_{e}}
\\
&\stackrel{\eqref{equ:universal_completion_is_self-similar3}}{=} \sigma_{\Gr}(g,h) U_{gh} \bone_{e,\sr(e)}.
\end{align*}
Hence, $U_{g}U_{h}=\sigma_{\Gr}(g,h) U_{gh}$. Moreover, for any $e,f\in E^1$
\begin{align*}
\langle U_g ( \bone_{e,\sr(e)}) \mid  \bone_{f,\sr(f)}\rangle
&\stackrel{\eqref{equ:universal_completion_is_self-similar3}}{=} [\sr(g)=\rg(e)] \ol{\sigma_{\bowtie}(g,e)}  \langle \bone_{ge,g|_e} \mid \bone_{f,\sr(f)} \rangle 
\\
&\stackrel{\eqref{equ:universal_completion_is_self-similar2}}{=} [\sr(g)=\rg(e)] [ge=f]\ol{\sigma_{\bowtie}(g,e)}\, \ol{\sigma_G(g|_e,(g|_e)^{-1})}   \bone_{(g|_e)^{-1}}  
\\
&
\stackrel{\eqref{equ:inverse_vs_restriction}}{=} [\sr(g)=\rg(e)] [ge=f]\ol{\sigma_{\bowtie}(g,e)}\, \ol{\sigma_G(g|_e,g^{-1}|_{ge})}   \bone_{g^{-1}|_{ge}}.
\\
&\stackrel{\eqref{eq:ssc_edges_G}}{=}
[\sr(g^{-1})=\rg(f)] [e=g^{-1}f]\ol{\sigma_{\Gr}(g,g^{-1}) }\sigma_{\bowtie}(g^{-1},f)  \bone_{g^{-1}|_f}  
\\
&\stackrel{\eqref{equ:universal_completion_is_self-similar2}}{=} [\sr(g^{-1})=\rg(f)]\ol{\sigma_{\Gr}(g,g^{-1}) } \sigma_{\bowtie}(g^{-1},f)  \langle \bone_{e,\sr(e)} \mid \bone_{g^{-1}f,g^{-1}|_f}  \rangle 	
\\
&\stackrel{\eqref{equ:universal_completion_is_self-similar3}}{=} \langle   \bone_{e,\sr(e)}\mid  \ol{\sigma_{\Gr}(g,g^{-1}) }U_{g^{-1}}\bone_{f,\sr(f)}\rangle.
\end{align*}	
Using that $\{\bone_{e,\sr(e)}\}_{e \in E^1}$ is a frame, $\langle \cdot\mid\cdot\rangle$ is a $\Contc( \Gr,\sigma)$-valued sesquilinear form, and $U_g$ and $U_{g^{-1}}$ are right $\Contc( \Gr,\sigma)$-module maps, one concludes that 
\begin{align*}
\langle U_g (\zeta) \mid \eta \rangle
=  \langle \zeta \mid  \ol{\sigma_{\Gr}(g,g^{-1}) }U_{g^{-1}} \eta \rangle, \qquad\text{for any $\zeta,\eta \in \Contc(E^1 \fibre{\sr}{\rg} \Gr)$}.
\end{align*} 
The relations we proved, show that for any $\eta \in \Contc(E^1 \fibre{\sr}{\rg} \Gr)$ we have
\begin{align*}
\langle U_g (\eta) \mid U_g (\eta) \rangle&=
\langle U_g (\eta) \mid \ol{\sigma_{\Gr}(g,g^{-1}) }U_{g^{-1}}U_g (\eta) \rangle=\langle  \eta \mid U_{\sr(g)} (\eta) \rangle.
\end{align*}	
Since  $U_{\sr(g)}$ is an orthogonal projection onto the subspace $\Contc(\sr(g)E^1 \fibre{\sr}{\rg} \Gr)$ in $\Contc(E^1 \fibre{\sr}{\rg} \Gr)$,
it extends to the contractive projection in $\LL(X(G,E,\sigma))$. Thus, it follows that $U_{g}$ extends to an adjointable partial isometry on $X(G,E,\sigma)$
and the map $\Gr\ni g \mapsto U_g\in \LL(X(G,E,\sigma))$  is a $\sigma_{\Gr}$-twisted unitary representation   of $\Gr$.
\end{proof}
\begin{remark}
The correspondence structure on  $\Contc(E^1 \fibre{\sr}{\rg} \Gr)$ in terms of functions $\xi, \eta\in\Contc(E^1 \fibre{\sr}{\rg} \Gr)$, $a\in \Contc(\Gr,\sigma_G)$,
is given by 
the formulas, for $(e,g)\in E^1 \fibre{\sr}{\rg} \Gr$,
\begin{align*}
(\xi \cdot a)(e,g)&=\sum_{h\in \Gr \sr(g)} \sigma_{\Gr}(gh^{-1},h) \xi(e,gh^{-1})a(h),
\\
\langle \xi \mid \eta\rangle (g) &= \sum_{ 
	(e,h)\in  E^1 \fibre{\sr}{\rg} \Gr \sr(g)}  \overline{\sigma_G(hg^{-1},g)}\, \overline{\xi(e , hg^{-1})} \eta(e,h),
\\
(a \cdot \xi) (e,g) 
&=  \sum_{h\in  \rg(g)\Gr} \sigma_{\bowtie}(g,e)\sigma_{G}(h|_{h^{-1}e}, (h|_{h^{-1}e})^{-1} g)  a(h) \xi(h^{-1}e,(h|_{h^{-1}e})^{-1} g).
\end{align*}
However, as we noticed in the proof above, it is determined by its values on the frame  $\{\bone_{e,\sr(e)}\}_{e \in E^1}$
and then even the formulas \eqref{equ:universal_completion_is_self-similar1}-\eqref{equ:universal_completion_is_self-similar3} simplify. 
\end{remark}

\begin{corollary}\label{cor:self-similar_completions}
Let $\Cst_{\lambda}(\Gr,\sigma_G)$ be a Hausdorff completion of  $\Contc(\Gr,\sigma_G)$ in a $\Cst$-seminorm $\|\cdot\|_{\lambda}$. 
Let $X_{\lambda}(\Gr,E,\sigma)$ be the corresponding Hausdorff completion of $\Contc(E^1 \fibre{\sr}{\rg} \Gr)$, 
and let $I_\lambda$ be the kernel of the canonical  $*$-epimorphism $\Cst(\Gr,\sigma_G)\to \Cst_{\lambda}(\Gr,\sigma_G)$.
We have natural isomorphisms of  right Hilbert $\Cst_{\lambda}(\Gr,\sigma_G)$-modules 
$$
X_{\lambda}(\Gr,E,\sigma)\cong X(\Gr,E,\sigma)/X(\Gr,E,\sigma) I_{\lambda}  \cong X(E) \ox_{\Cont_0(E^0)} \Cst_\lambda(\Gr,\sigma_G) 
$$
and $X_{\lambda}(\Gr,E,\sigma)$ is naturally a $\Cst$-correspondence from $\Cst(\Gr,\sigma_G)$ to $\Cst_\lambda(\Gr,\sigma_G)$.
The seminorm $\|\cdot\|_{\lambda}$ is self-similar if and only if the ideal $I_\lambda$ is positively $X(\Gr,E,\sigma)$-invariant.
\end{corollary}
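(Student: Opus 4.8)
The plan is to assemble the statement from Proposition~\ref{prop:universal_completion_is_self-similar} and Lemma~\ref{lem:self-similar_vs_positive}, together with two standard facts about Hilbert modules: that $M/MI\cong M\ox_A(A/I)$ for a right Hilbert $A$-module $M$ and a closed ideal $I\idealin A$ (with $MI$ the closed submodule generated by $\{m\cdot i\}$), and associativity of the internal tensor product, both found in \cite{lance} and \cite{fmr}. Since $\Cst(\Gr,\sigma_G)$ denotes the \emph{maximal} completion of $\Contc(\Gr,\sigma_G)$, any $\Cst$-seminorm $\|\cdot\|_\lambda$ is dominated by the universal norm, so $\Cst_\lambda(\Gr,\sigma_G)\cong \Cst(\Gr,\sigma_G)/I_\lambda$ and the hypotheses of Lemma~\ref{lem:self-similar_vs_positive} apply verbatim to $X=X(\Gr,E,\sigma)$, $A=\Cst(\Gr,\sigma_G)$, $A_\lambda=\Cst_\lambda(\Gr,\sigma_G)$.

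First I would establish the left isomorphism $X_\lambda(\Gr,E,\sigma)\cong X(\Gr,E,\sigma)/X(\Gr,E,\sigma)I_\lambda$. This is precisely the computation inside the proof of Lemma~\ref{lem:self-similar_vs_positive}: for $f\in\Contc(E^1\fibre{\sr}{\rg}\Gr)$ one has $\|f\|_\lambda^2=\|\langle f,f\rangle\|_\lambda=\|q^{I_\lambda}(\langle f,f\rangle)\|=\|q^{XI_\lambda}(f)\|^2$, where $q^{I_\lambda}$ and $q^{XI_\lambda}$ are the relevant quotient maps. Hence $f\mapsto q^{XI_\lambda}(f)$ extends to an isometric isomorphism of the Hausdorff completion $X_\lambda(\Gr,E,\sigma)$ onto the Hilbert $\Cst_\lambda(\Gr,\sigma_G)$-module $X(\Gr,E,\sigma)/X(\Gr,E,\sigma)I_\lambda$, intertwining the $\Contc(\Gr,\sigma_G)$-actions. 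In particular this endows $X_\lambda(\Gr,E,\sigma)$ with a genuine right $\Cst_\lambda(\Gr,\sigma_G)$-module structure.

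For the right-hand isomorphism I would feed in the identification \eqref{eq:universal_correspondence_isomosrphism} from Proposition~\ref{prop:universal_completion_is_self-similar}, namely $X(\Gr,E,\sigma)\cong X(E)\ox_{\Cont_0(E^0)}\Cst(\Gr,\sigma_G)$, and then quotient by $I_\lambda$. Applying $M/MI\cong M\ox_A(A/I)$ with $M=X(E)\ox_{\Cont_0(E^0)}\Cst(\Gr,\sigma_G)$ and $A=\Cst(\Gr,\sigma_G)$, followed by associativity of the internal tensor product and $\Cst(\Gr,\sigma_G)\ox_{\Cst(\Gr,\sigma_G)}\Cst_\lambda(\Gr,\sigma_G)\cong\Cst_\lambda(\Gr,\sigma_G)$, yields
\[
X(\Gr,E,\sigma)/X(\Gr,E,\sigma)\,I_\lambda\cong X(E)\ox_{\Cont_0(E^0)}\Cst_\lambda(\Gr,\sigma_G),
\]
completing the chain of right Hilbert $\Cst_\lambda(\Gr,\sigma_G)$-module isomorphisms.

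It then remains to record the bimodule structure and the self-similarity dichotomy. The left $\Cst(\Gr,\sigma_G)$-action on $X(\Gr,E,\sigma)$ commutes with the right action, so it preserves $X(\Gr,E,\sigma)I_\lambda$ and descends to the quotient; this exhibits $X_\lambda(\Gr,E,\sigma)$ as a $\Cst$-correspondence \emph{from} $\Cst(\Gr,\sigma_G)$ \emph{to} $\Cst_\lambda(\Gr,\sigma_G)$ (the left coefficient algebra cannot in general be pushed down to $\Cst_\lambda(\Gr,\sigma_G)$, which is exactly what self-similarity would buy). Finally, the equivalence that $\|\cdot\|_\lambda$ is self-similar if and only if $I_\lambda$ is positively $X(\Gr,E,\sigma)$-invariant is nothing but Lemma~\ref{lem:self-similar_vs_positive} applied to the present data. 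The only points demanding genuine care, and the place I would expect to spend the most effort, are checking that the three descriptions of $X_\lambda(\Gr,E,\sigma)$ agree \emph{as bimodules} rather than merely as right modules, and that the left action descends compatibly; but these are routine verifications in Hilbert-module algebra once the right-module identifications above are in place, so no serious obstacle is anticipated.
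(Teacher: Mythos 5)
Your proposal is correct and follows essentially the same route as the paper: both rest on Proposition~\ref{prop:universal_completion_is_self-similar} for the correspondence structure and on (the proof of) Lemma~\ref{lem:self-similar_vs_positive} for the identification $X_\lambda(\Gr,E,\sigma)\cong X(\Gr,E,\sigma)/X(\Gr,E,\sigma)I_\lambda$ and the self-similarity dichotomy. The only cosmetic difference is in the tensor-product identification: the paper verifies $X_\lambda(\Gr,E,\sigma)\cong X(E)\ox_{\Cont_0(E^0)}\Cst_\lambda(\Gr,\sigma_G)$ by a direct norm computation on finite sums $\sum_i a_i\otimes b_i$, whereas you invoke the standard facts $M/MI\cong M\ox_A(A/I)$ and associativity of the internal tensor product — both are routine and equally valid.
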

\begin{proof}
That the identity on $\Contc(E^1 \fibre{\sr}{\rg} \Gr)$ induces an isomorphism $X_{\lambda}(\Gr,E,\sigma)\cong   X(E) \ox_{\Cont_0(E^0)} \Cst_\lambda(\Gr,\sigma_G)$ is 
straightforward. In particular, for 
$\sum_{i=1}^n a_i\otimes b_i\in \Contc(E^1 \fibre{\sr}{\rg} \Gr)$ where $a_i\in 
\Contc(E^1)$ and  $b_i\in \Contc(\Gr)$ we have 
$
\|\sum_{i=1}^n a_i\otimes b_i\|_{\lambda}^2=\| \sum_{i,j=1}^n \langle a_i\otimes b_i, a_j\otimes b_j\rangle \|_{\lambda}=
\| \sum_{i,j=1}^n b_i^* \langle a_i, a_j\rangle b_j \|_{\lambda}
=\|\sum_{i=1}^n a_i\otimes b_i\|_{X(E) \ox \Cst_\lambda(\Gr,\sigma_G) }^2
$. The remaining part follows from Proposition~\ref{prop:universal_completion_is_self-similar} and (the proof of) Lemma~\ref{lem:self-similar_vs_positive}.
\end{proof}
Let $X$ be a $\Cst$-correspondence over a $\Cst$-algebra $A$. 
Recall that a \emph{representation  of  $X$}  in a $C^*$-algebra $B$   is a pair $(\pi,\psi)$ where  $\pi:A\to B$ is  a representation and  $\psi:X\to B$ is a linear (necessarily contractive) map  such that
$$
\psi(a\cdot x)=\pi(a)\psi(x), \quad \psi(x\cdot b)=\psi(x)\pi(b),\quad \psi(x)^*\psi(y)=\pi(\langle x, y\rangle_A),
$$
for all   $a,b\in A,\, x\in X$. 
When $B=\Bound(H)$ for a Hilbert space $H$, we say that $(\pi,\psi)$ is a representation on the Hilbert space $H$.
The $C^*$-algebra generated by $\pi(A)\cup \psi(X)$ is denoted by $C^*(\pi,\psi)$.
The \emph{Toeplitz algebra}  of $X$ is $\TT(X) \coloneqq C^*(i_A,i_X)$ where $(i_A,i_X)$ is a universal representation of $X$ in the sense that 
for any other representation $(\pi,\psi)$ the maps $i_A(a)\mapsto \pi(a)$ and $i_X(x)\mapsto \psi(x)$ determine a representation $\TT(X)\to  C^*(\pi,\psi)$.
Recall that (generalised) compact operators $\Comp(X)$ is the closed linear span of rank one operators on $X$ given by
$\Theta_{x,y}(z) \coloneqq x\langle y,z\rangle_A$, for $x,y,z\in X$. 
Any representation $(\pi,\psi)$ of $X$ in $B$ induces a representation $\psi^{(1)}:\Comp(X)\to B$ where $\psi^{(1)}(\Theta_{x,y})=\psi(x)\psi(y)^*$, $x,y\in X$.
For any ideal $J$  in $J(X) \coloneqq \phi^{-1}(\Comp(X))$, we say that a representation $(\pi,\psi)$ of $X$ is 
$J$-\emph{covariant} if $\pi(a)=\psi^{(1)}(\phi(a))$ for every $a\in J$. The associated  \emph{relative Cuntz--Pimsner algebra}  is 
$\OO(J,X) \coloneqq C^*(j_A,j_X)$
where  $(j_A,j_X)$ is a universal $J$-covariant representation    of $X$.
It is equipped with the \emph{gauge circle action} $\gamma:\T \to \Aut(\OO(J,X))$ 
determined by $\gamma_z(j_A(a))=j_A(a)$ and $\gamma_z(j_X(x))=zj_X(x)$, for $a\in A$, $x\in X$, $z\in \T$.
Note that $\TT(X)=\OO(\{0\},X)$.
\begin{proposition}\label{prop:correspondence_btw_correspondence_reps}
We have a bijective correspondence between representations  $(\pi,\psi)$ of  the $\Cst(\Gr,\sigma_{\Gr})$-correspondence $X(\Gr,E,\sigma)$    on a Hilbert space $H$
and 	$\sigma$-twisted representations $(W,T)$  of  $(\Gr,E)$ on $H$ given by 
\[
\pi(\bone_g) = W_g  \quad \text{and} \quad \psi(\bone_{e,\sr(e)}) = T_e \quad \text{for all } g \in \Gr \text{ and } e \in E^1. 
\]
Moreover, $(W,T)$ is covariant if and only if $(\pi,\psi)$ is $J_{\reg}$-covariant where 
$$
J_{\reg}  \coloneqq  \Cst(\Gr_{\reg}, \sigma_{\Gr}|_{\Gr_{\reg}})
$$ sits naturally as an ideal in  $C^*(\Gr,\sigma_{\Gr})$. 
\end{proposition}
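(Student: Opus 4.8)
The plan is to treat the two halves of the bijection separately, working throughout with the explicit description of $X(\Gr,E,\sigma)$ from Proposition~\ref{prop:universal_completion_is_self-similar} and the orthogonal frame $\{\bone_{e,\sr(e)}\}_{e\in E^1}$ of Remark~\ref{rem:tensor_correspondence}. For the forward direction I would start with a representation $(\pi,\psi)$ of $X(\Gr,E,\sigma)$ on $H$, set $W_g:=\pi(\bone_g)$ and $T_e:=\psi(\bone_{e,\sr(e)})$, and check the axioms of Definition~\ref{defn:reps_of_twisted_self_similar}. Relation \ref{enu:reps_of_twisted_self_similar1} is immediate from the twisted product $\bone_g*\bone_h=\sigma_{\Gr}(g,h)\bone_{gh}$ and multiplicativity of $\pi$. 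For \ref{enu:reps_of_twisted_self_similar2} the frame identity $\langle\bone_{e,\sr(e)}\mid\bone_{f,\sr(f)}\rangle=[e=f]\bone_{\sr(e)}$ gives $T_e^*T_e=W_{\sr(e)}$ and $T_e^*T_f=0$ for $e\ne f$, while $\bone_{\rg(e)}\cdot\bone_{e,\sr(e)}=\bone_{e,\sr(e)}$ yields $T_e=W_{\rg(e)}T_e$ and hence $T_eT_e^*\le W_{\rg(e)}$, so $\{W_v\}\cup\{T_e\}$ is an $E$-family. For \ref{enu:reps_of_twisted_self_similar3} I would compute, from \eqref{equ:universal_completion_is_self-similar3} and normalisation of the cocycles (so that every $\sigma_{\Gr}$-factor with a unit argument equals $1$), that $\bone_g\cdot\bone_{e,\sr(e)}=[\sr(g)=\rg(e)]\,\sigma_{\bowtie}(g,e)\,\bone_{ge,g|_e}$ and, via \eqref{equ:universal_completion_is_self-similar1}, that $\bone_{ge,g|_e}=\bone_{ge,\sr(ge)}\cdot\bone_{g|_e}$; applying $\psi$ then delivers $W_gT_e=\sigma_{\bowtie}(g,e)T_{ge}W_{g|_e}$.

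For the reverse direction I would begin with $(W,T)$ and note that, by \eqref{eq:inverse_of_W_g}, $W_g^*=\overline{\sigma_{\Gr}(g^{-1},g)}W_{g^{-1}}$, so $g\mapsto W_g$ is a $\sigma_{\Gr}$-twisted unitary representation of $\Gr$ and hence extends, by the universal property of the full algebra, to a representation $\pi\colon\Cst(\Gr,\sigma_{\Gr})\to\Bound(H)$ with $\pi(\bone_g)=W_g$. I would then build $\psi$ off the frame by declaring $\psi(\bone_{e,\sr(e)}\cdot a):=T_e\pi(a)$ and extending linearly to the (dense) span of such elements. Orthogonality of the frame is the crucial ingredient: for $x=\sum_e\bone_{e,\sr(e)}a_e$ one has $\langle x\mid x\rangle=\sum_e a_e^*a_e$, and using $T_e^*T_f=[e=f]W_{\sr(e)}$ one computes $\big\|\sum_eT_e\pi(a_e)\big\|^2=\|\pi(\sum_ea_e^*a_e)\|\le\|x\|^2$, so well-definedness and contractivity hold simultaneously and $\psi$ extends to $X$. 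The right-module and inner-product axioms fall out of the same computation, and the left-module axiom $\psi(\bone_g\cdot\bone_{e,\sr(e)})=\pi(\bone_g)\psi(\bone_{e,\sr(e)})$ is precisely \ref{enu:reps_of_twisted_self_similar3} read backwards, by the identities already used above. Mutual inverseness is automatic, since both assignments are determined on the generators $\bone_g$ and $\bone_{e,\sr(e)}$.

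For the covariance statement I would first record that $E^0_{\reg}$ and $E^0_{\sing}$ are $\Gr$-invariant, yielding the decomposition $\Gr=\Gr_{\reg}\sqcup\Gr_{\sing}$ on page~\pageref{page:groupoid_decomposition_singular_regular} and a corresponding direct-sum splitting that realises $J_{\reg}=\Cst(\Gr_{\reg},\sigma_{\Gr}|_{\Gr_{\reg}})$ as an ideal of $\Cst(\Gr,\sigma_{\Gr})$. The key computation is that for $g\in\Gr_{\reg}$ the left action is the finite-rank operator
\[
\phi(\bone_g)=\sum_{e\in\sr(g)E^1}\sigma_{\bowtie}(g,e)\,\Theta_{\bone_{ge,g|_e},\,\bone_{e,\sr(e)}},
\]
which I would verify by evaluating both sides on the frame; finiteness of $\sr(g)E^1$ for regular $g$ then gives $J_{\reg}\subseteq J(X)$. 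Applying $\psi^{(1)}$ and using $\psi(\bone_{ge,g|_e})=T_{ge}W_{g|_e}$ together with \ref{enu:reps_of_twisted_self_similar3} yields $\psi^{(1)}(\phi(\bone_g))=W_g\sum_{e\in\sr(g)E^1}T_eT_e^*$. Specialising to a regular vertex $v$ gives $\psi^{(1)}(\phi(\bone_v))=\sum_{e\in\rg^{-1}(v)}T_eT_e^*$, so $J_{\reg}$-covariance at the units $\bone_v$ is exactly \ref{defn:Cuntz--Krieger2}; conversely, if \ref{defn:Cuntz--Krieger2} holds then $\sum_{e\in\sr(g)E^1}T_eT_e^*=W_{\sr(g)}$ and $W_gW_{\sr(g)}=W_g$, whence $\psi^{(1)}(\phi(\bone_g))=W_g=\pi(\bone_g)$ for every $g\in\Gr_{\reg}$, extending by density and continuity to full $J_{\reg}$-covariance.

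The main obstacle I anticipate is the reverse construction of $\psi$: making the frame-based definition rigorous—deriving well-definedness and the contractive estimate from orthogonality and passing from finite frame sums to norm limits—is where the genuine content lies. The remaining verifications reduce to the module identities of Proposition~\ref{prop:universal_completion_is_self-similar} and to careful, but entirely routine, cancellation of normalised-cocycle factors whose arguments are units.
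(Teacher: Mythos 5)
Your proposal is correct and follows essentially the same route as the paper's proof: both directions are handled via the universal property of $\Cst(\Gr,\sigma_{\Gr})$ and the orthogonality relation $T_e^*T_f=[e=f]W_{\sr(e)}$ to get a well-defined isometric $\psi$ on the dense span of $\{\bone_{e,g}\}=\{\bone_{e,\sr(e)}\cdot\bone_g\}$, and the covariance equivalence is established exactly as in the paper through the finite-rank expansion $\phi(\bone_g)=\sum_{e\in\sr(g)E^1}\sigma_{\bowtie}(g,e)\,\Theta_{\bone_{ge,g|_e},\bone_{e,\sr(e)}}$ together with $\psi^{(1)}(\phi(\bone_g))=W_g\sum_{e\in\sr(g)E^1}T_eT_e^*$ for $g\in\Gr_{\reg}$. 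The only cosmetic difference is that the paper defines $\psi(\bone_{e,g})=T_eW_g$ directly rather than via your frame-based phrasing $\psi(\bone_{e,\sr(e)}\cdot a)=T_e\pi(a)$, which amounts to the same thing.
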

\begin{proof} Let $(W,T)$ be representation of  $(\Gr,E,\sigma)$ on $H$. 
Since $\{W_g \mid g \in \Gr\}$ is a $\sigma_{G}$-twisted unitary representation of $\Gr$, the universal property of $C^*(\Gr,\sigma_{G})$ induces a $*$-homomorphism $\pi \colon C^*(\Gr,\sigma_{G}) \to \Bound(H)$ such that $\pi(\bone_g) = W_g$. 
Recall that $\{ \bone_{e,g} \mid (e,g)\in  E^1 * \Gr\}$ densely spans $X(\Gr,E,\sigma)$. Define $\psi$ on this dense set by
$
\psi (\bone_{e,g}) = T_e W_g. 
$
If $(e,g),(f,h) \in E^1* \Gr$, then
\begin{align*}
\psi(\bone_{e,\sr(e)} \cdot \bone_g)^* \psi(\bone_{f,\sr(f)} \cdot \bone_h) 
&= W_{g}^* T_e^* T_f W_h\stackrel{\textup{\ref{defn:Cuntz--Krieger1}}}{=}[e = f]  W_{g}^* W_{\sr(e)} W_h 
\\
&
\stackrel{\eqref{eq:inverse_of_W_g}}{=}[e = f]  \ol{\sigma_{G}(g^{-1},g)}W_{g^{-1}}W_h
\\
&
\stackrel{\textup{\ref{enu:reps_of_twisted_self_similar1}}}{=}[e = f]  \ol{\sigma_{G}(g^{-1},g)}\sigma_{G}(g^{-1},h)W_{g^{-1} h}
\\
&	 	\stackrel{\eqref{eq:inner_product_alternativ} }{=} 	\pi (\langle \bone_{e,g}   \mid \bone_{f,h} \rangle).
\end{align*}
It follows that $\psi$ extends to an isometric linear map $\psi \colon X(\Gr,E,\sigma) \to \Bound(H)$ satisfying $\psi(\xi)^*\psi(\eta) = \pi(\langle \xi \mid \eta \rangle)$ for all $\xi,\eta \in X(\Gr,E,\sigma)$. 
The definition of $\psi$ makes it clear that $\psi(\xi) \pi(a) = \psi(\xi \cdot a)$ for all $\xi \in X(\Gr,E,\sigma)$ and $a \in C^*(\Gr, \sigma_{G})$, cf. \eqref{equ:universal_completion_is_self-similar1} and \ref{enu:reps_of_twisted_self_similar1}, \ref{enu:reps_of_twisted_self_similar2}.

For the left action, observe that for $(g,e) \in \Gr * E^1$, 
\[
\pi(\bone_g) \psi(\bone_{e,\sr(e)}) = W_g T_e \stackrel{\textup{\ref{enu:reps_of_twisted_self_similar3}}}{=}  \sigma_{\bowtie}(g,e)T_{g e} W_{g|_e}
= \psi(\sigma_{\bowtie}(g,e)\bone_{g \cdot e,g|_{e}})\stackrel{\eqref{equ:universal_completion_is_self-similar3}}{=}\psi(\bone_g\cdot \bone_{e,\sr(e)}),
\] 
so $\pi(a) \psi(\xi) = \psi(a \cdot \xi)$ for all $a \in C^*(\Gr,\sigma_G)$ and $\xi \in X(\Gr,E,\sigma)$. 

Conversely, let $(\pi,\psi)$ be any representation of  $X(\Gr,E,\sigma)$  in $B(H)$ and  define $(W,T)$ by the displayed formula in the assertion.
Then the  map $g \mapsto W_g$ is a $\sigma_{\Gr}$-twisted unitary representation of $\Gr$ on $H$. In particular, the projections  $W_{v}$, $v\in E^0$, are mutually orthogonal. 
For  $e,f\in E^{1}$ we have
\[
T_e^*T_f=\psi(\bone_{e,\sr(e)})^*\psi( \bone_{f,\sr(f)})=\pi(\langle \bone_{e,\sr(e)},\bone_{f,\sr(f)}\rangle) \stackrel{\eqref{equ:universal_completion_is_self-similar2}}{=}[e=f]\pi(\bone_{\sr(e)})=[e=f]W_{\sr(e)}.
\]
Hence, $\{T_{e}\}_{e\in E^1}$ are partial isometries with orthogonal range projections and $T_e^*T_e=W_{\sr(e)}$, $e\in E^1$.
Moreover, for each $(g,e) \in \Gr * E^1$, we have
\begin{align*}
W_g T_e 
&= \pi(\bone_g)\psi(\bone_{e,\sr(e)})=\psi(\bone_g\cdot \bone_{e,\sr(e)}) 
\stackrel{\eqref{equ:universal_completion_is_self-similar3}}{=}  \psi(\sigma_{\bowtie}(g,e) \bone_{g \cdot e, g|_{e} })
\\
&=
\sigma_{\bowtie}(g,e)  \psi(\bone_{g \cdot e, \sr(g \cdot e)}) \pi(\bone_{g|_{e}}) = \sigma_{\bowtie}(g,e)  T_{g|_e} W_{g \cdot e}.
\end{align*}
It follows that $(W,T)$  is a representation of  $(\Gr,E,\sigma)$. This proves the first part of the assertion.

Recall that $\Gr=\Gr_{\reg}\sqcup \Gr_{\sing}$ decomposes into a disjoint union of groupoids, see page~\pageref{page:groupoid_decomposition_singular_regular}.
This implies that $\Cst(\Gr, \sigma_{\Gr})=\Cst(\Gr_{\reg}, \sigma_{\Gr}|_{\Gr_{\reg}})\oplus \Cst(\Gr_{\sing}, \sigma_{\Gr}|_{\Gr_{\sing}})$
decomposes into a direct sum of $C^*$-algebras.
The expression \eqref{eq:unitary_rep} defining the representation $U$ of $(\Gr, \sigma_{\Gr})$ in  $\LL(X(\Gr,E,\sigma))$, which is in fact the left action of the unitaries $\{\bone_{g}\}_{g\in \Gr}$,  implies that 
\begin{equation}\label{eq:compact_sum_rep}
\phi(\bone_{g}) = \sum_{e \in \sr(g)E^1} \sigma_{\bowtie}(g,e) \Theta_{\bone_{g  e, g|_{e}}, \bone_{e,\sr(e)}}
\end{equation}
where the series is strongly convergent.
If $g \in \Gr_{\reg}$, then the sum in \eqref{eq:compact_sum_rep} is finite and so  $\phi(\bone_{g})\in \Comp(X(\Gr,E,\sigma)$ is compact. 
Therefore, $J_{\reg}=\Cst(\Gr_{\reg}, \sigma_{\Gr}|_{\Gr_{\reg}}) \subseteq \phi^{-1}(\Comp(X(\Gr,E,\sigma))$. 
Also for the corresponding representations  $(W,T)$ and  $(\pi,\psi)$, for any $g \in \Gr_{\reg}$ we have 
\begin{align*}
\psi^{(1)}(\phi(\bone_{g}))&=\sum_{e \in \sr(g)E^1} \sigma_{\bowtie}(g,e) \psi(\bone_{g  e, g|_{e}}) \psi(\bone_{e,\sr(e)})^*
\\
&=\sum_{e \in \sr(g)E^1} \sigma_{\bowtie}(g,e) T_{ge} W_{g|_{e}} T_{e}^*  
\\
&\stackrel{\ref{enu:reps_of_twisted_self_similar3}} {=}	W_{g} \sum_{e \in \sr(g)E^1} T_{e} T_{e}^*=\pi(\bone_{g}) \sum_{e \in \sr(g)E^1} T_{e} T_{e}^*.
\end{align*}
Hence, we see that $(W,T)$ is covariant if and only if $\psi^{(1)}(\phi(\bone_{g}))=\pi(\bone_{g})$ for all $g\in \Gr_{\reg}$ if 
and only if $(\pi,\psi)$ is  $J_{\reg}$-covariant.
\end{proof}
\begin{corollary}\label{cor:Cuntz--Pimsner_picture}
For any twisted self-similar action $(\Gr,E,\sigma)$ we have natural gauge-invariant isomorphisms
$$
\TT(\Gr,E,\sigma) \cong\TT(X(\Gr,E,\sigma)),\qquad  \OO(\Gr,E,\sigma)\cong \OO(X(\Gr,E,\sigma),J_{\reg}).
$$ 
For each $*=\red, \ess$, the above isomorphisms descend to isomorphisms 
of $\TT_{*}(\Gr,E,\sigma)$ and  $\OO_{*}(\Gr,E,\sigma)$ with some relative Cuntz--Pimsner algebras
associated to self-similar Hausdorff completions of $\Cont_c(\Gr,\sigma_{G})$. In particular,  
\begin{enumerate}  
\item\label{thm:Cuntz--Pimsner_picture1}  $
\TT_{\red}(\Gr,E,\sigma)$ is a relative Cuntz--Pimsner algebra of a $C^*$-correspondence  $X_{\tc,\red}(\Gr,E,\sigma)$ associated to an  exotic self-similar  completion   $C^*_{\tc,\red}(\Gr,\sigma_{G})$  of $\Cont_c(\Gr,\sigma_{G})$ and an ideal in the kernel of the canonical homomorphism $C^*_{\tc,\red}(\Gr,\sigma_{G})\onto C^*_{\red}(\Gr,\sigma_{G})$.

\item\label{thm:Cuntz--Pimsner_picture2}  If   $E$ is row-finite, then $\TT_{\ess}(\Gr,E,\sigma)$ is a relative Cuntz--Pimsner algebra of a $C^*$-correspondence  $X_{\tc,\ess}(\Gr,E,\sigma)$ over an  exotic self-similar  completion $C^*_{\tc,\ess}(\Gr,\sigma_{G})$  of $\Cont_c(\Gr,\sigma_{G})$ and 
an ideal in the kernel of  $C^*_{\tc,\ess}(\Gr,\sigma_{G})\onto C^*_{\red}(\Gr,\sigma_{G})$.

\end{enumerate}

\end{corollary}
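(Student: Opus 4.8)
The plan is to deduce everything from Proposition~\ref{prop:correspondence_btw_correspondence_reps}, which turns $\sigma$-twisted representations of $(\Gr,E)$ into representations of the $C^*$-correspondence $X(\Gr,E,\sigma)$, together with the self-similar completion machinery of Lemma~\ref{lem:self-similar_vs_positive} and Corollary~\ref{cor:self-similar_completions}. For the two universal isomorphisms I would argue purely by universal properties. Specialising to $P=\{2\}$, Proposition~\ref{prop:correspondence_btw_correspondence_reps} gives, via $\pi(\bone_g)=W_g$ and $\psi(\bone_{e,\sr(e)})=T_e$, a bijection between representations of $X(\Gr,E,\sigma)$ and $\sigma$-twisted representations $(W,T)$ of $(\Gr,E)$, carrying $J_{\reg}$-covariant representations exactly to Cuntz--Krieger covariant ones. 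Matching this against Definition~\ref{defn:universal_self_similar_algebras} and the universal properties of $\TT(X(\Gr,E,\sigma))$ and $\OO(X(\Gr,E,\sigma),J_{\reg})$ produces the two displayed isomorphisms. Gauge-invariance I would check on generators: the Cuntz--Pimsner action sends $j_X(x)\mapsto z\,j_X(x)$ and fixes $j_A(a)$, which under the identification reads $T_e\mapsto zT_e$, $W_g\mapsto W_g$; this is precisely the action attached to the length cocycle \eqref{eq:length_cocycle}, as $T_\alpha W_g T_\beta^*$ has degree $|\alpha|-|\beta|$.

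For the reduced statement I would first manufacture the exotic base. Put $C^*_{\tc,\red}(\Gr,\sigma_{\Gr})\coloneqq B(\tw^{\rd})$, the closed subalgebra of $\TT_{\red}(\Gr,E,\sigma)$ generated by the image of $\Gr$; being a quotient of $C^*(\Gr,\sigma_{\Gr})$ it is a Hausdorff completion of $\Cont_c(\Gr,\sigma_{\Gr})$. Let $(\pi_{\red},\psi_{\red})$ be the correspondence representation attached by Proposition~\ref{prop:correspondence_btw_correspondence_reps} to the regular representation $(\tw^{\rd},\ttt^{\rd})$, and let $I_{\tc,\red}=\ker\pi_{\red}$ be the kernel of $C^*(\Gr,\sigma_{\Gr})\onto C^*_{\tc,\red}(\Gr,\sigma_{\Gr})$. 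The decisive point is that $I_{\tc,\red}$ is positively $X(\Gr,E,\sigma)$-invariant: for $a\in I_{\tc,\red}$ and $x\in X(\Gr,E,\sigma)$ one has $\psi_{\red}(a\cdot x)=\pi_{\red}(a)\psi_{\red}(x)=0$, so $\langle a\cdot x,\,a\cdot x\rangle\in\ker\pi_{\red}$ and hence $a\cdot x\in X(\Gr,E,\sigma)\,I_{\tc,\red}$. By Lemma~\ref{lem:self-similar_vs_positive} the norm on $C^*_{\tc,\red}(\Gr,\sigma_{\Gr})$ is then self-similar, and Corollary~\ref{cor:self-similar_completions} furnishes the $C^*$-correspondence $X_{\tc,\red}(\Gr,E,\sigma)\cong X(E)\ox_{\Cont_0(E^0)}C^*_{\tc,\red}(\Gr,\sigma_{\Gr})$ over $C^*_{\tc,\red}(\Gr,\sigma_{\Gr})$. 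The essential algebra is handled in the same way with $(\tw^{\es},\ttt^{\es})$ and $C^*_{\tc,\ess}(\Gr,\sigma_{\Gr})\coloneqq B(\tw^{\es})$.

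Next I would realise $\TT_{\red}(\Gr,E,\sigma)$ as a relative Cuntz--Pimsner algebra. The representation $(\pi_{\red},\psi_{\red})$ descends to a representation of $X_{\tc,\red}(\Gr,E,\sigma)$ that is injective on the base, generates $\TT_{\red}(\Gr,E,\sigma)$, and by Remark~\ref{rem:gauge_actions_on_reduced_essential} intertwines the two gauge actions. The gauge-invariant uniqueness theorem for relative Cuntz--Pimsner algebras then identifies $\TT_{\red}(\Gr,E,\sigma)\cong\OO\big(X_{\tc,\red}(\Gr,E,\sigma),J_{\tc,\red}\big)$, where $J_{\tc,\red}=\{a\in C^*_{\tc,\red}(\Gr,\sigma_{\Gr}):\phi(a)\in\Comp(X_{\tc,\red}(\Gr,E,\sigma))\text{ and }\pi_{\red}(a)=\psi_{\red}^{(1)}(\phi(a))\}$ is the ideal of covariance. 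Enlarging this covariance ideal by the image of $J_{\reg}$ gives the analogous relative Cuntz--Pimsner models for $\OO_{\red}$ and $\OO_{\ess}$, and the essential versions run identically.

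The refined statements require locating $J_{\tc,\red}$, and this is the main obstacle. Invoking Proposition~\ref{prop:embeddings_of_algebras}\ref{enu:embeddings_of_algebras2} for the reduced algebra (any $E$) and \ref{enu:embeddings_of_algebras3} for the essential algebra ($E$ row-finite) supplies the canonical surjection $C^*_{\tc,\red}(\Gr,\sigma_{\Gr})\onto C^*_{\red}(\Gr,\sigma_{\Gr})$; I then have to prove $J_{\tc,\red}\subseteq\ker\big(C^*_{\tc,\red}(\Gr,\sigma_{\Gr})\onto C^*_{\red}(\Gr,\sigma_{\Gr})\big)$, which is the content of statements \ref{thm:Cuntz--Pimsner_picture1} and \ref{thm:Cuntz--Pimsner_picture2}. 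To do so I would compare $(\pi_{\red},\psi_{\red})$ with the regular (Fock) representation built over the honest base $C^*_{\red}(\Gr,\sigma_{\Gr})$, whose Toeplitz representation carries trivial covariance, forcing any $a$ obeying the covariance relation to vanish under $C^*_{\tc,\red}\onto C^*_{\red}$. The absence of row-finiteness is exactly what can break the essential comparison (cf. Remark~\ref{rem:various_inclusions}), and controlling the covariance ideal on the $\OO$-side is genuinely delicate, which is why it is isolated as Problem~\ref{problem:embed_into_OO}.
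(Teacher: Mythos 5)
Your overall architecture matches the paper's: the two universal isomorphisms come from Proposition~\ref{prop:correspondence_btw_correspondence_reps} plus universal properties, and the reduced/essential cases are handled by exhibiting the generated subalgebras as a correspondence with a covariance ideal and invoking the gauge-invariant uniqueness theorem. Your verification that $\ker\pi_{\red}$ is positively $X(\Gr,E,\sigma)$-invariant (via $\psi_{\red}(a\cdot x)=\pi_{\red}(a)\psi_{\red}(x)=0$, hence $\langle a\cdot x\mid a\cdot x\rangle\in\ker\pi_{\red}$, hence $a\cdot x\in X\ker\pi_{\red}$) is a clean way to feed Lemma~\ref{lem:self-similar_vs_positive} and Corollary~\ref{cor:self-similar_completions}; the paper instead works directly with $X_{W,T}=\clsp\{T_eW_g\}$ and $J_{W,T}=C^*_{W,T}(\Gr,\sigma_G)\cap\overline{X_{W,T}X_{W,T}^*}$ inside the target algebra, which amounts to the same thing.

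The genuine gap is in the decisive step for items \ref{thm:Cuntz--Pimsner_picture1} and \ref{thm:Cuntz--Pimsner_picture2}: you correctly isolate the containment of the covariance ideal in $\ker\bigl(C^*_{\tc,\red}(\Gr,\sigma_G)\onto C^*_{\red}(\Gr,\sigma_G)\bigr)$ as what must be proved, but you only gesture at it via a comparison with ``the Fock representation built over the honest base $C^*_{\red}(\Gr,\sigma_G)$''. For that comparison to force anything, you need a homomorphism from $\TT_{\red}(\Gr,E,\sigma)$ (not merely from the universal $\TT(\Gr,E,\sigma)$) onto the Fock--Toeplitz algebra of $X(E)\ox C^*_{\red}(\Gr,\sigma_G)$, i.e.\ you must show the Fock representation is weakly contained in the regular representation of $\widetilde{\Gg}(\Gr,E)$ --- and you neither state nor prove this. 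The paper avoids the issue entirely with a more economical device: compressing the regular representation to the ``vacuum'' vectors $\bone_{[\rg(g),g,\sr(g);\sr(g)]}$ yields a conditional expectation $E(b)=\sum_{g}P_gbP_g$ from $C^*_{W,T}(\Gr,\sigma_G)$ onto $c_0(\Gr^0)$; since each $T_f^*$ annihilates these vectors, $J_{W,T}\subseteq\ker E$, and the kernel of $C^*_{W,T}(\Gr,\sigma_G)\onto C^*_{\red}(\Gr,\sigma_G)$ is by definition the largest ideal contained in $\ker E$. The row-finite hypothesis in item \ref{thm:Cuntz--Pimsner_picture2} enters exactly because it guarantees these vacuum points lie in $\widetilde{\Gg}(\Gr,E)_{\Hau}$, so the same compression exists on the essential algebra. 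You should replace your Fock-space sketch by this expectation argument, or else supply the weak-containment statement your sketch silently assumes.
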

\begin{proof}
The first isomorphisms follow from Proposition~\ref{prop:correspondence_btw_correspondence_reps}  
and the universal description  of the considered algebras.
By Remark~\ref{rem:gauge_actions_on_reduced_essential} the algebras  $\TT_{*}(\Gr,E,\sigma)$ and  $\OO_{*}(\Gr,E,\sigma)$, for  $*=\red,\ess$,
are equipped with canonical gauge-actions. Suppose that $(W,T)$ is a representation of $(\Gr,E,\sigma)$ generating one of these algebras. Let
$$
X_{W,T} \coloneqq \clsp\{T_{e}W_g: (e,g)\in E^{1}*\Gr\}, \qquad C^*_{W,T}(\Gr,\sigma_{G}) \coloneqq \clsp\{W_{g} : g\in \Gr\},
$$
and 
$J_{W,T} \coloneqq C^*_{W,T}(\Gr,\sigma_{G})\cap 	\overline{X_{W,T}	X_{W,T}^*}$ where 
$$
\overline{X_{W,T}	X_{W,T}^*}=\clsp\{T_{e}W_g T_f^*: (e,g,f) \in  E^{1}\fibre{\sr}{\rg}\Gr\fibre{\sr}{\sr}E^{1}\}.
$$ Then $X_{W,T}$ is a  $C^*$-correspondence over $C^*_{W,T}(\Gr,\sigma_{G})$ and $J_{W,T}$ is an ideal in $C^*_{W,T}(\Gr,\sigma_{G})$ that acts faithfully and by compacts on the left
of 	$X_{W,T}$. Hence, the relative Cuntz--Pimsner algebra $\OO(J_{W,T},X_{W,T})$ is isomorphic to the 
given algebra by the gauge-invariance uniqueness theorem, see \cite{Kakariadis} for instance.

\ref{thm:Cuntz--Pimsner_picture1}.  Assume that $(W,T)=(\tw^{\rd,2},\ttt^{\rd,2})$ is the representation on  $\ell^2(\widetilde{\Gg}(\Gr,E))$ that generates  $\TT_{\red}(\Gr,E,\sigma)$.
By Proposition~\ref{prop:embeddings_of_algebras}\ref{enu:embeddings_of_algebras2},  $C^*_{W,T}(\Gr,\sigma_{G})$ is 
an exotic $C^*$-algebra of $\Cont_c(\Gr,\sigma_{G})$. Note that
denoting by $P_{g}$ the orthogonal projection onto the one dimensional space  $\mathbb{C}\bone_{[\rg(g),g,\sr(g);\sr(g)]}$ we get 
that the formula 
$
E(b)=\sum_{g\in \Gr}P_{g}bP_{g}
$,  $b\in C^*_{W,T}(\Gr,\sigma_{G})$, defines a canonical conditional expectation $C^*_{W,T}(\Gr,\sigma_{G})\onto c_0(\Gr^0)$.
Since every $T_f^*$, for $f\in E^1$ kills every  $\bone_{[\rg(g),g,\sr(g);\sr(g)]}$ we see that 
$J_{W,T} \subseteq \ker E$. As the kernel of $C^*_{W,T}(\Gr,\sigma_{G})\onto C^*_{\red}(\Gr,\sigma_{G})$
is the largest ideal in $\ker E$ the assertion follows. 

\ref{thm:Cuntz--Pimsner_picture2}. The same argument as in the proof \ref{thm:Cuntz--Pimsner_picture1} works,
as for row-finite graphs elements $[\rg(g),g,\sr(g);\sr(g)]$ are in $\widetilde{\Gg}(\Gr,E)_{\Hau}$, cf. the proof of Proposition~\ref{prop:embeddings_of_algebras}\ref{enu:embeddings_of_algebras2}.
\end{proof}

\begin{theorem}\label{thm:Toeplitz_nuclearity}
Let $(\Gr,E,\sigma)$ be a twisted self-similar groupoid action.
The following conditions are equivalent:
\begin{enumerate}
\item\label{enu:Toeplitz_nuclearity1} $\Gr$ is amenable;
\item\label{enu:Toeplitz_nuclearity2} $\widetilde{\Gg}(\Gr,E)$ is amenable;
\item\label{enu:Toeplitz_nuclearity3} $\widetilde{\Gg}(\Gr,E)_{0}$ is amenable;
\item\label{enu:Toeplitz_nuclearity4} $\TT(\Gr,E,\sigma)$ is nuclear; 
\item\label{enu:Toeplitz_nuclearity5} $\TT(\Gr,E,\sigma)_0$ is nuclear; 
\item\label{enu:Toeplitz_nuclearity6} $C^*(\Gr,\sigma_{\Gr})$ is nuclear;
\item\label{enu:Toeplitz_nuclearity7} $C^*_{\red}(\Gr,\sigma_{\Gr})$ is nuclear; 
\item\label{enu:Toeplitz_nuclearity8} $\TT_{\red}(\Gr,E)$ is nuclear;
\item\label{enu:Toeplitz_nuclearity9} $\TT_{\red}(\Gr,E)_0$ is nuclear.  
\end{enumerate}
Assume these equivalent conditions hold. Then $\TT_{\red}(\Gr,E,\sigma)_{*}=\TT(\Gr,E,\sigma)_{*}$ for $*=\Space\, ,0$, and  these algebras as well as  $\TT_{\ess}(\Gr,E,\sigma)_{*}$,
for $*=\Space\, ,0$, are nuclear. 
Moreover,  $\TT(\Gr,E,\sigma)$ satisfies the UCT and in fact is $KK$-equivalent to $C^*(\Gr,\sigma_{\Gr})$.
If, in addition, the graph $E$ is row-finite, then 
$$
\TT(\Gr,E,\sigma)=\TT_{\ess}(\Gr,E,\sigma).
$$
\end{theorem}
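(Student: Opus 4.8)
The plan is to run everything through the relative Cuntz--Pimsner models of Corollary~\ref{cor:Cuntz--Pimsner_picture}, as the introduction already signals that the Toeplitz--Pimsner picture is the device that forces the singular $C^*$-ideal to vanish in the row-finite case. Under the standing amenability hypothesis the earlier parts of the theorem give $\TT(\Gr,E,\sigma)=\TT_{\red}(\Gr,E,\sigma)$ and $C^*(\Gr,\sigma_G)=C^*_{\red}(\Gr,\sigma_G)$, so it suffices to show that the canonical surjection $q\colon \TT(\Gr,E,\sigma)\onto\TT_{\ess}(\Gr,E,\sigma)$ is injective; equivalently, that the singular ideal of the reduced algebra vanishes. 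Note that one cannot route this through Corollary~\ref{cor:vanishing_of_singulars}, since the twist $\LL_{\sigma}$ is typically topologically nontrivial (Example~\ref{ex:topologically_nontrivial_twist_from_self-similar}); the Pimsner model is precisely the substitute.

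First I would recall the two models. By Corollary~\ref{cor:Cuntz--Pimsner_picture}, $\TT(\Gr,E,\sigma)\cong\TT(X(\Gr,E,\sigma))$ is the Toeplitz algebra of the correspondence $X(\Gr,E,\sigma)$ over $C^*(\Gr,\sigma_G)$, while, \emph{because $E$ is row-finite}, $\TT_{\ess}(\Gr,E,\sigma)\cong\OO(J_{\ess},X_{\tc,\ess}(\Gr,E,\sigma))$ is a relative Cuntz--Pimsner algebra of a self-similar correspondence $X_{\tc,\ess}$ over an exotic completion $C^*_{\tc,\ess}(\Gr,\sigma_G)$ of $\Cont_c(\Gr,\sigma_G)$, with $J_{\ess}$ an ideal contained in $\ker\big(C^*_{\tc,\ess}(\Gr,\sigma_G)\onto C^*_{\red}(\Gr,\sigma_G)\big)$. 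Tracking the generators $W_g\mapsto\tw^{\es,2}_g$ and $T_e\mapsto\ttt^{\es,2}_e$ through Proposition~\ref{prop:correspondence_btw_correspondence_reps}, I would check that under these two identifications $q$ is exactly the homomorphism of relative Cuntz--Pimsner algebras induced by the canonical coefficient surjection $C^*(\Gr,\sigma_G)\onto C^*_{\tc,\ess}(\Gr,\sigma_G)$, the induced quotient of correspondences (Corollary~\ref{cor:self-similar_completions}), and the enlargement of the covariance ideal from $\{0\}$ to $J_{\ess}$.

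The decisive step is then a collapse driven by amenability. Being exotic, $C^*_{\tc,\ess}(\Gr,\sigma_G)$ sits between the full and reduced completions: the surjection onto $C^*_{\red}(\Gr,\sigma_G)$ is furnished by Proposition~\ref{prop:embeddings_of_algebras}\ref{enu:embeddings_of_algebras3} (this is where row-finiteness enters a second time), and the surjection from $C^*(\Gr,\sigma_G)$ is universal. Since amenability forces $C^*(\Gr,\sigma_G)=C^*_{\red}(\Gr,\sigma_G)$ and their composite is an isomorphism, both surjections are isomorphisms, so $C^*_{\tc,\ess}(\Gr,\sigma_G)=C^*(\Gr,\sigma_G)$ and the ideal $I_{\tc,\ess}=\ker\big(C^*(\Gr,\sigma_G)\onto C^*_{\tc,\ess}(\Gr,\sigma_G)\big)$ is zero. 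Then Corollary~\ref{cor:self-similar_completions} makes the correspondence quotient $X(\Gr,E,\sigma)\to X_{\tc,\ess}(\Gr,E,\sigma)$ an isomorphism, and $J_{\ess}\subseteq\ker(\operatorname{id})=\{0\}$. Hence $\OO(J_{\ess},X_{\tc,\ess})=\OO(\{0\},X(\Gr,E,\sigma))=\TT(X(\Gr,E,\sigma))$ and $q$ is the identity, giving $\TT(\Gr,E,\sigma)=\TT_{\ess}(\Gr,E,\sigma)$.

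The main obstacle I anticipate is bookkeeping rather than substance: I must confirm that the abstract isomorphisms of Corollary~\ref{cor:Cuntz--Pimsner_picture} are compatible with the \emph{canonical} quotient $q$ (not merely that the two algebras are abstractly isomorphic), and that the coefficient algebra of the essential model is genuinely squeezed between $C^*$ and $C^*_{\red}$ so that amenability collapses it. Once the generating representations are matched on the nose via Proposition~\ref{prop:correspondence_btw_correspondence_reps}, the amenability collapse and the resulting $J_{\ess}=0$ are formal. I would also mention the alternative gauge-invariant uniqueness route---verifying that $\tw^{\es,2}\times\ttt^{\es,2}$ is injective on $C^*(\Gr,\sigma_G)$ and admits no extra covariance on $\phi^{-1}(\Comp X)$---but the squeeze argument avoids the delicate Coburn-type computation and is the one I would carry out.
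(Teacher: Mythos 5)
Your treatment of the final claim ($E$ row-finite $\Rightarrow$ $\TT(\Gr,E,\sigma)=\TT_{\ess}(\Gr,E,\sigma)$) is correct and is essentially the paper's argument: amenability squeezes the exotic completion, $C^*(\Gr,\sigma_{\Gr})=C^*_{\tc,\ess}(\Gr,\sigma_{\Gr})=C^*_{\red}(\Gr,\sigma_{\Gr})$, so by Corollary~\ref{cor:self-similar_completions} the correspondence quotient is an isomorphism and the covariance ideal $J_{\ess}\subseteq\ker(\mathrm{id})$ vanishes, whence the essential algebra is again the Toeplitz algebra $\TT(X(\Gr,E,\sigma))$. The same squeeze applied to Corollary~\ref{cor:Cuntz--Pimsner_picture}\ref{thm:Cuntz--Pimsner_picture1} is exactly how the paper obtains $\TT(\Gr,E,\sigma)=\TT_{\red}(\Gr,E,\sigma)$; you assume that equality as given rather than deriving it, but the derivation is identical, so this portion is sound.

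The genuine gap is that this is only the last sentence of the theorem. You do not address the nine-way equivalence \ref{enu:Toeplitz_nuclearity1}--\ref{enu:Toeplitz_nuclearity9}, the nuclearity of the algebras $\TT_{*}(\Gr,E,\sigma)$ and their cores, or the UCT and $KK$-equivalence claims; instead you appeal to ``the earlier parts of the theorem,'' which is circular in a proof of the full statement. Those parts are not formal bookkeeping: the paper needs Katsura's theorem (nuclearity of a Toeplitz--Pimsner algebra, of its fixed-point algebra, and of its coefficient algebra are all equivalent, and $\TT(X)$ is $KK$-equivalent to the coefficient algebra, which yields the UCT claim) to link \ref{enu:Toeplitz_nuclearity4}--\ref{enu:Toeplitz_nuclearity6}; Takeishi's theorem to identify nuclearity of $C^*_{\red}(\Gr,\sigma_{\Gr})$ with amenability of $\Gr$; the observation that amenability is twist-independent, so the conditions can be transferred to the untwisted groupoid algebra $C^*(\widetilde{\Gg}(\Gr,E))$; and the Buss--Mart\'inez / Brix--Gonzales--Hume--Li results on nuclearity versus amenability for (non-Hausdorff) \'etale groupoids, plus the faithful conditional expectation onto the fixed-point algebra, to connect \ref{enu:Toeplitz_nuclearity2}, \ref{enu:Toeplitz_nuclearity3}, \ref{enu:Toeplitz_nuclearity8}, \ref{enu:Toeplitz_nuclearity9}. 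None of this machinery appears in your plan, so the bulk of the theorem remains unproven.
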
 
\begin{proof}
Assume \ref{enu:Toeplitz_nuclearity1}. 
Then $C^*(\Gr,\sigma_{\Gr})=C^*_{\red}(\Gr,\sigma_{\Gr})$ and they are nuclear by  \cite{BKMS}*{Theorem 11.7}  
In particular, in the notation of Corollary~\ref{cor:Cuntz--Pimsner_picture}\ref{thm:Cuntz--Pimsner_picture1}
we have $C^*_{\tc,\red}(\Gr,\sigma_{G})=C^*(\Gr,\sigma_{\Gr})=C^*_{\red}(\Gr,\sigma_{G})$ and therefore  $X_{\tc,\red}(\Gr,E,\sigma)=X(\Gr,E,\sigma)$,
cf. Corollary~\ref{cor:self-similar_completions}.  Thus,
$\TT(\Gr,E,\sigma)= \TT_{\red}(\Gr,E,\sigma)$ as they are both the Toeplitz algebra $\TT(X(\Gr,E,\sigma))$.
If $E$ is row-finite then the same argument, and Corollary~\ref{cor:Cuntz--Pimsner_picture}\ref{thm:Cuntz--Pimsner_picture2},
gives also $\TT(\Gr,E,\sigma)=\TT_{\ess}(\Gr,E,\sigma)$.
Applying \cite{Katsura}*{Theorem 7.2} to this Toeplitz algebra presentation we get 
that $\TT(\Gr,E,\sigma)= \TT_{\red}(\Gr,E,\sigma)$ and  $\TT(\Gr,E,\sigma)_0= \TT_{\red}(\Gr,E,\sigma)_0$ are nuclear.
Concluding \ref{enu:Toeplitz_nuclearity1} implies  the conditions  \ref{enu:Toeplitz_nuclearity4}--\ref{enu:Toeplitz_nuclearity9}.

By Corollary~\ref{cor:Cuntz--Pimsner_picture}, $\TT(\Gr,E,\sigma) \cong\TT(X(\Gr,E,\sigma))$. Hence, \ref{enu:Toeplitz_nuclearity4}--\ref{enu:Toeplitz_nuclearity6}
are equivalent by \cite{Katsura}*{Theorem 7.2}. They imply each of the conditions \ref{enu:Toeplitz_nuclearity7}--\ref{enu:Toeplitz_nuclearity9}, as nuclearity passes to quotients. By \cite{Takeishi}*{Theorem 5.4}, \ref{enu:Toeplitz_nuclearity7} and \ref{enu:Toeplitz_nuclearity1} are equivalent. 
Hence, \ref{enu:Toeplitz_nuclearity1} and \ref{enu:Toeplitz_nuclearity4}--\ref{enu:Toeplitz_nuclearity7} are all equivalent.
Note that they are independent of twist since \ref{enu:Toeplitz_nuclearity1} is. Hence, 
they are equivalent to nuclearity of $\TT(\Gr,E)$ which is the groupoid $C^*$-algebra $C^*(\widetilde{\Gg}(\Gr,E))$ by Theorem  \ref{thm:presentations_of_twisted_self_similar_algebras}. Therefore, these conditions are equivalent to \ref{enu:Toeplitz_nuclearity1} and to \ref{enu:Toeplitz_nuclearity8} by 
\cite{Buss_Martinez}*{Theorem A} or \cite{Brix-Gonzales-Hume-Li:Hausdorff_covers}*{Theorem F}. Conditions \ref{enu:Toeplitz_nuclearity8} and \ref{enu:Toeplitz_nuclearity9}
are equivalent  because we have a conditional expectation $\TT_{\red}(\Gr,E)\onto \TT_{\red}(\Gr,E)_0$, cf. Remark~\ref{rem:gauge_actions_on_reduced_essential}.
Then \ref{enu:Toeplitz_nuclearity9} and \ref{enu:Toeplitz_nuclearity3} are equivalent again by  
\cite{Buss_Martinez}*{Theorem A} or \cite{Brix-Gonzales-Hume-Li:Hausdorff_covers}*{Theorem F}, cf. Remark~\ref{remembeddings_of_algebras}.
Alternatively, one can get the equivalence of  \ref{enu:Toeplitz_nuclearity3} and \ref{enu:Toeplitz_nuclearity2} by \cite{Miller_Steinberg}*{Proposition 2.17(5)}.
This proves the equivalence of all the conditions \ref{enu:Toeplitz_nuclearity1}--\ref{enu:Toeplitz_nuclearity9}, as well as the second part of the assertion.
\end{proof}

We highlight the following open problems.

\begin{question}\label{problem:nuclearity}
Let $(\Gg,\LL)$ be a   non-Hausdorff, say second countable, twisted \'etale groupoid.
Show that nuclearity of $C^*_{\red}(\Gg,\LL)$, nuclearity of $C^*(\Gg,\LL)$, and  amenability of $\Gg$ are all equivalent, and if they hold, then
   $C^*_{\red}(\Gg,\LL)=C^*(\Gg,\LL)$. If $C^*_{\red}(\Gg,\LL)$ is nuclear, does it always satisfy the UCT?
\end{question}

\begin{remark}\label{rem:non-Hausdorff_nuclear}
The UCT part of Problem~\ref{problem:nuclearity} asks for generalisation of Tu's celebrated result \cite{Tu}, see \cite{Barlak_Li}, to the non-Hausdorff setting. The characterisation of nuclearity in Problem~\ref{problem:nuclearity} asks for a generalisation of \cite{Buss_Martinez}*{Theorem A} and \cite{Brix-Gonzales-Hume-Li:Hausdorff_covers}*{Theorem F} to the twisted case. If we knew  this, then we could  simplify the above proof, and we could also add twists to items 
\ref{enu:Toeplitz_nuclearity8} and \ref{enu:Toeplitz_nuclearity9}. Instead, our proof is based on  the theory of Cuntz--Pimsner algebras and results for discrete groupoids.  An important upshot of this method is the last part of the assertion, that we now reformulate as a separate corollary.
\end{remark}
\begin{corollary}\label{cor:row_finiteness_and_toeplitz_groupoids}
For a twisted self-similar action $(\Gr,E,\sigma)$ of an amenable groupoid on a row-finite graph, the $C^*$-algebraic singular ideal 
for the universal groupoid $\widetilde{\Gg}(\Gr,E)$ vanishes.
\end{corollary}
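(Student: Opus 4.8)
The plan is to recognise the statement as an essentially immediate reformulation of the final clause of Theorem~\ref{thm:Toeplitz_nuclearity}, once the abstract singular ideal is matched with the concrete reduced-to-essential map. First I would recall that, since $\widetilde{\Gg}(\Gr,E)_{\Hau}$ is comeager (the groupoid is covered by a sequence of bisections), the $C^*$-algebraic singular ideal $J^{2}_{\sing}$ of the reduced algebra $F^2_{\red}(\widetilde{\Gg}(\Gr,E),\LL_{\sigma})$ is precisely the kernel of the canonical representation $F^2_{\red}(\widetilde{\Gg}(\Gr,E),\LL_{\sigma})\to F^2_{\ess}(\widetilde{\Gg}(\Gr,E),\LL_{\sigma})$, as noted in the remark following the definition of the singular ideal. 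By the isometric isomorphisms of Theorem~\ref{thm:presentations_of_twisted_self_similar_algebras} together with Definition~\ref{def:reduced_and_essential_self-similar-algebras}, these two algebras are $\TT_{\red}(\Gr,E,\sigma)$ and $\TT_{\ess}(\Gr,E,\sigma)$, and the map in question is the canonical surjection $\TT_{\red}(\Gr,E,\sigma)\to\TT_{\ess}(\Gr,E,\sigma)$. Thus it suffices to prove that this surjection is injective.

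Next I would invoke Theorem~\ref{thm:Toeplitz_nuclearity}. Since $\Gr$ is amenable and $E$ is row-finite, its concluding clause gives $\TT(\Gr,E,\sigma)=\TT_{\ess}(\Gr,E,\sigma)$, while amenability of $\Gr$ already forces $\TT(\Gr,E,\sigma)=\TT_{\red}(\Gr,E,\sigma)$. Composing these two equalities shows that the canonical map $\TT_{\red}(\Gr,E,\sigma)\to\TT_{\ess}(\Gr,E,\sigma)$ is an isomorphism, whence its kernel $J^{2}_{\sing}$ vanishes. Specialising to the trivial twist $\sigma=1$ recovers the untwisted statement for $\Cst_{\red}(\widetilde{\Gg}(\Gr,E))$.

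The substantive content sits entirely inside Theorem~\ref{thm:Toeplitz_nuclearity}, whose proof passes through the relative Cuntz--Pimsner model of Corollary~\ref{cor:Cuntz--Pimsner_picture} and Katsura's nuclearity results; at the level of this corollary the only genuine care is bookkeeping. The one point I would verify explicitly is that the abstract identification of $J^{2}_{\sing}$ as a kernel is compatible with the concrete groupoid-model isomorphisms, i.e.\ that the map realising $\TT_{\red}\to\TT_{\ess}$ really is the canonical reduced-to-essential map $F^2_{\red}\to F^2_{\ess}$. This is built into Definition~\ref{def:reduced_and_essential_self-similar-algebras}, where both algebras are defined as ranges of compressions of the same regular representation to nested invariant subspaces $\ell^2(\widetilde{\Gg}(\Gr,E))$ and $\ell^2(\widetilde{\Gg}(\Gr,E)_{\Hau})$, so the compatibility is automatic. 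Consequently the only potential obstacle---ensuring the various canonical maps line up---dissolves, and the corollary follows directly.
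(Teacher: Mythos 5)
Your proposal is correct and matches the paper exactly: the corollary is stated there as a direct reformulation of the final clause of Theorem~\ref{thm:Toeplitz_nuclearity}, and your argument supplies precisely the intended bookkeeping — identifying $J^{2}_{\sing}$ with the kernel of the canonical map $\TT_{\red}(\Gr,E,\sigma)\to\TT_{\ess}(\Gr,E,\sigma)$ via Definition~\ref{def:reduced_and_essential_self-similar-algebras}, and then using $\TT=\TT_{\red}$ (amenability) together with $\TT=\TT_{\ess}$ (row-finiteness) to conclude that this kernel is zero.
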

\begin{remark}\label{rem:row_finiteness_and_toeplitz_groupoids}
Without row-finiteness assumption the above corollary as well as Corollary~\ref{cor:Cuntz--Pimsner_picture}\ref{thm:Cuntz--Pimsner_picture2}
and Proposition~\ref{prop:embeddings_of_algebras}\ref{enu:embeddings_of_algebras3} fail. 
Indeed, note that in Example~\ref{ex:infinitely_many_edges} (where the graph is not row-finite) we have  $\widetilde{\Gg}(\Gr,E)=\Gg(\Gr,E)$ and this groupoid is amenable. Thus,  we have 
$$\TT^P(\Gr,E)=\TT^P_{\red}(\Gr,E)\neq \TT^P_{\ess}(\Gr,E),
$$
for any non-empty $P\subseteq [1,\infty]$.
The power of Corollary~\ref{cor:row_finiteness_and_toeplitz_groupoids} lies in that  it allows one to construct ``arbitrarily'' non-Hausdorff groupoids for which the singular ideal vanishes. The recent example from \cite{Martinez_Szakacs}*{Section 5} does not fall into this class, as  
the group acting there is non-amenable and the graph is not row-finite.
\end{remark}

Applying the isomorphism $\OO(\Gr,E,\sigma)\cong \OO(X(\Gr,E,\sigma),J_{\reg})$ is much harder, but we record some
consequences for future reference.
\begin{proposition}
Let  $(\Gr,E,\sigma)$	be a twisted self-similar action and let $*=\Space,\red, \ess$. 
Denote by $A_{*}$, $J_{*}$ and $X_*$ the images of $C^*(\Gr,\sigma_{\Gr})$, $J_{\reg}$  and $X(\Gr,E,\sigma)$ in $\OO_{*}(\Gr,E,\sigma)$, respectively. 
We have the six-term  exact sequence
$$
\begin{xy}
\xymatrix{
	K_0(J_{*}) \ar[rr]_{K_0(\iota)- K_0(X_{*})} & & K_0(A_*) \ar[rr]_{K_0(\iota)\,\,\,\,\,\,\,\,\,}  & &  \ar[d]  K_0(\OO_{*}(\Gr,E,\sigma))
	\\
	K_1(\OO_{*}(\Gr,E,\sigma)) \ar[u]  & & K_1(A_*)  \ar[ll]_{\,\,\,\,\,\,\,\,\,\,\,\,K_1(\iota)}  &  &  \ar[ll]_{K_1(\iota)- K_1(X_{*})}  K_1(J_{*})
} 
\end{xy}, 
$$
where $\iota$ stands for inclusion and $K_i(X_*)$, $i = 0,1$, is the homomorphism given by taking the Kasparov product with the Kasparov class associated to $X_*$. The $C^*$-algebra $\OO_{*}(\Gr,E,\sigma)$
is nuclear if and only if the inclusion $A_*\subseteq \OO_{*}(\Gr,E,\sigma)_0$ is nuclear.
If this holds and both $A_*$ and $J_*$ satisfy the UCT, then $\OO_{*}(\Gr,E,\sigma)$ satisfies the UCT.
\end{proposition}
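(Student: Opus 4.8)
The plan is to transport everything to the relative Cuntz--Pimsner picture and then feed it into the general machinery of Pimsner and Katsura. By Corollary~\ref{cor:Cuntz--Pimsner_picture}, together with the self-similar completions of Corollary~\ref{cor:self-similar_completions}, for each $*=\Space,\red,\ess$ the algebra $\OO_{*}(\Gr,E,\sigma)$ is a relative Cuntz--Pimsner algebra $\OO(J_{*},X_{*})$, where $X_{*}$ is the $\Cst$-correspondence over $A_{*}$ obtained as the image of $X(\Gr,E,\sigma)$ and $J_{*}$ is the image of $J_{\reg}=\Cst(\Gr_{\reg},\sigma_{\Gr}|_{\Gr_{\reg}})$. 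The first thing I would record is that $J_{*}$ acts faithfully and by generalised compact operators on $X_{*}$: this is exactly the content of the computation \eqref{eq:compact_sum_rep} in the proof of Proposition~\ref{prop:correspondence_btw_correspondence_reps}, showing $\phi(\bone_g)\in\Comp(X(\Gr,E,\sigma))$ for $g\in\Gr_{\reg}$, and it descends to each quotient. I would also note that the isomorphism $\OO_{*}(\Gr,E,\sigma)\cong\OO(J_{*},X_{*})$ is gauge-equivariant, so that the fixed-point algebra $\OO_{*}(\Gr,E,\sigma)_{0}$ coincides with the core of $\OO(J_{*},X_{*})$, and $A_{*}$ sits inside it as the image of the coefficient algebra.

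With this in hand, the six-term exact sequence is the standard one for relative Cuntz--Pimsner algebras. I would invoke the Pimsner--Katsura sequence for $\OO(J_{*},X_{*})$, which reads
\[
K_{0}(J_{*})\xrightarrow{K_{0}(\iota)-K_{0}(X_{*})}K_{0}(A_{*})\xrightarrow{K_{0}(\iota)}K_{0}(\OO(J_{*},X_{*}))\to K_{1}(J_{*})\to\cdots,
\]
where the left-hand map is inclusion minus Kasparov product with the class of the correspondence, and the second is induced by the inclusion of the coefficient algebra. Since $J_{*}$ acts faithfully and by compacts, this is precisely the sequence in the statement after transporting along $\OO_{*}(\Gr,E,\sigma)\cong\OO(J_{*},X_{*})$; the $K_{1}$-row and the connecting maps come out as displayed.

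For nuclearity I would apply Katsura's nuclearity criterion, namely the very result \cite{Katsura}*{Theorem 7.2} already used in the proof of Theorem~\ref{thm:Toeplitz_nuclearity}. That theorem characterises nuclearity of the (relative) Cuntz--Pimsner algebra through nuclearity of the embedding of the coefficient algebra into its core. As the core is $\OO_{*}(\Gr,E,\sigma)_{0}$ and the coefficient algebra is $A_{*}$, this yields exactly that $\OO_{*}(\Gr,E,\sigma)$ is nuclear if and only if the inclusion $A_{*}\subseteq\OO_{*}(\Gr,E,\sigma)_{0}$ is a nuclear map; the circle being amenable, the passage between nuclearity of the core and of the whole algebra is mediated by the faithful gauge conditional expectation, which is the mechanism behind Katsura's statement.

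Finally, for the UCT I would use that $\OO(J_{*},X_{*})$ is, up to $KK$-equivalence, the mapping cone of a representative of the class $[\iota]-[X_{*}]\in KK(J_{*},A_{*})$ realising the first map of the six-term sequence. Assuming $\OO_{*}(\Gr,E,\sigma)$ nuclear and separable (guaranteed by the standing countability of $\Gr$ and $E$), and that $A_{*}$ and $J_{*}$ lie in the bootstrap class, the mapping cone lies in the bootstrap class, which is closed under suspensions and extensions; hence $\OO_{*}(\Gr,E,\sigma)$ is $KK$-equivalent to a bootstrap-class algebra and therefore satisfies the UCT. The main obstacle I anticipate is the bookkeeping in the reduced and essential cases: one must check that Katsura's hypotheses survive passage to the exotic completions $A_{*}$ --- in particular that the identification with a relative Cuntz--Pimsner algebra is genuinely gauge-equivariant and that $J_{*}$ remains a faithfully and compactly acting ideal after quotienting --- and that the $KK$-theoretic mapping-cone description is invoked in the relative rather than only the absolute setting.
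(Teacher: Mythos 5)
Your proposal is correct and follows essentially the same route as the paper: identify $\OO_{*}(\Gr,E,\sigma)$ with a relative Cuntz--Pimsner algebra via Corollary~\ref{cor:Cuntz--Pimsner_picture} and then quote Katsura's six-term exact sequence, nuclearity criterion, and UCT permanence (the paper cites \cite{Katsura}*{Propositions 8.7, 8.8 and Theorem 7.3}). Your additional remarks --- that $J_{*}$ acts faithfully and by compacts, that the identification is gauge-equivariant, and the mapping-cone reading of the UCT step --- are exactly the hypotheses and mechanisms underlying those cited results, so they are welcome elaboration rather than a different argument.
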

\begin{proof} By Corollary~\ref{cor:Cuntz--Pimsner_picture} we may view $\OO_{*}(\Gr,E,\sigma)$ as
the relative Cuntz--Pimsner algebra $\OO(X_{*},J_{*})$. The assertion follows by applying \cite{Katsura}*{Propositions 8.7, 8.8, and Theorem 7.3}.	
\end{proof}
In the untwisted case we can get a result similar to 
Theorem~\ref{thm:Toeplitz_nuclearity} by using groupoid models and recent results of \cite{Miller_Steinberg}.
\begin{theorem}\label{thm:Cuntz--Pimsner_nuclearity}
Let $(\Gr,E)$ be a  self-similar groupoid action.
The following are equivalent:
\begin{enumerate}
\item\label{enu:Cuntz--Pimsner_nuclearity1} $\Gg(\Gr,E)_{00}$ is amenable;
\item\label{enu:Cuntz--Pimsner_nuclearity2} $\Gg(\Gr,E)$ is amenable;
\item\label{enu:Cuntz--Pimsner_nuclearity3} $\Gg(\Gr,E)_{0}$ is amenable;
\item\label{enu:Cuntz--Pimsner_nuclearity4} $\OO(\Gr,E)$ is nuclear; 
\item\label{enu:Cuntz--Pimsner_nuclearity5} $\OO(\Gr,E)_{0}$ is nuclear; 
\item\label{enu:Cuntz--Pimsner_nuclearity6} $\OO(\Gr,E)_{00}$ is nuclear;
\item\label{enu:Cuntz--Pimsner_nuclearity7} $\OO_{\red}(\Gr,E)$ is nuclear; 
\item\label{enu:Cuntz--Pimsner_nuclearity8} $\OO_{\red}(\Gr,E)_0$ is nuclear;
\item\label{enu:Cuntz--Pimsner_nuclearity9} $\OO_{\red}(\Gr,E)_{00}$ is nuclear.  
\end{enumerate}
If the above equivalent conditions hold, then   for any $*=\Space\, ,0, 00$ and non-empty $P\subseteq [1,\infty]$
\begin{equation}\label{eq:eqaulity_from_nuclearity}
\OO_{\red}^{P}(\Gr,E)_{*}=\OO^P(\Gr,E)_{*}.
\end{equation}
The above equivalent conditions always hold when $(\Gr,E)$ is contracting.
\end{theorem}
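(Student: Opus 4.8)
The plan is to reduce every one of the nine conditions to an amenability or nuclearity statement about one of the three open subgroupoids $\Gg_{00}(\Gr,E)\subseteq \Gg_{0}(\Gr,E)\subseteq \Gg(\Gr,E)$, and then to close a single equivalence cycle. Specialising Theorem~\ref{thm:presentations_of_twisted_self_similar_algebras} to $P=\{2\}$ and the trivial twist gives $\OO(\Gr,E)\cong C^*(\Gg(\Gr,E))$ and $\OO_{\red}(\Gr,E)\cong C^*_{\red}(\Gg(\Gr,E))$, while Remark~\ref{remembeddings_of_algebras} identifies the reduced core subalgebras $\OO_{\red}(\Gr,E)_0$ and $\OO_{\red}(\Gr,E)_{00}$ with $C^*_{\red}(\Gg_0(\Gr,E))$ and $C^*_{\red}(\Gg_{00}(\Gr,E))$. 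Using the gauge action of Remark~\ref{rem:gauge_actions_on_reduced_essential} one obtains the analogous identification of the full core subalgebras $\OO(\Gr,E)_0$ and $\OO(\Gr,E)_{00}$ with $C^*(\Gg_0(\Gr,E))$ and $C^*(\Gg_{00}(\Gr,E))$. With these dictionaries in place, conditions \ref{enu:Cuntz--Pimsner_nuclearity1}--\ref{enu:Cuntz--Pimsner_nuclearity3} become amenability of $\Gg_{00}$, $\Gg$, $\Gg_0$, conditions \ref{enu:Cuntz--Pimsner_nuclearity4}--\ref{enu:Cuntz--Pimsner_nuclearity6} become nuclearity of the three full groupoid $C^*$-algebras, and \ref{enu:Cuntz--Pimsner_nuclearity7}--\ref{enu:Cuntz--Pimsner_nuclearity9} become nuclearity of the three reduced ones.

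The three amenability conditions are equivalent to one another by \cite{Miller_Steinberg}*{Theorem 2.18}, since $\Gg_{00}\subseteq\Gg_0\subseteq\Gg$ are wide open subgroupoids and amenability of any one forces amenability of all. For each fixed subgroupoid $\Gg_*$, $*=\Space,0,00$, I would invoke \cite{Buss_Martinez}*{Theorem A} (or \cite{Brix-Gonzales-Hume-Li:Hausdorff_covers}*{Theorem F}) to get that amenability of $\Gg_*$ is equivalent to nuclearity of $C^*_{\red}(\Gg_*)$; this yields \ref{enu:Cuntz--Pimsner_nuclearity2}$\Leftrightarrow$\ref{enu:Cuntz--Pimsner_nuclearity7}, \ref{enu:Cuntz--Pimsner_nuclearity3}$\Leftrightarrow$\ref{enu:Cuntz--Pimsner_nuclearity8}, and \ref{enu:Cuntz--Pimsner_nuclearity1}$\Leftrightarrow$\ref{enu:Cuntz--Pimsner_nuclearity9}. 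To incorporate the full algebras I would argue in both directions: amenability of $\Gg_*$ gives $C^*(\Gg_*)=C^*_{\red}(\Gg_*)$, so it forces nuclearity of the full algebras \ref{enu:Cuntz--Pimsner_nuclearity4}--\ref{enu:Cuntz--Pimsner_nuclearity6}; conversely nuclearity passes to quotients, and the canonical surjections $C^*(\Gg_*)\onto C^*_{\red}(\Gg_*)$ turn \ref{enu:Cuntz--Pimsner_nuclearity4}--\ref{enu:Cuntz--Pimsner_nuclearity6} back into \ref{enu:Cuntz--Pimsner_nuclearity7}--\ref{enu:Cuntz--Pimsner_nuclearity9}. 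Together with the Miller--Steinberg equivalence this closes the cycle through all nine statements.

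For the supplementary equation~\eqref{eq:eqaulity_from_nuclearity} I would note that once the equivalent conditions hold each $\Gg_*(\Gr,E)$ is amenable, so Remark~\ref{rem:amenability} (resting on \cite{Gardella_Lupini17}*{Theorem 6.19}, and using the standing countability hypothesis on $\Gr$ and $E$) gives $F^P(\Gg_*)=F^P_{\red}(\Gg_*)$ for every nonempty $P\subseteq[1,\infty]$; translating through the groupoid models of Definition~\ref{def:reduced_and_essential_self-similar-algebras} this is exactly $\OO^P(\Gr,E)_*=\OO^P_{\red}(\Gr,E)_*$. The final clause about contracting actions follows because \cite{Miller_Steinberg}*{Corollary 2.19} (cf. Corollary~\ref{cor:contracting_non_Hausdorff}) shows $\Gg(\Gr,E)$ is amenable whenever $(\Gr,E)$ is contracting, so condition \ref{enu:Cuntz--Pimsner_nuclearity2} holds.

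I expect the main obstacle to be the identification of the \emph{full} core subalgebras $\OO(\Gr,E)_0$ and $\OO(\Gr,E)_{00}$ with the full groupoid $C^*$-algebras $C^*(\Gg_0(\Gr,E))$ and $C^*(\Gg_{00}(\Gr,E))$: the reduced and essential analogues are handed to us by Remark~\ref{remembeddings_of_algebras}, but for the full algebra one must check that the fixed-point algebra of the gauge action, respectively the subalgebra generated by $\Gr$ and the diagonal, really coincides with the universal algebra of the corresponding open subgroupoid rather than merely surjecting onto it. I would establish this using the gauge action from Remark~\ref{rem:gauge_actions_on_reduced_essential} together with the open-subgroupoid structure; alternatively, one can sidestep part of it by transferring nuclearity directly along the $\T$-gauge conditional expectations, using that fixed-point algebras of compact-group actions on nuclear $C^*$-algebras are nuclear. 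It is worth stressing that the deeper difficulties flagged in Problem~\ref{problem:nuclearity}---the twisted amenability-versus-nuclearity equivalence and the UCT---do not arise here precisely because the theorem is untwisted, which is exactly what makes \cite{Buss_Martinez}*{Theorem A} and \cite{Brix-Gonzales-Hume-Li:Hausdorff_covers}*{Theorem F} applicable verbatim.
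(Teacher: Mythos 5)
Your proposal is correct and follows essentially the same route as the paper: Miller--Steinberg for the equivalence of the three amenability conditions, the Buss--Mart\'inez/BGHL nuclearity-amenability theorem applied to each subgroupoid via the identification of the reduced cores as reduced groupoid algebras (Remark~\ref{remembeddings_of_algebras}), quotient arguments for the full algebras, Gardella--Lupini for~\eqref{eq:eqaulity_from_nuclearity}, and Miller--Steinberg again for the contracting case. The only caveat is your opening claim that the gauge action identifies $\OO(\Gr,E)_{0}$ and $\OO(\Gr,E)_{00}$ with the \emph{universal} algebras $C^*(\Gg_0(\Gr,E))$ and $C^*(\Gg_{00}(\Gr,E))$ --- these full cores are a priori only exotic completions of $\mathfrak{C}_c(\Gg_*(\Gr,E))$ (and $\Gg_{00}$ is not even the kernel of the cocycle), but as you yourself note this identification is not needed: the surjection of each full core onto the corresponding reduced core, together with amenability forcing full $=$ reduced, is exactly what the paper uses to close the cycle.
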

\begin{proof}
Conditions \ref{enu:Cuntz--Pimsner_nuclearity1}--\ref{enu:Cuntz--Pimsner_nuclearity3} are equivalent by \cite{Miller_Steinberg}*{Theorem 2.18},
as amenability passes to open subgroupoids, cf. also \cite{Miller_Steinberg}*{Proposition 2.17(5)}.
By Remark~\ref{remembeddings_of_algebras} for each $*=0,00$ the core subalgebras $\OO^P_{\red}(\Gr,E)_{*}$ are reduced. 
Hence, conditions \ref{enu:Cuntz--Pimsner_nuclearity1}--\ref{enu:Cuntz--Pimsner_nuclearity9} are all equivalent 
by \cite{Buss_Martinez}*{Theorem A} or \cite{Brix-Gonzales-Hume-Li:Hausdorff_covers}*{Theorem F}.
Equality \eqref{eq:eqaulity_from_nuclearity}  follows from \cite{Gardella_Lupini17}*{Theorem 6.19}, cf. Remark~\ref{rem:amenability}.
By \cite{Miller_Steinberg}*{Corollary 2.19} conditions \ref{enu:Cuntz--Pimsner_nuclearity1}--\ref{enu:Cuntz--Pimsner_nuclearity3} hold for contracting actions.
\end{proof}
\begin{remark} Recall the action $\Gr \curvearrowright \partial E$ from Remark~\ref{rem:transformation_groupoids_from_groupoid_actions}.
If the transformation groupoid $\Gr\rtimes \partial E$ is amenable, then the above equivalent conditions  \ref{enu:Cuntz--Pimsner_nuclearity1}--\ref{enu:Cuntz--Pimsner_nuclearity9} hold. When $(\Gr,E)$ is pseudo free the converse implication is also true, as then $\Gg(\Gr,E)_{00}\cong\Gr\rtimes \partial E$, see Proposition~\ref{prop:pseudo_freeness}.
When the action $(\Gr,E)$ is faithful, or even only loosely faithful, then
$\Gg(\Gr,E)_{00}$ is isomorphic to the groupoids of germs of $\Gr \curvearrowright \partial E$, see \cite{Miller_Steinberg}.
\end{remark}
\begin{remark}\label{rem:Cuntz--Pimsner_nuclearity}
Recall that Hausdorffness of any of the groupoids in  \ref{enu:Cuntz--Pimsner_nuclearity1}--\ref{enu:Cuntz--Pimsner_nuclearity3}
is equivalent to condition $\Fin$. If this holds, then the above conditions remain equivalent if we consider  twisted algebras in items \ref{enu:Cuntz--Pimsner_nuclearity4}--\ref{enu:Cuntz--Pimsner_nuclearity9}, for any twist $\sigma$ of $(\Gr,E)$, as the solution to the Problem~\ref{problem:nuclearity} in the Hausdorff case is known, cf. \cite{Takeishi} and Remark~\ref{rem:non-Hausdorff_nuclear}. 
We could also add twist \eqref{eq:eqaulity_from_nuclearity}, if Problem~\ref{problem:amenability_implies_twisted_coincide} is solved.
\end{remark}

\appendix

\section{Gauge actions, fixed-point subalgebras and 1-cocycles}\label{sec:gauge_actions}
Let $(\Gamma,+)$ be a discrete abelian group and let $\widehat{\Gamma}$ be its dual compact group. In this paper we are interested in the case where $\Gamma=\Z$ and $\widehat{\Gamma}=\T$,
but since all the arguments remain valid in this slightly more general picture we keep it for possible future reference. By an \emph{action of $\widehat{\Gamma}$ on a Banach algebra} $A$ we mean a group homomorphism $\kappa:\widehat{\Gamma}\to \Aut(A)$ into a group of isometric automorphisms on $A$ such that
for each $a\in A$, the map $\widehat{\Gamma}\ni z\mapsto \gamma_z(a)\in A$ is continuous. Then
$$
A_{t} \coloneqq \{a\in A: \kappa_{z}(a)= z(t) a\}, \qquad  t\in \Gamma,
$$
are Banach subspaces that we call \emph{spectral subspaces}. They clearly satisfy $A_t\cdot A_s\subseteq A_{s+t}$ and so in particular, 
$A_0$ is a Banach subalgebra of $A$ called the \emph{fixed-point subalgebra} of $A$. 
\begin{lemma}\label{lem:grading_from_gauge_action}
For any action $\kappa:\widehat{\Gamma}\to \Aut(A)$  on a Banach algebra $A$, we have $A=\overline{\bigoplus_{t\in \Gamma} A_t}$. That is, the spectral subspaces are linearly independent and their closed linear span is $A$. For each $t\in \Gamma$ there is a unique  contractive linear projection $E_t:A\to A_t\subseteq A$ such that $E_t(A_s)=0$ for $s\neq t$. Moreover, we have $E_t(ab)=aE_t(b)$ and $E_t(ba)=E_t(b)a$  for  $a\in A_0$, $b\in B$, $t\in \Gamma$.
\end{lemma}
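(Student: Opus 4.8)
The plan is to realise each $E_t$ explicitly as a Fourier coefficient, using integration against the normalised Haar measure $dz$ on the compact group $\widehat{\Gamma}$. Concretely, for $a \in A$ and $t \in \Gamma$ I would set
\[
E_t(a) \coloneqq \int_{\widehat{\Gamma}} \overline{z(t)}\, \kappa_z(a)\, dz.
\]
Since the action is strongly continuous, the map $z \mapsto \overline{z(t)}\kappa_z(a)$ is a continuous $A$-valued function on the compact space $\widehat{\Gamma}$, so this Bochner integral is well defined, and because each $\kappa_z$ is isometric and $dz$ is a probability measure we immediately obtain $\|E_t(a)\| \le \|a\|$; thus $E_t$ is a contractive linear map.

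The basic identities all reduce to invariance of Haar measure and orthogonality of characters. Substituting $z' = wz$ and using translation invariance gives $\kappa_w(E_t(a)) = w(t)\, E_t(a)$ for all $w \in \widehat{\Gamma}$, so $E_t$ maps into $A_t$. If $a \in A_s$, then $\kappa_z(a) = z(s)a$, whence $E_t(a) = \big(\int_{\widehat{\Gamma}} z(s-t)\, dz\big)\, a$, which equals $a$ when $s = t$ and $0$ otherwise, by orthogonality of characters on $\widehat{\Gamma}$. In particular $E_t$ is a projection onto $A_t$ annihilating every $A_s$ with $s \ne t$, and linear independence of the spectral subspaces follows by applying the $E_{t_j}$ to a finite relation $\sum_i a_i = 0$ with $a_i \in A_{t_i}$ and distinct $t_i$. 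The module property is just as direct: for $a \in A_0$ we have $\kappa_z(a) = a$, so $\kappa_z(ab) = a\,\kappa_z(b)$, and pulling the fixed factor $a$ outside the integral yields $E_t(ab) = a\,E_t(b)$, and symmetrically $E_t(ba) = E_t(b)\,a$.

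The one step requiring genuine work is the density assertion $A = \overline{\bigoplus_{t\in\Gamma} A_t}$, and this is where I expect the main obstacle to lie, since the Fourier series $\sum_t E_t(a)$ need not converge. I would circumvent this by a Fej\'er-type summation. For $f \in L^1(\widehat{\Gamma})$ put $\kappa_f(a) \coloneqq \int_{\widehat{\Gamma}} f(z)\,\kappa_z(a)\, dz$; when $f$ is a trigonometric polynomial $f(z) = \sum_{t\in F} c_t\, \overline{z(t)}$ with $F \subseteq \Gamma$ finite, linearity gives $\kappa_f(a) = \sum_{t\in F} c_t\, E_t(a) \in \bigoplus_{t\in F} A_t$. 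It therefore suffices to produce trigonometric polynomials $f_i$ with $\kappa_{f_i}(a) \to a$ for every $a$. Trigonometric polynomials form a self-adjoint point-separating unital subalgebra, hence are dense in $C(\widehat{\Gamma})$ by Stone--Weierstrass and so dense in $L^1(\widehat{\Gamma})$; I can thus approximate a bounded approximate identity of $L^1(\widehat{\Gamma})$ by trigonometric polynomials $f_i$ normalised so that $\int_{\widehat{\Gamma}} f_i\, dz = 1$. The estimate $\|\kappa_{f_i}(a) - a\| \le \int_{\widehat{\Gamma}} |f_i(z)|\,\|\kappa_z(a) - a\|\, dz$, combined with strong continuity of $\kappa$, then forces $\kappa_{f_i}(a) \to a$, proving density. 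Finally, uniqueness is immediate: any contractive linear projection with the stated vanishing properties agrees with $E_t$ on the dense subspace $\bigoplus_s A_s$ and is continuous, hence coincides with $E_t$ on all of $A$.
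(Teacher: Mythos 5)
Your construction of $E_t$ as the Fourier coefficient $\int_{\widehat{\Gamma}}\overline{z(t)}\,\kappa_z(a)\,dz$ is exactly the paper's (the paper phrases it as a weak Gelfand--Pettis integral, citing Rudin, where you use the Bochner integral; for a continuous function from a compact space into a Banach space these coincide, so the difference is immaterial), and your treatment of contractivity, the projection and orthogonality identities, linear independence, and the $A_0$-bimodule property matches the paper's line for line. The one genuine divergence is the density of $\bigoplus_{t}A_t$. The paper disposes of this by citing the Hahn--Banach argument of Brown--Fuller--Pitts--Reznikoff: if $\phi\in A'$ kills every $A_t$, then $z\mapsto\phi(\kappa_z(a))$ is a continuous function on $\widehat{\Gamma}$ all of whose Fourier coefficients $\phi(E_t(a))$ vanish, hence it is identically zero, and evaluating at the identity gives $\phi(a)=0$. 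You instead run a Fej\'er-type summation: approximate an approximate identity of $L^1(\widehat{\Gamma})$ by trigonometric polynomials via Stone--Weierstrass and note that the averages $\kappa_{f_i}(a)$ lie in finite sums of spectral subspaces and converge to $a$. Both routes are correct. Yours is a little longer but constructive, exhibiting an explicit net in $\bigoplus_t A_t$ converging to $a$ (a Ces\`aro-type summability of the Fourier series), whereas the paper's duality argument is shorter but purely existential. One small point you should make explicit: once you replace the nonnegative approximate identity $u_i$ by a trigonometric polynomial $f_i$ that is merely $L^1$-close to it, the convergence $\int_{\widehat{\Gamma}}|f_i(z)|\,\|\kappa_z(a)-a\|\,dz\to 0$ requires the splitting $\int|f_i-u_i|\cdot 2\|a\|\,dz+\int u_i(z)\|\kappa_z(a)-a\|\,dz$, i.e.\ you must observe that $|f_i|$ still has uniformly bounded mass concentrating at the identity; the displayed inequality alone does not force the limit. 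This is routine and does not affect correctness.
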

\begin{proof}
Let $\mu$ be the (normalised) Haar measure on $\widehat{\Gamma}$.
For each $t$ and $a\in A$ the function $\widehat{\Gamma}\ni z \mapsto \kappa_z(a) \overline{z(t)}\in A$ is continuous. 
Since $\widehat{\Gamma}$ is compact and  $A$ is a Fr\'echet space, by \cite{Rudin}*{Theorems 3.27 and 3.20(c)} the weak (or Gelfand-Pettis) integral  $E_t(a) \coloneqq \int_{\widehat{\Gamma}} \kappa_z(a) \overline{z(t)}\, d\mu$ exists. This means that  $E_t(a)$ is a unique element in $A$ such that $f(E_t(a))=\int_{\widehat{\Gamma}} f(\kappa_z(a) \overline{z(t)})\, d\mu$ for every $f\in A'$. In particular, this implies that $\|E_t(a)\|\leq \|a\|$, $E_t$ is linear and $E_{t}|_{A_t}=\text{id}_{A_t}$. 
Routine calculations using $\widehat{\Gamma}$-invariance of  $\mu$ show that $E_{t}(A)\subseteq A_t$, $E_{t}(A_s)=0$ for $s\neq t$ (we have $\int_{\widehat{\Gamma}}  z(s)\overline{z(t)})\, d\mu(t)=0$). Hence, $E_t:A\to A_t\subseteq A$ is a contractive linear projection with desired properties. In particular,  the spaces $\{A_t\}_{t\in \Gamma}$ are linearly independent. The simple argument in the proof of \cite{Brown- Fuller-Pitts-Reznikof:Graded}*{Lemma 3.5}, that uses only Hahn-Banach theorem and injectivity of the Fourier transform, shows that $\bigoplus_{t\in \Gamma} A_t$ is dense in $A$.
This proves the first part of the assertion. The second part is straightforward in view of the integral formula for $E_t$.
\end{proof}
It is natural to call the contractive $A_0$-bimodule projection $E_1:A\to A_0$ from Lemma~\ref{lem:grading_from_gauge_action}, 
a \emph{conditional expectation}.
When $A$ is a $C^*$-algebra, then this projection is necessarily faithful (there is no nonzero ideal in $A$ contained in $\ker E_0$), cf. \cite{Brown- Fuller-Pitts-Reznikof:Graded}*{Lemma 3.3}. In the Banach algebra setting this is not automatic.

In \cite{Brown- Fuller-Pitts-Reznikof:Graded}*{Definition 2.9} the authors call a twisted groupoid (using Renault-Kumjian twists) $\Gamma$-graded
if there are two groupoid homomorphisms as consistent pair of continuous  groupoid homomorphisms, one defined on $\Gg$ the other on the twist.
However, every continuous groupoid homomorphism $c:\Gg\to \Gamma$ uniquely determines the relevant homomorphism on the twist (so this additional structure is automatic). In particular, we obtain a much simpler proof of \cite{Brown- Fuller-Pitts-Reznikof:Graded}*{Lemma 2.9}, which shows that $\Gamma$ action on the twisted groupoid induces a ``dual action'' of $\widehat{\Gamma}$ on the reduced $C^*$-algebra, and in fact we can prove it for the associated
full, reduced and essential $L^P$-operator algebras (and the proof for full algebras is nontrivial). In addition, we identify the structure of the fixed-point algebras.
\begin{lemma}\label{lem:reduced_essential_passes_to_subgroupoids}
Let $(\Gg,\LL)$ be a twisted \'etale groupoid and let $\Gg_0$ be a wide open subgroupoid of $\Gg$.
Let $F_{\RR}(\Gg,\LL)$ be a Banach algebra completion of $\mathfrak{C}_c(\Gg,\LL)$ and let $F_{\RR}(\Gg,\LL)_0$ be 
the closure of the image of $\mathfrak{C}_c(\Gg_0,\LL|_{\Gg_0})$ in $F_{\RR}(\Gg,\LL)$. 
If $F_{\RR}(\Gg,\LL)$ is a Banach algebra, a reduced Banach algebra, or an essential  Banach algebra of $(\Gg,\LL)$, then $F_{\RR}(\Gg,\LL)_0$ is a  Banach algebra, a reduced Banach algebra, or an essential  Banach algebra of $(\Gg_0,\LL|_{\Gg_0})$, respectively.
\end{lemma}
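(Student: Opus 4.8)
Looking at this lemma, I need to show that various "niceness" properties of a completion of $\mathfrak{C}_c(\Gg,\LL)$ pass to the subalgebra coming from a wide open subgroupoid.

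Let me understand what needs to be proved. The lemma has three parallel claims about Banach algebra completions, reduced groupoid Banach algebras, and essential groupoid Banach algebras.

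The plan is to exploit the characterizations of reduced and essential groupoid Banach algebras given in Remarks~\ref{rem:reduced_Lp_algebras} and~\ref{rem:essential_groupoid_algebras}, together with the fact that $\Gg_0$ is a \emph{wide open} subgroupoid — so $\Gg_0^0 = \Gg^0 = X$ and $\Gg_0$ is open in $\Gg$.

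\textbf{First, the Banach algebra case.} The statement is essentially definitional: $F_{\RR}(\Gg,\LL)_0$ is by construction the closure of the image of $\mathfrak{C}_c(\Gg_0,\LL|_{\Gg_0})$, so it is a Banach algebra completion of $\mathfrak{C}_c(\Gg_0,\LL|_{\Gg_0})$ provided that the inclusion $\mathfrak{C}_c(\Gg_0,\LL|_{\Gg_0}) \hookrightarrow \mathfrak{C}_c(\Gg,\LL)$ is a well-defined $*$-homomorphism. Since $\Gg_0$ is open, a section of $\LL|_{\Gg_0}$ with compact support, extended by zero, is a quasi-continuous section on $\Gg$; and because $\Gg_0$ is a wide \emph{subgroupoid}, both the convolution product and the involution are preserved (the convolution sum over $\rg(\eta)=\rg(\gamma)$ restricted to $\Gg_0$ closes up inside $\Gg_0$). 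So this step is routine.

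\textbf{The reduced and essential cases.} Here I would use the injective contractive maps characterizing these algebras. For the reduced case, by Remark~\ref{rem:reduced_Lp_algebras} there is an injective contractive linear map $j\colon F_{\RR}(\Gg,\LL) \to \mathfrak{C}_0(\Gg,\LL)$, isometric on $\Cont_c(X)$. The key observation is the \emph{restriction map}: since $\Gg_0$ is open in $\Gg$, restricting a section from $\Gg$ to $\Gg_0$ gives a contractive linear map $\mathfrak{C}_0(\Gg,\LL) \to \mathfrak{C}_0(\Gg_0,\LL|_{\Gg_0})$, and this restriction sends elements supported on $\Gg_0$ identically to themselves. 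I would like to show that the composition
\[
F_{\RR}(\Gg,\LL)_0 \xhookrightarrow{} F_{\RR}(\Gg,\LL) \xrightarrow{j} \mathfrak{C}_0(\Gg,\LL) \xrightarrow{\textup{res}} \mathfrak{C}_0(\Gg_0,\LL|_{\Gg_0})
\]
is the required injective contractive map $j_0$ for $F_{\RR}(\Gg,\LL)_0$, isometric on $\Cont_c(X)$. Contractiveness and the isometry on $\Cont_c(X)$ are inherited; the substantive point is \emph{injectivity} of $j_0$. The essential case proceeds analogously using the map $j^{\es}\colon B\to \mathfrak{C}_0(\Gg,\LL)/\mathfrak{M}_0(\Gg,\LL)$ of Remark~\ref{rem:essential_groupoid_algebras}, noting that restriction descends to the meager-support quotients (meager in $\Gg_0$ follows from meager in $\Gg$ intersected with an open set).

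\textbf{The main obstacle} will be establishing injectivity of $j_0$, i.e.\ that $\textup{res}\circ j$ does not kill any nonzero element of $F_{\RR}(\Gg,\LL)_0$. The worry is that an element of $F_{\RR}(\Gg,\LL)_0$, while a limit of sections supported on $\Gg_0$, might have $j$-image with support leaking into $\Gg\setminus\Gg_0$, so that restriction could lose information. I would resolve this by showing that $j$ intertwines restriction with the ``support on $\Gg_0$'' structure: concretely, for $f\in\mathfrak{C}_c(\Gg_0,\LL|_{\Gg_0})$ the section $j$ applied to its image in $F_{\RR}(\Gg,\LL)$ is supported on $\overline{\Gg_0}$, and $\textup{res}$ recovers $f$ on the open set $\Gg_0$; passing to limits and using that $\Gg_0$ is open (so that pointwise values on $\Gg_0$ are continuous functionals compatible with the $\mathfrak{C}_0$-norm), one sees $\textup{res}\circ j$ restricted to $F_{\RR}(\Gg,\LL)_0$ agrees with the canonical embedding of that algebra into $\mathfrak{C}_0(\Gg_0,\LL|_{\Gg_0})$. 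Injectivity then follows because an element of $F_{\RR}(\Gg,\LL)_0$ whose values vanish on all of $\Gg_0$ must be zero, its $j$-image already being determined by its restriction to the dense subalgebra supported in $\Gg_0$.
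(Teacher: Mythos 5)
Your proposal is correct and follows essentially the same route as the paper: both arguments take the injective contractive map $j$ (resp.\ $j^{\es}$) characterising reduced (resp.\ essential) groupoid Banach algebras, compose it with restriction of sections to the open subgroupoid $\Gg_0$, and observe that injectivity survives because $j$-images of elements of $F_{\RR}(\Gg,\LL)_0$ are uniform limits of sections vanishing off $\Gg_0$ and hence themselves vanish off $\Gg_0$. The only cosmetic difference is that the paper phrases the first case via the contractive restriction-to-$X$ map (which trivially restricts since $X\subseteq\Gg_0$ by wideness), but this does not affect the substance.
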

\begin{proof} 
We may assume that $\Cont_0(X)$ is a subalgebra of $F_{\RR}(\Gg,\LL)$.
If the map $\mathfrak{C}_c(\Gg,\LL)\ni f\mapsto f|_{X}\in \mathcal{B}(X)$ extends to the contractive operator $F_{\RR}(\Gg,\LL)\to \mathcal{B}(X)$,
this operator restricts to the operator
$F_{\RR}(\Gg,\LL)_0\to \mathcal{B}(X)$ showing that $F_{\RR}(\Gg,\LL)_0$ is a groupoid Banach algebra.

By \cite{BKM2}*{Remark 3.18}, $F_{\RR}(\Gg,\LL)$ is a reduced groupoid Banach algebra  if and only if the inclusion $\mathfrak{C}_c(\Gg,\LL)\subseteq \mathcal{B}(\Gg,\LL)$ extends to an injective contractive map 
$j: F_{\RR}(\Gg,\LL) \to \mathcal{B}(\Gg,\LL)$. If such map exists it restricts to the injective
map $j: F_{\RR}(\Gg,\LL)_0 \to \mathcal{B}(\Gg_0,\LL|_{\Gg_0})$ showing that 
$F_{\RR}(\Gg,\LL)_0$ is a reduced groupoid Banach algebra of $(\Gg_0,\LL|_{\Gg_0})$.

By \cite{BKM2}*{Remark 4.13}, $F_{\RR}(\Gg,\LL)$ is an  essential groupoid Banach algebra  if and only if the inclusion $\mathfrak{C}_c(\Gg,\LL)\subseteq \mathcal{B}(\Gg,\LL)$ induces  an injective contractive map 
$j: F_{\RR}(\Gg,\LL) \to \mathcal{D}(\Gg,\LL)=\mathcal{B}(\Gg,\LL)/\mathfrak{M}(\Gg,\LL)$. If such map exists it restricts to the injective
map $j: F_{\RR}(\Gg,\LL)_0 \to \mathcal{D}(\Gg_0,\LL|_{\Gg_0})$ showing that 
$F_{\RR}(\Gg,\LL)_0$ is an essential groupoid Banach algebra of $(\Gg_0,\LL|_{\Gg_0})$.
\end{proof}
\begin{theorem}\label{thm:gauge_actions_on_groupoids}
Let $(\Gg,\LL)$ be a twisted \'etale groupoid, and let $P\subseteq [1,\infty]$ be non-empty. For any continuous groupoid homomorphism $c:\Gg\to \Gamma$ the formula
\begin{equation}\label{eq:kappa_gauge_action}
	\kappa_z(f)(\gamma)=z(c(\gamma))f(\gamma), \qquad f\in \mathfrak{C}_c(\Gg,\LL),
\end{equation}
determines actions  of $\widehat{\Gamma}$ on  $F^P(\Gg , \LL)$,  $F^P_{\red}(\Gg , \LL)$,  and  $F^P_{\ess}(\Gg , \LL)$. Moreover,  $\Gg_{0} \coloneqq c^{-1}(0)$ is a wide clopen subgroupoid of $\Gg$ and 
the fixed-point subalgebras  $F^P(\Gg , \LL)_0$, $F^P_{\red}(\Gg , \LL)_0$, and  $F^P_{\ess}(\Gg , \LL)_0$
are respectively a Banach algebra, a reduced Banach algebra and an essential Banach algebra of $(\Gg_0 , \LL|_{\Gg_0})$.
The associated conditional expectations $F^P_{\red}(\Gg , \LL)\onto F^P_{\red}(\Gg , \LL)_0$ and
$F^P_{\ess}(\Gg , \LL)\onto F^P_{\ess}(\Gg , \LL)_0$ are faithful.
\end{theorem}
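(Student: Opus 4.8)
The plan is to first check that \eqref{eq:kappa_gauge_action} defines a strongly continuous action of $\widehat{\Gamma}$ by isometric automorphisms on each of the three algebras, then to identify the fixed-point subalgebra with a completion of $\mathfrak{C}_c(\Gg_0,\LL|_{\Gg_0})$ and feed this into Lemma~\ref{lem:reduced_essential_passes_to_subgroupoids}, and finally to deduce faithfulness of the expectations by a factorisation argument.

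For the action, note that since $\Gamma$ is discrete and $c$ is continuous, $c$ is locally constant, so $\kappa_z$ preserves $\mathfrak{C}_c(\Gg,\LL)$. As $c$ is a homomorphism we have $c(\eta)+c(\eta^{-1}\gamma)=c(\gamma)$, and a short computation with the convolution formula gives $\kappa_z(f*g)=\kappa_z(f)*\kappa_z(g)$ and, when $P=P^*$, $\kappa_z(f^*)=\kappa_z(f)^*$; since $|z(c(\gamma))|=1$ we get $\|\kappa_z(f)\|_I=\|f\|_I$, while $\kappa_z\kappa_w=\kappa_{zw}$ is immediate. For the reduced and essential norms I would implement $\kappa_z$ by the diagonal invertible isometry $(V_z\xi)(\gamma)=z(c(\gamma))\xi(\gamma)$ on $\ell^p(\Gg,\LL)$ (on $c_0(\Gg,\LL)$ when $p=\infty$), which preserves $\ell^p(\Gg_{\Hau},\LL|_{\Gg_{\Hau}})$; the same computation yields $V_z\Lambda_p(f)V_z^{-1}=\Lambda_p(\kappa_z(f))$ and likewise for $\Lambda_p^{\ess}$, so $\kappa_z$ is isometric for $\|\cdot\|_{L^P,\red}$ and $\|\cdot\|_{L^P,\ess}$. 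For the full norm there is no implementing operator, and instead I use that, $\kappa_z$ being an $I$-isometric automorphism of $\mathfrak{C}_c(\Gg,\LL)$, precomposition $\psi\mapsto\psi\circ\kappa_z$ is a bijection of the defining family of representations on $L^p$-spaces ($p\in P$), whence $\|\kappa_z(f)\|_{L^P}=\|f\|_{L^P}$. Strong continuity follows from the fact that every $f\in\mathfrak{C}_c(\Gg,\LL)$ is a finite sum $f=\sum_t f_t$ with $f_t$ supported on $c^{-1}(t)$, so $\kappa_z(f)=\sum_t z(t)f_t$, combined with density and the isometry just proved.

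For the fixed points, observe that $\Gg_0=c^{-1}(0)$ is clopen ($c$ locally constant), a subgroupoid ($c(\gamma^{-1})=-c(\gamma)$, $c(\gamma\eta)=c(\gamma)+c(\eta)$), and wide ($c(x)=c(x)+c(x)$ forces $c(x)=0$ on units). The decomposition $f=\sum_t f_t$ exhibits $\mathfrak{C}_c(\Gg,\LL)$ as $\Gamma$-graded with degree-$t$ part the sections supported on $c^{-1}(t)$, so by uniqueness of the spectral projections in Lemma~\ref{lem:grading_from_gauge_action} the conditional expectation $E_0$ onto the fixed-point subalgebra restricts on $\mathfrak{C}_c(\Gg,\LL)$ to $f\mapsto f|_{\Gg_0}$. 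Hence its range is precisely the closure of $\mathfrak{C}_c(\Gg_0,\LL|_{\Gg_0})$, which is the algebra $F_{\RR}(\Gg,\LL)_0$ of Lemma~\ref{lem:reduced_essential_passes_to_subgroupoids}. As $F^P(\Gg,\LL)$ is a groupoid Banach algebra (restriction to $X$ factors as $F^P(\Gg,\LL)\to F^P_{\red}(\Gg,\LL)\to\mathcal{B}(X)$), $F^P_{\red}(\Gg,\LL)$ is reduced, and $F^P_{\ess}(\Gg,\LL)$ is essential, Lemma~\ref{lem:reduced_essential_passes_to_subgroupoids} yields at once that the three fixed-point subalgebras are respectively a groupoid, a reduced, and an essential Banach algebra of $(\Gg_0,\LL|_{\Gg_0})$. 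For faithfulness, let $\Phi\colon F^P_{\red}(\Gg,\LL)\to\mathcal{B}(X)$ and $\Phi_0\colon F^P_{\red}(\Gg,\LL)_0\to\mathcal{B}(X)$ be the faithful restriction maps of the ambient and fixed-point reduced algebras; since $X\subseteq\Gg_0$, both $\Phi$ and $\Phi_0\circ E_0$ send $f\in\mathfrak{C}_c(\Gg,\LL)$ to $f|_X$, so $\Phi=\Phi_0\circ E_0$ by density. Any closed two-sided ideal $I\subseteq\ker E_0$ then satisfies $\Phi(I)=\Phi_0(E_0(I))=\{0\}$, i.e.\ $I\subseteq\ker\Phi$, and faithfulness of $\Phi$ forces $I=\{0\}$; the essential case is identical with $\mathcal{D}(X)$ and the faithful maps $\Psi,\Psi_0$ replacing $\Phi,\Phi_0$.

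I expect the main obstacle to be the full-algebra case of the first step, where, lacking an implementing operator, one must argue directly on the universal family of $L^p$-representations, together with the bookkeeping showing that the abstractly defined fixed-point subalgebra of Lemma~\ref{lem:grading_from_gauge_action} coincides exactly with the concrete completion of $\mathfrak{C}_c(\Gg_0,\LL|_{\Gg_0})$ needed to apply Lemma~\ref{lem:reduced_essential_passes_to_subgroupoids}; this identification is the bridge joining the two lemmas and, once in place, makes both the structural statement and the faithfulness essentially formal.
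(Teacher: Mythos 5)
Your proof is correct, and everything except the full-algebra case of the first step runs along the same lines as the paper: the diagonal isometry $V_z$ implementing $\kappa_z$ in the regular and essential representations, the identification of $E_0$ with $f\mapsto f|_{\Gg_0}$ on $\mathfrak{C}_c(\Gg,\LL)$ followed by Lemma~\ref{lem:reduced_essential_passes_to_subgroupoids}, and the factorisation $\Phi=\Phi_0\circ E_0$ for faithfulness are all exactly the paper's arguments (you additionally spell out strong continuity, which the paper leaves implicit).

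Where you genuinely diverge is the isometry of $\kappa_z$ for the universal norm $\|\cdot\|_{L^P}$. The paper invokes the disintegration--integration theorem of \cite{BKM} in a strengthened form: it fixes an isometric representation $\pi\times v$ coming from a covariant pair $(\pi,v)$ with $\pi$ defined on all bounded Borel functions $\B(X)$, perturbs $v$ to $w_{z,U}=\pi(z_U)v_U$ with $z_U=z\circ c\circ r|_U^{-1}\in\B(\rg(U))$, checks that $(\pi,w_z)$ is again covariant, and re-integrates to obtain $\kappa_z=(\pi\times v)^{-1}\circ(\pi\times w_z)$. You instead observe that $\kappa_z$ is a $\|\cdot\|_I$-isometric algebra automorphism of $\mathfrak{C}_c(\Gg,\LL)$, so that $\psi\mapsto\psi\circ\kappa_z$ permutes the defining family of representations and the universal norm is preserved by a supremum-relabelling. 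Since the paper's definition of a representation on an $L^p$-space imposes nothing beyond $\|\cdot\|_I$-contractivity and multiplicativity, this precomposition argument is valid, and it is both more elementary (no Borel-functional calculus, no disintegration) and more general (it would work for any isometric automorphism of $(\mathfrak{C}_c(\Gg,\LL),\|\cdot\|_I)$ and any class of representations defined purely by norm conditions). What the paper's heavier route buys is an explicit spatial implementation of $\kappa_z$ inside a fixed faithful representation, which is not needed for this theorem.
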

\begin{proof}
Firstly note that \eqref{eq:kappa_gauge_action} yields a well-defined automorphism  $\kappa_z$ of the algebra $\mathfrak{C}_c(\Gg,\LL)$.
Also, clearly $\kappa_{z_1}\circ \kappa_{z_2}=\kappa_{z_1z_2}$  for $z_1,z_2\in \widehat{\Gamma}$. Hence, $\kappa:  \widehat{\Gamma}\to  \Aut(\mathfrak{C}_c(\Gg,\LL))$
is a group homomorphism. Thus,  for the first part of the assertion it suffices to show that $\kappa_{z}$ extends (induces) an isometric automorphism on the relevant (Hausdorff) completion of  $\mathfrak{C}_c(\Gg,\LL)$.
Secondly, it suffices to consider the case when $P=\{p\}$, as then one gets the assertion by passing to appropriate direct sums.
Recall the regular representation $\Lambda_p : \mathfrak{C}_c(\Gg,\LL) \to \Bound(\ell^{p}(\Gg,\LL))$ from Example~\ref{ex:regular_representation}.
For $z\in \widehat{\Gamma}$  the multiplication operator $V_{z}\xi  (\gamma) \coloneqq  z(c(\gamma))\xi(\gamma)$, $\xi \in \ell^{p}(\Gg,\LL)$,
is an invertible isometry on  $\ell^{p}(\Gg,\LL)$. A simple calculation shows that $V_{z}\Lambda_p(f)V_{z}^{-1}=\Lambda_p(\kappa_{z}(f))$ for every $f\in \mathfrak{C}_c(\Gg,\LL)$. This implies that \eqref{eq:kappa_gauge_action} determines an isometric automorphism  $\kappa_{z}^{\red}$ of $F^p_{\red}(\Gg , \LL)$.
Since the subspace $\ell^p(\Gg_{\Hau} , \LL)\subseteq \ell^p(\Gg, \LL)$ is invariant for both  $\Lambda_p$ and $V_z$ the same reasoning shows 
that \eqref{eq:kappa_gauge_action} determines an isometric automorphism  $\kappa_{z}^{\ess}$ of $F^p_{\ess}(\Gg , \LL)$.
Hence, 
$\kappa^{\red}$ and $\kappa^{\ess}$ are the desired actions on  $F^p_{\red}(\Gg , \LL)$ and $F^p_{\ess}(\Gg , \LL)$.

For the universal algebras we need to use the disintegration-integration theorem from \cite{BKM} in a slightly stronger form than Proposition~\ref{prop:disintegration_theorem}.
Let  $S$ be the family of bisections where $\LL$ is topologically trivial. This is a wide inverse subsemigroup of $\Bis(\Gg)$.
For each $U\in S$ we fix a unitary section $c_{U} \in \Contu(U,\LL)$ which we treat as a global section of $\LL$ by letting $c_{U}$
to be zero outside $U$. For each $a\in \Cont_0(\rg(U))$ we put $a\delta_{U} \coloneqq  a* c_U$. 
Then $\mathfrak{C}_c(\Gg , \LL)=\linspan\{a\delta_U: a\in \Cont_0(\rg(U)), \,\, U\in S\}$. 
We may treat $(\Gg,\LL)$ as $(S\ltimes_{h} X,\LL_{u})$ where $h$ is the restriction of the standard action of $\Bis(G)$ on $X$ and  $u(U,V) \coloneqq c_{U}*c_{V}* c_{UV}^*$ for $U,V\in S$, cf. \cite{BKM}*{Subsection 4.3}.  We may also naturally treat $(h,u)$ as an action of the spectrum of $\B(X)$, cf. \cite{BKM}*{Page  40}.
By \cite{BKM}*{Theorem 5.19(1)}  there is a covariant representation of $(h,u)$  on an $L^p$-space $Y$
as in Definition~\ref{defn:covariant_representation_in_algebra}, except that representation $\pi:\B(X)\to \Bound(Y)$ is defined on $\B(X)$ rather than on $\Cont_0(X)$ (in fact we may assume $\pi$ acts by multiplication operators),
and such that the formula
$$
\pi\times v(a\delta_U)=\pi(a)v_U, \qquad a \in \Cont_0(\rg(U)), U\in S
$$
determines an isometric representation  $\pi\times v:F^p(\Gg , \LL)\to B(Y)$.
Note that for each $z\in \widehat{\Gamma}$ we have $z_{U} \coloneqq z\circ c\circ r|_{U}^{-1}\in \B(\rg(U))\subseteq \B(X)$, and putting 
$w_{z,U} \coloneqq \pi(z_{U})v_{U}$ for $U\in S$  one readily checks that  the pair $(\pi,w_z)$ shares the same properties as  $(\pi,v)$, i.e. it is a covariant representation of the action $(h,u)$.
Hence, it integrates  a representation $\pi\times w_z:F^p(\Gg , \LL)\to B(Y)$.  Moreover, for $a \in \Cont_0(\rg(U))$ and $U\in S$ we have
$$
\pi\times v(\kappa_z(a\delta_U))=\pi\times v((a\cdot z_{U})\delta_U))=\pi(a\cdot z_{U})v_U=\pi\times w_z(a\delta_U). 
$$
This, and the fact that $\pi\times v$ is isometric, imply that $\kappa_z \coloneqq (\pi\times v)^{-1}\circ \pi\times w_z$ is a well-defined contractive homomorphism
$\kappa_z:F^p(\Gg , \LL)\to F^p(\Gg , \LL)$ satisfying \eqref{eq:kappa_gauge_action}.  Since $\kappa_{z^{-1}}$ is a contractive inverse of $\kappa_{z}$,  we see that  $\kappa_z$ is an isometric automorphism of  $F^p(\Gg , \LL)$. This finishes the proof of the first part of the assertion.

Let $E_0:F^P(\Gg , \LL)\to F^P(\Gg , \LL)_0$ be the associated conditional expectation.  Note that $\mathfrak{C}_c(\Gg_0,\LL|_{\Gg_0})$ is naturally a subalgebra of $\mathfrak{C}_c(\Gg,\LL)$ and $E_0$ restricts to a projection $E_0:\mathfrak{C}_c(\Gg,\LL)\to \mathfrak{C}_c(\Gg_0,\LL|_{\Gg_0})$, which is given by 
$E_0(f)=f|_{\Gg_0}$, for $f\in \mathfrak{C}_c(\Gg,\LL)$. In particular, $\mathfrak{C}_c(\Gg_0,\LL|_{\Gg_0})\subseteq F^P(\Gg , \LL)_0$.
To see that $\mathfrak{C}_c(\Gg_0,\LL|_{\Gg_0})$ is dense in $F^P(\Gg , \LL)_0$ take any $f\in F^P(\Gg , \LL)_0$ and choose a sequence $\{f_n\}_{n=1}^{\infty}\subseteq \mathfrak{C}_c(\Gg,\LL)$ which converges in norm to $f$. Then $\{E_0(f_n)\}_{n=1}^{\infty}\subseteq \mathfrak{C}_c(\Gg_0,\LL|_{\Gg_0})$ converges to $E_0(f)=f$.
Hence, $F^P(\Gg , \LL)_0$ is a closure of the image of $\mathfrak{C}_c(\Gg_0,\LL|_{\Gg_0})$.
Similar arguments show that $F^P_{\red}(\Gg,\LL)_0$ and $F^P_{\ess}(\Gg,\LL)_0$ are closures of images of  $\mathfrak{C}_c(\Gg_0,\LL|_{\Gg_0})$. 
Hence, the middle part of the assertion follows from Lemma~\ref{lem:reduced_essential_passes_to_subgroupoids}.

Finally, notice that composing the conditional expectation $E_{0}^{\red}:F^P_{\red}(\Gg , \LL)\onto F^P_{\red}(\Gg , \LL)_0$ with the canonical generalised expectation
$F^P_{\red}(\Gg , \LL)_0\onto \B(X)$ coincides with the associated faithful map $F^P_{\red}(\Gg , \LL)\to \B(X)$. Hence, $E_{0}^{\red}$
is faithful. Similarly, $E_{0}^{\ess}:F^P_{\ess}(\Gg , \LL)\onto F^P_{\ess}(\Gg , \LL)_0$ composed with the canonical map
$F^P_{\ess}(\Gg , \LL)_0\onto \mathcal{D}(X)$ coincides with the canonical faithful map $F^P_{\ess}(\Gg , \LL)\to \mathcal{D}(X)$. Hence, $E_{0}^{\ess}$
is faithful. 
\end{proof}
The actions of $\widehat{\Gamma}$ described in Theorem~\ref{thm:gauge_actions_on_groupoids} in the context of algebras defined in terms of generators and relations are often called \emph{gauge-actions}.  Therefore, this name is even more justified when applied to the  inverse semigroups algebras that we defined in  the previous subsection.  Using Lemma~\ref{lem:from_cocycles_to_homomorphisms} we can translate the above result to this context.

\begin{definition}\label{def:gauge_action_semigroup_representations}
Let $(S,\omega)$ be a twisted inverse semigroup equipped with a $1$-cocycle $c:S\setminus \{0\}\to \Gamma$ 
with values in an abelian group $\Gamma$ (so we have $c(st)=c(s)+ c(t)$ whenever $st\neq 0$). 
We say that a representation $v:S\to B(E)$ of  $(S,\omega)$ \emph{admits a gauge action} induced by $c$  if
$$
\kappa_z(v_t)=z(c(t)) v_t, \qquad z\in \widehat{\Gamma}, t\in S,
$$
determines an action of $\widehat{\Gamma}$ on the range $B(v)= \clsp\{v_t : \, t\in S \}$ of $v$.
\end{definition}
\begin{corollary}\label{cor:gauge_action_inverse_semigroup}
Let $(S,\omega)$ be a twisted inverse semigroup,  $c:S\setminus \{0\}\to \Gamma$ a  $1$-cocycle,  $\emptyset \neq P\subseteq [1,\infty]$ and let $*=\Space,\red,\ess$.
Representations generating $\TT^P_{*}(S)$ and $\OO^P_{*}(S)$ admit gauge actions induced by $c$. Moreover,
\begin{enumerate}
	\item The associated
	fixed-point subalgebras $\TT^P_{*}(S)_0$ and $\OO^P_{*}(S)_0$ are ranges of restrictions of the corresponding representations
	to the inverse subsemigroup  $S_0 \coloneqq  c^{-1}(0)\cup \{0\}\subseteq S$.
	
	\item  
	The algebras $\TT^P(S)_0$, $\TT^P_{\red}(S)_0$ and $\TT^P_{\ess}(S)_0$  are respectively, a Banach algebra, a reduced Banach algebra, and an essential Banach algebra  of the groupoid $\widetilde{\Gg}(S)_0 \coloneqq S_0 \ltimes_{\widetilde{h}} \widehat{\EE}\subseteq\widetilde{\Gg}(S)$.
	\item The algebras $\OO^P(S)_0$, $\OO^P_{\red}(S)_0$ and $\OO^P_{\ess}(S)_0$  are respectively, a Banach algebra, a reduced Banach algebra, and an essential Banach algebra  of $\Gg(S)_0 \coloneqq S_0 \ltimes_{h} \partial \EE\subseteq \Gg(S)$.
\end{enumerate}
\end{corollary}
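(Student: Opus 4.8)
The plan is to reduce everything to the groupoid result Theorem~\ref{thm:gauge_actions_on_groupoids} by passing through the groupoid models $\TT^P_*(S,\omega)\cong F^P_*(\widetilde{\Gg}(S),\LL_\omega)$ and $\OO^P_*(S,\omega)\cong F^P_*(\Gg(S),\LL_\omega)$ of Corollary~\ref{cor:groupoid_presentation_inverse_semigroup_algebras} (the displayed statement is the case of trivial $\omega$, and the general twisted case is identical with $\LL_\omega$ in place of the trivial bundle). First I would feed the $1$-cocycle $c\colon S\setminus\{0\}\to\Gamma$ into Lemma~\ref{lem:from_cocycles_to_homomorphisms}, applied to the canonical actions $\widetilde{h}$ on $\widehat{\EE}$ and $h$ on $\partial\widehat{\EE}$. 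This produces a continuous groupoid homomorphism $\widetilde{c}\colon\widetilde{\Gg}(S)\to\Gamma$ together with its restriction $\widetilde{c}\colon\Gg(S)\to\Gamma$, characterised by $\widetilde{c}|_{U_t}\equiv c(t)$ on the basic bisections $U_t$, and it identifies the preimages $\widetilde{c}^{-1}(0)$ with the clopen wide subgroupoids $\widetilde{\Gg}(S)_0=S_0\ltimes_{\widetilde{h}}\widehat{\EE}$ and $\Gg(S)_0=S_0\ltimes_h\partial\widehat{\EE}$, where $S_0=c^{-1}(0)\cup\{0\}$.

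With $\widetilde{c}$ in hand, I would apply Theorem~\ref{thm:gauge_actions_on_groupoids} once with $\Gg=\widetilde{\Gg}(S)$ and once with $\Gg=\Gg(S)$, each carrying the twist $\LL_\omega$. This immediately furnishes isometric $\widehat{\Gamma}$-actions $\kappa$ on all six algebras $F^P_*(\widetilde{\Gg}(S),\LL_\omega)$ and $F^P_*(\Gg(S),\LL_\omega)$ via $\kappa_z(f)(\gamma)=z(\widetilde{c}(\gamma))f(\gamma)$. Transporting these through the isomorphisms of Corollary~\ref{cor:groupoid_presentation_inverse_semigroup_algebras} and recalling that each generator $v_t$ corresponds to a section supported on $U_t$, the relation $\widetilde{c}|_{U_t}\equiv c(t)$ gives $\kappa_z(v_t)=z(c(t))\,v_t$. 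This is precisely the condition of Definition~\ref{def:gauge_action_semigroup_representations}, so the generating representations of $\TT^P_*(S,\omega)$ and $\OO^P_*(S,\omega)$ admit gauge actions induced by $c$. The structural half of Theorem~\ref{thm:gauge_actions_on_groupoids} then says that the fixed-point subalgebra of $F^P_*(\Gg,\LL_\omega)$ is a Banach, a reduced, or an essential groupoid Banach algebra of $(\Gg_0,\LL_\omega|_{\Gg_0})$ in the full, reduced, and essential cases respectively; applying this to $\widetilde{\Gg}(S)$ and to $\Gg(S)$ yields items (2) and (3), the twist restricting to the one induced by $\omega|_{S_0}$ and disappearing when $\omega$ is trivial.

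For item (1) I would use that, by the same theorem together with Lemma~\ref{lem:grading_from_gauge_action}, the fixed-point subalgebra is the closure of the image of $\mathfrak{C}_c(\Gg_0,\LL_\omega|_{\Gg_0})$, equivalently the range of the degree-zero projection $E_0$, which annihilates every spectral component of nonzero degree. Under the isomorphisms this closure is exactly $\clsp\{v_t\colon t\in S_0\}$, the range of the restriction of the generating representation to $S_0$, since $v_t$ has degree $c(t)$, so $E_0(v_t)=v_t$ for $t\in S_0$ and $E_0(v_t)=0$ otherwise. I expect the only delicate point to be this last bookkeeping: one must check that the support bisection $U_t$ lies in $\Gg_0=\widetilde{c}^{-1}(0)$ if and only if $t\in S_0$, which rests on $\widetilde{c}$ being well defined on germ classes $[t,\phi]$ rather than on representatives, precisely the content secured by Lemma~\ref{lem:from_cocycles_to_homomorphisms}. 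The passage from $\widetilde{\Gg}(S)$ to its restriction $\Gg(S)$ to tight filters requires no new argument, and the genuinely substantial input, the existence of the gauge action on the \emph{full} algebra, has already been discharged inside Theorem~\ref{thm:gauge_actions_on_groupoids} via disintegration, so here it is a direct citation.
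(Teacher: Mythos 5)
Your proposal is correct and follows exactly the paper's route: the paper's proof is a one-line citation applying Theorem~\ref{thm:gauge_actions_on_groupoids} to the groupoid models of Corollary~\ref{cor:groupoid_presentation_inverse_semigroup_algebras}, with the groupoid homomorphism supplied by Lemma~\ref{lem:from_cocycles_to_homomorphisms}. Your elaboration of the bookkeeping (the identification $\widetilde{c}^{-1}(0)\cong S_0\ltimes_h X$, the relation $\kappa_z(v_t)=z(c(t))v_t$ via $\widetilde{c}|_{U_t}\equiv c(t)$, and the description of the fixed-point subalgebra as the closure of the image of $\mathfrak{C}_c(\Gg_0,\LL|_{\Gg_0})$) is accurate and consistent with what the cited results provide.
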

\begin{proof} Apply 
Theorem~\ref{thm:gauge_actions_on_groupoids} to the groupoid models in Corollary~\ref{cor:groupoid_presentation_inverse_semigroup_algebras}
equipped with the associated groupoid homomorphism to $c$ via  Lemma~\ref{lem:from_cocycles_to_homomorphisms}.
\end{proof}

\end{document}